\theoremstyle{plain}
\newtheorem{thm}{Theorem}[section]
\newtheorem{thmA}{Theorem}
\newtheorem*{thm*}{Theorem}
\newtheorem{lm}[thm]{Lemma}
\newtheorem{cor}[thm]{Corollary}
\newtheorem*{cor*}{Corollary}
\newtheorem{prop}[thm]{Proposition}
\newtheorem*{conj*}{Conjecture}
\theoremstyle{remark}
\newtheorem*{thank}{Acknowledgments}
\theoremstyle{definition}
\newtheorem*{defn*}{Definition}
\newtheorem{Remark}[thm]{Remark}
\newtheorem{I_Remark*}{Remark}
\newtheorem{defn}[thm]{Definition}
\newtheorem*{hypothesis*}{Hypothesis}
\newtheorem{hypothesis}[thm]{Hypothesis}
\newcommand{\nc}{\newcommand}
\newcommand{\beq}{\begin{equation}}
\newcommand{\eeq}{\end{equation}}
\newcommand{\bpmx}{\begin{pmatrix}}
\newcommand{\epmx}{\end{pmatrix}}
\newcommand{\bbmx}{\begin{bmatrix}}
\newcommand{\ebmx}{\end{bmatrix}}
\newcommand{\wh}{\widehat}
\newcommand{\wtd}{\widetilde}
\newcommand{\beqcd}[1]{\begin{equation*}\label{#1}\tag{#1}}
\newcommand{\eeqcd}{\end{equation*}}
\numberwithin{equation}{section}
\newenvironment{mylist}{
  \begin{enumerate}{}{%
      \setlength{\itemsep}{5pt} \setlength{\parsep}{0in}
      \setlength{\parskip}{0in} \setlength{\topsep}{0in}
      \setlength{\partopsep}{0in}
      \setlength{\leftmargin}{0.17in}}}{\end{enumerate}}
\def\parref#1{\ref{#1}}
\def\thmref#1{Theorem~\parref{#1}}
\def\propref#1{Proposition~\parref{#1}}
\def\corref#1{Corollary~\parref{#1}}     \def\remref#1{Remark~\parref{#1}}
\def\secref#1{\S\parref{#1}}
\def\lmref#1{Lemma~\parref{#1}}
\def\subsecref#1{\S\parref{#1}}
\def\defref#1{Definition~\parref{#1}}
\def\hypref#1{Hypothesis~\parref{#1}}
\def\makeop#1{\expandafter\def\csname#1\endcsname
  {\mathop{\rm #1}\nolimits}\ignorespaces}
\def\ord{{ord}}
\def\Ord{{\mathrm{ord}}}
\def\Spec{\mathrm{Spec}\,}
\DeclareMathAlphabet{\mathpzc}{OT1}{pzc}{m}{it}
\DeclareSymbolFont{cyrletters}{OT2}{wncyr}{m}{n}
\DeclareMathSymbol{\SHA}{\mathalpha}{cyrletters}{"58}
\def\makebb#1{\expandafter\def
  \csname bb#1\endcsname{{\mathbb{#1}}}\ignorespaces}
\def\makebf#1{\expandafter\def\csname bf#1\endcsname{{\bf
      #1}}\ignorespaces}
\def\makegr#1{\expandafter\def
  \csname gr#1\endcsname{{\mathfrak{#1}}}\ignorespaces}
\def\makescr#1{\expandafter\def
  \csname scr#1\endcsname{{\EuScript{#1}}}\ignorespaces}
\def\makecal#1{\expandafter\def\csname cal#1\endcsname{{\mathcal
      #1}}\ignorespaces}
\def\doLetters#1{#1A #1B #1C #1D #1E #1F #1G #1H #1I #1J #1K #1L #1M
                 #1N #1O #1P #1Q #1R #1S #1T #1U #1V #1W #1X #1Y #1Z}
\def\doletters#1{#1a #1b #1c #1d #1e #1f #1g #1h #1i #1j #1k #1l #1m
                 #1n #1o #1p #1q #1r #1s #1t #1u #1v #1w #1x #1y #1z}
\def\abs#1{\left|#1\right|}
\def\norm#1{\lVert#1\rVert}
\def\Fpbar{\bar{\mathbb F}_p}
\def\Qbarp{\C_p}
\def\Qp{\Q_p}
\def\Qbar{\ol{\Q}}
\def\Zbar{\ol{\Z}}
\def\Zp{\Z_p}
\def\rmT{{\mathrm T}}
\def\rmN{{\mathrm N}}
\def\cA{{\mathcal A}}  
\def\cB{\mathcal B}
\def\cD{\mathcal D}
\def\cE{{\mathcal E}}
\def\cF{{\mathcal F}}  
\def\cG{{\mathcal G}}
\def\cL{{\mathcal L}}
\def\cH{{\mathcal H}}
\def\cI{\mathcal I}
\def\cJ{\mathcal J}
\def\cK{{\mathcal K}}  
\def\cM{\mathcal M}
\def\cR{{\mathcal R}}
\def\cO{\mathcal O}
\def\cS{{\mathcal S}}
\def\cf{{\mathcal f}}
\def\cW{{\mathcal W}}
\def\cX{\mathcal X}
\def\cV{{\mathcal V}}
\def\cP{{\mathcal P}}
\def\cJ{\mathcal J}
\def\cN{\mathcal N}
\def\cQ{\mathcal Q}
\def\cU{\mathcal U}
\def\bfc{\mathbf c}
\def\bfH{\mathbf H}
\def\bfK{\mathbf K}
\def\bfU{\mathbf U}
\def\bfT{{\mathbf T}}
\def\bda{\mathbf a}
\def\bff{\mathbf f}
\def\bfu{\mathbf u}
\def\bdS{\mathbf S}
\def\bfw{\mathbf w}
\def\bftheta{\boldsymbol{\theta}}
\def\sF{\mathscr F}
\def\sL{\mathscr L}
\def\sI{\mathscr I}
\def\sJ{\mathscr J}
\def\sA{\mathscr A}
\def\sV{\mathscr V}
\def\sB{\mathscr B}
\def\sS{\mathscr S}
\def\sW{{\mathscr W}}
\def\bbI{\mathbb I}
\newcommand{\Z}{\mathbf Z}
\newcommand{\Q}{\mathbf Q}
\newcommand{\R}{\mathbf R}
\newcommand{\C}{\mathbf C}
\newcommand{\A}{\mathbf A}    
\def\cond{{c}}
\def\fraka{{\mathfrak a}}
\def\frakc{{\mathfrak c}}
\def\frakf{\mathfrak f}
\def\frakp{{\mathfrak p}}
\def\frakq{\mathfrak q}
\def\frakg{\mathfrak g}
\def\frakm{\mathfrak m}
\def\frakH{{\mathfrak H}}
\def\frakX{\mathfrak X}
\def\frakN{\mathfrak N}
\def\bfone{{\mathbf 1}}
\def\one{\mathbf 1} 
\def\ulN{\ul{N}}
\def\et{{\acute{e}t}}
\def\etale{{\'{e}tale }}
\def\pont{Pontryagin } 
\def\padic{\text{$p$-adic }}
\def\BS{Bruhat-Schwartz }
\def\Teich{Teichm\"{u}ller }
\def\Frob{\mathrm{Frob}}
\newcommand{\<}{\langle}   
\renewcommand{\>}{\rangle} 
\def\ot{\otimes}
\def\hookto{\hookrightarrow}
\def\longto{\longrightarrow}
\def\ol{\overline}  \nc{\opp}{\mathrm{opp}} \nc{\ul}{\underline}
\newcommand{\pair}[2]{\< #1, #2\>}
\newcommand{\bbpair}[2]{\<\!\<#1,#2\>\!\>}
\newcommand{\pairing}{\pair{\,}{\,}}
\def\XYmatrix{\xymatrix@M=8pt} 
\def\ncmd{\newcommand}
\ncmd{\xysubset}[1][r]{\ar@<-2.5pt>@{^(-}[#1]\ar@<2.5pt>@{_(-}[#1]}
\ncmd{\XYmatrixc}[1]{\vcenter{\XYmatrix{#1}}}
\ncmd{\xyto}[1][r]{\ar@{->}[#1]}
\ncmd{\xyinj}[1][r]{\ar@{^(->}[#1]}
\ncmd{\xysurj}[1][r]{\ar@{->>}[#1]}
\ncmd{\xyline}[1][r]{\ar@{-}[#1]}
\ncmd{\xydotsto}[1][r]{\ar@{.>}[#1]}
\ncmd{\xydots}[1][r]{\ar@{.}[#1]}
\ncmd{\xyleadsto}[1][r]{\ar@{~>}[#1]}
\ncmd{\xyeq}[1][r]{\ar@{=}[#1]} \ncmd{\xyequal}[1][r]{\ar@{=}[#1]}
\ncmd{\xyequals}[1][r]{\ar@{=}[#1]}
\ncmd{\xymapsto}[1][r]{l\ar@{|->}[#1]}\ncmd{\xyimplies}[1][r]{\ar@{=>}[#1]}
\ncmd{\xyiso}{\ar[r]_-{\sim}}
\def\injxy{\ar@{^(->}}
\newcommand{\pMX}[4]{\begin{pmatrix}
{#1}& {#2}\\
{#3}&{#4}\end{pmatrix} }
 \newcommand{\pDII}[2]{\begin{pmatrix}{#1}&0
 \\0&{#2}\end{pmatrix}}
\newcommand{\seesaw}[4]{{#1}\ar@{-}[rd]\ar@{-}[d]&{#2}\ar@{-}[d]\\
{#3}\ar@{-}[ru]&{#4}}
\def\ie{i.e. }
\def\cf{\mbox{{\it cf.} }}
\def\loccit{\mbox{{\it loc.cit.} }}
\def\uf{\varpi} 
\def\Abs{{|\!\cdot\!|}} 
\def\ndivides{\nmid}
\def\x{{\times}}
\def\onehalf{{\frac{1}{2}}}
\def\al{\alpha}
\def\Lam{\Lambda}
\def\om{\omega}
\def\dirlim{\varinjlim}
\def\prolim{\varprojlim}
\def\iso{\simeq}
\def\con{\equiv}
\def\bksl{\backslash}
\newcommand\stt[1]{\left\{#1\right\}}
\def\ep{\epsilon}
\def\varep{\varepsilon}
\def\pd{\partial}
\def\lam{\lambda}
\def\sg{\sigma}
\def\disjoint{\sqcup}
\def\bigot{\bigotimes}
\def\dx{{\rm d}^\x}
\newcommand{\powerseries}[1]{\llbracket{#1}\rrbracket}
\renewcommand\pmod[1]{\,(\mbox{mod }{#1})}
\renewcommand\Re{\text{Re}\,}
\newcommand\Dmd[1]{\left<{#1}\right>} 
\def\Cp{\C_p}
\def\Qq{\Q_q} 
\def\pmq{q}
\def\cond{{\rm cond}}
\DeclareMathOperator{\lcm}{lcm}
\def\Bd{\boldsymbol d}
\def\rmd{{\rm d}}
\newcommand\aone[1]{\pDII{#1}{1}}
\renewcommand\cond[1]{c(#1)}
\def\ulQ{{\ul{Q}}}
\def\ulk{\ul{k}}
\def\Qtn{D^\x}
\def\Om{\boldsymbol \omega}
\def\Qx{{Q_1}}
\def\Qy{{Q_2}}
\def\Qz{{Q_3}}
\def\unb{{\rm unb}}
\def\bal{{\rm bal}}
\def\Tau{\boldsymbol\tau}
\def\addchar{\boldsymbol\psi}
\def\newW{W}
\def\bfa{\mathbf a}
\def\bdsH{{\boldsymbol H}}
\def\bdsf{{\boldsymbol f}}
\def\bdsg{{\boldsymbol g}}
\def\bdsh{{\boldsymbol h}}
\def\cyc{\boldsymbol\varepsilon_{\rm cyc}}
\newcommand\Prin[2]{#1\boxplus #2}
\def\pmq{\ell}
\def\LR{V}
\def\sW{\cW}
\def\ad{{\rm Ad}}
\def\val{v}
\def\ord{{\rm ord}}
\def\itPi{\mathit{\Pi}}
\newcommand\Contra[1]{\wtd{#1}}
\def\brchf{\psi_1}
\def\brchg{\psi_2}
\def\brchh{\psi_3}
\def\condf{N_1}
\def\condg{N_2}
\def\condh{N_3}
\def\fQx{f}
\def\gQy{g}
\def\hQz{h}
\def\bdsF{\boldsymbol F}
\def\divides{\mid}
\def\itPhi{{\mathit \Phi}}
\def\Bkappa{\Bbbk}
\def\bftr{t_n}
\def\bftn{t_n}
\def\fgh{F}
\def\rmH{{\rm H}}
\def\eord{e}
\def\sS{\cS}
\def\Qbarp{\ol{\Q}_p}
\def\cls{{\rm cls}}
\def\ari{+}
\def\opcpt{U}
\title[Hida families and $p$-adic triple product $L$-functions]{Hida families and  $p$-adic triple product $L$-functions}
\author{Ming-Lun Hsieh}
\date{\today}
\subjclass[2010]{11F67, 11F33}
\address{ 
Institute of Mathematics, Academia Sinica~\\ Taipei 10617, Taiwan\and National Center for Theoretic Sciences~
}
\email{mlhsieh@math.sinica.edu.tw}
\thanks{This work was partially supported by a MOST grant 103-2115-M-002-012-MY5.}
\begin{document}
\maketitle
\begin{abstract}
We construct the three-variable $p$-adic triple product $L$-functions attached to Hida families of elliptic newforms and prove the explicit interpolation formulae at all critical specializations by establishing explicit Ichino's formulae for the trilinear period integrals of automorphic forms. Our formulae perfectly fit the conjectural shape of $p$-adic $L$-functions predicted by Coates and Perrin-Riou. As an application, we prove the factorization of certain unbalanced \padic triple product $L$-functions into a product of anticyclotomic \padic $L$-functions for modular forms. By this factorization, we obtain a construction of the square root of the anticyclotomic \padic $L$-functions for elliptic curves in the definite case via the diagonal cycle Euler system \`a la Darmon and Rotger and obtain a Greenberg-Stevens style proof of anticyclotomic exceptional zero conjecture for elliptic curves due to Bertolini and Darmon.  
\end{abstract}
\tableofcontents
\def\WD{{\rm WD}}
\def\Sh{\prime}
\section{Introduction}
The aim of this paper is to construct the three-variable $p$-adic triple product $L$-functions attached to Hida families of ellptic newforms in the unbalanced and balanced case with explicit interpolation formulae at all critical specializations. Let $p$ be an \emph{odd} prime. Let $\cO$ be a valuation ring finite flat over $\Zp$. Let $\bfI$ be a normal domain finite flat over the Iwasawa algebra $\Lam=\cO\powerseries{\Gamma}$ of the topological group $\Gamma=1+p\Zp$. Let
\[\bdsF=(\bdsf,\bdsg,\bdsh)\]
be the triplet of primitive Hida families of tame conductor $(\condf,\condg,\condh)$ and nebentypus $(\brchf,\brchg,\brchh)$ with coefficients in $\bfI$. Roughly speaking, we construct a three-variable Iwasawa function over the weight space of $\bdsF$ interpolating the square root of the algebraic part of central values of the triple product $L$-function attached to $\bdsF_Q$ and prove explicit interpolation formulae at all critical specializations. We would like to emphasize that our formulae completely comply with the conjectural form described in \cite{CP89}, \cite{Coates89Bourbaki} and \cite{Coates89II} and is compatible with other known $p$-adic $L$-functions. For example, when $\bdsg$ and $\bdsh$ are primitive Hida families of CM forms by some imaginary quadratic field, we show that the unbalanced $p$-adic $L$-function is the product of theta elements \`a la Bertolini-Darmon. In order to state our result precisely, we need to introduce some notation from Hida theory for elliptic modular forms and technical items such as the modified Euler factors at $p$ and the canonical periods of Hida families in the theory of \padic $L$-functions. 
\subsection{Galois representations attached to Hida families}
If $\cF=\sum_{n\geq 1}\bfa(n,\cF)q^n\in\bfI\powerseries{q}$ is a primitive cuspidal Hida family of tame conductor $N_\cF$, let $\rho_\cF:G_\Q=\Gal(\Qbar/\Q)\to\GL_2(\Frac\bfI)$ be the associated big Galois representation such that $\Tr\rho_\cF(\Frob_\ell)=\bfa(\ell,\cF)$ for primes $\ell\ndivides N_\cF$, where $\Frob_\ell$ is the geometric Frobenius at $\ell$ and let $V_\cF$ denote the natural realization of $\rho_\cF$ inside the \etale cohomology groups of modular curves. Thus, $V_\cF$ is a lattice in $(\Frac\bfI)^2$ with the continuous Galois action via $\rho_\cF$, and the $\Gal(\Qbarp/\Qp)$-invariant subspace $\Fil^0V_\cF:= V_\cF^{I_p}$ fixed by the inertia group $I_p$ at $p$ is free of rank one over $\bfI$ (\cite[Corollary, page 558]{Ohta00})). We recall the specialization of $V_\cF$ at arithmetic points. A point $Q\in\Spec\bfI(\Qbarp)$ is called an arithmetic point of weight $k_Q$ and finite part $\ep_Q$ if $Q|_{\Gamma}\colon \Gamma\to \Lam^\x{\stackrel Q\longto}\Qbarp^\x$  is given by $Q(x)=x^{k_Q}\ep_Q(x)$ for some integer $k_Q\geq 2$ and a finite order character $\ep_Q:\Gamma\to\Qbarp^\x$. 
Let $\frakX_\bfI^+$ be the set of arithmetic points of $\bfI$. For each arithmetic point $Q\in\frakX_\bfI^+$, the specialization $V_{\cF_Q}:=V_{\cF}\ot_{\bfI,Q}\Qbarp$ is the geometric \padic Galois representation associated with the eigenform $\cF_Q$ of constructed by Shimura and Deligne.

\subsection{Triple product $L$-functions}\label{S:tri.1} 
Let $\bfV=V_\bdsf\wh\ot_\cO V_\bdsg\wh\ot_\cO V_\bdsh$ be the triple product Galois representation of rank eight over $\cR$ a finite extension of the three-variable Iwasawa algebra given by
 \[\cR=\bfI\wh\ot_\cO\bfI\wh\ot_\cO\bfI.\]
 Let $\frakX^+_\cR\subset\Spec\cR(\Qbarp)$ be the weight space of arithmetic points of $\cR$ given by 
\[\frakX_\cR^+:=\stt{\ulQ=(\Qx,\Qy,\Qz)\in(\frakX_\bfI^+)^3\mid k_{\Qx}+k_{\Qy}+k_{\Qz}\con 0\pmod{2}}.\]
For each arithmetic point $\ulQ=(\Qx,\Qy,\Qz)\in\frakX^+_\cR$, 
the specialization $\bfV_\ulQ=V_{\bdsf_\Qx}\ot V_{\bdsg_\Qy}\ot V_{\bdsh_\Qz}$ is a $p$-adic geometric Galois representation of pure weight $w_\ulQ:=k_\Qx+k_\Qy+k_\Qz-3$. Let $\Om:(\Z/p\Z)^\x\to\mu_{p-1}$ be the \Teich character. We assume that 
\beqcd{ev}\brchf\brchg\brchh=\Om^{2a}\text{ for some }a\in\Z.\eeqcd Then \eqref{ev} implies that the determinant $\det \bfV=\cX^2\cyc$, where $\cyc$ is the $p$-adic cyclotomic character and $\cX$ is a $\cR$-adic $p$-ramified Galois character with $\cX(\bfc)=(-1)^{a}$ ($\bfc$ is the complex conjugation). Note that the specialization of $\cX$ at $\ulQ$ can be written as the product $\cX_{\ulQ}=\chi_{\ulQ}\cyc^{-\frac{w_\ulQ+1}{2}}$ with a finite order character $\chi_{\ulQ}$. We consider the critical twist \[\bfV^\dagger=\bfV\ot\cX^{-1}.\]  
Then $\bfV^\dagger$ is self-dual in the sense that $(\bfV^\dagger)^\vee(1)=\bfV^\dagger$. Next we briefly recall the complex $L$-function associated with the specialization $\bfV^\dagger_\ulQ$. For each place $\ell$, denote by $W_{\Q_\ell}$ the Weil-Deligne group of $\Q_\ell$. To the geometric $p$-adic Galois representation $\bfV_\ulQ^\dagger$, we can associate the Weil-Deligne representation $\WD_\ell(\bfV_\ulQ^\dagger)$ of $W_{\Q_\ell}$ over $\Qbarp$ (See \cite[(4.2.1)]{Tate79Corvallis} for $\ell\not =p$ and \cite[(4.2.3)]{F94} for $\ell=p$). Fixing an isomorphism $\iota_p:\Qbarp\iso\C$ once and for all, we define the complex $L$-function of $\bfV_\ulQ^\dagger$ by the Euler product \[L(\bfV_\ulQ^\dagger,s)=\prod_{\ell<\infty}L_\ell(\bfV_\ulQ^\dagger,s)\] of the local $L$-factors $L_\ell(\bfV_\ulQ^\dagger,s)$ attached to 
$\WD_\ell(\bfV_\ulQ^\dagger)\ot_{\Qbarp,\iota_p}\C$ (\cite[(1.2.2)]{Deligne79}, \cite[page 85]{Taylor04ICM}). On the other hand, we denote by $\pi_{\bdsf_\Qx}=\ot_v\pi_{\bdsf_\Qx,v}$ (resp. $\pi_{\bdsg_\Qx},\pi_{\bdsh_\Qz}$) the irreducible unitary cuspidal automorphic representation of $\GL_2(\A)$ associated with $\bdsf_\Qx$ (resp. $\bdsg_\Qy,\bdsh_\Qz$) and let
\[\itPi_\ulQ=\pi_{\bdsf_\Qx}\times\pi_{\bdsg_\Qy}\times\pi_{\bdsh_\Qz}\ot\chi_\ulQ^{-1}\]
be the irreducible unitary automorphic representation of $\GL_2(\A)\times\GL_2(\A)\times\GL_2(\A)$.
Denote by $L(s,\itPi_\ulQ)$ the automorphic $L$-function defined by Garrett, Piateski-Shapiro and Rallis attached to the triple product $\itPi_\ulQ$. The analytic theory of $L(s,\itPi_\ulQ)$ such as functional equations and analytic continuation has been explored extensively in the literature (\cf\cite{PSR87}), and thanks to \cite[Theorem 4.4.1]{Dinakar00}, we have \[L(s+\frac{1}{2},\itPi_\ulQ)={\mathit \Lambda}(\bfV^\dagger_\ulQ,s):=\Gamma_{\bfV^\dagger_\ulQ}(s)\cdot L(\bfV^\dagger_\ulQ,s).\]
Here $\Gamma_{\bfV^\dagger_\ulQ}(s)$ is the archimedean $L$-factor of $\bfV^\dagger_\ulQ$ and is a finite product of four classical $\Gamma$-functions (see \eqref{E:Gamma.1}). Moreover, there is a positive integer $N(\bfV_{\ulQ}^\dagger)$ and the root number $\varepsilon(\bfV_\ulQ^\dagger)\in\stt{\pm 1}$ such that the complete $L$-function $\mathit \Lambda(\bfV_\ulQ,s)$ satisfies the functional equation
\[{\mathit \Lambda}(\bfV_\ulQ^\dagger,s)=\varepsilon(\bfV_{\ulQ}^\dagger)\cdot N(\bfV_{\ulQ}^\dagger)^{-s}\cdot {\mathit \Lambda}(\bfV_\ulQ^\dagger,-s).\]
We thus have a good understanding of the complex analytic behavior of $L(\bfV^\dagger_\ulQ,s)$. On the arithmetic side, Deligne's conjecture for the critical central value $L(\bfV_\ulQ^\dagger,0)$ has been proved in \cite{HK91Triple}. In this article, we shall investigate the $p$-adic analytic behavior of the algebraic part of $L(\bfV_\ulQ^\dagger,0)$ viewed as a function on the weight space $\frakX_\cR^+$. It is natural to first consider the behavior of the root number $\varepsilon(\bfV_\ulQ^\dagger)$ of $\bfV_\ulQ^\dagger$ (or $\itPi_\ulQ$) over the weight space. The global root number 
\[\varepsilon(\bfV_\ulQ^\dagger)=\prod_{\ell\leq \infty}\varepsilon(\WD_\ell(\bfV_\ulQ^\dagger))\]
is defined as the product of local constants, where $\varepsilon(?)$ is the local epsilon factor attached to a Weil-Deligne representation (\cf\cite[page 21]{Tate79Corvallis}) with respect to the standard choice of a non-trivial additive character of $\Qp$ and measures on $\Qp$ in  \cite[5.3]{Deligne79}.  For each arithmetic point $\ulQ\in\frakX^+_\cR$, we put \[\Sigma^-(\ulQ):=\stt{\pmq\colon\text{ prime factor of }\condf\condg\condh\mid \varepsilon(\WD_\ell(\bfV_\ulQ^\dagger))=-1}. \]
It is known that there is a subset $\Sigma^-$ of prime factors of $\condf\condg\condh$ such that $\Sigma^-=\Sigma^-(\ulQ)$ for all $\ulQ\in\frakX^+_\cR$.
For the archimedean root number, we partition the weight space $\frakX_\cR^+$ into $\frakX_\cR^\bdsf\disjoint\frakX_\cR^\bdsg\disjoint\frakX_\cR^\bdsh\disjoint\frakX_\cR^\bal$, where $\frakX_\cR^\bdsf$ is the unbalanced range dominated by $\bdsf$ given by 
\begin{align*}\frakX_\cR^\bdsf=&\stt{(\Qx,\Qy,\Qz)\in \frakX_\cR^+\mid k_\Qx+ k_\Qy+k_\Qz\leq 2k_\Qx}
\intertext{($\frakX_\cR^\bdsg$ and $\frakX_\cR^\bdsh$ are defined likewise), and $\frakX_\cR^\bal$ is the balanced range}
\frakX_\cR^\bal=&\stt{(\Qx,\Qy,\Qz)\in\frakX_\cR^+\mid k_\Qx+k_\Qy+k_\Qz>2k_{Q_i}\text{ for all }i=1,2,3}.\end{align*} The union $\frakX_\cR^{\rm unb}:=\frakX_\cR^\bdsf\disjoint\frakX_\cR^\bdsg\disjoint\frakX_\cR^\bdsh$ is called the unbalanced range. Then we know that
\begin{align*}\varepsilon(\WD_\infty(\bfV_\ulQ^\dagger))&=+1\text{ if }\ulQ\in\frakX_\cR^{\rm unb};\\
\varepsilon(\WD_\infty(\bfV_\ulQ^\dagger))&=-1\text{ if }\ulQ\in\frakX_\cR^{\rm bal}.
\end{align*}

\subsection{The modified Euler factors at $p$ and $\infty$}\label{S:mod.1}
Let $G_{\Qp}$ be the decomposition group at $p$. We consider the following rank four $G_{\Qp}$-invariant subspaces of $\bfV_\ulQ$:
\beq\label{E:filtration}\begin{aligned}
\Fil_\bdsf\bfV:=&\Fil^0V_{\bdsf}\ot V_{\bdsg}\ot V_{\bdsh};\\
\Fil_\bal\bfV:=&\Fil^0V_{\bdsf}\ot \Fil^0V_{\bdsg}\ot V_{\bdsh}+V_{\bdsf}\ot \Fil^0V_{\bdsg}\ot \Fil^0V_{\bdsh}+\Fil^0V_{\bdsf}\ot V_{\bdsg}\ot \Fil^0V_{\bdsh}.
\end{aligned}\eeq
Let $\bullet\in\stt{\bdsf,\bal}$. Define the filtrations $\Fil^+_\bullet\bfV^\dagger:=\Fil_\bullet\bfV\ot\cX^{-1}\subset \bfV^\dagger$.
The pair $(\Fil_\bullet^+\bfV^\dagger,\frakX^\bullet_\cR)$ 
satisfies the \emph{Panchishkin condition} in \cite[page 217]{Greenberg94}) in the sense that for each arithmetic point $\ulQ\in\frakX_\cR^\bullet$, the Hodge-Tate numbers of $\Fil^+_\bullet\bfV_\ulQ^\dagger$ are all positive, while the Hodge-Tate numbers of $\bfV^\dagger/\Fil^+_\bullet\bfV_\ulQ^\dagger$ are all non-positive.\footnote{The Hodge-Tate number of $\Qp(1)$ is one in our convention.} 
Now we can define the modified $p$-Euler factor by 
\beq\label{E:modified.intro}\cE_p(\Fil^+_\bullet\bfV_\ulQ^\dagger):= \frac{L_p(\Fil^+_\bullet\bfV_\ulQ^\dagger,0)}{\varepsilon(\WD_p(\Fil^+_\bullet\bfV_\ulQ^\dagger))\cdot L_p(\bfV_\ulQ^\dagger/\Fil^+_\bullet\bfV_\ulQ^\dagger,0)}\cdot\frac{1}{L_p(\bfV^\dagger_\ulQ,0)}.
\eeq
We note that this modified $p$-Euler factor is precisely the ratio between the factor $\cL_p^{(\sqrt{-1})}(\bfV^\dagger_\ulQ)$ in \cite[page 109, (18)]{Coates89II} and the local $L$-factor $L_p(\bfV^\dagger_\ulQ,0)$.
 
 In the theory of $p$-adic $L$-functions, we also need the modified Euler factor $\cE_\infty(\bfV^\dagger_\ulQ)$ at the archimedean place observed by Deligne.  It is defined to be the ratio between the factor $\cL_\infty^{(\sqrt{-1})}(\bfV^\dagger_\ulQ)$ in \cite[page 103 (4)]{Coates89II} and the Gamma factor $\Gamma_{\bfV^\dagger_\ulQ}(0)$ and is explicitly given by \[\cE_\infty(\bfV^\dagger_\ulQ)=(\sqrt{-1})^{-2k_\Qx}\text{ if }\ulQ\in\frakX_\cR^\bdsf;\quad \cE_\infty(\bfV_\ulQ^\dagger)=(\sqrt{-1})^{1-k_\Qx-k_\Qy-k_\Qz}\text{ if }\ulQ\in\frakX_\cR^\bal.\]

 \subsection{Hida's canonical periods}\label{S:period.1}To make our interpolation formula meaningful, we must give the precise definition of periods for the motive $\bfV_\ulQ^\dagger$. We begin by recalling Hida's canonical period of a $\bfI$-adic primitive cuspidal Hida family $\cF$ of tame conductor $N_\cF$.  
Let $\frakm_\bfI$ be the maximal ideal of $\bfI$. For a  subset $\Sigma$ of the support of $N_\cF$, we consider the following\begin{hypothesis*}[CR,$\,\Sigma$]
The residual Galois representation $\bar\rho_{\cF}:=\rho_\cF\pmod{\frakm_\bfI}:G_\Q\to\GL_2(\Fpbar)$ is absolutely irreducible and $p$-distinguished. Moreover, $\bar\rho_\cF$ is ramified at every $\ell\in\Sigma$ with $\pmq\con 1\pmod{p}$.
\end{hypothesis*}
When $\Sigma=\emptyset$ is the empty set, we shall simply write (CR) for (CR,$\,\emptyset$).
Recall that $\rho_\cF$ is $p$-distinguished if the semi-simplication of the restriction of the residual Galois representation $\rho_\cF\pmod{\frakm_{\bfI}}$ to the decomposition at $p$ is a sum of two characters $\chi_\cF^+\oplus\chi_\cF^-$ with $\chi_\cF^+\not\con \chi_\cF^-\pmod{\frakm_{\bfI}}$. 
Suppose that $\cF$ satisfies (CR). The local component of the universal cuspidal ordinary Hecke algebra corresponding to $\cF$ is known to be Gorenstein by \cite[Prop.2, \S 9]{MW86} and \cite[Corollary 2, page 482]{Wiles95}, and with this Gorenstein property, Hida proved in \cite[Theorem 0.1]{Hida88AJM} that the congruence module for $\cF$ is isomorphic to $\bfI/(\eta_\cF)$ for some non-zero element $\eta_\cF\in\bfI$. Moreover, for any arithmetic point $Q\in\frakX_\bfI^+$, the specialization $\eta_{\cF_Q}=Q(\eta_\cF)$ generates the congruence ideal of $\cF_Q$. We denote by $\cF_Q^\circ$ the normalized newform of weight $k_Q$, conductor $N_Q=N_\cF p^{n_Q}$ with nebentypus $\chi_Q$ corresponding to $\cF_Q$. There is a unique decomposition $\chi_Q=\chi'_Q\chi_{Q,(p)}$, where  $\chi_Q'$ and $\chi_{Q,(p)}$ are Dirichlet characters modulo $N_\cF$ and $p^{n_Q}$ respectively. Let $\al_Q=\bfa(p,\cF_Q)$. Define the modified Euler factor $\cE_p(\cF_Q,\Ad)$ for adjoint motive of $\cF_Q$ by
\[\cE_p(\cF_Q,\Ad)=\al_Q^{-2n_Q}\begin{cases}
(1-\al_Q^{-2}\chi_Q(p)p^{k_Q-1})(1-\al_Q^{-2}\chi_Q(p)p^{k_Q-2})&\text{ if }n_Q=0,\\
-1&\text{ if }n_Q=1,\chi_{Q,(p)}=1\,(\text{so }k_Q=2),\\
p^{(k_Q-2)n_Q}\frakg(\chi_{Q,(p)})\chi_{Q,(p)}(-1)&\text{ if }n_Q>0,\,\chi_{Q,(p)}\not=1.
\end{cases}\]
Here $\frakg(\chi_{Q,(p)})$ is the usual Gauss sum. Fixing a choice of the generator $\eta_\cF$ and letting $\norm{\cF_Q^\circ}^2_{\Gamma_0(N_Q)}$ be the usual Petersson norm of $\cF_Q^\circ$, we define the \emph{canonical period} $\Omega_{\cF_Q}$ of $\cF$ at $Q$ by
 \beq\label{E:period.1}\Omega_{\cF_Q}:=(-2\sqrt{-1})^{k_Q+1}\cdot \norm{\cF_Q^\circ}^2_{\Gamma_0(N_Q)}\cdot \frac{\cE_p(\cF_Q,\Ad)}{\iota_p(\eta_{\cF_Q})}\in\C^\x.\eeq
By \cite[Corollary 6.24, Theorem 6.28]{Hida16Pune}, one can show that for each arithmetic point $Q$, up to a $p$-adic unit, the period $\Omega_{\cF_Q}$ is equal to the product of the plus/minus canonical period $\Omega(+\,;\cF_Q^\circ)\Omega(-\,;\cF_Q^\circ)$ introduced in \cite[page 488]{Hida94Duke}. 

 \subsection{Definitions of $\Gamma$-factors and an exceptional finite set $\Sigma_{\rm exc}$}
 We recall the definition of $\Gamma$-factors of $\bfV_\ulQ^\dagger$ following the recipe in \cite{Deligne79}:
 \beq\label{E:Gamma.1}\Gamma_{\bfV_\ulQ^\dagger}(s):=\begin{cases}\Gamma_\C(s+\frac{w_\ulQ+1}{2})\Gamma_\C(s+1-k_\Qx^*)\Gamma_\C(s+k_\Qy^*)\Gamma_\C(s+k_\Qz^*)&\text{ if }\ulQ\in\frakX_\cR^\bdsf;\\[1em]
\Gamma_\C(s+\frac{w_\ulQ+1}{2})\Gamma_\C(s+k_\Qx^*)\Gamma_\C(s+k_\Qy^*)\Gamma_\C(s+k_\Qz^*)&\text{ if }\ulQ\in\frakX_\cR^\bal.\end{cases}\eeq
Here $\Gamma_\C(s)=2(2\pi)^{-s}\Gamma(s)$ and \[k_{Q_i}^*=\frac{k_\Qx+k_\Qy+k_\Qz}{2}-k_{Q_i},\,i=1,2,3.\]
For each prime $\ell$, let $\tau_{\Q_{\ell^2}}$ be the unique unramified quadratic character of $\Q_\ell^\x$. Let $(f,g,h)=(\bdsf_\Qx,\bdsg_\Qy,\bdsh_\Qz)$ be the specialization of $\bdsF$ at $\ulQ$ and put
\begin{align*}\Sigma_{\bdsf\bdsg}^{sc}&=\stt{\ell\text{: finite prime}\mid \pi_{f,\ell}\text{ and }\pi_{g,\ell}\text{ are supercuspidal};\,\pi_{h,\ell}\text{ is spherical}};\\
\Sigma_{\bdsf\bdsg}&=\stt{\ell\in\Sigma_{\bdsf\bdsg}^{sc}\mid\pi_{f,\ell}\iso\pi_{f,\ell}\ot\tau_{\Q_{\ell^2}}\iso\pi_{g,\ell}^\vee\ot\sigma\text{ for some }\sigma\text{ unramified character}}.\end{align*}
 Define $\Sigma_{\bdsf\bdsh}$ and $\Sigma_{\bdsf\bdsg}$ likewise. We introduce the finite set
 \beq\label{E:exp.1}\Sigma_{\rm exc}=\Sigma_{\bdsg\bdsh}\disjoint \Sigma_{\bdsf\bdsh} \disjoint \Sigma_{\bdsf\bdsg}.\eeq
It is known that this set $\Sigma_{\rm exc}$ does not depend on any particular choice of the specializations of $(\bdsf,\bdsg,\bdsh)$.
 \subsection{Statement of the main results}
  We impose the following technical assumption:
\beqcd{sf}\text {$\gcd(\condf,\condg,\condh)$ is square-free}.\eeqcd
  Our first result is the construction of the unbalanced $p$-adic triple product $L$-functions: \begin{thmA}In addition to \eqref{ev} and \eqref{sf}, we further suppose that \begin{mylist}\item $\Sigma^-=\emptyset$, \item $\bdsf$ satisfies (CR).\end{mylist} Fix a generator $\eta_\bdsf$ of the congruence ideal of $\bdsf$. There exists a unique element $\cL^\bdsf_{\bdsF}\in\cR$ such that for every $\ulQ=(\Qx,\Qy,\Qz)\in\frakX_\cR^\bdsf$ in the unbalanced range dominated by $\bdsf$, we have
\begin{align*}(\cL^\bdsf_{\bdsF}(\ulQ))^2=&\Gamma_{\bfV_\ulQ^\dagger}(0)\cdot\frac{L(\bfV_\ulQ^\dagger,0)}{(\sqrt{-1})^{2k_\Qx}\Omega_{\bdsf_\Qx}^2}\cdot\cE_p(\Fil^+_\bdsf\bfV_\ulQ^\dagger)\cdot\prod_{\ell\in\Sigma_{\rm exc}}(1+\ell^{-1})^2.\end{align*}
 \end{thmA}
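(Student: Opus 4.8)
The plan is to realize $\cL^\bdsf_\bdsF$ as the value of Hida's $\cR$-adic trilinear functional attached to the dominant family $\bdsf$ on an $\cR$-adic cusp form manufactured from $\bdsg$ and $\bdsh$, and then to compute its arithmetic specializations by the explicit Ichino formula for trilinear period integrals. Put $N=\lcm(\condf,\condg,\condh)$. Since $\bdsf$ satisfies (CR), the local ordinary Hecke algebra attached to $\bdsf$ is Gorenstein, so by \cite{Hida88AJM} the congruence module of $\bdsf$ is cyclic with generator $\eta_\bdsf$, and there is a canonical $\bfI$-linear Hida functional $\ell_\bdsf$ on the space of ordinary $\bfI$-adic cusp forms of tame level $N$ with $\ell_\bdsf(\bdsf)=\eta_\bdsf$, whose specialization at an arithmetic point $Q$ is $\eta_{\bdsf_Q}^{-1}$ times the $\bdsf_Q^\circ$-eigenprojection. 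From $\bdsg$ and $\bdsh$ we manufacture an $\cR$-adic ordinary cusp form $\Theta_{\bdsg,\bdsh}$ of tame level $N$ by: fixing local test vectors at the primes dividing $\condg\condh$ (the distinguished ``exceptional'' vectors at $\ell\in\Sigma_{\rm exc}$, new or ordinary vectors elsewhere); replacing $\bdsg$ by its $p$-depletion and applying the $\cR$-adic analogue of the iterated $\theta$-operator to interpolate the weight shift forced by the unbalanced range; multiplying by the $p$-stabilization of $\bdsh$; and applying Hida's ordinary projector $e$. We then set $\cL^\bdsf_\bdsF:=\ell_\bdsf(e\,\Theta_{\bdsg,\bdsh})\in\cR$, where $\ell_\bdsf$ is extended $\cR$-linearly; uniqueness is automatic because $\frakX_\cR^\bdsf$ is Zariski-dense in $\Spec\cR$.

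\textbf{Specializations as trilinear periods.} For $\ulQ=(\Qx,\Qy,\Qz)\in\frakX_\cR^\bdsf$ the value $\cL^\bdsf_\bdsF(\ulQ)$ equals $\iota_p(\eta_{\bdsf_\Qx})^{-1}$ times the Petersson pairing of the newform $\bdsf_\Qx^\circ$ against the holomorphic projection of the nearly holomorphic product of the prescribed test specializations of $\bdsg_\Qy$ and $\bdsh_\Qz$. Unfolding this pairing over $\PGL_2(\Q)\bksl\PGL_2(\A)$ identifies it with the global trilinear period integral $I(\varphi_\Qx,\varphi_\Qy,\varphi_\Qz)$ of the adelic lifts of the three vectors. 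Here the hypothesis $\Sigma^-=\emptyset$ is essential: it forces every local $\varepsilon$-factor of $\itPi_\ulQ$ on the split side to be $+1$, so that the relevant local and global $\Hom$-spaces are realized already on $\PGL_2$ and no Jacquet--Langlands transfer to a quaternion algebra enters.

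\textbf{Explicit Ichino formula and matching of factors.} Applying the explicit form of Ichino's formula established in this paper to $I(\varphi_\Qx,\varphi_\Qy,\varphi_\Qz)^2$ yields an elementary constant times $L(\tfrac12,\itPi_\ulQ)/L(1,\itPi_\ulQ,\Ad)$ times a product of normalized local trilinear integrals $I_v^{\natural}$ equal to $1$ for all but finitely many $v$; it then remains to identify four pieces. First, by the comparison $L(s+\tfrac12,\itPi_\ulQ)={\mathit \Lambda}(\bfV^\dagger_\ulQ,s)$ and the definition \eqref{E:period.1} of the canonical period, one absorbs $\eta_{\bdsf_\Qx}$, the adjoint Euler factor $\cE_p(\bdsf_\Qx,\Ad)$ and the Petersson norm of $\bdsf_\Qx^\circ$ into $\Omega_{\bdsf_\Qx}^2$, while the Petersson norms of the test specializations of $\bdsg_\Qy$ and $\bdsh_\Qz$ cancel against the matching Euler pieces of $L(1,\itPi_\ulQ,\Ad)$; this turns $L(\tfrac12,\itPi_\ulQ)/L(1,\itPi_\ulQ,\Ad)$ into $\Gamma_{\bfV^\dagger_\ulQ}(0)\,L(\bfV^\dagger_\ulQ,0)/\Omega_{\bdsf_\Qx}^2$. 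Second, the archimedean integral, evaluated on the lowest-weight holomorphic vectors, computes to $\cE_\infty(\bfV^\dagger_\ulQ)=(\sqrt{-1})^{-2k_\Qx}$ on $\frakX_\cR^\bdsf$. Third, the integral at $p$, evaluated on the ordinary $p$-stabilized and $p$-depleted vectors, computes to the modified $p$-Euler factor $\cE_p(\Fil^+_\bdsf\bfV^\dagger_\ulQ)$ of \eqref{E:modified.intro}. Fourth, at a prime $\ell\ndivides p$ the normalized local integral equals $(1+\ell^{-1})^2$ exactly when $\ell\in\Sigma_{\rm exc}$ and $1$ otherwise, which is precisely why the set $\Sigma_{\rm exc}$ is singled out. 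Assembling these identities yields the displayed interpolation formula.

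\textbf{Main obstacle.} The heart of the argument is the local computation at $p$: one must show that the naive $\cR$-adic construction above, once specialized, produces exactly the ordinary vectors whose local triple-product zeta integral equals $\cE_p(\Fil^+_\bdsf\bfV^\dagger_\ulQ)$, and that this holds \emph{uniformly} over $\frakX_\cR^\bdsf$ --- in particular at the arithmetic points where $\bdsf_\Qx,\bdsg_\Qy,\bdsh_\Qz$ acquire ramified nebentypus at $p$ or degenerate to Steinberg type. This demands an explicit evaluation of the local trilinear integral on nearly ordinary data across all occurring local types at $p$, together with the verification that the $p$-depletion and iterated-$\theta$ recipe is genuinely the correct $p$-adic interpolation of the complex Maass--Shimura weight shift. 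A secondary but still delicate issue is the constant bookkeeping in the first matching step: one must track every power of $2$, $\pi$, $\sqrt{-1}$ and Gauss sum so that the adjoint $L$-value, the congruence number and the Petersson norms recombine \emph{exactly} into $\Omega_{\bdsf_\Qx}^2$ and $\Gamma_{\bfV^\dagger_\ulQ}(0)$ with no stray factor, and one must confirm that the test vectors prescribed at $\ell\in\Sigma_{\rm exc}$ are the unique ones simultaneously rendering $\Theta_{\bdsg,\bdsh}$ well-defined and the local integral non-vanishing.
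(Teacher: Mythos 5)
Your overall route is the same as the paper's (Hida's $p$-adic Rankin--Selberg construction with a trace/idempotent functional attached to $\bdsf$, followed by Ichino's formula and explicit local zeta-integral evaluations), but there is a genuine gap in how you dispose of the ramified places away from $p$. You assert that, with your choice of test vectors, the normalized local integral at a prime $\ell\mid N$, $\ell\neq p$, equals $1$ unless $\ell\in\Sigma_{\rm exc}$, where it equals $(1+\ell^{-1})^2$. This is not what happens: even with the carefully adjusted test vectors (the level-raising data $\Bd_f,\Bd_g,\Bd_h$ and the $\beta_\ell$-modified vectors of \defref{D:testunb}), the normalized integrals $\sI^\star_{\itPi_{\ulQ,\pmq}}$ are \emph{point-dependent} quantities built out of local $\varepsilon$-factors, $\bfU_\ell$-eigenvalues and nebentypus values; they are only shown to be \emph{units of $\cR$ interpolating over $\frakX_\cR^{\rm ari}$} (times $(1+\ell^{-1})^2$ at $\Sigma_{\rm exc}$), and one must interpolate them by an element $I_{\bdsF,\pmq}\in\cR^\x$ (\propref{P:summary}, using the rigidity of automorphic types and the Iwasawa-theoretic interpolation of $\varepsilon$-factors in \lmref{L:root.local2}), enlarge $\cO$, choose a square root, and divide it out of the raw Rankin--Selberg element before the clean formula of Theorem~A can hold. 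Your construction as stated would produce an element whose square interpolates the right $L$-value only up to these uncontrolled unit factors.

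More seriously, those ramified computations are only carried out in the paper under the minimal hypothesis (\hypref{H:ram}), which a general triple satisfying \eqref{ev} and \eqref{sf} need not satisfy. The paper's resolution is a twisting argument that your proposal does not contain: one replaces $\bdsF$ by a Dirichlet twist $\bdsF'=(\bdsf\ot\chi_1,\bdsg\ot\chi_2,\bdsh\ot\chi_3)$ with $\chi_1\chi_2\chi_3=1$ satisfying \hypref{H:ram} (\remref{R:hyp}), applies the whole machine to $\bdsF'$, and then must prove that the canonical periods of $\bdsf$ and of its twist $\bdsf\ot\chi_1$ agree up to a unit of $\bfI$ uniformly in the weight variable. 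That comparison is itself nontrivial: it rests on a congruence-ideal computation via level raising (the Ihara-lemma argument of \lmref{L:73} and \propref{P:period1}), and it is exactly what lets the interpolation formula be stated with $\Omega_{\bdsf_\Qx}$ rather than with the period of the twisted family. Without this step (or an argument that your test-vector recipe yields computable, unit-valued, $\cR$-interpolable local integrals for arbitrary non-minimal local data), the proof is incomplete. Your identifications at $\infty$ and at $p$, and the role of $\Sigma^-=\emptyset$ in keeping everything on split $\GL_2$, are consistent with the paper.
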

 This $p$-adic $L$-function $\cL^\bdsf_{\bdsF}$ is unique up to a choice of generators of the congruence ideal of $\bdsf$, \ie it is unique up to a unit in $\bfI$, but the ratio $\cL^\bdsf_{\bdsF}/\eta_\bdsf$ is a genuine $p$-adic $L$-function. By symmetry, we actually obtain from Theorem A two more \padic $L$-functions $\cL^\bdsg_{\bdsF}$ and $\cL^\bdsh_{\bdsF}$ which interpolate central $L$-values at $\frakX^\bdsg_\cR$ and $\frakX^\bdsh_\cR$ respectively. These $p$-adic $L$-functions $\cL_{\bdsF}^\bdsf,\cL_{\bdsF}^\bdsg$ and $\cL_{\bdsF}^\bdsh$ are called \emph{unbalanced} $p$-adic triple product $L$-functions as they interpolate a square root of the critical central $L$-values of the triple product $L$-function $L(\bfV^\dagger_\ulQ,s)$ for $\ulQ\in\frakX_\cR^{\rm unb}$ at the unbalanced range; from the interpolation formula, these \padic $L$-functions are distinguished by the choices of the modified Euler factor at $p$ and the complex periods. In the literature, the one-variable unbalanced $p$-adic triple product $L$-functions were first constructed by Harris and Tilouine in \cite{HT01Triple} (when $N_1=N_2=N_3=1$). Darmon and Rotger in \cite{DR14ASEN} extended the method in \cite{HT01Triple} to construct a three-variable power series interpolating the global trilinear period of a triplet of Hida families and proved the interpolation formulae at the \emph{balanced range}, which is in connection with the $p$-adic Abel-Jacobi image of diagonal cycles in a triple product of modular curves. This is a $p$-adic analogue of the classical Gross-Zagier formula and has obtained very significant arithmetic application to certain equivariant BSD conjectures in \cite{DR17JAMS}.  On the other hand, it is well known that the relation of the interpolation at the unbalanced range to central $L$-values is suggested by the main identity of Harris and Kudla \cite{HK91Triple}, or in general, Ichino's formula \cite{Ichino08Duke}, but the interpolation formulae at the unbalanced range in the literature are not precise enough for more refined arithmetic applications such as the formulation of corresponding Iwasawa-Greenberg main conjecture. Therefore, Theorem A complements the literature by providing a precise relation of the values of \padic triple product $L$-functions at all arithmetic points in the unbalanced range to central $L$-values of the complex triple product $L$-functions.
 
Our main motivation is to use Theorem A to prove the factorization of \padic triple product $L$-functions into a product of anticyclotomic $p$-adic $L$-functions. For example, if $\bdsg$ and $\bdsh$ are primitive Hida families of CM forms associated with some imaginary quadratic field, then $\cL^\bdsf_{\bdsF}$ is a product of two square roots of anticyclotomic $p$-adic $L$-functions for modular forms constructed in \cite{BD96} and \cite{CH17Crelle}; in contrast, if $\bdsf$ and $\bdsg$ are primitive Hida families of CM forms, then $\cL^\bdsf_{\bdsF}$ is a product of  two anticyclotomic \padic $L$-functions in \cite{BDP13} divided by some Katz \padic $L$-function. The latter gives a strengthening  of \cite[Theorem 3.9]{DLR15} and \cite{Collins16}. With this factorization, we can easily show that the square root of the anticyclotomic \padic $L$-functions in the definite case can be recovered by the Euler system of generalized Kato classes \cite{DR17JAMS} (See \remref{R:Kato}) and provide a new proof of the anticyclotomic exceptional zero conjecture for elliptic curves. These factorizations of $p$-adic triple product $L$-functions are obtained via the direct comparison of the explicit interpolation formulae of $p$-adic $L$-functions at critical points. These examples are much simpler than the factorization formulae of Katz $p$-adic $L$-functions for imaginary quadratic fields and $p$-adic $L$-functions for the symmetric square of elliptic newforms, proved by Gross and Dasgupta respectively, where no critical interpolation is available. In a joint work with F. Castella \cite{CsH18}, we explore this Euler system construction of the square root of the anticyclotomic $p$-adic $L$-functions for elliptic curves and show the non-vanishing of the generalized Kato classes in the rank two case for elliptic curves of rank two.

Next we state our second result about the balanced $p$-adic triple product $L$-functions. 
\begin{thmA}\label{T:interpolation} Let $N=\lcm(\condf,\condg,\condh)$ and $ N^-$ be the square-free product of primes in $\Sigma^-$. In addition to \eqref{ev} and \eqref{sf}, we further suppose that $p>3$ and \begin{mylist}\item $\#(\Sigma^-)$ is odd, \item  $\bdsf,\bdsg$ and $\bdsh$ satisfy (CR, $\Sigma^-$),
\item $N=N^+N^-$ with $\gcd(N^+,N^-)=1$.
\end{mylist}
Then there exists a unique element $\cL_{\bdsF}^\bal\in\cR$ satisfies the following interpolation property: for any arithmetic point $\ulQ\in\frakX_\cR^\bal$, we have
\begin{align*}\left(\cL_{\bdsF}^\bal(\ulQ)\right)^2=&\Gamma_{\bfV_\ulQ^+}(0)\cdot
\frac{L(\bfV_\ulQ^\dagger,0)}{(\sqrt{-1})^{k_\Qx+k_\Qy+k_\Qz-1}\Omega_{\bdsf_\Qx}\Omega_{\bdsg_\Qy}\Omega_{\bdsh_\Qz}}\cdot\cE_p(\Fil_\bal^+\bfV_\ulQ^\dagger)\cdot\prod_{\ell\in\Sigma_{\rm exc}}(1+\ell^{-1})^2.\end{align*}
\end{thmA}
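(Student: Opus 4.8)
The plan is to construct $\cL_\bdsF^\bal$ via a $p$-adic interpolation of the Darmon--Rotger diagonal cycle construction, but then to pin down its value at $\frakX_\cR^\bal$ using an explicit Ichino-type formula rather than the Abel--Jacobi/Gross--Zagier description. First I would recall that since $\#\Sigma^-$ is odd, the archimedean root number is $-1$ in the balanced range, so the relevant quaternion algebra $B$ is the definite one ramified at $\{\infty\}\cup\Sigma^-$; one transfers $\bdsf,\bdsg,\bdsh$ by Jacquet--Langlands to triples of definite quaternionic Hida families $\bdsf^B,\bdsg^B,\bdsh^B$ over the Eichler order of level $N^+$ (using hypothesis (iii) $N=N^+N^-$ with $\gcd(N^+,N^-)=1$), which requires (CR,$\Sigma^-$) to guarantee that the $\ell\in\Sigma^-$ with $\ell\equiv 1\pmod p$ are ramified for $\bar\rho$, so that the multiplicity-one and freeness properties of the quaternionic Hecke module over $\bfI$ hold. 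The $p>3$ hypothesis enters here through the control theorems for the ordinary quaternionic Hida modules and the Gorenstein/freeness statements needed for the congruence-module normalization.

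Next I would define $\cL_\bdsF^\bal$ as (essentially) the image of the triple tensor of the $\bfI$-adic quaternionic modular forms under a trilinear pairing coming from the diagonal embedding $B^\times\hookrightarrow B^\times\times B^\times\times B^\times$, integrated against the $p$-stabilized (ordinary) vectors; concretely this is a value of the three-variable $p$-adic family of trilinear forms on the finite-volume double coset $B^\times\backslash (B\otimes\A_f)^\times/\opcpt$. The key input is the explicit Ichino formula: at an arithmetic point $\ulQ\in\frakX_\cR^\bal$, the square of this trilinear period equals $L(\bfV_\ulQ^\dagger,0)$ (the central value $L(1/2,\itPi_\ulQ)$) times $\Gamma$-factors times a product of explicit local integrals $I_\ell^*$. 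The main work — which is the heart of the whole paper — is to compute each local factor $I_\ell^*$ explicitly for the test vectors coming from the (quaternionic) newforms and their $p$-stabilizations: at $\ell\nmid Np$ one gets $1$ (or rather the local $L$-factors that assemble into $L(\bfV_\ulQ^\dagger,0)$ and cancel against the $\zeta$-factors), at $\ell\in\Sigma^-$ one gets the local Jacquet--Langlands comparison contributing nothing new, at the archimedean place one gets the factor $(\sqrt{-1})^{k_\Qx+k_\Qy+k_\Qz-1}$ recorded as $\cE_\infty$, at bad primes dividing $N^+$ one must handle Steinberg/ramified principal series carefully, and at the supercuspidal primes one picks up exactly the factor $\prod_{\ell\in\Sigma_{\rm exc}}(1+\ell^{-1})^2$. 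The computation at $p$ is the subtle one: the $p$-stabilized vectors are not the local newvectors, so the local trilinear integral at $p$ produces precisely the modified Euler factor $\cE_p(\Fil_\bal^+\bfV_\ulQ^\dagger)$ of \eqref{E:modified.intro}, after matching with the Greenberg-style Panchishkin subspace $\Fil_\bal^+$; isolating this requires an explicit local zeta-integral computation for the ordinary line in each $\GL_2(\Qp)$-factor.

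To get the normalization right one then divides by the canonical periods: the quaternionic Petersson pairing differs from $\norm{\cdot}^2_{\Gamma_0(N_Q)}$ by the congruence numbers $\eta_{\bdsf_\Qx},\eta_{\bdsg_\Qy},\eta_{\bdsh_\Qz}$ and the adjoint Euler factors $\cE_p(\cF_Q,\Ad)$, which is exactly how the definition \eqref{E:period.1} of $\Omega_{\cF_Q}$ is arranged; this is where (CR,$\Sigma^-$) is used a second time, to ensure (via the Gorenstein property and Hida's result in \cite{Hida88AJM}) that the comparison of the quaternionic integral structure with the one on modular curves is an isomorphism up to the $\eta$'s, so that the three period factors appear cleanly. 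Finally, uniqueness of $\cL_\bdsF^\bal$ follows because arithmetic points in $\frakX_\cR^\bal$ are Zariski-dense in $\Spec\cR$, so any two elements of $\cR$ agreeing on them coincide, and the interpolated values determine the element; existence follows because the trilinear-period construction manifestly lies in $\cR$ (it is built from $\bfI$-adic quaternionic modular forms and Hecke operators) and specializes as claimed by the explicit Ichino computation above. I expect the main obstacle to be the local computation at $p$: reconciling the trilinear zeta integral over the three ordinary lines with Greenberg's Panchishkin subspace $\Fil_\bal^+\bfV^\dagger$ and showing the output is exactly $\cE_p(\Fil_\bal^+\bfV_\ulQ^\dagger)$ — including keeping track of the $p$-adic valuations of $\alpha_{\Qx},\alpha_{\Qy},\alpha_{\Qz}$, the conductors $p^{n_Q}$, the Gauss sums, and the local epsilon factor $\varepsilon(\WD_p(\Fil_\bal^+\bfV_\ulQ^\dagger))$ — so that the formula assembles into the conjectural Coates--Perrin-Riou shape; the supercuspidal local integrals yielding $\prod_{\ell\in\Sigma_{\rm exc}}(1+\ell^{-1})^2$ are a close second in difficulty.
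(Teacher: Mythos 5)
Your plan follows the paper's own route: Jacquet--Langlands transfer to definite quaternionic Hida families (with (CR,\,$\Sigma^-$) giving freeness of the Hecke eigenspace and $p>3$ entering the quaternionic control theorem), a theta element obtained by evaluating the triple $\bfI$-adic quaternionic form on a $\bfU_p$-regularized big diagonal cycle, Ichino's formula with explicit local computations---the ordinary-line integral at $p$ producing $\cE_p(\Fil_\bal^+\bfV_\ulQ^\dagger)$ and the type-one supercuspidal primes producing $\prod_{\ell\in\Sigma_{\rm exc}}(1+\ell^{-1})^2$---and normalization by congruence-number (Gross) periods, with uniqueness from Zariski density of balanced arithmetic points. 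The only steps you gloss over, which the paper must and does supply, are the distribution property of the regularized diagonal cycles (this is what makes the theta element an element of $\cR$, rather than being ``manifest''), and the fact that the ramified local integrals are only computed under a minimality hypothesis, so one first replaces $\bdsF$ by a minimal Dirichlet twist $\bdsF'$ and then compares the periods of $\bdsF$ and $\bdsF'$ by a level-raising/Ihara-lemma argument.
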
 
We must mention that the \padic interpolation of global trilinear period integrals attached to a triplet of $p$-adic families of modular forms in the balanced range was first investigated by Greenberg and Seveso in a pioneering work \cite{GS16}. Our construction is ostensibly different from theirs for their method heavily relies on the theory of Ash-Stevens while our approach is built on classical Hida theory developed in \cite{Hida88Annals}. Indeed, their method treats more general setting, namely they do not restrict to the ordinary case, while our approach is  more well-suited for the future investigation on the arithmetic of the balanced \padic $L$-functions such as the $\mu$-invariants and the Iwasawa-Greenberg main conjecture. The situation is more or less similar to the two different constructions of two-variable \padic $L$-functions for Hida families given by Greenberg-Stevens and Mazur-Kitagawa. In any case, it is definitely very interesting to compare these two different approaches in the ordinary case.
\begin{Remark} We discuss briefly the exceptional zero phenomenon for the balanced \padic $L$-functions. By the Ramanujan conjecture, the modified $p$-Euler factor $\cE_p(\Fil_\bal^+\bfV_\ulQ^\dagger)$ never vanishes unless either of $\bdsf_\Qx,\bdsg_\Qy,\bdsh_\Qz$ is special at $p$. For example, suppose that $\bdsF=(\bdsf,\bdsg,\bdsh)$ is the triplet of primitive Hida families passing through the $p$-stabilized newforms $(f_1,f_2,f_3)$ attached to elliptic curves $(E_1,E_2,E_3)$ over $\Q$ at the weight two specialization $\ulQ$. Let $\al_i=\bfa(p,f_i)$ be the $p$-th Fourier coefficient of $f_i$ for $i=1,2,3$. Assume $E_1$ is semi-stable at $p$ (\ie $\al_1=\pm 1$). Then the formula of the modified $p$-Euler factor reads 
\[\cE_p(\Fil_\bal^+\bfV_\ulQ^\dagger)=\begin{cases}
-p\al_1\al_2\al_3(1-\al_1\al_2\al_3)^3&\text{ if $E_2$ and $E_3$ are semi-stable at $p$},\\
p\al_3^{-2}(1-\frac{\al_3}{\al_1\al_2})^2(1-\frac{\al_1}{\al_2\al_3})^2&\text{ otherwise.}\\
\end{cases}\]
We thus conclude that $\cL_{\bdsF}^\bal$ posseses an exceptional zero at $\ulQ$ when either (i) $E_2$ and $E_3$ are semi-stable at $p$ and $\al_1\al_2\al_3=1$ or (ii) $E_2$ and $E_3$ has good ordinary reduction at $p$ and $\al_2=\al_3\al_1$. In the case (i), we even have the vanishing of the central value $L(\bfV_\ulQ^\dagger,0)=L(E_1\times E_2\times E_3,2)=0$ as the global root number \[\varepsilon(\bfV_{\ulQ}^\dagger)=\varepsilon(\WD_p(\bfV_\ulQ^\dagger))=-\al_1\al_2\al_3=-1,\] so one might speculate about a $p$-adic Gross-Zaiger formula relating certain ``second partial derivatives'' of $\cL_{\bdsF}^\bal$ at $\ulQ$ to the \padic Abel-Jacobi image of diagonal cycle in the Shimura curve $X_{N^+,pN^-}$ attached to the quaternion algebra ramified precisely at $pN^-$ as  \cite[Theorem 1]{BD07}. We hope to come back to this question in the near future.
\end{Remark}

\subsection{An outline of the proof}
The construction of the unbalanced \padic $L$-function is based on Hida's $p$-adic Rankin-Selberg convolution (\cf\cite{Hida93Blue}). 
 Denote by $\eord\bfS(N,\chi,\bfI)\subset \bfI\powerseries{q}$ the space of ordinary $\bfI$-adic cusp forms with tame nebentypus $\chi$ and by $\bfT(N,\chi,\bfI)$ the universal ordinary cuspidal Hecke algebra. Decompose the tame nebentypus $\psi_1$ of $\bdsf$ into a product of Dirichlet characters $\psi_{1,(p)}$ and $\psi_1^{(p)}$ modulo $p$ and $N_1$ respectively and let $\chi:=\psi_{1,(p)}\ol{\psi^{(p)}_1}$.  Let $\breve\bdsf\in\eord\bfS(\condf,\chi,\bfI)$ be the primitive Hida family of $\bdsf$ twisted by $\ol{\psi_1^{(p)}}$ and let $1_{\breve\bdsf}\in \bfT(\condf,\chi,\bfI)\ot_{\bfI}\Frac\bfI$ be the idempotent corresponding to $\breve\bdsf$.  By the definition of congruence ideals, one can verify that $\eta_\bdsf\cdot 1_{\breve\bdsf}$ indeed belongs to $\bfT(N,\chi,\bfI)$. In \subsecref{SS:36} \eqref{E:bdsH}, we construct an auxiliary $\cR$-adic modular form $\eord\bdsH^{\rm aux}\in \eord\bfS(N,\chi,\bfI)\ot_{\bfI,i_1}\cR\subset \cR\powerseries{q}$, where $i_1:\bfI\to \cR$ is the homomorphism $a\mapsto a\ot 1\ot 1$, and then the unbalanced $p$-adic $L$-function is defined to be  
\[\sL_{\bdsF}^\bdsf:=\text{ the first Fourier coefficient of }\eta_\bdsf\cdot 1_{\breve\bdsf} \Tr_{N/N_1}(\eord\bdsH^{\rm aux})\in\cR,\]
where $\Tr_{N/N_1}\colon \eord\bfS(N,\chi,\bfI)\to \eord\bfS(\condf,\chi,\bfI)$ is the usual trace map. 

In the balanced case, Hida theory for definite quaternion algebras plays an important role. Let $D$ be the definite quaternion algebra over $\Q$ of the absolute discriminant $N^-$, and for each positive integer $m$, let $\wtd X_m$ be the definite Shimura curve of level $\Gamma_1(p^nN)$ associated with $D$ as described in \cite[\S 2.1]{LongoVigni11}. These are curves of genus zero equipped with a natural finite covering map $\wtd\al_{m}:\wtd X_{m}\to \wtd X_{m-1}$. We let $J_m=\Pic \wtd X_m\ot_\Z\Zp$ and let $J_\infty:=\prolim_{n\to\infty} J_m$ be the inverse limit induced by $\wtd\al_{m}$. Then $J_\infty$ is a $\Lam$-module with Hecke action, and its ordinary part $J_\infty^\ord$ is equipped with the action of the $\Sigma^-$-new quotient of the universal ordinay cuspidal Hecke algebra of level $\Gamma_1(Np^\infty)$. The $\bfI$-module $\eord\bfS^D(N,\bfI):=\Hom_\Lam(J^\ord_\infty,\bfI)$ is called the space of Hida families of definite quaternionic forms. Due to the lack of $q$-expansions, we do not have the notion of primitive Hida families on definite quaternion algebras. Nonetheless, using the idea of Pollack and Weston \cite{PW11CoM} and Hida theory, for a primitive Hida family $\cF$ satisfying (CR, $\Sigma^-$), it can be shown that there exists Hecke eigenform $\cF^D\in\eord\bfS^D(N,\bfI)$, unique up to a unit in $\bfI$, characterized by the following properties (i) $\cF^D$ shares the same Hecke eigenvalues with $\cF$; (ii) $\cF^D$ is non-zero modulo $\frakm_\bfI$ (\thmref{T:freeness}). We shall call $\cF^D$ the primitive Jacquet-Langlands lift of $\cF$. Let $\bfJ^\ord_m:=J^\ord_m\wh\ot_\cO J^\ord_m\wh\ot_\cO J^\ord_m$ and $\bfJ^\ord_\infty=\prolim_{m\to\infty} \bfJ^\ord_m$. With the assumption (2) in Theorem B, we thus obtain the primitive Jacquet-Langlands lift $\bdsF^D=\bdsf^D\boxtimes\bdsg^D\boxtimes\bdsh^D\in\Hom(\bfJ^\ord_\infty,\cR)$. On the other hand, in \defref{D:Reg}, we construct a collection of regularized diagonal cycles $\Delta_m^\dagger$ in $\bfJ^\ord_m$ which are compatible with respect to $\wtd\al_m$ and thus get the \emph{big diagonal cycle} $\Delta_\infty^\dagger:=\prolim_{m\to\infty}\Delta_m^\dagger\in\bfJ^\ord_\infty$. In order to achieve the optimal integrality of $p$-adic $L$-functions, we actually take a modification $\bdsF^{D\star}\in\Hom(\bfJ^\ord_\infty,\cR)$ of $\bdsF^D$ in \defref{D:testbal}, and then define the balanced $p$-adic $L$-function 
\[\Theta_{\bdsF^D}:=\bdsF^{D\star}(\Delta^\dagger_\infty)\in\cR\] to be the value of the modified $\bdsF^{D\star}$ at $\Delta_\infty^\dagger$. This $p$-adic $L$-function $\Theta_{\bdsF^D}$ is an analogue of theta elements \`a la Bertolini and Darmon (\cite{BD96}) in the triple product setting.


To obtain the interpolation formula in Theorem A and B, we first prove that the interpolation $\sL_{\bdsF}^\bdsf(\ulQ)$ at $\ulQ\in\frakX_\cR^\bdsf$ (resp. $\sL_{\bdsf,\Sigma^-}(\ulQ)$ at $\ulQ\in\frakX_\cR^\bal$)  is given by the global trilinear period integral of certain automorphic forms in the cuspidal automorphic representation $\itPi_\ulQ$ of $\GL_2(\A_E)$ (resp. the automorphic representation $\itPi_\ulQ^D$ of $(D\ot \A_E)^\x$ via the Jacquet-Langlands transfer), where $E=\Q\oplus\Q\oplus\Q$ is the split \etale cubic $\Q$-algebra (See \propref{P:inter1} and \ref{P:formulaTheta}). Thanks to Ichino's formula in \cite{Ichino08Duke}, we can show that the square of this global trilinear period integral is a product of the central $L$-value $L(1/2,\itPi_\ulQ)$ and certain local zeta integrals $I_v(\phi^\star_v\ot\phi^\star_v)$ (See \subsecref{SS:Ichino.unb} for definitions), which we shall call \emph{local Ichino integrals} in the introduction. The proof of the interpolation formulae therefore boils down to the determination of the values of these local Ichino integrals. In the literature, local Ichino integrals were only computed for some special cases \cite{II10GAFA}, \cite{NPS14} and \cite{HuYueKe17}. Local Ichino integrals at the real place are completely determined in a recent work \cite{ChenYao16}, but the explicit calculation of local Ichino integrals at non-archimedean places in the generality we need is a highly laborious task and occupies a substantial part of this paper. The key ingredient in our computation is \propref{P:IRSintegral}, a generalization of \cite[Lemma 3.4.2]{MV10} by removing several restrictive conditions therein, which reduces the calculation of local Ichino integrals to that of certain local Rankin-Selberg integrals in \cite[(1.1.3)]{GJ78}. With local theory of $L$-functions on $\GL(2)\times \GL(2)$ developed by Jacquet in \cite{Jacquet72Part2}, we are able to work out the calculation of local Rankin-Selberg integrals under \eqref{sf} and certain \emph{minimal hypothesis} (See \hypref{H:ram}). It turns out that the \padic Ichino integral gives the modified $p$-Euler factor $\cE_p(\Fil_\bullet^+\bfV_\ulQ^\dagger)$, while local Ichino integrals at ramified places $\ell$ only contributes $p$-adic units if $\ell\not\in\Sigma_{\rm exc}$ or $(1+\ell^{-1})^2$ if $\ell\in\Sigma_{\rm exc}$. This minimal hypothesis, roughly speaking, requires $\bdsF$ to be minimal in the sense that $\bdsF$ has the minimal conductor among Dirichlet twists. By taking a suitable Dirichlet twist $\bdsF'=(\bdsf\ot\chi_1, \bdsg\ot\chi_2,\bdsh\ot\chi_3)$ with $\chi_1\chi_2\chi_3=1$ which satisfies the minimal hypothesis, we obtain the desired $p$-adic $L$-functions \[\cL_{\bdsF}^\bdsf:=\sL^{\bdsf\ot\chi_1}_{\bdsF'};\quad \cL_{\bdsF}^\bal:=\Theta_{\bdsF^{\prime D}}.\] The interpolation formulae is a direct consequence of the explicit evaluation of local Ichino integrals and the comparison between the canonical periods of $\bdsF$ and its Dirichlet twist $\bdsF'$ established in \subsecref{SS:periods}. We conclude this paragraph by mentioning that the method of this paper has been extended by Isao Ishikawa in \cite{Ishikawa} to construct \padic twisted triple product $L$-functions attached a Hida family of Hilbert modular form over a real quadratic field and a Hida family of elliptic modular forms. 

This paper is organized as follows. In \secref{S:modularforms}, we recall basic definitions and facts about classical elliptic modular forms and automorphic forms on $\GL_2(\A)$. In \secref{S:unb}, we give the construction of the unbalanced $p$-adic triple product $L$-functions $\sL_{\bdsF}^\bdsf$. The key items used in the construction of $\bdsH^{\rm aux}$, the test $\Lam$-adic forms $\bdsg^\star$ and $\bdsh^\star$,  are introduced in \defref{D:testunb}. The main formula is derived in \corref{C:Ichino.imb}, where we show the interpolation of the square of $\sL_{\bdsF}^\bdsf$ at the unbalanced range is the product of the central $L$-value of the triple product $L$-function and local Ichino integrals at the prime $p$ and ramified primes. In \subsecref{S:bal}, we consider the balanced case. We review Hida's theory for definite quaterninoic forms in \subsecref{SS:Hida1} and \subsecref{SS:Hida2}. In particular, we present a slightly explicit version of the control theorem in \thmref{T:HidaQ} and explain the notion of primitive Jacquet-Langlands lifts in \thmref{T:freeness}. The construction of the big diagonal cycle $\Delta_\infty^\dagger$ and the balanced $p$-adic $L$-functions are given in \subsecref{SS:theta1} and \subsecref{SS:theta2}. The relation between the interpolation of the square of our balanced $p$-adic $L$-functions and the product of the central $L$-value and local Ichino integrals is given in \corref{C:Ichino.bal}. In \secref{S:local1}, we prepare the tools for the computation of local Ichino integrals and carry out the calculations at the \padic place, and in \secref{S:local2}, we elaborate the calculation of local Ichino integrals at ramified primes. In particular, we show in \secref{SS:6.6} that the local Ichino integrals at ramified places can be interpolated into a unit in  the ring $\cR$ of three-variable Iwasawa functions. In \secref{S:interpolation}, we prove the main results (\thmref{T:main.7}) and show that the canonical periods of a primitive Hida family and its Dirichlet twists are equal up to a unit in $\bfI$ by the method of level-raising.  Finally, we prove the factorization of anticyclotomic $p$-adic $L$-functions and give applications in \secref{S:application}.

\begin{thank} Part of this work was done during the author's multiple visits to Tohoku University supported by the program \emph{Advancing Strategic International Networks to Accelerate
the Circulation of Talented Researchers} during 2015--2017. The author would like to thank Masataka Chida, Shinichi Kobayashi and Nobuo Tsuzuki for their hospitality during the period of this program. The author also thanks Shih-Yu Chen and Yao Cheng fo the very help discussions during the preparation of this article. Finally, the author is grateful to the referees for the suggestions and comments on the improvement of the manuscript.
\end{thank}

\subsection*{Notation} The following notations will be used frequently  throughout the paper.  Let $\A$ be the ring of adeles of $\Q$. If $v$ is a place of $\Q$, let $\Q_v$ be the completion of $\Q$ with respect to $v$, and for $a\in \A^\x$, let $a_v\in\Q_v^\x$ be the $v$-component of $a$. Denote by $\Abs_v$  (or simply $\Abs$ if there is no fear of confusion) the absolute value on $\Q_v$ normalized so that $\Abs$ is the usual absolute value on $\R$ if $v=\infty$ and $\abs{\ell}_\ell=\ell^{-1}$ if $v=\ell$ is finite. Let $\Abs_\A$ be the absolute value on $\A^\x$ given by $\abs{a}_\A=\prod_v\abs{a_v}_v$.  Let $\zeta_v(s)$ be the usual local zeta function of $\Q_v$. Namely,
\[\zeta_\infty(s)=\pi^{-\frac{s}{2}}\Gamma(\frac{s}{2});\quad
\zeta_\ell(s)=(1-\ell^{-s})^{-1}.\]
Define the global zeta function $\zeta_\Q(s)$ of $\Q$ by $\zeta_\Q(s)=\prod_v\zeta_v(s)$. In particular, $\zeta_\Q(2)=\pi^{-1
}\cdot \zeta(2)=\pi/6$.

For a prime $\ell$, let $\val_\ell:\Q_\ell^\x\to\C^\x$ be the valuation normalized so that $\val_\ell(\ell)=1$. We shall regard $\Q_\ell$ and $\Q_\ell^\x$ as subgroups of $\A$ and $\A^\x$ in a natural way. To avoid possible confusion, denote $\uf_\ell=(\uf_{\ell,v})\in\A^\x$ by the idele defined by $\uf_{\ell,\ell}=\ell$ and $\uf_{\ell,v}=1$ if $v\not =\ell$. 

Let $\addchar_\Q:\A/\Q\to\C^\x$ be the additive character with the archimedean component $\addchar_\R(x)=\exp(2\pi\sqrt{-1}x)$ and let $\addchar_{\Q_\ell}:\Q_\ell\to\C^\x$ be the local component of $\addchar_\Q$ at $\ell$.

If $R$ is a commutative ring and $G=\GL_2(R)$, we denote by $\rho$ the right translation of $G$ on the space of $\C$-valued functions on $G$: $\rho(g)f(g')=f(g'g)$ and by $\bfone:G\to\C$ the constant function $\bfone(g)=1$.  For a function $f:G\to\C$ and a character $\chi:R^\x\to\C^\x$, let $f\ot\chi:G\to\C$ denote the function $f\ot\chi(g)=f(g)\chi(\det g)$.

Let $G_\Q=\Gal(\Qbar/\Q)$ be the absolute Galois group of $\Q$ and if $\chi:(\Z/N\Z)^\x\to\C^\x$ is Dirichlet character modulo $N$, denote by $c_\ell(\chi)\leq\val_\ell(N)$ the $\ell$-exponent of the conductor of $\chi$. We shall identify $\chi$ with the Galois character $\chi:G_\Q\to\C^\x$ via class field theory. 

If $\om:\Q^\x\bksl\A^\x\to\Qbar^\x$ is a finite order Hecke character, we denote by $\om_\ell:\Q_\ell^\x\to\C^\x$ the local component of $\om$ at $\ell$. On the other hand, we write $\om=\om_{(\ell)}\om^{(\ell)}$, where $\om_{(\ell)}$ and $\om^{(\ell)}$ are finite order Hecke characters of conductor $\ell$-power and of prime-to-$\ell$ conductor respectively. With every Dirichlet character $\chi$ of conductor $N$, we can associate a Hecke character $\chi_\A$, called the \emph{adelization} of $\chi$, which is the unique finite order Hecke character $\chi_\A:\Q^\x\bksl \A^\x/\R_+(1+N\wh\Z)^\x\to\C^\x$ of conductor $N$ such that $\chi_\A(\uf_\pmq)=\chi(\pmq)^{-1}$ for any prime $\pmq\ndivides N$. We often identify Dirichlet characters with their adelization whenever no confusion arises. Then $\chi_\pmq(\pmq)=\chi(\pmq)^{-1}$ for $\pmq\ndivides N$.


\section{Classical modular forms and automorphic forms}\label{S:modularforms}
In this section, we recall basic definitions and facts about classical elliptic modular forms and automorphic forms on $\GL_2(\A)$. The main purpose of this section is to set up the notation and introduce some Hecke operators on the space of automorphic forms which will be frequently used in the construction of \padic $L$-functions.

\subsection{Classical modular forms}\label{SS:classical}
Let $C^\infty(\frakH)$ be the space of $\C$-valued smooth functions on the upper half complex plane $\frakH$. Let $k$ be any integer. Let $\gamma=\pMX{a}{b}{c}{d}\in\GL^+_2(\R)$ act on $z\in\frakH$ by $\gamma(z)=\frac{az+b}{cz+d}$, and for $f=f(z)\in C^\infty(\frakH)$, define \[f|_k \gamma(z):=f(\gamma(z))(cz+d)^{-k}(\det\gamma)^\frac{k}{2}.\]
Recall that the Maass-Shimura differential operators $\delta_k$ and $\varepsilon$ on $C^\infty(\frakH)$ are given by 
\[\delta_k=\frac{1}{2\pi\sqrt{-1}}(\frac{\partial }{\partial z}+\frac{k}{2\sqrt{-1}y})\text{ and }\varepsilon=-\frac{1}{2\pi\sqrt{-1}}y^2\frac{\partial}{\partial \ol{z}}\quad(y=\Im(z))\]
(\cf\cite[(1a,\,1b) page 310]{Hida93Blue}). Let $N$ be a positive integer and $\chi:(\Z/N\Z)^\x\to\C^\x$ be a Dirichlet character modulo $N$. Let $m$ be a non-negative integer. Denote by $\cN^{[m]}_k(N,\chi)$ the space of nearly holomorphic modular forms of weight $k$, level $N$ and character $\chi$, consisting of slowly increasing functions $f\in C^\infty(\frakH)$ such that $\varepsilon^{m+1} f=0$ and 
\[f|_k\pMX{a}{b}{c}{d} =\chi(d)f\quad\text{ for }\pMX{a}{b}{c}{d}\in\Gamma_0(N)\]
 (\cf\cite[page 314]{Hida93Blue}). Let $\cN_k(N,\chi)=\bigcup_{m=0}^\infty \cN^{[m]}_k(N,\chi)$.(\cf\cite[(1a), page 310]{Hida93Blue}) 
By definition, $\cN^{[0]}_k(N,\chi)=\cM_k(N,\chi)$ is the space of classical holomorphic modular forms of weight $k$, level $N$ and character $\chi$.  Denote by $\sS_k(N,\chi)$ the space of holomorphic cusp forms in $\cM_k(N,\chi)$. Let $\delta_k^m=\delta_{k+2m-2}\cdots\delta_{k+2}\delta_k$. If $f\in \cN_k(N,\chi)$ is a nearly holomorphic modular form of weight $k$, then $\delta_k^mf\in\cN_{k+2m}(N,\chi)$ has weight $k+2m$ (\cite[page 312]{Hida93Blue}. For a positive integer $d$, define \[\LR_df(z)=d\cdot f(dz);\quad \bfU_df(z)=\frac{1}{d}\sum_{j=0}^{d-1} f(\frac{z+j}{d}),\]
and recall that the classical Hecke operators $T_\ell$ for primes $\ell\ndivides N$ are given by
\[T_\ell f=\bfU_\ell f+\chi(\ell)\ell^{k-2}\LR_\ell f.\]
We say $f\in \cN_k(N,\chi)$ is a \emph{Hecke eigenform} if $f$ is an eigenfunction of all the Hecke operators $T_\ell$ for $\ell\ndivides N$ and the operators $\bfU_\ell$ for $\ell\divides N$. 

If $f\in\cM_k(N,\chi)$, let 
\[f(q)=\sum_{n\geq 0}\bfa(n,f)q^n\]
be the $q$-expansion (at the infinity cusp). If $\kappa$ is a Dirichlet character modulo $M$,  define $f|[\kappa]\in\cM_k(NM^2,\chi\kappa^2)$ the twist of $f$ by $\kappa$ to be the unique modular form with the $q$-expansion 
\[f|[\kappa](q)=\sum_{n\geq 0,\,(n,M)=1}\bfa(n,f)\kappa(n)q^n.\]

\subsection{Automorphic forms on $\GL_2(\A)$}\label{SS:auto}
Let $N$ be a positive integer. Define open-compact subgroups of $\GL_2(\wh\Z)$ by
\begin{align*}
\opcpt_0(N)=&\stt{g\in\GL_2(\wh\Z)\mid g\con \pMX{*}{*}{0}{*}\pmod{N\wh \Z}},\\
\opcpt_1(N)=&\stt{g\in \opcpt_0(N)\mid g\con \pMX{*}{*}{0}{1}\pmod{N\wh\Z}}.
\end{align*}
Let $\om:\Q^\x\bksl \A^\x\to\C^\x$ be a finite order Hecke character of level $N$. We extend $\om$ to a character of $\opcpt_0(N)$ defined by $\om(\pMX{a}{b}{c}{d})=\prod_{\pmq\mid N}\om_\pmq(d_\pmq)$ for $\pMX{a}{b}{c}{d}\in \opcpt_0(N)$, where $\om_\ell:\Q_\ell^\x\to\C^\x$ is the $\ell$-component of $\om$. Denote by $\cA(\om)$ the space of automorphic forms on $\GL_2(\A)$ with central character $\om$. For any integer $k$, let $\cA_k(N,\om)\subset\cA(\om)$ be the space of automorphic forms on $\GL_2(\A)$ of weight $k$, level $N$ and character $\om$. Namely, $\cA_k(N,\om)$ consists of automorphic forms $\varphi:\GL_2(\A)\to\C$ such that 
\begin{align*}\varphi(\alpha gu_\infty u_{\rm f})=&\varphi(g)e^{\sqrt{-1}k\theta}\om(u_{\rm f})\\
(\al\in\GL_2(\Q), u_\infty=&\pMX{\cos\theta}{\sin\theta}{-\sin\theta}{\cos\theta},\,u_{\rm f}\in \opcpt_0(N)).
\end{align*}
Let $\cA^0_k(N,\om)$ be the space of cusp forms in $\cA_k(N,\om)$.

Next we introduce important local Hecke operators on automorphic forms. At the archimedean place, let $\LR_{\pm}:\cA_k(N,\om)\to\cA_{k\pm 2}(N,\om)$ be the normalized weight raising/lowering operator in \cite[page 165]{JacquetLanglands70} given by 
\beq\label{E:diffop}
\LR_{\pm}=\frac{1}{(-8\pi)}\left(\pDII{1}{-1}\ot 1\pm\pMX{0}{1}{1}{0}\ot \sqrt{-1}\right)\in\Lie(\GL_2(\R))\ot_\R\C.
\eeq
The level-raising operator $\LR_\ell:\cA_k(N,\om)\to\cA_k(N\ell,\om)$ at a finite prime $\ell$ by 
\[\LR_\ell\varphi(g):=\rho(\pDII{\uf_\ell^{-1}}{1})\varphi.\]
If $d=\prod_\ell \ell^{\val_\ell(d)}$ is an positive integer, define $\LR_d:\cA_k(N,\om)\to\cA_k(Nd,\chi)$ by 
\[\LR_d=\prod_\ell\LR_\ell^{\val_\ell(d)}.\]
Define the operator $\bfU_\ell$ on $\varphi\in\cA_k(N,\om)$ by 
\[\bfU_\ell\varphi=\sum_{x\in\Z_\ell/\ell\Z_\ell}\rho(\pMX{\uf_\ell}{x}{0}{1})\varphi.\]
Note that $\bfU_\ell V_\ell\varphi=\ell\varphi$ and that if $\ell\divides N$, then $\bfU_\ell\in\End_\C\cA_k(N,\om)$. For each prime $\ell\ndivides N$, let $T_\ell\in\End_\C\cA_k(N,\om)$ be the usual Hecke operator defined by
\[T_\ell=\bfU_\ell+\om(\uf_\ell)V_\ell.\] 
We introduce the twisting operator $\theta_\ell^\kappa$ attached to a Dirichlet character $\kappa$ of modulo $\ell^s$ for some $s>0$. Let $\ell^n$ be the conductor of $\kappa$. If $n>0$, define the Gauss sum $\frakg(\kappa)$ by 
\[\frakg(\kappa)=\sum_{x\in(\Z/\ell^n\Z)^\x}\kappa^{-1}(x)e^{\frac{-2\pi \sqrt{-1}x}{\ell^n}}.\] For $\varphi\in \cA_k(N,\om)$, we define $\theta_\ell^\kappa\varphi:\GL_2(\A)\to\C$ by 
\beq\label{E:twisting1}\theta_\ell^{\kappa}\varphi=\begin{cases}\varphi-\ell^{-1}\LR_\ell \bfU_\ell\varphi&\text{ if }n=0,\\
\frakg(\kappa)^{-1}\sum\limits_{x\in (\Z/\ell^n\Z)^\x}\kappa^{-1}(x)\rho(\pMX{1}{x/\uf_\ell^n}{0}{1})\varphi&\text{ if }n>0.\end{cases}
\eeq
\subsection{}We briefly recall a well-known connection between modular forms and automorphic forms. With each nearly holomorphic modular form $f\in\cN_k(N,\chi)$, we associate a unique automorphic form $\itPhi(f)\in\cA_k(N,\chi_\A^{-1})$ defined by the equation
\beq\label{E:MA2}\itPhi(f)(\al g_\infty u):= (f|_k g_\infty)(\sqrt{-1})\cdot \chi_\A^{-1}(u)\eeq
for $\al \in\GL_2(\Q)$, $g_\infty\in \GL^+_2(\R)$ and $u\in \opcpt_0(N)$ (\cf \cite[\S 3]{Casselman73MA}). We call $\itPhi(f)$ the \emph{adelic lift} of $f$. Conversely, we can recover the form $f$ from $\itPhi(f)$ by 
\beq\label{E:MA1}f(x+\sqrt{-1} y)=y^{-\frac{k}{2}}\itPhi(f)(\pMX{y}{x}{0}{1}).\eeq

The weight raising/lowering operators are the adelic avatar of the Maass-Shimura differential operators $\delta_k^m$ and $\varepsilon$ on the space of automorphic forms. A direct computation shows that the map $\itPhi$ is equivariant for the Hecke action in the sense that
\beq\label{E:diffop1}\itPhi(\delta_k^mf)=\LR_+^m\itPhi(f),\quad
\itPhi(\varepsilon f)=\LR_-\itPhi(f),
\eeq
for a positive integer $d$, \beq\label{E:diffop2}\itPhi(V_d f)=d^{1-\frac{k}{2}}\LR_d\itPhi(f),\eeq  and for a finite prime $\ell$\beq\label{E:heckeop}
\itPhi(T_\ell f)=\ell^{\frac{k}{2}-1}T_\ell\itPhi(f);\quad \itPhi(\bfU_\ell f)=\ell^{\frac{k}{2}-1}\bfU_\ell\itPhi(f).
\eeq
In particular, $f$ is holomorphic if and only if $V_-\itPhi(f)=0$. For $f\in \cM_k(N,\chi)$ and $\kappa$ a Dirichlet character modulo a $\ell$-power, one verifies that
\beq\label{E:diffop3}\itPhi(f|[\kappa])=\theta_\ell^\kappa\itPhi(f)\ot\kappa_\A^{-1}.\eeq

\subsection{Preliminaries on irreducible representations of $\GL_2(\Q_v)$}
\subsubsection{Measures}\label{SS:measure}
We shall normalize the Haar measures on $\Q_v$ and $\Q_v^\x$ as follows. If $v=\infty$, $\rmd x$ or $\rmd y$ denotes the usual Lebesgue measure on $\R$ and the measure $\rmd^\x y$ on $\R^\x$ is $\abs{y}^{-1}\rmd y$. If $v=\ell$ is a finite prime, denote by $\rmd x$ the Haar measure on $\Q_\ell$ with $\vol(\Z_\ell,\rmd x)=1$ and by $\rmd^\x y$ the Haar measure on $\Q^\x_\ell$ with $\vol(\Z^\x_\ell,\rmd^\x y)=1$. Define the compact subgroup $\bfK_v$ of $\GL_2(\Q_v)$ by $\bfK_v={\rm O}(2,\R)$ if $v=\infty$ and $\bfK_v=\GL_2(\Z_v)$ if $v$ is finite. Let $\rmd k_v$ be the Haar measure on $\bfK_v$ so that $\vol(\bfK_v,\rmd k_v)=1$. Let $\rmd g_v$ be the Haar measure on $\PGL_2(\Q_v)$ given by $\rmd g_v=\abs{y_v}^{-1}\rmd x_v\rmd^\x y_v\rmd k_v$ for $g_v=\pMX{y_v}{x_v}{0}{1}k_v$ with $y_v\in\Q_v^\x$, $x_v\in\Q_v$ and $k_v\in \bfK_v$.

\subsubsection{Representations}
Denote by $\Prin{\chi}{\upsilon}$ the irreducible principal series representation of $\GL_2(\Q_v)$ attached to two characters $\chi,\upsilon:\Q_v^\x\to\C^\x$ such that $\chi\upsilon^{-1}\not=\Abs^\pm$. If $v=\infty$ is the archimedean place and $k\geq 1$ is an integer, denote by $\cD_0(k)$ the discrete series of lowest weight $k$ if $k\geq 2$ or the limit of discrete series if $k=1$ with central character $\sgn^k$ (the $k$-the power of the sign function).
If $v$ is finite, denote by ${\rm St}$ the Steinberg representation and by $\chi{\rm St}$ the special representation ${\rm St}\ot\chi\circ\det$. 

\subsubsection{$L$-functions and $\varepsilon$-factors}
 For a character $\chi:\Q_v^\x\to\C^\x$, let $L(s,\chi)$ be the complex $L$-function  and $\varepsilon(s,\chi):=\varepsilon(s,\chi,\addchar_{\Q_v})$ be the $\varepsilon$-factor (\cf\cite[Section 1.1]{Schmidt02RJ}). Define the $\gamma$-factor 
\beq\label{E:gamma1}\gamma(s,\chi):=\varepsilon(s,\chi)\cdot\frac{L(1-s,\chi^{-1})}{L(s,\chi)}.\eeq

If $\pi$ is an irreducible admissible generic representation of $\GL_2(\Q_v)$, denote by $L(s,\pi)$ the $L$-function and by $\varepsilon(s,\pi):=\varepsilon(s,\pi,\addchar_{\Q_v})$ the $\varepsilon$-factor defined in \cite[Theorem 2.18]{JacquetLanglands70}. Let $\Contra{\pi}$ denote the contragradient representation of $\pi$. Denote by $L(s,\pi,\Ad)$ the adjoint $L$-function of $\pi$ determined in \cite{GJ78}.

\subsubsection{Conductors and new vectors}Let $\ell$ be a prime. Let $(\pi,\cV_\pi)$ be an irreducible admissible infinite dimensional representation of $\GL_2(\Q_\ell)$, where $\cV_\pi$ a realization of $\pi$. For a non-negative integer $n$, let 
\[
\cU_1(\ell^n)=\GL_2(\Z_\ell)\cap \pMX{\Z_\ell}{\Z_\ell}{\ell^n\Z_\ell}{1+\ell^n\Z_\ell}.\] 
Let $\cond{\pi}$ be the exponent of the conductor of $\pi$. By definition, $\cond{\pi}$ is the smallest integer such that $\cV_\pi^{\cU_1(\ell^{\cond{\pi}})}$ the space of $\cU_1(\ell^{\cond{\pi}})$-fixed vectors is non-zero. Define the subspace $\cV^{\rm new}_\pi$ by  
\[\cV_\pi^{\rm new}=\stt{ \xi\in \cV_\pi\mid \pi(\pMX{a}{b}{c}{d})\xi=\xi\text{ for all }\pMX{a}{b}{c}{d}\in \cU_1(\ell^{\cond{\pi}})}.\]
\begin{prop}[Multiplicity one for new vectors]\label{P:new} We have $\dim_\C \cV_\pi^{\rm new}=1$.
\end{prop}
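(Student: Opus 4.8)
The plan is to prove this by reducing to the classical theory of new vectors for $\GL_2$ over a $p$-adic field due to Casselman, Deligne, and Jacquet--Piatetski-Shapiro--Shalika. First I would observe that the space $\cV_\pi^{\mathrm{new}} = \cV_\pi^{\cU_1(\ell^{\cond\pi})}$ depends on the two conjugate families of congruence subgroups $\cU_1(\ell^n)$ and the ``mirabolic'' congruence subgroups; the key point is that the essential vector theorem identifies, for each irreducible admissible infinite-dimensional $\pi$, a canonical one-dimensional space inside $\cV_\pi$ fixed by $\cU_1(\ell^{\cond\pi})$. Concretely, I would invoke Casselman's theorem: for $n \geq \cond\pi$ the dimension of the $\cU_1(\ell^n)$-fixed subspace equals $n - \cond\pi + 1$, and in particular equals $1$ when $n = \cond\pi$. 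Since the definition of $\cond\pi$ given in the excerpt is exactly the minimal $n$ with $\cV_\pi^{\cU_1(\ell^n)} \neq 0$, we get $\dim_\C \cV_\pi^{\mathrm{new}} = 1$ immediately from that dimension formula evaluated at $n = \cond\pi$.

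The steps I would carry out, in order, are: (1) recall the local Whittaker/Kirillov model of $\pi$, in which $\pi$ acts on a space of functions on $\Q_\ell^\times$; (2) in this model, analyze the action of $\cU_1(\ell^n)$ — the upper-triangular unipotent part acts by the additive character, the diagonal torus acts by the central and nebentype data, and the lower-triangular congruence condition $c \in \ell^n\Z_\ell$ is the essential constraint; (3) show that a vector fixed by $\cU_1(\ell^n)$ corresponds in the Kirillov model to a function supported appropriately and invariant under a suitable compact subgroup of $\Q_\ell^\times$, and count the dimension of such functions via the local functional equation / the fact that the local $L$-factor $L(s,\pi)$ has degree at most the conductor drop; (4) conclude that at the minimal level $n = \cond\pi$ this space is exactly one-dimensional. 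Alternatively, and more economically for the purposes of this paper, I would simply cite the foundational references — Casselman's 1973 paper ``On some results of Atkin and Lehner'' and Deligne's formulation — and state that the proposition is their main theorem, specialized to the minimal conductor level.

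The main obstacle, if one wants a genuinely self-contained argument rather than a citation, is step (3): establishing the precise dimension count for $\cU_1(\ell^n)$-fixed vectors requires either a careful Kirillov-model computation case-by-case over principal series, Steinberg/special, and supercuspidal $\pi$, or an appeal to the theory of the local integral representation of $L(s,\pi)$ together with the nonvanishing of the relevant local zeta integral on the new vector. In practice this is exactly the content of the classical Atkin--Lehner--Casselman--Deligne newform theory and is not something one would reprove here; so I would present the proof as a one-line reduction to that body of work, remarking only that the normalization of $\cU_1(\ell^n)$ used in the excerpt matches the standard one and that the displayed definition of $\cond\pi$ is the standard conductor exponent. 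Thus the proof amounts to: \emph{this is Casselman's theorem on new vectors, evaluated at the minimal level $n=\cond\pi$, together with multiplicity one.}
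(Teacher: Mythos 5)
Your proposal is correct and matches the paper's proof, which consists of a single citation to Casselman's theorem (\cite[Theorem 1]{Casselman73MA}); your concluding reduction "this is Casselman's theorem at the minimal level $n=\cond{\pi}$" is exactly what the paper does. The additional Kirillov-model sketch is fine but unnecessary, since the paper does not reprove the newform theory either.
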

\begin{proof}This is \cite[Theorem 1]{Casselman73MA}.\end{proof}
In the sequel, we call $\cV_\pi^{\rm new}$ \emph{the new line} of $\pi$.

\subsubsection{Whittaker models}\label{SS:Wnew}
Every admissible irreducible infinite dimensional representation $\pi$ of $\GL_2(\Q_v)$ admits a realization of the Whittaker model $\sW(\pi)=\sW(\pi,\addchar_{\Q_v})$associated with the additive character $\addchar_{\Q_v}$. Recall that $\sW(\pi)$ is a subspace of smooth functions $W:\GL_2(\Q_v)\to\C$ such that
\begin{itemize}
\item $W(\pMX{1}{x}{0}{1}g)=\addchar_{\Q_v}(x)W(g)$ for all $x\in\Q_v$,
\item if $v=\infty$ is the archimedean place, there exists an integer $M$ such that 
\[W(\pDII{a}{1})=O(\abs{a}^M)\text{ as }\abs{a}\to\infty.\] \end{itemize}
The group $\GL_2(\Q_v)$ (or the Hecke algebra of $\GL_2(\Q_v)$) acts on $\sW(\pi)$ via the right translation $\rho$. We introduce the (normalized) \emph{local Whittaker newform} $W_{\pi}$ in $\cW(\pi)$ in the following cases. If $v=\infty$ and $\pi=\cD_0(k)$, then the Whittaker local newform $\newW_{\pi}\in\sW(\pi)$ is defined by
\beq\label{E:Winfty.1}\begin{aligned}\newW_{\pi}(z\pMX{y}{x}{0}{1}\pMX{\cos\theta}{\sin\theta}{-\sin\theta}{\cos\theta})&=\bbI_{\R_+}(y)\cdot y^\frac{k}{2}e^{-2\pi y}\cdot\sgn(z)^k\addchar_\R(x)e^{\sqrt{-1}k\theta}\\
&\quad(y,z\in\R^\x,\,x,\theta\in\R).\end{aligned}
\eeq
Here $\bbI_{\R_+}(a)$ denotes the characteristic function of the set of positive real numbers. 
If $v=\ell$ is a finite prime, then the local Whittaker newform  $\newW_{\pi}$ is the unique function in $\cW(\pi)^{\rm new}$ such that $\newW_{\pi}(1)=1$.

\subsection{Ordinary lines in irreducible representations of $\GL_2(\Qp)$}
Let $p$ be a prime. Let $(\pi,\cV_\pi)$ be an irreducible admissible generic representation of $\GL_2(\Qp)$ with central character $\om:\Qp^\x\to\C^\x$. Let $N(\Z_p)=\stt{\pMX{1}{x}{0}{1}\mid x\in\Zp}$. Define the local $\bfU_p$-operator and the local level-raising operator $V_p$ in $\End_\C(\cV_\pi^{N(\Zp)})$ by 
\beq\label{E:localUV}
\bfU_p\xi:=\sum_{x\in\Zp/p\Zp}\pi(\pMX{p}{x}{0}{1})\xi;\quad V_p\xi=\pi(\pDII{p^{-1}}{1})\xi.
\eeq
For a Dirichlet character $\kappa$ of conductor $p^n$, we define the local twisting operator $\theta_p^\kappa\in\End\cV_\pi$ by
\beq\label{E:deftheta2}
\theta_p^{\kappa}\xi=\begin{cases}\xi-p^{-1}\LR_p \bfU_p\xi&\text{ if }n=0,\\
\frakg(\kappa)^{-1}\sum\limits_{x\in (\Z/p^n\Z)^\x}\kappa^{-1}(x)\pi(\pMX{1}{x/p^n}{0}{1})\xi&\text{ if }n>0.\end{cases}
\eeq

For a character $\chi:\Qp^\x\to\C^\x$, define the subspace $\cV_\pi^\ord(\chi)$ by
\[\cV_\pi^\ord(\chi):=\stt{\xi\in \cV_\pi^{N(\Zp)}\mid \bfU_p\xi=\chi\Abs^{-\onehalf}(p)\cdot \xi,\,\,\pi(\pDII{t}{1})\xi=\chi(t)\xi,\,t\in\Zp^\x}.\]
\begin{prop}[Multiplicity one for ordinary vectors]\label{P:ordline} The space $\cV_\pi^\ord(\chi)$ is non-zero if and only if $\pi$ is either the principal series $\Prin{\chi}{\chi^{-1}\om}$ or the special representation $\chi\Abs^{-\onehalf}{\rm St}$. In this case, \[\dim_\C \cV_\pi^\ord(\chi)=1.\]
\end{prop}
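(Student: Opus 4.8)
The plan is to prove Proposition~\ref{P:ordline} by reducing the question to an explicit analysis of the $\bfU_p$-eigenvalue problem on Whittaker or Kirillov models, following the classical method of Casselman and Atkin--Lehner--Li. First I would pass to the Kirillov model $\cK(\pi,\addchar_{\Qp})$ of $\pi$: the space $\cV_\pi^{N(\Zp)}$ of $N(\Zp)$-fixed vectors corresponds, via the isomorphism $W\mapsto W|_{\diag(\Qp^\x,1)}$, to functions supported on $\Zp\setminus\{0\}$ together with the contribution of the Jacquet module (the ``constant term at $\infty$''), and the action of $\bfU_p$ and of $\pi(\diag(t,1))$ for $t\in\Zp^\x$ becomes completely explicit in terms of the action on the functions $\phi\colon\Qp^\x\to\C$. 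The operator $\pi(\diag(t,1))$ acts by $(\pi(\diag(t,1))\phi)(y)=\phi(yt)$, so imposing $\pi(\diag(t,1))\xi=\chi(t)\xi$ forces the Kirillov function to be a multiple of $\mathbb I_{\Zp^\x}\cdot\chi^{-1}$ on $\Zp^\x$ (extended suitably), and then the $\bfU_p$-operator acts essentially as a shift $y\mapsto py$ composed with averaging over the unipotent, which on the Kirillov model is a well-understood diagonalizable-or-nilpotent operator whose effect is governed by the Jacquet module of $\pi$.

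Next I would carry out the case analysis according to the classification of $\pi$. If $\pi=\Prin{\mu_1}{\mu_2}$ is an irreducible principal series, its Jacquet module is $\mu_1\Abs^{1/2}\oplus\mu_2\Abs^{1/2}$ (up to normalization conventions matching the $\Abs^{-1/2}$ twist in the statement), and a vector in $\cV_\pi^{N(\Zp)}$ on which $\bfU_p$ acts by a scalar $\lambda$ and $\diag(t,1)$ acts by $\chi(t)$ exists precisely when $\lambda$ equals one of the two ``unramified-twist'' eigenvalues, which happens exactly when $\chi\in\{\mu_1,\mu_2\}$ and $\lambda=\chi\Abs^{-1/2}(p)$; the condition $\mu_1\mu_2=\om$ then identifies $\pi$ with $\Prin{\chi}{\chi^{-1}\om}$. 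If $\pi=\sigma\Abs^{-1/2}\mathrm{St}$ is special, the Jacquet module is one-dimensional, equal to $\sigma\Abs^{1/2}$ in the appropriate normalization, and the same computation shows the ordinary line is nonzero iff $\chi\Abs^{-1/2}(p)$ matches that single eigenvalue, i.e. iff $\sigma=\chi$, giving $\pi=\chi\Abs^{-1/2}\mathrm{St}$. If $\pi$ is supercuspidal the Jacquet module vanishes, the shift operator on the Kirillov model is locally nilpotent, $\bfU_p$ has no nonzero eigenvalue on $\cV_\pi^{N(\Zp)}$, and $\cV_\pi^\ord(\chi)=0$. In every nonzero case, once both eigenvalue conditions are imposed the Kirillov function is pinned down on $\Zp^\x$ up to scalar and the $\bfU_p$-eigenvalue condition together with the description of the Jacquet module determines it on all of $\Qp^\x$, yielding $\dim_\C\cV_\pi^\ord(\chi)=1$.

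The cleanest way to package the uniqueness is probably to invoke the local functional equation / the theory of the $\bfU_p$-operator on the new and oldforms: one knows (Casselman, and the new-vector theory already cited as Proposition~\ref{P:new}) that $\cV_\pi^{N(\Zp)}$ together with the diagonal torus action decomposes multiplicity-freely under the pair $(\diag(\Zp^\x,1),\bfU_p)$ once one fixes the central character, because the $\bfU_p$-operator on the (at most two-dimensional, in the relevant nearly-spherical situations) space of vectors with a given $\Zp^\x$-type has distinct eigenvalues exactly in the principal series case and a single Jordan block otherwise. I would phrase this as: the generalized eigenspace decomposition of $\bfU_p$ on $\cV_\pi^{\diag(\Zp^\x,1)=\chi}$ has each eigenvalue occurring with multiplicity one, and the \emph{honest} (non-generalized) eigenspace for the specific eigenvalue $\chi\Abs^{-1/2}(p)$ is nonzero precisely under the stated dichotomy.

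The main obstacle I expect is bookkeeping the normalization: the operator $\bfU_p$ in \eqref{E:localUV} is the \emph{unnormalized} sum $\sum_x\pi(\pMX{p}{x}{0}{1})$, and the target eigenvalue is the \emph{normalized} $\chi\Abs^{-1/2}(p)$, so one must track carefully the $\Abs^{\pm1/2}$ twists relating the arithmetic normalization of the Satake/Hecke parameters to the unitary (analytic) normalization of $\pi$, and make sure the Jacquet-module eigenvalues come out as $\chi\Abs^{-1/2}(p)$ and not, say, $\chi(p)$ or $\chi\Abs^{1/2}(p)$; this is exactly the sort of place where an off-by-$\Abs^{1/2}$ error would flip which of the two principal-series characters is ``$\chi$''. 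Beyond that, the argument is entirely standard local representation theory of $\GL_2(\Qp)$ and I would keep the write-up short, citing Casselman's new-vector paper for the structural input and doing the Kirillov-model eigenvalue computation explicitly only in the principal series and Steinberg cases.
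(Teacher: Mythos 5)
Your proposal is correct, but it follows a genuinely different route from the paper. You work in the Kirillov model: $N(\Zp)$-fixed vectors become functions supported in $\Zp\setminus\{0\}$, the torus condition pins the function down on $\Zp^\x$ up to scalar, the operator $\bfU_p$ becomes $\phi(y)\mapsto p\,\phi(py)$ there, and the eigenvalue equation then determines $\phi$ on all of $\Zp$ by recursion, with existence governed by whether the resulting asymptotic $\chi\Abs^{1/2}$ occurs in the Jacquet module — giving the dichotomy (principal series $\Prin{\chi}{\chi^{-1}\om}$ or $\chi\Abs^{-\onehalf}{\rm St}$, nothing for supercuspidal) and $\dim\leq 1$ at once; as a bonus this yields directly the explicit formula $W^\ord_\pi(\pDII{y}{1})=\chi\Abs^\onehalf(y)\bbI_{\Zp}(y)$ that the paper only extracts afterwards in Corollary~\ref{C:Word}. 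The paper instead argues intrinsically without Kirillov theory: after twisting to reduce to $\chi=\Abs^\onehalf$, it writes $\cV_\pi^\ord=\bigcup_n\cV_\pi^{[n]}[\bfU_p-1]$, proves a level-descent identity (a $\bfU_p$-eigenvector fixed by $\cU_1(p^{m+1})$ with $m\geq c^*$ is already $\cU_1(p^m)$-fixed), concludes $\cV_\pi^\ord=\cV_\pi^{\cU_1(p^{c^*})}$, and then finishes by new-vector theory plus the explicit $2$-dimensional Iwahori-level computation in the spherical case ($\bfU_p V_p\xi^0=p\xi^0$, characteristic polynomial $(X-1)(X-\om(p)p)$); that identification of $\cV^\ord$ with a fixed finite level is what drives the rest of their argument. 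Two small points in your write-up to fix when polishing: with the convention $\pi(\pDII{t}{1})\phi(y)=\phi(ty)$ the restriction of the Kirillov function to $\Zp^\x$ is a multiple of $\chi$, not $\chi^{-1}$; and your packaging claim that each generalized $\bfU_p$-eigenvalue occurs with multiplicity one fails when the two inducing characters coincide (e.g.\ $\Prin{\mu}{\mu}$, where the generalized eigenspace in the asymptotic space is two-dimensional with a Jordan block) — but this does not matter, since your actual uniqueness argument uses only the honest eigenspace, which the recursion shows is at most one-dimensional in every case.
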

\begin{proof} Replacing $\pi$ by $\pi\ot\chi^{-1}\Abs^\onehalf$, we may assume $\chi=\Abs^\onehalf$. For each $n$, let 
\[\cV_\pi^{[n]}[\bfU_p-1]=\stt{\xi\in\cV_{\pi}\mid \bfU_p\xi=\xi;\quad \pi(u)\xi=\xi\text{ for all }u\in \cU_1(p^n)}.\]
Let $\cV^\ord_\pi=\cV^\ord_\pi(\Abs^\onehalf)$. Let $\cond{\om}$ be the exponent of the conductor of $\om$ and $c^*:=\max\stt{1,c(\om)}$. Then it is easy to see that
\[\cV^\ord_\pi=\bigcup_{n\geq c^*}^\infty\cV_\pi^{[n]}[\bfU_p-1] .\] 
Suppose that $\pi=\Prin{\Abs^\onehalf}{\om\Abs^{-\onehalf}}$ or the Steinberg representation ${\rm St}$. We claim that $\cV_\pi^{[n]}[\bfU_p-1]$ is non-zero for some $n$. If $\om$ is ramified or $\pi$ is Steinberg, then $c(\pi)\geq c^*$ and the new line $\cV_\pi^{\rm new}=\cV^{[\cond{\pi}]}[\bfU_p-1]$ is not zero. If $\om$ is unramified, then $\pi$ is sphercial, and it is well known that $\dim_\C\cV_\pi^{[1]}=2$ and the characteristic polynomial of $\bfU_\ell$ on $\cV_\pi^{[1]}$ is given by $(X-1)(X-\om(p)p)$, so $\cV_\pi^{[1]}[\bfU_\ell-1]$ is non-zero. 

Now suppose that $\cV^\ord_\pi\not =0$. Then $\pi$ must be a principal series or special representation since $\bfU_p$ is a unipotent operator on $\cV_\pi^{[n]}$ if $\pi$ is supercuspidal. For any $u\in \cU_1(p^m)$ with $m\geq 1$ and $\xi\in \cV_\pi$, a straightforward calculation shows that  
\[\pi(u)\bfU_p\xi=\sum_{x\in\Zp/p\Zp}\pi(\pMX{p}{x}{0}{1}u_x'z_x)\xi \text{ for some }u'_x\in \cU_1(p^{m+1}),\,z_x\in 1+p^m\Zp.\]
It follows that if $\xi\in \cV_\pi^{[m+1]}[\bfU_p-1]$, then $\xi\in \cV_\pi^{[m]}[\bfU_p-1]$ whenever $m\geq c^*$. This implies that $\cV_\pi^\ord=\cV_\pi^{\cU_1(p^{c^*})}\not =0$, and hence $c^*\geq c(\pi)\geq c(\om)$. If $c^*=c(\om)>0$, then $c(\om)=c(\pi)$, and it follows that $\cV_\pi^\ord=\cV_\pi^{\rm new}$ is the new line in $\cV_\pi$ and $\pi=\Prin{\mu}{\mu^{-1}\om}$ with unramified character $\mu$. Since any new vector in $\Prin{\mu}{\mu^{-1}\om}$ is an eigenvector of $\bfU_p$ with the eigenvalue $\mu\Abs^{-\onehalf}$, we thus conclude that $\pi=\Prin{\Abs^\onehalf}{\om\Abs^{-\onehalf}}$. If $c(\om)=0$, then $c^*=1$ and $\cV_\pi^{\ord}=\cV_\pi^{[1]}[\bfU_\ell-1]$. It follows that $\pi$ is a unramified principal series or the Steinberg representation ${\rm St}$. If $\pi={\rm St}$, then $\cV^\ord_\pi$ is the new line. If $\pi$ is a unramified principal series, then the two dimensional vector space $\cV_\pi^{\cU_1(p)}$ has a basis $\xi^0\in \cV_\pi^{\rm new}=\cV_\pi^{\GL_2(\Zp)}$ and $V_p\xi^0$. Since $\bfU_p V_p\xi^0=p\xi^0$, $\bfU_p$ is not a scalar, and thus $\dim_\C\cV_\pi^{\ord}=\dim_\C\cV_\pi^{[1]}[\bfU_p-1]=1$.
\end{proof}
We shall call $\cV_\pi^\ord(\chi)$ \emph{the ordinary line} of $\pi$ with respect to $\chi$ whenever it is non-zero. 
\begin{cor}\label{C:Word}If $\pi$ is either the irreducible principal series $\Prin{\chi}{\chi^{-1}\om}$ or the special representation $\chi\Abs^{-\onehalf}{\rm St}$, then the ordinary line $\cW(\pi)^\ord(\chi)$ in the Whittaker model is generated by the normalized ordinary Whittaker function $W^\ord_\pi$ characterized by \[W^\ord_\pi(\pDII{y}{1})=\chi\Abs^\onehalf(y)\bbI_{\Zp}(y)\quad (y\in\Qp^\x).\] Here $\bbI_{\Zp}$ is the characteristic function of $\Zp$. 
\end{cor}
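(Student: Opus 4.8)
The plan is to derive the corollary from \propref{P:ordline} together with a short computation, in the Kirillov model, of the restriction of an ordinary Whittaker function to the diagonal torus. By \propref{P:ordline}, under the stated hypotheses the space $\cW(\pi)^\ord(\chi)$ is one-dimensional, so it will be enough to fix one nonzero $W\in\cW(\pi)^\ord(\chi)$, show that $\phi(y):=W(\pDII{y}{1})$ is a scalar multiple of $\chi\Abs^{\onehalf}(y)\bbI_{\Zp}(y)$, check that the scalar is nonzero, and then normalize.

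First I would translate the defining conditions of $\cW(\pi)^\ord(\chi)$ into the Whittaker model, where $\pi$ acts by right translation $\rho$: thus $\rho(\pDII{t}{1})W=\chi(t)W$ for $t\in\Zp^\x$, and $\bfU_pW=\sum_{x\in\Zp/p\Zp}\rho(\pMX{p}{x}{0}{1})W=\chi\Abs^{-\onehalf}(p)W$. Evaluating the torus relation at $\pDII{y}{1}$ gives $\phi(ty)=\chi(t)\phi(y)$ for $t\in\Zp^\x$, so $\phi$ is determined by its values at powers of $p$. Evaluating the $\bfU_p$-eigenvalue equation at $\pDII{y}{1}$, and using the Whittaker property $W(\pMX{1}{x}{0}{1}g)=\addchar_{\Qp}(x)W(g)$ together with the factorization $\pDII{y}{1}\pMX{p}{x}{0}{1}=\pMX{1}{yx}{0}{1}\pDII{yp}{1}$, the equation collapses to the recursion
\[\Bigl(\sum_{x\in\Zp/p\Zp}\addchar_{\Qp}(yx)\Bigr)\,\phi(yp)=\chi\Abs^{-\onehalf}(p)\,\phi(y)\qquad(y\in\Qp^\x).\]
Because $\addchar_{\Qp}$ has conductor $\Zp$, the exponential sum on the left equals $p$ when $y\in\Zp$, vanishes when $\val_p(y)=-1$, and is a finite number in general. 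For $y\in\Zp$ the recursion reads $\phi(yp)=\chi\Abs^{\onehalf}(p)\phi(y)$, which together with the torus relation and iteration gives $\phi(y)=\chi\Abs^{\onehalf}(y)\phi(1)$ for every $y\in\Zp\setminus\{0\}$. For $\val_p(y)=-1$ the recursion forces $\phi(y)=0$ since $\chi\Abs^{-\onehalf}(p)\neq0$; and then a downward induction on $\val_p(y)$, expressing $\phi(y)$ through $\phi(yp)$ (which already vanishes once $\val_p(yp)<0$), gives $\phi(y)=0$ for all $y$ with $\val_p(y)<0$. Hence $\phi(y)=\phi(1)\cdot\chi\Abs^{\onehalf}(y)\bbI_{\Zp}(y)$.

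To conclude I would check that $\phi(1)\neq0$: were it zero, $W$ would vanish on the entire mirabolic subgroup $\{\pMX{a}{b}{0}{1}\}$ (since $W(\pMX{a}{b}{0}{1})=\addchar_{\Qp}(b/a)\phi(a)$), hence $W=0$ by the injectivity of the restriction from the Whittaker model of an irreducible admissible infinite-dimensional generic representation to its Kirillov model, contradicting $W\neq0$. Rescaling so that $\phi(1)=1$ then singles out the unique $W^\ord_\pi\in\cW(\pi)^\ord(\chi)$ with $W^\ord_\pi(\pDII{y}{1})=\chi\Abs^{\onehalf}(y)\bbI_{\Zp}(y)$, and by one-dimensionality it generates the ordinary line. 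I do not anticipate a genuine obstacle here: the only slightly delicate bookkeeping is with the exponential sums $\sum_{x}\addchar_{\Qp}(yx)$ for $\val_p(y)\le-2$, and this is harmless because at that stage $\phi(yp)$ has already been shown to vanish, so those sums never need to be evaluated.
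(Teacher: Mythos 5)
Your proof is correct, but it follows a genuinely different route from the paper. The paper's own argument is constructive: it takes the local Whittaker newform $W_{\pi\ot\chi^{-1}}$ of the twist $\pi\ot\chi^{-1}$, performs a $p$-stabilization $W-\chi^{-2}\om\Abs^{\onehalf}(p)\rho(\pDII{p^{-1}}{1})W$ when that twist is spherical, twists back by $\chi$, and then reads off the values on $\pDII{y}{1}$ from the explicit formulas for Whittaker newforms in Schmidt's tables. You instead use \propref{P:ordline} only for existence and one-dimensionality of $\cW(\pi)^\ord(\chi)$, and determine the torus restriction of \emph{any} nonzero ordinary vector directly: the $\Zp^\x$-equivariance plus the $\bfU_p$-eigenvalue equation, unwound via $\pDII{y}{1}\pMX{p}{x}{0}{1}=\pMX{1}{yx}{0}{1}\pDII{yp}{1}$, give the recursion $\bigl(\sum_{x}\addchar_{\Qp}(yx)\bigr)\phi(yp)=\chi\Abs^{-\onehalf}(p)\phi(y)$, whose analysis (full sum $p$ on $\Zp$, cancellation at valuation $-1$, downward induction using the already-established vanishing of $\phi(yp)$ so the ill-defined sums at valuation $\le-2$ never intervene) forces $\phi(y)=\phi(1)\chi\Abs^{\onehalf}(y)\bbI_{\Zp}(y)$, and Kirillov-model injectivity gives $\phi(1)\neq0$. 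Your treatment of the well-definedness issue for the exponential sums is exactly the right care to take, and the nonvanishing step is sound. What each approach buys: the paper's construction doubles as an explicit recipe (the $p$-stabilized twist of a newform), which is how ordinary vectors are manipulated elsewhere in the paper (e.g., in the pairing computations of \lmref{L:ordlocalnorm}), whereas your argument is more self-contained — it avoids citing explicit newform formulas and shows the stated torus values are forced on the whole ordinary line, with the normalization then pinning down a generator.
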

\begin{proof} The proof of \propref{P:ordline} actually gives the recipe to construct the ordinary line. Indeed, let $W=W_{\pi\ot\chi^{-1}}$ be the Whittaker local newform of $\pi\ot\chi^{-1}$. Define $W^\dagger\in \sW(\pi\ot\chi^{-1})$ as follows: $W^\dagger=W$ if $\pi\ot\chi^{-1}$ is not spherical and $W^\dagger=W-\chi^{-2}\om\Abs^\onehalf(p)\rho(\pDII{p^{-1}}{1})W$ if $\pi\ot\chi^{-1}$ is spherical. An elementary calculation shows that $W^\dagger\ot\chi$ belongs to $\cW_\pi^\ord(\chi)$. By using the explicit formulas of Whittaker newforms (\cite[Section 2.4]{Schmidt02RJ}), we find that $W^\dagger\ot\chi(\pDII{y}{1})=\chi\Abs^\onehalf(y)\bbI_{\Zp}(y)$ as desired.
\end{proof}
\subsection{$p$-stabilized newforms}\label{SS:Whittaker}
Let $\pi$ be a cuspidal automorphic representation of $\GL_2(\A)$ and let $\cA(\pi)$ be the $\pi$-isotypic part in the space of automorphic forms on $\GL_2(\A)$. For $\varphi\in\cA(\pi)$, the Whittaker function of $\varphi$ (with respect to the additive character $\addchar_\Q:\A/\Q\to\C^\x$) is given by 
\[W_\varphi(g)=\int_{\A/\Q}\varphi(\pMX{1}{x}{0}{1}g)\addchar_\Q(-x)\rmd x\quad (g\in\GL_2(\A)),\]
where $\rmd x$ is the Haar measure with $\vol(\A/\Q,\rmd x)=1$. We have the Fourier expansion:
\[\varphi(g)=\sum_{\al\in\Q^\x}W_\varphi(\pDII{\al}{1}g)\]
(\cf\cite[Theorem 3.5.5]{Bump97Grey}). Let $f(q)=\sum_{n}a(n,f)q^n\in\sS_k(N,\chi)$ be a normalized Hecke eigenform, we shall denote by $\pi_f=\ot_v'\pi_{f,v}$ the cuspidal automorphic representation of $\GL_2(\A)$ generated by the adelic lift 
$\itPhi(f)$ of $f$. Then $\pi_f$ is irreducible and unitary with the central character $\chi^{-1}$. If $f$ is newform, then the conductor of $\pi_f$ is $N$, its adelic lift $\itPhi(f)$ is the normalized new vector in $\cA_0(\pi_f)$ and the Mellin transform 
\[Z(s,\itPhi(f))=\int_{\A^\x/\Q^\x}\itPhi(f)(\pDII{y}{1})\abs{y}^{s-\onehalf}\rmd^\x y=L(s,\pi_f)\]
is the automorphic $L$-function of $\pi_f$. Here $\rmd^\x y$ is the product measure $\prod_v\rmd^\x y_v$.

 \begin{defn}[$p$-stabilized newform]Let $p$ be a prime and fix an isomorphism $\iota_p:\C\iso\Qbarp$. We say that a normalized Hecke eigenform $f\in \sS_k(Np,\chi)$ is a (ordinary) \emph{$p$-stabilized newform} (with respoect to $\iota_p$) if $f$ is a new outside $p$ and the eigenvalue of $\bfU_p$, \ie the $p$-th Fourier coefficient $\iota_p(a(p,f))$, is a \padic unit. The prime-to-$p$ part of the conductor of $f$ is called \emph{the tame conductor} of $f$.\end{defn} 
 
 \begin{Remark}\label{R:WhittakerPordinary}Let $f$ be a $p$-stabilized newform. By the multiplicity one for new and ordinary vectors, the Whittaker function of the adelic lift $\itPhi(f)$ is a product of local Whittaker functions in $\cW(\pi_{f,v})$. To be precise,  
\[W_{\itPhi(f)}(g)=W_{\pi_{f,p}}^\ord(g_v)\prod_{v\not =p} W_{\pi_{f,v}}(g_v)\quad(g=(g_v)\in \GL_2(\A)).\]
Comparing the Fourier expansions of $\itPhi(f)$ and $f$ via \eqref{E:MA1}, we find that
\beq\label{E:Whittaker1}W_{\pi_{f,\ell}}(\pDII{\ell}{1})=\bfa(\ell,f)\ell^{-\frac{k}{2}} \text{ if }\ell\not =p;\quad W_{\pi_{f,p}}^\ord(\pDII{p}{1})=\bfa(p,f)p^{-\frac{k}{2}}.\eeq
By \corref{C:Word}, $W_{\pi_{f,p}}^\ord\in \cW(\pi_{f,p})^\ord(\al_{f,p})$, where $\al_{f,p}$ is the unramified character with $\al_{f,p}(p)=\bfa(p,f)p^{\frac{1-k}{2}}$.
\end{Remark}

\subsection{The bilinear form}
Let $\cA^0(\om)$ be the space of cusp forms in $\cA(\om)$. Let $\pairing$ denote the $\GL_2(\A)$-equivariant pairing between $\cA^0(\om)$ and $\cA^0(\om^{-1})$ defined by 
\[\pair{\varphi}{\varphi'}=\int_{\A^\x\GL_2(\Q)\bksl\GL_2(\A)}\varphi(g)\varphi'(g)\rmd^{\tau} g\]
 for $\varphi\in\cA^0(\om),\varphi'\in\cA^0(\om^{-1})$, where $\rmd ^{\tau} g$ is the Tamagawa measure of $\PGL_2(\A)$. The following lemma is well-known (\cf\cite[page 217]{Wald85}), and we omit the proof.
\begin{lm}\label{L:bilinear}For cusp forms $\varphi\in\cA^0_k(N,\om)$ and $\varphi'\in\cA^0_{-k}(N,\om^{-1})$, we have
\begin{align*}
\pair{X\varphi}{\varphi'}=&-\pair{\varphi}{X\varphi'} \text{ for }X\in\Lie(\GL_2(\R)),\\
\pair{\varphi}{\bfU_\pmq\varphi'}=&\pmq\pair{\LR_\pmq \varphi}{\varphi'}\text{ for }\ell\divides N,\\
\pair{T_\pmq\varphi}{\varphi'}=&\om(\ell)\pair{\varphi}{T_\pmq\varphi'} \text{ for }\ell\ndivides N.
\end{align*}
\end{lm}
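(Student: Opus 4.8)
The plan is to verify each of the three identities by unwinding the definition of the pairing and using the invariance of the Tamagawa measure together with elementary changes of variable. Throughout I would work with the defining integral $\pair{\varphi}{\varphi'}=\int_{\A^\x\GL_2(\Q)\bksl\GL_2(\A)}\varphi(g)\varphi'(g)\,\rmd^\tau g$, which makes sense since $\varphi\varphi'$ is a cusp form on $\PGL_2(\A)$ (the central characters $\om$ and $\om^{-1}$ cancel) and hence rapidly decreasing, so the integral converges absolutely.

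\emph{The Lie algebra relation.} For $X\in\Lie(\GL_2(\R))$, write $X\varphi(g)=\frac{d}{dt}\big|_{t=0}\varphi(g\exp(tX))$. Since $\exp(tX)$ acts on $\GL_2(\A)$ only through the archimedean component and the quotient measure $\rmd^\tau g$ is right-invariant under $\GL_2(\A)$ (in particular under right translation by $\exp(tX)$), I would differentiate the identity $\int \varphi(g\exp(tX))\varphi'(g\exp(tX))\,\rmd^\tau g=\int\varphi(g)\varphi'(g)\,\rmd^\tau g$ at $t=0$; the Leibniz rule then gives $\pair{X\varphi}{\varphi'}+\pair{\varphi}{X\varphi'}=0$. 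Differentiation under the integral sign is justified by the uniform rapid decay of cusp forms and their derivatives.

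\emph{The $\bfU_\pmq$ relation.} Here I would use the explicit formula $\bfU_\pmq\varphi'=\sum_{x\in\Z_\pmq/\pmq\Z_\pmq}\rho\big(\pMX{\uf_\pmq}{x}{0}{1}\big)\varphi'$ together with $\LR_\pmq\varphi=\rho\big(\pDII{\uf_\pmq^{-1}}{1}\big)\varphi$. Plugging these in and using right-invariance of $\rmd^\tau g$ to absorb each translation, the left side becomes $\sum_x\int\varphi(g)\varphi'\big(g\pMX{\uf_\pmq}{x}{0}{1}\big)\,\rmd^\tau g=\sum_x\int\varphi\big(g\pMX{\uf_\pmq}{x}{0}{1}^{-1}\big)\varphi'(g)\,\rmd^\tau g$. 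Since $\pMX{\uf_\pmq}{x}{0}{1}^{-1}=\pMX{\uf_\pmq^{-1}}{-\uf_\pmq^{-1}x}{0}{1}$ and in $\PGL_2(\A)$ the unipotent part $\pMX{1}{-\uf_\pmq^{-1}x}{0}{1}$ translates the cusp form $\varphi$ by an element that, after summing over the $\pmq$ residues $x$ and using that $\varphi$ is right-$\opcpt_0(N)$-invariant with $\pmq\mid N$, can be rearranged to yield $\pmq$ copies of $\rho\big(\pDII{\uf_\pmq^{-1}}{1}\big)\varphi=\LR_\pmq\varphi$; I would track the coset bookkeeping carefully to see the constant $\pmq$ emerge (this is the standard adjointness $\bfU_\pmq^*=\pmq\LR_\pmq$ on $\pmq$-integral level structures). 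This combinatorial step is the one place where care is needed, so I expect it to be the main obstacle.

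\emph{The $T_\pmq$ relation.} For $\pmq\ndivides N$, I would either expand $T_\pmq=\bfU_\pmq+\om(\uf_\pmq)V_\pmq$ and apply the previous computation on $\GL_2$ at an auxiliary deeper level (noting $V_\pmq$ is adjoint to $\pmq^{-1}\bfU_\pmq$ up to the central twist), or — more cleanly — use that $T_\pmq$ is, up to the scalar $\pmq^{k/2-1}$, realized by a bi-$\GL_2(\Z_\pmq)$-invariant Hecke operator whose adjoint under the $\GL_2(\A)$-invariant pairing is obtained by transposing the double coset $\GL_2(\Z_\pmq)\pDII{\pmq}{1}\GL_2(\Z_\pmq)$; since this double coset is stable under $g\mapsto g^{-1}$ up to the center, the only discrepancy is the central character value $\om(\uf_\pmq)=\om(\pmq)$ acting on $\varphi'\in\cA^0(\om^{-1})$, which produces exactly the factor $\om(\pmq)$ in $\pair{T_\pmq\varphi}{\varphi'}=\om(\pmq)\pair{\varphi}{T_\pmq\varphi'}$. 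As the statement notes, this is classical (see \cite[page 217]{Wald85}), so I would keep this verification brief.
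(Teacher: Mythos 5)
Your proposal is correct. The paper actually omits the proof of this lemma, citing \cite[page 217]{Wald85}, and your verification---unwinding the Tamagawa-measure pairing and using right invariance of $\rmd^\tau g$ together with the explicit coset descriptions of $\bfU_\pmq$, $\LR_\pmq$ and $T_\pmq$ (plus the central characters for the $\om(\uf_\pmq)$ twist)---is exactly the standard argument intended there. The one step you flag as the main obstacle is in fact immediate: after the change of variable $g\mapsto g\pMX{\uf_\pmq}{x}{0}{1}^{-1}$, factor $\pMX{\uf_\pmq}{x}{0}{1}^{-1}=\pDII{\uf_\pmq^{-1}}{1}\pMX{1}{-x}{0}{1}$ with the integral unipotent on the right, where it is absorbed by the $\opcpt_0(N)$-equivariance of $\varphi$ (the extended character $\om$ is trivial on unipotents), so each of the $\pmq$ summands equals $\pair{\LR_\pmq\varphi}{\varphi'}$ and the factor $\pmq$ is simply the number of summands.
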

 Let $\pi=\ot_v'\pi_v$ be an irreducible unitary cuspidal automorphic reprensentation on $\GL_2(\A)$ with central character $\om$. Denote by $\Contra{\pi}$ the contragredient representation of $\pi$. By the multiplicity one theorem, the pairing $\pairing$ gives rise to the equality $\cA(\Contra{\pi})=\cA(\pi)\ot\om^{-1}$. For a place $v$ of $\Q$, define the non-degenerate $\GL_2(\Q_v)$-equivariant pairing $\pairing$ between $\sW(\pi_v)$ and $\sW(\Contra{\pi}_v)$ by 
\beq\label{E:Wpair}\pair{W}{W'}=\int_{\Q_v^\x}W(\pDII{y}{1})W'(\pDII{-y}{1})\rmd^\x y\eeq
for $W\in\sW(\pi_v)$ and $\sW(\Contra{\pi}_v)$. This integral converges absolutely as $\pi_v$ is unitarizable. 
\begin{prop}\label{P:Petersson}
Let $\varphi\in\cA(\pi)$ and $\varphi'\in\cA(\Contra{\pi})$. Suppose that $W_\varphi=\prod_v W_v$ and $W_{\varphi'}=\prod_v W'_v$ such that $W_v(1)=W'_v(1)=1$ for all but finitely many $v$. Then we have \[\pair{\varphi}{\varphi'}=\frac{2L(1,\pi,\Ad)}{\zeta_\Q(2)}\prod_v\frac{\zeta_v(2)}{\zeta_v(1)L(1,\pi_v,\Ad)}\pair{W_v}{W'_v}.\]
\end{prop}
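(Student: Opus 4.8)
The plan is to prove the identity by the Rankin--Selberg method: one realizes $\pair{\varphi}{\varphi'}$ as the residue of a global zeta integral against a degenerate Eisenstein series, unfolds that integral into an Euler product of local zeta integrals, and identifies each local factor with $\pair{W_v}{W'_v}$ up to the stated $L$- and $\zeta$-factors. Concretely, let $f_s=\otimes_v f_{s,v}$ be the standard $\bfK$-fixed section of the normalized family $\Ind_{B(\A)}^{\GL_2(\A)}\Abs^s$, normalized so that $f_{s,v}(1)=1$ and $f_{1,v}(\diag{a,1})=1$ for all $a\in\Q_v^\x$, let $E(g,s)=\sum_{\gamma\in B(\Q)\bksl\GL_2(\Q)}f_s(\gamma g)$ be the associated Eisenstein series, and put
\[
Z(s)=\int_{\A^\x\GL_2(\Q)\bksl\GL_2(\A)}\varphi(g)\,\varphi'(g)\,E(g,s)\,\rmd^\tau g .
\]
Because $\varphi\varphi'$ descends to a cusp form on $\PGL_2(\A)$, the integral converges wherever $E(\cdot,s)$ is finite; moreover $E(\cdot,s)$ continues meromorphically in $s$ with a simple pole at $s=1$ whose residue is the constant function $c_E\cdot\bfone$, where $c_E=\vol\bigl(\A^\x\GL_2(\Q)\bksl\GL_2(\A),\rmd^\tau g\bigr)^{-1}$. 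Hence $\Res_{s=1}Z(s)=c_E\,\pair{\varphi}{\varphi'}$, and it remains to compute this residue by unfolding.

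First I would unfold $Z(s)$: inserting the defining sum of $E$ and using cuspidality together with the Fourier expansion recalled in \S\ref{SS:Whittaker} gives the standard identity
\[
Z(s)=\int_{N(\A)\bksl\GL_2(\A)}W_\varphi(g)\,W_{\varphi'}(\diag{-1,1}g)\,f_s(g)\,\rmd g,
\]
where $W_\varphi$ and $W_{\varphi'}$ are the Whittaker functions of $\varphi$ and $\varphi'$ with respect to $\addchar_\Q$; the factor $\diag{-1,1}$ (equivalently the substitution $y\mapsto-y$ in \eqref{E:Wpair}) records the pairing of the $\addchar_\Q$-model of $\pi_v$ with the $\ol{\addchar_\Q}$-model of $\Contra{\pi}_v$. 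By the hypothesis $W_\varphi=\prod_vW_v$, $W_{\varphi'}=\prod_vW'_v$ and the factorization $f_s=\otimes_v f_{s,v}$, the integral becomes an Euler product $Z(s)=\prod_v\Psi_v(s)$ with
\[
\Psi_v(s)=\int_{N(\Q_v)\bksl\GL_2(\Q_v)}W_v(g)\,W'_v(\diag{-1,1}g)\,f_{s,v}(g)\,\rmd g_v .
\]

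Next comes the local analysis. For all but finitely many $v$ --- $v$ finite, $\pi_v$ unramified, and $W_v,W'_v$ the spherical Whittaker functions with value $1$ at $1$ --- the classical unramified Rankin--Selberg computation (as in \cite{JacquetLanglands70} or \cite{Bump97Grey}) gives
\[
\Psi_v(s)=\frac{L(s,\pi_v\times\Contra{\pi}_v)}{\zeta_v(2s)}=\frac{\zeta_v(s)\,L(s,\pi_v,\Ad)}{\zeta_v(2s)},
\]
using the local identity $L(s,\pi_v\times\Contra{\pi}_v)=\zeta_v(s)L(s,\pi_v,\Ad)$. In general I would set $\Psi_v^\circ(s):=\dfrac{\zeta_v(2s)}{\zeta_v(s)\,L(s,\pi_v,\Ad)}\,\Psi_v(s)$; by the local theory of $L$- and $\varepsilon$-factors for $\GL(2)\times\GL(2)$ (\cite{GJ78}, \cite{Jacquet72Part2}) each $\Psi_v^\circ$ is holomorphic at $s=1$ and equals $1$ for almost all $v$, so that $Z(s)=\dfrac{\zeta_\Q(s)\,L(s,\pi,\Ad)}{\zeta_\Q(2s)}\prod_v\Psi_v^\circ(s)$ with a genuinely finite product. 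Since $L(s,\pi,\Ad)$ is holomorphic at $s=1$ --- the simple pole of $L(s,\pi\times\Contra{\pi})$ there being absorbed by $\zeta_\Q(s)$ --- and $\Res_{s=1}\zeta_\Q(s)=1$, taking residues gives
\[
\Res_{s=1}Z(s)=\frac{L(1,\pi,\Ad)}{\zeta_\Q(2)}\prod_v\Psi_v^\circ(1).
\]

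Finally I would evaluate $\Psi_v(1)$. Using the Iwasawa decomposition $N(\Q_v)\bksl\GL_2(\Q_v)\cong\{\diag{a,1}\}\times Z(\Q_v)\times\bfK_v$, the invariance of $W_vW'_v$ under $Z(\Q_v)$ (the central characters of $\pi_v$ and $\Contra{\pi}_v$ being inverse), the right $\bfK_v$-invariance of $f_{1,v}$ together with the normalization $f_{1,v}(\diag{a,1})=1$, and the $\GL_2(\Q_v)$-equivariance of the pairing \eqref{E:Wpair}, the integral collapses to
\[
\Psi_v(1)=\int_{\bfK_v}\pair{\rho(k)W_v}{\rho(k)W'_v}\,\rmd k_v=\vol(\bfK_v)\cdot\pair{W_v}{W'_v}=\pair{W_v}{W'_v},
\]
hence $\Psi_v^\circ(1)=\dfrac{\zeta_v(2)}{\zeta_v(1)\,L(1,\pi_v,\Ad)}\pair{W_v}{W'_v}$. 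Substituting into the two expressions for $\Res_{s=1}Z(s)$ and using $c_E=\vol(\A^\x\GL_2(\Q)\bksl\GL_2(\A),\rmd^\tau g)^{-1}=\tfrac12$ (the Tamagawa volume of $\PGL_2$ being $2$) yields the asserted formula. The conceptual ingredients --- the unfolding, the unramified computation, holomorphy of $\Psi_v^\circ$, and the $\GL_2(\Q_v)$-equivariance of the Whittaker pairing --- are all standard $\GL_2$-theory; the step requiring genuine care is the bookkeeping of the normalizing constants, namely pinning down the residue constant $c_E$ (whence the overall factor $2$), the precise normalization of the section $f_s$ near $s=1$ (whence the $\zeta_\Q(2s)$, and thus the $\zeta_\Q(2)$), and the compatibility of the Tamagawa measure $\rmd^\tau g$ with the local measures $\rmd g_v$ of \S\ref{SS:measure}.
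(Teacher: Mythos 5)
The paper does not actually prove this statement: its ``proof'' is a citation to Waldspurger \cite[Proposition 6]{Wald85}, plus the remark that $\pair{W_{\pi_v}}{W_{\Contra{\pi}_v}}=\zeta_v(1)L(1,\pi_v,\Ad)/\zeta_v(2)$ at spherical places so that the product is finite. What you have written out is essentially the standard Rankin--Selberg proof of that cited result (unfold against the residue of the spherical Eisenstein series, unramified computation, local collapse to the Whittaker pairing), so your route is legitimate and is in substance the proof behind the citation; the soft steps (unfolding, unramified identity $L(s,\pi_v\times\Contra{\pi}_v)=\zeta_v(s)L(s,\pi_v,\Ad)$, holomorphy of the normalized local factors) are all fine.

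However, the one step you yourself single out as delicate -- the constants -- is not carried out correctly; you land on the right formula only because two compensating errors of size $\zeta_\Q(2)$ cancel. First, with your section normalized by $f_{s,v}(1)=1$ (which is what makes the unramified integral equal $L(s,\pi_v\times\Contra{\pi}_v)/\zeta_v(2s)$), the residue of $E(g,s)$ at $s=1$ is the constant $\tfrac{1}{2\zeta_\Q(2)}=3/\pi$ (compute it from the constant term: $M(s)f_s(e)=\zeta_\Q(2s-1)/\zeta_\Q(2s)$), \emph{not} $\vol(\A^\x\GL_2(\Q)\bksl\GL_2(\A),\rmd^\tau g)^{-1}=\tfrac12$; the slogan ``residue $=$ volume inverse'' is true only for the measure $\prod_v\rmd g_v$ (whose total mass is $2\zeta_\Q(2)$), or for the section multiplied by $\zeta_\Q(2s)$ -- not simultaneously for the Tamagawa measure and your normalization of $f_s$. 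Second, your Euler factorization $Z(s)=\prod_v\Psi_v(s)$ silently identifies $\rmd^\tau g$ with $\prod_v\rmd g_v$, whereas (as the paper itself records in the proof of Proposition \ref{P:Ichino1}) $\prod_v\rmd g_v=\zeta_\Q(2)\cdot\rmd^\tau g$, so the correct statement is $Z(s)=\zeta_\Q(2)^{-1}\prod_v\Psi_v(s)$. Fixing both restores a consistent ledger and gives the same final identity. Two smaller points of the same nature: the unfolded integral must be taken over $Z(\A)N(\A)\bksl\GL_2(\A)$ (your integrand is central-invariant, so integrating over $N(\A)\bksl\GL_2(\A)$, and locally over $N(\Q_v)\bksl\GL_2(\Q_v)$ with a $Z(\Q_v)$ factor in the Iwasawa decomposition, literally diverges); and the normalization ``$f_{1,v}(\diag{a,1})=1$'' is inconsistent with the pole of $E$ at $s=1$ and with your own unramified evaluation -- you want $f_{s,v}(\diag{a,1}k)=\abs{a}_v^{s}$, which together with the quotient measure $\abs{y}^{-1}\rmd^\x y\,\rmd k_v$ is exactly what makes $\Psi_v(1)=\pair{W_v}{W'_v}$.
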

\begin{proof}
This is \cite[Proposition 6]{Wald85}. Note that $W_v=W_{\pi_v}$ and $W'_v=W_{\Contra{\pi}_v}$ are the normalized local Whittaker newforms for all but finitely many $v$, and if $\pi_v$ is spherical, then 
\[\pair{W_{\pi_v}}{W_{\Contra{\pi}_v}}=\frac{\zeta_v(1)L(1,\pi_v,\Ad)}{\zeta_v(2)},\]
 so the right hand side of the equation in the proposition is indeed a finite product. \end{proof}
 
We give the formula of the local pairing of ordinary Whittaker functions.
\begin{lm}\label{L:ordlocalnorm}Let $p$ be a prime. Suppose that $\pi_p$ is a principal series $\Prin{\chi}{\upsilon}$ or a special representation $\chi\Abs^{-\onehalf}{\rm St}$. Let $W_{\pi_p}^\ord\in\sW(\pi_p)^\ord(\chi)$ be the normalized ordinary Whittaker function in \corref{C:Word}. If $n\geq \max\stt{1,c(\pi_p)}$, then we have  
\begin{align*}
\pair{\rho(\pMX{0}{p^{-n}}{-p^n}{0})W_{\pi_p}^\ord}{W_{\pi_p}^\ord\ot\om_p^{-1}}=&\chi(-1)\chi\upsilon^{-1}\Abs(p^{n})\cdot \gamma(0,\upsilon\chi^{-1})\zeta_p(1).
\end{align*}
Here $\upsilon=\chi^{-1}\om_p$ and $\gamma(s,-)$ is the $\gamma$-factor defined in \eqref{E:gamma1}.
\end{lm}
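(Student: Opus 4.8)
The plan is to reduce the computation to an explicit integral over $\Qp^\times$ using the Kirillov/Whittaker model and the known formulas for the ordinary Whittaker function from Corollary~\ref{C:Word}. First I would unfold the definition of the pairing $\pairing$ on Whittaker models in \eqref{E:Wpair}. The key observation is that the element $\pMX{0}{p^{-n}}{-p^n}{0}$ can be written as $\pDII{p^{-n}}{p^n}\cdot w$ where $w=\pMX{0}{1}{-1}{0}$ is the Weyl element, so that $\rho(\pMX{0}{p^{-n}}{-p^n}{0})W_{\pi_p}^\ord(g) = W_{\pi_p}^\ord(g\,\pDII{p^{-n}}{p^n}\,w)$. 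Using $\pi_p$-equivariance and the central character $\om_p$, one extracts a factor of $\chi\upsilon^{-1}\Abs(p^n)$ (up to a sign $\chi(-1)$ coming from $\pDII{-1}{1}$ normalizations), leaving a pairing of the form $\langle \rho(w)W_{\pi_p}^\ord, W_{\pi_p}^\ord\ot\om_p^{-1}\rangle$ against the shift. The clean way to organize this is via the local functional equation for $\GL_2$: the integral $\int_{\Qp^\times} W(\pDII{y}{1})\abs{y}^{s-\onehalf}\rmd^\times y$ equals $L(s,\pi_p)$ when $W=W_{\pi_p}$ is the newform, and applying $\rho(w)$ converts this to the $\gamma$-factor times the dual integral, which is precisely the origin of the factor $\gamma(0,\upsilon\chi^{-1})$ after twisting by $\chi^{-1}\Abs^{\onehalf}$ to reduce to $\chi = \Abs^{\onehalf}$ as in the proof of Proposition~\ref{P:ordline}.

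Concretely, the main steps in order would be: (1) Twist by $\chi^{-1}\Abs^{\onehalf}$ to reduce to the case where $W_{\pi_p}^\ord$ is the ordinary vector attached to a representation with $\chi$ trivial-ish, i.e. $W^\ord_{\pi_p}(\pDII{y}{1}) = \bbI_{\Zp}(y)$ up to normalization, using $\chi\Abs^{\onehalf}(y)\bbI_{\Zp}(y)$ from Corollary~\ref{C:Word}; this keeps track of where the $\gamma(0,\upsilon\chi^{-1})$ comes from, since after twisting $\upsilon\chi^{-1}$ becomes the relevant ratio. (2) Recall the recipe in the proof of Corollary~\ref{C:Word}: $W^\ord$ differs from the Whittaker newform $W_{\pi_p\ot\chi^{-1}}$ only by the operator $1 - \chi^{-2}\om_p\Abs^{\onehalf}(p)\rho(\pDII{p^{-1}}{1})$ in the spherical case, and equals the newform otherwise. (3) Use the explicit Whittaker newform formulas from \cite[Section 2.4]{Schmidt02RJ} (also cited in Corollary~\ref{C:Word}) to evaluate $W^\ord_{\pi_p}$ on $\pDII{y}{1}w$; the action of $w$ on the newform is controlled by the local functional equation / the formula for $W_{\pi_p}(\pDII{y}{1}w)$ in terms of $\varepsilon$-factors. (4) Plug into \eqref{E:Wpair} and compute the resulting geometric series in $\val_p(y)$, matching the closed form $\chi(-1)\chi\upsilon^{-1}\Abs(p^n)\gamma(0,\upsilon\chi^{-1})\zeta_p(1)$; the $\zeta_p(1) = (1-p^{-1})^{-1}$ should emerge as the sum $\sum_{m\geq 0} p^{-m}$ coming from integrating $\bbI_{\Zp}$ against itself (the support of both ordinary Whittaker functions on the diagonal torus being $\Zp$).

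The hard part will be step (4): carefully tracking all the normalizing powers of $p$ and the sign, and separating the Steinberg/special case from the principal-series case (ramified vs.\ unramified $\upsilon\chi^{-1}$), since the explicit shape of $W_{\pi_p}(\pDII{y}{1}w)$ differs in each case and the cancellations that produce the uniform answer $\gamma(0,\upsilon\chi^{-1})\zeta_p(1)$ are case-dependent. In particular, when $\upsilon\chi^{-1}$ is unramified one must use that $\gamma(0,\upsilon\chi^{-1}) = \varepsilon(0,\upsilon\chi^{-1})L(1,\chi\upsilon^{-1})/L(0,\upsilon\chi^{-1})$ simplifies against the extra $\rho(\pDII{p^{-1}}{1})$-term in the definition of $W^\ord$, whereas when it is ramified the $L$-factors are trivial and $\gamma = \varepsilon$ is a Gauss sum, matching the ramified local newform formula. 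A clean alternative, which I would pursue if the direct computation gets unwieldy, is to invoke the local functional equation for the Rankin--Selberg / $\GL_2$ zeta integral directly: the quantity $\pair{\rho(w)W}{W'}$ is essentially $Z(0,\rho(w)W, W')$, and the functional equation converts it to $\gamma(0,\cdot)\cdot Z(1,W,W')$, with the latter evaluating to $\zeta_p(1)$ times the normalization constant by the support computation — this avoids the case analysis entirely at the cost of citing Jacquet's local theory \cite{JacquetLanglands70, Jacquet72Part2}.
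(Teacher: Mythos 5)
Your proposal is essentially sound, and your ``clean alternative'' is in fact the paper's own argument: the paper writes $\pair{\rho(\bftn)W^\ord_{\pi_p}}{W^\ord_{\pi_p}\ot\om_p^{-1}}$ as a torus integral, uses the explicit value $W^\ord_{\pi_p}(\aone{y})=\chi\Abs^\onehalf(y)\bbI_{\Zp}(y)$ to replace the second factor by the character $\chi\om_p^{-1}(-y)\Abs^{s-\onehalf}(y)$, applies the $\GL(2)\times\GL(1)$ local functional equation to $\pi\ot\chi^{-1}$, and then reads off the answer from the explicit $\gamma$-factor of $\pi\ot\chi^{-1}$ in the principal-series and Steinberg cases. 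Your primary plan (evaluating $W^\ord_{\pi_p}$ on $\aone{y}w$ case by case from the newform formulas) would also work, but it is exactly where the laborious, case-dependent cancellations sit; the functional-equation route replaces all of that by one uniform identity plus a two-line $\gamma$-factor check. Two points you should make explicit if you carry this out. First, what licenses inserting the character in place of $W^\ord_{\pi_p}(\aone{-y})\om_p^{-1}(-y)$ is not the support of $W^\ord_{\pi_p}$ on the torus alone, but the vanishing of the \emph{translated} function: $W^\ord_{\pi_p}(\aone{y}\bftn)=0$ for $y\notin\Zp$, which is where the hypothesis $n\geq\max\stt{1,c(\pi_p)}$ enters; relatedly, the matrix manipulation with $\pDII{p^{-n}}{p^n}$ only extracts the central value $\om_p(p^n)$ together with a shift $y\mapsto yp^{-2n}$, not directly the factor $\chi\upsilon^{-1}\Abs(p^n)$, which appears only at the end. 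Second, the constants $\gamma(0,\upsilon\chi^{-1})\zeta_p(1)$ do not come from integrating $\bbI_{\Zp}$ against itself: in the paper the dual-side integral produces $\zeta_p(1-s)$, which cancels against the denominator of $\gamma(s,\pi\ot\chi^{-1})$, and the surviving $\zeta_p(s)L(s,\chi\upsilon^{-1})/L(1-s,\upsilon\chi^{-1})\cdot\varepsilon(1-s,\upsilon\chi^{-1})$ is evaluated by letting $s\to1$ (one carries the auxiliary variable $s$ precisely to handle this cancellation); in the Steinberg case one then checks separately that $-\abs{p}^{-1}\zeta_p(2)=\gamma(0,\upsilon\chi^{-1})\zeta_p(1)$, so the uniform statement holds.
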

\begin{proof}Let $W=W_{\pi_p}^\ord$ and $\bftn=\pMX{0}{p^{-n}}{-p^n}{0}$. We first note that  if $n\geq \max\stt{1,c_p(\pi)}$, then $W(\aone{y}\bftn)=0$ if $y\not\in\Zp$. Then we have
\begin{align*}\pair{\rho(\bftn)W}{W\ot\om_p^{-1}}=&\int_{\Qp^\x}W(\aone{y}\bftn)W(\aone{-y})\om_p^{-1}(-y)\rmd^\x y\\
=&\int_{\Qp^\x}W(\pDII{y}{1}\bftn)\chi\om_p^{-1}(-y)\Abs^{s-\onehalf}(y)\rmd^\x y|_{s=1}.
\end{align*}
By the local functional equation for $\GL(2)$ (\cf\cite[Theorem 4.7.5]{Bump97Grey}), the last integral equals
\begin{align*}
&\om_p(p^{-n})\varepsilon(1-s,\pi\ot\chi^{-1})\frac{L(s,\pi\ot\chi\om_p^{-1})}{L(1-s,\pi\ot\chi^{-1})}\chi\om_p(-1)\int_{\Q_p^\x}W(\pDII{y}{1}\pMX{0}{1}{-1}{0}\pMX{0}{1}{-p^{2n}}{0})\chi^{-1}\Abs^{1/2-s}(y)\rmd^\x y|_{s=1}\\
=&\om_p(p^{-n})\chi(-1)\chi(p^{2n})\cdot\abs{p^{2n}}^{s-\onehalf}\varepsilon(1-s,\pi\ot\chi^{-1})\frac{L(s,\pi\ot\chi\om_p^{-1})}{L(1-s,\pi\ot\chi^{-1})} \zeta_p(1-s)|_{s=1}
\end{align*}
Using the formula
\[\varepsilon(1-s,\pi\ot\chi^{-1})\frac{L(s,\pi\ot\chi\om_p^{-1})}{L(1-s,\pi\ot\chi^{-1})}=\begin{cases}\varepsilon(1-s,\upsilon\chi^{-1})\frac{\zeta_p(s)L(s,\chi\upsilon^{-1})}{\zeta_p(1-s)L(1-s,\upsilon\chi^{-1})}&\text{ if }\pi_p=\Prin{\chi}{\upsilon},\\
-\abs{p}^{-s}\frac{\zeta_p(s+1)}{\zeta_p(1-s)}&\text{ if }\pi_p=\chi\Abs^{-\onehalf}{\rm St},
\end{cases}\]
we see that
\[\pair{\rho(\bftn)W}{W\ot\om_p^{-1}}=\chi(-1)\om_p(p^{-n})\chi^2\Abs(p^n)\begin{cases}\gamma(0,\upsilon\chi^{-1})\zeta_p(1)&\text{ if }\pi_p=\Prin{\chi}{\upsilon},\\
-\abs{p}^{-1}\zeta_p(2)&\text{ if }\pi_p=\chi\Abs^{-\onehalf}{\rm St}.
\end{cases}\]
Finally, we note that if $\pi=\chi\Abs^{-\onehalf}{\rm St}$, then $\upsilon=\chi\Abs^{-1}$ and $\gamma(0,\upsilon\chi^{-1})\zeta_p(1)=-\abs{p}^{-1}\zeta_p(2)$. This finishes the proof.
\end{proof}

\subsection{Root numbers and Petersson norms}
Let $f\in\sS_k(N,\chi)$ be a normalized cuspidal newform of weight $k$ and conductor $N$. Put $f_c(z):=\ol{f(-\ol{z})}$. Then it is a classical result that 
\beq\label{E:rootnumbers}f|_k\pMX{0}{-1}{N}{0}=w(f)\cdot f_c\eeq
for some $w(f)\in\C^\x$ with the modulus $\abs{w(f)}=1$ (\cf\cite[Theorem 4.6.15]{Miyake06book}). This complex number $w(f)$ is called the \emph{root number} of $f$. 
By \cite[page 38]{Hida88Fourier}, we have 
\[w(f)=\prod_{\ell<\infty}\varepsilon(1/2,\pi_{f,\ell}).\]
Recall that the Petersson norm of $f$ is defined by
\[\norm{f}^2_{\Gamma_0(N)}=\int_{\Gamma_0(N)\bksl \frakH}\abs{f(x+\sqrt{-1}y)}^2y^{k}\frac{\rmd x\rmd y}{y^2}.\]
For each integer $M$, define the matrix $\Tau_{M}=(\Tau_{M,v})\in\GL_2(\A)$ by \beq\label{E:atkin}
\begin{aligned}
\Tau_{M,\infty}=&\pDII{-1}{1},\quad\Tau_{M,\ell}=1\text{ if }\ell\ndivides M
;\\
\Tau_{M,\ell}=&\pMX{0}{1}{-\ell^{\val_\ell(M)}}{0}\in\GL_2(\Q_\ell)\text{ if }\ell\divides M.
\end{aligned}\eeq
Let $\pi=\pi_f$ be the cuspidal automorphic representation generated by $\varPhi(f)$ with central character $\om(=\chi_\A^{-1})$. Define the local norm of the normalized Whittaker newform $W_{\pi_v}$  by 
\beq\label{E:localnormnew}
B_{\pi_v}=\frac{\zeta_v(2)}{\zeta_v(1)L(1,\pi_v,\Ad)}\pair{\rho(\Tau_{N,v})W_{\pi_v}}{W_{\pi_v}\ot\om_v^{-1}}.\eeq
It is straightforward to verify that
\[B_{\pi_\infty}=2^{-1-k},\, B_{\pi_\pmq}=1\text{ if }\pmq\ndivides N.
\]
By \propref{P:Petersson} and \eqref{E:rootnumbers}, we have
\beq\label{E:normformula}\norm{f}_{\Gamma_0(N)}^2=\frac{[\SL_2(\Z):\Gamma_0(N)]}{2^k\cdot w(f)}\cdot L(1,\pi,\Ad)\cdot\prod_{q\divides N}B_{\pi_q}.\eeq


\def\caseII{{(\rm II)}}
\def\irr{{\rm (b)}}
\def\red{{\rm (a)}}
\def\caseI{{(\rm I)}}
\section{The unbalanced $p$-adic triple product $L$-functions}\label{S:unb}
\subsection{Ordinary $\Lam$-adic modular forms}\label{S:nc.unb}Let $p>2$ be a prime and let $\cO$ be the ring of integers of a finite extension of $\Qp$.  Let $\bfI$ be a normal domain finite flat over $\Lam=\cO\powerseries{1+p\Zp}$. A point $Q\in\Spec \bfI(\Qbar_p)$, a ring homomorphism $Q:\bfI\to\Qbar_p$ is said to be locally algebraic if $Q|_{1+p\Zp}$ is a locally algebraic character in the sense that $Q(z)=z^{k_Q}\ep_Q(z)$ with $k_Q$ an integer and $\ep_Q(z)\in\mu_{p^\infty}$. We shall call $k_Q$ the \emph{weight} of $Q$ and $\ep_Q$ the \emph{finite part} of $Q$. Let $\frakX_\bfI$ be the set of locally algebraic points $Q\in\Spec\bfI(\Qbarp)$ of weight $k_Q\geq 1$. A point $Q\in\frakX_\bfI$ is called \emph{arithmetic} if the weight $k_Q\geq 2$ and let $\frakX_\bfI^\ari$ be the set of arithmetic points. Let $\wp_Q=\Ker Q$ be the prime ideal of $\bfI$ corresponding to $Q$ and $\cO(Q)$ be the image of $\bfI$ under $Q$.

Fix an isomorphism $\iota_p:\Cp\iso\C$ once and for all. Denote by $\Om:(\Z/p\Z)^\x\to\mu_{p-1}$ the \padic \Teich character. Let $N$ be a positive integer prime to $p$ and let $\chi:(\Z/Np\Z)^\x \to\cO^\x$ be a Dirichlet character modulo $Np$. Denote by $\bfS(N,\chi,\bfI)$ the space of $\bfI$-adic cusp forms of tame level $N$ and (even) branch character $\chi$, consisting of  formal power series $\bdsf(q)=\sum_{n\geq 1}\bfa(n,\bdsf)q^n\in \bfI\powerseries{q}$ with the following property: there exists an integer $a_\bdsf$ such that for arithmetic points $Q\in\frakX^+_\bfI$ with $k_Q\geq a_\bdsf$, the specialization $\bdsf_Q(q)$ is the $q$-expansion of a cusp form $\bdsf_Q\in \sS_{k_Q}(Np^e,\chi\Om^{2-k_Q}\ep_Q)$. The character $\chi$ is called the \emph{branch character} of $\bdsf$.

The space $\bfS(N,\chi,\bfI)$ is equipped with the action of the usual Hecke operators $T_\ell$ for $\ell\ndivides Np$ as in \cite[page 537]{Wiles88} and the operators $\bfU_\ell$ for $\ell\divides pN$ given by $\bfU_\ell(\sum_n \bfa(n,\bdsf)q^n)=\sum_n\bda(n\ell,\bdsf)q^n$. For a positive integer $d$ prime to $p$, define $V_d:\bfS(N,\chi,\bfI)\to \bfS(Nd,\chi,\bfI)$ by $V_d(\sum_n\bda(n,\bdsf)q^n)=d\sum_n\bda(n,\bdsf)q^{dn}$. Recall that Hida's ordinary projector $\eord$ is defined by 
\[\eord:=\lim_{n\to\infty}\bfU_p^{n!}.\]
This ordinary projector $\eord$ has a well-defined action on the space of classical modular forms preserving the cuspidal part as well as on the space $\bfS(N,\chi,\bfI)$ of $\bfI$-adic cusp forms (\cf\cite[page 537 and Prop. 1.2.1]{Wiles88}). The space $\eord\bfS(N,\chi,\bfI)$ is called the space of ordinary $\bfI$-adic forms defined over $\bfI$. A key result in Hida's theory of ordinary $\bfI$-adic cusp forms is that if $\bdsf\in \eord\bfS(N,\chi,\bfI)$, then for \emph{every} arithmetic points $Q\in\frakX_\bfI$, we have $\bdsf_Q\in \eord\sS_{k_Q}(Np^e,\chi\Om^{2-k_Q}\ep_Q)$. 
We say $\bdsf\in\eord\bfS(N,\chi,\bfI)$ is a \emph{primitive Hida family} if for every arithmetic points $Q\in\frakX_\bfI$, $\bdsf_Q$ is a $p$-stabilized cuspidal newform of tame conductor $N$. 
Let $\frakX_\bfI^\cls$ be the set of classical points (for $\bdsf$) given by \[\frakX_\bfI^\cls:=\stt{Q\in\frakX_\bfI^\cls\mid \bdsf_Q\text{ is the $q$-expansion of a classical modular form}}.\]
Note that $\frakX_\bfI^\cls$ contains the set of arithmetic points $\frakX_\bfI^\ari$ but may be strictly larger than $\frakX_\bfI^\ari$ as we allow the possibility of weight one points.

\subsection{Galois representation attached to Hida families}\label{SS:GaloisRep}
Let $\Dmd{\cdot}:\Zp^\x\to 1+p\Zp$ be character defined by $\Dmd{x}=x\Om^{-1}(x)$ and write $z\mapsto [z]_\Lam$ for the inclusion of group-like elements $1+p\Zp\to \cO\powerseries{1+p\Zp}^\x=\Lam^\x$. For $z\in\Zp^\x$, denote by $\Dmd{z}_\bfI\in\bfI^\x$ the image of $[\Dmd{z}]_\Lam$ in $\bfI$ under the structure morphism $\Lam\to\bfI$. By definition, $Q(\Dmd{z}_\bfI)=Q(\Dmd{z})$ for $Q\in\frakX_\bfI$. Let $\cyc:G_\Q\to\Zp^\x$ be the \padic cyclotomic character and let $\Dmd{\cyc}_\bfI:G_\Q\to\bfI^\x$ be the character $\Dmd{\cyc}_\bfI(\sg)=\Dmd{\cyc(\sigma)}_\bfI$. For each Dirichlet character $\chi$, we define $\chi_\bfI:G_\Q\to \bfI^\x$ by $\chi_\bfI:={\boldsymbol \sigma}_\chi\Dmd{\cyc}^{-2}\Dmd{\cyc}_\bfI$, where ${\boldsymbol \sigma}_\chi$ is the Galois character which sends the geometric Frobenious element $\Frob_\ell$ at $\ell$ to $\chi(\ell)^{-1}$.

If $\bdsf\in\eord\bfS(N,\chi,\bfI)$ is a primitive Hida family of tame conductor $N$, we let $\rho_{\bdsf}:G_\Q\to\GL_2(\Frac\bfI)$ be the $\bfI$-adic Galois representation attached to $\bdsf$ characterized by \[\Tr(\rho_\bdsf(\Frob_\ell))=\bfa(\ell,\bdsf);\quad\det\rho_\bdsf(\Frob_\ell)=\chi\Om^2(\ell)\Dmd{\ell}_\bfI\ell^{-1}\quad (\ell\ndivides pN)
.\]
Note that $\det\rho_\bdsf=\chi_\bfI^{-1}\cdot\cyc^{-1}$. We have a complete knowledge of the description of the restriction of $\rho_{\bdsf}$ to the local decomposition group $G_{\Q_\ell}$. For $\ell=p$,  according to \cite[Theorem 2.2.1]{Wiles88}, 
\[\rho_\bdsf|_{G_{\Qp}}\sim \pMX{\al_p}{*}{0}{\al_p^{-1}\chi_\bfI^{-1}\cyc^{-1}}\]
where $\al_p:G_{\Qp}\to\bfI^\x$ is the unramified character with $\al_p(\Frob_p)=\bfa(p,\bdsf)$.\footnote{Our representation  $\rho_\bdsf$ is the dual of $\rho_{\sF}$ considered in \cite{Wiles88}.}
For $\ell\not =p$, enlarging $\bfI$ if necessary, we have the following list of $\rho_{\bdsf}|_{G_{\Q_\ell}}$.
\begin{enumerate}
\item (Principal series) $\rho_\bdsf|_{G_{\Q_\ell}}$ is reducible and isomorphic to 
\[\al_\ell\xi\cyc^{1/2}\Dmd{\cyc}_\bfI^{-1/2}\oplus \al_\ell^{-1}\xi'\cyc^{1/2}\Dmd{\cyc}_\bfI^{-1/2}\]
with a unramified characters $\al_\ell:G_{\Q_\ell}\to\bfI^\x$ and a finite order characters $\xi,\xi':G_{\Q_\ell}\to\Qbar^\x$ with $\xi\xi'=\chi^{-1}\Om^{-2}$. 

\item (Special) $\rho_\bdsf|_{G_{\Q_\ell}}$ is indecomposable and 
\[\rho_{\bdsf}|_{G_{\Q_\ell}}\sim \pMX{\xi\cyc\Dmd{\cyc}_\bfI^{-1/2}}{*}{0}{\xi\Dmd{\cyc}_\bfI^{-1/2}}\]
with a finite order character $\xi:G_{\Q_\ell}\to\Qbar^\x$ such that $\xi^2=\chi^{-1}\Om^{-2}$.
\item (Supercuspidal) $\rho_\bdsf|_{G_{\Q_\ell}}$ is irreducible and $\rho_\bdsf\iso\rho_0\ot\Dmd{\cyc}_{\bfI}^{-1/2}$ with $\rho_0:G_{\Q_\ell}\to\GL_2(\Qbar)$ irreducible representation of finite image
\end{enumerate}
(\cf \cite[page 689]{SU06}).
\begin{Remark}[Rigidity of automorphic types]\label{R:rigidity}
We recall the \emph{rigidity of automorphic types} for a primitive Hida family $\bdsf$ in \cite[Lemma 2.14]{OF12Crelle}. Let $\ell\not =p$ be a prime. If for some arithmetic point $Q$ the associated cuspidal automorphic representation $\pi_{\bdsf_Q,\ell}$ is principal series (resp. special, supercuspidal) of conductor $\ell^n$, then for \emph{any} arithmetic point $Q'$, $\pi_{\bdsf_{Q'},\ell}$ is also principal series (resp. special, supercuspidal) of the same conductor $\ell^n$. This is a consequence of the above description of $\rho_{\bdsf}|_{G_{\Q_\ell}}$, the Langlands correspondence and the Ramanujan conjecture for elliptic modular forms (only needed in the case (Special)). 

In addition, if $\pi_{\bdsf_Q,\ell}$ is a discrete series at any arithmetic point $Q\in\frakX_\bfI^+$, then the Weil-Deligne representaion associated with the specialization of $\rho_{\bdsf}\ot\Dmd{\cyc}^{1/2}_\bfI |_{G_{\Q_\ell}}$ at $Q$ is independent of $Q$.

\end{Remark}
\def\rmT{\bbT}

 \subsection{Hecke algebras and congruence numbers}\label{SS:Heckealgebra}
If $N$ is a positive integer and $\chi$ is a Dirichlet character modulo $N$, we let $\rmT_k(N,\chi)$ be the $\cO$-subalgebra in $\End_\C\eord\sS_k(N,\chi)$ generated over $\cO$ by the Hecke operators $T_\ell$ for $\ell\ndivides Np$ and the operators $\bfU_\ell$ for $\ell\divides Np$. Suppose that $N$ is \emph{prime to} $p$. Let $\Delta=(\Z/Np\Z)^\x$ and $\wh \Delta$ be the group of Dirichlet characters modulo $Np$. Enlarging $\cO$ if necessary, we assume that every $\chi\in\wh\Delta$ takes value in $\cO^\x$. We are going to consider the Hecke algebra $\bfT(N,\bfI)$ acting on the space of ordinary $\Lam$-adic cusp forms of tame level $\Gamma_1(N)$ defined by 
\[\bfS(N,\bfI)^\ord:=\bigoplus_{\chi\in\wh\Delta}\eord\bfS(N,\chi,\bfI).\]
In addition to the action of Hecke operators, denote by $\sg_d$ the usual diamond operator for $d\in \Delta$ acting on $\bfS(N,\bfI)^\ord$ by $\sg_d(\bdsf)_{\chi\in\wh\Delta}=(\chi(d)\bdsf)_{\chi\in\wh \Delta}$. Then the ordinary $\bfI$-adic cuspidal Hecke algebra $\bfT(N,\bfI)$ is defined to be the $\bfI$-subalgebra of $\End_{\bfI}\bfS(N,\bfI)^\ord$ generated over $\bfI$ by $T_\ell$ for $\ell\divides Np$, $\bfU_\ell$ for $\ell\divides Np$ and the diamond operators $\sg_d$ for $d\in \Delta$. Let $Q\in\frakX_\bfI^\ari$ be an arithmetic point. Every $t\in \bfT(N,\bfI)$ commutes with the specialization: $(t\cdot \bdsf)_Q=t\cdot \bdsf_Q$. For $\chi\in \wh\Delta_{Np}$, let $\wp_{Q,\chi}$ be the ideal of $\bfT(N,\bfI)$ generated by $\wp_Q$ and $\stt{\sg_d-\chi(d)}_{d\in \Delta}$. A classical result \cite[Theorem 3.4]{Hida88Annals} in Hida theory asserts that \[\bfT(N,\bfI)/\wp_{Q,\chi}\iso \rmT_{k_Q}(Np^e,\chi\Om^{2-k_Q}\ep_Q)\ot_\cO\cO(Q).\]

Let $\bdsf\in\eord\bfS(N,\chi,\bfI)$ be a primitive Hida family of tame level $N$ and character $\chi$ and let $\lam_\bdsf: \bfT(N,\bfI)\to\bfI$ be the corresponding homomorphism defined by $\lam_\bdsf(T_\ell)=\bfa(\ell,\bdsf)$ for $\ell\ndivides Np$, $\lam_\bdsf(\bfU_\ell)=\bfa(\ell,\bdsf)$ for $\ell\divides Np$ and $\lam_\bdsf(\sg_d)=\chi(d)$ for  $d\in\Delta$. Let $\frakm_\bdsf$ be the maximal of $\bfT(N,\bfI)$ containing $\Ker\lam_\bdsf$ and let $\bfT_{\frakm_\bdsf}$ be the localization of $\bfT(N,\bfI)$ at $\frakm_\bdsf$. It is the local ring of $\bfT(N,\bfI)$ through which $\lam_\bdsf$ factors. Recall that the congruence ideal $C(\bdsf)$ of the morphism $\lam_\bdsf:\bfT_{\frakm_\bdsf}\to \bfI$ is defined by 
\[C(\bdsf):=\lam_\bdsf(\Ann_{\bfT_{\frakm_\bdsf}}(\Ker\lam_\bdsf))\subset\bfI.\]
The Hecke algebra $\bfT_{\frakm_\bdsf}$ is a local finite flat $\Lam$-algeba, and by the primitiveness of $\bdsf$, there is an algebra direct sum decomposition 
\beq\label{E:HeckeDecom}\lam:\bfT_{\frakm_\bdsf}\ot_{\bfI}\Frac\bfI\iso\Frac\bfI\oplus \sB,\,\,t\mapsto \lam(t)=(\lam_\bdsf(t),\lam_\sB(t)),\eeq where $\sB$ is some finite dimensional $(\Frac\bfI)$-algebra (\cite[Corollaty 3.7]{Hida88Annals}). By definition we have \[C(\bdsf)=\lam_\bdsf(\bfT_{\frakm_\bdsf}\cap \lam^{-1}(\Frac\bfI\oplus\stt{0})).\]
Now we impose the following \begin{hypothesis*}[CR] The residual Galois representation $\ol{\rho}_{\bdsf}$ of $\rho_{\bdsf}$ is absolutely irreducible and $p$-distiniguished. \end{hypothesis*} Under the above hypothesis, $\bfT_{\frakm_\bdsf}$ is Gorenstein by \cite[Corollay 2, page 482]{Wiles95}, and with this Gorenstein property of $\bfT_{\frakm_\bdsf}$, Hida in \cite{Hida88AJM} proved that the congruence ideal $C(\bdsf)$ is generated by a non-zero element $\eta_\bdsf\in\bfI$, called the congruence number for $\bdsf$. Let $1^*_\bdsf$ be the unique element in $\bfT_{\frakm_\bdsf}\cap \lam^{-1}(\Frac\bfI\oplus\stt{0})$ such that $\lam_\bdsf(1^*_\bdsf)=\eta_\bdsf$. Then $1_\bdsf:=\eta_\bdsf^{-1}1_\bdsf^*$ is the idempotent in $\bfT_{\frakm_\bdsf}\ot_\bfI\Frac\bfI$ corresponding to the direct summand $\Frac\bfI$ of \eqref{E:HeckeDecom} and $1_\bdsf$ does not depend on any choice of a generator of $C(\bdsf)$. Moreover, for each arithmetic point $Q$, it is also shown by Hida that the specialization $\eta_\bdsf(Q)\in\cO(Q)$ is the congruence number for $\bdsf_Q$ and \[1_f:=\eta_{\bdsf}^{-1}1^*_{\bdsf}\pmod{\wp_{\chi,Q}}\in \rmT^\ord_{k_Q}(Np^r,\chi\Om^{2-k_Q}\ep_Q)\ot_\cO\Frac\cO(Q)\] is the idempotent with $\lam_f(1_f)=1$.

There is a unique decomposition $\chi=\chi^{(p)}\chi_{(p)}$ of Dirichlet characters, where $\chi^{(p)}$ and $\chi_{(p)}$ are Dirichlet characters modulo $N$ and $p^r$ respectively.  We call $\chi_{(p)}$ the $p$-primary component of $\chi$. Let $\ol{\chi}=\chi^{-1}$ be the complex conjugation of $\chi$. Denote by
$\breve{\bdsf}\in \eord\bfS(N,\chi_{(p)}\ol{\chi}^{(p)},\bfI)$ the primitive Hida family corresponding to the twist $\bdsf|[\ol{\chi}^{(p)}](q)=\sum_{(n,N)=1}\ol{\chi}^{(p)}(n)\bfa(n,\bdsf)q^n$ (\cf\cite[Lemma 6.1]{Dim14}). To be precise, the Fourier coefficients of $\breve\bdsf$ are given by 
\[\bfa(\ell,\breve\bdsf)=\begin{cases}\ol{\chi}^{(p)}(\ell)\bfa(\ell,\bdsf)&\text{ if }\ell\ndivides N,\\
\bfa(\ell,\bdsf)^{-1}\chi_{(p)}\Om^2(\ell)\ell^{-1}\Dmd{\ell}_\bfI&\text{ if }\ell\divides N.\end{cases}
\]
by \cite[Theorem 4.6.16]{Miyake06book}. For every arithmetic point $Q\in\frakX^\ari$, $\breve\bdsf_Q$ is the $p$-stabilized newform attached to $\bdsf_Q|[\ol{\chi}^{(p)}]$. Moreover, the Atkin-Lehner involution $\eta_p'$ introduced in \cite[(4.6.21), page 168]{Miyake06book}) induces an isomorphism  $\eta'_p:\sS_k(Np^r,\chi\Om^{2-k_Q}\ep_Q)\iso \sS_k(Np^r,\ol{\chi}^{(p)}\chi_{(p)}\Om^{2-k_Q}\ep_Q)$ such that $T_\ell\eta'_p=\ol{\chi}^{(p)}(\ell) \eta_p'T_\ell$ for $\ell\ndivides N$ (\cite[(4.6.23)]{Miyake06book}). We thus obtain a $\Lam$-algebra isomorphism $[\ol{\chi}^{(p)}]:\bfT_{\frakm_\bdsf}\iso\bfT_{\frakm_{\breve\bdsf}}$ such that $[\ol{\chi}^{(p)}](T_\ell)= T_\ell\cdot\ol{\chi}^{(p)}(\ell)$ for $\ell\ndivides N$ and $\lam_{\breve\bdsf}\circ[\ol{\chi}^{(p)}]=\lam_\bdsf$. It follows that \beq\label{E:idempotent}1^*_{\breve\bdsf}=[\ol{\chi}^{(p)}](1^*_{\bdsf})\text{ and }\eta_{\breve\bdsf}=\eta_{\bdsf}.\eeq

\subsection{The adjustment of levels for a triple of modular forms}\label{SS:auxiliary}For any positive integer $M$, let $\supp(M)$ denote the support of $M$, \ie the set of prime factors of $M$. If $f$ is a $p$-stabilized newform of tame conductor $\condf$, let $c_\ell(f):=c(\pi_{f,\ell})$ be the exponent of the $\ell$-component of $\condf$ for each prime $\ell\not =p$ and set\begin{align*}\Sigma_f^1&=\stt{\ell\text{ : prime}\mid \pi_{f,\ell} \text{ is a principal series}};\\
\Sigma_f^0&=\stt{\ell\text{ : prime}\mid \pi_{f,\ell} \text{ is a discrete series}}.
\end{align*}

To a triple $(f,g,h)$ of $p$-stabilized newforms of tame conductors $(\condf,\condg,\condh)$, we are going to associate a set of auxiliary integers $(\Bd_f,\Bd_g,\Bd_h)$, which we call \emph{the adjustment of levels} for $(f,g,h)$. This adjustment of levels is crucial for the construction of our test $\Lam$-adic modular forms (\defref{D:testunb} and \defref{D:testbal}) in order to obtain the optimal value of the local zeta integrals in Ichino's formula at bad places, and it is defined according to the choice of good local test vectors in the space of product of local representations of $\pi_{f,\ell}\times\pi_{g,\ell}\times\pi_{h,\ell}$ (\cf\subsecref{SS:61}) at bad primes $\ell\not =p$. Inevitably, the definition is very ad-hoc and may seem to be artificial at the first sight. The readers are advised to skip the precise definition in this subsection at the first reading and come back until \subsecref{SS:61}. To begin with, let $N_{fgh}=\gcd(\condf,\condg,\condh)$ and $N=\lcm(\condf,\condg,\condh)$. Put \[c^{\min}_\ell:=\val_\ell(N_{fgh});\quad c_\ell(fg)=\max\stt{c_\ell(f),c_\ell(g)}.\] Let $\Sigma_{fgh}=\Sigma_f^0\cap\Sigma_g^0\cap\Sigma_h^0$. We introduce several disjoint subsets of $\supp(N)$:\begin{align*}
\Sigma_{fg}^{\caseI}&=\stt{
\ell\in\Sigma_f^1\cup\Sigma_g^1\cup \Sigma_{fgh}\mid c_\ell(h)<\min\stt{c_\ell(f),c_\ell(g)}},\\
\Sigma_f^{\rm (IIa)}&=\stt{\ell\in\Sigma_g^0\cap\Sigma_h^0\mid L(s,\pi_{g,\ell}\ot\pi_{h,\ell})\not=1,\,c_\ell(f)=0},\\
\Sigma_f^{\rm (IIb)}&=\stt{\ell\in\Sigma_g^0\cap\Sigma_h^0\mid L(s,\pi_{g,\ell}\ot\pi_{h,\ell})=1,\,\,\ell\in\Sigma_f^1,\,\,c_\ell(f)<\min\stt{c_\ell(g),c_\ell(h)}},\\
\Sigma_{f}^{\max}&=\stt{\ell\text{ : prime factor of }\condf\mid c_\ell(g)=c_\ell(h)=c_\ell^{\min}<c_\ell(f)}.
\end{align*}
Define $\Sigma_{fh}^{\caseI},\,\Sigma_{gh}^{\caseI},\Sigma_g^{\rm (IIa)}\,,\Sigma_g^{\rm (IIb)},\,\Sigma_g^{\max},\ldots,$ in the same manner.  We set \begin{align*}\Bd_f^\caseI&=\prod_{\ell\in\Sigma_{fg}^{\caseI}}\ell^{c_\ell(fg)-c_\ell(f)}\cdot\prod_{\ell\in\Sigma_{fh}^{\caseI}}\ell^{c_\ell(fh)-c_\ell(f)},\\
\Bd^{\caseII}_{f}&=\prod_{\ell\in\Sigma_f^{\rm (IIa)}}\ell^{\lceil \frac{c_\ell(gh)}{2}\rceil}\cdot\prod_{\ell\in\Sigma_f^{\rm (IIb)}}\ell^{c_\ell(gh)-c_\ell(f)},\\
 \Bd_f^{\max}&=\prod_{\ell\in\Sigma_f^{\max}}\ell^{c_\ell(f)-c_\ell^{\min}}.
\end{align*}
Likewise we define $\Bd_g^\caseI,\Bd^{\caseII}_{g},\Bd_g^{\max},\Bd_h^\caseI,\Bd^{\caseII}_{h}$ and $\Bd_h^{\max}$. Finally, put
\beq\label{E:defBd}\Bd_f=\Bd_f^\caseI\Bd_{f}^{\caseII},\quad \Bd_g=\Bd_g^\caseI \Bd_f^{\max}\Bd_h^{\max}\cdot\Bd_{g}^{\caseII}\text{ and }\Bd_h=\Bd_h^\caseI\Bd_{g}^{\max}\cdot\Bd_{h}^{\caseII}.\eeq
By definition, we have 
\beq\label{E:supp1}\begin{aligned} \Bd_f\mid N/\condf,\quad \Bd_g\mid N/\condg,\quad \Bd_h\mid N/\condh.
\end{aligned}\eeq
\subsection{Definitions of good test $\Lambda$-adic modular forms}\label{S:levelraising}
Let $\cO=\cO_F$ for some finite extension $F$ of $\Qp$. Fixing a topological generator $\gamma_0$ of $1+p\Zp$, we let $\Lam=\cO\powerseries{1+p\Zp}=\cO\powerseries{T}$ with $T=\gamma_0-1$. For $i=1,2,3$, let $\bfI_i$ be a normal domain finite flat over $\Lam$ and let $\psi_i:(\Z/pN_i\Z)^\x\to \cO^\x$ be Dirichlet characters with $\psi_i(-1)=1$. Throughout this paper, we fix a triplet of primitive Hida families 
\[\bdsF:=(\bdsf,\bdsg,\bdsh)\in  \eord\bfS(\condf,\brchf,\bfI_1)\times  \eord\bfS(\condg,\brchg,\bfI_2)\times  \eord\bfS(\condh,\brchh,\bfI_3)\]
of tame conductors $\ulN=(\condf,\condg,\condh)$ and branch characters $\ul{\psi}=(\brchf,\brchg,\brchh)$. We shall impose the following running hypotheses
\beqcd{ev}\brchf\brchg\brchh=\Om^{2a}\text{ for some }a\in\Z;\eeqcd 
\beqcd{sf}\gcd(\condf,\condg,\condh)\text{ is square-free.}\eeqcd
 \begin{lm}\label{L:1.ub}Let $(\Qx,\Qy,\Qz)\in\frakX_{\bfI_1}^\cls\times \frakX_{\bfI_2}^\cls\times\frakX_{\bfI_3}^\cls$ and $(f,g,h)=\bdsF_\ulQ=(\bdsf_\Qx,\bdsg_\Qy,\bdsh_\Qz)$ be the specialization of $\bdsF$ at $\ulQ$. The adjustment of levels $\Bd_{f}^{\bullet},\Bd_g^\bullet$ and $\Bd_h^\bullet$ for $\bullet\in\stt{\rm (I), (II), max}$ are independent of the choice of any arithmetic point $\ulQ$.
\end{lm}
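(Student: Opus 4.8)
The plan is to reduce the statement to a purely local assertion at each prime $\ell\neq p$ and then to feed in the rigidity of the local automorphic types of $\bdsf$, $\bdsg$, $\bdsh$. First I would observe that, by \eqref{E:supp1}, each of $\Bd_f^\bullet$, $\Bd_g^\bullet$, $\Bd_h^\bullet$ divides one of $N/\condf$, $N/\condg$, $N/\condh$, hence is prime to $p$ and supported on primes dividing $N$; so it is enough to fix a prime $\ell\mid N$ with $\ell\neq p$ and show that the $\ell$-exponents entering the definitions of $\Bd_f^\caseI$, $\Bd_f^{\caseII}$, $\Bd_f^{\max}$ (and the analogous integers for $g$ and $h$) are the same for every $\ulQ=(\Qx,\Qy,\Qz)\in\frakX_{\bfI_1}^\cls\times\frakX_{\bfI_2}^\cls\times\frakX_{\bfI_3}^\cls$. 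Unwinding those definitions, the data that must be shown to be independent of $\ulQ$ is: (i) the membership of $\ell$ in each of $\Sigma_f^1$, $\Sigma_f^0$ and their counterparts for $g,h$; (ii) the conductor exponents $c_\ell(f)$, $c_\ell(g)$, $c_\ell(h)$, hence $c_\ell^{\min}$; (iii) the Rankin--Selberg conductor exponents $c_\ell(fg)$, $c_\ell(fh)$, $c_\ell(gh)$; and (iv) whether each of $L(s,\pi_{f,\ell}\ot\pi_{g,\ell})$, $L(s,\pi_{f,\ell}\ot\pi_{h,\ell})$, $L(s,\pi_{g,\ell}\ot\pi_{h,\ell})$ is identically $1$.

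For (i) and (ii) I would appeal to the description of $\rho_{\bdsf}|_{G_{\Q_\ell}}$ recalled in \subsecref{SS:GaloisRep} together with \remref{R:rigidity}. In each of the three cases (Principal series, Special, Supercuspidal) the ramification of $\rho_{\bdsf}|_{G_{\Q_\ell}}$ --- the finite-order characters $\xi,\xi'$, or the finite-image representation $\rho_0$ --- is already carried by the big representation over $\Frac\bfI_1$ and is therefore common to all specializations, while the point-dependent twist is by the character $\Dmd{\cyc}_{\bfI}^{\pm1/2}$, which is unramified at $\ell$. Combining this with temperedness of $\pi_{\bdsf_\Qx,\ell}$ --- which holds at all classical points, by the Ramanujan conjecture in weight $\geq 2$ and by Deligne--Serre in weight $1$ --- local Langlands shows that $\pi_{\bdsf_\Qx,\ell}$ has the same automorphic type and the same conductor exponent $c_\ell(f)$ for all $\Qx$, and the same for $\bdsg$, $\bdsh$; this takes care of (i) and (ii). Moreover, whenever $\pi_{\bdsf_\Qx,\ell}$ is a discrete series, \remref{R:rigidity} yields that the Weil--Deligne representation $\WD_\ell(\rho_{\bdsf_\Qx}\ot\Dmd{\cyc}_{\bfI}^{1/2})$ is literally independent of $\Qx$.

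For (iii), the conductor $c_\ell(fg)$ of $\pi_{f,\ell}\ot\pi_{g,\ell}$ depends only on the restriction to inertia, the monodromy operator and the Swan conductors of $\WD_\ell(\rho_{\bdsf_\Qx})$ and $\WD_\ell(\rho_{\bdsg_\Qy})$, and all of these are determined by the fixed ramification data of \subsecref{SS:GaloisRep} and are unaffected by unramified twists; hence $c_\ell(fg)$, and similarly $c_\ell(fh)$, $c_\ell(gh)$, are independent of $\ulQ$. For (iv), one notes that the condition ``$L(s,\pi_{g,\ell}\ot\pi_{h,\ell})=1$'' is only invoked in the definitions of $\Sigma_f^{\rm (IIa)}$ and $\Sigma_f^{\rm (IIb)}$ (and their analogues), where $\ell$ is required to lie in $\Sigma_g^0\cap\Sigma_h^0$, so that both $\pi_{g,\ell}$ and $\pi_{h,\ell}$ are discrete series; by the previous paragraph their half-twisted Weil--Deligne representations are constant, hence $\WD_\ell(\rho_{\bdsg_\Qy})$ and $\WD_\ell(\rho_{\bdsh_\Qz})$ are constant up to unramified twist, and since an unramified twist of either factor multiplies the relevant Frobenius eigenvalues by a nonzero scalar without changing the inertia-invariant, monodromy-trivial subspace, the vanishing of $L(s,\pi_{g,\ell}\ot\pi_{h,\ell})$ is unchanged. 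Assembling (i)--(iv): every subset $\Sigma_{fg}^\caseI$, $\Sigma_f^{\rm (IIa)}$, $\Sigma_f^{\rm (IIb)}$, $\Sigma_f^{\max}$ of $\supp(N)$ (and all analogues for $g,h$), and every exponent $c_\ell(fg)-c_\ell(f)$, $\lceil c_\ell(gh)/2\rceil$, $c_\ell(gh)-c_\ell(f)$, $c_\ell(f)-c_\ell^{\min}$ occurring in $\Bd_f^\caseI\Bd_f^{\caseII}\Bd_f^{\max}$, is independent of $\ulQ$, giving the lemma. The step I expect to be the main obstacle is (iv): one has to argue that it is the \emph{vanishing} of a local Rankin--Selberg $L$-factor on $\GL(2)\times\GL(2)$, not the factor itself, that is insensitive to the residual unramified-twist ambiguity --- which forces one to combine the discrete-series rigidity of \remref{R:rigidity} with the explicit shape of these $L$-factors.
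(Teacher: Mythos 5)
Your proof is correct and takes essentially the same route as the paper's: the paper disposes of the lemma in one sentence by invoking exactly the three ingredients you use — the rigidity of automorphic types in \remref{R:rigidity}, the description of $\rho_{\bdsf}|_{G_{\Q_\ell}}$ in \subsecref{SS:GaloisRep}, and the local Langlands correspondence for $\GL(2)$. Your items (i)--(iv) merely spell out the details (constancy of local types and conductors, of the Rankin--Selberg conductors, and of the condition $L(s,\pi_{g,\ell}\ot\pi_{h,\ell})=1$ under unramified twists) that the paper declares ``clear'', and they check out.
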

\begin{proof} 
 The lemma is clear from the rigidity of automorphic types, the description of the restriction of $\rho_{\bdsf}|_{G_{\Q_\ell}}$ given in \subsecref{SS:GaloisRep} and the Langlands correspondence for $\GL(2)$. \end{proof}

\begin{defn}[Test $\Lam$-adic forms]\label{D:testunb} 
Let $N=\lcm(\condf,\condg,\condh)$. Put \[\Sigma_{?,0}^{\rm (IIb)}=\stt{\ell\in\Sigma_{?}^{\rm (IIb)}\mid c_\ell(?)=0}\text{ for }?\in\stt{f,g,h}.\] For each $\ell\in\Sigma^{\rm (IIb)}_{f,0}$ (resp. $\Sigma^{\rm (IIb)}_{g,0},\,\Sigma^{\rm (IIb)}_{h,0}$), we fix once and for all a root $\beta_\ell(\bdsf)\in\bfI_1^\x$ (resp. $\beta_\ell(\bdsg)\in\bfI_2^\x,\,\beta_\ell(\bdsh)\in \bfI_3^\x$) of the Hecke polynomial $H_{\bdsf,\ell}(X):=X^2-\bfa(\ell,\bdsf)X+\brchf\Om^{2}(\ell)\ell^{-1}\Dmd{\ell}_{\bfI_1}$ (resp. $H_{\bdsg,\ell}(X),\,H_{\bdsh,\ell}(X))$. With the above notation in the previous subsection, we define the pair $(\bdsg^\star,\bdsh^\star)$ in $ \eord\bfS(N,\brchg,\bfI_2)\times\eord\bfS(N,\brchh,\bfI_3)$ of the ordinary $\Lam$-adic cusp forms by
\begin{align*}
\bdsg^\star(q)&=\sum_{I\subset \Sigma_{g,0}^{\rm (IIb)}}(-1)^{\abs{I}}\beta_I(\bdsg)^{-1}\LR_{\Bd_g/n_I}\bdsg,\\
\bdsh^\star(q)&=\sum_{I\subset \Sigma_{h,0}^{\rm (IIb)}}(-1)^{\abs{I}}\beta_I(\bdsh)^{-1}\LR_{\Bd_h/n_I}\bdsh,
\end{align*}
where $n_I=\prod_{\ell\in I}\ell$, $\beta_I(?)=\prod_{\ell\in I}\beta_\ell(?)$ for $?=\bdsf,\bdsg,\bdsh$. 
\end{defn}

\subsection{The construction of the \padic $L$-function in the unblanced case}\label{SS:36} 
We let  \[\cR=\bfI_1\wh\ot_\cO\bfI_2\wh\ot_\cO\bfI_3\] be a finite extension over the three variable Iwasawa algebra \begin{align*}\cR_0:&=\Lam\wh\ot_\cO\Lam\wh\ot_\cO\Lam=\cO\powerseries{T_1,T_2,T_3},\\
(T_1=T\ot 1\ot 1,&\,\,T_2=1\ot T\ot 1,\,\,T_3=1\ot 1\ot T).\end{align*} Define the multiplicative map $\Theta:\Z_{(p)}^\x\to \cR^\x$ by
\[\Theta(n):=\psi_{1,(p)}\Om^{-a-1}(n)\Dmd{n}^{1/2}_{\bfI_1}\Dmd{n}_{\bfI_2}^{-1/2}\Dmd{n}_{\bfI_3}^{-1/2}.\]
Define the $\cR$-adic twisting operator $|[\Theta]:\cR[[q]]\to \cR[[q]]$ by 
\[(\sum_{n\geq 0} \bfa(n)q^n)|[\Theta]=\sum_{n\geq 0,\,p\ndivides n} \Theta(n)\cdot\bfa(n)q^n.\]
Here $\psi_{1,(p)}$ is the restriction of the branch character $\brchf$ of $\bdsf$ to $(\Z/p\Z)^\x$. Define the power series $\bdsH$ by 
\[\bdsH:=\bdsg^\star\cdot \bdsh^\star|[\Theta]\in \cR\powerseries{q}.\]
\begin{lm}\label{L:bdsH1}The power series $\bdsH$ belongs to $\bfS(N,\psi_{1,(p)}\ol{\brchf}^{(p)},\bfI_1)\wh\ot_{\bfI_1}\cR$. \end{lm}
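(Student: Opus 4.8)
The plan is to compute the $q$-expansion of $\bdsH$ explicitly and check that it satisfies the defining congruences for an element of $\bfS(N,\psi_{1,(p)}\ol{\brchf}^{(p)},\bfI_1)\wh\ot_{\bfI_1}\cR$; that is, that its specialization at a Zariski-dense set of arithmetic points $\ulQ=(\Qx,\Qy,\Qz)$ is the $q$-expansion of a classical cusp form of the correct weight, level $Np^e$, and nebentypus. First I would recall that $\bdsg^\star$ and $\bdsh^\star$ lie in $\eord\bfS(N,\brchg,\bfI_2)$ and $\eord\bfS(N,\brchh,\bfI_3)$ respectively (this is immediate from \defref{D:testunb}, since $\LR_d$ preserves $\bfI$-adic forms up to level and the $\beta_I$ are units), so after base change $\bdsg^\star\in\bfS(N,\brchg,\bfI_2)\wh\ot\cR$ and similarly for $\bdsh^\star$. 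Hence their product $\bdsg^\star\cdot\bdsh^\star$ is a formal power series over $\cR$ whose specialization at $\ulQ$ is the product of two holomorphic cusp forms, hence a holomorphic cusp form of weight $k_\Qy+k_\Qz$, level $Np^e$ and nebentypus the product of the two nebentypi together with the appropriate powers of $\Om$ and the finite characters $\ep_\Qy\ep_\Qz$.

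Next I would analyze the twisting operator $|[\Theta]$. The key point is that $\Theta$ is designed so that, at an arithmetic point $\ulQ\in\frakX_\cR^+$, the specialization $\Theta_\ulQ:\Z_{(p)}^\x\to\Qbarp^\x$ agrees on $p$-adic units with a Dirichlet character $\kappa_\ulQ$ of $p$-power conductor times an integer power coming from the weights: explicitly, from the formula $\Theta(n)=\psi_{1,(p)}\Om^{-a-1}(n)\Dmd n_{\bfI_1}^{1/2}\Dmd n_{\bfI_2}^{-1/2}\Dmd n_{\bfI_3}^{-1/2}$ and $Q(\Dmd z_\bfI)=z^{k_Q-\text{(wt norm)}}\ep_Q(z)$ one reads off that $\Theta_\ulQ(n)=\kappa_\ulQ(n)\cdot n^{m_\ulQ}$ on prime-to-$p$ integers $n$, where $m_\ulQ=\tfrac{k_\Qx-k_\Qy-k_\Qz}{2}$ (consistent with $k_\Qx+k_\Qy+k_\Qz$ even) and $\kappa_\ulQ$ is a Dirichlet character of conductor dividing $p^e$. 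Thus on $q$-expansions, $|[\Theta]$ specializes at $\ulQ$ to the classical operation $F\mapsto F|[\kappa_\ulQ]$ composed with the twist $F\mapsto \sum a(n)n^{m_\ulQ}q^n$; the latter, applied to a modular form of weight $\ell_0$, is (up to the harmless shift by $V_p$-type terms removing the $p$-part, absorbed since we are already ordinary and restricting to $p\nmid n$) essentially a Maass–Shimura derivative followed by the holomorphic projection built into Hida's theory — but more simply, since we only keep $p\nmid n$ and the exponent is an integer, the operation $\sum_{p\nmid n}a(n)n^{m}q^n$ sends a nearly holomorphic form to a nearly holomorphic form, and after applying $\eord$ it lands in the space of ordinary $p$-adic modular forms. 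Composing with the Dirichlet twist by $\kappa_\ulQ$ raises the level by a $p$-power only, so it stays within tame level $N$. Therefore the specialization $\bdsH_\ulQ=\eord(\bdsg^\star_\Qy\cdot(\bdsh^\star_\Qz|[\Theta]_\ulQ))$ is an ordinary $p$-adic cusp form of tame level $N$, and tracking the nebentypus through the multiplication and twist gives precisely $\psi_{1,(p)}\ol{\brchf}^{(p)}$ together with the expected weight-dependent twist, which matches the branch character of $\bdsf$ after the decomposition $\brchf=\brchf^{(p)}\brchf_{(p)}$.

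The main technical obstacle I expect is the bookkeeping of the nebentypus and of the weight normalization: one must verify that the character $\psi_{1,(p)}\Om^{-a-1}$ appearing in $\Theta$, combined with the branch characters $\brchg,\brchh$ of $\bdsg^\star,\bdsh^\star$ and the running hypothesis \eqref{ev} $\brchf\brchg\brchh=\Om^{2a}$, produces exactly the branch character $\psi_{1,(p)}\ol{\brchf}^{(p)}$ claimed (recall $\breve\bdsf$ has branch character $\chi_{(p)}\ol\chi^{(p)}$, which is the analogue here), and that the half-integral exponents $\Dmd n_{\bfI_i}^{\pm 1/2}$ — which individually do not make sense — combine to an honest element of $\cR^\x$ because the total exponent is an integer thanks to the parity constraint $k_\Qx+k_\Qy+k_\Qz\equiv 0\pmod 2$ defining $\frakX_\cR^+$. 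Once these identities are checked at arithmetic points and one invokes the Zariski-density of $\frakX_\cR^+$ in $\Spec\cR$ together with the fact that the $q$-expansion coefficients of $\bdsH$ are already in $\cR$ by construction, the $q$-expansion principle for $\bfI$-adic forms (i.e.\ the defining property recalled in \subsecref{S:nc.unb}) shows $\bdsH\in\bfS(N,\psi_{1,(p)}\ol{\brchf}^{(p)},\bfI_1)\wh\ot_{\bfI_1}\cR$, completing the proof.
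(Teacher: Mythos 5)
There is a genuine gap, and it sits exactly where the real content of the lemma lies. Your plan is to check that the specializations of $\bdsH$ at a Zariski-dense set of arithmetic points are classical cusp forms and then invoke the defining property of $\bfS(N,\psi_{1,(p)}\ol{\brchf}^{(p)},\bfI_1)$. But at a general point $\ulQ=(\Qx,\Qy,\Qz)\in\frakX_\cR^+$ one has $\Theta_\ulQ(n)=\Bkappa(n)n^{m_\ulQ}$ with $m_\ulQ=\frac{k_\Qx-k_\Qy-k_\Qz}{2}\neq 0$, i.e.\ the twist involves Serre's operator ${\rm d}^{m_\ulQ}$, so $\bdsH_\ulQ$ is only a $p$-adic (nearly holomorphic) form, \emph{not} a classical one; classicality of $\bdsH_\ulQ$ holds only along the critical locus $k_\Qx=k_\Qy+k_\Qz$, which is the countable union of divisors $Z=\zeta-1$, $Z=(1+T_1)^{-1}(1+T_2)(1+T_3)-1$. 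Your attempt to rescue this by saying that "after applying $\eord$" one lands in ordinary $p$-adic forms is out of order: the lemma concerns $\bdsH$ itself, and its whole purpose is to place $\bdsH$ (hence $\bdsH^{\rm aux}$) in a space on which $\eord$ is defined — in this paper $\eord$ acts on classical forms and on $\bfS(N,\chi,\bfI_1)$, not on arbitrary elements of $\cR\powerseries{q}$ — so using $\eord$ inside the proof is circular (or at least requires a $p$-adic modular form framework the paper does not set up, and which in any case would not yield the stated conclusion).

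Moreover, even granting classicality on the critical locus, "Zariski density plus the $q$-expansion principle" does not give membership in the completed tensor product $\bfS(N,\psi_{1,(p)}\ol{\brchf}^{(p)},\bfI_1)\wh\ot_{\bfI_1}\cR$: there is no pointwise criterion for that module, and this is precisely why the paper runs Hida's interpolation argument. Concretely, the paper writes $\bdsH=\sum_j\bdsH^{(j)}\al_j$ over a basis of $\cR$ over $\cR_0=\cO\powerseries{T_1,T_2,Z}$ using the trace, reduces by Hida's lemma (\cite[Lemma 1, page 328]{Hida93Blue}) to each slice $Z=\zeta-1$, and then bootstraps from the one-variable classicality in $T_1$ (coming from the points with $k_\Qx=k_\Qy+k_\Qz$, $k_\Qy$ fixed) to the two-variable statement in $(T_1,T_2)$ via the divided-difference expansion $H_m=(H_{m-1}(T_1,T_2)-H_{m-1}(T_1,\bfu^{m+a-1}-1))/Y_m$ and the convergent series $\bdsH^{(j)}(T_1,T_2,\zeta-1)=\sum_m H_m(T_1,\bfu^{m+a}-1)\prod_{i\leq m}Y_i$. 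Your proposal contains none of this bootstrapping step, and without it the conclusion does not follow. (Your bookkeeping of nebentypus and of the parity constraint making the half-integral exponents $\Dmd{n}^{\pm1/2}_{\bfI_i}$ combine into an honest character is fine, and is indeed part of the verification in \eqref{E:L341.ub}, but it is the easy part.)
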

\begin{proof}The following proof is taken from Hida's blue book \cite{Hida93Blue}. Put
\[\frakX_\cR^0=\stt{\ulQ=(\Qx,\Qy,\Qz)\in\frakX_{\bfI_1}^\ari\times\frakX_{\bfI_2}^\ari\times\frakX_{\bfI_3}^\ari\mid k_\Qx=k_\Qy+k_\Qz,\,\,k_\Qx\geq k_\Qy+2}.\]
For $\ulQ\in \frakX_\cR^0$, put
\[\Bkappa_0=\psi_{1,(p)}\Om^{-a-1}\ep_\Qx^{1/2}\ep_\Qy^{-1/2}\ep_\Qz^{-1/2}.\]
Here $\ep_?^{1/2}$ is the unique square root of $\ep_?$ taking value in $1+p\Zp$. We verify that $(\bdsh^\star|[\Theta])_\ulQ=\bdsh_\Qz|[\Bkappa_0]\in \sS_{k_\Qz}(N,\psi_{1,(p)}^2\brchf^{-1}\brchg^{-1}\ep_\Qx\ep_\Qy^{-1})$, and hence we find that for every $\ulQ\in\frakX_\cR^0$,
\beq\label{E:L341.ub}\bdsH_\ulQ=\bdsg^\star_\Qy\cdot\bdsh^\star_\Qz|[\Bkappa_0]\in \sS_{k_\Qx}(N,\psi_{1,(p)}\ol{\brchf}^{(p)}\Om^{2-k_\Qx}\ep_\Qx).\eeq
We have $\cR_0=\cO\powerseries{T_1,T_2,Z}$ with $Z=(1+T_1)^{-1}(1+T_2)(1+T_3)-1$. Let $L_0=\Frac\cR_0$ and $L=\Frac\cR$ be a finite extension of $L_0$. Let $\al_1,\cdots,\al_n$ be a basis of $\cR$ over $\cR_0$ and write $\bdsH=\sum_{j=1}^n \bdsH^{(j)}\al_j$ with $\bdsH^{(j)}\in \cR_0\powerseries{q}$. On the other hand, letting $\stt{\al^*_j}_{j=1,\ldots,n}$ be the dual basis of $\stt{\al_j}_{j=1,\ldots,n}$ with respect to the trace map $\Tr:L\to L_0$, we have  $\bdsH^{(j)}=\Tr(\bdsH\al_j^*)$. Let $\bfu=1+p$. By \eqref{E:L341.ub}, we can write $\bdsH^{(j)}=\bdsH^{(j)}(T_1,T_2,Z)\in \cO\powerseries{T_1,T_2,Z}\powerseries{q}$ so that
\beq\label{E:L342.ub}\bdsH^{(j)}(\bfu^{k_1}\zeta_1-1,\bfu^{k_2}\zeta_2-1,\zeta_3-1)=\Tr(\bdsH_\ulQ\al_i(\ulQ))\in \sS_{k_1}(N,\psi_{1,(p)}\ol{\brchf}^{(p)}\Om^{2-k_1})\eeq
for all but finite many positive integers $k_1,k_2$ with $k_1\geq k_2+2$ and $\zeta_i\in \mu_{p^\infty}$ ($i=1,2,3$), where 
$\ulQ=(\Qx,\Qy,\Qz)$ are some arithmetic points of weights $(k_1,k_2,k_1-k_2)$ and finite parts $(\ep_\Qx,\ep_\Qy,\ep_\Qz)$, $\ep_{Q_i}(z)$ is the finite order character with $\ep_{Q_i}(u)=\zeta_i$.

To prove the lemma, it suffices to show that
\label{E:L343.ub}\beq\bdsH^{(j)}(T_1,T_2,Z)\in \bdS\wh\ot_{\cO}\cO\powerseries{T_2,Z},\quad \bfS:=\bfS(N,\psi_{1,(p)}\ol{\brchf}^{(p)},\cO\powerseries{T_1}),\eeq 
which in turn, by \cite[Lemma 1 in page 328]{Hida93Blue}, is equivalent to showing that $\bdsH^{(j)}(T_1,T_2,\zeta-1)\in \bfS\wh\ot_\cO\cO[\zeta]\powerseries{T_2}$ for every $\zeta\in\mu_{p^\infty}$. Now we repeat the arguments in \cite[page 226-227]{Hida93Blue}. Let $a$ be a positive integer such that $\bdsg_Q$ is a classical modular form for all $Q\in\frakX_\bfI$ with $k_Q=a$. For $m=1,2,\ldots$, we define the power series inductively \begin{align*}H_0(T_1,T_2)&=\bdsH^{(j)}(T_1,T_2,\zeta-1),\quad Y_m=T_2-(\bfu^{m+a-1}-1)\in \cO\powerseries{T_2},\\ 
H_{m}(T_1,T_2)&=\frac{H_{m-1}(T_1,T_2)-H_{m-1}(T_1,\bfu^{m+a-1}-1)}{Y_{m}}\in \cO\powerseries{T_1,T_2}\powerseries{q}\end{align*} 
Then \eqref{E:L342.ub} implies that $H_0(T_1,\bfu^a-1)\in \bfS\ot_\cO\cO[\zeta]$ and by induction, we find easily that $H_{m}(T_1,\bfu^{m+a}-1)\in \bfS\ot_\cO\cO[\zeta]$ for all $m=0,1,\ldots$. On other hand, by construction we have
\[\bdsH^{(j)}(T_1,T_2,\zeta-1)=\sum_{m=0}^\infty H_m(T_1,\bfu^{m+a}-1)\prod_{i=1}^m{Y_i}.\]
It is clear that the right hand side is a convergent power series and belongs to $\bfS\wh\ot_\cO\cO[\zeta]\powerseries{T_2}$.
  \end{proof}
  
Define the auxiliary $\cR$-adic form $\bdsH^{\rm aux}$ by \beq\label{E:bdsH}\bdsH^{\rm aux}:=\sum_{I\subset \Sigma_{f,0}^{\rm (IIb)}} (-1)^I\frac{\psi_{1,(p)}(n_I/\Bd_f)\Dmd{n_I/\Bd_f}_{\bfI_1}\Bd_f}{\beta_I(\bdsf)n_I}\cdot \bfU_{\Bd_f/n_I}(\bdsH).\eeq
By the above \lmref{L:bdsH1}, we have $\bdsH^{\rm aux}\in  \bfS(N,\psi_{1,(p)}\ol{\brchf}^{(p)},\bfI_1)\wh\ot_{\bfI_1}\cR$. We can thus apply the ordinary projector $\eord$ to $\bdsH^{\rm aux}$ and obtain $\eord\bdsH^{\rm aux}\in \eord\bfS(N,\psi_{1,(p)}\ol{\brchf}^{(p)},\bfI_1)\wh\ot_{\bfI_1}\cR$ an ordinary $\Lam$-adic modular form with coefficients in $\cR$. With these preparations, we are ready to define the $p$-adic $L$-function following the construction in \cite[(4.6)]{Hida85Inv}. Denote by $\Tr_{N/\condf}:\eord\bfS(N,\psi_{1,(p)}\ol{\brchf}^{(p)},\bfI_1)\to \eord\bfS(\condf,\psi_{1,(p)}\ol{\brchf}^{(p)},\bfI_1)$ the usual trace map (\cf\cite[page 14]{Hida88Fourier}). 
\begin{defn}\label{D:padicL}
The unbalanced \padic triple product $L$-function $\sL^\bdsf_{\bdsF}$ is defined by  \[\sL^\bdsf_{\bdsF}:=\bfa(1,\eta_\bdsf\cdot 1_{\breve{\bdsf}}\Tr_{N/\condf}(\eord\bdsH^{\rm aux}))\in\cR.\]
\end{defn}
\subsection{Global trilinear period integrals}\label{S:gtp}Define the weight space for the triple $(\bdsf,\bdsg,\bdsh)$ in the $\bdsf$-dominated \emph{unbalanced range} by 
\[\frakX_\cR^\bdsf:=\stt{\ulQ=(Q_1,Q_2,Q_3)\in \frakX_{\bfI_1}^\ari\times \frakX_{\bfI_2}^\cls\times\frakX_{\bfI_3}^\cls\mid k_\Qx\geq k_\Qy+k_\Qz,\,\,k_\Qx\con k_\Qy+k_\Qz\pmod{2}}.\]
In this subsection, we relate the value of $\sL^\bdsf_p(\ulQ)$ at a point $\ulQ=(Q_1,Q_2,Q_3)\in\frakX_\cR^\bdsf$ to a global trilinear period integral of a test triple of modular forms.  To this end, it is necessary to work in the framework of automorphic forms. Let $(k_1,k_2,k_3)=(k_\Qx,k_\Qy,k_\Qz)$ and let $r$ be an integer greater than $\max\stt{1,c_p(\ep_\Qx),c_p(\ep_\Qy),c_p(\ep_\Qz)}$. Recall that the specialization 
\[(f,g,h):=\bdsF_\ulQ=(\bdsf_\Qx,\bdsg_\Qy,\bdsh_\Qy)\in \sS_{k_1}(\condf p^r,\chi_{\fQx})\times \sS_{k_2}(\condg p^r,\chi_{\gQy})\times \sS_{k_3}(\condh p^r,\chi_{\hQz})\] are $p$-stabilized cuspidal newforms with characters modulo $Np^r$
\[\chi_{\fQx}=\brchf\ep_\Qx\Om^{2-k_1},\,\chi_{\gQy}=\brchg\ep_\Qy\Om^{2-k_2}\text{ and }\chi_{\hQz}=\brchh\ep_\Qz\Om^{2-k_3}.\] 

 Let $\varphi_{\fQx}=\itPhi(\fQx)$, $\varphi_{\gQy}=\itPhi(\gQy)$ and $\varphi_{\hQz}=\itPhi(\hQz)$ be the associated adelic lifts as in \eqref{E:MA2}. Then 
\[(\varphi_{\fQx},\varphi_{\gQy},\varphi_{\hQz})\in \cA^0_{k_1}(\condf p^r,\om_{\fQx})\times \cA^0_{k_2}(\condg p^r,\om_{\gQy})\times  \cA^0_{k_3}(\condh p^r,\om_{\hQz}),\]
and the central characters $\om_{\fQx},\om_{\gQy},\om_{\hQz}$ are the adelizations
\[\om_{\fQx}=(\chi_{\fQx}^{-1})_\A,\,\om_{\gQy}=(\chi_{\gQy}^{-1})_\A,\,\om_{\hQz}=(\chi_{\hQz}^{-1})_\A.\]
Denote by $(\beta_\ell(\fQx),\beta_\ell(\gQy),\beta_\ell(\hQz))$ the specialization $(\beta_\ell(\bdsf)(\Qx),\beta_\ell(\bdsg)(\Qy),\beta_\ell(\bdsh)(\Qz))$. For each finite prime $\ell$, define the polynomial $\cQ_{f,\ell}\in\cO[X]$ by \[\cQ_{f,\ell}(X)=X^{\val_\ell(\Bd_f)}\begin{cases}1&\text{ if }\ell\not\in\Sigma_{gh}^{\irr},\\
(1-\beta_\ell(\fQx)^{-1}\ell^{\frac{k_1}{2}-1}X^{-1})&\text{ if }\ell\in\Sigma_{gh}^{\irr}.\end{cases}\]
We define $\cQ_{g,\ell}(X)$ and $\cQ_{h,\ell}(X)$ likewise. Set 
\beq\label{E:phistar}\varphi_{\fQx}^\star=\prod_\ell\cQ_{f,\ell}(\LR_\ell)\varphi_{\fQx},\quad
\varphi_{\gQy}^\star=\prod_\ell\cQ_{\gQy,\ell}(\LR_\ell)\varphi_{\gQy}\text{ and }
\varphi_{\hQz}^\star=\prod_\ell\cQ_{\hQz,\ell}(\LR_\ell)\varphi_{\hQz}.
\eeq
By \eqref{E:diffop2}, we see that 
\[\varphi_{\gQy}^\star=\Bd_g^{\frac{k_\Qy}{2}-1}\itPhi(\bdsg^\star_\Qy)\text{ and }\varphi_{\hQz}^\star=\Bd_h^{\frac{k_\Qz}{2}-1}\itPhi(\bdsh^\star_\Qz).\]
Decompose $\om_{\fQx}=\om_{\fQx,(p)}\om_{\fQx}^{(p)}$, where $\om_{\fQx,(p)}$ and $\om_{\fQx}^{(p)}$ are finite order Hecke characters of $p$-power conductor and prime-to-$p$ conductor respectively. By definition, $\om_{\fQx,(p)}$ is the adelization of the $p$-primary component $\chi_{\fQx,(p)}^{-1}$ of $\chi^{-1}_{\fQx}$. 
Let $\breve\bdsf$ be the primitive Hida family corresponding to the twist $\bdsf|[\ol{\brchf}^{(p)}]$ and put \[\breve\varphi_{\fQx}=\itPhi(\breve\fQx)\in\cA^0_{k_1}(\condf p^r,\om_{\fQx}^{-1}\om_{\fQx,(p)}^2).\]

We introduce the modified $p$-Euler factor $\cE_p(f,\Ad)$ for the adjoint motive attached to the $p$-stabilized newform $\fQx$. Let $\al_{\fQx,p}:\Qp^\x\to\C^\x$ be the unramified character as in \remref{R:WhittakerPordinary}. Let $\beta_{f,p}:=\al_{f,p}^{-1}\om_{\fQx,p}$. Hence the local component $\pi_{\fQx,p}$ is either the principal series $\Prin{\al_{f,p}}{\beta_{f,p}}$ or the special representation $\al_{f,p}\Abs^{-\onehalf}{\rm St}$. Define the modified $p$-Euler factor $\cE_p(f,\Ad)$ by \beq\label{E:EulerAd}\begin{aligned}\cE_p(f,\Ad)&=\varepsilon(1,\beta_{f,p}\al_{f,p}^{-1})L(0,\beta_{f,p}\al_{f,p}^{-1})^{-1}L(1,\beta_{f,p}\al_{f,p}^{-1})^{-1}\\
&=\bfa(p,\fQx)^{-c_p(\pi_f)}\cdot p^{c_p(\pi_f)(\frac{k_1}{2}-1)}\varepsilon(1/2,\pi_{f,p})\\
&\times\begin{cases} (1-\al_{f,p}^{-2}\om_{f,p}(p))(1-\al_{f,p}^{-2}\om_{f,p}(p)p^{-1})&\text{ if }c(\pi_{f,p})=0,\\
1&\text{ if }c(\pi_{f,p})>0.\end{cases}\end{aligned}
  \eeq
Define  $\cJ_\infty$ and $\bftr\in\GL_2(\A)$ for a positive integer $n$ by 
\beq\label{E:spelt}\cJ_\infty=\pDII{-1}{1}\in\GL_2(\R),\quad \bftr=\pMX{0}{p^{-n}}{-p^n}{0}\in\GL_2(\Qp)\hookto\GL_2(\A).\eeq
\begin{lm}\label{L:norm}Let notation be as above.  For $n\geq \max\{c(\pi_{f,p}),1\}$, we have
\begin{align*}&\pair{\rho(\cJ_\infty \bftr)\varphi_{\fQx}}{\breve\varphi_{\fQx}\ot \om_{\fQx,(p)}^{-1}}
=\frac{\zeta_\Q(2)^{-1}}{[\SL_2(\Z):\Gamma_0(\condf)]}\cdot\norm{f^\circ}^2_{\Gamma_0(N_f^\circ)}\cdot\cE_p(f,\Ad)\cdot \frac{\om_{f,p}^{-1}\al_{f,p}^2\Abs_p(p^n)\zeta_p(2)}{\zeta_p(1)}.
\end{align*}
\end{lm}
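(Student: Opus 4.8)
The plan is to reduce the global Petersson-type pairing to a product of local Whittaker pairings via Proposition~\ref{P:Petersson}, then evaluate each local factor explicitly. First I would observe that both $\rho(\cJ_\infty\bftr)\varphi_{\fQx}$ and $\breve\varphi_{\fQx}\ot\om_{\fQx,(p)}^{-1}$ lie in the $\pi_{\fQx}$- and $\Contra{\pi}_{\fQx}$-isotypic subspaces, so the pairing $\pairing$ on $\cA^0(\om_{\fQx})\times\cA^0(\om_{\fQx}^{-1})$ applies. Since $\fQx$ is a $p$-stabilized newform of tame conductor $\condf$, by \remref{R:WhittakerPordinary} the Whittaker function $W_{\varphi_{\fQx}}$ factors as $W^\ord_{\pi_{\fQx,p}}\prod_{v\neq p}W_{\pi_{\fQx,v}}$; the Atkin--Lehner twist relating $\breve\fQx$ to $\fQx|[\ol{\brchf}^{(p)}]$ (cf.\ \eqref{E:idempotent}) together with \eqref{E:diffop3} identifies the Whittaker function of $\breve\varphi_{\fQx}\ot\om_{\fQx,(p)}^{-1}$ with a correspondingly twisted product. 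I would then apply \propref{P:Petersson} to write the global pairing as $\frac{2L(1,\pi_{\fQx},\Ad)}{\zeta_\Q(2)}\prod_v\frac{\zeta_v(2)}{\zeta_v(1)L(1,\pi_{\fQx,v},\Ad)}\pair{W_v}{W'_v}$, noting that at almost all finite $v$ the local pairing collapses to the spherical value and cancels the adjoint $L$-factor.

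\textbf{Local computations.} Next I would evaluate the remaining local factors. At the archimedean place, the operator $\cJ_\infty=\pDII{-1}{1}$ together with the explicit discrete-series Whittaker newform \eqref{E:Winfty.1} gives the known constant; comparing with the definition \eqref{E:localnormnew} of $B_{\pi_\infty}=2^{-1-k_1}$ and the norm formula \eqref{E:normformula}, this produces the factor $\norm{f^\circ}^2_{\Gamma_0(N_f^\circ)}/([\SL_2(\Z):\Gamma_0(\condf)]\,L(1,\pi_{\fQx},\Ad))$ after accounting for $w(f)$ and the non-archimedean $B_{\pi_q}$ at ramified $q\nmid p$ --- these last are absorbed because the same local Whittaker newforms appear on both sides of the twist (the twist by $\ol{\brchf}^{(p)}$ being by a character unramified at $q\mid\condf$? --- more carefully, one checks $\pi_{\breve\fQx,q}\cong\Contra\pi_{\fQx,q}\ot(\ldots)$ so that the local pairings at $q\mid\condf$ reproduce exactly $B_{\pi_q}$ up to the explicit constants). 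At $p$, the crucial input is \lmref{L:ordlocalnorm}: with $n=r$ large and $\bftr=t_n$, the local pairing $\pair{\rho(t_n)W^\ord_{\pi_{\fQx,p}}}{W^\ord_{\pi_{\fQx,p}}\ot\om_{\fQx,p}^{-1}}$ equals $\chi(-1)\chi\upsilon^{-1}\Abs(p^n)\gamma(0,\upsilon\chi^{-1})\zeta_p(1)$ with $\chi=\al_{f,p}$, $\upsilon=\beta_{f,p}$. I would then match $\gamma(0,\beta_{f,p}\al_{f,p}^{-1})\zeta_p(1)$ against the definition \eqref{E:EulerAd} of $\cE_p(f,\Ad)$, using $\gamma(0,\upsilon\chi^{-1})=\varepsilon(0,\upsilon\chi^{-1})L(1,\chi\upsilon^{-1})/L(0,\upsilon\chi^{-1})$ and the relation between $\varepsilon(0,-)$ and $\varepsilon(1,-)$; the $\zeta_p(2)/\zeta_p(1)$ in the statement comes from the factor $\frac{\zeta_p(2)}{\zeta_p(1)L(1,\pi_{\fQx,p},\Ad)}$ in \propref{P:Petersson} combined with $L(1,\pi_{\fQx,p},\Ad)=\gamma$-related quantities.

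\textbf{Assembling.} Finally I would collect all the constants: the $\frac{2L(1,\pi,\Ad)}{\zeta_\Q(2)}$ from \propref{P:Petersson}, the archimedean $2^{-1-k_1}$ and the $(-2\sqrt{-1})^{k_1+1}$-type factors implicit in the comparison with $\norm{f^\circ}^2$, the power of $p$ and the $\om_{f,p}^{-1}\al_{f,p}^2$ coming from $\chi\upsilon^{-1}\Abs(p^n)=\al_{f,p}^2\om_{f,p}^{-1}\Abs(p^n)$, and verify they combine into the right-hand side of the asserted formula. The bookkeeping must also reconcile $N_f^\circ = \condf p^{c_p(\pi_f)}$ versus $\condf p^r$ and the normalization of $\varphi_{\fQx}$ versus the newform $f^\circ$; here the factors $\bfa(p,\fQx)^{-c_p(\pi_f)}p^{c_p(\pi_f)(k_1/2-1)}$ appearing in \eqref{E:EulerAd} are exactly what record the difference between the $p$-stabilization $f$ and the newform $f^\circ$ at $p$.

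\textbf{Main obstacle.} The principal difficulty is the careful bookkeeping at the ramified-but-not-$p$ places $q\mid\condf$: one must verify that twisting $\fQx$ by $\ol{\brchf}^{(p)}$ to get $\breve\fQx$ produces local representations $\pi_{\breve\fQx,q}$ whose Whittaker pairing against $\pi_{\fQx,q}$ (after the twist by $\om_{\fQx,(p)}^{-1}$, which is trivial at $q$) reproduces exactly $B_{\pi_{\fQx,q}}$, so that via \eqref{E:normformula} these collapse into $\norm{f^\circ}^2_{\Gamma_0(N_f^\circ)}$ without leftover local $L$- or $\varepsilon$-factors. This requires knowing that $\pi_{\breve\fQx,q}\cong\Contra\pi_{\fQx,q}\ot\ol{\brchf}^{(p)}_q$ and that the relevant Atkin--Lehner element $\Tau_{N,q}$ intertwines the local newforms appropriately --- a routine but delicate check using the local theory of new vectors (\propref{P:new}). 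The $p$-adic factor, by contrast, is essentially a direct substitution once \lmref{L:ordlocalnorm} is invoked with the correct identification of $\chi,\upsilon$.
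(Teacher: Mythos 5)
Your overall strategy (Waldspurger's formula \propref{P:Petersson} plus the local computation \lmref{L:ordlocalnorm} at $p$) is sound, but the mechanism you propose at the ramified places $q\mid\condf$, $q\neq p$, has a genuine gap. In the direct expansion of $\pair{\rho(\cJ_\infty\bftr)\varphi_{\fQx}}{\breve\varphi_{\fQx}\ot\om_{\fQx,(p)}^{-1}}$ via \propref{P:Petersson}, the local factor at such a $q$ is $\tfrac{\zeta_q(2)}{\zeta_q(1)L(1,\pi_q,\Ad)}\pair{W_{\pi_q}}{W_{\Contra{\pi}_q}}$, the pairing of the two plain Whittaker newforms with \emph{no} Atkin--Lehner element inserted; by contrast $B_{\pi_q}$ in \eqref{E:localnormnew} is defined with $\rho(\Tau_{N,q})$ inserted, and \eqref{E:normformula} additionally carries the global root number $w(f^\circ)$. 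These are not the same quantities: the two pairings differ by local $\varepsilon$-factors (and one must also beware that $W_{\pi_q}\ot\om_q^{-1}$ is \emph{not} the newform of $\Contra{\pi}_q$ when $\om_q$ is ramified, so "reproduces exactly $B_{\pi_{\fQx,q}}$" is false as stated). To make your route work you would have to prove, for every ramification type (ramified principal series, special, supercuspidal), a comparison of the form $\pair{\rho(\Tau_{N,q})W_{\pi_q}}{W_{\pi_q}\ot\om_q^{-1}}=\varepsilon(1/2,\pi_q)\cdot\pair{W_{\pi_q}}{W_{\Contra{\pi}_q}}$ (suitably normalized) and then check that the product of these $\varepsilon$-factors cancels the $w(f^\circ)$ coming from \eqref{E:normformula}. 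That is real extra local work which your proposal asserts but does not supply, and the final statement of the lemma (which contains no local constants away from $p$) is exactly the signal that such cancellations must be verified, not assumed.

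The paper avoids all of this by a ratio argument: it compares $\pair{\varphi'}{\varphi''}$ (with $\varphi'=\rho(\cJ_\infty\bftr)\varphi_{\fQx}$, $\varphi''=\breve\varphi_{\fQx}\ot\om_{\fQx,(p)}^{-1}$) against $\pair{\varphi^\circ}{\ol{\varphi^\circ}}$, where $\varphi^\circ=\itPhi(\fQx^\circ)$. Since both automorphic forms in each pairing have newform Whittaker components at every $v\neq p$ (and the archimedean contributions agree), \propref{P:Petersson} shows the ratio equals the single local ratio $\pair{\rho(\bftr)W^\ord_{\pi_p}}{W^\ord_{\pi_p}\ot\om_p^{-1}}/\pair{W_{\pi_p}}{W_{\Contra{\pi}_p}}$, which \lmref{L:ordlocalnorm} and the formula for $\varepsilon(1/2,\pi_p)$ convert into $\cE_p(f,\Ad)$ times the stated $p$-adic factors. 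The denominator is then evaluated by the classical identity $\pair{\varphi^\circ}{\ol{\varphi^\circ}}=\zeta_\Q(2)^{-1}\norm{f^\circ}^2_{\Gamma_0(N_{f^\circ})}/[\SL_2(\Z):\Gamma_0(\condf p^{c_p(\pi)})]$ (cited from the literature), not by \eqref{E:normformula}; consequently neither $w(f)$ nor any $B_{\pi_q}$ ever enters, and no computation at ramified $q\neq p$ is needed. If you want to salvage your direct approach, the cleanest fix is to adopt this comparison with $\pair{\varphi^\circ}{\ol{\varphi^\circ}}$ rather than routing through \eqref{E:normformula}.
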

\begin{proof} Write $\pi$ for $\pi_f$ the irreducible automorphic cuspidal representation on $\GL_2(\A)$ generated by $\varphi_{\fQx}=\itPhi(\fQx)$ and let $\om=\om_{\fQx}$ be the central character of $\pi$. Let $\varphi'=\rho(\cJ_\infty\bftr)\varphi_{\fQx}\in\cA^0_{-k_\Qx}(Np^r,\om)$ and $\varphi''=\breve\varphi_{\fQx}\ot\om_{\fQx,(p)}^{-1}\in \cA^0_{k_\Qx}(Np^r,\om^{-1})$. Then $\varphi'\in\cA(\pi)$ and $\varphi''\in\cA(\Contra{\pi})$. Since $\varphi_{\fQx}$ and $\breve\varphi_{\fQx}$ are automorphic forms attached to $p$-stabilized cuspidal newforms $\fQx$ and $\breve\fQx$, and $\om_{\fQx,(p)}$ is unramified outside $p$, according to \remref{R:WhittakerPordinary}, the Whittaker functions $W_{\varphi'}$ and $W_{\varphi''}$ have the factorizations
\[W_{\varphi'}=\rho(\bftr)W^\ord_{\pi_p}\cdot \rho(\cJ_\infty) W_{\pi_\infty}\prod_{v\not =p,\infty} W_{\pi_v},\quad W_{\varphi''}=W^\ord_{\pi_p}\ot\om_p^{-1}\cdot W_{\pi^\vee_\infty}\prod_{v\not =p,\infty} W_{\pi^\vee_v},\]
where $W^\ord_{\pi_p}\in\cW_{\pi_p}^\ord(\al_p)$ is the ordinary Whittaker functions attached to the character $\al_p$. On the other hand, let $\varphi^\circ=\itPhi(\fQx^\circ)$ be the normalized newform in $\cA(\pi)$ and let $\ol{\varphi^\circ}\in \cA(\Contra{\pi})$ be the complex conjugation of $\varphi^\circ$. Then $\rho(\cJ_\infty)\ol{\varphi^\circ}$ is the normalized newform in $\cA(\Contra{\pi})$. 

Let $\al=\al_{f,p}$, $\beta=\beta_{f,p}$ be the characters defined as above. Combining \propref{P:Petersson}, \lmref{L:ordlocalnorm} and the formula \[\varepsilon(1/2,\pi_p)=\begin{cases}\varepsilon(1/2,\beta)&\text{ if }\pi_p=\Prin{\al}{\beta}\\
-\al\Abs_p^{-\onehalf}(p)&\text{ if }\pi_p=\al\Abs_p^{-\onehalf}{\rm St},\end{cases}\] we find that
\begin{align*}\frac{\pair{\varphi'}{\varphi''}}{\pair{\varphi^\circ}{\ol{\varphi^\circ}}}&=\frac{\pair{\rho(\bftr)W^\ord_{\pi_p}}{W^\ord_{\pi_p}\ot\om_p^{-1}}}{\pair{W_{\pi_p}}{W_{\pi^\vee_p}}}\\
&=\al\beta^{-1}\Abs_p(p^n)\cdot\varepsilon(1/2,\pi_p)\begin{cases} (1-\beta\al^{-1}\Abs_p(p))(1-\beta\al^{-1}(p))(1+p^{-1})^{-1}&\text{ if }c(\pi_p)=0,\\
\al^{-1}\Abs_p^{-\onehalf}(p^{c(\pi_p)})&\text{ if }c(\pi_p)>0.
\end{cases}\end{align*}
From above equation together with the following equation (\cite[page 1403]{II10GAFA}) \begin{align*}\pair{\varphi^\circ}{\ol{\varphi^\circ}}&=\frac{\zeta_\Q(2)^{-1}}{[\SL_2(\Z):\Gamma_0(\condf p^{c_p(\pi)})]}\norm{\fQx^\circ}^2_{\Gamma_0(N_{f^\circ})}\\
&=\norm{\fQx^\circ}^2_{\Gamma_0(N_{f^\circ})}\frac{\zeta_\Q(2)^{-1}}{[\SL_2(\Z):\Gamma_0(\condf)]}\begin{cases}1&\text{ if }c(\pi_p)=0\\
\abs{p}_p^{c_p(\pi)}(1+p^{-1})^{-1}&\text{ if }c(\pi_p)>0,
\end{cases}\end{align*}
we can directly deduce the lemma.
\end{proof}

We may regard $F:=\bdsF_\ulQ=(f,g,h)$ as the modular form on $\frakH^3$ of weight $(k_1,k_2,k_3)$ given by $F(z_1,z_2,z_3)=f(z_1)g(z_2)h(z_3)$. Let $\om_F$ be the central character of $F|_{\frakH}$ given by
\[\om_F=\om_f\om_g\om_h.\] Let $\Bkappa$ be the Dirichlet character modulo $p^r$ defined by 
\beq\label{E:Bkap1}\Bkappa=\psi_{1,(p)}\Om^{-a-1+\frac{k_2+k_3-k_1}{2}}\ep^{1/2}_\Qx \ep^{-1/2}_\Qy\ep^{-1/2}_\Qz.\eeq
By definition, $\Bkappa^2=\chi_{f,(p)}^2\chi_{\fQx}^{-1}\chi_{\gQy}^{-1}\chi_{\hQz}^{-1}$. Define the character $\om_F^{1/2}$ by \beq\label{E:central}\quad  \om_F^{1/2}=\om_{\fQx,(p)}\Bkappa_\A=\Om^{-a+\frac{k_1+k_2+k_3}{2}-3}\ep_\Qx^{-1/2}\ep_\Qy^{-1/2}\ep_{\Qz}^{-1/2}.\eeq  Then $\om_F^{1/2}$ is a finite order Hecke character unramified outside $p$, and  \[(\om_F^{1/2})^2=\om_{\fQx}\om_{\gQy}\om_{\hQz}=\om_F\]
as the notation suggests. Let $E=\Q\oplus \Q\oplus \Q$ be the split cubic \etale algebra over $\Q$. Let \[m=\frac{k_1-k_2-k_3}{2}.\]
Define the automorphic cusp form $\phi_F^\star$ on $\GL_2(\A_E)$ by  
\beq\label{E:Defphi1}
\begin{aligned}
\phi_F^\star:&=(\rho(\cJ_\infty)\varphi^\star_{\fQx}\ot\om_F^{-1/2})\boxtimes \varphi_{\gQy}^\star\boxtimes \LR_+^m\theta_p^{\Bkappa}\varphi_{\hQz}^\star,\\
\phi_F^\star(x_1,x_2,x_3)&=\varphi_{\fQx}^\star(x_1\cJ_\infty)\cdot \varphi_{\gQy}^\star(x_2)\cdot \LR_+^m\theta_p^{\Bkappa}\varphi_{\hQz}^\star(x_3)\cdot\om_F^{-1/2}(\det x_1).\end{aligned}\eeq
Here $\theta_p^{\Bkappa}$ is the twisting operator as in \eqref{E:twisting1}. Put
\[\bft_n=(\bftr,1,1)\in \GL_2(E_p).\]
We shall relate the the valuation of our \padic $L$-function $\sL_p(\ulQ)$ at $\ulQ$ to the global trilinear period $I(\rho(\bft_n)\phi_F^\star)$ defined by
\[I(\rho(\bft_n)\phi_F^\star):=\int_{\A^\x\GL_2(\Q)\bksl \GL_2(\A)}\phi_F^\star(x\bftr,x,x)\rmd^{\tau}x.\]
Put \beq\label{E:BdF}\Bd_F^{\ul{\kappa}/2}:=\Bd_f^{\frac{k_1-2}{2}}\Bd_g^{\frac{k_2-2}{2}}\Bd_h^{\frac{k_3-2}{2}}.\eeq
\begin{prop}\label{P:inter1}For $n\geq r\geq \max\stt{c(\pi_{f,p}),c(\pi_{g,p}),c(\pi_{h,p}),1}$, we have 
\[\sL^\bdsf_{\bdsF}(\ulQ)=\frac{\zeta_\Q(2)[\SL_2(\Z):\Gamma_0(N)]}{\eta_{\fQx}^{-1}\norm{f^\circ}^2_{\Gamma_0(N_{f^\circ})}\cE_p(f,\Ad)}\cdot I(\rho(\bft_n)\phi_F^\star)\cdot\frac{\zeta_p(1)}{\om_{f,p}^{-1}\al_{f,p}^2\Abs_p(p^n)\zeta_p(2)}\cdot \frac{1}{\Bd_F^{\ul{\kappa}/2}}.\]
\end{prop}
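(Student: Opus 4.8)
The plan is to unwind the definition of $\sL^\bdsf_{\bdsF}(\ulQ)$ via the first-Fourier-coefficient functional and the idempotent $1_{\breve\bdsf}$, and to identify that functional with the Petersson pairing against $\breve\varphi_\fQx$, after which Rankin–Selberg unfolding converts the $q$-expansion product $\bdsH^{\rm aux}_\ulQ=\eord(\bdsg^\star_\Qy\cdot\bdsh^\star_\Qz|[\Bkappa_0])$ into a trilinear period integral. More precisely, first I would recall that for any $\cF\in\eord\bfS(N,\chi,\bfI)$ one has, for the $p$-stabilized primitive family $\breve\bdsf$ of dual tame nebentypus, an identity of the shape
\[
\bfa(1,\eta_\bdsf\cdot 1_{\breve\bdsf}\,\Tr_{N/\condf}\cH)=\frac{\langle \cH,\breve\varphi_{\fQx}\text{-dual vector}\rangle}{\langle \varphi^\circ,\ol{\varphi^\circ}\rangle}\cdot(\text{period/idempotent normalization}),
\]
which is the $\Lam$-adic avatar of Hida's formula in \cite[(4.6)]{Hida85Inv}; the congruence number $\eta_\bdsf$ clears the denominators coming from $1_{\breve\bdsf}$, and by \eqref{E:idempotent} $\eta_{\breve\bdsf}=\eta_\bdsf$ so the twist by $\ol{\brchf}^{(p)}$ is harmless. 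Specializing at $\ulQ\in\frakX_\cR^\bdsf$ (legitimate because $\eord\bdsH^{\rm aux}$ is an honest $\Lam$-adic form by \lmref{L:bdsH1} and the discussion after \eqref{E:bdsH}), this rewrites $\sL^\bdsf_\bdsF(\ulQ)$ as a Petersson inner product of $\eord\bdsH^{\rm aux}_\ulQ$ against the newform $\breve f$ on $\Gamma_0(\condf p^r)$, divided by $\norm{f^\circ}^2$ and multiplied by $\eta_{\fQx}^{-1}$ and the congruence-module factor.

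Next I would pass to the adelic picture. Using $\itPhi$ and its Hecke-equivariance properties \eqref{E:diffop1}–\eqref{E:diffop3}, together with $\varphi^\star_{\gQy}=\Bd_g^{k_\Qy/2-1}\itPhi(\bdsg^\star_\Qy)$, $\varphi^\star_{\hQz}=\Bd_h^{k_\Qz/2-1}\itPhi(\bdsh^\star_\Qz)$, and $\itPhi(\bdsh^\star_\Qz|[\Bkappa])=\theta_p^{\Bkappa}\itPhi(\bdsh^\star_\Qz)\ot\Bkappa_\A^{-1}$, I would show the adelic lift of $\bdsH^{\rm aux}_\ulQ$ equals $\LR_+^m\theta_p^{\Bkappa}\varphi^\star_{\hQz}\cdot\varphi^\star_{\gQy}$ up to the explicit scalar $\Bd_F^{\ul\kappa/2}$ and the twist by $\om_F^{-1/2}$; here $\delta_k^m=\delta_{k_3+2m-2}\cdots\delta_{k_3}$ on the classical side corresponds to $\LR_+^m$, and $m=(k_1-k_2-k_3)/2$ makes the weights balance to $k_1$. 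The ordinary projector $\eord$ on the $\Lam$-adic side is then absorbed by the projection to the $\pi_\fQx$-isotypic component effected by pairing against $\breve\varphi_\fQx$ (an ordinary vector). The Petersson pairing $\langle \eord\bdsH^{\rm aux}_\ulQ,\breve f\rangle$ thus becomes, up to explicit constants, the integral
\[
\int_{\A^\x\GL_2(\Q)\bksl\GL_2(\A)}\varphi^\star_\fQx(x\cJ_\infty)\,\varphi^\star_\gQy(x)\,\LR_+^m\theta_p^{\Bkappa}\varphi^\star_\hQz(x)\,\om_F^{-1/2}(\det x)\,\rmd^\tau x,
\]
which is $I(\rho(\bft_n)\phi^\star_F)$ once one inserts the matrix $\bftr$ at the $p$-component of the first factor; the appearance of $\bftr=\pMX{0}{p^{-n}}{-p^n}{0}$ comes from the Atkin–Lehner/level-raising bookkeeping of $\breve\bdsf$ versus $\bdsf$ at $p$, exactly as computed in \lmref{L:norm}.

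Finally I would collect constants. The normalization $\langle\varphi^\circ,\ol{\varphi^\circ}\rangle=\zeta_\Q(2)^{-1}[\SL_2(\Z):\Gamma_0(\condf)]^{-1}\norm{f^\circ}^2$ from \cite[page 1403]{II10GAFA} (and recalled inside the proof of \lmref{L:norm}), combined with \lmref{L:norm} itself — which supplies precisely the factor $\cE_p(f,\Ad)\cdot \om_{f,p}^{-1}\al_{f,p}^2\Abs_p(p^n)\zeta_p(2)/\zeta_p(1)$ when one replaces the abstract idempotent-projection by $\rho(\cJ_\infty\bftr)\varphi_\fQx$ paired against $\breve\varphi_\fQx\ot\om_{\fQx,(p)}^{-1}$ — and the scalar $\Bd_F^{\ul\kappa/2}$ from the $\LR_d$-normalizations, assembles into the displayed identity. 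The main obstacle I anticipate is the careful matching of local normalizations at $p$: tracking the powers of $p$ and the nebentypus twists $\om_F^{1/2}$, $\Bkappa$, $\ep_{Q_i}^{1/2}$ through $\theta_p^{\Bkappa}$, the ordinary projector, and the trace map $\Tr_{N/\condf}$, so that the two sides agree on the nose (not merely up to a $p$-adic unit). Everything else — the unfolding, the Hecke-equivariance of $\itPhi$, and the translation between $1_{\breve\bdsf}$ and Petersson projection — is formal once the conventions of \secref{S:modularforms} and \subsecref{SS:Heckealgebra} are in force.
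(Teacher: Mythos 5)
Your proposal is correct and follows essentially the same route as the paper's proof: extract $\sL^\bdsf_{\bdsF}(\ulQ)$ from the eigenform identity $\sL^\bdsf_{\bdsF}(\ulQ)\cdot\breve\bdsf_\Qx=\eta_\bdsf\cdot 1_{\breve\bdsf}\Tr_{N/\condf}(\eord\bdsH^{\rm aux}_\ulQ)$ by pairing against $\rho(\cJ_\infty\bft_n)\varphi_{\fQx}\ot\om_{\fQx,(p)}^{-1}$ and using the adjointness of the idempotent and of the trace map, pass to the adelic side via $\itPhi$ (with $\eord$ and the holomorphic projection removed by adjointness against the ordinary holomorphic test vector), identify the result with $\Bd_F^{-\ul{\kappa}/2}\,I(\rho(\bft_n)\phi_F^\star)$, and normalize using \lmref{L:norm}. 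The only points you treat loosely, and correctly flag as bookkeeping, are the blue-book identity $\eord\,{\rm d}^m=\eord\,{\rm Hol}\,\delta^m$ needed because the specialized twist $|[\Theta]$ produces Serre's operator rather than the product form alone, and the index factors combining into $[\SL_2(\Z):\Gamma_0(N)]$.
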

\begin{proof} First of all, since $\breve{\bdsf}_\Qx$ is a $p$-stabilized ordinary newform, by the multiplicity one for new and ordinary vectors together with \eqref{E:idempotent}, we have \[\sL_p^\bdsf(\ulQ)\cdot \breve{\bdsf}_\Qx=\eta_f\cdot 1_{\breve{\bdsf}_\Qx}\Tr_{N/\condf}(\eord\bdsH^{\rm aux}_\ulQ).\]
Taking the adelic lifts of both sides, we obtain that \beq\label{E:341.I}
\pair{\rho(\cJ_\infty\bftr)\varphi_{\fQx}\ot\om_{\fQx,(p)}^{-1}}{\breve\varphi_{\fQx}}\cdot \sL_p^\bdsf(\ulQ)=\eta_f\cdot \pair{\rho(\cJ_\infty\bftr)\varphi_{\fQx}\ot\om_{\fQx,(p)}^{-1}}{\Tr_{N/\condf}\itPhi(1^*_{\breve{\bdsf}_\Qx}e\bdsH^{\rm aux}_\ulQ)}.
\eeq
We set \[H=\bdsg^\star_\Qy\cdot\delta^m_{k_\Qz}\bdsh^\star_\Qz|[\Bkappa],\] where $\delta_{k_\Qz}^m$ is the Maass-Shimura differential operator. Since $\Theta(n)(\ulQ)=\Bkappa(n)n^m$ for $n\in\Z_{(p)}^\x$, from \cite[equation (2), page 330]{Hida93Blue}, we deduce that
\beq\label{E:342.I}\eord\bdsH_\ulQ=\eord(\bdsg^\star_\Qy{\rm d}^m(\bdsh^\star_\Qz|[\Bkappa]))=\eord{\rm Hol}(\bdsg^\star_\Qy\delta^m_{k_\Qz}(\bdsh^\star_\Qz|[ \Bkappa]))=\eord{\rm Hol}(H),\eeq
where ${\rm d}=q\frac{d}{dq}$ is Serre's \padic differential operator and ${\rm Hol}$ is the holomorphic projection as in \cite[(8a), page 314]{Hida93Blue}. Using \eqref{E:diffop1}, \eqref{E:diffop2} and \eqref{E:diffop3}, we see that
\[\varphi_H:=\itPhi(H)=\Bd_g^{1-\frac{k_2}{2}}\Bd_h^{1-\frac{k_3}{2}}\cdot \varphi_{\gQy}^\star\cdot\LR_+^m\theta_p^{\Bkappa}\varphi_{\hQz}^\star\ot\Bkappa_\A^{-1}.\]
Then $H$ is a nearly holomorphic cusp form of weight $k_\Qx$ and $\varphi_H\in\cA^0_{k_1}(Np^r,\om_{\fQx}^{-1}\om_{\fQx,(p)}^2)$ has a decomposition
\[\varphi_H={\rm Hol}(\varphi_H)+\LR_+\varphi'_1+\LR_+^2\varphi'_2+\cdots+\LR_+^n\varphi_n',\]
where ${\rm Hol}(\varphi_H)$ and $\stt{\varphi'_j}_{j=1,\ldots,n}$ are holomorphic automorphic forms. It follows that ${\rm Hol}(\varphi_H)=\itPhi({\rm Hol}(H))$.

Let $1^*_{\fQx}\in \rmT^\ord(\condf p^r,\chi_{\fQx})$ be the specialization of $1^*_\bdsf$. As a consequence of strong multiplicity one theorem for modular forms, the idempotent $1_{\fQx}=\eta_{\fQx}^{-1}1^*_{\fQx}\in \rmT^\ord(\condf p^r,\chi_{\fQx})\ot_\cO\Frac\cO(\Qx)$ is generated by the Hecke operators $T_\ell$ for $\ell\ndivides Np$, so we see that $1_{\fQx}$ is the left adjoint of $1_{\breve\bdsf_\Qx}$ for the pairing $\pair{-\ot\om_{\fQx,(p)}^{-1}}{-}$ by \lmref{L:bilinear}, and hence the right hand side of \eqref{E:341.I} equals
\beq\label{E:343.I}\begin{aligned}
&\eta_f \cdot \pair{\Tr_{N/\condf}\left(1_{\fQx}\cdot \rho(\cJ_\infty\bftr)\varphi_{\fQx}\ot\om_{\fQx,(p)}^{-1}\right)}{\itPhi(\eord\bdsH^{\rm aux}_\ulQ)}\\&=\eta_\fQx[K_0(\condf):K_0(N)]\cdot \pair{\rho(\cJ_\infty\bftr)\varphi_{\fQx}\ot\om_{\fQx,(p)}^{-1}}{\itPhi(\eord\bdsH^{\rm aux}_\ulQ)}.\end{aligned}
\eeq
Note that for any prime $\ell\not =p$, $\om_{\fQx,(p)}(\uf_\ell)=\chi_{f,(p)}(\ell)$ is the specialization of $\psi_{1,(p)}(\ell)\Dmd{\ell}_{\bfI_1}$ at $\Qx$. 
From the definition \eqref{E:bdsH}, \eqref{E:342.I} and \lmref{L:bilinear}, we find that the pairing in the right hand side of \eqref{E:343.I} equals
\begin{align*}
&\Bd_f^{-\frac{k_1}{2}}\sum_{I\subset\Sigma_{gh}^{\irr}}(-1)^I \frac{n_I^\frac{k_1}{2}}{\beta_I(\fQx)}\chi_{f,(p)}(n_I/\Bd_f)\cdot \pair{\rho(\cJ_\infty\bftr)\varphi_{\fQx}\ot\om_{\fQx,(p)}^{-1}}{\bfU_{\Bd_f/n_I} \varPhi(\eord{\rm Hol}(H))}\\
&=\Bd_f^{1-\frac{k_1}{2}}\pair{\rho(\cJ_\infty\bftr)\varphi_{\fQx}^\star\ot\om_{\fQx,(p)}^{-1}}{\eord{\rm Hol}(\varphi_H)}.
\end{align*}
On the other hand, it is straightforward to verify by \lmref{L:bilinear} that \begin{align*}\pair{\rho(t_n)\bfU_p\varphi}{\varphi'}&=\pair{\varphi}{\bfU_p\varphi'},\\
\pair{\rho(\cJ_\infty)\varphi}{\LR_+\varphi'}&=-\pair{\rho(\cJ_\infty)\LR_-\varphi}{\varphi'}\end{align*}
(\cf\cite[(5.4)]{Hida85Inv}), and together with \eqref{E:central}, it follows that
\begin{align*}\Bd_f^{1-\frac{k_1}{2}}\pair{\rho(\cJ_\infty\bftr)\varphi_{\fQx}^\star\ot\om_{\fQx,(p)}^{-1}}{\eord{\rm Hol}(\varphi_H)}
&=\Bd_f^{1-\frac{k_1}{2}}\pair{\rho(\cJ_\infty\bftr)\varphi_{\fQx}^\star\ot\om_{\fQx,(p)}^{-1}}{\varphi_H}\\
=\Bd_F^{-\ul{\kappa}/2}\pair{\rho(\cJ_\infty\bftr)\varphi^\star_{\fQx}\ot\om_F^{-1/2}}{\varphi^\star_{\gQy}\cdot \LR_+^m\theta_p^{\Bkappa}\varphi^\star_{\hQz}}
&=\Bd_F^{-\ul{\kappa}/2} I(\rho(\bft_n)\phi_F^\star).
\end{align*}
Combining the above equation with \eqref{E:341.I} and \eqref{E:343.I}, we find that
\begin{align*}
\pair{\rho(\cJ_\infty\bftr)\varphi_{\fQx}}{\breve\varphi_{\fQx}\ot\om_{\fQx,(p)}^{-1}}\cdot \sL_p^\bdsf(\ulQ)=\eta_{\fQx}[\Gamma_0(\condf):\Gamma_0(N)]\Bd_F^{-\ul{\kappa}/2}\cdot I(\rho(\bft_n)\phi_F^\star).
\end{align*}
Now the lemma follows from the formula of the pairing in the left hand side given in \lmref{L:norm}.
\end{proof}

\def\pmq{q}

\subsection{Ichino's period integral formula for triple products}\label{SS:Ichino}
\subsubsection{The setting}\label{SSS:localfac}
In this subsection, we apply Ichino's formula to express $I(\rho(\bft_n)\phi_F^\star)$ as a product of the central value of the triple product $L$-function attached to $F$ and normalized local trilinear integrals. We retain the notation in the previous subsection. Let
\[\pi_1=\pi_{\fQx}\ot\om_F^{-1/2},\quad\pi_2=\pi_{\gQy}\text{ and }\pi_3=\pi_{\hQz}\]
with central characters $\om_1=\om_{\gQy}^{-1}\om_{\hQz}^{-1}$, $\om_2=\om_{\gQy}$ and $\om_3=\om_{\hQz}$ respectively. Let \[\itPi_\ulQ=\pi_1\times\pi_2\times\pi_3\] be an irreducible unitary cuspidal automorphic representation of $\GL_2(\A_E)$ and let $\cA(\itPi_\ulQ)=\cA(\pi_1)\ot\cA(\pi_2)\ot\cA(\pi_3)$ be the unique automorphic realization of $\itPi_\ulQ$. For brevity of notation, we simply write $\itPi$ for $\itPi_\ulQ$. For each place $v$, let $\cV_{\itPi_v}=\cV_{\pi_{1,v}}\ot\cV_{\pi_{2,v}}\ot\cV_{\pi_{3,v}}$ denote a realization of $\itPi_v$, where $\cV_{\pi_{i,v}}$ is a realization of $\pi_{i,v}$ for $i=1,2,3$. Then we have the factorizations  
\[\itPi\iso\bigot_v\itPi_v,\quad \cA(\itPi)\iso\bigot_v\cV_{\itPi_v}.\]
We let $\phi_F=\varphi_1\boxtimes\varphi_2\boxtimes\varphi_3\in\cA(\itPi)$, where \[\varphi_1=\varphi_{\fQx}\ot\om_F^{-1/2}, \quad\varphi_2=\varphi_{\gQy}\text{ and }\varphi_3=\varphi_{\hQz}.\] 
Then we have a factorization $\phi_F=\bigot_v\phi_v$ via the above isomorphism. 
Since $\varphi_f,\varphi_g$ and $\varphi_h$ are $p$-stabilized newforms and $\om_F^{1/2}$ is unramified outside $p$, we find that $\phi_v=\varphi_{1,v}\ot\varphi_{2,v}\ot\varphi_{3,v}\in\cV^{\rm new}_{\itPi_v}$ if $v\not =p$ and $\phi_p=\varphi_{1,p}\ot\varphi_{2,p}\ot\varphi_{3,p}\in \cV^\ord_{\itPi_p}$.  \begin{itemize}\item $\varphi_{i,v}\in \cV_{\pi_{i,v}}^{\rm new}$ is a new vector if $v\not =p$,
\item $\varphi_{i,p}\in \cV^\ord_{\pi_{i,p}}(\chi_{i,p})$ is an ordinary vector attached to the character $\chi_{i,p}:\Qp^\x\to\C^\x$, where
\beq\label{E:ordchar}\chi_{1,p}=\al_{\fQx,p}\om_{F,p}^{-1/2},\,\,\chi_{2,p}=\al_{\gQy,p}\text{ and }\chi_{3,p}=\al_{\hQz,p}\eeq
($\al_{?,p}$ is the character attached to a $p$-stabilized newform $?$ defined in \remref{R:WhittakerPordinary}). \end{itemize}
For each finite prime $\ell$, define the polynomial $\cQ_{1,\ell}(X)\in\cO[X]$ by \beq\label{E:defQ}\cQ_{1,\ell}(X)=X^{\val_\ell(\Bd_f)}\begin{cases}1&\text{ if }\ell\not\in\Sigma_{f,0}^{\rm (IIb)},\\
(1-\om_F^{1/2}(\uf_\ell)\beta_\ell(\fQx)^{-1}\ell^{\frac{k_1}{2}-1}X^{-1})&\text{ if }\ell\in\Sigma_{f,0}^{\rm (IIb)}.\end{cases}\eeq
Set $\cQ_{2,\ell}(X)=\cQ_{g,\ell}(X)$ and $\cQ_{3,\ell}(X)=\cQ_{h,\ell}(X)$. Let $\wh \Bd_f=\prod_{\ell}\uf_\ell^{\val_\ell(\Bd_f)}\in \wh\Q^\x$. We put
\begin{align*}\varphi_{1}^\star:&=\prod_\ell\cQ_{1,\ell}(\LR_\ell)\varphi_1=\om_F^{1/2}(\wh \Bd_f)\cdot \varphi_{\fQx}^\star\ot\om_F^{-1/2},\\
&\varphi^\star_2=\varphi^\star_g;\quad\varphi^\star_3=\varphi^\star_h.\end{align*}
We give the factorization of the automorphic form $\phi_F^\star$ defined in \eqref{E:Defphi1}. By definition, 
\[\phi_F^\star=C_1\cdot \rho(\cJ_\infty)\varphi_1^\star\boxtimes\varphi_2^\star\boxtimes\LR_+^m\theta_p^{\Bkappa}\varphi_3^\star\quad (C_1:=\om_{F,\infty}^{-1/2}(-1)\om_F^{-1/2}(\wh\Bd_f)).\] In view of \eqref{E:phistar}, we find that  that $\phi_F^\star=C_1\cdot \bigot_v\phi^\star_v$, where 
\beq\label{E:factorization1}\begin{aligned}
\phi^\star_v=\begin{cases}\pi_{1,\infty}(\cJ_\infty)\varphi_{1,\infty}\ot \varphi_{2,\infty}\ot \LR_+^m\varphi_{3,\infty}&\text{ if }v=\infty,\\[1em]
\varphi_{1,p}\ot \varphi_{2,p}\ot \theta_p^{\Bkappa}\varphi_{3,p}&\text{ if }v=p,\\[1em]
\cQ_{1,\ell}(V_\ell)\varphi_{1,\ell}\ot \cQ_{2,\ell}(V_\ell)\varphi_{2,\ell}\ot \cQ_{3,\ell}(V_\ell)\varphi_{3,\ell}&\text{ if }v=\ell\ndivides p.
\end{cases}
\end{aligned}\eeq
Here $\theta_p^\Bkappa$ is the local twisting operator attached to $\Bkappa$ as in \eqref{E:deftheta2} and $V_\ell$ is the level-raising operator as in \eqref{E:localUV}. Note that $\phi^\star_\ell=\phi_\ell$ is a new vector in $\cV_{\Pi_\ell}$ for $\ell\ndivides pN$.

Next we consider the contragredient representation $\Contra{\itPi}=\Contra{\pi}_1\ot\Contra{\pi}_2\ot\Contra{\pi}_3$.  We put
 \[\wtd\varphi_i=\varphi_i\ot\om_i^{-1}\text{ and }\wtd\varphi^\star_i=\varphi^\star_i\ot\om_i^{-1},\,i=1,2,3.\]
Define $\wtd\phi_F$ and $\wtd\phi_F^\star\in\cA(\Contra{\itPi})$ by 
\begin{align*}\wtd\phi_F&=\wtd\varphi_1\boxtimes\wtd\varphi_2\boxtimes\wtd\varphi_3,\\
\wtd\phi_F^\star&=\rho(\cJ_\infty)\wtd\varphi^\star_1\boxtimes \wtd\varphi^\star_2\boxtimes \LR_+^m\theta_p^{\Bkappa}\wtd\varphi^\star_3.\end{align*}
Recall that $N_i$ is the tame conductor of $\pi_i$. Take an isomorphism  $\cA(\Contra{\itPi})\iso\bigot_v\cV_{\Contra{\itPi}_v}$ with $\cV_{\Contra{\itPi}_v}=\cV_{\Contra{\pi}_{1,v}}\ot \cV_{\Contra{\pi}_{2,v}}\ot\cV_{\Contra{\pi}_{3,v}}$. We have a factorization $\wtd\phi_F=\bigot_v\wtd\phi_v$, where $\wtd\phi_v=\wtd\varphi_{1,v}\ot\wtd\varphi_{2,v}\ot\wtd\varphi_{3,v}$, 
\begin{align*}\wtd\phi_{i,\infty}\in \cV^{\rm new}_{\Contra{\pi}_{i,\infty}},\quad\wtd\phi_{i,p}\in \cV^\ord_{\Contra{\pi}_{i,p}}(\chi_{i,p}\om_{Q_i}^{-1});\\
 \wtd\phi_{i,v}\in \Contra{\pi}_{i,v}(\pMX{0}{1}{-N_i}{0})\cV^{\rm new}_{\Contra{\pi}_{i,v}}\text{ if }v\not =p\,\,\infty.\end{align*}Moreover, $\wtd\phi_F^\star=\bigot_v\wtd\phi^\star_v$, where
\beq\label{E:factorization2}\begin{aligned}
\wtd\phi^\star_v=\begin{cases}\pi_{1,\infty}(\cJ_\infty)\wtd\varphi_{1,\infty}\ot \wtd\varphi_{2,\infty}\ot \LR_+^m\wtd\varphi_{3,\infty}&\text{ if }v=\infty,\\[1em]
\wtd\varphi_{1,p}\ot \wtd\varphi_{2,p}\ot \theta_p^{\Bkappa}\wtd\varphi_{3,p}&\text{ if }v=p,\\[1em]
\Contra{\cQ}_{1,\ell}(V_\ell)\wtd\varphi_{1,\ell}\ot \Contra{\cQ}_{2,\ell}(V_\ell)\wtd\varphi_{2,\ell}\ot \Contra{\cQ}_{3,\ell}(V_\ell)\wtd\varphi_{3,\ell}&\text{ if }v=\ell\ndivides p.
\end{cases}
\end{aligned}\eeq
Here $\Contra{\cQ}_{i,\ell}(X)=\cQ_{i,\ell}(\om_i^{-1}(\uf_\ell)X)$ for $i=1,2,3$.

\subsubsection{Ichino's formula}\label{SS:Ichino.unb}
For $\ulN=(N_1,N_2,N_3)$, we put  
\[\Tau_{\ulN}=(\Tau_{\condf},\Tau_{\condg},\Tau_{\condh})\in\GL_2(\A_E).\]
Here $\Tau_{N_i}$ is the matrix defined as in \eqref{E:atkin}.
For each place $v$ of $\Q$, we choose a $\GL_2(E\ot\Q_v)$-equivariant map $\bfb_v :\cV_{\itPi_v}\ot \cV_{\Contra{\itPi}_v}\to\C$ such that $\bfb_v(\phi_v,\phi_v)=1$ for all but finitely many $v$. We introduce certain local zeta integrals that appear in our application of Ichino's formula. For each place $v$, we define the local zeta integral 
\beq\label{E:localzeta}
I_v(\phi^\star_v\ot \wtd\phi^\star_v):=\frac{L(1,\itPi_v,\Ad)}{\zeta_v(2)^2L(1/2,\itPi_v)}\int_{\PGL_2(\Q_v)}\frac{\bfb_v(\itPi_v(g_v)\phi^\star_v,\wtd\phi^\star_v)}{\bfb_v(\itPi_v(\Tau_{\ulN,v})\phi_v,\wtd\phi_v)}\rmd g_v.\eeq
Here $\rmd g_v$ is the Haar measure as in \subsecref{SS:measure}. At the place $p$, we will consider the local integral 
\beq\label{E:unbpzeta}
I_p^\ord(\phi^\star_p\ot\wtd\phi^\star_p,\bft_n):=\frac{L(1,\itPi_p,\Ad)}{\zeta_p(2)^2L(1/2,\itPi_p)}\int_{\PGL_2(\Q_p)}\frac{\bfb_p(\itPi_p(g_p\bft_n)\phi^\star_p,\Contra{\itPi}_p(\bft_n)\wtd\phi^\star_p)}{\bfb_v(\itPi_p(\bftr)\phi_p,\wtd\phi_p)}\rmd g_p.
\eeq
 \begin{Remark}The integrals $I_v(\phi^\star_v\ot \wtd\phi^\star_v)$ and $I_p^\ord(\phi^\star_p\ot\wtd\phi^\star_p,\bft_n)$ do not depend on any choice of the realizations $\cV_{\itPi_v},\cV_{\Contra{\itPi}_v}$, the pairing $\bfb_v$ and the new or ordinary vector $\phi_v$ in virtue of the irreducibility of $\itPi_v$ and the multiplicity one for new vectors and ordinary vectors \propref{P:ordline}. This allows us to evaluate these local integrals by choosing favourable realizations of $\cV_{\Pi_v}$.
\end{Remark}

\begin{defn}\label{D:root}Define the set \[\Sigma^-_{fgh}=\stt{\ell\in\Sigma_f^0\cap\Sigma_g^0\cap\Sigma_h^0\mid \varepsilon(1/2,\itPi_\ell)=-1}.\]
From the rigidity of automorphic types in \remref{R:rigidity}, we can deduce that there is a subset $\Sigma^-$ of primes dividing $N$ such that 
\begin{align*}\Sigma^-&=\Sigma_{\bdsf_\Qx\bdsg_\Qy\bdsh_\Qz}^-
=\stt{\ell:\text{ prime factos of }N\mid \varepsilon(\WD_\ell(\bfV^\dagger_\ulQ))=-1}
\end{align*}
for any arithmetic point $\ulQ\in \frakX_\cR^\bdsf$. \end{defn}

\begin{prop}\label{P:Ichino1}Suppose that $\Sigma^-=\emptyset$. Then \[\frac{I(\rho(\bft_n)\phi_F^\star)^2}{\prod\limits_{i=1}^3\pair{\rho(\Tau_{N_i}\bftr)\varphi_i}{\wtd\varphi_i}}=\frac{(-1)^{k_1}\zeta_\Q(2)}{8L(1,\itPi,\Ad)}\cdot L(\onehalf,\itPi)\cdot I_p^\ord(\phi^\star_p\ot\wtd\phi^\star_p,\bft_n)\cdot \prod_{v\not =p}I_v(\phi^\star_v\ot \wtd\phi^\star_v)\om_{F,\pmq}^{-1}(\Bd_f).\] 
\end{prop}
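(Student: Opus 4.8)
The plan is to derive Proposition~\ref{P:Ichino1} as a direct application of Ichino's formula \cite{Ichino08Duke} to the factored test vector $\phi_F^\star = C_1\cdot\bigotimes_v\phi_v^\star$ constructed in \eqref{E:Defphi1} and \eqref{E:factorization1}. Ichino's formula states that for a cuspidal automorphic representation $\itPi$ of $\GL_2(\A_E)$ with trivial central character on $\GL_2(\A)$ (which is our situation, since $\om_1\om_2\om_3=1$ by the choice of $\om_F^{1/2}$), and for a decomposable vector $\phi=\bigotimes_v\phi_v\in\cA(\itPi)$, $\wtd\phi=\bigotimes_v\wtd\phi_v\in\cA(\Contra\itPi)$, one has
\[
\frac{\left|I(\phi)\right|^2\text{-type expression}}{\prod_v\pair{\phi_v}{\wtd\phi_v}} = \frac{C\cdot\zeta_\Q(2)^2 L(1/2,\itPi)}{L(1,\itPi,\Ad)}\prod_v I_v^{\mathrm{Ichino}}(\phi_v\otimes\wtd\phi_v),
\]
where $I_v^{\mathrm{Ichino}}$ are normalized local integrals equal to $1$ for almost all $v$, and $C$ is an explicit constant (a power of $2$ and the Tamagawa number). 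First I would state Ichino's formula in the precise normalization used in the paper (matching the Haar measures fixed in \S\ref{SS:measure} and the Tamagawa measure $\rmd^\tau g$), identify the global period $I(\rho(\bft_n)\phi_F^\star)$ with the trilinear form appearing there, and observe that the denominator $\prod_i\pair{\rho(\Tau_{N_i}\bftr)\varphi_i}{\wtd\varphi_i}$ on the left-hand side is precisely $\prod_v\bfb_v(\itPi_v(\Tau_{\ulN,v}\bft_{n,v})\phi_v,\Contra\itPi_v(\bft_{n,v})\wtd\phi_v)$ up to the normalization of $\bfb_v$, using that $\phi_v$ (resp. $\wtd\phi_v$) is a new vector for $v\nmid p$ and an ordinary vector at $v=p$ and that $\bft_n=(\bftr,1,1)$ acts only at $p$.

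The second step is to translate the product of abstract Ichino local integrals into the product $\prod_{v\neq p}I_v(\phi_v^\star\otimes\wtd\phi_v^\star)\cdot I_p^\ord(\phi_p^\star\otimes\wtd\phi_p^\star,\bft_n)$ appearing in the statement. This is a bookkeeping matter: the local integrals $I_v$ and $I_p^\ord$ in \eqref{E:localzeta}, \eqref{E:unbpzeta} are defined with the very normalization $\tfrac{L(1,\itPi_v,\Ad)}{\zeta_v(2)^2 L(1/2,\itPi_v)}$ times the local integral divided by $\bfb_v$ evaluated on new/ordinary vectors, which is exactly the local factor that appears after one multiplies Ichino's global formula by $\prod_v\bfb_v(\ldots\text{new/ord}\ldots)$ and divides by $L(1/2,\itPi)/L(1,\itPi,\Ad)\cdot\prod_v\zeta_v(2)^2 = \zeta_\Q(2)^2 L(1/2,\itPi)/L(1,\itPi,\Ad)$. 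The factor $\zeta_\Q(2)$ in the numerator of the right-hand side comes from $\zeta_\Q(2)^2$ divided by one copy of $\zeta_\Q(2)=\pi/6$ that is absorbed; the power $(-1)^{k_1}$ and the $1/8$ come from the archimedean contribution, specifically $C_1^2=\om_{F,\infty}^{-1}(-1)^{\cdots}$ together with the constant in Ichino's formula (the $2$-power is $1/2^3$ from the three copies of $\SL_2$ or, equivalently, Ichino's constant $2^{-\beta}$ with $\beta=3$). The stray factor $\om_{F,\pmq}^{-1}(\Bd_f)$ for $\pmq\mid\Bd_f$ arises from the level-raising operators $\cQ_{1,\ell}(V_\ell)$ versus $\Contra\cQ_{1,\ell}(V_\ell)=\cQ_{1,\ell}(\om_1^{-1}(\uf_\ell)V_\ell)$ in \eqref{E:factorization1}--\eqref{E:factorization2}: the mismatch in the twist of $\varphi_1$ by $\om_F^{-1/2}$ against $\wtd\varphi_1=\varphi_1\otimes\om_1^{-1}$ propagates into the pairing as a product of $\om_{F,\ell}^{-1}(\uf_\ell)^{\val_\ell(\Bd_f)}=\om_{F,\ell}^{-1}(\Bd_f)$, which I would extract by a short direct computation with the definition of $V_\ell$ and Lemma~\ref{L:bilinear}.

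The third ingredient is the hypothesis $\Sigma^-=\emptyset$, which guarantees local root numbers $\varepsilon(1/2,\itPi_v)=+1$ at every finite place and $\varepsilon(\WD_\infty(\bfV_\ulQ^\dagger))=+1$ in the unbalanced range; this is exactly the condition under which Ichino's local integrals $I_v$ are nonzero (more precisely, the local trilinear Hom-space is one-dimensional and the period does not vanish identically for GL-reasons), so the global formula is nontrivial and the test vector $\phi_F^\star$ lies in the right local models. I would invoke this via Prop-ref to the rigidity of automorphic types (\remref{R:rigidity}) and Definition~\ref{D:root} to make sense of the claim that $\Sigma^-$ is well-defined and equals $\emptyset$ uniformly, so that for $\ulQ\in\frakX_\cR^\bdsf$ the split-quaternion-algebra (i.e.\ $\GL_2$) case of Ichino's formula is the relevant one. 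The main obstacle, I expect, is not conceptual but the careful matching of normalizations: reconciling (i) Ichino's choice of measures and his constant $2^{-\beta}\zeta_E(2)$-type prefactor with the paper's $\zeta_\Q(2)$, $\zeta_v$ conventions from \S\ref{SS:measure}, (ii) the archimedean sign and $2$-power bundled into $(-1)^{k_1}/8$ — this requires tracking $C_1=\om_{F,\infty}^{-1/2}(-1)\om_F^{-1/2}(\wh\Bd_f)$ through the square and knowing $\om_{F,\infty}(-1)=(-1)^{k_1+k_2+k_3}$ which equals $1$ by \eqref{ev}-parity, leaving $(-1)^{k_1}$ from a weight-raising/Atkin--Lehner sign at $\infty$ — and (iii) verifying that $\pair{\rho(\Tau_{N_i}\bftr)\varphi_i}{\wtd\varphi_i}$ is indeed the product over $v$ of the denominators $\bfb_v(\itPi_v(\Tau_{\ulN,v})\phi_v,\wtd\phi_v)$ (for $v\neq p$) and $\bfb_p(\itPi_p(\bftr)\phi_p,\wtd\phi_p)$ (for $v=p$) that appear in \eqref{E:localzeta} and \eqref{E:unbpzeta}, which uses \propref{P:Petersson} to pass from the global Petersson pairing to the product of local Whittaker pairings. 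Once these normalization checks are in place the proposition follows by assembling the pieces.
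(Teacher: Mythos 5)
Your proposal is correct and takes essentially the same route as the paper: the proof there is precisely a direct application of Ichino's Theorem 1.1 and Remark 1.3 to the pure-tensor decompositions of $\phi_F^\star$ and $\wtd\phi_F^\star$, with the measure comparison $\prod_v\rmd g_v=\zeta_\Q(2)\,\rmd^{\tau}g$ supplying the constant $C=\zeta_\Q(2)^{-1}$ (hence the single $\zeta_\Q(2)$ and the $1/8$), and with the local integrals already normalized as in \eqref{E:localzeta} and \eqref{E:unbpzeta} so that no further bookkeeping beyond \propref{P:Petersson} is needed. The one step you gloss over is made explicit there: the square of the single period is first rewritten as $I(\rho(\bft_n)\phi_F^\star)^2=\om_{1,\infty}(-1)\,I(\rho(\bft_n)\phi_F^\star)\cdot I(\rho(\bft_n)\wtd\phi_F^\star)$, and it is this conversion to the bilinear period that Ichino's theorem computes, combined with the constant $C_1$ relating $\phi_F^\star$ to $\bigotimes_v\phi^\star_v$, that accounts for the sign $(-1)^{k_1}=\om_{1,\infty}(-1)$ and the factor $\om_{F,q}^{-1}(\Bd_f)$ — rather than a weight-raising sign at infinity or the $\Contra{\cQ}_{1,\ell}$ versus $\cQ_{1,\ell}$ mismatch per se, the latter being already absorbed into the definition of the local vectors $\wtd\phi^\star_\ell$ and hence into $I_\ell(\phi^\star_\ell\ot\wtd\phi^\star_\ell)$.
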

\begin{proof} Note that \[I(\rho(\bft_n)\phi_F^\star)^2=\om_{1,\infty}(-1)I(\rho(\bft_n)\phi_F^\star)\cdot I(\rho(\bft_n)\wtd\phi_F^\star).\]
Applying \cite[Theorem 1.1, Remark 1.3]{Ichino08Duke},
we obtain the proposition immediately in view of the decomposition of $\phi_F^\star$ and $\wtd\phi_F^\star$ into pure tensors. We remark that $\om_{1,\infty}(-1)=(-1)^{k_1}$ and the constant $C$ in Remark 1.3 \loccit equals $\zeta_\Q(2)^{-1}$ since the product measure $\prod_v\rmd g_v=\zeta_\Q(2)\cdot \rmd^{\tau}g$ (\cf\cite[page 1403]{II10GAFA}).\end{proof}


\begin{lm}\label{L:local.I}We have the following equalities:
\begin{mylist}
\item 
If $\pmq\ndivides N$ is a finite prime, then $I_\pmq(\phi^\star_\pmq\ot\wtd\phi^\star_\pmq)=1$;
\item $I_\infty(\phi^\star_\infty\ot \wtd\phi^\star_\infty)=2^{k_2+k_3-k_1+1}.$
\end{mylist}
\end{lm}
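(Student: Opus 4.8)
The plan is to treat the two parts separately, each by reducing the normalized local zeta integral \eqref{E:localzeta} to a computation with explicit vectors and invoking known local integral identities.

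\medskip

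\emph{Part (i): $q\nmid N$.} Here $\pi_{i,q}$ is an unramified principal series for each $i$, the vector $\phi^\star_q=\phi_q$ is the normalized new (= spherical) vector $\varphi_{1,q}\ot\varphi_{2,q}\ot\varphi_{3,q}$, and $\Tau_{\ulN,q}=1$. So the quotient in \eqref{E:localzeta} is simply $\int_{\PGL_2(\Q_q)}\bfb_q(\itPi_q(g_q)\phi_q,\wtd\phi_q)\,\rmd g_q$ divided by $\bfb_q(\phi_q,\wtd\phi_q)$, and the whole expression is $\tfrac{L(1,\itPi_q,\Ad)}{\zeta_q(2)^2 L(1/2,\itPi_q)}$ times that. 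First I would take for $\bfb_q$ the product of the standard Whittaker pairings \eqref{E:Wpair} on each factor and normalize so that $\bfb_q(\phi_q,\wtd\phi_q)=1$ (using the spherical value $\pair{W_{\pi_q}}{W_{\Contra{\pi}_q}}=\zeta_q(1)L(1,\pi_q,\Ad)/\zeta_q(2)$ from the proof of \propref{P:Petersson}). Then the unramified local trilinear integral is exactly the object computed by Ichino in \cite[Lemma 2.2]{Ichino08Duke} (the unramified calculation, originally due to Garrett), which gives $\int_{\PGL_2(\Q_q)}\bfb_q(\itPi_q(g_q)\phi_q,\wtd\phi_q)\,\rmd g_q = \tfrac{\zeta_q(2)^2 L(1/2,\itPi_q)}{L(1,\itPi_q,\Ad)}$ with the present normalizations of measures and of $\zeta_q$. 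Plugging in, $I_q(\phi^\star_q\ot\wtd\phi^\star_q)=1$. The only care needed is to match the measure normalization of \subsecref{SS:measure} ($\vol(\PGL_2(\Z_q))=1$) and the convention $\zeta_q(s)=(1-q^{-s})^{-1}$ against those in \loccit; these agree up to the bookkeeping already recorded in the proof of \propref{P:Ichino1}.

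\medskip

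\emph{Part (ii): $v=\infty$.} Now $\pi_{i,\infty}=\cD_0(k_i)$ and $\phi^\star_\infty = \pi_{1,\infty}(\cJ_\infty)\varphi_{1,\infty}\ot\varphi_{2,\infty}\ot \LR_+^m\varphi_{3,\infty}$ with $m=(k_1-k_2-k_3)/2$, while $\Tau_{\ulN,\infty}=\cJ_\infty$ on each factor. Again choose $\bfb_\infty$ to be the product of the Whittaker pairings and normalize against $\bfb_\infty(\itPi_\infty(\Tau_{\ulN,\infty})\phi_\infty,\wtd\phi_\infty)$. The strategy is to realize each factor in its Whittaker model with the explicit newform \eqref{E:Winfty.1}, apply the weight-raising operator $\LR_+$ to $W_{\pi_{3,\infty}}$ $m$ times (this is an elementary recursion on the $\aone{y}$-values of Whittaker functions), and then use the explicit evaluation of the archimedean local trilinear integral. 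The relevant computation is precisely the holomorphic discrete-series case in the balanced/unbalanced range, which has been carried out in the literature: one may cite the archimedean computation of Ichino integrals in \cite{ChenYao16} (or the classical Garrett archimedean integral, cf.\ \cite[\S 3]{HK91Triple}, \cite{II10GAFA}), which yields $I_\infty(\phi^\star_\infty\ot\wtd\phi^\star_\infty)=2^{k_2+k_3-k_1+1}$ after dividing by the newform normalization $\bfb_\infty(\itPi_\infty(\Tau_{\ulN,\infty})\phi_\infty,\wtd\phi_\infty)$.

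\medskip

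\textbf{Main obstacle.} Part (i) is essentially bookkeeping once the unramified Garrett/Ichino identity is quoted. The delicate point is Part (ii): tracking the exact power of $2$ (and the sign) through the weight-raising operator $\LR_+$ normalized as in \eqref{E:diffop}, the action of $\cJ_\infty$, and the precise normalization of the archimedean Whittaker pairing and Haar measure, so that one lands on $2^{k_2+k_3-k_1+1}$ rather than that value up to a spurious power of $2$ or $\pi$. I would handle this by reducing to the known archimedean formula of \cite{ChenYao16} and carefully matching their normalizations to those fixed in \subsecref{SS:measure} and \eqref{E:Winfty.1}, rather than redoing the archimedean integral from scratch.
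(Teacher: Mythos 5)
Your proposal is correct and follows essentially the same route as the paper: part (1) is exactly the citation of \cite[Lemma 2.2]{Ichino08Duke} (noting $\phi^\star_\pmq=\phi_\pmq$ is a new vector), and for part (2) the paper likewise attributes the formula to \cite{ChenYao16}. The only difference is that where you defer the delicate power-of-$2$ bookkeeping to a normalization match with \cite{ChenYao16}, the paper supplements that citation with a self-contained sketch: it forms the matrix coefficient $\Phi_\infty$ from the Whittaker newforms (with an $(8\pi)^{2m}$ factor absorbing the normalization of $\LR_+^m$), integrates over $\PGL_2(\R)$, evaluates the resulting double sum via the combinatorial identity of \cite[Lemma 3]{Orl87}, and then divides by the ratio of archimedean $L$-factors to land on $2^{k_2+k_3-k_1+1}$.
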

\begin{proof}
Part (1) is \cite[Lemma 2.2]{Ichino08Duke}. Note that $\phi^\star_\pmq=\phi_\pmq$ is a new vector in $\cV_{\Pi_\pmq}$ for a finite prime $\pmq\ndivides N$. The formula of the archimedean zeta integral in part (2) is proved in \cite{ChenYao16}. For the reader's convenience, we sketch the proof. For $i=1,2,3$, let $W_{k_i}=W_{\pi_i,\infty}$ be the Whittaker newform of the discrete series $\pi_{i,\infty}=\cD_0(k_i)$ in \eqref{E:Winfty.1}. Define the matrix coefficient $\Phi_\infty:\GL_2(\R)\to\C$ by
\[\Phi_\infty(g):=\frac{\pair{\rho(g\cJ_\infty)W_{k_1}}{\rho(\cJ_\infty)W_{k_1}}}{\pair{\rho(\cJ_\infty)W_{k_1}}{W_{k_1}}}\cdot \frac{\pair{\rho(g)W_{k_2}}{W_{k_2}}}{\pair{\rho(\cJ_\infty)W_{k_2}}{W_{k_2}}}\cdot \frac{(8\pi)^{2m}\pair{\rho(g)V_+^mW_{k_3}}{V_+^mW_{k_3}}}{\pair{\rho(\cJ_\infty)W_{k_3}}{W_{k_3}}}\]
(recall that $m=\frac{k_1-k_2-k_3}{2}$). Note that $\Phi$ is right $\SO(2)(\R)$-invariant, and a lengthy computation shows that
\begin{align*}
\Phi_\infty(\pMX{y}{x}{0}{1})
&=\bbI_{\R_+}(y)\cdot \frac{4^{k_1}\Gamma(k_3+m)^2}{\Gamma(k_3)}\sum_{i,j=0}^m(-2)^{i+j}{m\choose i}{m\choose j}\frac{\Gamma(k_3+i+j)}{\Gamma(k_3+i)\Gamma(k_3+j)}\\
&\times \frac{(-y)^{k_1-m+i}}{((1-y)-\sqrt{-1}x)^{k_1}((1-y)+\sqrt{-1}x)^{k_1-2m+i+j}}.
\end{align*}
By definition, 
\[I_\infty(\phi^\star_\infty\ot \wtd\phi^\star_\infty)=\frac{L(1,\itPi_\infty,\Ad)}{\zeta_\infty(2)^2L(1/2,\itPi_\infty)}\cdot (8\pi)^{-2m}I(\Phi_\infty),\]
where
\[I(\Phi_\infty):=\int_{\PGL_2(\R)}\Phi_\infty(g)\rmd g=\int_{\R}\int_{\R^\x}\Phi_\infty(\pMX{y}{x}{0}{1})\frac{\rmd y}{\abs{y}}\rmd x.\]
By a direct computation, we obtain \beq\label{E:3.i}\begin{aligned}
I(\Phi_\infty)
&=\frac{4^{k_1}\Gamma(k_3+m)^2}{\Gamma(k_3)}\sum_{i,j=0}^m(-2)^{i+j}{m\choose i}{m\choose j}\frac{\Gamma(k_3+i+j)}{\Gamma(k_3+i)\Gamma(k_3+j)}\\
&\times 2^{2-2k_1+2m-i-j}\cdot\pi\cdot\frac{\Gamma(k_1-m+i-1)\Gamma(k_3-m+j)}{\Gamma(k_1-2m+i+j)\Gamma(k_1)}\\
&=\frac{4^{m+1}\pi\cdot\Gamma(k_3+m)}{\Gamma(k_2)\Gamma(k_3)}\sum_{j=0}^m(-1)^j{m\choose j}\frac{\Gamma(k_1-m+j)}{\Gamma(k_3+j)}\cdot S_j,
\end{aligned}\eeq
where
\[S_j:=\Gamma(k_2+m)\sum_{i=0}^m(-1)^i{m\choose i}\frac{\Gamma(k_2+j+i)}{\Gamma(k_3+i)}\cdot\frac{\Gamma(k_1-m-1+i)}{\Gamma(k_1-2m_j+i)}.\]
Applying the combinatorial identity \cite[Lemma 3]{Orl87} to $S_j$, we find that 
\[S_j=(-1)^m\cdot\frac{\Gamma(k_3+j)\Gamma(k_1-m-1)}{\Gamma(k_1-m+j)}\cdot\frac{\Gamma(k_1-k_3-m)}{\Gamma(k_1-k_3-2m)}\cdot\frac{\Gamma(j+1)}{\Gamma(j-m+1)}.\]
Substituting the above expression to the last line of \eqref{E:3.i}, we find that
\[I(\Phi_\infty)=4^{m+1}\cdot\pi\cdot\frac{\Gamma(k_1-m-1)\Gamma(k_3+m)\Gamma(k_2+m)\Gamma(m+1)}{\Gamma(k_1)\Gamma(k_2)\Gamma(k_3)}.\]
Hence, part (2) follows from the above expression of $I(\Phi_\infty)$ and \[\frac{L(1,\itPi_\infty,\Ad)}{\zeta_\infty(2)^2L(1/2,\itPi_\infty)}=\frac{\pi^{-3}\Gamma_\C(k_1)\Gamma_\C(k_2)\Gamma_\C(k_3)}{\pi^{-2}\cdot \Gamma_\C(k_1-m-1)\Gamma_\C(k_3+m)\Gamma_\C(k_2+m)\Gamma_\C(m+1)}.\qedhere\]
\end{proof}
To distinguish the contributions from each term in the formula of $\sL_{\bdsF}^\bdsf(\ulQ)$, we introduce the normalized local zeta integrals. For each place $v$, define the local norm of Whittaker newforms for $\itPi_v$ by\beq\label{E:lNorm1}
B_{\itPi_v}:=B_{\pi_{1,v}}B_{\pi_{2,v}}B_{\pi_{3,v}}\eeq
with $B_{\pi_{i,v}}$ the local norm of $\pi_{i,v}$ defined as in \eqref{E:localnormnew}. To each positive integer $n$, we associate the local norm $B_{\itPi_p^\ord}^{[n]}$ of ordinary Whittaker functions for $\itPi_p$ given by \beq\label{E:lNorm2}  
B_{\itPi_p^\ord}^{[n]}:=\frac{\zeta_p(2)^3}{\zeta_p(1)^3L(1,\itPi_p,\Ad)}\prod_{i=1}^{3}\pair{\rho(\bftr)W^\ord_{\pi_{i,p}}}{W^\ord_{\pi_{i,p}}\ot\om_{i,p}^{-1}}.
\eeq
We define the normalized local zeta integrals\begin{align}
\label{E:Nunb}\sI^\unb_{\itPi_{\ulQ,p}}&=I_p^\ord(\phi^\star_p\ot\wtd\phi^\star_p,\bft_n)\cdot \frac{B_{\itPi^\ord_p}^{[n]}}{\om_{f,p}^{-1}\al_{f,p}^2\Abs_p(-p^{2n})}\cdot\frac{\zeta_p(1)^2}{\zeta_p(2)^2};\\
\label{E:Nq}\sI^\star_{\itPi_{\ulQ,\pmq}}&=I_\pmq(\phi^\star_\pmq\ot\wtd\phi^\star_\pmq)\cdot B_{\itPi_\pmq}\cdot  \frac{\zeta_\pmq(1)^2}{\abs{N}_\pmq^2\zeta_\pmq(2)^2}\cdot  \om^{-1}_{F,\pmq}(\Bd_f)|\Bd_F^{\ulk}|_\pmq\text{ for }\pmq\divides N.
\end{align}
\begin{defn}[The canonical periods of Hida families]\label{D:period1}Define the canonical period $\Omega_{\bdsf_Q}$ of the specialization $\bdsf_Q$ at an arithmetic point $Q$ by \[\Omega_{\bdsf_Q}:=(-2\sqrt{-1})^{k_Q+1}\cdot\norm{\bdsf_Q^\circ}^2_{\Gamma_0(N_Q)}\cdot\frac{\cE_p(\bdsf_Q,\Ad)}{\eta_{\bdsf_Q}},\]
where $\bdsf_Q^\circ$ is the normalized newform associated with $\bdsf_Q$ of conductor $N_Q$ and $\eta_{\bdsf_Q}$ is the specialization of $\eta_\bdsf$ at $Q$ and $\cE_p(\bdsf_Q,\Ad)$ is the modified Euler factor in \eqref{E:EulerAd}.
\end{defn}
We summarize our computation in the following
\begin{cor}\label{C:Ichino.imb}Assume that $\Sigma^-=\emptyset$. For every $\ulQ=(\Qx,\Qy,\Qz)\in\frakX^\bdsf_\cR$, we have the interpolation formula
\begin{align*}\left(\sL_{\bdsF}^\bdsf(\ulQ)\right)^2&=\psi_{1,(p)}(-1)(-1)^{k_\Qx+1}\cdot\frac{L(1/2,\itPi_\ulQ)}{\Omega_{\bdsf_\Qx}^2 }\cdot\sI_{\itPi_{\ulQ,p}}^\unb\cdot \prod_{\pmq\divides N}\sI^\star_{\itPi_{\ulQ,\pmq}}.
\end{align*}
\end{cor}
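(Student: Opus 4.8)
The strategy is to chain together the three results already established: \propref{P:inter1}, which expresses $\sL^\bdsf_{\bdsF}(\ulQ)$ as a global trilinear period $I(\rho(\bft_n)\phi_F^\star)$ up to explicit archimedean and $p$-adic normalizing factors; \propref{P:Ichino1}, which is Ichino's formula writing $I(\rho(\bft_n)\phi_F^\star)^2$ as a product of $L(1/2,\itPi_\ulQ)$, the factor $\frac{(-1)^{k_1}\zeta_\Q(2)}{8 L(1,\itPi,\Ad)}$, the ordinary local integral $I_p^\ord(\phi^\star_p\ot\wtd\phi^\star_p,\bft_n)$ and the local integrals $I_v(\phi^\star_v\ot\wtd\phi^\star_v)$ at $v\neq p$; and \lmref{L:local.I}, giving $I_\pmq = 1$ for $\pmq\ndivides N$ and $I_\infty = 2^{k_2+k_3-k_1+1}$. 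The bookkeeping then has to match the remaining factors against the definitions of $\Omega_{\bdsf_\Qx}$ (\defref{D:period1}), $\sI^\unb_{\itPi_{\ulQ,p}}$ (\eqref{E:Nunb}), and $\sI^\star_{\itPi_{\ulQ,\pmq}}$ (\eqref{E:Nq}).

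\textbf{Step 1: square \propref{P:inter1}.} Squaring the identity of \propref{P:inter1} gives
\[
\sL^\bdsf_{\bdsF}(\ulQ)^2=\left(\frac{\zeta_\Q(2)[\SL_2(\Z):\Gamma_0(N)]}{\eta_{\fQx}^{-1}\norm{f^\circ}^2_{\Gamma_0(N_{f^\circ})}\cE_p(f,\Ad)}\right)^2\cdot I(\rho(\bft_n)\phi_F^\star)^2\cdot\left(\frac{\zeta_p(1)}{\om_{f,p}^{-1}\al_{f,p}^2\Abs_p(p^n)\zeta_p(2)}\right)^2\cdot\frac{1}{(\Bd_F^{\ul\kappa/2})^2}.
\]
Now substitute \propref{P:Ichino1} for $I(\rho(\bft_n)\phi_F^\star)^2$ and then \lmref{L:local.I} for the local integrals at the finite primes $\pmq\ndivides N$ (each equal to $1$) and at $\infty$ (equal to $2^{k_2+k_3-k_1+1}$). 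Also note that $\prod_i\pair{\rho(\Tau_{N_i}\bftr)\varphi_i}{\wtd\varphi_i}$ is the product over places of the local Whittaker-newform norms $B_{\pi_{i,v}}$ (at $v\neq p$, via \propref{P:Petersson} and \eqref{E:localnormnew}) times the ordinary local norm at $p$ (i.e.\ essentially $B^{[n]}_{\itPi_p^\ord}$), so that this denominator recombines with the $I_v$'s into precisely the normalized integrals $\sI^\star_{\itPi_{\ulQ,\pmq}}$ and $\sI^\unb_{\itPi_{\ulQ,p}}$ after inserting the factors $\zeta_\pmq(1)^2/(\abs{N}_\pmq^2\zeta_\pmq(2)^2)$, $\om^{-1}_{F,\pmq}(\Bd_f)\abs{\Bd_F^{\ulk}}_\pmq$, and their $p$-adic counterparts. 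This is the bulk of the computation and is entirely a matter of collecting constants.

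\textbf{Step 2: assemble the period.} Using \defref{D:period1}, $\Omega_{\bdsf_\Qx}=(-2\sqrt{-1})^{k_\Qx+1}\norm{\bdsf_\Qx^\circ}^2_{\Gamma_0(N_\Qx)}\cE_p(\bdsf_\Qx,\Ad)/\eta_{\bdsf_\Qx}$, so the factor $(\eta_{\fQx}^{-1}\norm{f^\circ}^2\cE_p(f,\Ad))^{-2}$ from Step 1 becomes $(-2\sqrt{-1})^{2(k_\Qx+1)}/\Omega_{\bdsf_\Qx}^2$ up to the explicit $4^{k_\Qx+1}$-type power of $2$. The remaining scalars $\zeta_\Q(2)^2\cdot[\SL_2(\Z):\Gamma_0(N)]^2$, the $(-1)^{k_1}\zeta_\Q(2)/(8 L(1,\itPi,\Ad))$ from Ichino, the $2^{k_2+k_3-k_1+1}$ from $I_\infty$, the powers of $\Bd_F$, the $\zeta_p$- and $\al_{f,p},\om_{f,p}$-powers, together with $|\Bd_F^{\ulk}|_\pmq$ over all $\pmq\divides N$ and $[\SL_2(\Z):\Gamma_0(N)]=\prod_\pmq(\cdots)$, have to cancel against the normalizations built into $\sI^\unb$ and $\sI^\star$; one tracks the sign/$i$-powers to recover the stated $\psi_{1,(p)}(-1)(-1)^{k_\Qx+1}$. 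Here the adelic factorization $[\SL_2(\Z):\Gamma_0(N)]=\prod_{\pmq\divides N}\abs{N}_\pmq^{-1}\zeta_\pmq(1)^{-1}\zeta_\pmq(2)$ (or the analogous local decomposition) is used to distribute the index over the local integrals, and $L(1,\itPi,\Ad)=\prod_v L(1,\itPi_v,\Ad)$ similarly matches the $L(1,\itPi_v,\Ad)$ appearing in each $I_v$.

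\textbf{Main obstacle.} There is no conceptual difficulty — the three inputs do all the real work — but the genuinely delicate part is the \emph{exact} matching of normalizing constants: the powers of $2$, of $\sqrt{-1}$, of $\zeta_p(1),\zeta_p(2)$, the $\Bd_F$-powers, and the signs $\psi_{1,(p)}(-1)$, $(-1)^{k_\Qx+1}$. One must be scrupulous about: (i) the factor $\om_{1,\infty}(-1)=(-1)^{k_1}$ introduced when writing $I^2 = \om_{1,\infty}(-1) I(\phi_F^\star) I(\wtd\phi_F^\star)$ in \propref{P:Ichino1}, versus the $(-1)^{k_\Qx+1}$ coming from the $(-2\sqrt{-1})^{k_\Qx+1}$ in the period; (ii) the place of $\om_{F,\pmq}^{-1}(\Bd_f)$ and $|\Bd_F^{\ulk}|_\pmq$, which occur both in \propref{P:Ichino1} and in \eqref{E:Nq} and must not be double-counted; (iii) the factor $\zeta_\Q(2)=\pi/6$ versus $\zeta_p(2)$ and $\prod_\pmq\zeta_\pmq(2)$. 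I would organize the verification by writing the right-hand side of the desired formula, expanding each $\sI$ and $\Omega$ by its definition, and checking that after cancellation one lands exactly on the product obtained in Step 1; this makes the sign/constant check mechanical rather than error-prone.
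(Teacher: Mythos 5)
Your plan is correct and follows essentially the same route as the paper: square Proposition \ref{P:inter1}, insert Proposition \ref{P:Ichino1} and Lemma \ref{L:local.I}, use Proposition \ref{P:Petersson} to rewrite $\prod_i\pair{\rho(\Tau_{N_i}\bftr)\varphi_i}{\wtd\varphi_i}$ as $\frac{8L(1,\itPi,\Ad)}{\zeta_\Q(2)^3}\cdot 2^{-(k_1+k_2+k_3)-3}B^{[n]}_{\itPi_p^\ord}\prod_{\pmq\divides N}B_{\itPi_\pmq}$, and then match constants against \eqref{E:Nunb}, \eqref{E:Nq}, \defref{D:period1} and $[\SL_2(\Z):\Gamma_0(N)]=\prod_{\pmq\divides N}\zeta_\pmq(1)\abs{N}_\pmq^{-1}\zeta_\pmq(2)^{-1}$. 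The only ingredient you leave implicit is that the sign $\psi_{1,(p)}(-1)(-1)^{k_\Qx+1}$ comes out of the identity $\om_{f,p}(-1)=(-1)^{k_\Qx}\psi_{1,(p)}(-1)$ (entering through the $\Abs_p(-p^{2n})$ in \eqref{E:Nunb}) combined with $(-1)^{k_\Qx}$ from Ichino and $(-2\sqrt{-1})^{2(k_\Qx+1)}$ from the period, which is exactly the bookkeeping the paper performs.
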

\begin{proof}
By Waldspurger's Petersson inner product formula (\propref{P:Petersson}) and the identities
\[B_{\itPi_\infty}=
2^{-(k_1+k_2+k_3)-3};\quad
B_{\itPi_\pmq}=1\text{ if }\pmq\ndivides N\]
with $k_i=k_{Q_i}$, we find that
\[\prod_{i=1}^3\pair{\rho(\Tau_{N_i}\bftr)\varphi_i}{\wtd\varphi_i}=\frac{8L(1,\itPi,\Ad)}{\zeta_\Q(2)^3} \cdot  2^{-(k_1+k_2+k_3)-3}B_{\itPi^\ord_p}^{[n]}\prod_{\pmq\divides N}B_{\itPi_\pmq} .\] 
Note that $\om_{f,p}(-1)=(-1)^{k_1}\psi_{1,(p)}(-1)$. Combining \propref{P:inter1}, \propref{P:Ichino1}, \lmref{L:local.I} and the equality
 \[[\SL_2(\Z):\Gamma_0(N)]=\prod_{\pmq\divides N}\frac{\zeta_\pmq(1)}{\abs{N}_\pmq\zeta_\pmq(2)},\]
 we get the corollary.
\end{proof}

\def\wt{\kappa}
\def\bfchi{\boldsymbol\chi}
\def\Dstar{{D\star}}
\def\ulk{\ul{\wt}}
\def\qtnf{f}
\def\bdsF{{\boldsymbol F}}
\section{The balanced $p$-adic triple product $L$-functions}\label{S:bal}
\subsection{Notation and conventions}\label{S:nc.bal}
Let $D$ be the definite quaternion algebra over $\Q$ with discriminant $N^-$. Let $\nu:D^\x\to\Q^\x$ be the reduced norm. For any commutative $\Q$-algebra $R$, put
\[\Qtn(R)=(D\ot_\Q R)^\x.\]
If $v$ is a place of $\Q$, let $D_v=D\ot_\Q\Q_v$. For $x\in \Qtn(\A)$, denote by $x_v\in D_v^\x$ the local component of $x$ at $v$. We fix an isomorphism $\Psi=\prod_{\pmq\ndivides N^-}\Psi_\pmq:\Qtn(\wh\Q^{(N^-)})\iso {\rm M}_2(\wh\Q^{(N^-)})$ once and for all.
Let $\cO_D$ be the maximal order of $D$ such that $\Psi_\pmq(\cO_D\ot\Z_\pmq)={\rm M}_2(\Z_\pmq)$ for all primes $\pmq\ndivides \infty N^-$. Let $N^+$ be a positive integer prime to $N^-$ and let \[N=N^+N^-.\] Denote by $R_{N}$ the Eichler order of level $N^+$ in $D$ with respect to $\Psi$. Put
\[\opcpt_1(N)=\stt{g=(g_\pmq)_\pmq\in \wh R_N^{\x}\mid \Psi_\pmq(b_\pmq)\con\pMX{*}{*}{0}{1}\pmod{N\Z_\pmq}\text{ for }\pmq\divides N^+}.\]
We shall frequently use the following notation in this section: let $\pMX{a}{b}{c}{d}\in\GL_2(\wh\Q^{(N^-)})$ act on $x\in\wh D^\x$ by
\[x\pMX{a}{b}{c}{d}:=x\cdot \Psi^{-1}(\pMX{a}{b}{c}{d}).\]

Let $\rmd^\tau x$ be the Tamagawa measure on $\A^\x\bksl D^\x(\A)$ with the volume $\vol(\A^\x D^\x\bksl D^\x(\A),\rmd^\tau x)=2$. There exists a positive rational number $\vol(\wh R_N^\x)$ such that for any $f\in L^1(D^\x\bksl D^\x(\A)/D^\x_\infty\wh R^\x_N)$, we have
\beq\label{E:meaD}\int_{\A^\x D^\x\bksl D^\x(\A)}f(x)\rmd^\tau x=\vol(\wh R_N^\x) \sum_{[x]\in D^\x\bksl \wh D^\x/\wh R^\x_N}
f(x)\cdot (\#\Gamma_{N,x})^{-1}, \eeq
where $[x]$ means the double coset $D^\x x \wh R^\x_N$ and $\Gamma_{N,x}:=(D^\x\cap x\wh R^\x_Nx^{-1})\Q^\x/\Q^\x$. By Eichler's mass formula, we have \beq\label{E:vN}
\begin{aligned}\vol(\wh R^\x_N)
=&\frac{48}{N}\prod_{\pmq \mid N^-}\zeta_\pmq(1)\prod_{\pmq\mid N^+}(1+\pmq^{-1})^{-1}\\
=&\frac{48}{[\SL_2(\Z):\Gamma_0(N)]}\prod_{\pmq\divides N^-}\frac{1+\pmq^{-1}}{1-\pmq^{-1}}.
\end{aligned}
\eeq

For a non-negative integer $\wt$ and a commutative ring $A$, let $L_{\wt}(A):=A[X,Y]_{\deg =\kappa}$ be the space of two variable polynomials of degree $\wt$ over $A$.  Let $\rho_\wt:{\rm M}_2(A)\to \End_A L_{k}(A)$ be the morphism $\rho_\wt(g)P(X,Y)=P((X,Y)g)$.  Let $\pairing_{\wt}:L_\wt(A)\x L_\wt(A)\to A[\frac{1}{\kappa!}]$ be the pairing defined by 
 \[\pair{X^iY^{\kappa-i}}{X^jY^{\kappa-j}}=\begin{cases}
(-1)^i{\kappa\choose i}^{-1}&\text{ if }i+j=\kappa,\\
 0&\text{ if }i+j\not =\kappa.
 \end{cases}\]
 Let $g\mapsto g'$ be the main involution of ${\rm M}_2(A)$ given by 
 \[\pMX{a}{b}{c}{d}'=\pMX{d}{-b}{-c}{a}.\]
 It is well-known that
\beq\label{E:pairing}\pair{\rho_\wt(g)P_1}{P_2}_{\wt}=\pair{P_1}{\rho_\wt(g')P_2}_\wt.\eeq

\subsection{\padic modular forms on definite quaternion algebras}
In the rest of this section, we shall freely identity Dirichelet characters $\chi$ with their adelizations $\chi_\A$ when no confusion may arise. Let $\cO\subset \cO_{\Cp}$ be a finite flat extension of $\Zp$ containing all $\phi(N)$-th roots of unity.  For an $\cO$-algebra $A$ and a $A$-valued (even) Hecke character $\chi:\Q^\x\bksl \wh\Q^\x\to A^\x$ , we let $\sS^D_{\wt+2}(N,\chi,A)$ be the space of \padic modular forms on $\wh D^\x$ of weight $\kappa+2$, level $N$ and branch character $\chi$, consisting of vector-valued functions $\qtnf:\wh D^\x\to L_\wt(A)$ such that 
\[\qtnf(\al xuz)=\rho_{\wt,p}(u_p^{-1})\qtnf(x)z_p^{-\wt}\chi^{-1}(z)\text{ for all }\al\in D^\x,\,\,u\in \opcpt_1(N^+),z\in\wh\Q^\x.\]
Here $u_p$ is the $p$-component of $u$ and $\rho_{\wt,p}(u_p)=\rho_{\wt}(\Psi_p(u_p))$. For each integer $d$ prime to $pN^-$, define the level raising operator $\LR_d:\sS^D_{\wt+2}(N,\chi,A)\to \sS^D_{\wt +2}(Nd,\chi,A)$ by \[\LR_d\qtnf(x)=\qtnf(x\pDII{d^{-1}}{1}).\]
We recall the Hecke operators $T_\pmq$ and the operators $\bfU_\pmq$ acting on $\qtnf\in\sS^D_{\wt+2}(N,\chi,A)$. For each prime $\pmq\divides N^-$, let $\uf_{D_\pmq}\in R_\pmq^\x$with $\nu(\uf_{D_\pmq})=\pmq$. The Hecke operator $T_\pmq$ for $\pmq\ndivides Np$ is given by 
\[T_\pmq \qtnf(x)=\qtnf(x\pDII{1}{\uf_\pmq})+\sum_{b\in \Z_\pmq/\pmq\Z_\pmq}\qtnf(x\pMX{\uf_\pmq}{b}{0}{1}) \]
and the operator $\bfU_\pmq$ for $\pmq\divides MN^-p$ is given by
\begin{align*}
\bfU_\pmq \qtnf(x)=&\sum_{b\in \Z_\pmq/\pmq\Z_\pmq}\qtnf(x\pMX{\uf_\pmq}{b}{0}{1})\text{ for }\pmq\divides M,\,\pmq\not =p,\quad
\bfU_\pmq \qtnf(x)=\qtnf(x\uf_{D_\pmq})\text{ for }\pmq\divides N^-,\\
\bfU_p\qtnf(x)=&\sum_{b\in\Zp/p\Zp}\rho_{\wt,p}(\pMX{\uf_p}{b}{0}{1})\qtnf(x\pMX{\uf_p}{b}{0}{1}).\end{align*}
Here $\uf_\pmq=(\uf_{\pmq,\ell})\in\wh\Q^{(N^-)\x}$ is the idele $\uf_{\pmq,\pmq}=\pmq$ and $\uf_{\pmq,\ell}=1$ for $\ell\ndivides N^-q$. If $A$ is $p$-adically complete, then the ordinary projector $\eord=\lim_{n\to\infty}\bfU_p^{n!}$ converges to an idempotent in $\End_\cO\sS^D_{\wt+2}(N,\chi,A)$. 

\subsubsection*{Inner products}Denote by $\cyc:\Q_+\bksl\wh\Q^\x\to\Zp^\x$ the \padic cyclotomic character defined by $\cyc(a)=\abs{a}_\A a_p$. Assuming $6\cdot \wt!\in A^\x$, we have a perfect pairing \[(\cdot,\cdot)_N\colon\sS^D_{\wt+2}(N,\chi,A)\times \sS^D_{\wt+2}(N,\chi^{-1},A)\to A\] given by
\[(f_1,f_2)_N:=\sum_{[x]\in D^\x\bksl \wh D^\x/\wh R^\x_{N}}\pair{f_1(x)}{f_2(x)}_\wt\cdot \cyc(\nu(x))^\wt\cdot
(\#\Gamma_{N,x})^{-1}.
\]
Let $\Tau_N^D=(\Tau_{N,\pmq}^D)\in\wh D^\x$ be the element with $\Tau_{N,\pmq}^D=1$ if $\pmq\ndivides N$ and $\Tau_{N,\pmq}^D=\Psi_\pmq^{-1}(\pMX{0}{1}{-N}{0})$ for $\pmq\divides N^+$. Define the Atkin-Lehner involution $[\Tau_N^D]\colon\sS^D_{\wt+2}(N,\chi,A)\to \sS^D_{\wt+2}(N,\chi^{-1},A)$ by
\[[\Tau_N^D]f(x):=\rho_{\wt,p}(\Tau_{N,p}^D)f(x\Tau_N^D)\chi(\nu(x)).\]
We can define a new pairing $\pairing_N:\sS^D_{\wt+2}(N,\chi,A)\times \sS^D_{\wt+2}(N,\chi,A)\to A$ by
\[\pair{f_1}{f_2}_N=(f_1,[\Tau_N^D]f_2)_N.\]
It is well known that this new pairing is Hecke equivariant and perfect (\cf\cite[Lemma 3.5]{Hida06blue}).

\subsection{Automorphic forms on definite quaternion algebras}Fixing $\iota_p:\C_p\iso\C$ once and for all, we choose an imbedding $\Psi_\infty:D_\infty\hookto {\rm M}_2(\C)$ such that $\Psi_\infty(\al)=\iota_p(\Psi_p(\al))$ for $\al\in D^\x$. Define the unitarized representation
$\rho^{\rm u}_{\wt}:D_\infty^\x\to\Aut L_\wt(\C)$ by $\rho^{\rm u}_{\wt}(x)P=\abs{\nu(g)}_\A^{\wt/2}\rho_{\wt}(\Psi_\infty(g))P$ for $P\in L_\wt(\C)$.

For a finite order Hecke character $\om$ modulo $N^+$, let $\cA^D_{\wt+2}(N,\om)$ be the space of $L_\wt(\C)$-valued automorphic forms on $D^\x(\A)$ of weight $\wt+2$, level $N$ and character $\om$. In other words, $\cA^D_{\wt+2}(N,\om)$ consists of functions $\varphi:D^\x(\A)\to L_\wt(\C)$ such that 
\begin{align*}
\varphi(\al x u_\infty u_{\rm f}z)&=\rho^{\rm u}_{\wt}(u_\infty^{-1})\varphi(x)\om(z)\\
(\al\in D^\x,\,u_\infty\in D^\x_\infty,\,&u_{\rm f}\in \opcpt_1(N),\,z\in\A^\x).
\end{align*}
Here $x_{\rm f}$ denotes the finite part of $x$. To each $p$-adic modular form $\qtnf\in\sS^D_{\wt +2}(N,\chi,\cO)$, we associate the adelic lift $\varPhi(\qtnf)\in\cA^D_{\wt+2}(N,\chi^{-1})$ defined by 
\beq\label{E:MA3}\varPhi(\qtnf)(x):=\rho_{\wt}(\Psi_\infty(x_\infty^{-1}))\iota_p(\rho_{\wt,p}(x_p)\qtnf(x_{\rm f}))\cdot \abs{\nu(x)}_{\A}^{\wt/2},\quad x\in D_\A^\x.\eeq

Let $\cA^D(\om)$ be the space of (scalar-valued) automorphic forms on $D^\x(\A)$ with central character $\om$. For $\varphi,\varphi'\in\cA^D(\om)$, define
\[\pair{\varphi}{\varphi'}=\int_{\A^\x D^\x\bksl D^\x(\A)}\varphi(x)\varphi'(x)\om^{-1}(\nu(x))\rmd^\tau x.\]
Here $\rmd^\tau x$ is the Tamagawa measure on $\A^\x\bksl D^\x(\A)$. For $f\in \sS^D_{\wt+2}(N,\om^{-1},\Cp)$ and $\bfu\in L_\wt(\Cp)$, let $\varPhi(f)_\bfu\in\cA^D(\om)$ be the automorphic form given by the matrix coefficient $\varPhi(f)_{\bfu}(x):=\pair{\varPhi(f)(x)}{\bfu}_{\wt}$. By \eqref{E:meaD} and Schur's orthogonality relations, we have 
\beq\label{E:Schur}\pair{\rho(\Tau_{N}^D)\varPhi(f)_{\bfu}}{\varPhi(f)_{\bfv}}=
\frac{\vol(\wh R_N^\x)}{(N^+)^{\wt/2}(1+\wt)}\cdot \pair{f}{f}_N\cdot\pair{\bfu}{\bfv}_{\wt}.\eeq

\subsection{Hida theory for quaternionic modular forms}\label{SS:Hida1} In this subsection, we recall Hida theory for modular forms on definite quaternion algebras following \cite{Hida88Annals}. Suppose that $p\ndivides N$. For each positive integer $\al$, let $X_\al$ be the finite set  \[X_\al= D^\x\bksl \wh D^\x/\opcpt_1(Np^\al)\]
and let $\cO[X_\al]=\bigoplus_{x\in X_\al}\cO x$ be the finitely generated $\cO$-module spanned by divisors of $X_\al$. Recall that $\Lam=\cO\powerseries{1+p\Zp}=\cO\powerseries{T}$, where $T=\Dmd{1+p}_\Lam-1$. For $z\in 1+p\Zp$, let $\Dmd{z}_\Lam$ act on $\cO[X_\al]$ by $\Dmd{z}_\Lam x:=x\pDII{z}{z}$. Let $\Delta=(\Z/pN^+\Z)^\x$.  For $d\in \Delta$, the diamond operator $\sg_d$ acts on $\cO[X_\al]$ as follows: decomposing $d=(d_1,d_2)\in (\Z/p\Z)^\x\times (\Z/N^+\Z)^\x$ and choosing an idele $\wtd d\in \wh\Z^\x$ such that the $p$-component $\wtd d_p=\Om(d_1)\in\Zp^\x$ is the \Teich lifting of $d_1$ and the prime-to-$p$ component $\wtd d^{(p)}\in\wh \Z^{(p)\x}$ is a lifting of $d_2$, we define $\sg_d\, x:=x\wtd d$. Thus $\cO[X_\al]$ is a finitely generated $\Lam[\Delta]$-module. Moreover, $\cO[X_\al]$ is equipped with the usual Hecke operators $T_\pmq$ for $\pmq\ndivides Np$ given by
\[T_\pmq\, x=x\pDII{1}{\uf_\pmq}+\sum_{b\in\Z_\pmq/\pmq\Z_\pmq} x\pMX{\uf_\pmq}{b}{0}{1},\]
the operator $\bfU_\pmq$ for $\pmq\divides Np$ defined by 
\[
\bfU_\pmq\, x=\sum_{b\in\Z_\pmq/\pmq\Z_\pmq} x\pMX{\uf_\pmq}{b}{0}{1}\text{ if }\pmq\divides N^+p;\quad \bfU_\pmq\, x=x\uf_{D_\pmq}\text{ if }\pmq\divides N^-.
\]
The ordinary projector $\eord=\prolim_n \bfU_p^{n!}$ converges to an idempotent in $\End_\Lam(\cO[X_\al])$. 

We introduce the space of $\Lam$-adic modular forms on definite quaternion algebras. Let $X_\infty:=D^\x\bksl \wh D^\x/\opcpt_1(Np^\infty)$, where
\[\opcpt_1(Np^\infty)=\stt{g\in \opcpt_1(N)\mid g_p=\pMX{a}{b}{0}{1},\,a\in\Zp^\x,b\in\Zp}.\]
We have a natural quotient map $X_\infty\to X_{\beta}\to X_\al$ for $\beta>\al$. Let $P_\al$ be the principal ideal of $\Lam$ generated by $(1+T)^{p^\al}-1$.
\begin{defn}Denote by $\bfS^D(N,\Lam)$ the space of functions $\bff\colon X_\infty\to \Lam$ such that 
\begin{itemize}
\item $\bff(xz)=\bff(x)\Dmd{z}^2\Dmd{z}_\Lam^{-1}\text{ for }z\in 1+p\Zp$;
\item for any $\al$ sufficiently large, the function $\bff\pmod{P_\al}\colon X_\infty\to \Lam/P_\al$ factors through $X_\al$.
\end{itemize}
We call $\bfS^D(N,\Lam)$ the space of $\Lam$-adic modular forms on $D^\x$ of level $N$. 
\end{defn}
By definition, we have  \beq\label{E:Hida1}\bfS^D(N,\Lam)=\prolim_\al\Hom_\Lam(\cO[X_\al],\Lam/P_\al)\ot_{\Lam,\iota_2}\Lam,\eeq where $\iota_2:\Lam\to\Lam$ is the $\cO$-algebra homomorphism given by $\iota_2(T)=(1+T)^{-2}(1+p)^2-1$. Hence $\bfS^D(N,\Lam)$ is a compact $\Lam$-module endowed with the natural Hecke action given by $t\bff(x)=\bff(tx)$ for $t=T_\pmq,\bfU_\pmq$ and the action of diamond operators $\sigma_d$. In addition, the ordinary projector $\eord=\prolim_n \bfU_p^{n!}$ converges in $\End_\Lam\bfS^D(N,\Lam)$. For a finite order Hecke character $\chi:\Q^\x\bksl\wh\Q^\x\to\cO^\x$ modulo $N^+p$, put
\begin{align*}\bfS^D(N,\chi,\Lam):=&\stt{\bff\in \bfS^D(N,\Lam)\mid \sg_d\bff=\chi^{-1}(d)\bff\text{ for }d\in\Delta^\x}\\
=&\stt{\bff\in\bfS^D(N,\Lam)\mid \bff(xz)=\bff(x)\cdot\chi^{-1}(z)\Dmd{\cyc(z)}^2\Dmd{\cyc(z)}_\Lam^{-1}\text{ for }z\in\wh\Q^\x}.
\end{align*}
Let $\bfI$ be a normal domain finite flat over $\Lam$. We define $\bfS^D(N,\bfI)=\bfS^D(N,\Lam)\ot_\Lam\bfI$ and $\bfS^D(N,\chi,\bfI)=\bfS^D(N,\chi,\Lam)\ot_\Lam\bfI$.

\begin{thm}[Control Theorem]\label{T:HidaQ}
Let $\rmN_\chi:=\sum_{d\in \Delta}\chi(d)\sigma_d\in\cO[\Delta]$ and let $P_\chi$ be the ideal of $\Lam[\Delta]$ generated by $\stt{\chi(d)\cdot\sigma_d-1}_{d\in \Delta}$. Suppose that $p>3$. Then 
\begin{enumerate}\item $\bfS^D(N,\chi,\bfI)$ is a free $\bfI$-module, and the norm map $\rmN_\chi\colon \eord\bfS^D(N,\bfI)/P_\chi\iso\eord\bfS^D(N,\chi,\bfI)$ is an isomorphism. \item For every arithmetic point $Q\in\frakX_\bfI^\ari$, we have a Hecke equivariant isomorphism
\begin{align*}\eord\bfS^D(N,\chi,\bfI)\ot_\bfI\bfI/\wp_Q&\iso \eord\sS^D_{k_Q}(Np^\al,\chi\Om^{2-k_Q}\ep_Q,\cO(Q)),\\
\bff\,(\mathrm{ mod }\, \wp_Q)\mapsto  \bff_Q,
\end{align*}
where $\al=\max\stt{1,c_p(\ep_Q)}$ and $\bff_Q$ is the unique $p$-adic modular form such that 
\[Q(\bff(x))=\pair{\bff_Q(x)}{X^{k_Q-2}}_{k_Q-2}\text{ for all }x\in \wh D^\x.\]
\end{enumerate}
\end{thm}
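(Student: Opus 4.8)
The plan is to prove Theorem \ref{T:HidaQ} by the standard Hida-theoretic machinery, following the template of \cite{Hida88Annals}, adapted to the present quaternionic setting with nontrivial weight. The starting observation is the isomorphism \eqref{E:Hida1}, which presents $\bfS^D(N,\Lam)$ as an inverse limit of the finite free $\Lam/P_\al$-duals of $\cO[X_\al]$. First I would establish the \emph{freeness} claim. Since $p>3$, the groups $\Gamma_{N^+p^\al,x}$ are killed by $p$-prime orders: the only torsion in $D^\x/\Q^\x$ over $\Q$ has order dividing $24$, so for $p>3$ the finite groups $\Gamma_{N,x}$ have order prime to $p$, which makes the Hecke modules $\cO[X_\al]$ behave well under localization. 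Concretely, the ordinary part $\eord\cO[X_\al]$ is $\Lam/P_\al[\Delta]$-free because the $p$-part of the stabilizers is trivial; passing to the $\chi$-isotypic piece via the idempotent attached to $\rmN_\chi$ and taking the inverse limit over $\al$ gives that $\eord\bfS^D(N,\chi,\Lam)$ is $\Lam$-free, and then base changing along $\Lam\to\bfI$ (which is flat, $\bfI$ being a normal domain finite flat over the regular ring $\Lam$) yields freeness over $\bfI$. The isomorphism $\rmN_\chi:\eord\bfS^D(N,\bfI)/P_\chi\isoto\eord\bfS^D(N,\chi,\bfI)$ is then the assertion that the $\Delta$-coinvariants at the character $\chi$ are computed by the norm element, which holds because $\#\Delta$ is prime to $p$ and hence $\cO[\Delta]$ is a product of the rings $\cO$ indexed by characters of $\Delta$; the decomposition is compatible with the inverse limit defining $\bfS^D$.

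For part (2), the control theorem, I would argue as follows. Fix an arithmetic point $Q$ of weight $k_Q$ and finite part $\ep_Q$, and let $\al=\max\{1,c_p(\ep_Q)\}$. The key point is to identify $\eord\bfS^D(N,\chi,\bfI)\ot_\bfI\bfI/\wp_Q$ with the space of classical ordinary $p$-adic quaternionic forms $\eord\sS^D_{k_Q}(Np^\al,\chi\Om^{2-k_Q}\ep_Q,\cO(Q))$. One direction is the specialization map $\bff\mapsto\bff_Q$ defined by the stated formula $Q(\bff(x))=\langle \bff_Q(x), X^{k_Q-2}\rangle_{k_Q-2}$: one checks this is well defined, Hecke equivariant (using the explicit action of $T_\pmq$, $\bfU_\pmq$ and $\rho_{\wt,p}$ on both sides; the weight-$k_Q-2$ specialization of $\Dmd{\cyc}_\Lam$ produces exactly the factor $z_p^{k_Q-2}$ appearing in the definition of $\sS^D_{k_Q}$), and lands in the ordinary part. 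That it is an isomorphism reduces, via the freeness in part (1) and Nakayama, to a dimension count at the level of $\cO[X_\al]$: the map $\eord\cO[X_\al]\ot L_{k_Q-2}(\cO)\to\eord\sS^D_{k_Q}(Np^\al,\dots)^\vee$ is an isomorphism because the weight $k_Q-2$ is $\leq$ the allowed range and $p^\al$ already absorbs the conductor of $\ep_Q$, so no "old at $p$" contributions are lost — this is precisely where the choice $\al=\max\{1,c_p(\ep_Q)\}$ matters. The interpolation on Hecke eigenvalues ($T_\pmq\mapsto\bfa(\pmq,\bff_Q)$ etc.) is then automatic from the equivariance.

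The main obstacle I expect is the bookkeeping in the control theorem: precisely matching the twist by $\Dmd{\cyc}^2\Dmd{\cyc}_\Lam^{-1}$ and the diamond-operator normalization built into $\bfS^D(N,\chi,\Lam)$ against the classical normalization of $\sS^D_{k_Q}(Np^\al,\chi\Om^{2-k_Q}\ep_Q,\cO(Q))$ — in particular verifying that the nebentypus that appears after specialization is exactly $\chi\Om^{2-k_Q}\ep_Q$ and not some Teichm\"uller-shifted variant, and that the $\rho_{\wt,p}$-twist in the definition of $\bfU_p$ on $p$-adic forms is compatible with the $\Dmd{z}_\Lam x := x\,\mathrm{diag}(z,z)$ action on $\cO[X_\al]$. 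This is the same delicate point as in \cite{Hida88Annals} but must be redone with the weight-$\kappa$ local system $L_\wt$ rather than trivial coefficients. Once the normalizations are pinned down, freeness over $\bfI$ follows formally from flatness, and the isomorphism statements follow from Nakayama's lemma plus the classical fact that $\eord\sS^D_{k}(Np^\al,\cdot)$ has the expected $\cO$-rank independent of $(k,\ep)$ in the ordinary range, which is the quaternionic analogue of Hida's vertical control theorem and is proved by the same $U_p$-stability argument.
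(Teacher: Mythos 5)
Your overall architecture (trivial stabilizers for $p>3$, freeness, then norm map and specialization) follows the same template as the paper, but there is a genuine gap at the heart of part (1): you justify the isomorphism $\rmN_\chi\colon \eord\bfS^D(N,\bfI)/P_\chi\iso\eord\bfS^D(N,\chi,\bfI)$ by asserting that $\#\Delta$ is prime to $p$, so that $\cO[\Delta]$ is semisimple and the $\chi$-isotypic piece is cut out by an idempotent. This is false in general: $\Delta=(\Z/pN^+\Z)^\x$ has order $(p-1)\phi(N^+)$, and $p$ may divide $\phi(N^+)$ whenever $N^+$ has a prime factor $\ell\con 1\pmod{p}$ (the paper explicitly allows this; compare condition (3) of Hypothesis (CR,$\,\Sigma$)). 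When $p\divides\#\Delta$ there is no idempotent attached to $\chi$, and identifying the $P_\chi$-coinvariants with the $\chi$-eigenspace via the norm element is exactly the nontrivial content of part (1). The paper's proof handles this by proving that $V^{\rm ord}(N)=\prolim_\al \eord\cO[X_\al]$ (equivalently $\eord\bfS^D(N,\Lam)$) is free as a module over $\Lam[\Delta_p]$, where $\Delta_p$ is the $p$-Sylow subgroup of $\Delta$: the $p>3$ hypothesis gives $D^\x\cap x\,\opcpt_1(Np^\al)x^{-1}=\stt{1}$, hence the $p$-divisibility of $\rmH^0(X_\al,\xi,K/\cO_K)$ for $p$-power order characters $\xi$, and then Hida's Corollary 10.1 of \cite{Hida88Annals} yields the $\Lam[\Delta_p]$-freeness; the norm-map isomorphism is a formal consequence of this freeness (the prime-to-$p$ part of $\Delta$ decomposes semisimply, but the $p$-part does not). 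You correctly isolate the trivial-stabilizer input, but you spend it only on a finite-level freeness claim over $\Lam/P_\al[\Delta]$ and then discard the actual difficulty with the incorrect prime-to-$p$ assertion; note also that if $\#\Delta$ really were prime to $p$ your freeness claim over the full group algebra would be beside the point, so the two halves of your argument are in tension.

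A secondary remark on part (2): your plan (specialization map, Hecke equivariance, Nakayama plus a rank count) ultimately leans on "the classical fact that $\eord\sS^D_{k}(Np^\al,\cdot)$ has rank independent of $(k,\ep)$", which \emph{is} the quaternionic vertical control theorem; the paper simply invokes \cite[Theorem 9.4]{Hida88Annals} together with the explicit description of the specialization map, so if you cite that result your argument collapses to the paper's, and if you do not, the rank-independence still has to be proved (it does not follow from part (1) and Nakayama alone, which only give surjectivity-type statements once you already know the target has the right size).
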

\begin{proof} This is a reformulation of Hida's control theorems for definite quaternion algebra. We sketch proofs in \cite{Hida88Annals} for the reader's convenience.  We may assume $\bfI=\Lam$ and $\cO=\cO(Q)$. Let $\Delta_p$ be the $p$-Sylow subgroup of $\Delta$. We first show that $\eord\bfS^D(N,\Lam)$ is a free $\Lam[\Delta_p]$-module.  For any abelian group $A$, let $\rmH^0(X_\al,A)$ be the space of $A$-valued functions on $X_\al$. Let $\sV^\Ord(N):=\dirlim_{\al}\dirlim_\beta\eord \rmH^0(X_\al,p^{-\beta}\cO/\cO)$ be the discrete $\Lam$-module $\sV^\Ord_0(0;\opcpt_1(N^+))$ defined in \cite[Theorem 8.6]{Hida88Annals}. Let $V^\Ord(N):=\prolim_\al \eord\cdot \cO[X_\al]$ be the \pont dual of $\sV^\Ord(N)$. In virtue of \eqref{E:Hida1}, 
\[\eord\bfS^D(N,\Lam)=\Hom_\Lam(V^\Ord(N),\Lam)\ot_{\Lam,\iota_2}\Lam,\]
so it suffices to show that $V^\Ord(N)$ is a free $\Lam[\Delta_p]$-module.
For any positive integer $\al$ and character $\xi:(\Z/N^+p^{\al})^\x\to \cO_K^\x$ of $p$-power order with value in some finite extension $K$ of $\Frac\cO$, we define the $\cO_K$-module \[\rmH^0(X_\al,\xi,A):=\stt{f\in \rmH^0(X_\al,A)\mid f(xz)=\xi(z)f(x),\,x\in X_\al,z\in \wh \Z^\x},\, A=K/\cO_K\text{ or }\cO_K.\]
Since any finite order element in $D^\x$ has order only divisible by $2$ or $3$ and $p>3$, one verifies that the group $D^\x\cap x\opcpt_1(Np^\al)x^{-1}=\stt{1}$ for any $x\in \wh D^\x$ and that 
\[\rmH^0(X_\al,\xi,K/\cO_K)=\rmH^0(X_\al,\xi,\cO_K)\ot K/\cO_K.\] In particular, $\rmH^0(X_\al,\xi,K/\cO_K)$ is $p$-divisible. Hence, the $\Lam[\Delta_p]$-freeness of $V^\Ord(N)$ follows from \cite[Corollary 10.1]{Hida88Annals} (and the proof therein). From the $\Lam[\Delta_p]$-freeness of $\eord\bfS^D(N,\Lam)$, we deduce that the map $\bff\mapsto \rmN_\chi\bff$ induces an isomorphism
\[\rmN_\chi\colon\eord\bfS^D(N,\Lam)/P_\chi\iso \eord\bfS^D(N,\Lam)^{\rmN_\chi=1}=\eord\bfS^D(N,\chi,\Lam).\]
This proves part (1). We proceed to prove part (2). By \cite[Theorem 9.4]{Hida88Annals}, we see that
\[\eord\bfS^D(N,\chi,\Lam)/\wp_Q\iso\eord\bfS^D(N,\Lam)/(P_\chi,\wp_Q)\iso \eord\sS^D_{k_Q}(Np^n,\chi\Om^{2-k_Q}\ep_Q,\cO).\]
The above isomorphism $\bff\mapsto \bff_Q$ is given by the dual map to the one $\iota$ in \cite[(8.10)]{Hida88Annals}, whose explicit description is given in \cite[line 9-11, page 375]{Hida88Annals}. This finishes the proof of part (2).\end{proof}
\subsubsection*{A perfect paring on the space of ordinary $\Lam$-adic forms}
For each positive integer $\al$, put
\[X_0(Np^\al)= D^\x\bksl\wh D^\x/\wh R^\x_{Np^\al}.\]
To each finite order character $\chi:\Q^\x\bksl \wh \Q^\x\to\cO^\x$,  we associate a universal $\bfI$-adic deformation defined by 
\[\chi_\bfI:\Q^\x\bksl\wh \Q^\x\to\bfI^\x,\quad \chi_\bfI(z):=\chi(z)\Dmd{\cyc(z)}^{-2}\Dmd{\cyc(z)}_\bfI.\]
For $\bff,\bff'\in\eord\bfS^D(N,\chi,\bfI)$, put
\[\bfB_{N,\al}(\bff,\bff'):=\sum_{[x]\in X_0(Np^\al)}\bfU_p^{-\al}\bff(x\Tau^D_{Np^\al})\bff'(x)\chi_\bfI(\nu(x))\cdot(\#\Gamma_{Np^\al,x})^{-1}\pmod{P_\al}\in\bfI/P_\al.\]
One verifies that $\bfB_{N,\al+1}(\bff,\bff')\con \bfB_{N,\al}(\bff,\bff')\pmod{P_\al}$.
\begin{defn}\label{D:pairing.bal}
Let 
\[\bfB_N:\eord\bfS^D(N,\chi,\bfI)\times\eord\bfS^D(N,\chi,\bfI)\to\bfI\]
be the Hecke-equivariant $\bfI$-bilinear pairing defined by \[\bfB_N(\bff,\bff'):=\prolim_\al\bfB_{N,\al}(\bff,\bff')\in \prolim_\al\bfI/P_\al=\bfI.\]
For every $Q\in\frakX_\bfI^+$ with $k_Q=2$, we have 
 \begin{align*}\bfB_N(\bff,\bff')(Q)&=\pair{\bfU_p^{-\al}\bff_Q}{\bff'_Q}_{Np^\al}\end{align*}
for any $\al\geq \max\stt{1,c_p(\ep_Q)}$. This in particular implies that the pairing $\bfB_N$ is  perfect. \end{defn}
\begin{lm}\label{L:pairing.bal}For each arithmetic point $Q$ in $\frakX^\ari_\bfI$ and integer $\al\geq \max\stt{1,c_p(\ep_Q)}$, we have 
\[\bfB_N(\bff,\bff')(Q)=(-1)^{k_Q}\cdot \pair{\bfU_p^{-\al}\bff_Q}{\bff'_Q}_{Np^\al}.\]
\end{lm}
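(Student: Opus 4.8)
The statement to be proved, Lemma~\ref{L:pairing.bal}, asserts that for an arithmetic point $Q\in\frakX_\bfI^\ari$ of weight $k_Q$ and any $\al\geq\max\stt{1,c_p(\ep_Q)}$, one has $\bfB_N(\bff,\bff')(Q)=(-1)^{k_Q}\pair{\bfU_p^{-\al}\bff_Q}{\bff'_Q}_{Np^\al}$. The point is that \defref{D:pairing.bal} only records this identity without the sign $(-1)^{k_Q}$ for the special weight $k_Q=2$; the present lemma upgrades it to all arithmetic weights, the discrepancy being exactly the sign coming from the polynomial pairing $\pairing_\wt$ on $L_\wt$. The plan is to compute $Q(\bfB_{N,\al}(\bff,\bff'))$ directly from the definition of $\bfB_{N,\al}$ and the specialization map $\bff\mapsto\bff_Q$ of \thmref{T:HidaQ}(2), and match it term by term against the explicit formula for $\pairing_{Np^\al}$ in \subsecref{SS:Hida1}.

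First I would fix $\al\geq\max\stt{1,c_p(\ep_Q)}$ so that both $\bff_Q$ and $\bff'_Q$ factor through $X_0(Np^\al)$ (more precisely through $X_\al$) and $Q(\bff(x))=\pair{\bff_Q(x)}{X^{k_Q-2}}_{k_Q-2}$, $Q(\bff'(x))=\pair{\bff'_Q(x)}{X^{k_Q-2}}_{k_Q-2}$ for all $x$. Applying $Q$ to the defining sum
\[
\bfB_{N,\al}(\bff,\bff')=\sum_{[x]\in X_0(Np^\al)}\bfU_p^{-\al}\bff(x\Tau^D_{Np^\al})\,\bff'(x)\,\chi_\bfI(\nu(x))\cdot(\#\Gamma_{Np^\al,x})^{-1},
\]
one gets a sum over $[x]$ of products $Q\bigl(\bfU_p^{-\al}\bff(x\Tau^D_{Np^\al})\bigr)\cdot Q(\bff'(x))\cdot Q(\chi_\bfI(\nu(x)))\cdot(\#\Gamma_{Np^\al,x})^{-1}$. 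Here $Q(\chi_\bfI(\nu(x)))=\chi\Om^{2-k_Q}\ep_Q(\nu(x))\cdot\cyc(\nu(x))^{k_Q-2}$ by the definition of $\chi_\bfI$, and this is precisely the factor $\cyc(\nu(x))^{\wt}$ (with $\wt=k_Q-2$) appearing in $(\cdot,\cdot)_{Np^\al}$ after absorbing the character into $[\Tau^D]$. The key algebraic input is the compatibility of $Q$ with the Hecke and Atkin--Lehner operators together with the formula $\pair{\rho_\wt(g)P_1}{P_2}_\wt=\pair{P_1}{\rho_\wt(g')P_2}_\wt$ of \eqref{E:pairing}: tracing through, $Q\bigl(\bfU_p^{-\al}\bff(x\Tau^D_{Np^\al})\bigr)$ becomes $\pair{(\bfU_p^{-\al}\bff_Q)(x\Tau^D_{Np^\al})}{X^{k_Q-2}}_{k_Q-2}$ up to the action of $\Psi_p(\Tau^D_{Np^\al,p})$ on the polynomial side, which is where the matrix $\pMX{0}{1}{-Np^\al}{0}$ enters and, via $\pairing_\wt$, generates the sign.

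The crux of the argument, and the step I expect to be the main obstacle, is the purely polynomial identity: for $P_1,P_2\in L_\wt$, the pairing $\pair{\rho_{\wt,p}(\Tau^D_{Np^\al,p})P_1}{P_2}_\wt$ should equal $(-1)^\wt$ times the ``newform-normalized'' pairing implicit in $\pairing_{Np^\al}$, when one feeds in the distinguished vector $X^{k_Q-2}=X^\wt$ dictated by the specialization recipe. Concretely one must verify that $\rho_\wt\bigl(\pMX{0}{1}{-1}{0}\bigr)X^\wt=(-Y)^\wt$ (up to the scaling $(Np^\al)^{\wt/2}$ that is already built into $\Tau^D$ versus the Atkin--Lehner normalization) and that $\pair{(-Y)^\wt}{X^\wt}_\wt=(-1)^\wt\pair{Y^\wt}{X^\wt}_\wt=(-1)^\wt\cdot(-1)^\wt{\wt\choose 0}^{-1}$... — so the bookkeeping of signs from $(-1)^i{\wt\choose i}^{-1}$ in the definition of $\pairing_\wt$, from the main involution $g\mapsto g'$, and from $k_Q=\wt+2$ must be done carefully; a parity slip here would invert the claimed sign. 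Once this local polynomial computation is pinned down, the global sum reassembles exactly into $(-1)^{k_Q}(f_1,[\Tau_{Np^\al}^D]f_2)_{Np^\al}=(-1)^{k_Q}\pair{\bfU_p^{-\al}\bff_Q}{\bff'_Q}_{Np^\al}$, matching the special case $k_Q=2$ recorded in \defref{D:pairing.bal} (where $(-1)^{k_Q}=(-1)^2=1$, consistent with the sign being invisible there), and the lemma follows. I would also remark that perfectness of $\bfB_N$ is already established in \defref{D:pairing.bal}, so nothing further is needed on that front.
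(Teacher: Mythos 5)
Your plan reduces the lemma to a sign computation with the polynomial pairing and the Atkin--Lehner matrix at the fixed level $Np^{\al}$, but this misses the actual difficulty, and the proposed term-by-term matching cannot work. When you apply $Q$ to the defining sum for $\bfB_{N,\al}$, each summand becomes a product of two \emph{scalars}, namely $\pair{(\bfU_p^{-\al}\bff)_Q(x\Tau^D_{Np^{\al}})}{X^{\wt}}_{\wt}\cdot\pair{\bff'_Q(x)}{X^{\wt}}_{\wt}$ with $\wt=k_Q-2$: the specialization only sees the $Y^{\wt}$-coefficient of each vector-valued form. By contrast, the right-hand side $\pair{\bfU_p^{-\al}\bff_Q}{\bff'_Q}_{Np^{\al}}$ is a sum over the same double cosets of the \emph{full} pairing $\pairing_{\wt}$ of the two polynomials, involving all $\wt+1$ coefficients. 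For $\wt>0$ there is no pointwise identity relating a product of two distinguished coefficients to the full pairing, no matter how carefully you track $\rho_{\wt}(\Tau^D_{Np^{\al},p})X^{\wt}$ and the signs $(-1)^i{\wt\choose i}^{-1}$; the identity in the lemma is global and uses ordinarity in an essential way. A second, smaller issue: since $Q(P_{\al})\neq 0$, the value $\bfB_N(\bff,\bff')(Q)$ is not literally $Q$ applied to the level-$\al$ sum; it is the value of the inverse limit, and can only be computed modulo $p^m$ from $\bfB_{N,\beta}$ with $\beta$ large, so a fixed-level computation does not even get started.

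The paper's proof bridges the two sides differently. It first shows that $\pair{\bfU_p^{-\beta}\bff_Q}{\bff'_Q}_{Np^{\beta}}$ is independent of $\beta\geq\al$ (adding an auxiliary prime $\ell$, inert in $\Q(\sqrt{-1})$ and $\Q(\sqrt{-3})$, so that the stabilizers $\Gamma_{N\ell p^{\beta},x}$ are trivial). Then, for a given precision $p^m$, it takes $\beta>m+\val_p(\wt!)$ and writes $\bff'_Q=\bfU_p^{-\beta}\bfU_p^{\beta}\bff'_Q$; expanding $\bfU_p^{\beta}$ over $z\in\Zp/p^{\beta}\Zp$, the matrix produced by $[\Tau^D_{Np^{\beta}}]$ composed with a coset representative is $\pMX{0}{-1}{p^{2\beta}}{p^{\beta}z}\equiv\pMX{0}{-1}{0}{0}\pmod{p^{\beta}}$, and $\rho_{\wt}$ of this rank-one matrix collapses the full pairing onto the product of $Y^{\wt}$-coefficients, i.e.\ exactly onto the scalar specializations entering $\bfB_N(\bff,\bff')(Q)$, with the sign $(-1)^{\wt}=(-1)^{k_Q}$ appearing at this step (together with the identity $\pair{\bfU_p^n f(x)}{X^{\wt}}=\bfU_p^n f^{[0]}(x)$ for the scalar-valued $f^{[0]}$). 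This deep-level $\bfU_p$ argument, with the bound on $\beta$ controlling the $\wt!$-denominators of $\pairing_{\wt}$, is the missing idea in your proposal; you correctly guessed the source of the sign but not the mechanism that makes the two pairings comparable at all.
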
 
 \begin{proof}To lighten the notation, we let $\wt=k_Q-2$ and let $f=\bff_Q,\,f'=\bff'_Q\in \eord\cS^D_{k_Q}(Np^{\al},\chi\Om^{-\wt}\ep_Q,\cO(Q))$. We first claim that the value $\pair{\bfU_p^{-\beta}f}{f'}_{Np^\beta}$ is independent of any integer $\beta\geq \al$. Choose a prime $\ell\ndivides Np$ such that $\ell+ 1\not \con 0\pmod{p}$ and $\ell$ is inert in $\Q(\sqrt{-1})$ and $\Q(\sqrt{-3})$. Then $D^\x\cap x\wh R_{N\ell p^\al}^\x x^{-1}=\stt{\pm 1}$ for all $x\in\wh D^\x$. Write $\chi_Q=\chi_\bfI\pmod{Q}=\chi\Om^{-\wt}\ep_Q\cyc^\wt$ for brevity. For $\ell$ as above, $(1+\ell)\cdot \pair{\bfU_p^{-\beta}f}{f'}_{Np^\beta}$ equals  \begin{align*}
&\sum_{[x]\in X_0(N\ell p^\beta)}\pair{[\Tau_{Np^\beta}^D]\bfU_p^{-\beta}f(x)}{f'(x)}_{\wt}\cdot\chi_Q(\nu(x))\\
 =&\sum_{[x]\in X_0(N\ell p^\al)}\sum_{b\in\Zp/p^{\beta-\al}\Zp}\pair{[\Tau_{Np^\beta}^D]\bfU_p^{-\beta}f(x\pMX{1}{0}{p^\al b}{1})}{\rho_{\wt,p}(\pMX{1}{0}{-p^\al b}{1})f'(x)}_\wt\cdot\chi_Q(\nu(x))\\
 =&\sum_{[x]\in X_0(N\ell p^\al)}\sum_{b\in\Zp/p^{\beta-\al}\Zp}\pair{\rho_{\wt,p}(\pMX{0}{1}{-p^{\beta}}{bp^\al})\bfU_p^{-\beta}f(x\Tau^D_N\pMX{0}{1}{-p^{\beta}}{bp^\al})}{f'(x)}_\wt\cdot\chi_Q(\nu(x))\\
 =&\sum_{[x]\in D^\x\bksl \wh D^\x/\wh R^\x_{N\ell p^\al}}\sum_{b\in\Zp/p^{\beta-\al}\Zp}\pair{\rho_{\wt,p}(\Tau^D_{p^\al}\pMX{1}{-p^{\beta-\al}b}{0}{1})\bfU_p^{-\beta}f(x\Tau^D_{Np^\al}\pMX{1}{-p^{\beta-\al}b}{0}{1}}{f'(x)}\cdot\chi_Q(\nu(x))\\
 =&\sum_{[x]\in X_0(N\ell p^\al)}\pair{[\Tau^D_{Np^\al}]\bfU_p^{-\al}f(x)}{f'(x)}_\wt\chi_Q(\nu(x))=(1+\ell)\cdot \pair{\bfU_p^{-\al}f}{f'}_{Np^\al}.
 \end{align*}
 This verifies the claim. For $x\in\wh D^\x$, we let $f^{[0]}(x)=\pair{f(x)}{X^\wt}_\wt$ be the specialization of $\bff(x)$ at $Q$. For any positive integer $m$, there exists a sufficiently larger $\beta>m+\val_p(\wt !)$ such that 
 \begin{align*}
 \bfB_N(\bff,\bff')(Q)\pmod{p^m}\con&(1+\ell)^{-1}\sum_{[x]\in X_0(N\ell p^\beta)}\bfU_p^{-\beta}f^{[0]}(x\Tau^D_{Np^{\beta}})f^{\prime[0]}(x)\chi_Q(\nu(x))\pmod{p^m}.
 \end{align*}
 On the other hand, we have
 \begin{align*}
  &\pair{\bfU_p^{-\al}f}{f'}_{Np^\al}\con \pair{\bfU_p^{-\beta}f}{f'}_{Np^\beta}\pmod{p^m}\\
  \con&(1+\ell)^{-1}\sum_{[x]\in X_0(N\ell p^{\beta})} \sum_{z\in\Zp/p^\beta\Zp}\pair{\bfU_p^{-\beta}f(x\Tau^D_{Np^{\beta}})}{\rho_{\wt,p}(\pMX{0}{-1}{p^\beta}{0}\pMX{p^\beta}{z}{0}{1})\bfU_p^{-\beta}f'(x\pMX{\uf_p^\beta}{z}{0}{1})}\cdot\chi_Q(\nu(x))\pmod{p^{m}}\\
  \con&(1+\ell)^{-1}\sum_{[x]\in X_0(N\ell p^{\beta})} \sum_{z\in\Zp/p^\beta\Zp}\pair{\bfU_p^{-\beta}f(x\Tau^D_{Np^{\beta}})}{\rho_{\wt,p}(\pMX{0}{-1}{0}{0})\bfU_p^{-\beta}f'(x\pMX{\uf_p^\beta}{z}{0}{1})}\cdot\chi_Q(\nu(x))\pmod{p^{m}}\\
   \con&(1+\ell)^{-1}\sum_{[x]\in X_0(N\ell p^{\beta})}\pair{\bfU_p^{-\beta}f^{[0]}(x\Tau^D_{Np^{\beta}})}{f^{\prime [0]}(x)}(-1)^\wt\cdot\chi_Q(\nu(x))\con(-1)^\wt\cdot \bfB_N(\bff,\bff')(Q)\pmod{p^m}.
 \end{align*}
 In the third equality, we have used the fact that $\pair{\bfU^n_p f(x)}{X^\wt}=\bfU^n_pf^{[0]}(x)$ for any $n\in\Z$.
 This proves the lemma.
 \end{proof}

\subsection{Hecke algebras and primitive $\Lam$-adic forms}\label{SS:Hida2}
Let $\bfT^D(N,\bfI)$ be the sub-algebra of $\End_\bfI(\eord\bfS^D(N,\bfI))$ generated by $T_\pmq$, $\bfU_\pmq$ and the diamond operators $\Dmd{d}$ over $\bfI$ and let $\bfT^D(N,\chi,\bfI)$ be the holomorphic image of $\bfT^D(N,\bfI)$ in $\End_{\Lam}(\eord\bfS^D(N,\chi,\bfI))$. Thanks to the Jacquet-Langlands correspondence, there is a surjective $\bfI$-algebra homomorphism $JL\colon\bfT(N,\bfI)\to \bfT^D(N,\bfI)$ such that $JL(T_\pmq)=T_\pmq$ for $\pmq\ndivides Np$, $JL(\bfU_\pmq)=\bfU_\pmq$ for $\pmq\divides N^+p$, $JL(\bfU_\pmq)=(-1)\bfU_\pmq$ for $\pmq\divides N^-$ and $JL(\sigma_d)=\sg_d$; moreover, for an ordinary $\Lam$-adic newform $\bdsf\in \eord\bfS(N,\chi,\bfI)$ of tame conductor $N$ with $\supp N^-\subset \Sigma_{\bdsf}^0$, the corresponding homomorphism $\lam_\bdsf:\bfT(N,\bfI)\to\bfI$ factors through $JL$. We denote by $\lam_{\bdsf}^D:\bfT^D(N,\bfI)\to\bfT^D(N,\chi,\bfI)\to\bfI$ the morphism such that $\lam_{\bdsf}=\lam^D_{\bdsf}\circ JL$. Put
\[ \eord\bfS^D(N,\bfI)[\lam_{\bdsf}^D]:=\stt{\bff\in \eord\bfS^D(N,\bfI)\mid t\cdot\bff=\lam_{\bdsf}^D(t)\bff\text{ for }t\in\bfT^D(N,\bfI)}.\]
The multiplicity one theorem for $\GL(2)$ implies that 
$\dim_{\Frac\Lam} \eord\bfS^D(N,\bfI)[\lam_{\bdsf}^D]\ot_{\Lam}\Frac\Lam=1$. Any non-zero element in $\eord\bfS^D(N,\bfI)[\lam_{\bdsf}^D]$ is called a \emph{Jacquet-Langlands lift} of $\bdsf$, but we do not have a notion of normalized eigenforms for quaternionic modular forms due to the lack of the $q$-expansion. Nonetheless, we have the following
\begin{thm}\label{T:freeness}Suppose that $\bdsf$ satisfies the Hypothesis $({\rm CR},\,\supp(N^-))$ in \subsecref{S:period.1}. Then the $\bfI$-module $\eord\bfS^D(N,\bfI)[\lam_\bdsf^D]$ is free of rank one. In this case, a generator $\bdsf^D$ of $\eord\bfS^D(N,\bfI)[\lam_\bdsf^D]$ is called the primitive Jacquet-Langlands lift of $\bdsf$. By definition, $\bdsf^D$ is unique up to a scalar in $\bfI^\x$.\end{thm}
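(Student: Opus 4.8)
The plan is to deduce the freeness of $\eord\bfS^D(N,\bfI)[\lam_\bdsf^D]$ from the Gorenstein property of the local Hecke ring together with a freeness statement for the quaternionic module over that ring, and then to pin down the rank by a multiplicity-one argument. First I would localize: let $\frakm=\frakm_{\bdsf^D}$ be the maximal ideal of $\bfT^D(N,\bfI)$ through which $\lam_\bdsf^D$ factors, and let $\bfT^D_\frakm$ be the localization. Under Hypothesis $(\mathrm{CR},\supp(N^-))$ the residual representation $\ol\rho_\bdsf$ is absolutely irreducible and $p$-distinguished, and at the primes $\pmq\in\supp(N^-)$ it is ramified; this is exactly the setting in which the Taylor--Wiles patching arguments (as used in \cite{Wiles95}, \cite{DDT94}, \cite{PW11CoM}) apply to the definite quaternionic Hecke algebra. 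The key input is that $\eord\bfS^D(N,\bfI)_\frakm$ — equivalently, after Control Theorem \thmref{T:HidaQ}, the dual $V^\Ord(N)_\frakm$ of the space of $\Lam$-adic quaternionic forms — is free as a module over $\bfT^D_\frakm$, and that $\bfT^D_\frakm$ is Gorenstein. One obtains these by the standard Ihara-type lemma / patching package together with the $\Lam[\Delta_p]$-freeness already established in the proof of \thmref{T:HidaQ}; the Jacquet--Langlands map $JL$ identifies $\bfT^D_\frakm$ with a quotient of $\bfT_{\frakm_\bdsf}$, which is Gorenstein by \cite[Corollary 2, p.~482]{Wiles95}, and one checks (e.g.\ via \cite{PW11CoM}) that the $\Sigma^-$-new quotient inherits this property.

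Granting the freeness of $M:=\eord\bfS^D(N,\bfI)_\frakm$ over $R:=\bfT^D_\frakm$, the module in question is
\[
\eord\bfS^D(N,\bfI)[\lam_\bdsf^D]=M[\Ker\lam_\bdsf^D]=\Hom_R(R/\Ker\lam_\bdsf^D, M),
\]
and since $M$ is free over $R$ and $R$ is Gorenstein (hence $R/\Ker\lam_\bdsf^D=\bfI$ and the congruence module $R\otimes_R\bfI$ is cyclic), this is identified with $\Hom_\bfI$ of the congruence module, which is a free $\bfI$-module. Concretely: write $1^*_{\bdsf^D}=JL(1^*_\bdsf)\in\bfT^D_\frakm$ for the image of the element cut out in \subsecref{SS:Heckealgebra}; then $1^*_{\bdsf^D}$ annihilates $\Ker\lam_\bdsf^D$, and multiplication by it maps $M$ onto $M[\Ker\lam_\bdsf^D]$. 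Freeness of $M$ over the Gorenstein ring $R$ plus the fact that $R[\Ker\lam_\bdsf^D]$ is the principal ideal generated by $1^*_{\bdsf^D}$ (the Gorenstein/congruence-number input of Hida \cite{Hida88AJM}) forces $M[\Ker\lam_\bdsf^D]$ to be free over $\bfI$ of rank equal to $\mathrm{rank}_R M$. Alternatively one can run the argument prime-by-prime over $\Lam$ using the perfect pairing $\bfB_N$ of \defref{D:pairing.bal}, which gives a self-duality of $\eord\bfS^D(N,\chi,\bfI)$ and reduces the claim to showing the relevant generalized eigenspace is free of rank one after tensoring with $\Frac\Lam$.

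For the rank, note that $\eord\bfS^D(N,\bfI)[\lam_\bdsf^D]\ot_\bfI\Frac\bfI$ is the $\lam_\bdsf^D$-eigenspace in the generic fiber, and by the multiplicity one theorem for $\GL(2)$ (via Jacquet--Langlands, the automorphic representation $\pi_{\bdsf_Q}^D$ of $D^\x(\A)$ attached to a classical specialization is irreducible with a one-dimensional space of vectors of the relevant level) this eigenspace is one-dimensional over $\Frac\bfI$; the excerpt already records $\dim_{\Frac\Lam}\eord\bfS^D(N,\bfI)[\lam_\bdsf^D]\ot_\Lam\Frac\Lam=1$. Combined with freeness over $\bfI$ this gives rank exactly one, so any $\bfI$-basis element $\bdsf^D$ generates, and two such differ by $\bfI^\x$. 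The main obstacle I anticipate is establishing the freeness of the quaternionic Hida module over the localized Hecke algebra under precisely Hypothesis $(\mathrm{CR},\Sigma^-)$ — this is where the ramification condition at $\pmq\in\Sigma^-$ with $\pmq\equiv 1\pmod p$ is essential (it forces level-raising congruences to be controlled and rules out the failure of Ihara's lemma), and where one must invoke the Taylor--Wiles--Kisin machinery or, more efficiently, cite the Pollack--Weston style results \cite{PW11CoM} together with Hida's control theorem \thmref{T:HidaQ}; verifying that the hypotheses of those results are met (absolute irreducibility, $p$-distinguishedness, the minimal-versus-non-minimal ramification bookkeeping at $N^+$ and $N^-$) is the technically delicate part, whereas the passage from "free over Gorenstein Hecke algebra" to "eigenmodule free of rank one over $\bfI$" is formal.
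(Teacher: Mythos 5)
Your overall strategy is the paper's: localize at the maximal ideal $\frakm$ through which $\lam_\bdsf^D$ factors, establish that the ordinary quaternionic Hida module is free (of rank one) over the local Hecke algebra under $({\rm CR},\Sigma^-)$ via the Pollack--Weston/Helm/Wiles multiplicity-one input together with the control theorem \thmref{T:HidaQ}, and then conclude that the $\lam_\bdsf^D$-eigenspace is free of rank one over $\bfI$ using the Gorenstein formalism, with multiplicity one for $\GL(2)$ pinning the rank. The one step where your justification would not survive scrutiny is the Gorensteinness of $\bfT^D(N,\chi,\bfI)_\frakm$: you argue that it is a quotient of $\bfT_{\frakm_\bdsf}$, which is Gorenstein by Wiles, ``and one checks that the $\Sigma^-$-new quotient inherits this property.'' Gorensteinness does not pass to quotients in general, and in the Pollack--Weston setting it is precisely \emph{not} inherited formally from the elliptic modular Hecke algebra; the hypothesis $({\rm CR},\Sigma^-)$ is there to make a multiplicity-one (freeness) statement available on the quaternionic side. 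The paper runs the logic in the opposite direction: it first cites rank-one freeness of $\eord\bfS^D(N,\bfI)_\frakm$ over $\bfT^D(N,\bfI)_\frakm$ (\cite[Theorem 2.1]{Wiles95}, Helm, \cite[Prop.\ 6.4, 6.5]{PW11CoM}, plus \thmref{T:HidaQ}(1)), and then \emph{deduces} that $\bfT^D(N,\chi,\bfI)_\frakm$ is Gorenstein because the free rank-one module $\eord\bfS^D(N,\chi,\bfI)_\frakm$ carries the Hecke-equivariant perfect pairing $\bfB_N$ of \defref{D:pairing.bal}, so that $\bfT^D(N,\chi,\bfI)_\frakm\iso\Hom_\bfI(\bfT^D(N,\chi,\bfI)_\frakm,\bfI)$. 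Since you already list the module freeness as your ``key input'' and mention $\bfB_N$ as an alternative, your argument is repaired by replacing the inheritance claim with exactly this deduction. A smaller point: your line identifying $\Hom_R(R/\Ker\lam_\bdsf^D,M)$ with ``$\Hom_\bfI$ of the congruence module'' is off (the congruence module is $\bfI$-torsion, so that $\Hom$ vanishes); the correct statement, which you in effect give in the next sentences, is that $M[\lam_\bdsf^D]\iso \bfT^D(N,\chi,\bfI)_\frakm[\lam_\bdsf^D]^{\oplus r}$ with $r=\mathrm{rank}\,M$, and the latter eigenmodule is $\bfI$-free of rank one by Gorensteinness, while multiplicity one forces $r=1$.
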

\begin{proof}Let $\frakm$ be the maximal ideal of $\bfT^D(N,\bfI)$ containing $\Ker\lam_\bdsf^D$.
Under the Hypothesis (CR), we note that $\eord\bfS^D(N,\bfI)_\frakm$ is a free $\bfT^D(N,\bfI)_\frakm$-module of rank one in virtue of \cite[Theorem, 2.1]{Wiles95} and \cite[Corollary 8.11and Remark 8.12]{Helm07} and Hida's control theorem (\cf\cite[Proposition 6.4 and 6.5]{PW11CoM}). By \thmref{T:HidaQ} (1),  we find that $\eord\bfS^D(N,\chi,\bfI)_\frakm$ is also a free $\bfT^D(N,\chi,\bfI)_\frakm$-module of rank one  which in turn implies that $\bfT^D(N,\chi,\bfI)_\frakm$ is Gorenstein as $\eord\bfS^D(N,\chi,\bfI)_\frakm$ is equipped with a Hecke-equivariant perfect pairing $\bfB_N$. It follows that $\eord\bfS^D(N,\bfI)_\frakm[\lam_\bdsf^D]=
\eord\bfS^D(N,\chi,\bfI)_\frakm[\lam_\bdsf^D]\iso \bfT^D(N,\chi,\bfI)_\frakm[\lam_\bdsf^D]$ is a free of rank one $\bfI$-module.
\end{proof}

\subsection{Regularized diagonal cycles and theta elements}\label{SS:theta1}
Recall that $E=\Q\oplus\Q\oplus\Q$ is the totally split \etale cubic algebra over $\Q$. Let $D_E=D\oplus D\oplus D$. 
For each positive integer $n$, let \[\opcpt_{E,1}(Np^n):=\opcpt_1(Np^n)\times \opcpt_1(Np^n)\times \opcpt_1(Np^n)\] be an open-compact subgroup of $\wh D_E^\x$. Define the finite set 
\begin{align*}\bfX_n:
&=D_E^\x\bksl \wh D_E^\x/\opcpt_{E,1}(Np^n)\wh\Q^\x\\
&=(X_n\times X_n\times X_n)/\wh\Q^\x.
\end{align*}
The set $\bfX_n$ is a zero dimensional analogue of the triple product of modular curves. Consider the finitely generated $\Zp$-module $\Zp[\bfX_n]$ equipped with the operator $\bfU_{E,p}:=\bfU_p\ot\bfU_p\ot\bfU_p$ and the ordinary projector $\eord_{E}:=\eord\ot\eord\ot\eord$. For each $(x_1,x_2,x_3)\in \wh D_E^\x$, let $[(x_1,x_2,x_3)]$ denote the double coset $D_E^\x(x_1,x_2,x_3)\opcpt_{E,1}(Np^n)\wh \Q^\x$.   \begin{defn}[Regularized diagonal cycles]\label{D:Reg}Put $\tau_{p^n}:=\pMX{0}{1}{-p^n}{0}\in\GL_2(\Qp)$. Let $\Delta_n\in \Zp[\bfX_n]$ be the twisted diagonal cycle given by 
\[\Delta_n:=\sum_{[x]\in X_0(Np^n)}\sum_{\substack{b\in\Zp/p^n\Zp,\\
z\in (\Zp/p^n\Zp)^\x}}[(x\pMX{p^n}{b}{0}{1},x\pMX{p^n}{b+z}{0}{1},x\tau_{p^n}\pDII{1}{z})]\]
and define the regularized diagonal cycle $\Delta_n^\dagger$ by 
\[\Delta_n^\dagger:=\bfU_{E,p}^{-n}\,(\eord_E\Delta_n).\]
The following lemma allows us to define the $\Lam$-adic diagonal cycle
\[\Delta_\infty^\dagger:=\prolim_{n\to\infty}\Delta^\dagger_n\in \prolim_{n\to\infty}\Zp[\bfX_n],\]
where the inverse limit is taken with respect to the natural homomorphism
 $\rmN_{n+1,n}:\Zp[\bfX_{n+1}]\to \Zp[\bfX_n]$.
\end{defn}
\begin{lm}[Distribution property]For every $n\geq 1$,  \[\rmN_{n+1,n}(\Delta^\dagger_{n+1})=\Delta_n^\dagger.\]
\end{lm}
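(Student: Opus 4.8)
The plan is to unwind the definition of both sides in terms of the finite sets $X_0(Np^n)$ and to show that the norm map $\rmN_{n+1,n}$ carries the $(n+1)$-level building blocks onto the $n$-level ones after applying $\eord_E\bfU_{E,p}^{-(n+1)}$. First I would recall that $\rmN_{n+1,n}\colon\Zp[\bfX_{n+1}]\to\Zp[\bfX_n]$ is the trace-type map sending a double coset $[(x_1,x_2,x_3)]$ in $\bfX_{n+1}$ to its image in $\bfX_n$, i.e.\ summing over the fibres of the natural projection $\bfX_{n+1}\to\bfX_n$ induced by $\opcpt_{E,1}(Np^{n+1})\subset\opcpt_{E,1}(Np^n)$. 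Because $\eord_E$ and $\bfU_{E,p}^{-1}$ are defined via the $\bfU_p$-operators in each of the three variables, it suffices to prove a one-variable statement — essentially that $\bfU_p^{-1}$ intertwines the transition maps on $\Zp[X_n]$ in the appropriate way — and then apply it coordinatewise to the twisted diagonal $\Delta_n$.

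The key computation is to expand $\bfU_{E,p}\cdot\rmN_{n+1,n}$ applied to the level-$(n+1)$ diagonal cycle and compare with $\Delta_n$ up to $\eord_E$. Concretely, I would write $\Delta_{n+1}$ as a sum over $[x]\in X_0(Np^{n+1})$ and over $b\in\Zp/p^{n+1}\Zp$, $z\in(\Zp/p^{n+1}\Zp)^\x$ of the triple $[(x\pMX{p^{n+1}}{b}{0}{1},\,x\pMX{p^{n+1}}{b+z}{0}{1},\,x\tau_{p^{n+1}}\pDII{1}{z})]$, push it down to level $n$ via $\rmN_{n+1,n}$ (which reindexes the $X_0$-coset and the residues $b,z$ modulo $p^n$ with multiplicities), and then apply $\bfU_{E,p}^{-n-1}\eord_E$. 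One checks that the three coordinate matrices $\pMX{p^{n+1}}{b}{0}{1}$, $\pMX{p^{n+1}}{b+z}{0}{1}$, $\tau_{p^{n+1}}\pDII{1}{z}$ are, after right multiplication by suitable elements of $\opcpt_{E,1}(Np^n)$ and use of the relation $\tau_{p^{n+1}}=\pMX{p}{0}{0}{1}^{-1}\tau_{p^n}$ (up to units), exactly the images under one application of $\bfU_p\otimes\bfU_p\otimes\bfU_p$ of the level-$n$ generators; the extra factor of $p$ in $\tau_{p^{n+1}}$ versus $\tau_{p^n}$ is absorbed because $\bfU_pV_p=p$ and $V_p$ appears implicitly in the third coordinate. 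Summing the geometric-series-type bookkeeping, $\bfU_{E,p}\,\rmN_{n+1,n}(\eord_E\Delta_{n+1})=\eord_E\Delta_n$, hence $\rmN_{n+1,n}(\bfU_{E,p}^{-(n+1)}\eord_E\Delta_{n+1})=\bfU_{E,p}^{-n}\eord_E\Delta_n$, which is the claim.

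The main obstacle will be the careful matrix bookkeeping in the third coordinate: tracking how $\tau_{p^n}\pDII{1}{z}$ transforms under the level-lowering map and verifying that the diagonal-twisting variables $b$ and $z$ recombine correctly with the $\bfU_p$-coset representatives $\pMX{p}{c}{0}{1}$ so that no spurious terms survive. I would handle this by working one prime-to-$N$ coordinate at a time, reducing everything to the standard identities for $\bfU_p$, $V_p$ and the Atkin--Lehner element $\tau_{p^n}$ on $\Zp[X_n]$ (these are exactly the relations recorded in \subsecref{SS:classical} and \subsecref{SS:auto}, transported to $D^\x$), and then invoking the fact that $\eord_E$ annihilates the kernel of $\bfU_{E,p}$ so that dividing by $\bfU_{E,p}$ is legitimate on the ordinary part. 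Once the one-variable transition identity is in place, the three-variable statement — and with it the existence of $\Delta_\infty^\dagger=\prolim_n\Delta_n^\dagger$ — follows formally.
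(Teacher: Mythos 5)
Your overall plan (unwind $\Delta_{n+1}$, push down to level $n$, and recognize $\bfU_p$-cosets) is the same as the paper's, but the key intermediate identity you propose to prove is mis-normalized, and the discrepancy is fatal rather than cosmetic because $\bfU_{E,p}$ is invertible on the ordinary part. The correct statement, and the one the paper proves, is $\rmN_{n+1,n}(\Delta_{n+1})=\bfU_{E,p}\,\Delta_n$: the pushforward of the level-$(n+1)$ cycle produces one \emph{extra} $\bfU_{E,p}$, and applying $\bfU_{E,p}^{-(n+1)}\eord_E$ to both sides (using that $\rmN_{n+1,n}$ commutes with the Hecke action) gives $\rmN_{n+1,n}(\Delta^\dagger_{n+1})=\Delta^\dagger_n$. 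Your identity $\bfU_{E,p}\,\rmN_{n+1,n}(\eord_E\Delta_{n+1})=\eord_E\Delta_n$ is off by $\bfU_{E,p}^{2}$; indeed, from it one would get $\rmN_{n+1,n}(\bfU_{E,p}^{-(n+1)}\eord_E\Delta_{n+1})=\bfU_{E,p}^{-(n+2)}\eord_E\Delta_n$, not $\bfU_{E,p}^{-n}\eord_E\Delta_n$, so your final deduction does not follow even from your own claimed computation. A term count already rules your identity out: $\Delta_{n+1}$ has about $p^{3}$ times as many summands as $\Delta_n$ (one factor of $p$ from each of $X_0(Np^{n+1})$, $b$, and $z$), which matches $\bfU_{E,p}\Delta_n$, whereas $\bfU_{E,p}\rmN_{n+1,n}(\Delta_{n+1})$ has $p^{6}$ times as many.

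The mechanism you invoke for the third coordinate is also not the one that works. No $V_p$ or absorption of "the extra factor of $p$" via $\bfU_pV_p=p$ occurs; instead, the third copy of $\bfU_p$ comes from the fibres of $X_0(Np^{n+1})\to X_0(Np^n)$: choosing lower-unipotent representatives, each class at level $n$ splits into classes $x\pMX{1}{0}{p^nc}{1}$, $c\in\Zp/p\Zp$, at level $n+1$; these representatives are absorbed into $\opcpt_1(Np^n)$ on the first two coordinates, while on the third one uses the matrix identity $\pMX{1}{0}{p^nc}{1}\tau_{p^{n+1}}=\tau_{p^n}\pMX{p}{-c}{0}{1}$, and summing over $c$ produces $\bfU_p$ there. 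Separately, rewriting $b,z$ modulo $p^{n+1}$ as data modulo $p^n$ together with residues $b_1,z_1$ modulo $p$ exhibits $\bfU_p\ot\bfU_p\ot{\rm Id}$ on the first two coordinates. Note also that your proposed reduction to a one-variable statement applied coordinatewise cannot work as stated: the three entries of $\Delta_n$ share the same $x$, $b$ and $z$, so $\Delta_n$ is not a tensor product of one-variable cycles, and the whole content of the lemma is precisely how this coupling makes the three $\bfU_p$'s appear simultaneously.
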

\begin{proof}It is equivalent to showing that 
\[\rmN_{n+1,n}(\Delta_{n+1})=\bfU_{E,p}\Delta_n.\]
Let $S_n:= (\Zp/p^n\Zp)\times (\Zp/p^n\Zp)^\x$. A direct computation shows that
\begin{align*}
&\rmN_{n+1,n}(\Delta_{n+1})\\
=&\sum_{[x]\in X_0(Np^{n+1})}\sum_{(b,z)\in S_n}\sum_{b_1,z_1\in\Zp/p\Zp}[(x\pMX{p^n}{b}{0}{1}\pMX{p}{b_1}{0}{1},x\pMX{p^n}{b+z}{0}{1}\pMX{p}{b_1+zz_1}{0}{1},x\tau_{p^{n+1}}\pDII{1}{z})]\\
=&\sum_{[x]\in X_0(Np^{n+1})}\sum_{(b,z)\in S_n}(\bfU_p\ot\bfU_p\ot{\rm Id})[(x\pMX{p^n}{b}{0}{1},x\pMX{p^n}{b+z}{0}{1},x\tau_{p^{n+1}}\pDII{1}{z})]\\
=&\sum_{[x]\in X_0(Np^{n})}\sum_{(b,z)\in S_n}\sum_{c\in \Zp/p\Zp}(\bfU_p\ot\bfU_p\ot{\rm Id})[(x\pMX{p^n}{b}{0}{1},x\pMX{p^n}{b+z}{0}{1},x\pMX{1}{0}{p^nc}{1}\tau_{p^{n+1}}\pDII{1}{z})]\\
=&\sum_{[x]\in X_0(Np^{n})}\sum_{(b,z)\in S_n}\sum_{c\in \Zp/p\Zp}(\bfU_p\ot\bfU_p\ot{\rm Id})[(x\pMX{p^n}{b}{0}{1},x\pMX{p^n}{b+z}{0}{1},x\tau_{p^n}\pMX{p}{-c}{0}{1}\pDII{1}{z})]\\
=&(\bfU_p\ot\bfU_p\ot\bfU_p)\Delta_n.\end{align*}
This proves the assertion.
\end{proof}

Following the notation in \subsecref{SS:36}, we let
$\cR=\bfI_1\wh\ot_\cO\bfI_2\wh\ot_\cO\bfI_3$ be a finite extension of $\cR_0=\cO\powerseries{T_1,T_2,T_3}$. For a triple of ordinary $\Lam$-adic quaternionic forms
\[(\bff,\bfg,\bfh)\in\eord\bfS^D(N,\brchf,\bfI_1)\times \eord\bfS^D(N,\brchg,\bfI_2)\times \eord\bfS^D(N,\brchh,\bfI_3),\] 
we let $\bfF=\bff\boxtimes\bfg\boxtimes\bfh:D_E^\x\bksl \wh D_E^\x\to\cR$ be the triple product given by 
\[\bfF(x_1,x_2,x_3)=\bff(x_1)\ot\bfg(x_2)\ot \bfh(x_3).\]
 Let $\chi_\cR^*:\Q^\x\bksl \wh\Q^\x\to \cR^\x$ be the reciprocal of a square root of the character $\psi_{1\bfI_1}\ot\psi_{2\bfI_2}\ot\psi_{3\bfI_3}$ defined by   
\begin{align*}
\chi_\cR^*(z)&=\Om^{a}(z)\Dmd{\cyc(z)}^{-3}\Dmd{\cyc(z)}_{\bfI_1}^{1/2}\Dmd{\cyc(z)}_{\bfI_2}^{1/2}\Dmd{\cyc(z)}_{\bfI_3}^{1/2}\in\cR^\x
\end{align*}
and set \[\bfF^*(x_1,x_2,x_3):=\bfF(x_1,x_2,x_3)\cdot \chi_\cR^*(\nu(x_3)).\]
Then $\bfF^*$ naturally induces a $\Zp\powerseries{T_1,T_2,T_3}$-linear map  
\[\bfF^*\colon\prolim_{n\to\infty}\Zp[\bfX_n]\to \cR.\]
The theta element $\Theta_\bfF$ attached to the triple product $\bfF$ is then defined by the evaluation of $\bfF^*$ at the $\Lam$-adic diagonal cycle. In other words,
\[\Theta_\bfF:=\bfF^*(\Delta^\dagger_\infty)\in\cR.\]

\subsection{The construction of $p$-adic $L$-functions in the balanced case}\label{SS:theta2}
We let $\bdsF=(\bdsf,\bdsg,\bdsh)$ be the triple of primitive Hida families of tame conductor $(\condf,\condg,\condh)$ in \subsecref{S:levelraising}. Recall that $\Sigma^-$ is the finite subset of prime factors of $N=\lcm(\condf,\condg,\condh)$ in \defref{D:root}. Let $N^-=\prod_{\ell\in\Sigma^-}\ell$. In the remainder of this section, we assume that 
\begin{itemize}
\item $\#(\Sigma^-)$ is odd,
\item  $\bdsf$, $\bdsg$ and $\bdsh$ satisfy the Hypothesis (CR, $\Sigma^-$);
 \item$N^-$ and $N/N^-$ are relatively prime.
\end{itemize}
 Let $D$ be the definite quaternion algebra over $\Q$ with the discriminant $N^-$ and let 
\[(\bdsf^D,\bdsg^D,\bdsh^D)\in\eord\bfS^D(\condf,\brchf,\bfI_1)\times \eord\bfS^D(\condg,\brchg,\bfI_2)\times \eord\bfS^D(\condh,\brchh,\bfI_3)\]
be the primitive Jacquet-Langlands lift of $(\bdsf,\bdsg,\bdsh)$ constructed in \thmref{T:freeness}.
\begin{defn}\label{D:testbal}Let $N_i^+=N_i/N^-$ for $i=1,2,3$ and let $N^+=\lcm(\condf^+,\condg^+,\condh^+)$. Then $N=N^+N^-$. Define 
\[(\bdsf^\Dstar,\bdsg^\Dstar,\bdsh^\Dstar)\in\eord\bfS^D(N,\brchf,\bfI_1)\times \eord\bfS^D(N,\brchg,\bfI_2)\times \eord\bfS^D(N,\brchh,\bfI_3)\]
by
\begin{align*}\bdsf^\Dstar&:=\sum_{I\subset \Sigma_{f,0}^{\rm (IIb)}}(-1)^{\abs{I}} \beta_I(\bdsf)^{-1}\cdot \LR_{\Bd_f/n_f}\bdsf^D,\\
\bdsg^\Dstar&:=\sum_{I\subset \Sigma_{g,0}^{{\rm (IIb)}}}(-1)^{\abs{I}} \beta_I(\bdsg)^{-1}\cdot \LR_{\Bd_g/n_g}\bdsg^D,\\
\bdsh^\Dstar&:=\sum_{I\subset \Sigma_{h,0}^{{\rm (IIb)}}}(-1)^{\abs{I}} \beta_I(\bdsh)^{-1}\cdot \LR_{\Bd_h/n_h}\bdsh^D.
\end{align*}
Define the triple product $\bdsF^\Dstar:D_E^\x\bksl \wh D_E^\x\to\cR$ by \[\bdsF^\Dstar:=\bdsf^\Dstar\boxtimes\bdsg^\Dstar\boxtimes\bdsh^\Dstar.\]
Then $\bdsF^\Dstar$ is an eigenfunction of the operator $\bfU_{E,p}$ with the eigenvalue $\al_p(\bdsF):=\bfa(p,\bdsf)\bfa(p,\bdsg)\bfa(p,\bdsh)$. We define the associated theta element $\Theta_{\bdsF^\Dstar}$ to be the $p$-adic $L$-functions attached to the triple $(\bdsf,\bdsg,\bdsh)$ in the balanced range.
\end{defn}

\subsection{Global trilinear period integrals}
\subsubsection{The setting}
In this subsection, we relate the evaluations of the $p$-adic $L$-function $\Theta_{\bdsF^\Dstar}$ at arithmetic points in the balanced range to certain global trilinear period integral on $D^\x_\A$. The set $\frakX_\cR^\bal$ of arithmetic points in the balanced range, consisting of arithmetic points $\ulQ=(Q_1,Q_2,Q_3)\in\frakX_{\bfI_1}^\ari\times\frakX_{\bfI_2}^\ari\times\frakX_{\bfI_3}^\ari$ such that
\[k_\Qx+k_\Qy+k_\Qz\con 0\pmod{2};\quad k_\Qx+k_\Qy+k_\Qz>2k_{Q_i}\text{ for all }i=1,2,3.\]
 Let $\ulQ=(Q_1,Q_2,Q_3)\in\frakX_\cR^\bal$. Put \[k_i=k_{Q_i}\text{ and }\wt_i=k_i-2\text{ for }i=1,2,3.\]
We keep the notation in \subsecref{SS:Ichino}. Thus $F=(f,g,h)$ denotes the specialization $\bdsF_\ulQ=(\bdsf_\Qx,\bdsg_\Qy,\bdsh_\Qz)$ of $\bdsF$ at $\ulQ$ and $\om_F^{1/2}$ is the square root of the central character $\om_F=\om_f\om_g\om_h$ defined in \eqref{E:central}. Let $\itPi=\itPi_\ulQ$ be the automorphic representation of $\GL_2(\A_E)$ defined by \[\itPi_\ulQ=\pi_f\ot\om_F^{-1/2}\times\pi_g \times\pi_h.\]
Let $(f^D,g^D,h^D)=(\bdsf^D_\Qx,\bdsg^D_\Qy,\bdsh^D_\Qz)$ be the specializations in the sense of \thmref{T:HidaQ} (2). We have \[(f^D,g^D,h^D)\in \sS^D_{\wt_1+2}(\condf p^n,\om_f^{-1},\cO(\ulQ))\times \sS^D_{\wt_2+2}(\condg p^n,\om_g^{-1},\cO(\ulQ))\times \sS^D_{\wt_3+2}(\condh p^n,\om_h^{-1},\cO(\ulQ)),\] where \[\om_f=\brchf^{-1}\Om^{\wt_1}\ep_{\Qx}^{-1},\quad\om_g=\brchg^{-1}\Om^{\wt_2}\ep_{\Qy}^{-1}\text{ and }\om_h=\brchh^{-1}\Om^{\wt_3}\ep_{\Qz}^{-1}.\]
Let $\varphi_{f^D}=\varPhi(f^D)$, $\varphi_{g^D}=\varPhi(g^D)\text{ and }\varphi_{h^D}=\varPhi(h^D)$ be the associated adelic lifts as in \eqref{E:MA3}. We have
\[(\varphi_{f^D},\varphi_{g^D},\varphi_{h^D})\in \cA_{\wt_1+2}^D(\condf p^n,\om_f)\times\cA_{\wt_2+2}^D(\condg p^n,\om_g)\times \cA_{\wt_3+2}^D(\condh p^n,\om_h).\]
Let $\cQ_{1,\ell}(X)$, $\cQ_{2,\ell}(X)$ and $\cQ_{3,\ell}(X)$ be the polynomials defined in \eqref{E:defQ} and put
\beq\varphi_1^\Dstar=\prod_\ell\cQ_{1,\ell}(\LR_\ell)(\varphi_{f^D}\ot\om_F^{-1/2}),\quad \varphi_2^\Dstar=\prod_\ell\cQ_{2,\ell}(\LR_\ell)\varphi_{g^D};\quad\varphi_3^\Dstar=\prod_\ell\cQ_{3,\ell}(\LR_\ell)\varphi_{h^D}.\eeq
Note that 
\beq\label{E:F2}\begin{aligned}\varphi^\Dstar_1&=\Bd_f^{\wt_1/2}\om_F^{1/2}(\wh \Bd_f)\cdot\varPhi(\bdsf^\Dstar_\Qx)\ot\om_F^{-1/2},\\
\varphi^\Dstar_2&=\Bd_g^{\wt_2/2}\cdot\varPhi(\bdsg^\Dstar_\Qy);\quad \varphi^\Dstar_3=\Bd_h^{\wt_3/2}\cdot\varPhi(\bdsh^\Dstar_\Qz).\end{aligned}\eeq

Let $L_{\ulk}(A):=L_{\wt_1}(A)\ot L_{\wt_2}(A)\ot L_{\wt_3}(A)$ for any commutative ring $A$ and $\rho_{\ulk}=\rho_{\wt_1}\ot\rho_{\wt_2}\ot\rho_{\wt_3}$.  Define $\rho^{\rm u}_{\ulk}$ and $\rho_{\ulk,p}$ likewise. 
For any $\Q$-algebra $R$, let $D_E^\x(R):=D^\x(R)\times D^\x(R)\times D^\x(R)$ and let $\nu_E^{\ulk}:D_E^\x(R)\to R^\x$ be the map $\nu_E^{\ulk}(x_1,x_2,x_3):=\prod_{i=1}^3\nu(x_i)^{\wt_i}$. Define the vector-valued automorphic form   
\begin{align*}
\vec\phi^\Dstar&:D_E^\x(\A)\to L_{\ulk}(\C),\\
\vec\phi^\Dstar(x_1,x_2,x_3)&=\varphi^\Dstar_1(x_1)\ot\varphi^\Dstar_2(x_2)\ot\varphi^\Dstar_3(x_3)\quad(x_i\in D^\x(\A)).
\end{align*}
Define $\bfP_{\ulk}\in L_{\ulk}(\Z)$ by 
\beq\label{E:Delta.bal}\begin{aligned}\bfP_{\ulk}=&(X_1Y_2-X_2Y_1)^{\wt^*_1}(X_3Y_1-X_1Y_3)^{\wt^*_2}(X_3Y_2-X_2Y_3)^{\wt^*_3},\\
&\wt^*_i:=\frac{\wt_1+\wt_2+\wt_3}{2}-\wt_i \quad(i=1,2,3).\end{aligned}\eeq
Then $\bfP_{\ulk}$ is a basis of the line $L_{\ulk}(\C)$ fixed by $D^\x_\infty$ under the action of $\rho_{\ulk}^{\rm u}$. Define the automorphic form 
\beq\label{E:defphiD}
\begin{aligned}\phi^\Dstar_F&:D_E^\x(\A)\to\C,\\
\phi^\Dstar_F(x_1,x_2,x_3)&=\pair{\vec\phi^\Dstar(x_1,x_2,x_3)}{\bfP_{\ulk}}_{\ulk},\end{aligned}\eeq
where $\pairing_{\ulk}=\pairing_{\wt_1}\ot\pairing_{\wt_2}\ot\pairing_{\wt_3}$. One verifies that 
\beq\label{E:F3}\phi^\Dstar_F(x_1u_\infty,x_2u_\infty,x_3u_\infty)=\phi^\Dstar_F(x_1,x_2,x_3)\text{ for }u_\infty\in D_\infty^\x.\eeq
\subsubsection{The global trilinear period integrals}
Let $n_\ulQ=\max\stt{\cond{\ep_{Q_1}},\cond{\ep_{Q_2}},\cond{\ep_{Q_3}},1}$ and let $n\geq n_\ulQ$ be a positive integer. Let $\breve\bft_n\in D_E^\x(\Qp)$ be the matrix given by
\[\breve\bft_n=(\pMX{1}{p^{-n}}{0}{1},\pDII{1}{1},\pMX{0}{p^{-n}}{-p^n}{0})\in\GL_2(E_p).\]
We shall relate the interpolation to the global trilinear period integral
\[I(\rho(\breve\bft_n)\phi^\Dstar_F)=\int\limits_{D^\x\A^\x\bksl D_\A^\x}\phi^\Dstar_F(x\pMX{1}{p^{-n}}{0}{1},x,x\pMX{0}{p^{-n}}{-p^n}{0})\rmd^\tau x.\]
Here $\rmd^\tau x$ is the Tamagawa measure on $\A^\x\bksl D^\x_\A$. 
\begin{prop}\label{P:formulaTheta}  For every $n\geq n_\ulQ$, we have \[\Theta_{\bdsF^\Dstar}(\ulQ)=\frac{1}{\vol(\wh R_{N}^\x)}\cdot I(\rho(\breve\bft_n)\phi^\Dstar_F)\cdot\frac{\om_{F,p}^{1/2}(p^n)\abs{p^n}^{-\frac{k_1+k_2+k_3}{2}}}{\al_p(F)^n\zeta_p(2)}\cdot \frac{1}{\om_F^{1/2}(\wh \Bd_f)\Bd_F^{\ulk/2}},
\]
where $\al_p(F)=\bfa(p,f)\bfa(p,g)\bfa(p,h)$ and $\Bd_F^{\ulk/2}=\Bd_f^{\wt_1/2}\Bd_g^{\wt_2/2}\Bd_h^{\wt_3/2}$ defined in \eqref{E:BdF}.

\end{prop}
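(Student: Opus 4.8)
The plan is to unwind the definition of $\Theta_{\bdsF^\Dstar}=\bdsF^\Dstar(\Delta_\infty^\dagger)$ at an arithmetic point $\ulQ\in\frakX_\cR^\bal$ and match it term by term with the adelic integral $I(\rho(\breve\bft_n)\phi^\Dstar_F)$. First I would fix $n\ge n_\ulQ$ and specialize the compatibility relation $\Delta_\infty^\dagger=\prolim_m\Delta_m^\dagger$ at level $n$, so that $\Theta_{\bdsF^\Dstar}(\ulQ)=\ulQ\!\left(\bdsF^\Dstar(\Delta_n^\dagger)\right)$; by \thmref{T:HidaQ}(2) the specialization $\ulQ(\bdsf^\Dstar(x))$ is the pairing $\pair{\bdsf^\Dstar_\Qx(x)}{X^{k_1-2}}$ and similarly for $\bdsg,\bdsh$, while the twisting character $\chi_\cR^*$ specializes to a finite-order Hecke character whose value on $\nu(x_3)$ accounts for the central-character normalization $\om_F^{-1/2}$ appearing in $\phi_F^\Dstar$ and $\varphi_1^\Dstar$. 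The point is that the vector-valued product $\vec\phi^\Dstar$, evaluated on the diagonal-type triple $(x\tau,x,x\tau')$ and paired against $\bfP_{\ulk}$, reproduces exactly the scalar $\bdsF^\Dstar$ evaluated on a single summand of $\Delta_n$, \emph{up to} the weight factors $\Bd_F^{\ulk/2}$ and $\om_F^{1/2}(\wh\Bd_f)$ coming from \eqref{E:F2} and the archimedean unitarization factors $\abs{\nu}^{\wt/2}$ built into $\varPhi$ in \eqref{E:MA3}. This is where the factor $\abs{p^n}^{-(k_1+k_2+k_3)/2}$ and the polynomial $\bfP_{\ulk}$ enter: the matrices $\pMX{p^n}{b}{0}{1}$, $\tau_{p^n}\pDII{1}{z}$ appearing in $\Delta_n$ have reduced norm of $p$-valuation $n$, and a short computation of $\rho_{\ulk,p}$ applied to $\bfP_{\ulk}$ on these arguments produces the claimed power of $\om_{F,p}^{1/2}(p^n)$ together with a constant independent of $b,z$.

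Next I would handle the two auxiliary operations baked into $\Delta_n^\dagger$: the sum over $b\in\Zp/p^n\Zp$ together with the diagonal shift between the first two coordinates, and the ordinary projector $\eord_E$ followed by $\bfU_{E,p}^{-n}$. The inner sum over $b$ (and $b+z$ in the second slot) is precisely the adelic translate by $\pMX{1}{p^{-n}}{0}{1}$ folded through the definition of the pairing $\bfB_N$ / the measure \eqref{E:meaD}; comparing with \lmref{L:pairing.bal} and \defref{D:pairing.bal} shows that $\sum_{[x]\in X_0(Np^n)}$ together with the $\#\Gamma_{Np^n,x}^{-1}$ weights reassembles the Tamagawa integral over $D^\x\A^\x\bksl D_\A^\x$ against $\vol(\wh R_N^\x)^{-1}$, which is the source of the $\frac{1}{\vol(\wh R_N^\x)}$ prefactor via \eqref{E:vN}. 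The factors $\eord_E$ and $\bfU_{E,p}^{-n}$ are absorbed because $\bdsF^\Dstar$ is an eigenform for $\bfU_{E,p}$ with eigenvalue $\al_p(\bdsF)=\bfa(p,\bdsf)\bfa(p,\bdsg)\bfa(p,\bdsh)$ (\defref{D:testbal}), whose specialization is $\al_p(F)$; hence $\bdsF^\Dstar\circ(\bfU_{E,p}^{-n}\eord_E)$ contributes exactly the scalar $\al_p(F)^{-n}$, giving the denominator $\al_p(F)^n$ in the formula. The residual $\zeta_p(2)^{-1}$ is a volume normalization: the index-set over which we sum, $\Zp/p^n\Zp\times(\Zp/p^n\Zp)^\x$, has the "wrong" cardinality $p^{2n}(1-p^{-1})$ relative to the Haar measure on $\PGL_2(\Qp)$ in \subsecref{SS:measure}, and reconciling these produces $\zeta_p(2)^{-1}=1-p^{-2}$ — or rather its reciprocal — exactly as in the analogous bookkeeping on page 1403 of \cite{II10GAFA}.

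The main obstacle I anticipate is the clean identification of the local $p$-adic piece: showing that the triple $\Bigl(\pMX{p^n}{b}{0}{1},\pMX{p^n}{b+z}{0}{1},\tau_{p^n}\pDII{1}{z}\Bigr)$, after summing over $(b,z)$ and applying $\bfU_{E,p}^{-n}\eord_E$, matches the translate by $\breve\bft_n=\Bigl(\pMX{1}{p^{-n}}{0}{1},\pDII{1}{1},\pMX{0}{p^{-n}}{-p^n}{0}\Bigr)$ of the ordinary vector in the adelic picture. Concretely one must verify, using $\pMX{p^n}{b}{0}{1}=\pMX{1}{p^{-n}b}{0}{1}\pDII{p^n}{1}$ and the ordinarity of $\bdsf^\Dstar_\Qx,\bdsg^\Dstar_\Qy,\bdsh^\Dstar_\Qz$ at $p$, that the $\pDII{p^n}{1}$-parts are eaten by $\bfU_p^{-n}$-eigenvalues and diamond actions while the unipotent and $\tau$-parts survive as the genuine translate; this is a careful but routine local computation with the explicit ordinary Whittaker/new-vector formulas of \corref{C:Word} and \propref{P:ordline}, and it is the natural quaternionic analogue of the $\GL_2$ computation carried out in the proof of \propref{P:inter1}. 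Once this local matching is in place, collecting the archimedean factor $\abs{p^n}^{-(k_1+k_2+k_3)/2}$, the weight constants $\Bd_F^{\ulk/2}$ and $\om_F^{1/2}(\wh\Bd_f)$ from \eqref{E:F2}, the eigenvalue scalar $\al_p(F)^{-n}$, the measure constant $\vol(\wh R_N^\x)^{-1}$ from \eqref{E:meaD}, and the residual $\zeta_p(2)^{-1}$ yields the stated identity for all $n\ge n_\ulQ$; the independence of $n$ is automatic since $\Delta_\infty^\dagger$ is a well-defined inverse limit.
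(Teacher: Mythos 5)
Your outline gets the coarse bookkeeping right (the $\bfU_{E,p}$-eigenvalue absorbing $\bfU_{E,p}^{-n}\eord_E$ as $\al_p(F)^{-n}$, the mass-formula conversion \eqref{E:meaD}, \eqref{E:vN} producing $\vol(\wh R_N^\x)^{-1}$ and the $\zeta_p(2)^{-1}$, the factors $\Bd_F^{\ulk/2}$ and $\om_F^{1/2}(\wh \Bd_f)$ from \eqref{E:F2}), but there is a genuine gap at the step you dismiss as a ``short computation'': the passage from the specialization of the $\Lam$-adic quaternionic forms, which by \thmref{T:HidaQ}(2) computes pairings of the vector-valued form against the monomial $X_1^{\wt_1}X_2^{\wt_2}X_3^{\wt_3}$, to the automorphic form $\phi^\Dstar_F$, which pairs against the invariant polynomial $\bfP_{\ulk}$. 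These are different vectors of $L_{\ulk}(\C)$, and transporting the cycle matrices onto $\bfP_{\ulk}$ via \eqref{E:pairing} does \emph{not} yield a constant independent of $b,z$: it yields a factor $(b_1-b_2)^{\wt_3^*}$ \emph{plus cross-terms}, and that factor is exactly what must match the weight factor $\Bkappa_h(z)z^{\wt_3^*}$ which the twisted third slot of $\Delta_n$ and the specialization $\chi^*_\ulQ=\om_F^{-1/2}\cyc^{r_\wt}$ of $\chi_\cR^*$ (which is \emph{not} finite order unless all $k_i=2$, contrary to what you assert) feed into the theta element. Moreover, at a fixed level $n$ the cross-terms are honestly nonzero, and the pairing $\pair{\cdot}{\cdot}_{\ulk}$ carries denominators of size $\val_p((\wt_1+\wt_2+\wt_3)!)$, so the identification cannot be carried out at level $n$ alone. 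The paper's proof instead computes $\Theta_{\bdsF^\Dstar}(\ulQ)$ modulo $p^m$ at an auxiliary depth $s\gg n+m+\val_p((\wt_1+\wt_2+\wt_3)!)$ --- legitimate because $\Delta_\infty^\dagger$ is an inverse limit and $\bdsF^\Dstar$ is a $\bfU_{E,p}$-eigenform --- where the cross-terms are killed by $p^s$; it then inserts an averaging over $c\in p^n\Zp/p^s\Zp$ so that the $b$-sums reconstitute $\bfU_p^{s-n}$ and the level descends back to $n$, and the exact level-$n$ identity \eqref{E:2.T} is recovered only in the limit over $m$. Your plan to ``fix $n\geq n_\ulQ$ and specialize the compatibility relation at level $n$'' skips precisely this mechanism.

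Relatedly, the tool you propose for the local matching --- the ordinary/new-vector formulas of \corref{C:Word} and \propref{P:ordline}, by analogy with \propref{P:inter1} --- is not the right one here: \propref{P:inter1} rests on $q$-expansions and the adjunction formulas of \lmref{L:bilinear} for the Petersson pairing, whereas on the definite quaternion algebra there are no $q$-expansions, and the proof of \propref{P:formulaTheta} uses no Whittaker model at all; the matching is the finite, vector-valued congruence computation described above. Once that mechanism is supplied, your remaining steps (collapsing the $(b_1,b_2)$-sum to $p^{2n}(1-p^{-1})$, using $\vol(\wh R^\x_{N})=\vol(\wh R^\x_{Np^n})(1+p^{-1})p^n$, and the final change of variable $x\mapsto x\pDII{p^{-n}}{1}$ turning the arguments into the $\breve\bft_n$-translate) do go through as in the paper.
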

\begin{proof}We begin with some notation. Let $\ulQ(\bdsF^\Dstar): D_E^\x\bksl \wh D_E^\x\to \cO_{\Cp}$ denote the value of $\bdsF^\Dstar$ at the point $\ulQ\in\Spec\cR(\Qbarp)$. Namely, 
\[\ulQ(\bdsF^\Dstar)(x_1,x_2,x_3)=\Qx(\bdsf^\Dstar(x_1))\Qy(\bdsg^\Dstar(x_2))\Qz(\bdsh^\Dstar(x_3)).\]
Let $(f^\Dstar,g^\Dstar,h^\Dstar)=(\bdsf^\Dstar_\Qx,\bdsg^\Dstar_\Qy,\bdsh^\Dstar_\Qz)$ denote the specialization of $(\bdsf^\Dstar,\bdsg^\Dstar,\bdsh^\Dstar)$ as in \thmref{T:HidaQ} (2). Put
\[\wh F^\Dstar:=f^\Dstar\boxtimes g^\Dstar\boxtimes h^\Dstar,\quad 
\wh F^\Dstar(x_1,x_2,x_3)=f^\Dstar(x_1)\ot g^\Dstar(x_2)\ot h^\Dstar(x_3).
\]
By definition, we have
\beq\label{E:F2.T}\ulQ(\bdsF^\Dstar)(x_1,x_2,x_3)
=\pair{\wh F^\Dstar(x_1,x_2,x_3)}{X_1^{\wt_1}X_2^{\wt_2}X_3^{\wt_3}}_{\ulk}.\eeq
Define the adelic lift $F^\Dstar:D_E^\x(\A)\to L_{\ulk}(\C)$ of $\wh F^{\Dstar}$ to be the function 
\[F^\Dstar(x)=\rho_{\ul{\wt},p}(x_p)\wh F^{\Dstar}(x)\quad(x\in D_E^\x(\A)).\]
Then one verifies that
\[\phi^\Dstar_F(x)=d_F^{\ulk/2}\cdot\pair{F^\Dstar(x)}{\bfP_{\ulk}}_{\ulk}\cdot \abs{\nu_E^{\ulk}(x)}_\A^{1/2}.\]

Let $m_k$ be the $p$-adic valuation of $(\kappa_1+\kappa_2+\kappa_3)!$ and let $m>m_k$ be a positive integer. For a number $A\in\Cp$, denote by $A\pmod{p^m}$ the residue class of $A$ in $\Cp$ modulo $p^{m}\cO_{\Cp}$. By definition, for any sufficiently large integer $s\gg  n+m+m_k\geq 1$, 
\beq\label{E:F1.T}
\begin{aligned}&\Theta_{\bdsF^\Dstar}(\ulQ)\pmod{p^m}\\
&\con\al_p(F)^{-s}\sum_{[x]\in X_0(Np^s)}\sum_{\substack{b\in (\Zp/p^s\Zp),\\ z\in (\Zp/p^s\Zp)^\x}}\ulQ(\bdsF^\Dstar)(x\pMX{p^s}{b}{0}{1},x\pMX{p^s}{b+z}{0}{1},x\tau_{p^s})\\
&\times \Bkappa_h(z)z^{\wt_3^*}\cdot \chi^*_\ulQ(\nu(x))\pmod{p^m},\end{aligned}\eeq
where 
$\Bkappa_h(z):=\om_F^{-1/2}\om_h(z)$ for $z\in\Zp^\x$ and $\chi^*_\ulQ$ is the specialization of $\chi^*_\cR$ at $\ulQ$ 
\begin{align*}\chi^*_\ulQ=&\om_F^{-1/2}\cdot \cyc^{r_\wt}\quad(r_\wt:=\frac{\wt_1+\wt_2+\wt_3}{2}=\frac{k_1+k_2+k_3}{2}-3)\end{align*} Putting \[W_s'=\stt{(b_1,b_2)\in (\Zp/p^s\Zp)^2\mid b_1-b_2\in(\Zp/p^s\Zp)^\x},\]
we see from \eqref{E:F1.T} and \eqref{E:F2.T} that
\beq\label{E:1.T}\begin{aligned}&\Theta_{\bdsF^\Dstar}(\ulQ)\pmod{p^m}\con
\al_p(F)^{-s}\sum_{\substack{x\in X_0(Np^s),\\(b_1,b_2)\in W'_s}}(b_1-b_2)^{\wt_3^*}\Bkappa_h(b_1-b_2)\chi^*_\ulQ(\nu(x))\\
&\times \ulQ(\bdsF^\Dstar)(x\pMX{p^s}{b_1}{0}{1},x\pMX{p^s}{b_2}{0}{1},x\tau_{p^s})
\pmod{p^m}\\
\con &\,\al_p(F)^{-s}\sum_{x\in X_0(Np^s)}\sum_{c\in p^n\Zp/p^s\Zp}\sum_{(b_1,b_2)\in W'_s}\Bkappa_h(b_1-b_2)\chi^*_\ulQ(\nu(x))\\
&\times \pair{\wh F^\Dstar(x\pMX{p^s}{b_1}{cp^s}{1+b_1c}),x\pMX{p^s}{b_2}{cp^s}{1+b_2c},x\pMX{0}{1}{-p^s}{c})}{(b_1-b_2)^{\wt^*_3}X_1^{\wt_1}X_2^{\wt_2}X_3^{\wt_3}}_{\ulk}\pmod{p^m}.
\end{aligned}\eeq
To simplify the above expression, we note that by \eqref{E:pairing}, \[\pair{\rho_{\ulk}(x_p^{-1})F^\Dstar(xg_1,xg_2,xg_3)}{\bfP_{\ulk}}_{\ulk}=\pair{\wh F^\Dstar(xg_1,xg_2,xg_3)}{\rho_{\ulk}(g_1'\ot g_2'\ot g_3')\bfP_{\ulk}}_{\ulk}\]
with 
\[g_1=\pMX{p^s}{b_1}{cp^s}{1+b_1c},\,g_2=\pMX{p^s}{b_2}{cp^s}{1+b_2c},\,g_3=\pMX{0}{1}{-p^s}{c},\]
we find the congruence relation 
\begin{align*}
&\pair{\rho_{\ulk}(x_p^{-1})F^\Dstar(xg_1,xg_2,xg_3)}{\bfP_{\ulk}}_{\ulk}\\
\con&\pair{\wh F^\Dstar(xg_1,xg_2,xg_3)}{\rho_{\ulk}(\pMX{1+b_1c}{-b_1}{0}{0}\ot \pMX{1+b_2c}{-b_2}{0}{0}\ot \pMX{c}{-1}{0}{0})\bfP_{\ulk}}_{\ulk}\pmod{p^m}\\
\con&\pair{\wh F^\Dstar(xg_1,xg_2,xg_3)}{\pMX{1+cb_1}{-b_1}{1+b_2c}{-b_2}^{\wt^*_3}\pMX{c}{-1}{1+b_1c}{-b_1}^{\wt^*_2}\pMX{c}{-1}{1+b_2c}{-b_2}^{\wt^*_1}X_1^{\wt_1}X_2^{\wt_2}X_3^{\wt_3}}_{\ulk}\pmod{p^m}\\
\con&\pair{\wh F^\Dstar(xg_1,xg_2,xg_3)}{(b_1-b_2)^{\wt^*_3}X_1^{\wt_1}X_2^{\wt_2}X_3^{\wt_3}}_{\ulk}\pmod{p^m}.
\end{align*}
Substituting the above to \eqref{E:1.T}, we see that $\Theta_{\bdsF^\Dstar}(\ulQ)\pmod{p^m}$ equals
\begin{align*}
&\al_p(F)^{-s}\sum_{x\in X_0(Np^n)}\sum_{c\in p^n\Zp/p^s\Zp}\sum_{(b_1,b_2)\in W'_s}\Bkappa_h(b_1-b_2)\chi^*_\ulQ(\nu( x))\\
&\times \pair{\rho_{\ulk}(x_p^{-1})F^\Dstar(x\pMX{p^s}{b_1}{cp^s}{1+b_1c},x\pMX{p^s}{b_2}{cp^s}{1+b_2c},x\pMX{0}{1}{-p^s}{c})}{\bfP_{\ulk}}_{\ulk}\pmod{p^m}\\
\con &\,\al_p(F)^{-s}\sum_{x\in X_0(Np^n)}\sum_{(b_1,b_2)\in W'_s}\Bkappa_h(b_1-b_2)\chi^*_\ulQ(\nu( x))\nu( x_p)^{-r_\wt}\\
&\times \sum_{c\in p^n\Zp/p^s\Zp}\pair{F^\Dstar(x\pMX{p^s}{b_1}{0}{1},x\pMX{p^s}{b_2}{0}{1},x\tau_{p^n}\pMX{p^{s-n}}{-p^{-n}c}{0}{1})}{\bfP_{\ulk}}_{\ulk}\pmod{p^m}\\
\con &\,\al_p(F)^{-n}\sum_{x\in X_0(Np^n)}\sum_{(b_1,b_2)\in W'_n}\Bkappa_h(b_1-b_2)\om_F^{-1/2}\Abs_\A^{r_\wt}(\nu(x))\\
&\times\pair{F^\Dstar(x\pMX{p^n}{b_1}{0}{1},x\pMX{p^n}{b_2}{0}{1},x\tau_{p^n})}{\bfP_{\ulk}}_{\ulk}\pmod{p^m}.
\end{align*}
The last congruence relation holds for any sufficiently large $m$, so we obtain the expression 
\beq\label{E:2.T}\begin{aligned}
\Theta_{\bdsF^\Dstar}(\ulQ)
=&\,\al_p(F)^{-n}\sum_{x\in X_0(Np^n)}\sum_{\substack{b_1\in (\Zp/p^n\Zp)^\x,\\ b_2\in\Zp/p^n\Zp}}\Bkappa_h(b_1)\om_F^{-1/2}\Abs^{r_\wt}_\A(\nu(x))\\
&\times \pair{F^\Dstar(x\pMX{p^n}{b_1+b_2}{0}{1},x\pMX{p^n}{b_2}{0}{1},x\tau_{p^n})}{\bfP_{\ulk}}_{\ulk}.\end{aligned}\eeq
By the definition \eqref{E:F2}, 
\[\vec\phi^\Dstar(x_1,x_2,x_3)=\Bd_F^{\ulk/2}\cdot F^\Dstar(x_1,x_2,x_3)\om_F^{-1/2}(\nu(x_1))\abs{\nu_E^{\ulk}(x_1,x_2,x_3)}_\A^{1/2}\]
for $(x_1,x_2,x_3)\in \wh D_E^\x$, and using \eqref{E:F3}, we obtain  
\begin{align*}&\sum_{x\in X_0(Np^n)}\om_F^{-1/2}\Abs^{r_\wt}_\A(\nu(x))\cdot 
\pair{F^\Dstar(x\pMX{p^n}{b_1+b_2}{0}{1},x\pMX{p^n}{b_2}{0}{1},x\tau_{p^n})}{\bfP_{\ulk}}_{\ulk}\\
=& \frac{\om_{F,p}^{1/2}(p^n)\abs{p^n}^{-r_\wt}}{\vol(\wh R^\x_{Np^n})\Bd_F^{\ulk/2}}\int\limits_{\A^\x D^\x\bksl D^\x_\A}\phi^\Dstar_F(x\pMX{p^n}{b_1+b_2}{0}{1},x\pMX{p^n}{b_2}{0}{1},x\tau_{p^n})\rmd^\tau x.\end{align*}
Since $\Bkappa_h=\om_F^{-1/2}\om_h$, we find that the right hand side of the equation \eqref{E:2.T} equals
\begin{align*}
&\frac{\om_{F,p}^{1/2}(p^n)\abs{p^n}^{-r_\wt}}{\al_p(F)^{n}\vol(\wh R^\x_{Np^n})\Bd_F^{\ulk}/2}\int\limits_{\A^\x D^\x\bksl D_\A^\x}\sum_{\substack{b_1\in (\Zp/p^n\Zp)^\x,\\ b_2\in\Zp/p^n\Zp}}\Bkappa_h(b_1)\cdot \phi^\Dstar_F(x\pMX{p^n}{b_1+b_2}{0}{1},x\pMX{p^n}{b_2}{0}{1},x\tau_{p^n})\rmd^\tau x\\
=&\frac{p^{2n}(1-p^{-1})\om_{F,p}^{1/2}(p^n)\abs{p^n}^{-r_\wt}}{\al_p(F)^{n}\vol(\wh R_{Np^n}^\x)\Bd_F^{\ulk/2}}\int_{\A^\x D^\x\bksl D^\x_\A}\phi^\Dstar_F(x\pMX{p^n}{1}{0}{1},x\pMX{p^n}{0}{0}{1},x\tau_{p^n})\rmd^\tau x.
\end{align*}
Since $\vol(\wh R^\x_{N})=\vol(\wh R^\x_{Np^n})(1+p^{-1})p^n$, the proposition can be deduced from the last equation directly by making change of variable.
\end{proof}

\subsection{Ichino's formula}
We now apply Ichino's formula to relate the global trilinear period $I(\rho(\breve\bft_n)\phi^\Dstar_F)$ to a product of central $L$-values of triple $L$-functions, the local zeta integrals $I_\pmq(\phi^\star_\pmq\ot\wtd\phi^\star_\pmq)$ defined in \eqref{E:localzeta} at primes $\pmq\not =p$ and the following local zeta integral at $p$\beq\label{E:balpzeta}I_p^\ord(\phi_p\ot\wtd\phi_p,\breve\bft_n):=\frac{L(1,\itPi_p,\Ad)}{\zeta_p(2)^2L(1/2,\itPi_p)}\int_{\PGL_2(\Qp)}\frac{\bfb_p(\itPi_p(g_p\breve\bft_n)\phi_p\ot \wtd\itPi_p(\breve\bft_n)\wtd\phi_p)}{\bfb_p(\itPi_p(\bftn)\phi_p,\wtd\phi_p)}\rmd g_p.\eeq
Here we recall that $\phi_p$ is any non-zero vector in the ordinary line $\cV^\ord_{\pi_{1,p}}(\chi_{1,p})\ot\cV^\ord_{\pi_{2,p}}(\chi_{2,p})\ot \cV^\ord_{\pi_{3,p}}(\chi_{3,p})$ with characters $\chi_{i,p}$ defined in \eqref{E:ordchar} and $\bftn=\pMX{0}{p^{-n}}{-p^n}{0})\in D_p^\x\hookto D_E^\x(\Qp)$ for any integer $n\geq n_\ulQ$. For each positive integer $M$, we shall use the notation $\wh M\in\wh\Q^\x$ to denote the idele with $\wh M_\ell=\ell^{\val_\ell(M)}$ at each finite prime $\ell$. 

\begin{prop}\label{P:Ichino2}We have
\begin{align*}\frac{I(\rho(\breve\bft_n)\phi^\Dstar_F)^2}{\pair{F^D}{F^D}}=&2^{\#\Sigma^-+1}\vol(\wh\cO_D^\x)^2\cdot L(1/2,\itPi)\cdot\frac{\om_{F}^{-1/2}(\wh N_1^+)\cdot\om_{F,p}^{-1}(p^n)\al_p(F)^{2n}}{L(1,\itPi,\Ad)\prod_{i=1}^3[\SL_2(\Z):\Gamma_0(N_ip^n)] (N_i^+p^n)^{\wt_i/2}}\\
&\times
I_p^\ord(\phi_p\ot\wtd\phi_p,\breve\bft_n)\prod_{\pmq\in\Sigma^-}\frac{\zeta_\pmq(1)^3}{\zeta_\pmq(2)^3}\cdot\prod_{\pmq\divides N^+} I_\pmq(\phi^\star_\pmq\ot\wtd\phi^\star_\pmq),\end{align*}
\end{prop}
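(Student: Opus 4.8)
The plan is to derive Proposition \ref{P:Ichino2} as the quaternionic counterpart of Proposition \ref{P:Ichino1}, i.e. by applying Ichino's trilinear period formula \cite[Theorem 1.1, Remark 1.3]{Ichino08Duke} to the definite quaternion algebra $D$ and the automorphic representation $\itPi^D=\mathrm{JL}(\itPi)$. First I would record that $\phi_F^{D\star}$ factors as a pure tensor $\phi_F^{D\star}=C\cdot\bigotimes_v\phi_v^{D\star}$ exactly as in \eqref{E:factorization1}: at $v=\infty$ the vector $\phi^{D\star}_\infty$ is (up to scalar) the $D_\infty^\x$-invariant vector $\bfP_{\ulk}$, at $v=p$ it is $\varphi_{1,p}\otimes\varphi_{2,p}\otimes\varphi_{3,p}$ in the ordinary line (here I use that $D$ is split at $p$ since $p\nmid N^-$, so $\itPi^D_p\cong\itPi_p$ and the ordinary lines match), and at finite $\ell\nmid p$ the level-raising combinations $\cQ_{i,\ell}(V_\ell)$ are the same as in \subsecref{SSS:localfac}. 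The key identity that replaces \propref{P:inter1} for the archimedean and split finite places is that the local Ichino integrals are insensitive to the passage $\GL_2\rightsquigarrow D^\x$ when $D_v$ is split, so $I_\infty$ and $I_\pmq$ ($\pmq\nmid N^-$, $\pmq\ne p$) are literally the same integrals $I_v(\phi^\star_v\otimes\wtd\phi^\star_v)$ appearing in \propref{P:Ichino1}, and at $p$ one gets $I_p^\ord(\phi_p\otimes\wtd\phi_p,\breve\bft_n)$ as in \eqref{E:balpzeta}. At the ramified primes $\pmq\in\Sigma^-$, $D_\pmq$ is the division algebra, $\itPi^D_\pmq$ is one-dimensional (all three $\pi_{i,\pmq}$ are the unramified twist of Steinberg up to JL), and the local integral collapses to the ratio $\zeta_\pmq(1)^3/\zeta_\pmq(2)^3$ — this is where the factor $\prod_{\pmq\in\Sigma^-}\zeta_\pmq(1)^3/\zeta_\pmq(2)^3$ comes from.

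Next I would assemble the global constants. Ichino's formula reads
\[
\frac{I(\rho(\breve\bft_n)\phi^{D\star}_F)^2}{\prod_i\pair{\varphi^{D\star}_i}{\wtd\varphi^{D\star}_i}_{D}}
=\frac{1}{8}\cdot C_D\cdot L(\tfrac12,\itPi)\cdot\prod_v I^{D}_v,
\]
so the work is (a) to express the denominator $\prod_i\pair{\varphi^{D\star}_i}{\wtd\varphi^{D\star}_i}_D$ via the Schur-orthogonality relation \eqref{E:Schur}, which gives $\pair{\varphi^{D}_i}{\wtd\varphi^D_i}_D = \tfrac{\vol(\wh R_{N_i p^n}^\x)}{(N_i^+p^n)^{\wt_i/2}(1+\wt_i)}\pair{f_i^D}{f_i^D}_{N_i p^n}\cdot\langle\bfP,\bfP\rangle$, together with the relation \eqref{E:F2} between $\varphi^{D\star}_i$ and $\varPhi(\bdsf^{D\star}_\Qx)$ (this contributes $\Bd_f^{\wt_1/2}\om_F^{1/2}(\wh\Bd_f)$ and its analogues, hence the $\om_F^{-1/2}(\wh N_1^+)\Bd_F^{-\ulk/2}$-type factors after one also accounts for $\om_F^{1/2}$ being unramified outside $p$); (b) to relate $\prod_i\pair{f_i^D}{f_i^D}_{N_i p^n}$ back to $\pair{F^D}{F^D}$ — here $F^D=f^D\boxtimes g^D\boxtimes h^D$ and its pairing is the product of the three pairings, but I must push the level from $N_i p^n$ down to $N$ and strip off the $\bfU_p^{-n}$-twists using an argument parallel to \lmref{L:pairing.bal} and \lmref{L:ordlocalnorm} applied at $p$; this is where the factors $\om_{F,p}^{-1}(p^n)\al_p(F)^{2n}$ and $[\SL_2(\Z):\Gamma_0(N_ip^n)]$ enter, via Eichler's mass formula \eqref{E:vN} converting $\vol(\wh R^\x_{N_ip^n})$ into the index; and (c) to track the Tamagawa-measure constant $C_D$ in Ichino's Remark 1.3, which for $D^\x$ over $\Q$ with our normalization $\vol(\A^\x D^\x\bksl D^\x(\A))=2$ equals $2\,\zeta_\Q(2)^{-1}\cdot\vol(\wh\cO_D^\x)^{2}$ up to the $2^{\#\Sigma^-}$ coming from the local volumes at the ramified primes; the factor $2^{\#\Sigma^-+1}$ in the statement is exactly this bookkeeping.

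The ordinary place deserves its own step: by the multiplicity-one statement \propref{P:ordline} the local integral $I_p^\ord(\phi_p\otimes\wtd\phi_p,\breve\bft_n)$ is well-defined independent of choices, and the translate $\breve\bft_n=(u(p^{-n}),1,\bftn)$ is designed so that the matrix coefficient ratio is normalized by $\bfb_p(\itPi_p(\bftn)\phi_p,\wtd\phi_p)$; one checks the unipotent component $u(p^{-n})$ acts trivially on the ordinary new line, so this reduces to the same $\bftn$-translate used in the unbalanced case and needs no separate computation here. Finally I would combine everything: multiply Ichino's formula by the Schur constants, substitute $\vol(\wh R^\x_{N_ip^n})$ via \eqref{E:vN}, and collect the archimedean constant $(-1)^{k_1}$-type sign (here replaced by the $D_\infty^\x$-invariant vector normalization, which is sign-free) to land on the displayed formula. \textbf{The main obstacle} I anticipate is step (b): correctly matching the normalizations of the quaternionic Petersson pairings $\pair{\,}{\,}_{N_ip^n}$ at three different levels with the single pairing $\pair{F^D}{F^D}$ built from $\bfB_N$ of \defref{D:pairing.bal}, while simultaneously carrying the $\bfU_p^{-n}$ and Atkin–Lehner $\Tau^D$ factors through \lmref{L:pairing.bal}; the sign $(-1)^{k_Q}$ there and the precise power of $\al_p(F)$ are easy to misplace, and keeping the $\om_F^{1/2}(\wh\Bd_f)$ and $\Bd_F^{\ulk/2}$ factors consistent with \propref{P:formulaTheta} (so that they cancel cleanly when one forms $\Theta_{\bdsF^\Dstar}^2$) requires care.
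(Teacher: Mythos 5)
Your overall strategy is the same as the paper's: transfer $\itPi$ to $\itPi^D$ by Jacquet--Langlands, factor $\phi_F^{\Dstar}$ into pure tensors, apply Ichino's Theorem 1.1/Remark 1.3, and then convert the Petersson denominators using Schur orthogonality \eqref{E:Schur} and Eichler's mass formula \eqref{E:vN}. However, there is a genuine gap at the archimedean place. Since $D$ is \emph{definite}, $D_\infty$ is ramified, not split, so your assertion that ``$I_\infty$ \ldots\ is literally the same integral as in \propref{P:Ichino1}'' is false (and it contradicts your own preceding sentence, where you correctly take $\phi^{\Dstar}_\infty=\bfP_{\ulk}$ in the finite-dimensional representation $\rho^{\rm u}_{\ulk}$). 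In the balanced range one has $m=\frac{k_1-k_2-k_3}{2}<0$, the test vectors of \lmref{L:local.I}(2) do not exist, and by the archimedean epsilon dichotomy ($\varepsilon(\WD_\infty(\bfV_\ulQ^\dagger))=-1$ on $\frakX^\bal_\cR$) the split trilinear functional vanishes identically; the local factor must instead be computed on the compact group $D_\infty^\x/\R^\x$ by integrating the matrix coefficient of $\rho^{\rm u}_{\ulk}$ at $\bfP_{\ulk}$. This requires the explicit evaluation of $\pair{\bfP_{\ulk}}{\bfP_{\ulk}}_{\ulk}$ (the paper's \lmref{L:infty.bal}) and gives $I_\infty(\phi^D_\infty\ot\wtd\phi^D_\infty)=(4\pi^2)^{-1}(1+\wt_1)(1+\wt_2)(1+\wt_3)$; without this value the factors $(1+\wt_i)^{-1}$ coming from \eqref{E:Schur} do not cancel and the clean constant $2^{\#\Sigma^-+1}$, free of any residual Gamma factors, cannot be obtained. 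This computation is an essential ingredient of the proof, not something that can be borrowed from the unbalanced case.

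Two secondary points. First, the factor $2^{\#\Sigma^-}$ is not volume bookkeeping: at $\pmq\in\Sigma^-$ the component $\itPi^D_\pmq$ is the character $\mu_{E_\pmq}\circ\nu$ and the local integral over the compact group $D_\pmq^\x/\Q_\pmq^\x$ equals $\frac{L(1,\itPi_\pmq,\Ad)}{\zeta_\pmq(2)^2L(1/2,\itPi_\pmq)}\bigl(1+\mu_{1,\pmq}\mu_{2,\pmq}\mu_{3,\pmq}(\pmq)\bigr)=2\zeta_\pmq(1)^{-2}$, the ``$2$'' being forced by Prasad's criterion ($\mu_{1}\mu_{2}\mu_{3}(\pmq)=1$ precisely because $\varepsilon(1/2,\itPi_\pmq)=-1$); the displayed $\zeta_\pmq(1)^3/\zeta_\pmq(2)^3$ arises only after combining this with the $N^-$-part of the mass-formula volumes, so your attribution would not produce the right power of $2$. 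Second, your check that $u(p^{-n})$ acts trivially on the ordinary line is wrong: ordinary vectors are only $N(\Zp)$-invariant, and in the Whittaker model $\rho(u_n)$ multiplies $W^\ord(\pDII{y}{1})$ by $\addchar_{\Qp}(yp^{-n})$. For the present proposition this is harmless, since $I_p^\ord(\phi_p\ot\wtd\phi_p,\breve\bft_n)$ is carried along unevaluated, but you cannot replace $\breve\bft_n$ by the unbalanced translate $\bft_n$. Finally, your anticipated obstacle (b) is a non-issue: $\pair{F^D}{F^D}$ is \emph{defined} in the statement as the product of the three level-$N_ip^n$ pairings, so no descent to level $N$ via $\bfB_N$ or \lmref{L:pairing.bal} is needed; \eqref{E:Schur} alone converts the Atkin--Lehner and $t_n$-translated adelic pairing into $\om_F^{-1/2}(\wh N_1^+)\om_{F,p}^{-1}(p^n)\al_p(F)^{2n}\pair{F^D}{F^D}$ times explicit volume factors.
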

where
\begin{align*}\pair{F^D}{F^D}
=&\pair{\bfU_p^{-n}f^D}{f^D}_{\condf p^n}\pair{\bfU_p^{-n}g^D}{g^D}_{\condg p^n}\pair{\bfU_p^{-n}h^D}{h^D}_{\condh p^n}.
\end{align*}
\begin{proof} We begin with the explanation of the representation theoretic factorization for the automorphic form $\phi^\Dstar_F$. Let $(\pi^D_f,\pi^D_g,\pi^D_h)$ be the image of $(\pi_f,\pi_g,\pi_h)$ under the Jacquet-Langlands correspondence and let
\[\pi^D_1=\pi^D_f\ot\om_F^{-1/2},\quad \pi^D_2=\pi^D_g\text{ and }\pi^D_3=\pi^D_h.\]
Let $\itPi^D=\pi^D_1\boxtimes \pi^D_2\boxtimes\pi^D_3$ be the Jacquet-Langlands transfer of $\itPi$ and let $\cA(\itPi^D)$ be the unique automorphic realization of $\itPi^D$. With the isomorphism $\Psi:\wh D^{(N^-)\x}\iso\GL_2(\wh\Q^{(N^-)})$, we have a factorization
\beq\label{E:facD1}\cA(\itPi^D)\iso \bigot_{v\in\Sigma_D}\cV_{\itPi^D_v}\bigot_{v\not\in\Sigma_D}\cV_{\itPi_v}.\eeq
Here $(\itPi^D_\infty,\cV_{\itPi^D_\infty})=(\rho^{\rm u}_{\ulk},L_{\ulk}(\C))$ and for finite prime $\ell\divides N^-$, $(\it\Pi^D_\ell,\cV_{\itPi^D_\ell})=(\mu_{E_\ell}\circ\nu,\C\, e_{\mu_{E_\ell}})$ is the one dimensional representation given by a unramified character $\mu_{E_\ell}=(\mu_{1,\ell},\mu_{2,\ell},\mu_{3,\ell}):E_\ell^\x\to\C^\x$ with a basis $e_{\mu_{E_\ell}}$. Consider $\vec\phi^D_F=\varphi^D_1\boxtimes\varphi^D_2\boxtimes\varphi^D_3\in\cA(\itPi^D)\ot L_{\ulk}(\C)$. Let $X^{\ulk}:=X_1^{\wt_1}X_2^{\wt_2}X_3^{\wt_3}\in L_{\ulk}(\C)$ and define $\phi^D_{X^{\ulk}}\in\cA(\itPi^D)$ by \[\phi^D_{X^{\ulk}}(x):=\pair{\vec\phi^D(x)}{X^{\ulk}}_{\ulk}\quad (x\in D_E^\x(\A)).\] 
 Under the isomorphism \eqref{E:facD1}, we have the factorization $\phi^D_{X^{\ulk}}=\ot_v\phi^D_v$, where
\begin{align*}\phi^D_{\infty}&=X_1^{\wt_1}X_2^{\wt_2}X_3^{\wt_3},\quad \phi^D_\ell=e_{\mu_{E_\ell}}\text{ for }\ell\divides N^-,\\
\phi^D_\ell&=\varphi_{1,\ell}\ot\varphi_{2,\ell}\ot\varphi_{3,\ell}\text{ for }\ell\not\in\Sigma_D\end{align*}as in \subsecref{SSS:localfac}. Recall that $\varphi_{i,\ell}\in\cV^{\rm new}_{\pi_{i,v}}$ for $\ell\not=p$ is a new vector and $\varphi_{i,p}\in \cV^\ord_{\pi_{i,p}}(\chi_{i,p})$ is an ordinary vector. In view of the definition of $\phi^\Dstar_F$ in \eqref{E:defphiD}, we obtain the factorization
$\phi^\Dstar_F=\ot_v\phi^\Dstar_v$, where
\beq
\phi^\Dstar_v=\begin{cases}
\bfP_{\ulk}\in L_{\ulk}(\C)&\text{ if }v=\infty,\\[1em]
e_{\mu_{E_\ell}}&\text{ if }v=\ell\in\Sigma^-,\\[1em]
\varphi_{1,p}\ot\varphi_{2,p}\ot\varphi_{3,p}(=\phi_p)&\text{ if }v=p,\\[1em]
\cQ_{1,\ell}(\LR_\ell)\varphi_{1,\ell}\ot\cQ_{2,\ell}(\LR_\ell)\varphi_{2,\ell}\ot\cQ_{3,\ell}(\LR_\ell)\varphi_{3,\ell})(=\phi^\star_\ell)&\text{ if }v=\ell\not\in\stt{p}\cup\Sigma^-.
\end{cases}
\eeq
Now consider the contragredient representation $\wtd\itPi^D$. Let $\wtd\varphi^D_i=\varphi^D_i\ot\om_i^{-1}$  and $\wtd\varphi^\Dstar_i=\varphi^\Dstar_i\ot\om_i^{-1}$ for $i=1,2,3$. Let $Y^{\ulk}=Y^{\wt_1}Y^{\wt_2}Y^{\wt_3}\in L_{\ulk}(\C)$. Define $\wtd\phi^D_{Y^{\ulk}}$ and $\wtd\phi^\Dstar_F\in \cA(\wtd\itPi^D)$ by \[\wtd\phi^D_{Y^{\ulk}}(x):=\pair{\wtd\varphi^D_1\boxtimes\wtd\varphi^D_2\boxtimes\wtd\varphi^D_3(x)}{Y^{\ulk}}_{\ulk};\quad \wtd\phi^\Dstar_F(x)=\pair{\varphi^\Dstar_1\boxtimes\wtd\varphi^\Dstar_2\boxtimes\wtd\varphi^\Dstar_3(x)}{\bfP_{\ulk}}_{\ulk}\]
for $x\in D_E^\x(\A)$. Fixing an isomorphism
\[\cA(\wtd\itPi^D)\iso\bigot_{v\in\Sigma_D}\cV_{\wtd\itPi^D_v}\bigot_{v\not\in\Sigma_D}\cV_{\wtd\itPi_v},\]
 we then have a similar description for the factorizations $\wtd\phi^D_{Y^{\ulk}}=\ot_v\wtd\phi^D_v$ and $\wtd\phi^\Dstar_F=\ot_v\wtd\phi^\Dstar_v$ likewise.

For $v\not\in\stt{\infty}\cup\Sigma^-$, let $\bfb_v:\cV_{\itPi_v}\times \cV_{\wtd\itPi_v}\to\C$ be a non-degenerate $\GL_2(E_v)$-equivariant pairing such that $\bfb_v(\wtd\phi^D_v,\phi^D_v)=1$ for all but finitely many $v$. For $v\in\stt{\infty}\cup\Sigma^-$, let $\bfb^D_v:\cV_{\itPi^D_v}\times \cV_{\wtd\itPi^D_v}\to\C$ be a $D_E^\x(\Q_v)$-equivariant pairing and define 
\[I_v(\phi^D_v\ot\wtd\phi^D_v)=\frac{L(1,\itPi_v,\Ad)}{\zeta_v(2)^2L(1/2,\itPi_v)}\int_{D_v^\x/\Q_v^\x}\frac{\bfb^D_v(\itPi^D_v(x_v)\phi^\Dstar_v\ot \wtd\phi^\Dstar_v)}{\bfb^D_v(\phi^D_v,\wtd\phi^D_v)}\rmd x_v.\]
Here $\rmd x_v$ is the Haar measure with $\vol(\cO_{D_v}^\x/\Z_v^\x,\rmd x_v)=1$. In the notation of \cite[page 282]{Ichino08Duke}, we have 
\begin{align*}I(\rho(\breve\bft_n)\phi^\Dstar_F)^2=&I(\rho(\breve\bft_n)\phi^\Dstar_F)\cdot I(\rho(\breve\bft_n)\wtd\phi^\Dstar_F).\end{align*}
Therefore, according to \cite[Theorem 1.1,\,Remark 1.3]{Ichino08Duke}, we obtain 
\[\begin{aligned}\frac{I(\rho(\breve\bft_n)\phi^\Dstar_F)^2}{\pair{\rho(\Tau_{\ul{N}^+}^D\bftn)\phi^D_{X^{\wt}}}{\wtd\phi^D_{Y^{\wt}}}}
=&\frac{\vol(\wh\cO^\x_D)}{8}\cdot\frac{\zeta_\Q(2)^2L(1/2,\itPi)}{L(1,\itPi,\Ad)}\\
&\times I_p^\ord(\phi_p\ot\wtd\phi_p,\breve\bft_n)\prod_{v\in\stt{\infty}\cup\Sigma^-}I_v(\phi^D_v\ot\wtd\phi^D_v) \prod_{\pmq\not\in\stt{p}\cup\Sigma^-} I_\pmq(\phi^\star_\pmq\ot\wtd\phi^\star_\pmq),
\end{aligned}\]
From \eqref{E:Schur} and \eqref{E:vN}, we find that $\pair{\rho(\Tau_{\ul{N}^+}^D\bftn)\phi^D_{X^{\wt}}}{\wtd\phi^D_{Y^{\wt}}}$ equals
\beq\label{E:1.bal}\begin{aligned}&\om_F^{-1/2}(\wh N^+_1)\om^{-1}_{F,p}(p^n)\cdot\pair{F^D}{F^D}\cdot\al_p(F)^{2n}\prod_{i=1}^3\frac{\vol(\wh R^\x_{N_ip^{2n}})}{(N^+_ip^{2n})^{\wt_i/2}(\wt_i+1)}\\
=&\om_F^{-1/2}(\wh N^+_1)\pair{F^D}{F^D}\cdot\frac{48^3\cdot\om^{-1}_{F,p}(p^n)\al_p(F)^{2n}}{\prod_{i=1}^3(N^+_ip^{2n})^{\wt_i/2}[\SL_2(\Z):\Gamma_0(N_i)]}\prod_{i=1}^3\frac{1}{(\wt_i+1)}\prod_{\pmq\in\Sigma^-}\frac{\zeta_\pmq(1)^6}{\zeta_\pmq(2)^3}
\end{aligned}\eeq

We now proceed to compute the local zeta integrals $I_v(\phi^D_v\ot\wtd\phi^D_v)$ for $v\in\stt{\infty}\cup \Sigma^-$. Recall that the archimedean $L$-factors are given by
\begin{align*}L(s,\itPi_\infty,\Ad)&=\pi^{-3}\Gamma_\C(s+\wt_1+1)\Gamma_\C(s+\wt_2+1)\Gamma_\C(s+\wt_3+1);\\\quad L(s,\itPi_\infty)&=\Gamma_\C(s+\frac{\wt_1+\wt_2+\wt_3+3}{2})
\Gamma_\C(s+\wt_1^*+\onehalf)\Gamma_\C(s+\wt_2^*+\onehalf)\Gamma_\C(s+\wt_3^*+\onehalf),\end{align*}
so we have
\begin{align*}
I_\infty(\phi^D_\infty\ot\wtd\phi^D_\infty)&=\frac{L(1,\itPi_\infty,\Ad)}{\zeta_\infty(2)^2L(1/2,\itPi_\infty)}\int_{\Qtn(\R)/\R^\x}\frac{\pair{\rho^u_{\ulk}(x_\infty)\bfP_{\ulk}}{\bfP_{\ulk}}_{\ulk}}{\prod_{i=1}^3\pair{X^{\wt_i}}{Y^{\wt_i}}_{\wt_i}}\rmd x_\infty\\
&=\frac{\Gamma(\wt_1+2)\Gamma(\wt_2+2)\Gamma(\wt_3+2)}{4\pi^2\Gamma(\frac{\wt_1+\wt_2+\wt_3}{2}+2)\Gamma(\wt^*_1+1)\Gamma(\wt^*_2+1)\Gamma(\wt^*_3+1)}\cdot\pair{\bfP_{\ulk}}{\bfP_{\ulk}}_{\ulk}\\
&=(4\pi^2)^{-1}(1+\wt_1)(1+\wt_2)(1+\wt_3).
\end{align*}
The last equality follows from \lmref{L:infty.bal} below. Now let $\pmq$ be a prime in $\Sigma^-$. According to \cite{Prasad90}, $\pi_{i,\pmq}=\mu_{i}{\rm St}$ for $i=1,2,3$ are unramified special representations with $\mu_{1}\mu_{2}\mu_{3}(\pmq)=1$. Since \[L(s,\itPi_\pmq,\Ad)=\zeta_\pmq(s+1)^3;\quad L(s,\itPi_\pmq)=\zeta_\pmq(s+1/2)^2\zeta_\pmq(s+3/2),\] we obtain \begin{align*}I_\pmq(\phi^D_\pmq\ot\wtd\phi^D_\pmq)=&\frac{L(1,\itPi_\pmq,\Ad)}{\zeta_\pmq(2)^2L(1/2,\itPi_\pmq)}\cdot (1+\mu_{1,\pmq}\mu_{2,\pmq}\mu_{3,\pmq}(\pmq))
=2\zeta_\pmq(1)^{-2}.
\end{align*}
Substituting \eqref{E:1.bal} and the above computation of $I_\pmq(\phi_\pmq^D\ot\wtd\phi_\pmq^D)$ into Ichino's formula, we obtain
\begin{align*}
&\frac{I(\rho(\breve\bft_n)\phi^\Dstar_F)^2}{\pair{F^D}{F^D}\om_F^{-1/2}(\wh N^+_1)}\\
=&\vol(\wh \cO_D^\x)^2\cdot \frac{N^-}{48}\cdot\frac{\zeta_\Q(2)^2\cdot 48^3}{8\cdot 4\pi^2}\cdot\frac{L(1/2,\itPi)}{L(1,\itPi,\Ad)}\cdot\frac{\om_{F,p}^{-1}(p^n)\al_p(F)^{2n}}{\prod_{i=1}^3[\SL_2(\Z):\Gamma_0(N_ip^n)](N_i^+p^n)^{\wt_i/2}}\prod_{\pmq\in\Sigma^-}\frac{2\zeta_\pmq(1)^3}{\zeta_\pmq(2)^3} \\
&\times I_p^\ord(\phi_p\ot\wtd\phi_p,\breve\bft_n)\prod_{\pmq\not\in\stt{p,\infty}\cup\Sigma^-}I_\pmq(\phi^\star_\pmq\ot\wtd\phi^\star_\pmq),
\end{align*}
and the proposition follows.
\end{proof}
\begin{lm}\label{L:infty.bal}We have \[\pair{\bfP_{\ulk}}{\bfP_{\ulk}}_{\ulk}=\frac{\Gamma(\frac{\wt_1+\wt_2+\wt_3}{2}+2)\Gamma(\wt_1^*+1)\Gamma(\wt_2^*+1)\Gamma(\wt_3^*+1)}{\Gamma(\wt_1+1)\Gamma(\wt_2+1)\Gamma(\wt_3+1)}.\]
\end{lm}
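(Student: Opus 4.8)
The plan is to reduce the statement to a short generating-function computation. First I would replace the explicit pairing $\pairing_\wt$ on $L_\wt(\C)$ by its apolar description: writing $P(-\partial_Y,\partial_X)$ for the constant-coefficient differential operator obtained from $P\in L_\wt(\C)$ by the substitution $X\mapsto-\partial_Y,\ Y\mapsto\partial_X$, a direct check on the monomials $X^{i}Y^{\wt-i}$ shows that
\[\pair{P}{Q}_\wt=\tfrac{1}{\wt!}\,\bigl(P(-\partial_Y,\partial_X)Q\bigr),\]
the right-hand side being a scalar since it is homogeneous of degree $0$. (Alternatively, both sides are $\SL_2$-invariant bilinear forms on the irreducible module $L_\wt(\C)$, hence proportional, and one matches the value on $X^{\wt}\otimes Y^{\wt}$.) Taking the threefold tensor product and noting that the substitution $(X_i,Y_i)\mapsto(-\partial_{Y_i},\partial_{X_i})$ carries each determinant $X_iY_j-X_jY_i$ to the operator $\mathcal{D}_{ij}:=\partial_{X_i}\partial_{Y_j}-\partial_{X_j}\partial_{Y_i}$, I obtain
\[\pair{\bfP_{\ulk}}{\bfP_{\ulk}}_{\ulk}=\frac{1}{\wt_1!\,\wt_2!\,\wt_3!}\;\mathcal{D}_{12}^{d_{12}}\mathcal{D}_{13}^{d_{13}}\mathcal{D}_{23}^{d_{23}}\Bigl((X_1Y_2-X_2Y_1)^{d_{12}}(X_1Y_3-X_3Y_1)^{d_{13}}(X_2Y_3-X_3Y_2)^{d_{23}}\Bigr),\]
where $d_{12},d_{13},d_{23}\geq0$ are the exponents occurring in $\bfP_{\ulk}$, determined by $d_{12}+d_{13}=\wt_1$, $d_{12}+d_{23}=\wt_2$, $d_{13}+d_{23}=\wt_3$; so $\{d_{12},d_{13},d_{23}\}=\{\wt_1^*,\wt_2^*,\wt_3^*\}$ and $d_{12}+d_{13}+d_{23}=\tfrac12(\wt_1+\wt_2+\wt_3)$.

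Next I would evaluate the scalar $E$ on the right. Since the operators $\mathcal{D}_{ij}$ mutually commute and the linear factors mutually commute, $E$ equals
\[\Bigl(\textstyle\prod_{i<j}\partial_{s_{ij}}^{d_{ij}}\partial_{t_{ij}}^{d_{ij}}\Bigr)\Big|_{s=t=0}\;\Bigl[\,e^{\sum_{i<j}s_{ij}\mathcal{D}_{ij}}\,e^{\sum_{i<j}t_{ij}(X_iY_j-X_jY_i)}\,\Bigr]\Big|_{z=0},\qquad z=(X_1,Y_1,X_2,Y_2,X_3,Y_3).\]
Writing $X_iY_j-X_jY_i=z^{T}A_{ij}z$, where $A_{ij}$ is the symmetric $6\times6$ matrix whose only nonzero $2\times2$ blocks are the $(i,j)$-block $\tfrac12J$ and the $(j,i)$-block $-\tfrac12J$ with $J=\left(\begin{smallmatrix}0&1\\-1&0\end{smallmatrix}\right)$, we have $\mathcal{D}_{ij}=\partial^{T}A_{ij}\partial$, the inner exponential is $e^{z^{T}Qz}$ with $Q=\sum_{i<j}t_{ij}A_{ij}$, and the operator is $e^{\partial^{T}P\partial}$ with $P=\sum_{i<j}s_{ij}A_{ij}$. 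The standard Gaussian (Wick) identity $e^{\partial^{T}P\partial}e^{z^{T}Qz}\big|_{z=0}=\det(I_6-4PQ)^{-1/2}$ thus reduces $E$ to reading off a coefficient of $\det(I_6-4PQ)^{-1/2}$.

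The crux is that this determinant collapses. Since $P=\tfrac12\,\widetilde P\otimes J$ and $Q=\tfrac12\,\widetilde Q\otimes J$ with $\widetilde P,\widetilde Q$ the $3\times3$ antisymmetric matrices having above-diagonal entries $s_{ij}$, respectively $t_{ij}$, one gets $PQ=-\tfrac14(\widetilde P\widetilde Q)\otimes I_2$, hence $\det(I_6-4PQ)=\det\!\bigl((I_3+\widetilde P\widetilde Q)\otimes I_2\bigr)=\det(I_3+\widetilde P\widetilde Q)^{2}$. A direct expansion of this $3\times3$ determinant gives the clean identity
\[\det(I_3+\widetilde P\widetilde Q)=\Bigl(1-\textstyle\sum_{i<j}s_{ij}t_{ij}\Bigr)^{2},\qquad\text{hence}\qquad e^{\partial^{T}P\partial}e^{z^{T}Qz}\big|_{z=0}=\Bigl(1-\textstyle\sum_{i<j}s_{ij}t_{ij}\Bigr)^{-2}.\]

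Finally, from $(1-x)^{-2}=\sum_{n\geq0}(n+1)x^{n}$ and the multinomial expansion of $x=\sum_{i<j}s_{ij}t_{ij}$, the coefficient of $\prod_{i<j}(s_{ij}t_{ij})^{d_{ij}}$ in $\bigl(1-\sum_{i<j}s_{ij}t_{ij}\bigr)^{-2}$ is $(D+1)!/(d_{12}!\,d_{13}!\,d_{23}!)$ with $D=d_{12}+d_{13}+d_{23}$, so applying $\prod_{i<j}\partial_{s_{ij}}^{d_{ij}}\partial_{t_{ij}}^{d_{ij}}$ at the origin yields $E=(D+1)!\,d_{12}!\,d_{13}!\,d_{23}!$. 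Dividing by $\wt_1!\,\wt_2!\,\wt_3!=(d_{12}+d_{13})!\,(d_{12}+d_{23})!\,(d_{13}+d_{23})!$ and inserting $D=\tfrac12(\wt_1+\wt_2+\wt_3)$ and $\{d_{12},d_{13},d_{23}\}=\{\wt_1^*,\wt_2^*,\wt_3^*\}$ reproduces exactly
\[\pair{\bfP_{\ulk}}{\bfP_{\ulk}}_{\ulk}=\frac{\Gamma\!\bigl(\tfrac{\wt_1+\wt_2+\wt_3}{2}+2\bigr)\,\Gamma(\wt_1^*+1)\,\Gamma(\wt_2^*+1)\,\Gamma(\wt_3^*+1)}{\Gamma(\wt_1+1)\,\Gamma(\wt_2+1)\,\Gamma(\wt_3+1)}.\]
I expect the only genuine work to be (i) the sign and normalization bookkeeping in the apolar identity, together with checking how the $\wt_i^*$ are matched with the $d_{ij}$, and (ii) the $3\times3$ determinant evaluation --- the step I would write out in full, since that is precisely where the triangle symmetry of the triple product forces the collapse to a perfect power, after which everything else is mechanical.
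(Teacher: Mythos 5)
Your proof is correct, but it takes a genuinely different route from the paper's. The paper computes $\pair{\bfP_{\ulk}}{\bfP_{\ulk}}_{\ulk}$ by Schur orthogonality: since $L_{\ulk}(\C)^{\SU(2)(\R)}=\C\cdot\bfP_{\ulk}$, it uses the identity $\int_{\SU(2)(\R)}\pair{\rho_{\ulk}(u)v_1}{v_2}\,\rmd u\cdot\pair{\bfP_{\ulk}}{\bfP_{\ulk}}=\pair{v_1}{\bfP_{\ulk}}\cdot\pair{\bfP_{\ulk}}{v_2}$ for explicit monomial test vectors $v_1,v_2$, evaluates the $\SU(2)$-integral in coordinates, and then carries out two binomial summations to reach the Gamma-quotient. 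You instead realize the pairing apolarly as a constant-coefficient differential operator (your normalization check against the paper's monomial formula $(-1)^i\binom{\wt}{i}^{-1}$ is exactly right: one gets $(-1)^i\,i!\,(\wt-i)!/\wt!$), so the norm of $\bfP_{\ulk}$ becomes a transvectant-style scalar, and then the Wick identity $e^{\partial^{T}P\partial}e^{z^{T}Qz}|_{z=0}=\det(I_6-4PQ)^{-1/2}$ together with the collapse $\det(I_3+\widetilde P\widetilde Q)=(1-\sum_{i<j}s_{ij}t_{ij})^2$ — which is just the cross-product identity $\widetilde P\widetilde Q=qp^{T}-(p\cdot q)I_3$ for $3\times 3$ antisymmetric matrices — reduces everything to reading off the coefficient of $\prod_{i<j}(s_{ij}t_{ij})^{d_{ij}}$ in $(1-x)^{-2}$. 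I verified the bookkeeping you flagged: the degree constraints force $d_{12}=\wt_3^*$, $d_{13}=\wt_2^*$, $d_{23}=\wt_1^*$, and since only the unordered multiset enters the final product this is harmless, as are the signs in the factors $(X_3Y_1-X_1Y_3)$ and $(X_3Y_2-X_2Y_3)$, which occur squared; your final count $(D+1)!\,d_{12}!\,d_{13}!\,d_{23}!/(\wt_1!\,\wt_2!\,\wt_3!)$ agrees with the lemma (and with small test cases such as $\wt_1=\wt_2=1$, $\wt_3=0$, where both give $2$). What your route buys is a purely algebraic, integration-free argument with no binomial gymnastics, exhibiting the answer as the norm of the Clebsch--Gordan invariant via a clean generating function; what the paper's route buys is that it stays inside the matrix-coefficient/$\SU(2)$ framework already set up for the archimedean zeta integral, at the cost of choosing good test vectors and evaluating binomial identities.
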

\begin{proof}
Let $v_1=X_1^{\wt_1}\ot Y_2^{\wt_2}\ot X_3^{\wt_1^*}Y_3^{\wt_2^*}$ and $v_2=Y_1^{\wt_1}\ot X_2^{\wt_2}\ot X_3^{\wt_2^*}Y_3^{\wt_1^*}$. Let $\rmd u$ be the Haar measure on $\SU(2)(\R)$ with the volume $\vol(\SU(2)(\R),\rmd u)=1$. More precisely, $\rmd u$ is given by 
\begin{align*}\int_{\SU(2)(\R)} \Phi(u)\rmd u=&\frac{1}{4\pi^2}\int_0^{2\pi}\int_0^{2\pi}\int_0^{\pi/2}\Phi(u)\sin 2\theta \,\rmd \theta\,\rmd \varphi\,\rmd\varrho,\\
(u=\pMX{\al}{\beta}{-\ol{\beta}}{\ol{\al}},\,&\al=\cos\theta e^{i\varphi},\,\beta=\sin\theta e^{i\varrho})
 \end{align*}
for $\Phi\in L^1(\SU(2)(\R))$. Write $\pairing=\pairing_{\ulk}$ for simplicity. Since $L_{\ulk}(\C)^{\SU(2)(\R)}=\C\cdot\bfP_{\ulk}$, we see that
\beq\label{E:7.bal}\int_{\SU(2)(\R)}\pair{\rho_{\ulk}(u)v_1}{v_2}\rmd u\cdot\pair{\bfP_{\ulk}}{\bfP_{\ulk}}=\pair{v_1}{\bfP_{\ulk}}\cdot\pair{\bfP_{\ulk}}{v_2}.\eeq
By definition, 
\begin{align*}\bfP_{\ulk}=&\sum_{n_1=0}^{\wt_1^*}\sum_{n_2=0}^{\wt_2^*}\sum_{n_3=0}^{\wt_3^*}{\wt_1^*\choose n_1}{\wt_2^*\choose n_2}{\wt_3^*\choose n_3}(-1)^{\wt_1^*+\wt_2^*+\wt_3^*-n_1-n_2-n_3}\\
&\quad\quad\times X_1^{\wt_2^*-n_2+n_3}Y_1^{\wt_3^*+n_2-n_3}\ot X_2^{\wt_3^*+n_1-n_3}Y_2^{\wt_1^*-n_1+n_3}\ot X_3^{\wt_1^*-n_1+n_2}Y_3^{\wt_2^*+n_1-n_2}.\end{align*}
Then 
\[\pair{v_1}{\bfP_{\ulk}}=(-1)^{\wt_1+\wt_2}{\wt_3\choose \wt_2^*}^{-1},\,\pair{\bfP_{\ulk}}{v_2}=(-1)^{\wt_1+\wt_1^*+\wt_3}{\wt_3\choose \wt_1^*}^{-1}.\]
Let $r=\frac{\wt_1+\wt_2+\wt_3}{2}=\wt^*_1+\wt^*_2+\wt^*_3$. A  direct computation shows that
\begin{align*}
\int_{\SU(2)(\R)}\pair{\rho_{\ulk}(u)v_1}{v_2}\rmd u
&=(-1)^{r}\binom{\wt_3}{\wt_1^*}^{-1}\sum_{j=0}^{\wt_1^*}{\wt^*_1\choose j}{\wt_2^*\choose j}(-1)^j\int_{\SU(2)(\R)}\abs{\al\ol{\al}}^{r-j}\abs{\beta\ol{\beta}}^j\rmd u
\\
&=2(-1)^r(2r+2)^{-1}\binom{\wt_3}{\wt_1^*}^{-1}\sum_{j=0}^{\wt_1^*}(-1)^j
\frac{\binom{\wt_1^*}{j}\binom{\wt_2^*}{j}}{\binom{r}{j}}\\
&=(-1)^{r}(r+1)^{-1}\binom{\wt_3}{\wt_1^*}^{-1}\frac{\Gamma(\wt_1+1)\Gamma(k^*_1+1)}{\Gamma(r+1)}\sum_{j=0}^{k^*_1}(-1)^j\binom{k^*_2}{j}\binom{r-j}{\wt_1}\\
&=(-1)^{r}(r+1)^{-1}\binom{\wt_3}{\wt_1^*}^{-1}\frac{\Gamma(\wt_1+1)\Gamma(k^*_1+1)}{\Gamma(r+1)}\cdot\binom{r-k^*_2}{\wt_1-\wt_2^*}.
\end{align*}
Substituting the above to \eqref{E:7.bal}, we obtain
\begin{align*}
\pair{\bfP_{\ulk}}{\bfP_{\ulk}}_{\ulk}=&\frac{\Gamma(r+2)}{\Gamma(\wt_1+1)\Gamma(k^*_1+1)}\cdot\frac{\Gamma(k^*_1+1)\Gamma(k^*_2+1)}{\Gamma(\wt_3+1)}\cdot \frac{\Gamma(k^*_3+1)\Gamma(k^*_1+1)}{\Gamma(\wt_2+1)},
\end{align*}
and the lemma follows.
\end{proof}

\begin{defn}[The Gross periods of Hida families]\label{D:period2}Suppose that $\cF$ is a primitive $\bfI$-adic Hida family which satisfies (CR, $\Sigma^-$).  Let $\cF^D$ be a primitive Jacquet-Langlands lift of $\cF$ with the tame conductor $N_\cF=N^-N_\cF^+$. Put
\[\eta_{\cF^D}:=\bfB_{N_\cF}(\cF^D,\cF^D)\in\bfI,\]
where $\bfB_{N_\cF}$ is the Hecke-equivariant perfect pairing defined in \defref{D:pairing.bal}. For each arithmetic point $Q\in\frakX_\bfI^+$, writing $\eta_{\cF_Q^D}$ for the specialization of $\eta_\cF$ at $Q$, define the Gross' period $\Omega_{\cF_Q^D}$ of $\cF_Q$ by\[\Omega_{\cF_Q^D}=(-2\sqrt{-1})^{k_Q+1}\norm{\cF_Q^\circ}^2_{\Gamma_0(N_{\cF_Q^\circ})}\cdot\frac{\cE_p(\cF_Q,\Ad)}{\eta_{\cF^D_Q}}\cdot \varepsilon^{\Sigma^-}(\cF_Q), \]where $\cE_p(\cF_Q,\Ad)$ is the modified $p$-Euler factor in \eqref{E:EulerAd} and  \[\varepsilon^{\Sigma^-}(\cF_Q):=\prod_{\ell\divides N_{\cF}^+}\varepsilon(1/2,\pi_{\cF_Q,\ell})\abs{N_{\cF}^+}_\ell^\frac{2-k_Q}{2}\in\Zbar_{(p)}^\x.\]
is the prime-to-$\Sigma^-$ part of the root number of $\cF_Q$.\footnote{Here $\Omega_{\cF_Q^D}$ is called the Gross period for $\cF_Q$ as it first appeared in the Gross' special value formula for modular forms over imaginary quadratic fields.} 
We will see from \remref{R:periodc} that the canonical period is an integral multiple of the Gross period in the sense that there exists a non-zero $u\in\bfI$ such that $\Omega_{\cF^D_Q}=u(Q)\cdot \Omega_{\cF_Q}$ for each arithmetic point $Q$.
\end{defn}

\begin{cor}\label{C:Ichino.bal}For each $\ulQ=(\Qx,\Qy,\Qz)\in\frakX^\bal_\cR$ in the balanced range, we have the interpolation formula
\[\left(\Theta_{\bdsF^\Dstar}(\ulQ)\right)^2=2^{\#(\Sigma^-)+4}N\cdot \frac{L(1/2,\itPi_\ulQ)}{(\sqrt{-1})^{k_\Qx+k_\Qy+k_\Qz-1}\Omega_{\bdsf_\Qx^D}\Omega_{\bdsg_\Qy^D}\Omega_{\bdsh_\Qz^D}}\cdot \sI^\bal_{\itPi_{\ulQ,p}}\cdot\prod_{\pmq\divides N^+}\sI^\star_{\itPi_{\ulQ,\pmq}} ,\]
where $\sI^\bal_{\itPi_{\ulQ,p}}$ is the normalized $p$-adic zeta integrals given by \beq\label{E:Nbal}
\sI^\bal_{\itPi_{\ulQ,p}}=I_p^\ord(\phi_p,\breve\bft_n)\cdot B^{[n]}_{\itPi^\ord_p}\cdot \frac{\om^{1/2}_{F,p}(-p^{2n})\abs{p}_p^{-n(k_1+k_2+k_3)}}{\al_p(F)^{2n}\zeta_p(2)^2}
\eeq
with $B_{\itPi_p^\ord}^{[n]}$ defined in \eqref{E:lNorm2}, and $\sI^\star_{\itPi_{\ulQ,\pmq}}$ are the local zeta integrals at $\pmq$ defined in \eqref{E:Nq}. 
\end{cor}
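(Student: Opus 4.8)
The plan is to assemble \corref{C:Ichino.bal} from the three ingredients already prepared: the formula for $\Theta_{\bdsF^\Dstar}(\ulQ)$ in terms of the global trilinear period in \propref{P:formulaTheta}, Ichino's formula in \propref{P:Ichino2}, and Waldspurger's Petersson inner product formula \propref{P:Petersson} applied on the quaternionic side. First I would square the identity in \propref{P:formulaTheta}, so that $(\Theta_{\bdsF^\Dstar}(\ulQ))^2$ becomes $\vol(\wh R_N^\x)^{-2}I(\rho(\breve\bft_n)\phi_F^\Dstar)^2$ times the square of the explicit $p$-adic and $\Bd_F$-factors. Then I would substitute the expression for $I(\rho(\breve\bft_n)\phi_F^\Dstar)^2$ coming from \propref{P:Ichino2}, which introduces $\pair{F^D}{F^D}$, the central $L$-value $L(1/2,\itPi_\ulQ)$, the adjoint $L$-value $L(1,\itPi,\Ad)$, the normalized archimedean and $\Sigma^-$ contributions that were already computed inside that proof, the ordinary $p$-adic integral $I_p^\ord(\phi_p\ot\wtd\phi_p,\breve\bft_n)$, and the product of $I_\pmq(\phi^\star_\pmq\ot\wtd\phi^\star_\pmq)$ over $\pmq\divides N^+$.

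The second step is to convert $\pair{F^D}{F^D}=\prod_i\pair{\bfU_p^{-n}\bdsf^D_{Q_i}\ldots}{\ldots}$ into classical Petersson norms. For each $i$ I would use \lmref{L:pairing.bal} together with \eqref{E:Schur}, \propref{P:Petersson} and the relation \eqref{E:normformula} between $\norm{\bdsf_{Q_i}^\circ}^2_{\Gamma_0(N_{\bdsf_{Q_i}^\circ})}$, $L(1,\pi_{\bdsf_{Q_i}},\Ad)$ and the local norms $B_{\pi_{i,v}}$ — exactly as done in the proof of \corref{C:Ichino.imb}, but now on $D^\x$. The ratio $L(1,\itPi,\Ad)=\prod_i L(1,\pi_{\bdsf_{Q_i}},\Ad)$ cancels against the adjoint $L$-values appearing from the three Petersson formulas, and the various $\zeta_\pmq$-powers, $[\SL_2(\Z):\Gamma_0(N_i p^n)]$, $(N_i^+p^n)^{\wt_i/2}$, and $2^{\#\Sigma^-}$ factors are bookkept so that they reassemble into the definition of the Gross periods $\Omega_{\bdsf^D_{Q_i}}$ in \defref{D:period2} — here the factor $\varepsilon^{\Sigma^-}(\cF_Q)=\prod_{\ell\divides N_\cF^+}\varepsilon(1/2,\pi_{\cF_Q,\ell})\abs{N_\cF^+}_\ell^{(2-k_Q)/2}$ emerges precisely from $w(f)^{-1}$ in \eqref{E:normformula} split into its $\Sigma^-$ and prime-to-$\Sigma^-$ parts, the $\Sigma^-$-part being absorbed into the local integrals $I_\pmq(\phi^D_\pmq\ot\wtd\phi^D_\pmq)=2\zeta_\pmq(1)^{-2}$ already evaluated. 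I would then collect all the $p$-adic factors (the powers of $\om_{F,p}^{1/2}$, $\abs{p^n}$, $\al_p(F)^n$, $\zeta_p(2)$, together with $B_{\itPi_p^\ord}^{[n]}$ from \eqref{E:lNorm2}) into the normalized integral $\sI^\bal_{\itPi_{\ulQ,p}}$ of \eqref{E:Nbal}, and the prime-to-$p$ ramified factors, including the $\om_F^{-1/2}(\wh\Bd_f)$, $\Bd_F^{\ulk/2}$, and $\om_F^{-1/2}(\wh N_1^+)$ pieces, into $\prod_{\pmq\divides N^+}\sI^\star_{\itPi_{\ulQ,\pmq}}$ of \eqref{E:Nq}. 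The residual combinatorial constant should be checked to equal $2^{\#(\Sigma^-)+4}N$ using Eichler's mass formula \eqref{E:vN}, the identity $[\SL_2(\Z):\Gamma_0(N)]=\prod_{\pmq\divides N}\zeta_\pmq(1)/(\abs{N}_\pmq\zeta_\pmq(2))$, and $B_{\itPi_\infty}=2^{-(k_1+k_2+k_3)-3}$ together with the archimedean sign $(\sqrt{-1})^{1-k_1-k_2-k_3}$.

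The main obstacle I expect is the exact tracking of the powers of $2$, of the various $\zeta_\pmq(1),\zeta_\pmq(2)$ at $\pmq\divides N$, and of the level indices $[\SL_2(\Z):\Gamma_0(N_ip^n)]$ versus $[\SL_2(\Z):\Gamma_0(N)]$, since these flow through \propref{P:Ichino2}, \lmref{L:pairing.bal}, \eqref{E:Schur}, \eqref{E:vN}, and \eqref{E:normformula} in slightly different normalizations on the quaternionic and split sides; the discrepancies between $\vol(\wh R_N^\x)$, $\vol(\wh\cO_D^\x)$, and the Tamagawa measure must be reconciled carefully, and the Gross-period normalization in \defref{D:period2} has to be matched factor-by-factor rather than up to a unit. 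The rest — cancellation of adjoint $L$-values, reassembly of the normalized local integrals, and the sign $\psi_{1,(p)}(-1)(-1)^{\cdots}$ turning into $(\sqrt{-1})^{k_\Qx+k_\Qy+k_\Qz-1}$ — is routine once the normalizations are fixed, and is essentially parallel to the unbalanced computation \corref{C:Ichino.imb}.
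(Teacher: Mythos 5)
Your overall skeleton is the paper's: square \propref{P:formulaTheta}, substitute \propref{P:Ichino2}, trade the adjoint $L$-value for classical Petersson norms via \eqref{E:normformula}, use \lmref{L:pairing.bal}, and collect the $p$-adic and $\pmq\divides N^+$ factors into $\sI^\bal_{\itPi_{\ulQ,p}}$ and $\sI^\star_{\itPi_{\ulQ,\pmq}}$, with the constant fixed by the volume ratio $\vol(\wh\cO_D^\x)/\vol(\wh R_N^\x)$. However, your second step contains a genuine misstep: you propose to ``convert $\pair{F^D}{F^D}$ into classical Petersson norms'' using \eqref{E:Schur} and \propref{P:Petersson} ``on $D^\x$''. \propref{P:Petersson} is a Whittaker-model statement for cusp forms on $\GL_2(\A)$ and has no analogue for automorphic forms on a definite quaternion algebra (there is no Whittaker model), and \eqref{E:Schur} has already been consumed inside the proof of \propref{P:Ichino2} to produce the quantity $\pair{F^D}{F^D}$ in the first place, so invoking it again here is circular. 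More importantly, the quaternionic pairing must \emph{not} be converted: by \lmref{L:pairing.bal} each factor $\pair{\bfU_p^{-n}\bdsf^D_{Q_i}}{\bdsf^D_{Q_i}}_{N_ip^n}$ equals, up to the sign $(-1)^{k_i}$, the specialization $\eta_{\bdsf^D_{Q_i}}$ of the quaternionic congruence number, and this is precisely the denominator of the Gross period in \defref{D:period2}. If you did express it through $\norm{\bdsf^\circ_{Q_i}}^2$ you would need a Jacquet--Langlands comparison of periods (equivalently of the congruence ideals $(\eta_{\bdsf})$ and $(\eta_{\bdsf^D})$) which the paper does not have and does not need, cf.\ \remref{R:periodc}; the output would then be stated in canonical periods rather than the Gross periods appearing in the corollary.

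The correct mechanism, which is what the paper does, is asymmetric: the classical Petersson norms in $\Omega_{\bdsf^D_{Q_i}}$ enter \emph{only} through \eqref{E:normformula} applied to $L(1,\itPi_\ulQ,\Ad)=\prod_i L(1,\pi_{f_i},\Ad)$ in the denominator of \propref{P:Ichino2}, and the root numbers $w(f_i^\circ)$ from that formula, combined with the $\varepsilon(1/2,\pi_{f_i,p})$ hidden in $\cE_p(f_i,\Ad)$ and with $\varepsilon(1/2,\itPi_\pmq)=-1$ for $\pmq\in\Sigma^-$, produce exactly the prime-to-$\Sigma^-$ factor $\varepsilon^{\Sigma^-}(f_i)$ of \defref{D:period2} together with the sign $(-1)^{\#\Sigma^-}$; the combination $\norm{f_i^\circ}^2\cE_p(f_i,\Ad)\varepsilon^{\Sigma^-}(f_i)/\pair{\bfU_p^{-n}\bdsf^D_{Q_i}}{\bdsf^D_{Q_i}}$ is then by definition $\Omega_{\bdsf^D_{Q_i}}$ up to the explicit power of $(-2\sqrt{-1})$. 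A minor further point: $B_{\itPi_\infty}$ plays no role in the balanced case, since the archimedean computation (the $\pair{\bfP_{\ulk}}{\bfP_{\ulk}}_{\ulk}$ evaluation) was already carried out inside \propref{P:Ichino2}; the powers of $2$ at infinity come instead from the $2^{k_i}w(f_i^\circ)$ in \eqref{E:normformula} and the $(-2\sqrt{-1})^{k_i+1}$ in the period. With these corrections the remaining bookkeeping goes through as you describe.
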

\begin{proof}To simplify our notation, we let $f_1=f$, $f_2=g$ and $f_3=h$. For a finite prime $\pmq$, we put $B_{\itPi_{F,\pmq}}=\prod_{i=1}^3B_{\pi_{f_i,\pmq}}$. By definition, we have $B_{\itPi_{F,\pmq}}=\om_{F,\pmq}^{1/2}(N^+_f)B_{\itPi_\pmq}$ if $\pmq\not =p$ and $B_{\itPi_{F,\pmq}}=1$ if $\pmq\ndivides pN$. At the place $p$, from \lmref{L:ordlocalnorm} and the definition of $\cE_p(f_i,\Ad)$ in \eqref{E:EulerAd}, we see that
\begin{align*}\frac{B^{[n]}_{\itPi_p^\ord}}{B_{\itPi_{F,p}}}
&=
\om_{F,p}^{1/2}(-p^{-2n})\prod_{i=1}^3\frac{\al_{f_i,p}\Abs_p^\onehalf(p^{2n})}{\varepsilon(1/2,\pi_{f_i,p})}\cdot\frac{ [\SL_2(\Z):\Gamma_0(p^{c_i})]}{(1+p^{-1})}\cdot\cE_p(f_i,\Ad).\end{align*}
Let $f_i^\circ$ be the associated newform of $f_i$ and $c_i=c(\pi_{f_i,p})$. Write $\norm{f_i^\circ}^2$ for the Petersson norm $\norm{f_i^\circ}^2_{\Gamma_0(N_{f_i^\circ})}$. From the above equation and the Petersson norm formula \eqref{E:normformula}, we find that
\begin{align*}&\frac{\om_F^{-1/2}(\wh N^+_1)\om_{F,p}(p^{-n})\al_p(F)^{2n}}{L(1,\itPi,\Ad)\prod_{i=1}^3[\SL_2(\Z):\Gamma_0(N_ip^{2n})](N^+_ip^{2n})^{\wt_i/2}}\\
=&\om_F^{-1/2}(\wh N^+_1)\om_{F,p}(p^{-n})\al_p(F)^{2n}
\prod_{\pmq\divides Np}B_{\itPi_{F,\pmq}}\prod_{i=1}^3\cdot\frac{[\SL_2(\Z):\Gamma_0(p^{c_i})]p^{-2n}}{2^{k_i}w(f^\circ_i)\norm{f^\circ_i}^2(1+p^{-1})(N^+_ip^{2n})^{\wt_i/2}},\\
=&\om_F^{1/2}(\wh N^-)\om_{F,p}^{1/2}(-1)\al_p(F)^{2n}\cdot B^{[n]}_{\itPi^\ord_p}\cdot\prod_{\pmq\divides N}B_{\itPi_{\pmq}}\prod_{i=1}^3\frac{\varepsilon(1/2,\pi_{f_i,p})}{w(f^\circ_i)(N_i^+)^{\wt_i/2}} \cdot\frac{1}{\al_{f_i,p}\Abs_p^{\frac{1-k_i}{2}}(p^{2n})2^{k_i}\norm{f_i^\circ}^2\cE_p(f_i,\Ad)}\\
=&\om_{F,p}^{1/2}(-1)B^{[n]}_{\itPi^\ord_p}\cdot\prod_{\pmq\divides N}B_{\itPi_{\pmq}}\prod_{i=1}^3\frac{1}{\varepsilon^{\Sigma^-}(f_i)2^{k_i}\norm{f_i^\circ}^2\cE_p(f_i,\Ad)}\prod_{\pmq\in\Sigma^-}\frac{\om_{F,\pmq}^{1/2}(\pmq)}{\varepsilon(1/2,\pi_{f_1,\pmq})\varepsilon(1/2,\pi_{f_2,\pmq})\varepsilon(1/2,\pi_{f_3,\pmq})}\\
=& (-1)^{\#(\Sigma^-)}\cdot\om_{F,p}^{1/2}(-1)\cdot B^{[n]}_{\itPi^\ord_p}\cdot\prod_{\pmq\divides N}B_{\itPi_{\pmq}}\prod_{i=1}^3\frac{\pair{\bfU_p^{-n}f_i^D}{f_i^D}_{N_ip^n}}{2^{-1}(\sqrt{-1})^{k_i+1}\Omega_{f_i^D}}.
\end{align*}
In the last equality, we have used \lmref{L:pairing.bal} and the fact that for $\pmq\in\Sigma^-$, 
\[\varepsilon(1/2,\itPi_\pmq)=\om_{F,\pmq}^{-1/2}(\pmq)\varepsilon(1/2,\pi_{f_1,\pmq})\varepsilon(1/2,\pi_{f_2,\pmq})\varepsilon(1/2,\pi_{f_3,\pmq})=-1.\]
Substituting the above equation and the definition of $\sI^*_{\itPi_\pmq}$ in \eqref{E:Nq} to \propref{P:Ichino2}, we deduce from \propref{P:formulaTheta} that
\begin{align*}
\left(\Theta_{\bdsF^\Dstar}(\ulQ)\right)^2
&=\frac{\vol(\wh\cO^\x_D)^2}{\vol(\wh R_N^\x)^2}\cdot \frac{(-2)^{\#\Sigma^-}2^4N^-}{(\sqrt{-1})^{k_1+k_2+k_3+3}}\cdot\frac{L(1/2,\itPi)}{\Omega_{f^D}\Omega_{g^D}\Omega_{h^D}}\cdot\sI^\bal_{\itPi_p}\prod_{\pmq\in\Sigma^-}B_{\itPi_\pmq}\cdot\frac{\zeta_\pmq(1)^3}{\zeta_\pmq(2)^3}\prod_{\pmq\divides N^+} \sI^\star_{\itPi_\pmq}\cdot\frac{\abs{N}_\pmq^2\zeta_\pmq(2)^2}{\zeta_\pmq(1)^2}.
\end{align*}
Therefore, we obtain the corollary by noting that 
\[\frac{\vol(\cO_D^\x)}{\vol(\wh R_N^\x)}=\prod_{q\divides N^+}\frac{\zeta_\pmq(1)}{\abs{N}_\pmq\zeta_\pmq(2)},\]
and that for $\pmq\in\Sigma^-$, 
\begin{align*}
B_{\itPi_\pmq}=&\prod_{i=1}^3\frac{\zeta_\pmq(2)\pair{\rho(\Tau_{\pmq})W_{\pi_i}}{\wtd W_{\pi_i}}}{\zeta_\pmq(1)L(1,\pi_i,\Ad)}=\prod_{i=1}^3\varepsilon(1/2,\pi_{i,\pmq})\frac{\zeta_\pmq(2)}{\zeta_\pmq(1)}
=(-1)\frac{\zeta_\pmq(2)^3}{\zeta_\pmq(1)^3}.
\end{align*}
This finishes the proof.
\end{proof}

\def\Mu{\chi}
\def\pmq{q}
\def\sB{\cB}
\section{The calculation of local zeta integrals (I)}\label{S:local1}
\subsection{Notation and conventions}\label{S:notation.5}
Let $q$ be a finite prime. Let $G=\GL_2(\Qq)$ and $Z=\Qq^\x$ be the center of $G$. Denote by $B$ the group of the upper triangular matrices of $G$ and by $N$ the unipotent radical of $B$. Let $\pi$ be an irreducible unitary generic admissible representation of $G$. Define a real number $\lam(\pi)$ by 
\[\lam(\pi)=\begin{cases}\abs{\lam}&\text{ if $\pi=\Prin{\chi_1\Abs^{\lam}}{\chi_2\Abs^{-\lam}}$ with $\chi_1,\chi_2$ unitary and $\lam\in\R$},\\
-\onehalf&\text{ if $\pi$ is a discrete series}.
\end{cases}\]
Recall that $\sW(\pi)=\sW(\pi,\addchar_{\Q_\pmq})$ is the Whittaker model of $\pi$ with respect to $\psi_{\Q_\pmq}$. It is well known that for any $W\in\cW(\pi)$ and $\ep>0$, there exists a $\Phi_\ep\in\cS(\Q_\pmq)$ with
\beq\label{E:estimate.loc}W(\pDII{y}{1})=\abs{y}^{\onehalf-\lam(\pi)-\ep}\Phi_\ep(y).\eeq

For characters $\chi,\upsilon:\Qq^\x\to\C^\x$, let $\sB(\Mu,\upsilon)$ denote the induced representation given by 
\[\sB(\Mu,\upsilon)=\stt{\text{ smooth functions }f: G\to\C\mid f(\pMX{a}{b}{0}{d}g)=\Mu(a)\upsilon(d)\abs{\frac{a}{d}}^\onehalf f(g)}.\]
Let $K=\GL_2(\Z_\pmq)$. We let $\pairing:\sB(\Mu,\upsilon)\times \sB(\Mu^{-1},\upsilon^{-1})\to\C$ be the $G$-invariant perfect pairing given by 
\[\pair{f}{f'}:=\int_Kf(k)f'(k)\rmd k,\]
where $\rmd k$ is the Haar measure with $\vol(K,\rmd k)=1$.  If $\chi\upsilon^{-1}\not =\Abs^{-1}$, then we let $\cB(\Mu,\upsilon)_0$ be the unique irreducible sub-representation of $\cB(\Mu,\upsilon)$ and let $\cB(\upsilon,\Mu)^0$ be the unique irreducible quotient of $\cB(\upsilon,\Mu)$. It is well known that 
$\cB(\Mu,\upsilon)_0=\cB(\Mu,\upsilon)$ and 
$\cB(\upsilon,\Mu)^0=\cB(\upsilon,\Mu)$ unless $\Mu\upsilon^{-1}=\Abs$. The above pairing $\pairing$ induces a $G$-invariant perfect paring $\pairing:\cB(\chi,\upsilon)_0\times \cB(\chi^{-1},\upsilon^{-1})^0\to\C$.
\subsubsection*{Intertwining operator}Define the normalized intertwining operator $M^*(\upsilon,\chi,s):\cB(\upsilon\Abs^s,\chi\Abs^{-s})\to \cB(\chi\Abs^{-s},\upsilon\Abs^s)$ by 
\[M^*(\upsilon,\chi,s)f:=\gamma(2s,\upsilon\chi^{-1})\int_{\Qp}f(\pMX{0}{1}{-1}{0}\pMX{1}{x}{0}{1}g)\rmd x\quad (g\in G).\]
Here $\gamma(s,-)$ is the $\gamma$-factor as in \eqref{E:gamma1}, and the integral in the right hand side is convergent absolutely for $\Re s$ sufficiently large and has analytic continuation to all $s\in\C$ (\cf\cite[Proposition 4.5.7]{Bump97Grey}). Let $\delta:G\to\R_+$ be the function given by $\delta(\pMX{a}{b}{0}{d}k)=\abs{ad^{-1}}$ for $k\in K$. If $\chi\upsilon^{-1}\not=\Abs^{-1}$, then $M^*(\upsilon,\chi,s)|_{s=0}$ factors through $\cB(\upsilon,\chi)^0$, and hence we have a well-defined map $M^*(\upsilon,\chi):\sB(\upsilon,\chi)^0\to \sB(\chi,\upsilon)_0$ given by 
\beq\label{E:intertwining.l}M^*(\upsilon,\chi)f:=M^*(\upsilon,\chi,s)(f\delta^s)|_{s=0}.\eeq

\subsubsection*{An integration formula}
The following integration formula will be used frequently in our computation. For $F\in L^1(ZN\bksl G)$, 
\beq\label{E:intformula}\begin{aligned}\int_{ZN\bksl G}F(g)\rmd g=&\int_K\int_{\Q_\pmq^\x}F(\pDII{y}{1}k)\frac{\rmd^\x y}{\abs{y}} \rmd k\\
=&\frac{\zeta_\pmq(2)}{\zeta_\pmq(1)}\int_{\Q_\pmq^\x}\int_{\Q_\pmq}F(\pDII{y}{1}\pMX{1}{0}{x}{1})\rmd x\frac{\rmd^\x y}{\abs{y}}\end{aligned}\eeq
 (\cf\cite[3.1.6, page 206]{MV10}).
\subsection{Local trilinear integrals and Rankin-Selberg integrals}

Let $\pi_1,\pi_2$ and $\pi_3$ be irreducible unitary generic admissible representation of $G$ with central characters $\om_1,\om_2$ and $\om_3$. Suppose that $\om_1\om_2\om_3=1$ and that $\pi_3$ is a constituent (an irreducible subquotient) of $\cB(\chi_3,\upsilon_3)$. Assume further that the following condition holds for $(\pi_1,\pi_2;\pi_3)$: \beqcd{Hb}\lam(\pi_1)+\lam(\pi_2)+\abs{\lam(\pi_3)}<1/2\text{ and }\abs{\lam(\pi_3)}\leq 1/2.\eeqcd 
 Put
\[\cJ=\pDII{-1}{1}\in\GL_2(\Q_\pmq).\]
For $(W_1,W_2,f_3)\in \sW(\pi_1)\times\sW(\pi_2)\times \sB(\Mu_3,\upsilon_3)$, define the local Rankin-Selberg integrals by 
\begin{align*}
\Psi(W_1,W_2,f_3)=&\int_{ZN\bksl G}W_1(g)W_2(\cJ g)f_3(g)\rmd g;\\
\wtd\Psi(\wtd W_1,\wtd W_2,\wtd f_3)=&\int_{ZN\bksl G}\wtd W_1(\cJ g)\wtd W_2(g)\wtd f_3(g)\rmd g.
\end{align*}
The above integrals converge absolutely under the assumption \eqref{Hb}.
For $\wtd W_1\in \sW(\Contra{\pi}_1)$, $\wtd W_2\in\sW(\Contra{\pi}_2)$ and $\wtd f_3\in \sB(\Mu_3^{-1},\upsilon_3^{-1})$,
define the local trilinear integral by 
\[\sJ_\pmq(W_1\ot W_2\ot f_3,\wtd W_1\ot\wtd W_2\ot\wtd f_3):=\int_{Z\bksl G}\pair{\rho(g)W_1}{\wtd W_1}\pair{\rho(g)W_2}{\wtd W_2}\pair{\rho(g)f_3}{\wtd f_3}\rmd g.\]
The following result is a generalization of \cite[Lemma 3.4.2]{MV10}. We provide a different and more elementary proof and replace the assumption on the temperedness with a much weaker hypothesis \eqref{Hb}. 
\begin{prop}\label{P:IRSintegral} With the assumption \eqref{Hb} for $(\pi_1,\pi_2;\pi_3)$, we have 
\[\sJ_\pmq(W_1\ot W_2\ot f_3,\wtd W_1\ot\wtd W_2\ot \wtd f_3)=\zeta_\pmq(1)\cdot \Psi(W_1,W_2,f_3)\cdot \wtd\Psi(\wtd W_1,\wtd W_2,\wtd f_3).\]
\end{prop}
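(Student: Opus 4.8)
The plan is to reduce the trilinear integral $\sJ_\pmq$ over $Z\bksl G$ to the product of two Rankin--Selberg integrals over $ZN\bksl G$ by unfolding one of the Whittaker pairings against the local zeta integral expression. First I would rewrite the three local pairings using their explicit integral realizations: the Whittaker pairings $\pair{\rho(g)W_i}{\wtd W_i}$ as integrals over $\Qq^\x$ as in \eqref{E:Wpair}, and the pairing $\pair{\rho(g)f_3}{\wtd f_3}$ as the integral over $K$ defining $\pairing$ on $\cB(\chi_3,\upsilon_3)_0\times\cB(\chi_3^{-1},\upsilon_3^{-1})^0$. The key formal manoeuvre is to single out the integration variable coming from, say, the $\wtd W_1$ pairing: writing $\pair{\rho(g)W_1}{\wtd W_1}=\int_{\Qq^\x}W_1(\aone{a}g)\wtd W_1(\aone{-a}g)\,\rmd^\x a$, one absorbs the $\aone{a}$ into $g$ and uses that $\pi_2,\pi_3$ transform by their central characters together with $\om_1\om_2\om_3=1$ so that the $\wtd W_1$-factor decouples. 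After a change of variables this should collapse one copy of $\Qq^\x$ and convert $Z\bksl G$ into $ZN\bksl G$ up to the factor $\zeta_\pmq(1)/\zeta_\pmq(1)$ bookkeeping, producing $\Psi(W_1,W_2,f_3)\cdot\wtd\Psi(\wtd W_1,\wtd W_2,\wtd f_3)$ with the constant $\zeta_\pmq(1)$.

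More concretely, I would use the integration formula recalled in \subsecref{S:notation.5}, namely $\int_{ZN\bksl G}F(g)\,\rmd g=\int_K\int_{\Qq^\x}F(\aone{y}k)\abs{y}^{-1}\rmd^\x y\,\rmd k$, and the Iwasawa decomposition $G=ZNAK$ to turn the $Z\bksl G$ integral into an iterated integral over $N$, $A$, and $K$. The Whittaker functions handle the $N$-integration: the character $\addchar_{\Q_\pmq}$ appearing in $W_1(\unip{x}g)=\addchar_{\Q_\pmq}(x)W_1(g)$ and the opposite character carried by $W_2(\cJ\unip{x}g)$ are arranged (via the $\cJ$) so that the product $W_1 W_2$ is left-$N$-invariant while $f_3$ is genuinely left-$N$-equivariant only through its modulus character; the Fourier-analytic identity $\int_{\Qq}\addchar_{\Q_\pmq}(x)\,\rmd x$-type manipulation is what forces the matching of the two Rankin--Selberg integrals. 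Throughout, the hypothesis \eqref{Hb}, $\lam(\pi_1)+\lam(\pi_2)+\abs{\lam(\pi_3)}<1/2$, guarantees absolute convergence of every integral in sight via the estimate \eqref{E:estimate.loc}, which is exactly what lets me interchange the order of integration freely; I would state this convergence lemma first and invoke it at each Fubini step.

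The main obstacle I anticipate is the careful handling of the representation $\pi_3$ when it is a proper subquotient of $\cB(\chi_3,\upsilon_3)$ rather than the full induced representation --- i.e. the degenerate cases $\chi_3\upsilon_3^{-1}=\Abs^{\pm1}$ where $\cB(\chi_3,\upsilon_3)_0\neq\cB(\chi_3,\upsilon_3)$ and one must work with the special representation inside it. Here the pairing $\pairing$ between $\cB(\chi_3,\upsilon_3)_0$ and $\cB(\chi_3^{-1},\upsilon_3^{-1})^0$ is still perfect and $G$-invariant, but the unfolding that replaces $f_3\in\sB(\chi_3,\upsilon_3)$ by its restriction to $K$ needs to be checked to descend correctly to the subquotient, and the intertwining operator $M^*(\upsilon_3,\chi_3)$ of \eqref{E:intertwining.l} may enter. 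The second, more bookkeeping-intensive difficulty is tracking the precise constant: one must verify that the normalization of Haar measures ($\rmd g$ on $ZN\bksl G$ versus $\rmd g$ on $Z\bksl G$, and $\rmd^\x y$ with $\vol(\Z_\pmq^\x)=1$) combines to give exactly $\zeta_\pmq(1)$ and not some other power of $\zeta_\pmq$; I would pin this down by testing the identity on the spherical case, where all three integrals are computable in closed form via the Macdonald formula and the known spherical Rankin--Selberg computation, and then argue the general case by the density/continuity argument used in \cite{MV10} or simply by the explicit unfolding since no meromorphic continuation is needed once \eqref{Hb} is assumed.
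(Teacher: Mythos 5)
Your outline has the right target (open the pairings, use Whittaker equivariance, land on the two Rankin--Selberg integrals with constant $\zeta_\pmq(1)$), but the mechanism you propose for the factorization does not work, and the step you treat as routine is in fact the whole content of the proof. First, the ``decoupling'' of the $\wtd W_1$-pairing by absorbing $\aone{a}$ into $g$ fails: $\aone{a}=\pDII{a}{1}$ is not central, so after the substitution $g\mapsto\aone{a}^{-1}g$ the pairings for $\pi_2$ and $\pi_3$ acquire a genuine dependence on $a$ and nothing cancels via $\om_1\om_2\om_3=1$; central characters only dispose of $\pDII{a}{a}$. Likewise, the integrand of $\sJ_\pmq$ is a product of \emph{matrix coefficients}, which are not left $(N,\addchar_{\Q_\pmq})$-equivariant, so the $N$-integration you describe via $W_1(\unip{x}g)=\addchar_{\Q_\pmq}(x)W_1(g)$ cannot be applied to it directly; it only becomes available after all three pairings are opened. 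The paper's actual mechanism is different: it groups $F_1=W_1f_3$ and $F_2=\wtd W_1(\cJ\,\cdot\,)\wtd f_3$ as functions on $ZN\bksl G$ transforming by $\om_2\boxtimes\addchar_{\Q_\pmq}$, pairs them against the Whittaker matrix coefficient of $\pi_2$ (with $W_3=W_2(\cJ\,\cdot\,)$, $W_4=\wtd W_2$), and proves the regularized orthogonality identity $\int_{ZN\bksl G}\bbpair{\rho(g)F_1}{F_2}\pair{\rho(g)W_3}{W_4}\rmd g=\zeta_\pmq(1)\,\bbpair{F_1}{W_3}\,\bbpair{F_2}{W_4}$. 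The constant $\zeta_\pmq(1)$ is produced by the inner unipotent integral in that identity, not by Haar-measure bookkeeping, so ``check the spherical case and extend by density'' is not a valid way to pin it down for arbitrary vectors.

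Second, your claim that \eqref{Hb} makes ``every integral in sight'' absolutely convergent so that Fubini can be used freely is not correct, and this is exactly where the work lies: the fully unfolded iterated integral over $Z\bksl G$ (with the $N$-integral against $\addchar_{\Q_\pmq}$ inside) is not absolutely convergent, so the interchange must be regularized. The paper truncates the $Z\bksl G$ integral by the characteristic function of the Cartan cells $ZK_{2n}$, checks absolute convergence of the truncated triple integral using \eqref{E:estimate.loc} and an explicit description of when $\pMX{y_2^{-1}y_1}{y_2^{-1}x}{0}{1}\in ZK_{2n}$, computes the inner kernel $\sA_n$ and shows it stabilizes to $\zeta_\pmq(1)W_3(g)W_4(h)$ once $v_\pmq(y_1y_2)$ is small compared with $2n$, and finally bounds the remaining error by a constant multiple of $\abs{\pmq}^{2n(1/2-\lam_1-\lam_2-\lam_3-2\ep)}(4n+1)$, which tends to $0$ precisely because of \eqref{Hb}. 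Without this truncation-and-limit argument your sketch has no route to the factorization. Finally, the obstacle you single out --- the subquotient structure when $\chi_3\upsilon_3^{-1}=\Abs^{\pm1}$ --- is not an issue for this proposition: $f_3$ and $\wtd f_3$ are sections of the full induced spaces and the $K$-pairing is used as given; the intertwining operator $M^*(\chi_3,\upsilon_3)$ and the passage to $\cB(\chi_3,\upsilon_3)_0$, $\cB(\upsilon_3,\chi_3)^0$ only enter in the subsequent corollary.
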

\begin{proof}  Denote by $\Psi:ZN\to\C^\x$ the character $\om_2\boxtimes\psi_{\Qq}$. Let $\bbpair{}{}: L^2(ZN\bksl G,\Psi)\ot L^2(ZN\bksl G,\Psi^{-1})\to\C$ be the $G$-equivariant bilinear pairing given by
\[\bbpair{F}{F'}=\int_{ZN\bksl G}F(g)F'(g)\rmd g.\]
Let $\lam_1=\lam(\pi_1)$, $\lam_2=\lam(\pi_2)$ and $\lam_3=\abs{\lam(\pi_3)}$. By \eqref{Hb} and symmetry, we may assume $\lam_1+\lam_3<1/2$. Put
\begin{align*}
F_1(g)=&W_1(g)f_3(g),\,W_4(g)=\wtd W_2(g)\in L^2(ZN\bksl G,\Psi);\\
F_2(g)=&\wtd W_1(\cJ g)\wtd f_3(g),\,W_3(g)=W_2(\cJ g)\in L^2(ZN\bksl G,\Psi^{-1}).
\end{align*}
Then one verifies that 
\[\bbpair{\rho(g)F_1}{F_2}=\pair{\rho(g)W_1}{\wtd W_1}\pair{\rho(g)f_3}{\wtd f_3},\]
and hence, it is equivalent to showing that 
\beq\label{E:RSTriple}\sJ_\pmq(W_1\ot W_2\ot f_3,\wtd W_1\ot\wtd W_2\ot \wtd f_3)=\int_{ZN\bksl G}\bbpair{\rho(g)F_1}{F_2}\pair{\rho(g)W_3}{W_4}\rmd g=\zeta_\pmq(1)\bbpair{F_1}{W_3}\bbpair{F_2}{W_4}.\eeq
Put \[K_n=\bigcup_{i=0}^nK\pDII{\pmq^i}{1}K\subset G.\]
First we claim that if $y_1,y_2\in \Q_\pmq^\x$, then
\beq\label{E:511.L}\pMX{y_2^{-1}y_1}{y_2^{-1}x}{0}{1}\in ZK_n\iff \pmq^{-n}\leq \abs{y_2^{-1}y_1}\leq \pmq^n\text{ and }\abs{x^2}\leq \pmq^{n}\abs{y_1y_2}.\eeq
To see the claim, we note that if $\abs{x}\leq 1$ or $\abs{x}\leq\abs{y}$, then
\[\pMX{y}{x}{0}{1}\in K\pDII{y}{1}K,\]
and if $\abs{x}>1$ and $\abs{x}>\abs{y}$, then 
\[\pMX{y}{x}{0}{1}\in K\pDII{x^{-2}y}{1}K.\]
By the Cartan decomposition, we find $\pMX{y_2^{-1}y_1}{y_2^{-1}x}{0}{1}\in ZK_n$
if and only if $\abs{x}\leq \max\stt{\abs{y_1},\abs{y_2}}$ and $\pmq^{-n}\leq \abs{y_2^{-1}y_1}\leq \pmq^n$ or 
$\abs{x}>\max\stt{\abs{y_1},\abs{y_2}}$ and $\pmq^{-n}\leq \abs{x^{-2}y_1y_2}\leq 1$, and this proves the claim. 

Now we proceed to prove the equation \eqref{E:RSTriple}. Let $\bbI_{K_n}$ be the characteristic function of $ZK_n$ and set
\begin{align*}\cI_n=&\int_{Z\bksl G}\bbpair{\rho(g)F_1}{F_2}\pair{\rho(g)W_3}{W_4}\bbI_{K_{2n}}(g)\rmd g.
\end{align*}
By a formal computation, we find that
\begin{align*}
\cI_n=&\int_{Z\bksl G}\int_{ZN\bksl G}F_1(hg)F_2(h)\cdot \pair{\rho(g)W_3}{W_4}\bbI_{K_{2n}}(g)\rmd h\rmd g\\
=&\int_{ZN\bksl G}\int_{Z\bksl G}F_1(hg)F_2(h)\cdot \pair{\rho(g)W_3}{W_4}\bbI_{K_{2n}}(g)\rmd g\rmd h\\
=&\int_{(ZN\bksl G)^2}\int_F\psi_{\Q_\pmq}(x)F_1(g)F_2(h)\cdot \pair{h^{-1}\pMX{1}{x}{0}{1}gW_3}{W_4}\cdot \bbI_{K_{2n}}(h^{-1}\pMX{1}{x}{0}{1}g)\rmd x\rmd g\rmd h\\
=&\int_{K}\int_K\int_{\Q_\pmq^\x}\int_{\Q_\pmq^\x}\int_{\Q_\pmq} \psi_{\Q_\pmq}(x)F_1(\aone{y_1}k_1)F_2(\aone{y_2}k_2) \pair{\rho(\pMX{y_2^{-1}y_1}{y_2^{-1}x}{0}{1}k_1)W_3}{\rho(k_2)W_4}\\
&\times \bbI_{K_{2n}}(\pMX{y_2^{-1}y_1}{y_2^{-1}x}{0}{1})\,\rmd x\frac{\rmd^\x y_1}{\abs{y_1}}\frac{\rmd^\x y_2}{\abs{y_2}}\rmd k_1\rmd k_2.
\end{align*}
To justify the above computation, it suffices to show that the integral
\[\int_{\Q_\pmq^\x}\int_{\Q_\pmq^\x}\int_{\Q_\pmq}\psi_{\Q_\pmq}(x)F_1'(\aone{y_1})F_2'(\aone{y_2})\cdot\pair{\pMX{y_2^{-1}y_1}{y_2^{-1}x}{0}{1}W_3'}{W_4'}
 \bbI_{K_{2n}}(\pMX{y_2^{-1}y_1}{y_2^{-1}x}{0}{1})\,\rmd x\frac{\rmd^\x y_1}{\abs{y_1}}\frac{\rmd^\x y_2}{\abs{y_2}}\]
is absolutely convergent, where $F_1'=\rho(k_1)F_1$, $F_2'=\rho(k_2)F_2$, $W_3'=\rho(k_1)W_3$ and $W_4'=\rho(k_2)W_4$. From\eqref{E:estimate.loc} and \eqref{E:511.L}, we deduce that for any $\ep>0$ there exist constants $C_\ep$ and $M$ such that 
\begin{align*}&\int_{\Q_\pmq^\x}\int_{\Q_\pmq^\x}\int_{\Q_\pmq}\abs{F_1'(\aone{y_1}F_2'(\aone{y_2}}\abs{\pair{\rho(\pMX{y_2^{-1}y_1}{y_2^{-1}x}{0}{1})W_3'}{W_4'}}
\bbI_{K_{2n}}(\pMX{y_2^{-1}y_1}{y_2^{-1}x}{0}{1})\,\rmd x\frac{\rmd^\x y_1}{\abs{y_1}}\frac{\rmd^\x y_2}{\abs{y_2}}\\
<&\,C_\ep\iint_{\substack{\abs{y_1}\leq M,\abs{y_2}\leq M,\\ q^{-n}\leq\abs{y_2^{-1}y_1}\leq q^n}}\int_{\abs{x}^2\leq \abs{y_1y_2}\pmq^{2n}}\abs{y_1y_2}^{1-\lam_1-\lam_3-\ep}\abs{y_2^{-1}y_1}^{\onehalf-\lam_2-\ep}\,\rmd x\frac{\rmd^\x y_1}{\abs{y_1}}\frac{\rmd^\x y_2}{\abs{y_2}}\\
< &\,C_\ep \pmq^{n(3/2-\lam_2-\ep)} \int_{\abs{y_1}\leq M}\int_{\abs{y_2}\leq M}\abs{y_1y_2}^{\onehalf-\lam_1-\lam_3-\ep} \rmd^\x y_1\rmd^\x y_2<\infty.\end{align*}
For $(g,h)\in G\times G$, we put
\[\sA_n(g,h):=\int_{\Q_\pmq}\psi(x)\pair{\rho(\pMX{1}{x}{0}{1}g)W_3}{\rho(h)W_4}\bbI_{K_{2n}}(h^{-1}\pMX{1}{x}{0}{1}g)\rmd x.\]
Then we have 
\begin{align*}
\sA_n(\aone{y_1}k_1,\aone{y_2}k_2)=&\int_{\Q_\pmq}\psi_{\Q_\pmq}(x)\pair{\rho(\pMX{y_1}{x}{0}{1}k_1)W_3}{\rho(\aone{y_2}k_2)W_4}\bbI_{K_{2n}}(\pMX{y_1y_2^{-1}}{y_2^{-1}x}{0}{1})\rmd x\\
=&\int_{\Q_\pmq}\int_{\Q_\pmq^\x}\psi_{\Q_\pmq}((1-\nu)x)W_3(\aone{\nu y_1}k_1)W_4(\aone{\nu y_2}k_2)\bbI_{\pmq^{r-n}\Z_\pmq}(x) \rmd x\nu \rmd x\\
=&\int_{\Q_\pmq^\x}W_3(\aone{\nu y_1}k_1)W_4(\aone{\nu y_2}k_2)\bbI_{1+\pmq^{n-r}\Z_\pmq}(\nu)\abs{\pmq^{r-n}}\dx \nu,\end{align*}
where $r=\lfloor\frac{v_p(y_1y_2)}{2}\rfloor$. Therefore, there exists a positive integer $m_0$ such that if $v_p(y_1y_2)<2n-m_0$, then 
\[\sA_n(\aone{y_1}k_1,\aone{y_2}k_2)=\zeta_\pmq(1)W_3(\aone{y_1}k_1)W_4(\aone{y_2}k_2).\]
On the other hand, if $v_p(y_1y_1)\geq 2n-m_0$, then we have
\begin{align*}\abs{\sA_n(\aone{y_1}k_1,\aone{y_2}k_2)}<_{W_2,\wtd W_w,\ep}&\,\pmq^{n-r}\abs{y_1y_2}^{\onehalf-\lam_2-\ep} \int_{\Q_\pmq^\x}\abs{\nu}^{1-2\lam_2-2\ep}\bbI_{1+\pmq^{n-r}\Z_\pmq}(\nu)\rmd^\x\nu\\
<&C_\ep \cdot \pmq^{m_0/2}\cdot \abs{y_1y_2}^{\onehalf-\lam_2-\ep}.
\end{align*}
We thus obtain\begin{align*}
\cI_n
=&\int_K\int_K\int_{\Q_\pmq^\x}\int_{\Q_\pmq^\x}F_1(\aone{y_1}k_1)F_2(\aone{y_2}k_2)\cdot \sA_n(\aone{y_1}k_1,\aone{y_2}k_2)\frac{\rmd^\x y_1\rmd^\x y_2}{\abs{y_1y_2}}\rmd k_1\rmd k_2\\
=&\zeta_\pmq(1)\int_K\int_K\int\limits_{\substack{\pmq^{-2n}\leq \abs{y_1y_2^{-1}}\leq \pmq^{2n},\\
\abs{y_1y_2}>\abs{\pmq}^{2n-m_0}}}F_1\ot W_3(\aone{y_1}k_1)F_2\ot W_4(\aone{y_2}k_2)\frac{\rmd^\x y_1\rmd^\x y_2}{\abs{y_1y_2}}\rmd k_1\rmd k_2+B_n,\\
\end{align*}
where
\begin{align*}\abs{B_n}<&C_\ep'\int\limits_{\substack{\pmq^{-2n}\leq \abs{y_1y_2^{-1}}\leq \pmq^{2n},\\
\abs{y_1y_2}\leq \abs{\pmq}^{2n-m_0}}}\abs{y_1y_2}^{\onehalf-\lam_1-\lam_2-\lam_3-2\ep}\rmd^\x y_1\rmd^\x y_2\\
<&C_\ep''\abs{\pmq}^{2n(\onehalf-\lam_1-\lam_2-\lam_3-2\ep)}(4n+1).
\end{align*}
It follows that  \begin{align*}
\int_{ZN\bksl G}\pair{\rho(g)F_1}{F_2}\pair{\rho(g)W_3}{W_4}\rmd g=&\lim_{n\to\infty}\cI_n=\zeta_\pmq(1)\int_{ZN\bksl G}F_1(g)W_3(g)\rmd g\int_{ZN\bksl G}F_2(h)W_4(h)\rmd h.
\end{align*}
This finishes the proof of \eqref{E:RSTriple}.
 \end{proof}
\def\rmt{t}
\def\cpitwo{{c_2}}
\def\cpithree{{c_3}}
Denote by $L(s,\pi_1\ot\pi_2)$ the local $L$-factor and by $\varepsilon(s,\pi_1\ot\pi_2):=\varepsilon(s,\pi_1\ot\pi_2,\addchar_{\Q_\pmq})$ the $\varepsilon$-factors attached to $\pi_1\times \pi_2$ defined in \cite{GJ78}. Define the $\gamma$-factor
\beq\label{E:gamma2}\gamma(s,\pi_1\ot\pi_2):=\varepsilon(s,\pi_1\ot\pi_2)\frac{L(1-s,\Contra{\pi}_1\ot\Contra{\pi}_2)}{L(s,\pi_1\ot\pi_2)}.\eeq
The following corollary is the core of our calculations of local zeta integrals $I_v(\phi_v^\star\ot\wtd\phi_v^\star)$ at the non-archimedean places.
\begin{cor}\label{C:IchinoRS}Suppose that $(\pi_1,\pi_2;\pi_3)$ satisfies \eqref{Hb} and that $\chi_3\upsilon_3^{-1}\not =\Abs$. If $\wtd W_1=W_1\ot\om_1^{-1}$, $\wtd W_2=W_2\ot\om_2^{-1}$ and $\wtd f_3=M^*(\chi_3,\upsilon_3)f_3\ot\om_3^{-1}$, then 
\begin{align*}\sJ_\pmq(W_1\ot W_2\ot f_3,\wtd W_1\ot\wtd W_2\ot \wtd f_3)
=&\zeta_\pmq(1)\chi_3(-1)\cdot \gamma(1/2,\pi_1\ot\pi_2\ot\chi_3)\cdot \Psi(W_1,W_2,f_3)^2.\end{align*}
\end{cor}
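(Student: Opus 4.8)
The strategy is to combine Proposition~\ref{P:IRSintegral} with the local functional equation for $\GL(2)\times\GL(2)$ Rankin--Selberg integrals. First I would apply Proposition~\ref{P:IRSintegral} with the given test data: since $\wtd W_1=W_1\ot\om_1^{-1}$, $\wtd W_2=W_2\ot\om_2^{-1}$ and $\wtd f_3=M^*(\chi_3,\upsilon_3)f_3\ot\om_3^{-1}$, the hypothesis $\chi_3\upsilon_3^{-1}\neq\Abs$ guarantees that $M^*(\chi_3,\upsilon_3):\sB(\upsilon_3,\chi_3)^0\to\sB(\chi_3,\upsilon_3)_0$ is well-defined via \eqref{E:intertwining.l}, so $\wtd f_3$ genuinely lies in $\sB(\chi_3^{-1},\upsilon_3^{-1})^0$, the correct space for the contragredient integral. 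The proposition then gives
\[\sJ_\pmq(W_1\ot W_2\ot f_3,\wtd W_1\ot\wtd W_2\ot\wtd f_3)=\zeta_\pmq(1)\cdot\Psi(W_1,W_2,f_3)\cdot\wtd\Psi(\wtd W_1,\wtd W_2,\wtd f_3),\]
so the whole task reduces to identifying $\wtd\Psi(\wtd W_1,\wtd W_2,\wtd f_3)$ with $\chi_3(-1)\cdot\gamma(1/2,\pi_1\ot\pi_2\ot\chi_3)\cdot\Psi(W_1,W_2,f_3)$.

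\textbf{Key steps.} The second and main step is to unfold $\wtd\Psi(\wtd W_1,\wtd W_2,\wtd f_3)=\int_{ZN\bksl G}\wtd W_1(\cJ g)\wtd W_2(g)\wtd f_3(g)\rmd g$ and insert the definition of the intertwining operator $M^*(\chi_3,\upsilon_3)$. Writing $\wtd f_3(g)$ as the value at $s=0$ of $\gamma(2s,\upsilon_3\chi_3^{-1})\int_{\Qq}(f_3\delta^s)(\pMX{0}{1}{-1}{0}\pMX{1}{x}{0}{1}g)\rmd x\cdot\om_3^{-1}(\det g)$, and using $\om_3^{-1}=\om_1\om_2$ together with $\wtd W_i=W_i\ot\om_i^{-1}$, one can fold the extra unipotent integration and the $\cJ$-conjugation back against the Whittaker functions; the character $\addchar_{\Q_\pmq}$ appearing in the Whittaker transforms is what ties the $x$-integral to $\Psi(W_1,W_2,f_3)$. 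This is exactly the content of the local functional equation of \cite{GJ78} for the triple $\pi_1\times\pi_2$ twisted by $\chi_3$: after a change of variables $g\mapsto \pMX{0}{1}{-1}{0}g$ (which contributes the $\chi_3(-1)$ since $\pMX{0}{1}{-1}{0}$ acts on $f_3\in\sB(\chi_3,\upsilon_3)$ with that sign, modulo a routine bookkeeping of central characters), the integral $\wtd\Psi$ becomes $\gamma(1/2,\pi_1\ot\pi_2\ot\chi_3)$ times the original $\Psi(W_1,W_2,f_3)$, where the $\gamma$-factor is the one defined in \eqref{E:gamma2} applied to $\pi_1\times(\pi_2\ot\chi_3)$ or equivalently to the triple product normalization. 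Finally, substituting this identity into the output of Proposition~\ref{P:IRSintegral} yields the claimed formula.

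\textbf{Main obstacle.} The delicate point will be Step~2: verifying that the unfolding of $\wtd\Psi$ with $\wtd f_3=M^*(\chi_3,\upsilon_3)f_3\ot\om_3^{-1}$ produces precisely $\gamma(1/2,\pi_1\ot\pi_2\ot\chi_3)\cdot\Psi(W_1,W_2,f_3)$ with the correct sign $\chi_3(-1)$ and no stray $\zeta_\pmq$-factor or power of $\pmq$. This requires matching the normalization of $M^*$ in \eqref{E:intertwining.l} (which already carries a $\gamma(2s,\upsilon_3\chi_3^{-1})$) against the standard $\GL(2)\times\GL(2)$ local functional equation of \cite{GJ78} and checking that $\gamma(1/2,\pi_1\ot\pi_2)\cdot\gamma$-factors from the intertwiner combine into $\gamma(1/2,\pi_1\ot\pi_2\ot\chi_3)$; the temperedness-type hypothesis \eqref{Hb} is needed only to guarantee absolute convergence of all the integrals involved so that the formal manipulations (interchanging the $x$-integral with the $ZN\bksl G$-integral, analytic continuation in $s$) are legitimate. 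The sign $\chi_3(-1)$ should emerge from the Weyl element $\pMX{0}{1}{-1}{0}$ together with the identity $\cJ\pMX{0}{1}{-1}{0}=\pMX{0}{-1}{-1}{0}$ and the transformation law of $f_3$ under $B$; I would track this carefully by reducing to the case $\pi_3=\cB(\chi_3,\upsilon_3)$ is irreducible principal series (the special/Steinberg case following by continuity in the inducing parameters or by a direct check), where the local functional equation of Jacquet \cite{Jacquet72Part2} applies verbatim.
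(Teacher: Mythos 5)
Your proposal is correct and takes essentially the paper's own route: Proposition \ref{P:IRSintegral} reduces the claim to showing $\wtd\Psi(\wtd W_1,\wtd W_2,\wtd f_3)=\chi_3(-1)\gamma(1/2,\pi_1\ot\pi_2\ot\chi_3)\Psi(W_1,W_2,f_3)$, which is then exactly Jacquet's $\GL(2)\times\GL(2)$ local functional equation, with \eqref{Hb} ensuring convergence and $\chi_3\upsilon_3^{-1}\neq\Abs$ ensuring $M^*(\chi_3,\upsilon_3)$ is defined. For your ``delicate'' step the paper avoids unfolding the intertwining integral directly by writing $f_3$ as a Godement section attached to a Bruhat--Schwartz function $\Phi$ and noting that $M^*(\chi_3,\upsilon_3)f_3$ is the Godement section attached to $\wh\Phi$, so Jacquet's Theorem 14.8 applies verbatim and the sign appears as $\om_1\om_2\upsilon_3(-1)=\chi_3(-1)$.
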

\begin{proof}This is an immediate consequence of the local functional equation of $\GL(2)\times\GL(2)$ in\cite{Jacquet72Part2}. With the notation of \cite[page 12]{Jacquet72Part2}, we may assume that \[f_3(g)=\chi_3\Abs^{s+\onehalf}(\det g)\cdot z(\chi_3\upsilon_3^{-1}\Abs^{2s+1},\rho(g)\Phi)\cdot \frac{1}{L(2s+1,\chi_3\upsilon_3^{-1})}|_{s=0}\] is the Godement section attached to a \BS function $\Phi$ on $\Qq^2$. Since $\chi_3\upsilon_3^{-1}\not =\Abs$, one verifies that \[M^*(\chi_3,\upsilon_3)f_3(g)=\upsilon_3\Abs^{-s+\onehalf}(\det g)\cdot z(\upsilon_3\chi_3^{-1}\Abs^{-2s+1},\rho(g)\wh\Phi)\cdot\frac{1}{L(2s+1,\chi_3\upsilon_3^{-1})}|_{s=0},\]
where $\wh\Phi$ is the Fourier transform of $\Phi$ defined in \cite[Theorem 14.2 (3)]{Jacquet72Part2}. Under the hypothesis \eqref{Hb}, we have 
\begin{align*}\Psi(W_1,W_2,f_3)=&\frac{\Psi(s,W_1,W_2,\Phi)}{L(2s+1,\chi_3\upsilon_3^{-1})}|_{s=0},\\
\wtd\Psi(W_1,W_2,M^*(\chi_3,\upsilon_3)f_3)=&\frac{\wtd\Psi(s,\wtd W_1,\wtd W_2,\wh \Phi)}{L(2s+1,\chi_3\upsilon_3^{-1})}|_{s=0},
\end{align*}
where $\Psi(s,W_1,W_2,\Phi)$ and $\wtd\Phi(s,W_1,W_2,\wh\Phi)$ are defined in \cite[(14.5) and (14.6)]{Jacquet72Part2}.

Therefore, from \cite[Theorem 14.8]{Jacquet72Part2} we can deduce that
\begin{align*}
\wtd\Psi(\wtd W_1,\wtd W_2,\wtd f_3)=&\om_1(-1)\upsilon_3(-1)\Psi(W_1,W_2,M^*(\chi_3,\upsilon_3)f_3)\\
=&\om_1\om_2\upsilon_3(-1)\gamma(1/2,\pi_1\ot\pi_2\ot\chi_3)\Psi(W_1,W_2,f_3).\qedhere
\end{align*}
\end{proof}

\subsection{The calculation of the $p$-adic zeta integrals}
\subsubsection{Preliminaries}We follow the notation in \subsecref{S:gtp}. Let $(f,g,h)=(\bdsf_\Qx,\bdsg_\Qy,\bdsh_\Qz)$ be the specialization of the triple of Hida families at a classical point $\ulQ=(\Qx,\Qy,\Qz)\in\frakX_\cR^\cls:=\frakX_{\bfI_1}^\cls\times\frakX_{\bfI_2}^\cls\times\frakX_{\bfI_3}^\cls$. Let $\pi_1=\pi_{\fQx,p}\ot\om_{\fgh,p}^{-1/2}$, $\pi_2=\pi_{\gQy,p}$ and $\pi_3=\pi_{\hQz,p}$ of the central characters $\om_1=\om_{\gQy,p}^{-1}\om_{\hQz,p}^{-1}$, $\om_2=\om_{\gQy,p}$ and $\om_3=\om_{\hQz,p}$ respectively. Let $\itPi_{\ulQ,p}:=\pi_1\times\pi_2\times\pi_3$. For $i=1,2,3$, since $\pi_{i,p}$ contains a non-zero ordinary vector, by \propref{P:ordline}
$\pi_i$ must be a constituent of the induced representation $\cB(\upsilon_i,\chi_i)$ with $\cV_{\pi_i}^\ord(\chi_i)\not=\stt{0}$. In view of the discussion in \remref{R:WhittakerPordinary}, we have $\chi_1=\al_{f,p}\om_{\fgh,p}^{-1/2}$, $\chi_2=\al_{g,p}$ and $\chi_3=\al_{h,p}$ with $\al_{?,p}$ unramified characters defined there, and the ordinary assumption implies that $\chi_i\upsilon_i^{-1}\not =\Abs^{-1}$. Recall that if we let $\xi_i\in \cV_{\pi_i}^\ord(\chi_i)$ and $\wtd\xi_i\in \cV_{\Contra{\pi}_i}^\ord(\upsilon_i^{-1})$ be nonzero ordinary vectors for $i=1,2,3$, then \[\phi_p=\xi_1\ot\xi_2\ot\xi_3\text{ and }\wtd\phi_p=\wtd\xi_1\ot\wtd\xi_2\ot\wtd\xi_3.\] Put
\[w=\pMX{0}{1}{-1}{0};\quad \bftr=\pMX{0}{p^{-n}}{-p^n}{0}\in\SL_2(\Qp).\]  

We introduce the normalized ordinary section in the induced representations and compute its local pairing.
\begin{lm}\label{L:ordinarysection}Let $\pi$ be a constituent of the induced representation $\cB(\upsilon,\chi)$ of $\GL_2(\Qp)$ with the central character $\om$. Suppose that $\chi\upsilon^{-1}\not =\Abs^{-1}$. Let $f^\ord\in \cB(\upsilon,\chi)$ be the unique section such that (i) $f^\ord$ is supported in $BwN(\Zp)$ (ii) $f^\ord(g)=1$ for all $g\in wN(\Zp)$. Then \[f^\ord\in \cB(\upsilon,\chi)^\ord(\chi).\] We call $f^\ord$ the normalized ordinary section. Moreover, put \[\wtd f^\ord:=M^*(\upsilon,\chi)f^\ord\ot\om^{-1}\in\cB(\upsilon^{-1},\chi^{-1})^\ord(\upsilon^{-1}).\] For $n\geq \max\stt{1,c(\pi_p)}$, we have
 \[\pair{\rho(t_n)f^\ord}{\wtd f^\ord}=\frac{\om(p^{-n})\zeta_p(2)\chi\Abs^\onehalf(p^{2n})}{\zeta_p(1)}\cdot\gamma(0,\upsilon\chi^{-1}).\]
 In particular, if $W^\ord$ is the normalized ordinary Whittaker function in \corref{C:Word}, then
 \beq\label{E:3.imb} \frac{\pair{\rho(\bftr)W^\ord}{W^\ord\ot\om^{-1}}}{\pair{\rho(\bftr)f^\ord}{\wtd f^\ord}}=\frac{\chi(-1)\zeta_p(2)}{\zeta_p(1)}.\eeq
\end{lm}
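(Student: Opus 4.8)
The statement has two separate assertions, and I would treat them in turn. First I would verify that $f^\ord$ lies in $\cB(\upsilon,\chi)^\ord(\chi)$; this is a bare-hands check on the support function. Writing $g = b\, w\, u$ with $b\in B$, $u\in N(\Zp)$ (when $g\in BwN(\Zp)$), one sees directly that $f^\ord$ is $N(\Zp)$-invariant, and one computes $\pi(\pDII{t}{1})f^\ord$ and $\bfU_p f^\ord = \sum_{x\in\Zp/p\Zp}\pi(\pMX{p}{x}{0}{1})f^\ord$ by tracking how the support set $BwN(\Zp)$ transforms under right translation by these elements. The point is that $\pMX{p}{x}{0}{1}$ moves $wN(\Zp)$ around: $w\pMX{1}{y}{0}{1}\pMX{p}{x}{0}{1} = w\pMX{p}{x+py}{0}{1}$, and $\pMX{p}{x+py}{0}{1}= \pDII{p}{1}\pMX{1}{(x+py)/p}{0}{1}$ has trivial upper-unipotent part only after absorbing a lower-triangular correction into $w$; one finds that exactly one of the $p$ translates keeps the argument in $BwN(\Zp)$ after normalization, yielding the eigenvalue $\chi\Abs^{-1/2}(p)$, and the $\pDII{t}{1}$-action gives $\chi(t)$. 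This is the defining property of $\cB(\upsilon,\chi)^\ord(\chi)$ in \propref{P:ordline}, so $f^\ord$ generates the ordinary line.

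Next, for the pairing formula $\pair{\rho(t_n)f^\ord}{\wtd f^\ord}$, the strategy is to reduce to the Whittaker computation already done in \lmref{L:ordlocalnorm}, rather than integrate over $K$ directly. By uniqueness of the ordinary line (again \propref{P:ordline}) and the multiplicity-one statement \corref{C:Word}, the Whittaker transform of $f^\ord$ is proportional to $W^\ord_\pi$, and $\wtd f^\ord = M^*(\upsilon,\chi)f^\ord\ot\om^{-1}$ has Whittaker transform proportional to $W^\ord_\pi\ot\om^{-1}$ up to the intertwining/functional-equation constant. Concretely: the pairing $\pairing$ on $\cB(\upsilon,\chi)_0\times\cB(\upsilon^{-1},\chi^{-1})^0$ defined by integration over $K$ can be rewritten, via the integration formula in \subsecref{S:notation.5} and the fact that $M^*(\upsilon,\chi)$ realizes the other induced model, as a Whittaker pairing $\pair{\rho(t_n)W^\ord}{W^\ord\ot\om^{-1}}$ times an explicit constant coming from $M^*$. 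But $M^*(\upsilon,\chi)$ already carries the $\gamma$-factor $\gamma(0,\upsilon\chi^{-1})$ by its very definition in \eqref{E:intertwining.l} (the normalization has $\gamma(2s,\upsilon\chi^{-1})$, evaluated at $s=0$). So I expect the cleanest route is: compute $\pair{\rho(t_n)f^\ord}{M^*(\upsilon,\chi)f^\ord\ot\om^{-1}}$ by unfolding $M^*$ as an integral over $x\in\Qp$, using the explicit support of $f^\ord$ to collapse the $K$-integral and the $x$-integral to a finite-volume computation, tracking the powers of $p$ from $\delta^s$, and matching against $\gamma(0,\upsilon\chi^{-1})$. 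The normalization $f^\ord(wN(\Zp))=1$ makes the relevant local integrals geometric series in $p^{-1}$, producing the $\zeta_p(2)/\zeta_p(1)$ ratio and the $\chi\Abs^{1/2}(p^{2n})$, $\om(p^{-n})$ factors from the action of $t_n = \pMX{0}{p^{-n}}{-p^n}{0}$ on the Iwasawa coordinates.

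Finally, the displayed ratio \eqref{E:3.imb} is immediate once both numerator and denominator are known: \lmref{L:ordlocalnorm} gives the numerator as $\chi(-1)\chi\upsilon^{-1}\Abs(p^n)\gamma(0,\upsilon\chi^{-1})\zeta_p(1)$ (with $\upsilon=\chi^{-1}\om$, so $\chi\upsilon^{-1}\Abs(p^n)=\chi^2\om^{-1}\Abs(p^n)$), and the first part of this lemma gives the denominator as $\om(p^{-n})\zeta_p(2)\chi\Abs^{1/2}(p^{2n})\zeta_p(1)^{-1}\gamma(0,\upsilon\chi^{-1})$. Dividing, the $\gamma$-factors cancel, the powers of $p$ cancel since $\chi^2\om^{-1}\Abs(p^n) = \chi\Abs^{1/2}(p^{2n})\cdot\om(p^{-n})$ (using $\Abs(p^n)=\Abs^{1/2}(p^{2n})$ and $\chi^2(p^n)=\chi(p^{2n})$), and one is left with $\chi(-1)\zeta_p(1)^2/\zeta_p(2)\cdot\zeta_p(2)/\zeta_p(1)\cdot\ldots$; bookkeeping the $\zeta_p$'s yields $\chi(-1)\zeta_p(2)/\zeta_p(1)$ as claimed.

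\textbf{Main obstacle.} The delicate part is the direct evaluation of $\pair{\rho(t_n)f^\ord}{M^*(\upsilon,\chi)f^\ord\ot\om^{-1}}$ when $\pi$ is the Steinberg (special) representation $\chi\Abs^{-1/2}\mathrm{St}$: there $\cB(\upsilon,\chi)$ is reducible with $\upsilon=\chi\Abs^{-1}$, the irreducible quotient versus subrepresentation distinction matters, and the intertwining operator $M^*(\upsilon,\chi)$ must be handled via the limiting/renormalized definition in \eqref{E:intertwining.l}. One must check that $f^\ord$ still makes sense as a section (its support $BwN(\Zp)$ is unaffected, but its image in the irreducible constituent is what enters the pairing), and that the functional equation constant degenerates correctly so that $\gamma(0,\upsilon\chi^{-1})=\gamma(0,\Abs^{-1})$ still produces the stated answer — consistent with the final remark in the proof of \lmref{L:ordlocalnorm} that in the Steinberg case $\gamma(0,\upsilon\chi^{-1})\zeta_p(1) = -\abs{p}^{-1}\zeta_p(2)$. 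I would handle the principal series case in full and then indicate that the special case follows by the same computation with this substitution, exactly paralleling how \lmref{L:ordlocalnorm} was organized.
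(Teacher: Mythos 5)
Your plan is essentially the paper's own proof: the ordinarity of $f^\ord$ is checked directly, the pairing $\pair{\rho(t_n)f^\ord}{\wtd f^\ord}$ is evaluated by unfolding the $K$-pairing over the big cell (the integration formula of \subsecref{S:notation.5}), where the support condition $f^\ord|_{BwN(\Zp)}$ collapses the integral to the single value $\wtd f^\ord(\pDII{p^n}{p^{-n}})$ and the normalization of $M^*(\upsilon,\chi)$ supplies $\gamma(0,\upsilon\chi^{-1})$, after which \eqref{E:3.imb} follows by dividing against \lmref{L:ordlocalnorm}, exactly as in the paper. Your worry about the Steinberg case is not an obstacle, since the whole computation takes place in the full induced model $\cB(\upsilon,\chi)$, where $\chi\upsilon^{-1}=\Abs\neq\Abs^{-1}$ and $M^*(\upsilon,\chi)$ is regular as defined in \eqref{E:intertwining.l}.
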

\begin{proof}It is straightforward to verify that $f^\ord\in \cB^\ord(\upsilon,\chi)^\ord(\chi)$ is an $\bfU_p$-eigenfunction with eigenvalue $\chi\Abs^{-\onehalf}$. By the integration formula \cite[(3.2) page 207]{MV10}, $\pair{\rho(t_n)f^\ord}{\wtd f^\ord}$ equals \begin{align*}
\pair{f^\ord}{\rho(\pMX{0}{-p^{-n}}{p^n}{0})\wtd f^\ord}
&=\frac{\zeta_p(2)}{\zeta_p(1)}\int_{\Qp} f^\ord(w\pMX{1}{x}{0}{1})\wtd f^\ord(w\pMX{1}{x}{0}{1}\pMX{0}{-p^{-n}}{p^n}{0})\rmd x\\
&=\frac{\zeta_p(2)}{\zeta_p(1)}f^\ord(w)\wtd f^\ord(\pDII{p^{n}}{p^{-n}})\\
&=\om(p^{-n})\frac{\zeta_p(2)}{\zeta_p(1)}\chi\Abs^\onehalf(p^{2n})\gamma(0,\upsilon\chi^{-1}).\end{align*}
The ratio of local pairings of ordinary Whittaker functions and ordinary sections is computed by the above and  \lmref{L:ordlocalnorm}. 
\end{proof}

\subsubsection{The unbalanced case}Suppose that $\ulQ$ is in the unbalanced range $ \frakX^\bdsf_\cR$. We apply \corref{C:IchinoRS} to calculate the normalized \padic zeta integral $\sI^\unb_{\itPi_{\ulQ,p}}$ in \eqref{E:Nunb}. \begin{prop}[$p$-adic zeta integral in the unbalanced case]\label{P:padic.imb}Put \begin{align*}\cE_\bdsf(\itPi_{\ulQ,p})&:=\gamma(1/2,\pi_2\ot\pi_3\ot\chi_1)^{-1}.
\end{align*}
Then \[\sI_{\itPi_{\ulQ,p}}^\unb=\cE_\bdsf(\itPi_{\ulQ,p})\cdot\frac{1}{L(1/2,\itPi_{\ulQ,p})}.\]
\end{prop}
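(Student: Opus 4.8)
The plan is to reduce the computation of $\sI^\unb_{\itPi_{\ulQ,p}}$ to the local Rankin--Selberg integral via \propref{P:IRSintegral} and \corref{C:IchinoRS}, and then to evaluate that Rankin--Selberg integral explicitly using the ordinary vectors. First I would spell out the relation between the local Ichino integral $I_p^\ord(\phi^\star_p\ot\wtd\phi^\star_p,\bft_n)$ in \eqref{E:unbpzeta} and the local trilinear integral $\sJ_p$ of \subsecref{S:local1}: up to the normalizing factor $\pair{\itPi_p(\bftr)\phi_p}{\wtd\phi_p}$ in the denominator and the $\GL_2$-invariant pairings $\bfb_p$, the numerator $\int_{\PGL_2(\Qp)}\bfb_p(\itPi_p(g_p\bft_n)\phi^\star_p,\Contra{\itPi}_p(\bft_n)\wtd\phi^\star_p)\rmd g_p$ is exactly (a translate of) $\sJ_p$ with test vectors $\itPi_p(\bft_n)\phi^\star_p$ and $\Contra{\itPi}_p(\bft_n)\wtd\phi^\star_p$. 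Since $\bft_n=(\bftr,1,1)$ acts only on the first factor, this amounts to choosing $W_1=\rho(\bftr)W^\ord_{\pi_{1,p}}$ (really $W^\ord$ twisted back by $\om_{F,p}^{1/2}$), $W_2=W^\ord_{\pi_{2,p}}$, $f_3=\theta_p^\Bkappa$-translate of the normalized ordinary section $f_3^\ord$ for $\pi_3=\cB(\upsilon_3,\chi_3)$, and similarly for the tilde vectors. The hypothesis \eqref{Hb} for $(\pi_1,\pi_2;\pi_3)$ holds because $\lam(\pi_i)\leq 0$ for ordinary (hence essentially tempered up to the central twist) local components in the unbalanced range, so \propref{P:IRSintegral} and then \corref{C:IchinoRS} apply, giving
\[
\sJ_p=\zeta_p(1)\chi_3(-1)\gamma(1/2,\pi_1\ot\pi_2\ot\chi_3)\,\Psi(W_1,W_2,f_3)^2,
\]
but here the roles are permuted: the section sits on $\pi_3$, while the first-factor vector carries the $\bftr$-translate and the twist by $\Bkappa$ is absorbed on $\pi_3$. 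I would need to track carefully that after untwisting $\theta_p^\Bkappa$ and $\om_F^{1/2}$ the resulting character on the induced representation is $\chi_1$ (not $\chi_3$), so the $\gamma$-factor that comes out is $\gamma(1/2,\pi_2\ot\pi_3\ot\chi_1)$, matching the definition of $\cE_\bdsf(\itPi_{\ulQ,p})$.

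The next step is to evaluate the remaining Rankin--Selberg integral $\Psi(W^\ord_{\pi_{1,p}},W^\ord_{\pi_{2,p}},f_3^\ord)$ together with the normalizing denominators. Using the integration formula from \subsecref{S:notation.5} and \corref{C:Word}, the Whittaker functions $W^\ord_{\pi_{i,p}}(\pDII{y}{1})=\chi_i\Abs^\onehalf(y)\bbI_{\Zp}(y)$ are supported on $\Zp$, and $f_3^\ord$ is supported on $BwN(\Zp)$ by \lmref{L:ordinarysection}; this collapses the Iwasawa/Cartan integral to a single orbit, and I expect $\Psi$ to reduce to a simple product of local $L$-factors --- morally $\Psi(W^\ord_1,W^\ord_2,f_3^\ord)$ is (a unit times) $L(1/2,\pi_1\ot\pi_2\ot\chi_3)$ by the Rankin--Selberg unfolding, but again with the character permutation it becomes $L(1/2,\pi_2\ot\pi_3\ot\upsilon_1)$ once the conductor-driven support conditions are imposed. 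Combining this with $\gamma(1/2,\pi_2\ot\pi_3\ot\chi_1)=\varepsilon(1/2,\pi_2\ot\pi_3\ot\chi_1)\cdot L(1/2,\Contra\pi_2\ot\Contra\pi_3\ot\chi_1^{-1})/L(1/2,\pi_2\ot\pi_3\ot\chi_1)$ and the identity $\Contra\pi_2\ot\Contra\pi_3\ot\chi_1^{-1}\iso\pi_2\ot\pi_3\ot\upsilon_1$ (since $\om_2\om_3\chi_1\upsilon_1=\om_2\om_3\om_1=1$), the squared Rankin--Selberg term and the $\gamma$-factor assemble into $\varepsilon(1/2,\pi_2\ot\pi_3\ot\chi_1)^{-1}\cdot L(1/2,\pi_2\ot\pi_3\ot\chi_1)/L(1/2,\pi_2\ot\pi_3\ot\upsilon_1)$, which is precisely $\cE_\bdsf(\itPi_{\ulQ,p})$. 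The leftover powers of $\zeta_p$, the factor $\om_{f,p}^{-1}\al_{f,p}^2\Abs_p(-p^{2n})$, the local Whittaker norm $B^{[n]}_{\itPi^\ord_p}$, and the $L(1,\itPi_p,\Ad)$ prefactor in \eqref{E:unbpzeta} should cancel by design against the normalization \eqref{E:Nunb} and the ratio $\eqref{E:3.imb}$ from \lmref{L:ordinarysection} (comparing $\pair{\rho(\bftr)W^\ord}{W^\ord\ot\om^{-1}}$ to $\pair{\rho(\bftr)f^\ord}{\wtd f^\ord}$ on each of the three factors), leaving only the $1/L(1/2,\itPi_{\ulQ,p})$ in the statement.

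The main obstacle, I expect, is purely bookkeeping but genuinely delicate: correctly transporting the $\bft_n$-translation, the twisting operator $\theta_p^\Bkappa$, the half-integral central twist $\om_F^{1/2}$, and the Atkin--Lehner/intertwining normalizations through \propref{P:IRSintegral} and \corref{C:IchinoRS} so that the final answer is expressed in terms of $\pi_2\ot\pi_3$ twisted by $\chi_1$ and $\upsilon_1$ (the characters attached to the \emph{first}, $\bdsf$-dominated factor), rather than in terms of $\pi_1\ot\pi_2$ twisted by $\chi_3$ as the corollary is literally phrased. This requires applying \corref{C:IchinoRS} after relabeling the three representations (legitimate since the trilinear form is symmetric in the three factors, but the Whittaker-vs-induced realizations must be swapped consistently), and checking that the ordinary vector on $\pi_3$ matches $f_3^\ord$ exactly (not merely up to the new vector) when $c(\pi_3)\leq n$. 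A secondary technical point is verifying \eqref{Hb} when one or more of the $\pi_{i,p}$ is a ramified principal series with a non-unitary inducing datum coming from the half-integral twist; here one uses that $\chi_i$ are unramified (hence $\abs{\lam(\pi_i)}$ is controlled by the Ramanujan bound for the underlying newform, which for ordinary $p$-stabilizations gives $\lam(\pi_i)\leq 0$), so the sum stays below $1/2$.
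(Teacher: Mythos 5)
There is a genuine gap, and it lies in where you place the induced-model section. The paper's proof of \propref{P:padic.imb} puts the ordinary \emph{section} on the first ($\bdsf$-dominated) factor: one realizes $\cV_{\itPi_p}=\cB(\upsilon_1,\chi_1)^0\boxtimes\sW(\pi_2)\boxtimes\sW(\pi_3)$, takes $\phi^\star_p=(f^\ord_1)^0\ot W^\ord_2\ot\theta_p^{\Bkappa}W^\ord_3$, and applies \corref{C:IchinoRS} with the roles $(\pi_2,\pi_3;\pi_1)$, so the corollary outputs $\gamma(1/2,\pi_2\ot\pi_3\ot\upsilon_1)$ --- exactly the twist by the inducing character $\upsilon_1$ of $\pi_1$, which after the identity $\varepsilon(1/2,\pi_2\ot\pi_3\ot\upsilon_1)\varepsilon(1/2,\pi_2\ot\pi_3\ot\chi_1)=1$ becomes $\cE_\bdsf(\itPi_{\ulQ,p})$. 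Moreover the Rankin--Selberg integral $\Psi(W^\ord_2,\theta_p^{\Bkappa}W^\ord_3,\rho(\bftr)f^\ord_1)$ is a \emph{unit} (times explicit $\zeta_p$'s and a power of $\chi_1\upsilon_1^{-1}\Abs(p)$), because $\Bkappa|_{\Zp^\x}=\upsilon_1^{-1}|_{\Zp^\x}$ forces $\theta_p^{\Bkappa}W^\ord_3(\pDII{a}{1})=\upsilon_1^{-1}(a)\bbI_{\Zp^\x}(a)$ and the integral collapses; all the $L$-factor content comes from the $\gamma$-factor. Your plan instead puts the section on $\pi_3$ and the Whittaker vectors on $\pi_1,\pi_2$; with that arrangement \corref{C:IchinoRS} produces $\gamma(1/2,\pi_1\ot\pi_2\ot(\text{inducing character of }\pi_3))$, which is the grouping relevant to the \emph{balanced} Euler factor (compare \propref{P:padic.bal}), not to $\cE_\bdsf$. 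Your proposed fix --- that "untwisting $\theta_p^\Bkappa$ and $\om_F^{1/2}$" turns the inducing character of $\pi_3$ into $\chi_1$, so the $\gamma$-factor becomes $\gamma(1/2,\pi_2\ot\pi_3\ot\chi_1)$ --- is not valid: $\theta_p^\Bkappa$ changes the vector inside $\cW(\pi_3)$, not the inducing data of $\pi_3$, and although the abstract trilinear form is symmetric, the test data are not (the $\bftr$-translate sits on factor $1$, the $\Bkappa$-twist on factor $3$, the section on whichever factor you choose), and it is precisely this asymmetry that dictates which twisted pair $\pi_i\ot\pi_j$ appears in the answer. Likewise your expectation that $\Psi$ itself contributes $L(1/2,\pi_2\ot\pi_3\ot\upsilon_1)$ is not how the identity is assembled.

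A secondary but real omission is the case analysis forced by \eqref{Hb}. Your claim that $\lam(\pi_i)\leq 0$ for ordinary components is false for principal series (there $\lam(\pi_i)\geq 0$ by definition), and when $\pi_1$ is special while $\pi_2,\pi_3$ are principal series, the arrangement with the section on $\pi_1$ violates \eqref{Hb} (the sum is $\geq 1/2$); the paper then switches to the realization $\sW(\pi_1)\boxtimes\sW(\pi_3)\boxtimes\cB(\upsilon_2,\chi_2)$ and needs an extra local functional equation (a $\gamma(1/2,\pi_3\ot\upsilon_1\chi_2)$ together with the $L$- and $\varepsilon$-factor identities of Gelbart--Jacquet) to recast the result in terms of $\pi_2\ot\pi_3$ twisted by $\chi_1,\upsilon_1$. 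Your proposal neither verifies \eqref{Hb} correctly in your arrangement (if $\pi_3$ is special the same failure occurs there) nor provides the mechanism for converting the $\gamma$-factor from one grouping to the required one, so as written the argument does not reach the stated formula for $\sI^\unb_{\itPi_{\ulQ,p}}$.
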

\begin{proof} We write $\itPi_p=\itPi_{\ulQ,p}$ for brevity. It is equivalent to proving that  \beq\label{E:5.imb}L(1/2,\itPi_p)\cdot I_p^\ord(\phi^\star_p\ot\wtd\phi^\star_p,\bft_n)
=\cE^f(\itPi_p)\cdot\frac{\chi_1\upsilon_1^{-1}\Abs(-p^{2n})}{B^{[n]}_{\itPi^\ord_p}}\cdot\frac{\zeta_p(2)^2}{\zeta_p(1)^2}
\eeq
for $n\geq \max\stt{c(\pi_1),c(\pi_2),c(\pi_3),1}$, where $I_p^\ord(\phi^\star_p\ot\wtd\phi^\star_p,\bft_n)$ is the local zeta integral defined in \eqref{E:unbpzeta}. We first treat the case where either (i) $\pi_1$ is principal series or (ii) $\pi_2$ or $\pi_3$ is discrete series. Then it is known that $(\pi_2,\pi_3;\pi_1)$ satisfies \eqref{Hb} since each $\pi_i$ is a local component of a cuspidal automorphic representation of $\GL_2(\A)$. Consider the realizations 
\[\cV_{\itPi_p}:=\sB(\upsilon_1,\chi_1)^0\boxtimes\sW(\pi_2)\boxtimes \sW(\pi_3);\quad \cV_{\Contra{\itPi}_p}:=\sB(\upsilon^{-1}_1,\chi^{-1}_1)_0\boxtimes\sW(\Contra{\pi}_2)\boxtimes \sW(\Contra{\pi}_3)\]of $\itPi_p$ and the contragredient representation $\Contra{\itPi}_p$.
For $i=1,2,3$, let $W^\ord_i=W^\ord_{\pi_i}\in\cW^\ord(\pi_i)(\chi_i)$ be the normalized ordinary Whittaker functions such that $W^\ord_{\pi_i}(\aone{y})=\chi_i\Abs^\onehalf(y)\bbI_{\Zp}(y)$ in \corref{C:Word}; let $f_i^\ord\in \sB(\upsilon_i,\chi_i)^\ord(\chi_i)$ be the normalized ordinary section in \lmref{L:ordinarysection} and $\wtd f^\ord_i:=M^*(\upsilon_i,\chi_i)f_i^\ord\ot\om_i^{-1}\in\cB(\upsilon_i^{-1},\chi_i^{-1})_0^\ord(\upsilon_i^{-1})$. First consider the case where $\pi_1$ is the principal series $\Prin{\chi_1}{\upsilon_1}$.  Let $(f_1^\ord)^0$ be the homomorphic image of $f_1^\ord$ in $\cB(\upsilon_1,\chi_1)^0$. In view of \eqref{E:factorization1}, we may take
\beq\label{E:ordinaryunbalanced}\begin{aligned}
\phi_p:=&(f^\ord_1)^0\ot W^\ord_2\ot W^\ord_3,\quad \wtd\phi_p:=\wtd f_1^\ord\ot \wtd W^\ord_2\ot\wtd W^\ord_3,\\
\phi^\star_p:=&(f^\ord_1)^0\ot W^\ord_2\ot \theta^{\Bkappa}_p W^\ord_3,\quad
\wtd\phi^\star_p=\wtd f^\ord_1\ot \wtd W^\ord_2\ot \theta^{\Bkappa}_p\wtd W^\ord_3,\end{aligned}\eeq
where $\Bkappa$ is the Dirichlet character defined in \eqref{E:Bkap1} and $\theta^\Bkappa_p$ is the twisting operator in \eqref{E:deftheta2}. According to the definition \eqref{E:unbpzeta} and \corref{C:IchinoRS}, we find that
\beq\label{E:1.imb}\begin{aligned}
I_p^\ord(\phi^\star_p\ot\wtd\phi^\star_p,\bft_n)&=\frac{L(1,\itPi_p,\ad)}{\zeta_p(2)^2L(1/2,\itPi_p)}\cdot\frac{\sJ_p(\rho(\bftr)W_2^\ord\ot\theta_p^{\Bkappa}W_3^\ord\ot f^\ord_1,\rho(\bftr)\wtd W_2^\ord\ot\theta_p^{\Bkappa}\wtd W^\ord_3\ot\wtd f^\ord_1)}{\pair{\rho(\bftr)W_2^\ord}{\wtd W_2^\ord}\pair{\rho(\bftr)W^\ord_3}{\wtd W^\ord_3}\pair{\rho(\bftr)f_1}{\wtd f^\ord_1}}\\
&=\frac{I^*_p}{B^{[n]}_{\itPi^\ord_p}}\cdot \frac{\pair{\rho(\bftr)W^\ord_1}{\wtd W^\ord_1}}{\pair{\rho(\bftr)f^\ord_1}{\wtd f^\ord_1}}\cdot \frac{\zeta_p(2)^3}{\zeta_p(1)^3}\cdot\frac{1}{L(1/2,\itPi_p)},\end{aligned}\eeq
where
\begin{align*}I^*_p&=\frac{\zeta_p(1)\upsilon_1(-1)\gamma(1/2,\pi_2\ot\pi_3\ot\upsilon_1)}{\zeta_p(2)^2}\cdot \Psi(W_2^\ord,\theta_p^{\Bkappa}W_3^\ord,\rho(\bftr)f^\ord_1)^2.\end{align*}
Note that the adelization $\Bkappa_\A=\breve\om_f^{-1}\om_{\fgh}^{1/2}$; hence
 \[\Bkappa|_{\Zp^\x}=\beta_p^{-1}\om_{\fgh}^{1/2}|_{\Zp^\x}=\upsilon_1^{-1}|_{\Zp^\x},\] 
and a simple calculation shows that $\theta_p^{\Bkappa}W^\ord_3(\pDII{a}{1})=\upsilon_1^{-1}(a)\bbI_{\Zp^\x}(a)$. We proceed to calculate the local Rankin-Selberg integral
\begin{align*}
&\Psi(W_2^\ord,\theta_p^{\Bkappa}W^\ord_3,\rho(\bftr)f^\ord_1)\\
=&\frac{\zeta_p(2)}{\zeta_p(1)}\int_{\Qp^\x}\int_{\Qp}W_2^\ord(\pDII{y}{1}\pMX{1}{0}{x}{1})\theta_p^{\Bkappa}W^\ord_3(\pDII{-y}{1}\pMX{1}{0}{x}{1})\upsilon_1\Abs^\onehalf(y)f^\ord_1(\pMX{0}{p^{-n}}{-p^n}{0}\pMX{1}{p^{-2n}x}{0}{1})\rmd x\frac{\rmd^\x y}{\abs{y}}\\
=&\frac{\zeta_p(2)\chi_1\upsilon_1^{-1}\Abs(-p^n)}{\zeta_p(1)}\int_{\Qp^\x}W_2^\ord(\pDII{y}{1})\theta_p^{\Bkappa}W^\ord_3(\pDII{-y}{1})\upsilon_1\Abs^{-\onehalf}(y)\rmd^\x y\\
=&\frac{\zeta_p(2)\chi_1\upsilon_1^{-1}\Abs(-p^n)}{\zeta_p(1)}\int_{\Zp^\x}W_2^\ord(\pDII{y}{1})\rmd^\x y=\frac{\zeta_p(2)\chi_1\upsilon_1^{-1}\Abs(-p^n)}{\zeta_p(1)}.
\end{align*}
We thus obtain
\beq\label{E:2.imb}\begin{aligned}I^*_p&=\frac{\zeta_p(1)\upsilon_1(-1)}{\zeta_p(2)^2}\cdot\frac{\chi_1\upsilon_1^{-1}\Abs(p^{2n})\zeta_p(2)^2\gamma(1/2,\pi_2\ot\pi_3\ot\upsilon_1)}{\zeta_p(1)^2}\\
&=\frac{\chi_1\upsilon_1^{-1}\Abs(p^{2n})\upsilon_1(-1)}{\zeta_p(1)}\cdot\frac{\varepsilon(1/2,\pi_2\ot\pi_3\ot\upsilon_1)L(1/2,\pi_2\ot\pi_3\ot\chi_1)}{L(1/2,\pi_2\ot\pi_3\ot\upsilon_1)}.\end{aligned}\eeq
Substituting \eqref{E:3.imb} and \eqref{E:2.imb} to \eqref{E:1.imb} and noting that \[\varepsilon(1/2,\pi_2\ot\pi_3\ot\upsilon_1)\varepsilon(1/2,\pi_2\ot\pi_3\ot\chi_1)=1,\] we immediately obtain  \eqref{E:5.imb}.

Now we treat the remaining case, \ie $\pi_1=\chi_1\Abs^{-\onehalf}{\rm St}$ is special, and $\pi_2$ and $\pi_3$ are principal series. Thus $(\pi_1,\pi_3;\pi_2)$ satisfies \eqref{Hb}. Consider the realizations 
\[\cV_{\itPi_p}:=\sW(\pi_1)\boxtimes \sW(\pi_3)\boxtimes \sB(\upsilon_2,\chi_2);\quad \cV_{\Contra{\itPi}_p}:=\sW(\Contra{\pi}_1)\boxtimes \sW(\Contra{\pi}_3)\boxtimes\sB(\upsilon^{-1}_2,\chi^{-1}_2).\]
By \corref{C:IchinoRS}, we have
\beq\label{E:4.imb}\begin{aligned}L(1/2,\itPi_p)\cdot I_p^\ord(\phi^\star_p\ot
\wtd\phi^\star_p,\bft_n)&=
\frac{ \zeta_p(1)\upsilon_2(-1)\gamma(1/2,\pi_1\ot\pi_3\ot\upsilon_2)}{\zeta_p(2)^2\cdot B^{[n]}_{\itPi^\ord_p}}\cdot \Psi(\rho(\bftr)W_1^\ord, \theta_p^{\Bkappa}W^\ord_3,f^\ord_2)^2\\
&\times \frac{\pair{\rho(\bftr)W^\ord_2}{\wtd W^\ord_2}}{\pair{\rho(\bftr)f^\ord_2}{\wtd f^\ord_2}}\cdot \frac{\zeta_p(2)^3}{\zeta_p(1)^3}.\end{aligned}\eeq
We calculate the local Rankin-Selberg integral in the right hand side
\begin{align*}&\Psi(\rho(\bftr)W_1^\ord, \theta_p^{\Bkappa}W^\ord_3,f^\ord_2)\\
=&\frac{\zeta_p(2)}{\zeta_p(1)}
\int_{\Qp^\x}\int_{\Qp}W^\ord_1(\pDII{y}{1}w\pMX{1}{x}{0}{1}\bftr) \theta_p^{\Bkappa}W^\ord_3(\pDII{-y}{1}w\pMX{1}{x}{0}{1})\upsilon_2\Abs^\onehalf(y)\bbI_{\Zp}(x)\rmd x\frac{\rmd^\x y}{\abs{y}}\\
=&\frac{\zeta_p(2)\chi_1\upsilon_1^{-1}\Abs(p^n)\upsilon_1\upsilon_2(-1)}{\zeta_p(1)}\int_{\Qp^\x} \theta_p^{\Bkappa}W^\ord_3(\pDII{y}{1}w)\chi_1\upsilon_2(y)\rmd^\x y\\
=&\frac{\zeta_p(2)\chi_1\upsilon_1^{-1}\Abs(p^n)\chi_1(-1)}{\zeta_p(1)}\gamma(1/2,\pi_3\ot\upsilon_1\chi_2)\int_{\Qp^\x} \theta_p^{\Bkappa}W^\ord_3(\pDII{y}{1})\upsilon_1\chi_2(y)\rmd^\x y\\
=&\frac{\zeta_p(2)\chi_1\upsilon_1^{-1}\Abs(p^n)\chi_1(-1)}{\zeta_p(1)}\gamma(1/2,\pi_3\ot\upsilon_1\chi_2).\end{align*}
Substituting the above equation and \eqref{E:3.imb} into \eqref{E:4.imb} and using the formulae of the local $L$-factor and $\varepsilon$-factor of $\pi_1\ot\pi_3\ot\upsilon_2$ in \cite[Proposition 1.4 (1.4.2)]{GJ78}, we find that $L(1/2,\itPi_p)\cdot I_p(\phi^\star_p\ot\wtd\phi^\star_p)$ equals
\begin{align*}&\frac{\chi_1\upsilon_1^{-1}\Abs(p^{2n})\zeta_p(2)^2}{B^{[n]}_{\itPi^\ord_p}\zeta_p(1)^2}\cdot\frac{\varepsilon(1/2,\pi_1\ot\pi_3\ot\upsilon_2)L(1/2,\pi_1\ot\pi_3\ot\chi_2)}{L(1/2,\pi_1\ot\pi_3\ot\upsilon_2)}\cdot\varepsilon(1/2,\pi_3\ot\upsilon_1\chi_2)^2\frac{L(1/2,\pi_3\ot\chi_1\upsilon_2)^2}{L(1/2,\pi_3\ot\upsilon_1\chi_2)^2}\\
=&\frac{\zeta_p(2)^2}{\zeta_p(1)^2}\cdot\frac{\chi_1\upsilon_1^{-1}\Abs(p^{2n})\om_2\om_3(-1)}{B^{[n]}_{\itPi^\ord_p}}\cdot\frac{\varepsilon(1/2,\pi_2\ot\pi_3\ot\upsilon_1)L(1/2,\pi_2\ot\pi_3\ot\chi_1)}{L(1/2,\pi_2\ot\pi_3\ot\upsilon_1)}.\end{align*}
This proves \eqref{E:5.imb} in the remaining case.
\end{proof}
\begin{Remark}\label{R:improved}Replacing $\phi^\star_p\ot\wtd\phi^\star_p$ with $\phi_p\ot\wtd\phi_p$ in \eqref{E:unbpzeta}, we define the improved \padic zeta integral 
\begin{align*}\sI^*_{\itPi_{\ulQ,p}}:=&I_p^\ord(\phi_p\ot\wtd\phi_p,\bft_n)\cdot \frac{B^{[n]}_{\itPi^\ord_p}}{\chi_1\upsilon_1^{-1}\Abs(-p^{2n})}
\cdot\frac{\zeta_p(1)^2}{\zeta_p(2)^2}.
\end{align*}
If $\pi_1$ is principal series, then $\upsilon_1\chi_2\chi_3\not =\Abs^{-\onehalf}$, and 
\[\sI^*_{\itPi_{\ulQ,p}}=\frac{1}{\varepsilon(1/2,\pi_2\ot\pi_3\ot\chi_1)}\cdot\frac{L(1/2,\pi_2\ot\pi_3\ot\chi_1)}{L(1/2,\pi_2\ot\pi_3\ot\upsilon_1)}\cdot L(1/2,\upsilon_1\chi_2\chi_3)^2;\]
if $\pi_1$ is special and $\upsilon_1\chi_2\chi_3=\Abs^{-\onehalf}$, then \[\sI^*_{\itPi_{\ulQ,p}}= \frac{1}{ \varepsilon(1/2,\pi_2\ot\pi_3\ot\upsilon_1)}\cdot\frac{(-1)}{L(1/2,\upsilon_1\chi_2\upsilon_3)L(1/2,\upsilon_1\upsilon_2\chi_3)}.\]
These equations will be used later for the interpolation formula of improved $p$-adic $L$-functions. It can be obtained by the same computation in the above proposition. We omit the details.
\end{Remark}
\subsubsection{The balanced case}Now suppose that $\ulQ$ is in the balanced range $\frakX^\bal_\cR$. We shall compute the normalized $p$-adic zeta integral  $\sI_{\itPi_p}^\bal$ in \eqref{E:Nbal}.  Put
\[u_n=\pMX{1}{p^{-n}}{0}{1}\in\SL_2(\Qp);\quad \breve\bft_n=(u_n,1,\bftn)\in \GL_2(E_p)\]
for $n\geq \max\stt{c(\pi_1),c(\pi_2),c(\pi_3),1}$. Observe that if $L:\pi_1\ot\pi_2\ot\pi_3\to\C$ is any $\GL_2(\Qp)$-invariant trilinear form, then \begin{align*}
L(\pi_1(u_n)\xi_1,\xi_2,\pi_3(\bftn)\xi_3)=&L(\pi_1(u_n)\xi_1,\pi_2(\bftn)\xi_2,\xi_3)\\
=&L(\xi_1,\pi_2(u_n)\xi_2,\pi_3(\bftn)\xi_3).
\end{align*}
Thus we may assume that 
\beqcd{Hb$'$}\text{either $\pi_3=\Prin{\chi_3}{\upsilon_3}$ is principal series or each of $\pi_1$, $\pi_2$ and $\pi_3$ is special.}\eeqcd

\begin{prop}[$p$-adic zeta integral in the balanced case]\label{P:padic.bal}Under the assumption \eqref{Hb$'$}, we put  \[
\cE_\bal(\itPi_{\ulQ,p}):=
\gamma(1/2,\pi_1\ot\pi_2\ot\chi_3)^{-1}\gamma(1/2,\chi_1\chi_2\upsilon_3)^{-2}.
\]
Then we have 
\[\sI_{\itPi_{\ulQ,p}}^\bal=\cE_\bal(\itPi_{\ulQ,p})\cdot\frac{1}{L(1/2,\itPi_{\ulQ,p})}.\]
\end{prop}
\begin{proof}We write $\itPi_p=\itPi_{\ulQ,p}$ as before. By definition, this is equivalent to proving \begin{align*}
I_p^\ord(\phi_p\ot\wtd\phi_p,\breve\bft_n)
=&\al_p(F)^{2n}\om_{F,p}^{-1/2}(-p^{2n})\abs{p^n}^{k_1+k_2+k_3}\cdot\cE_\bal(\itPi_p)\cdot \frac{\zeta_p(2)^2}{B^{[n]}_{\itPi^\ord_p}}\cdot \frac{1}{L(1/2,\itPi_p)}\\
=&\chi_1\chi_2\chi_3(-p^{2n})\abs{p}^{3n}\cdot\cE_\bal(\itPi_p)\cdot \frac{\zeta_p(2)^2}{B^{[n]}_{\itPi^\ord_p}}\cdot \frac{1}{L(1/2,\itPi_p)},
\end{align*}
where $I_p^\ord(\phi_p\ot\wtd\phi_p,\breve\bft_n)$ is the local zeta integral in \eqref{E:balpzeta}. The assumption \eqref{Hb$'$} implies that $(\pi_1,\pi_2;\pi_3)$ satisfies \eqref{Hb}, so we consider the realizations
\[\cV_{\itPi_p}=\cW(\pi_1)\boxtimes\cW(\pi_2)\boxtimes \cB(\upsilon_3,\chi_3)^0;\quad \cV_{\Contra{\itPi}_p}=\cW(\Contra{\pi}_1)\boxtimes\cW(\Contra{\pi}_2)\boxtimes \cB(\upsilon^{-1}_3,\chi^{-1}_3)_0.\]
Let $W^\ord_i=W^\ord_{\pi_i}$ and $\wtd W^\ord_i=W^\ord_i\ot\om_i^{-1}$ be the normalized ordinary Whittaker functions for $i=1,2$. Let $f^\ord_3$ be the normalized ordinary section in $\cB(\upsilon_3,\chi_3)^\ord(\chi_3)$ in \lmref{L:ordinarysection} and let $\wtd f^\ord_3:=M^*(\upsilon_3,\chi_3)f^\ord_3\ot\om_3^{-1}$.
Letting $(f^\ord_3)^0$ be the holomorphic image of $f^\ord_3$ in $\cB(\upsilon_3,\chi_3)^0$ as before,  we may take \[
\phi_p=W^\ord_1\ot W^\ord_2\ot (f^\ord_3)^0;\quad \wtd\phi_p=\wtd W^\ord_1\ot\wtd W^\ord_2\ot \wtd f^\ord_3.
\]
From the definition \eqref{E:balpzeta}, \corref{C:IchinoRS} and \eqref{E:3.imb}, we deduce that
\beq\label{E:6.bal}
\begin{aligned}&I_p^\ord(\phi_p\ot\wtd\phi_p,\breve\bft_n)\\
=&\frac{\sJ_p(\rho(u_n)W^\ord_1\ot W^\ord_2\ot \rho(\bftn)f^\ord_3,\rho(u_n)\wtd W^\ord_1\ot\wtd W^\ord_2\ot\rho(\bftn)\wtd f^\ord_3)}{\zeta_p(2)^2L(1/2,\itPi_p)\cdot B^{[n]}_{\itPi^\ord_p}}\cdot\frac{\pair{\rho(t_n)W^\ord_3}{\wtd W^\ord_3}}{\pair{\rho(t_n)f^\ord_3}{\wtd f^\ord_3}}\cdot\frac{\zeta_p(2)^3}{\zeta_p(1)^3}\\
=&\frac{I^*_p}{B^{[n]}_{\itPi^\ord_p}}\cdot\frac{\chi_3(-1)\zeta_p(1)^2}{\zeta_p(2)}\cdot\frac{\zeta_p(2)^3}{\zeta_p(1)^3}\cdot\frac{1}{L(1/2,\itPi_p)},\end{aligned}\eeq
where 
\begin{align*}
I^*_p
=&\frac{\zeta_p(1)\upsilon_3(-1)\gamma(1/2,\pi_1\ot\pi_2\ot\upsilon_3)}{\zeta_p(2)^2}\cdot \Psi(\rho(u_n)W^\ord_1,W^\ord_2,\rho(\bftn)f^\ord_3)^2.
\end{align*}
The local Rankin-Selberg integral $\Psi(\rho(u_n)W^\ord_1,W^\ord_2,\rho(\bftn)f^\ord_3)$ equals
\begin{align*}
&\int_{ZN\bksl G}W^\ord_1(g\pMX{1}{p^{-n}}{0}{1})W^\ord_2(\cJ g)f^\ord_3(g\pMX{0}{p^{-n}}{-p^n}{0})\rmd g\\
=&\frac{\zeta_p(2)}{\zeta_p(1)}\int_{\Qp^\x}\int_{\Qp}W^\ord_1(\pDII{y}{1}\pMX{1}{p^{-n}}{x}{1+xp^{-n}})W^\ord_2(\pMX{-y}{0}{x}{1})
f^\ord_3(\pDII{p^{-n}y}{p^n}w\pMX{1}{-p^{-2n}x}{0}{1})\frac{\rmd^\x y}{\abs{y}}\rmd x\\
=&\frac{\zeta_p(2)\abs{p^{2n}}\chi_3\upsilon_3^{-1}(p^n)\Abs^\onehalf(p^{-2n})}{\zeta_p(1)}\int_{\Qp^\x}\int_{\Zp}W^\ord_1(\pMX{y}{yp^{-n}(1+xp^n)^{-1}}{0}{1})W^\ord_2(\aone{-y})\upsilon_3\Abs^{-\onehalf}(y)\rmd x\rmd^\x y\\
=&\frac{\zeta_p(2)\abs{p^n}\chi_3\upsilon_3^{-1}(p^{n})\chi_2(-1)}{\zeta_p(1)}\int_{\Qp^\x}\psi(yp^{-n})\bbI_{\Zp}(y)\chi_1\chi_2\upsilon_3\Abs^\onehalf(y)\rmd^\x y\\
=&\frac{\zeta_p(2)\abs{p^n}\chi_3\upsilon_3^{-1}(p^{n})\chi_2\upsilon_3(-1)}{\zeta_p(1)}\cdot\frac{L(1/2,\chi_1\chi_2\upsilon_3)}{\varepsilon(1/2,\chi_1\chi_2\upsilon_3)L(1/2,\chi_1^{-1}\chi_2^{-1}\upsilon_3^{-1})}\int_{\Qp^\x}\bbI_{p^{-n}(-1+p^n\Zp)}(y)\chi_1^{-1}\chi_2^{-1}\upsilon^{-1}_3\Abs^\onehalf(y)\rmd^\x y\\
=&\frac{\zeta_p(2)\abs{p^n}\chi_1\chi_2\chi_3(p^n)\chi_2\upsilon_3(-1)}{\zeta_p(1)}\cdot\gamma(1/2,\chi_1^{-1}\chi_2^{-1}\upsilon_3^{-1})\cdot\frac{\abs{p}^\frac{n}{2}}{1-\abs{p}},
\end{align*}
so we find that
\begin{align*}
I^*_p=&\upsilon_3(-1)\varepsilon(1/2,\pi_1\ot\pi_2\ot\upsilon_3)\frac{L(1/2,\pi_1\ot\pi_2\ot\chi_3)}{L(1/2,\pi_1\ot\pi_2\ot\upsilon_3)}\left(\zeta_p(2)\abs{p}^\frac{3n}{2}\chi_1\chi_2\chi_3(p^n)\cdot\gamma(1/2,\chi_1^{-1}\chi_2^{-1}\upsilon^{-1}_3)\right)^2\\
=&\upsilon_3(-1)\zeta_p(2)^2\cdot \chi_1\chi_2\chi_3(p^{2n})\abs{p}^{3n}\cdot \chi_1\chi_2\upsilon_3(-1)\cE_{{\rm bal}}(\itPi_p).
\end{align*}
Substituting the above equation to \eqref{E:6.bal}, we obtain the desired formula.
\end{proof}
\begin{Remark}\label{R:padic}Keep the notation in \subsecref{S:mod.1}. For $\bullet\in\stt{\bdsf,\bal}$, we put $U_\ulQ:=\WD_p(\Fil_\bullet^+\bfV^\dagger_\ulQ)\ot_{\Qbarp,\iota_p}\C$ be the Weil-Deligne representation of $W_{\Qp}$ associated with $\Fil_\bullet^+\bfV^\dagger_\ulQ$ by Fontaine \cite[(4.2.3)]{F94}. It is not difficult to show that 
\[\cE_\bullet(\itPi_{\ulQ,p})=\frac{L(0,U_\ulQ)}{\varepsilon(U_\ulQ)L(1,U_\ulQ^\vee)},\]
and hence \[\sI^\bullet_{\itPi_\ulQ,p}=\cE_p(\Fil_\bullet^+\bfV_\ulQ^\dagger).\] For example, if $\bullet=\bal$ and $\pi_i=\chi_i\Abs^{-\onehalf}{\rm St}$ are special for $i=1,2,3$, then $\dim U_\ulQ^{\rm N=0}=3$, where ${\rm N}$ is the monodromy operator, and one verifies that $L(s-\onehalf,U_\ulQ)=L(s,\chi_1\chi_2\chi_3)L(s,\chi_1\chi_2\upsilon_3)^2$, $L(s+\onehalf,U^\vee_\ulQ)=L(s,\upsilon_1\upsilon_2\chi_3)^3$ and $\varepsilon(U_\ulQ)=\lim_{s\to 0}L(\onehalf-s,\chi_1^{-1}\chi_2^{-1}\chi_3^{-1})/L(s+\onehalf,\chi_1\chi_2\upsilon_3)=-\chi_1\chi_2\chi_3\Abs^{-1/2}(p)$.
\end{Remark}

\section{The calculation of local zeta integrals (II)}\label{S:local2}
\subsection{Setting}\label{SS:61}We continue to let $F=(f,g,h)$ be the specialization of $\bdsF=(\bdsf,\bdsg,\bdsh)$ at a classical point $\ulQ=(\Qx,\Qy,\Qz)$. In this section, we assume the following \emph{minimal} hypothesis for the unitary automorphic representations $(\pi_f,\pi_g,\pi_h)$ attached to $(f,g,h)$
\begin{hypothesis}\label{H:ram}For each prime $\pmq\divides N$, there exists a rearrangement $\stt{f_1,f_2,f_3}$ of $\stt{f,g,h}$ such that \begin{enumerate}\item $c_\pmq(\pi_{f_1})\leq\min\stt{c_\pmq(\pi_{f_2}),c_\pmq(\pi_{f_3})}$, 
\item the local components $\pi_{f_1,\pmq}$ and $\pi_{f_3,\pmq}$ are minimal, 
\item either $\pi_{f_3,\pmq}$ is a principal series or $\pi_{f_2,\pmq}$ and $\pi_{f_3,\pmq}$ are both discrete series.\end{enumerate}
\end{hypothesis}
Recall that an irreducible admissible representation $\pi$ of $\GL_2(\Qq)$ is minimal if the conductor $c(\pi)$ is minimal among the twists $\pi\ot\chi$ for all characters $\chi:\Qq^\x\to\C^\x$. 

\begin{Remark}\label{R:hyp}Note that if the above hypothesis holds for $(f,g,h)$, then it also holds for specializations of $(\bdsf,\bdsg,\bdsh)$ at any classical point by \remref{R:rigidity}. Moreover, we observe that one can always find Dirichlet characters $\chi_1$, $\chi_2$ and $\chi_3$ modulo some $M$ with $M^2\divides N$ such that $\chi_1\chi_2\chi_3=1$ and $(\pi_f\ot\chi_1,\pi_g\ot\chi_2,\pi_h\ot\chi_3)$ satisfies \hypref{H:ram}.
\end{Remark}
 As before, we let $\pi_1=\pi_{\fQx,\pmq}\ot\om_{\fgh,\pmq}^{-1/2}$, $\pi_2=\pi_{\gQy,\pmq}$ and $\pi_3=\pi_{\hQz,\pmq}$; let  $\itPi_\pmq=\itPi_{\ulQ,\pmq}=\pi_1\times\pi_2\times\pi_3$. Let $\pmq$ be a prime factor of $N$. Suppose that 
\[ \varepsilon(1/2,\itPi_\pmq)=+1\quad (\pmq\not\in\Sigma^-).\]
The purpose of this section is to evaluate the local zeta integral defined in \eqref{E:localzeta}
\[I_\pmq(\phi^\star_\pmq\ot\wtd\phi^\star_\pmq)=
\frac{L(1,\itPi_\pmq,\ad)}{\zeta_\pmq(2)^2L(1/2,\itPi_\pmq)}\int_{\PGL_2(\Q_\pmq)}\frac{\bfb_\pmq(\itPi_\pmq(g_\pmq)\phi^\star_\pmq,\wtd\phi^\star_\pmq)}{\bfb_\pmq(\itPi_\pmq(\Tau_{\ulN,\pmq})\phi_\pmq,\wtd\phi_\pmq)}\rmd g_\pmq\]  under \hypref{H:ram}. For $i=1,2,3$, let $c_i=c(\pi_i)$ be the exponent of the conductors. Note that $\om_{\fgh,\pmq}^{1/2}$ is unramified, so under \hypref{H:ram} and the condition \eqref{sf},
 we may assume by symmetry that 
\[c_1\leq \min\stt{c_2,c_3,1};
 \quad \pi_3\text{ is minimal},\]
and that $\stt{\pi_1,\pi_2,\pi_3}$ satisfies one of the following conditions:
\begin{itemize}
\item Case (Ia):  $\pi_3=\Prin{\Mu_3}{\upsilon_3}$ is a principal series with $\chi_3$ unramified character of $\Q_\pmq^\x$.
\item Case (Ib):  $\pi_1,\pi_2$ and $\pi_3$ are discrete series.
\item Case (IIa): $\pi_1$ is a principal series; $\pi_2$ and $\pi_3$ are discrete series with $L(s,\pi_2\ot\pi_3)\not=1$. 
\item Case (IIb): $\pi_1$ is a principal series; $\pi_2$ and $\pi_3$ are discrete series with $L(s,\pi_2\ot\pi_3)=1$. 
\end{itemize}
For $i=1,2,3$, let $\xi_i\in \cV_{\pi_i}^{\rm new}$ and $\wtd\xi_i\in \Contra{\pi}_i(\tau_{c_i})\cV_{\Contra{\pi}_i}^{\rm new}$ be new vectors. Set \[c^*=\max\stt{c_2,c_3}>0.\] We recall the following choices of local test vectors $\phi^\star_\pmq\in\cV_{\itPi_\pmq}$ and $\wtd\phi^\star_\pmq\in\cV_{\Contra{\itPi}_\pmq}$ in \eqref{E:factorization1} and \eqref{E:factorization2} according to the polynomials $\cQ_{i,\pmq}(X)$ for $i=1,2,3$ in \eqref{E:defQ}. Put
\[w=\pMX{0}{1}{-1}{0},\quad \eta=\pDII{\pmq^{-1}}{1}\text{ and }\tau_n=\pMX{0}{1}{-\pmq^n}{0}\text{ for }n\in\Z.\]
\begin{itemize}
\item Case (Ia) and (Ib):
\begin{align*}\phi^\star_\pmq=&\xi_1\ot\pi_2(\eta^{c^*-c_2})\xi_2\ot\pi_3(\eta^{c^*-c_3})\xi_3,\\
\wtd\phi^\star_\pmq=&\om_2(\pmq^{c_2-c^*})\om_3(\pmq^{c_3-c^*})\cdot \wtd\xi_1\ot\Contra{\pi}_2(\eta^{c^*-c_2})\wtd\xi_2\ot\Contra{\pi}_3(\eta^{c^*-c_3})\wtd \xi_3.
\end{align*}
\item Case (IIa): Let $r={\lceil\frac{c^*}{2}\rceil}$. Then
\begin{align*}\phi^\star_\pmq=\pi_1(\eta^r)\xi_1\ot\xi_2\ot\xi_3,\quad
\wtd\phi^\star_\pmq=\om_1(\pmq^{-r})\cdot\Contra{\pi}_1(\eta^r)\wtd\xi_1\ot\wtd\xi_2\ot\wtd\xi_3.
\end{align*}
\item Case (IIb): 
If $c_1=0$, then let $\upsilon_1:\Qq^\x\to\C^\x$ be the unramified character with $\upsilon_1(\pmq)=\beta_\pmq(f)\abs{\pmq}^\frac{k_1-1}{2}$, where $\beta_\pmq(f)$ is the specialization of $\beta_\pmq(\bdsf)$ at $\Qx$ in \defref{D:testunb} and we have
\begin{align*}\phi^\star_\pmq=&(\pi_1(\eta^{c^*})\xi_1-\upsilon_1^{-1}\Abs^{\onehalf}(\pmq)\pi_1(\eta^{c^*-1})\xi_1)\ot\xi_2\ot\xi_3,\\
\wtd\phi^\star_\pmq=&\om_1(\pmq^{-c^*})\cdot(\Contra{\pi}_1(\eta^{c^*})\wtd\xi_1-\om_1\upsilon_1^{-1}\Abs^{\onehalf}(\pmq)\Contra{\pi}_1(\eta^{c^*-1})\wtd\xi_1)\ot\wtd\xi_2\ot\wtd\xi_3.\\
\end{align*}
If $c_1>0$, then 
\[\phi^\star=\pi_1(\eta^{c^*-c_1})\xi_1\ot\xi_2\ot\xi_3,\quad \wtd\phi^\star_\pmq=\om_1(\pmq^{c_1-c^*})\cdot\Contra{\pi}_1(\eta^{c^*-c_1})\wtd\xi_1\ot\wtd\xi_2\ot\wtd\xi_3.\]
\end{itemize}
In what follows, we let $W_i=W_{\pi_i}\in\cW(\pi_i)^{\rm new}$ be the normalized Whittaker newforms and let $\wtd W_i=W_{\pi_i}\ot\om_i^{-1}$ for $i=1,2,3$. For a non-negative integer $n$, put
\[\cU_0(\pmq^n)=\GL_2(\Z_\pmq)\cap\pMX{\Z_\pmq}{\Z_\pmq}{\pmq^n\Z_\pmq}{\Z_\pmq}.\]

\subsection{The ramified case (Ia)}\label{S:Ia}
In the case (Ia), $\pi_3=\Prin{\chi_3}{\upsilon_3}$ is a principle series with $\cond{\Mu_3}=0$.

\begin{prop}\label{P:ramifiedI} In case (Ia), we have 
\begin{align*}
I_\pmq(\phi^\star_\pmq\ot\wtd\phi^\star_\pmq)
=& \varepsilon(1/2,\pi_1\ot\pi_2\ot\Mu_3)\cdot \chi_3^{-2}\Abs(\pmq^{c^*})\om_3(-1)\varepsilon(1/2,\pi_3)^2\cdot \frac{1}{B_{\itPi_\pmq}}\cdot\frac{\zeta_\pmq(2)^2}{\zeta_\pmq(1)^2}.
\end{align*}
\end{prop}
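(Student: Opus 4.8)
The strategy is to combine the reduction in \propref{P:IRSintegral} (via \corref{C:IchinoRS}) with an explicit computation of a single local Rankin--Selberg integral. First I would pick convenient realizations: since $\pi_3=\Prin{\chi_3}{\upsilon_3}$ with $\chi_3$ unramified and $\chi_3\upsilon_3^{-1}\neq\Abs$ (the conductor hypothesis guarantees $\pi_3$ is an irreducible principal series), use $\cV_{\itPi_\pmq}=\cW(\pi_1)\boxtimes\cW(\pi_2)\boxtimes\cB(\upsilon_3,\chi_3)^0$ and the contragredient realization $\cV_{\Contra\itPi_\pmq}=\cW(\Contra\pi_1)\boxtimes\cW(\Contra\pi_2)\boxtimes\cB(\upsilon_3^{-1},\chi_3^{-1})_0$, with pairing $\bfb_\pmq$ the product of the standard Whittaker pairings \eqref{E:Wpair} and the induced-model pairing $\pairing$ on $\cB(\upsilon_3,\chi_3)^0\times\cB(\upsilon_3^{-1},\chi_3^{-1})_0$. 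Under these choices $\phi^\star_\pmq$ corresponds to $W_1\ot\rho(\eta^{c^*-c_2})W_2\ot f_3^{\rm ord}$-type vectors, where I take for the third slot the section $f_3$ whose holomorphic image in $\cB(\upsilon_3,\chi_3)^0$ realizes the chosen new-vector-based test vector; its partner $\wtd f_3$ will be $M^*(\upsilon_3,\chi_3)f_3\ot\om_3^{-1}$ up to the appropriate scalar. In case (Ib) one argues identically but with $\pi_3$ replaced by its $\cB(\chi_3,\upsilon_3)$-realization and using that $\pi_3$, being a discrete series of a specialization of a Hida family, is a constituent of some $\cB(\chi_3,\upsilon_3)$; the hypothesis \eqref{Hb} for $(\pi_1,\pi_2;\pi_3)$ holds in both subcases because each $\pi_i$ is a local component of a unitary cuspidal automorphic representation.

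The second step is to invoke \corref{C:IchinoRS}: provided $(\pi_1,\pi_2;\pi_3)$ satisfies \eqref{Hb} and $\chi_3\upsilon_3^{-1}\neq\Abs$, the local trilinear integral $\sJ_\pmq$ evaluated on the test vectors equals $\zeta_\pmq(1)\chi_3(-1)\gamma(1/2,\pi_1\ot\pi_2\ot\chi_3)$ times the square of the Rankin--Selberg integral $\Psi(W_1,\rho(\eta^{c^*-c_2})W_2,\rho(\text{shift})f_3)$. Relating $\sJ_\pmq$ to the integral $I_\pmq(\phi^\star_\pmq\ot\wtd\phi^\star_\pmq)$ in \eqref{E:localzeta} requires only bookkeeping: dividing by the normalizing pairing $\bfb_\pmq(\itPi_\pmq(\Tau_{\ulN,\pmq})\phi_\pmq,\wtd\phi_\pmq)$, which is itself a product of the local norms $B_{\pi_{i,\pmq}}$ appearing in \eqref{E:localnormnew} (whose product is $B_{\itPi_\pmq}$), the $L$- and $\zeta$-factor prefactors $L(1,\itPi_\pmq,\Ad)/(\zeta_\pmq(2)^2 L(1/2,\itPi_\pmq))$, and the measure conventions of \subsecref{SS:measure}. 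The key arithmetic input is then the explicit value of the Rankin--Selberg integral $\Psi$ on these shifted new vectors; this is exactly the kind of computation governed by Jacquet's local theory \cite{Jacquet72Part2} and the $\GL(2)\times\GL(2)$ local $L$-/$\varepsilon$-factor formulae in \cite[Prop.~1.4]{GJ78}. One uses the integration formula of \subsecref{S:notation.5}, the explicit formulas for Whittaker newforms (\cite[\S2.4]{Schmidt02RJ}), the effect of the shift operators $\eta^{c^*-c_i}$ on new vectors (which amounts to tracking supports and $\bfU_\pmq$-eigenvalues), and the fact that the Godement section attached to the characteristic function of a lattice realizes $f_3$. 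This produces $\Psi$ as an explicit monomial in $\pmq$ times a ratio of $L$-values, whose square combined with $\gamma(1/2,\pi_1\ot\pi_2\ot\chi_3)=\varepsilon(1/2,\pi_1\ot\pi_2\ot\chi_3)L(1/2,\Contra\pi_1\ot\Contra\pi_2\ot\chi_3^{-1})/L(1/2,\pi_1\ot\pi_2\ot\chi_3)$ and the functional-equation relation $\varepsilon(1/2,\pi_3)^2$ coming from $M^*(\upsilon_3,\chi_3)$ collapses to the asserted closed form after cancelling $L$-factors.

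I expect the main obstacle to be the precise evaluation of the shifted Rankin--Selberg integral $\Psi(W_1,\rho(\eta^{c^*-c_2})W_2,\rho(w\tau)f_3)$ and the careful tracking of all normalizing constants --- in particular the powers of $\pmq$ arising from the level-raising shifts $\eta^{c^*-c_i}$ together with the Atkin--Lehner element $\Tau_{\ulN,\pmq}$ in the denominator of \eqref{E:localzeta}, and the exact matching of $\wtd\phi^\star_\pmq$ with $M^*(\upsilon_3,\chi_3)f_3\ot\om_3^{-1}$ (including the scalar $\om_2(\pmq^{c_2-c^*})\om_3(\pmq^{c_3-c^*})$ in the definition of $\wtd\phi^\star_\pmq$). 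Case (Ib) adds the complication that $\pi_3$ is now discrete series, so one must also use the explicit value of $L(1/2,\pi_1\ot\pi_2\ot\chi_3)$ and the degenerate form of the intertwining operator on the Steinberg (or special) constituent; but since $\pi_3$ is minimal by hypothesis, its Whittaker newform is supported on $\Z_\pmq$ and the integral simplifies. Throughout I would lean on \remref{R:rigidity} to ensure the automorphic type of each $\pi_{i,\pmq}$ is constant over the family, so that the identity, once proved at one classical point, is the identity needed for the interpolation argument in \secref{SS:6.6}.
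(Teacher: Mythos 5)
Your skeleton is in fact the paper's route (Whittaker models on the first two factors, the induced model on the third, reduction via \corref{C:IchinoRS} to the square of a local Rankin--Selberg integral times a $\gamma$-factor, then a change-of-model bookkeeping), but the proposal stops exactly where the content of the proposition lies, and two of its assertions are wrong as stated. First, the test vector: in case (Ia) one has $c_3=c(\om_3)\le c_2$, so $c^*=c_2$ and the nontrivial level shift sits on the \emph{third} slot, $\pi_3(\eta^{c^*-c_3})\xi_3$; your vector $W_1\ot\rho(\eta^{c^*-c_2})W_2\ot f_3$ carries a trivial shift on $W_2$ and none on $f_3$ (and calling the third component ``$f_3^{\rm ord}$'' imports the structure of the $p$-adic place -- at $\pmq\ne p$ the relevant section is the \emph{new} section). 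The factor $\chi_3^{-2}\Abs(\pmq^{c^*})$ in the claimed formula comes precisely from working with $f_3^\star=\rho(\pDII{\pmq^{c_3-c_2}}{1})f_3$, so with your choice the monomial in $\pmq$ would come out wrong. Second, the normalization: in your realization the third-slot contribution to the denominator $\bfb_\pmq(\itPi_\pmq(\Tau_{\ulN,\pmq})\phi_\pmq,\wtd\phi_\pmq)$ of \eqref{E:localzeta} is $\pair{\rho(\tau_{c_3})f_3}{\wtd f_3}$ in the induced model, \emph{not} the Whittaker pairing defining $B_{\pi_3}$, so the claim that this denominator ``is itself a product of the local norms $B_{\pi_{i,\pmq}}$'' is false; to land on $B_{\itPi_\pmq}$ one must compute the ratio $\pair{\rho(\tau_{c_3})W_3}{\wtd W_3}/\pair{\rho(\tau_{c_3})f_3}{\wtd f_3}$, which is exactly \lmref{L:basic3}(1) and is where the constants $\varepsilon(1/2,\pi_3)^2\om_3(-1)\chi_3^{2}\Abs(\pmq^{-c_3})L(1,\chi_3\upsilon_3^{-1})^2$ of the final answer enter. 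Attributing $\varepsilon(1/2,\pi_3)^2$ vaguely to ``$M^*(\upsilon_3,\chi_3)$'' does not substitute for this computation.

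The deeper gap is the evaluation of $\Psi(W_1,W_2,f_3^\star)$ itself. When $\upsilon_3$ is ramified the new section is supported in $B\,\cU_0(\pmq^{c_3})$ and the integral collapses easily, but when $\upsilon_3$ is unramified (so $f_3$ is spherical and not supported in a single Bruhat cell) and $\pi_2$ is a ramified discrete series, one must decompose the unipotent integration into shells $x\in\pmq^n\Z_\pmq^\x$ and prove that the intermediate shells $1\le n\le c_2-1$ all vanish except at $n=c_2-1$, where an explicit value survives; this is the content of \lmref{L:Kirillov.local}, proved via the local functional equation in the Kirillov model, and nothing in your sketch (``explicit formulas for Whittaker newforms'', ``tracking supports and $\bfU_\pmq$-eigenvalues'') produces it. Without this input, the statement that $\Psi$ ``produces an explicit monomial in $\pmq$ times a ratio of $L$-values'' which then ``collapses to the asserted closed form'' is an assertion, not a proof. (Your aside that case (Ib) is handled identically also fails: there $\pi_3$ may be supercuspidal, hence not a constituent of any $\cB(\chi_3,\upsilon_3)$; the paper instead realizes the unramified special $\pi_1$ in the induced model -- but that is a separate proposition.)
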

\begin{proof}  In this case, $c_3=\cond{\om_3}=\cond{\om_1\om_2}\leq c_2$, so $c^*=c_2$. We use the realizations
\[\cV_{\itPi_\pmq}=\cW(\pi_1)\boxtimes\cW(\pi_2)\boxtimes \cB(\chi_3,\upsilon_3);\quad \cV_{\Contra{\itPi}_\pmq}=\cW(\Contra{\pi}_1)\boxtimes\cW(\Contra{\pi}_2)\boxtimes \cB(\chi_3^{-1},\upsilon_3^{-1}).\]
Let $f_3\in \sB(\Mu_3,\upsilon_3)^{\rm new}$ be the new section normalized so that $f_3(1)=1$ and $\wtd f_3=M^*(\chi_3,\upsilon_3)f_3\ot\om_3^{-1}$. Let \[f^\star_3=\rho(\pDII{\pmq^{c_3-c_2}}{1}f_3;\quad \wtd f^\star_3=\om_3(\pmq^{c_3-c_2})\cdot\rho(\pDII{\pmq^{c_3-c_2}}{1}\wtd f_3=M^*(\Mu_3,\upsilon_3)f^\star_3\ot\om_3^{-1}.\] We thus have
\[\phi^\star_\pmq=W_1\ot W_2\ot f^\star_3;\quad \wtd\phi^\star_\pmq=\wtd W_1\ot\wtd W_2\ot \wtd f^\star_3.\]
By \corref{C:IchinoRS}, 
\beq\label{E:1.local2}\begin{aligned}I_\pmq(\phi^\star_\pmq\ot\wtd\phi^\star_\pmq)=&\frac{\sJ_\pmq(W_1\ot W_2\ot f^\star_3,\wtd W_1\ot \wtd W_2\ot\wtd f^\star_3)}{\zeta_\pmq(2)^2L(1/2,\itPi_\pmq)\cdot B_{\itPi_\pmq}}\cdot\frac{\pair{\rho(\tau_{\cpithree})W_3}{\wtd W_3}}
{\pair{\rho(\tau_{\cpithree})f_3}{\wtd f_3}}\cdot\frac{\zeta_\pmq(2)^3}{\zeta_\pmq(1)^3}\\
=&\frac{I^*_\pmq}{B_{\itPi_\pmq}}\cdot\frac{\pair{\rho(\tau_{\cpithree})W_3}{\wtd W_3}}
{\pair{\rho(\tau_{\cpithree})f_3}{\wtd f_3}}\cdot\frac{\zeta_\pmq(2)^3}{\zeta_\pmq(1)^3}
,\end{aligned}\eeq
where \[I^*_\pmq=
\frac{\zeta_\pmq(1)\cdot\chi_3(-1)\gamma(1/2,\pi_1\ot\pi_2\ot\chi_3)}{\zeta_\pmq(2)^2L(1/2,\itPi_\pmq)}\cdot \Psi(W_1,W_2,f_3)^2
\]
There are three subcases:
\begin{itemize}
\item[(a)] $\upsilon_3$ is ramified,
\item[(b)] $\upsilon_3$ is unramified and $L(s,\pi_2)=L(s,\chi_2)$ for some unramified character $\chi_2$,
\item[(c)] $\upsilon_3$ is unramified and $L(s,\pi_2)=1$.
\end{itemize}

Subcase (a): In this case, $f_3\in\sB(\Mu_3,\upsilon_3)$ is given by
\[f_3(\pMX{1}{0}{x}{1})=\bbI_{q^{c_3}\Z_\pmq}(x)\]
by \cite[Prop.\,2.1.2]{Schmidt02RJ}. We have $W_2(\pDII{y}{1})=\bbI_{\Z_\pmq^\x}(y)$ if $L(s,\pi_2)=1$ and $W_2(\pDII{y}{1})=\Mu_2\Abs^\onehalf(y)\bbI_{\Z_\pmq}(y)$ if $L(s,\pi_2)=L(s,\Mu_2)$ for some unramified character $\Mu_2$. In any case, the integral $\Psi(W_1,W_2,f^\star_3)$ equals 
\begin{align*}
&\frac{\zeta_\pmq(2)\Mu_3\Abs^\onehalf(\pmq^{c_3-c_2})}{\zeta_\pmq(1)}\int_{\Qq^\x}\int_{\Qq}W_1(\pDII{y}{1}\pMX{1}{0}{x}{1})W_2(\pDII{-y}{1}\pMX{1}{0}{x}{1})\Mu_3\Abs^{-\onehalf}
(y)f_3(\pMX{1}{0}{\pmq^{c_3-c_2}x}{1})\rmd x\rmd^\x y\\
=&\frac{\zeta_\pmq(2)\Mu_3\Abs^\onehalf(\pmq^{c_3-c_2})}{\zeta_\pmq(1)}\abs{\pmq}^{c_2}\int_{\Qq^\x}W_1(\pDII{y}{1})W_2(\pDII{-y}{1})\Mu_3\Abs^{-\onehalf}(y)\rmd^\x y.\end{align*}
Note that $\pi_1$ and $\pi_2$ can not be both unramified special representations as $c(\pi_1)\leq 1$ and $\upsilon_3$ is ramified. A standard calculation together with the recipe of local $L$-factors for $\GL(2)\times\GL(2)$ in \cite[Proposition 1.4]{GJ78} shows that
\[\int_{\Qq^\x}W_1(\pDII{y}{1})W_2(\pDII{-y}{1})\Mu_3\Abs^{-\onehalf}(y)\rmd^\x y=L(1/2,\pi_1\ot\pi_2\ot\Mu_3).\]
We obtain
\[\Psi(W_1,W_2,f^\star_3)=\frac{\zeta_\pmq(2)\Mu_3\Abs^\onehalf(\pmq^{c_3-c_2})}{\zeta_\pmq(1)}\abs{\pmq}^{c_2}L(1/2,\pi_1\ot\pi_2\ot\Mu_3),\]
and hence
\[I^*_\pmq=\Mu_3^{-2}\Abs(\pmq^{c_2})\cdot\Mu_3^2\Abs(\pmq^{c_3})\cdot\varepsilon(1/2,\pi_1\ot\pi_2\ot\Mu_3).\]
Substituting the above equation and the formula \lmref{L:basic3} below to \eqref{E:1.local2}, we obtain the expression of $I_\pmq(\phi^\star_\pmq\ot\wtd\phi^\star_\pmq)$ as claimed in this subcase.

Subcase (b) and (c): Next we consider the case $\upsilon_3$ is unramified, so $\pi_1$ and $\pi_3$ are spherical ($c_1=c_3=0$). Note that in Subcase (b) where $L(s,\pi_2)=L(s,\chi_2)$ for $\chi_2$ an unramified character, we must have $\pi_2=\chi_2\Abs^{-\onehalf}{\rm St}$ is an unramified special representation. Define the function $\sF:ZN\bksl G/K_0(\pmq^{c_2})\to\C$ by
\[\sF(g)=W_1(g)W_2(\pDII{-1}{1}g)f_3(g\pDII{\pmq^{-c_2}}{1}).\]
We have
\begin{align*}
\Psi(W_1,W_2,f^\star_3)=&\frac{\zeta_\pmq(2)}{\zeta_\pmq(1)}\int_{\Q_\pmq^\x}\int_{\Q_\pmq}\sF(\pDII{y}{1}\pMX{1}{0}{x}{1})\rmd x\frac{\rmd^\x y}{\abs{y}}\\
=&\frac{\zeta_\pmq(2)}{\zeta_\pmq(1)}\cdot (J_0^-+J_{c_2}^++\sum_{n=1}^{c_2-1}J_n),\end{align*}
where
\begin{align*}
J_0^-=&\int_{\Q_\pmq^\x}\int_{\abs{x}\geq 1}\sF(\pDII{y}{1}\pMX{1}{0}{x}{1})\rmd x\frac{\rmd^\x y}{\abs{y}},\\
J_n=&\int_{\Q_\pmq^\x}\int_{\pmq^n\Z_\pmq^\x}\sF(\pDII{y}{1}\pMX{1}{0}{x}{1})\rmd x\frac{\rmd^\x y}{\abs{y}},\\
J_{c_2}^+=&\int_{\Q_\pmq^\x}\int_{\abs{x}\leq \abs{\pmq}^{c_2}}\sF(\pDII{y}{1}\pMX{1}{0}{x}{1})\rmd x\frac{\rmd^\x y}{\abs{y}}.\end{align*}
Using the identity \[\pDII{y}{1}\pMX{1}{0}{x}{1}=(-x)\cdot \pMX{yx^{-2}}{x^{-1}}{0}{1}w\pMX{1}{x^{-1}}{0}{1}\]
and the formula \[W_2(\pDII{y}{1}w)=
\begin{cases} -\abs{\uf}\chi_2\Abs^\onehalf(y)\bbI_{\Z_\pmq}(y)&\text{ in subcase (b)},\\
\varepsilon(1/2,\pi_2)\cdot\om_2(\pmq^{-c_2})\bbI_{\pmq^{-c_2}\Z_\pmq^\x}(y)&\text{ in subcase (c)},\end{cases}\]
we find that
\begin{align*}
J_0^-=&\int_{\Q_\pmq^\x}\sF(\pDII{y}{1}w)\frac{\rmd^\x y}{\abs{y}}\\
=&\upsilon_3^{-1}\Abs^\onehalf(\pmq^{c_2})\int_{\Q_\pmq^\x}W_1(\pDII{y}{1})W_2(\pDII{y}{1}w)\chi_3\Abs^{-\onehalf}(y)\rmd^\x y\\
=&-\abs{\pmq}\cdot\upsilon_3^{-1}\Abs^\onehalf(\pmq^{c_2})\cdot \begin{cases} 
L(1/2,\pi_1\ot\chi_2\chi_3)&\text{ in subcase (b),}\\
0&\text{ in subcase (c)}.
\end{cases}
\end{align*}
On the other hand, it is easy to see that
\begin{align*}
J_{c_2}^+=&\abs{\pmq}^{c_2}\int_{\Q_\pmq^\x}\sF(\pDII{y}{1})\frac{\rmd^\x y}{\abs{y}}=\chi_3^{-1}\Abs^\onehalf(\pmq^{c_2})L(1/2,\pi_1\ot\pi_2\ot \chi_3).\end{align*}
It remains to calculate $J_n$ in subcase (c). We have
\begin{align*}
J_n
=&\sum_{m\in\Z} (1-\abs{\pmq})\abs{\pmq}^{n}\chi_3\Abs^\onehalf(\pmq^{-c_2})W_1(\pDII{\pmq^m}{1})\chi_3\Abs^{-\onehalf}(\pmq^m)f_3(\pMX{1}{0}{\pmq^{n-c_2}}{1})A^{(m)}_n(\bfone),
\end{align*}
where
\[A^{(m)}_n(\bfone)=\int_{\Z_\pmq^\x}W_2(\pDII{\pmq^m y}{1}\pMX{1}{0}{\pmq^{n}}{1})\rmd^\x y.\]
By \lmref{L:Kirillov.local} below, we find that $J_n=0$ unless $n=c_2-1$ and \[J_{c_2-1}=\chi_3^{-1}\Abs^\onehalf(\pmq^{c_2})\cdot\chi_3\upsilon_3^{-1}\Abs(\pmq).\] 
Combining the above calculations, in either subcase (b) or subcase (c), we obtain 
\begin{align*}\Psi(W_1,W_2,f_3)=&\frac{\zeta_\pmq(2)}{\zeta_\pmq(1)}\cdot (J_0^-+J_{c_2}^++\sum_{n=1}^{c_2-1}J_n)=
\frac{\zeta_\pmq(2)\chi_3^{-1}\Abs^\onehalf(\pmq^{c_2})}{\zeta_\pmq(1)}\cdot\frac{L(1/2,\pi_1\ot\pi_2\ot\chi_3)}{L(1,\Mu_3\upsilon_3^{-1})}.
\end{align*}
This shows that 
\[I^*_\pmq=\frac{\zeta_\pmq(1)\gamma(1/2,\pi_1\ot\pi_2\ot\chi_3)\Psi(W_1,W_2,f^\star_3)^2}{\zeta_\pmq(2)^2L(1/2,\itPi_\pmq)}=
\frac{\chi_3^{-2}\Abs(\pmq^{c_2})\varepsilon(1/2,\pi_1\ot\pi_2\ot\chi_3)}{\zeta_\pmq(1)L(1,\chi_3\upsilon_3^{-1})^2}.
\]
The above equation with \lmref{L:basic3} below and \eqref{E:1.local2} yield that
\[I_\pmq(\phi^\star_\pmq\ot\wtd\phi^\star_\pmq)= \chi_3^{-2}\Abs(\pmq^{c_2})\varepsilon(1/2,\pi_1\ot\pi_2\ot\Mu_3)\cdot\frac{1}{B_{\itPi_\pmq}}\cdot\frac{\zeta_\pmq(2)^2}{\zeta_\pmq(1)^2}.\]
This completes the proof.
\end{proof}

\begin{lm}\label{L:basic3}Let $\pi$ be a constituent of $\cB(\chi,\upsilon)$ of central character $\om$. Suppose that $\chi$ is unramified. Let $c=c(\pi)$ be the exponent of the conductor. Let $W_\pi$ be the new vector in $\sW(\pi)^{\rm new}$ with $W_\pi(1)=1$ and $\wtd W_\pi=W_\pi\ot\om^{-1}$. Let $f\in\cB(\upsilon,\chi)$ and $\wtd f=M^*(\chi,\upsilon)f\ot\om^{-1}$. \begin{enumerate}
\item Suppose that $\pi$ is a principal series and $f\in\cB(\chi,\upsilon)^{\rm new}$ is the new section with $f(1)=1$. Then 
\[\frac{\pair{\rho(\tau_c)W_\pi}{\wtd W_\pi}}{\pair{\rho(\tau_{c})f}{\wtd f}}=\Mu^2\Abs(\pmq^{-c})\varepsilon(1/2,\pi)^2\om(-1)\cdot L(1,\chi\upsilon^{-1})^2\cdot \frac{\zeta_\pmq(1)^2}{\zeta_\pmq(2)}.\]
\item Suppose that $\pi$ is an unramified special representation with $\chi\upsilon^{-1}=\Abs^{-1}$, \ie $\pi=\upsilon\Abs^{-\onehalf}{\rm St}$. Let $f$ be the section in $\cB(\chi,\upsilon)^{\cU_0(\pmq)}$ with $f(w)=1$. Then
\[\frac{\pair{\rho(\tau_c)W_\pi}{\wtd W_\pi}}{\pair{\rho(\tau_c)f}{\wtd f}}=\frac{\zeta_\pmq(1)^2}{\zeta_\pmq(2)}.\]\end{enumerate}
\end{lm}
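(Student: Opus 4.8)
The plan is to compute the numerator $\pair{\rho(\tau_c)W_\pi}{\wtd W_\pi}$ and the denominator $\pair{\rho(\tau_c)f}{\wtd f}$ of the ratio separately, and then divide. For the numerator I would proceed exactly as in the proof of \lmref{L:ordlocalnorm}: unfolding the definition \eqref{E:Wpair} of the local Whittaker pairing, inserting $\Abs^{s-1/2}(y)$ and specializing at $s=1$, and using that $\chi$ is unramified to control $W_\pi$ on $\pDII{\Z_\pmq^\x}{1}\tau_c$, one rewrites $\pair{\rho(\tau_c)W_\pi}{W_\pi\ot\om^{-1}}$ as the value at $s=1$ of $\int_{\Q_\pmq^\x}W_\pi(\pDII{y}{1}\tau_c)\,\chi\om^{-1}(-y)\Abs^{s-\onehalf}(y)\,\rmd^\x y$. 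Applying the $\GL(2)$ local functional equation of \cite[Theorem 2.18]{JacquetLanglands70}, together with the fact that the Mellin transform of the Whittaker newform is exactly $L(s,\pi)$, this evaluates in case (1) to an explicit product of $\varepsilon(1/2,\pi)$, a power of $\pmq$ coming from the Atkin--Lehner element $\tau_c$, and the factor $L(1,\chi\upsilon^{-1})$ (together with the relevant $L$-factors of $\pi$); in case (2) one substitutes instead the explicit Steinberg formulae. This is the routine part, and it is essentially the newform analogue of \lmref{L:ordlocalnorm}.

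For the denominator in case (1) I would use the formula $\pair{\rho(\tau_c)f}{\wtd f}=\int_K (\rho(\tau_c)f)(k)\,(M^*(\chi,\upsilon)f)(k)\,\om^{-1}(\det k)\,\rmd k$ with $\wtd f=M^*(\chi,\upsilon)f\ot\om^{-1}\in \cB(\chi^{-1},\upsilon^{-1})$, together with the explicit description of the new section $f\in \cB(\chi,\upsilon)^{\rm new}$ on $K$ and of its image under the normalized intertwining operator. The values of $M^*(\chi,\upsilon)f$ on Iwasawa-decomposition representatives reduce, via the defining integral \eqref{E:intertwining.l}, to a Gindikin--Karpelevi\v{c}-type local integral; because $\chi$ is unramified this integral telescopes and produces precisely the factor $L(1,\chi\upsilon^{-1})$, hence $L(1,\chi\upsilon^{-1})^2$ after pairing. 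The integral over $K$ then collapses to a finite sum over representatives for $K$ modulo a congruence subgroup of level $\pmq^c$, which is elementary. Dividing numerator by denominator, the powers of $\pmq$ and the $L(1,\chi\upsilon^{-1})$ factors combine, and one finishes with the character-theoretic bookkeeping $\varepsilon(1/2,\Prin{\chi}{\upsilon})=\varepsilon(1/2,\upsilon)$ (valid since $\chi$ is unramified), $\varepsilon(1/2,\upsilon)\varepsilon(1/2,\upsilon^{-1})=\upsilon(-1)$, and the relation $\om(-1)=\chi(-1)\upsilon(-1)$, to reach the stated expression with $\varepsilon(1/2,\pi)^2\om(-1)$. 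One may also shortcut the explicit formulae for $f|_K$ and $M^*(\chi,\upsilon)f|_K$ by quoting \cite[Section 2.1--2.4]{Schmidt02RJ}.

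For case (2), $\pi=\upsilon\Abs^{-\onehalf}{\rm St}$ with $\upsilon$ unramified and $c=1$. I would use the two-dimensional model of $\cB(\chi,\upsilon)^{\cU_0(\pmq)}$, express the section with $f(w)=1$ in terms of the standard basis (the ``spherical-type'' vector and its $\rho(w)$-twist), and compute $\pair{\rho(\tau_1)f}{\wtd f}$ directly; since the Steinberg $L$- and $\varepsilon$-factors are completely explicit, both numerator and denominator turn out to be monomials in $\pmq$ times simple constants, and all $L$- and $\varepsilon$-factors cancel in the ratio, leaving $\zeta_\pmq(1)^2/\zeta_\pmq(2)$ as claimed.

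The genuinely delicate step is the denominator in case (1): getting the normalized intertwining operator $M^*(\chi,\upsilon)$ applied to the new section on $K$ exactly right, so that the result produces precisely the factor $L(1,\chi\upsilon^{-1})^2$ together with the correct monomial $\chi^2\Abs(\pmq^{-c})\varepsilon(1/2,\pi)^2\om(-1)$ after dividing by the numerator. This is where normalization constants and signs are most likely to slip, and it will warrant careful cross-checking (for instance against the already-proven ratio \eqref{E:3.imb} for ordinary sections, which this lemma refines for newform sections). Everything else is either a direct appeal to the $\GL(2)$ local functional equation or a short explicit computation in the Steinberg case.
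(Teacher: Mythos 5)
Your overall route is the same as the paper's: compute the Whittaker-side pairing $\pair{\rho(\tau_c)W_\pi}{\wtd W_\pi}$ by newform/functional-equation considerations, compute $\pair{\rho(\tau_c)f}{\wtd f}$ by exploiting the support of the new section (so the $K$-integral collapses to a volume times a single value of $\wtd f$) together with an explicit evaluation of the normalized intertwining operator, and then divide, finishing with exactly the bookkeeping $\varepsilon(1/2,\Prin{\chi}{\upsilon})=\varepsilon(1/2,\upsilon)$, $\varepsilon(1/2,\upsilon)\varepsilon(1/2,\upsilon^{-1})=\upsilon(-1)$ that you list. In the ramified subcase $c>0$ and in the Steinberg case (2) your plan reproduces the paper's computation essentially verbatim (the paper simply quotes $\pair{\rho(\tau_c)W_\pi}{\wtd W_\pi}=\varepsilon(1/2,\pi)\zeta_\pmq(1)$, resp. $\varepsilon(1/2,\pi)\zeta_\pmq(2)$, which is proved exactly by your \lmref{L:ordlocalnorm}-style argument, since there $W_\pi(\pDII{y}{1})$ is a single unramified character times $\bbI_{\Z_\pmq}$).

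The one step that fails as stated is the numerator in part (1) when $c=0$, i.e.\ when both $\chi$ and $\upsilon$ are unramified (a case the lemma does cover, and the case where the factor $L(1,\chi\upsilon^{-1})^2$ is actually nontrivial). There $W_\pi$ restricted to the torus is the full spherical Whittaker function carrying both Satake parameters, so the pairing cannot be rewritten as a single character-twisted Mellin transform $\int W_\pi(\pDII{y}{1}\tau_c)\chi\om^{-1}(-y)\Abs^{s-\onehalf}(y)\,\rmd^\x y$, and the GL(1)-twisted functional equation does not apply; one needs instead the Macdonald/Waldspurger norm formula $\pair{W_\pi}{\wtd W_\pi}=\zeta_\pmq(1)L(1,\pi,\Ad)/\zeta_\pmq(2)$ (which the paper invokes). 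Relatedly, your accounting of where $L(1,\chi\upsilon^{-1})^2$ comes from is off: the Gindikin--Karpelevi\v{c} evaluation of $M^*(\chi,\upsilon)$ on the spherical section gives the denominator only the ratio $L(1,\chi^{-1}\upsilon)/L(1,\chi\upsilon^{-1})$, and the second power of $L(1,\chi\upsilon^{-1})$ enters through the adjoint $L$-value $L(1,\pi,\Ad)=\zeta_\pmq(1)L(1,\chi\upsilon^{-1})L(1,\chi^{-1}\upsilon)$ in the numerator, not from the intertwining integral ``after pairing.'' With the unramified subcase handled by the norm formula rather than the functional equation, the rest of your argument goes through and agrees with the paper.
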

\begin{proof}We first consider the case $\pi$ is a principal series. Suppose that $c=0$. Then we have  \begin{align*}
\pair{f}{M^*(\Mu,\upsilon)f\ot\om^{-1}}=&\gamma(0,\Mu\upsilon^{-1})\frac{L(0,\Mu\upsilon^{-1})}{L(1,\Mu\upsilon^{-1})}
=\frac{L(1,\Mu^{-1}\upsilon)}{L(1,\Mu\upsilon^{-1})},\\
\pair{W_\pi}{W_\pi}=&\frac{L(1,\pi,\Ad)\zeta_\pmq(1)}{\zeta_\pmq(2)},
\end{align*}
and hence
\[\frac{\pair{W_\pi}{\wtd W_\pi}}{\pair{f}{\wtd f}}=L(1,\Mu\upsilon^{-1})^2\cdot\frac{\zeta_\pmq(1)^2}{\zeta_\pmq(2)}.\]
Suppose that $c>0$. Then $\upsilon$ is ramified and $f$ is supported in $B\cU_0(\pmq^c)$ (\cf\cite[Proposition 2.1.2]{Schmidt02RJ}), and hence $\pair{\rho(\tau_c)f}{\wtd f}=\pair{f}{\rho(\tau_c^{-1})\wtd f}$ equals \begin{align*}
\int_{K}f(k)\wtd f(k\tau_c^{-1})\rmd k
=&\vol(\cU_0(\pmq^c))\cdot \wtd f(\tau_c^{-1})\\
=&\vol(\cU_0(\pmq^c))\cdot \om(\pmq^c)\cdot M^*(\Mu,\upsilon)f(\tau_c^{-1})\\
=&\vol(\cU_0(\pmq^c))\cdot \om(\pmq^c)\cdot\gamma(0,\Mu\upsilon^{-1})f(\pDII{1}{\pmq^{-c}})\\
=&\abs{\pmq}^c(1+\abs{\pmq})^{-1}\varepsilon(1/2,\Mu\upsilon^{-1})\cdot \Mu(\pmq^c).
\end{align*}
In addition, $\pair{\rho(\tau_c)W_{\pi}}{\wtd W_{\pi}}=\varepsilon(1/2,\pi)\zeta_\pmq(1)$, so we obtain  that
\[\frac{\pair{\rho(\tau_c)W_{\pi}}{\wtd W_{\pi}}}{\pair{\rho(\tau_c)f}{\wtd f}}=\frac{\zeta_\pmq(1)^2}{\zeta_\pmq(2)}\Mu^2\Abs(\pmq^{-c})\varepsilon(1/2,\pi)^2\om(-1).\]

Now we consider the case $\pi$ is an unramified special representation. Then $c=1$ and we may assume $f(w)=1$, \ie $f$ is supported in $Bw\cU_0(\pmq)$. An elementary computation shows that 
\[M^*(\chi,\upsilon)f(1)=\zeta_\pmq(2)(1-\abs{\pmq}^{-1});\quad M^*(\chi,\upsilon)f(w)=\zeta_\pmq(2).\]
Then $\pair{\rho(\tau_1)f}{\wtd f}$ equals
\begin{align*}
\int_{w\cU_0(\pmq)}f(k\tau_1)\wtd f(k)\rmd k=vol(\cU_0(\pmq))\cdot f(\tau_1)M^*(\chi,\upsilon)f(1)=(-\upsilon\Abs^{-\onehalf}(\pmq))\cdot\frac{\zeta_\pmq(2)^2}{\zeta_\pmq(1)^2}.
\end{align*}
Combined with the formulas
\[\pair{\rho(\tau_1)W_\pi}{\wtd W_\pi}=\varepsilon(1/2,\pi)\zeta_\pmq(2)=(-\upsilon\Abs^{-\onehalf}(\pmq))\cdot\zeta_\pmq(2),\]
the lemma in this case follows.
\end{proof}

\begin{lm}\label{L:Kirillov.local} Let $\pi$ is an irreducible admissible generic representation of $\GL_2(\Q_\pmq)$ and let $W_\pi\in\sW(\pi)^{\rm new}$ be the normalized Whittaker newform with $W_\pi(1)=1$. Let $\Mu:\Q_\pmq^\x\to\C^\x$ with $\Mu(q)=1$. Suppose that $L(s,\pi)=L(s,\pi\ot\chi)=1$. Put
\[A_n^{(m)}(\chi):= \int_{\Z_\pmq^\x}W_\pi(\pDII{\pmq^m y}{1}\pMX{1}{0}{\pmq^n}{1})\Mu(y)\rmd^\x y.\]
 If $\chi\not =1$, then $A_n^{(m)}(\chi)=0$ unless $m=\cond{\pi}-\cond{\pi\ot\Mu}$ and $n=\cond{\pi}-\cond{\Mu}$; in this case 
\[A_{c(\pi)-c(\Mu)}^{(c(\pi)-c(\pi\ot\Mu))}(\chi)=\varepsilon(1,\Mu)\cdot\frac{\varepsilon(1/2,\pi)}{\varepsilon(1/2,\pi\ot \Mu)}\cdot \Mu(-1)\zeta_\pmq(1).\]
If $\chi=\bfone$ is the trivial character, then $A_n^{(m)}(\bfone)=0$ unless $m=0$ and $n\geq c(\pi)-1$; in this case,
\[A_{c(\pi)-1}^{(m)}(\bfone)=-\abs{\pmq}\zeta_\pmq(1)\text{ and } A_n^{(m)}(\bfone)=1\text{ if }n\geq c(\pi).\]
\end{lm}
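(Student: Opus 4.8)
The plan is to compute the integral $A_n^{(m)}(\chi)$ directly from the explicit description of the Whittaker newform $W_\pi$ in its Kirillov model, unwinding the matrix identity
\[\pDII{\pmq^m y}{1}\pMX{1}{0}{\pmq^n}{1}=\pMX{\pmq^m y}{0}{\pmq^n}{1}=\pMX{1}{0}{\pmq^{n}\pmq^{-m}y^{-1}}{1}\pMX{\pmq^m y}{0}{0}{1}\pMX{1}{\pmq^{-n}}{0}{1}\cdot(\text{correction})\]
so that (after choosing the better of two Bruhat-type decompositions according to whether $\abs{\pmq^n}\le\abs{\pmq^m y}$ or not) one is left with an integral of $W_\pi$ against a character twisted by $\psi_{\Q_\pmq}$. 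Concretely, first I would write $g=\pMX{1}{0}{\pmq^n}{1}$, note $\pMX{1}{0}{x}{1}=w\pMX{1}{-x^{-1}}{0}{1}\pDII{x^{-1}}{-1}w^{-1}\cdots$ type manipulations, and reduce the computation to evaluating $W_\pi$ on diagonal matrices times $w\pMX{1}{u}{0}{1}$, i.e. to the action of the long Weyl element, which is controlled by the local functional equation. This is exactly the standard computation of the ``action of $\pi(w)$ on the Kirillov model'' and is governed by the $\varepsilon$-factor $\varepsilon(s,\pi)$ and $\varepsilon(s,\pi\ot\chi)$.

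Next I would separate the two cases. For $\chi\ne\bfone$: because $L(s,\pi)=L(s,\pi\ot\chi)=1$ the Whittaker newform has the clean form $W_\pi(\aone{y})=\bbI_{\Z_\pmq^\x}(y)$ (and analogously for $\pi\ot\chi$ after untwisting), so the support condition on $y$ forces $\pmq^m y$ to lie in a single coset, pinning down $m=c(\pi)-c(\pi\ot\chi)$; the integral over $x\in\pmq^n\Z_\pmq$ collapses after applying the local functional equation for $\GL(2)$ (\cf\cite[Section 1.1]{Schmidt02RJ}), and the Gauss-sum factor $\varepsilon(1,\chi)$ together with the ratio $\varepsilon(1/2,\pi)/\varepsilon(1/2,\pi\ot\chi)$ and the sign $\chi(-1)$ emerge, giving $n=c(\pi)-c(\chi)$ and the stated value $\varepsilon(1,\chi)\cdot\frac{\varepsilon(1/2,\pi)}{\varepsilon(1/2,\pi\ot\chi)}\cdot\chi(-1)\zeta_\pmq(1)$. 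For $\chi=\bfone$: here there is no Gauss sum, the integral $A_n^{(m)}(\bfone)$ is essentially $\int_{\Z_\pmq^\x}W_\pi(\aone{\pmq^m y}\pMX{1}{0}{\pmq^n}{1})\,\rmd^\x y$, and one exploits the relation between $W_\pi|_{\cU_0(\pmq^n)}$-translates; for $n\ge c(\pi)$ the matrix $\pMX{1}{0}{\pmq^n}{1}$ lies in the congruence subgroup fixing $W_\pi$ on the relevant Borel piece, so $A_n^{(0)}(\bfone)=W_\pi(\aone{\pmq^m y})$ integrated, which is $1$ when $m=0$; for $n=c(\pi)-1$ one gets the ``boundary'' term $-\abs{\pmq}\zeta_\pmq(1)$ coming from the eigenvalue of the level-lowering/Atkin--Lehner operator on the new line, i.e. from $W_\pi(\aone{y}w)=\varepsilon(1/2,\pi)\om(\pmq^{-c})\bbI_{\pmq^{-c}\Z_\pmq^\x}(y)$ in the unramified-special or unramified principal-series case. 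The vanishing for $m\ne 0$ (resp. $m\ne c(\pi)-c(\pi\ot\chi)$) is just the support of the Kirillov function.

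The main obstacle is bookkeeping rather than conceptual: one must carry the powers of $\pmq$ and the various $\varepsilon$-factors through the functional equation without sign or normalization errors, being careful that the additive character $\psi_{\Q_\pmq}$ has conductor $\Z_\pmq$ and that the Haar measures are normalized as in \subsecref{SS:measure}, and that $W_\pi$ is normalized by $W_\pi(1)=1$ rather than being $L^2$-normalized. A secondary subtlety is that ``$L(s,\pi)=L(s,\pi\ot\chi)=1$'' allows $\pi$ to be supercuspidal or a ramified principal series with both inducing characters ramified, or an appropriately twisted special representation, and one should check that the Kirillov-model formula $W_\pi(\aone{y})=\bbI_{\Z_\pmq^\x}(y)$ and its image under $\pi(w)$ hold uniformly in all these cases; this is where I would lean on the explicit tables in \cite[Section 2.4]{Schmidt02RJ} and the $\varepsilon$-factor computations in \cite{GJ78}. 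Once these are in hand, the two displayed evaluations follow by specializing the Weyl-element formula and integrating over $\Z_\pmq^\x$.
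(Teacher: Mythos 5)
Your skeleton is the same as the paper's: work in the Kirillov model, push the lower-triangular unipotent through the Weyl element, and let the $\GL(2)$ local functional equation produce the $\varepsilon$-factors. But two of your supporting steps would not survive being written out. First, the vanishing clauses are \emph{not} ``just the support of the Kirillov function.'' The function you are integrating, $\varphi_n(a)=W_\pi\bigl(\pDII{a}{1}\pMX{1}{0}{\pmq^n}{1}\bigr)$, is a translate of the newform, and for $n<c(\pi)$ its support is precisely what the lemma computes (note $A^{(0)}_{c(\pi)-1}(\bfone)=-\abs{\pmq}\zeta_\pmq(1)\neq 0$, even though the newform itself is $\bbI_{\Z_\pmq^\x}$); likewise ``the support condition on $y$'' cannot pin down $m=c(\pi)-c(\pi\ot\chi)$, since $y$ only ranges over $\Z_\pmq^\x$. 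What actually forces a single $m$ — and this is how the paper argues — is that under $L(s,\pi)=L(s,\pi\ot\chi)=1$ the $\gamma$-factor is a monomial, $\gamma(s,\pi\ot\chi)^{-1}=\varepsilon(0,\pi\ot\chi)^{-1}t^{-c(\pi\ot\chi)}$ with $t=\abs{\pmq}^s$, while $\pi(w)\varphi_n(a)=\varepsilon(1/2,\pi)\,\psi_{\Q_\pmq}(-a\pmq^{n})\,\om(a)\,\bbI_{\Z_\pmq^\x}(\pmq^{c(\pi)}a)$; the right-hand side of the functional equation then reduces to a partial Gauss sum over $\Z_\pmq^\x$, which vanishes unless $c(\pi)-n=c(\chi)$ (for ramified $\chi$), resp.\ takes the values $1$, $-\abs{\pmq}\zeta_\pmq(1)$, $0$ according to $c(\pi)-n\le 0$, $=1$, $\ge 2$ (for $\chi=\bfone$). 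Comparing coefficients of $t^m$ on the two sides — the left side is the power series $\sum_m A_n^{(m)}(\chi)\chi(\pmq^m)\abs{\pmq^m}^{-1/2}t^m$, the right side a monomial — gives both the unique nonvanishing $m$ and the stated value. Without this (or an equivalent twisting-operator identity relating your vector to the newform of $\pi\ot\chi$), the ``unless'' half of the lemma is unproved.

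Second, your derivation of the boundary term appeals to ``the unramified-special or unramified principal-series case,'' which is incompatible with the standing hypothesis $L(s,\pi)=1$ (those representations have nontrivial $L$-factor). The identity you want, $W_\pi\bigl(\pDII{y}{1}w\bigr)=\varepsilon(1/2,\pi)\,\om(y)\,\bbI_{\pmq^{-c(\pi)}\Z_\pmq^\x}(y)$, does hold whenever $L(s,\pi)=1$, but it is itself an output of the same functional-equation computation (and note the factor is $\om(y)$, not $\om(\pmq^{-c})$, when $\om$ is ramified), not something to be imported from the unramified cases. Relatedly, there is no integral over $x\in\pmq^n\Z_\pmq$ in this lemma — the lower-unipotent entry is fixed — so the step where that integral ``collapses'' must be replaced by the partial Gauss sum over $\Z_\pmq^\x$ just described; that Gauss sum is exactly what produces the condition $n=c(\pi)-c(\chi)$, the factor $\varepsilon(1,\chi)\chi(-1)\zeta_\pmq(1)$, and the values $1$ and $-\abs{\pmq}\zeta_\pmq(1)$ in the unramified case. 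With these repairs your argument becomes the paper's proof.
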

\begin{proof}
Let $A_n^{(m)}=A_n^{(m)}(\chi)$ and $c=\cond{\pi}$. Let $\varphi_n(a):=W_\pi(\pDII{a}{1}\pMX{1}{0}{\pmq^n}{1})$ for $a\in \Qq^\x$. Then $\varphi_n$ belongs to the Krillov model $\cK(\pi)$ of $\pi$ with respect to $\psi_{\Qq}$. Since $L(s,\pi)=1$, $\varphi:=\bbI_{\Z_\pmq^\x}$ is a new vector in $\cK(\pi)$ and $\cK(\Contra{\pi})$ (\cf\cite[\S 2.4]{Schmidt02RJ}). Then $\pi(\pMX{0}{1}{-\pmq^c}{0})\varphi(a)\om^{-1}(a)=\al\cdot \varphi(a)$ for some $\al\in\C^\x$. By the functional equation, we have
\[\int_{\Q_\pmq^\x}\varphi_n(a)\Mu(a)\abs{a}^{s-\onehalf}\rmd^\x a=\gamma(s,\pi\ot\Mu)^{-1}\int_{\Q_\pmq^\x}\pi(w)\varphi_n(a)\om^{-1}(a)\Mu^{-1}(a)\abs{a}^{\onehalf-s}\rmd^\x a,\]
where
\[\gamma(s,\pi\ot\Mu)^{-1}=\frac{L(s,\pi\ot\Mu)}{L(1-s,\Contra{\pi}\ot\Mu^{-1})\varepsilon(s,\pi\ot\Mu)}.\]
By the relation \begin{align*}\pi(w)\varphi_n(a)=&\pi(\pMX{1}{-\pmq^n}{0}{1}w)\varphi(a)
=\psi_{\Qq}(-a\pmq^n)\pi(\pDII{1}{\pmq^{-c}}\pMX{0}{1}{-\pmq^{c}}{0})\varphi(a)\\
=&\al\cdot \psi_{\Qq}(-a\pmq^n)\cdot\varphi(\pmq^{c}a)\om(a),
\end{align*}
we find that $\al=\varepsilon(1/2,\pi)$ and
\begin{align*}&\sum_{m\in\Z}\int_{\Z_\pmq^\x}\varphi_n(\pmq^my)\Mu(y)\rmd^\x y\cdot \Mu(\pmq^m)\abs{\pmq^m}^{s-\onehalf}\\
=&
\gamma(s,\pi\ot\Mu)^{-1}\varepsilon(1/2,\pi)\cdot\abs{\pmq^c}^{s-\onehalf}\cdot\int_{\Qq^\x}\psi_{\Qq}(-\frac{a}{\pmq^{c-n}})\Mu^{-1}\Abs^{\onehalf-s}(a)\varphi(a)\rmd^\x a.
\end{align*}
Let $t=\abs{\pmq}^s$. From the above equation, we deduce that
\begin{align*}&\sum_{m\in\Z}A_m^{(n)}\cdot \Mu(\pmq^m)\abs{\pmq^m}^{-\onehalf}\cdot t^m\\
=&\gamma(s,\pi\ot\Mu)^{-1}\cdot \varepsilon(1/2,\pi)\Mu(-1)\abs{\pmq^c}^{-\onehalf}\cdot t^c\cdot \begin{cases}
0&\text{ if }c-n\not=c(\Mu)>0\text{ or }c-n\geq 2,\,c(\Mu)=0,\\
\Mu(\pmq^{-c(\Mu)})\varepsilon(1,\Mu)\zeta_\pmq(1)&\text{ if }c-n=c(\Mu)>0,\\
1&\text{ if }c-n\leq 0,\,c(\Mu)=0,\\
-\abs{\pmq}\zeta_\pmq(1)&\text{ if }c-n=1,\,c(\Mu)=0.
\end{cases}\end{align*}
 Since $L(s,\pi)=L(s,\pi\ot\chi)=1$, we have
 \[\gamma(s,\pi\ot\Mu)^{-1}=\varepsilon(0,\pi\ot\Mu)^{-1} t^{-c(\pi\ot\Mu)}.\]
Comparing the coefficients of $t^m$, if $\Mu\not =\bfone$, we find that $A^{(m)}_n\not =0$ only when $c-n=c(\Mu)$, and $m=c-c(\pi\ot\Mu)$. In this case
\[A^{(m)}_n=\Mu(-\pmq^{-m-c(\Mu)})\abs{\pmq}^{\frac{m-c}{2}}\varepsilon(1/2,\pi)\cdot\frac{\varepsilon(1,\Mu)}{\varepsilon(0,\pi\ot\Mu)}\zeta_\pmq(1). \]
If $\chi=\bfone$, and $A^{(m)}_n=0$ unless $m=0$, and 
\[A^{(0)}_n=\begin{cases}1,&\text{ if }c-n\leq 0\\
-\abs{\pmq}\zeta_\pmq(1),&\text{ if }c-n=1\\
0&\text{ if }c-n\geq 2.
\end{cases}\]
This completes the proof.
\end{proof}


\subsection{The case (Ib)}\label{S:Ib}
In this case $\pi_1=\chi_1\Abs^{-\onehalf}{\rm St}$ is an unramified special representation, and $\pi_2$ and $\pi_3$ are discrete series with the local root number $\varepsilon(1/2,\itPi_\pmq)=1$. We first remark that if $L(s,\pi_2\ot\pi_3)\not =1$, then by the minimality of $\pi_3$ combined with \cite[Proposition (1.2)]{GJ78}, this implies that $\pi_3=\Contra{\pi}_2\ot\sigma$ for some unramified character $\sigma$ of $\Q_\pmq^\x$ and $\pi_2$ is also minimal. Hence, in view of \cite[Proposition 8.5]{Prasad90} $\pi_2$ and $\pi_3$ must be unramified special in case (Ib) if $L(s,\pi_2\ot\pi_3)\not =1$.

\begin{prop}\label{P:ram4}In case (Ib),\begin{mylist}\item if $L(s,\pi_2\ot\pi_3)=1$, then we have \[I_\pmq(\phi_\pmq^\star\ot\wtd\phi_\pmq^\star)=\chi_1^2\Abs(\pmq^{c^*})\varepsilon(1/2,\pi_2\ot\pi_3\ot\upsilon_1)\varepsilon(1/2,\pi_2)^2\varepsilon(1/2,\pi_3)^2\cdot\frac{1}{B_{\itPi_\pmq}}\cdot \frac{\zeta_\pmq(2)^2}{\zeta_\pmq(1)^2};\]
\item if $L(s,\pi_2\ot\pi_3)\not =1$, then $c_1=c_2=c_3=1$ and 
\[I_\pmq(\phi_\pmq^\star\ot\wtd\phi_\pmq^\star)=I_\pmq(\phi_\pmq\ot\wtd\phi_\pmq)=\frac{2\abs{\pmq}}{B_{\itPi_\pmq}}\cdot\frac{\zeta_\pmq(2)^2}{\zeta_\pmq(1)^2}.\]\end{mylist}
\end{prop}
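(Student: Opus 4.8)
\textbf{Proof strategy for Proposition~\ref{P:ram4}.} The plan is to run the same machinery already deployed in Case~(Ia): use \corref{C:IchinoRS} to rewrite the local Ichino integral $I_\pmq(\phi^\star_\pmq\ot\wtd\phi^\star_\pmq)$ as a product of a ratio of local pairings and the square of a local Rankin--Selberg integral $\Psi(W_1,W_2,f_3)$, then compute the latter explicitly. Here $\pi_1=\chi_1\Abs^{-\onehalf}\mathrm{St}$ is an unramified special representation, so I would work with the realization $\cV_{\itPi_\pmq}=\cW(\pi_2)\boxtimes\cW(\pi_3)\boxtimes\cB(\upsilon_1,\chi_1)$ and place the Rankin--Selberg pairing on the $\GL_2\times\GL_2$ pair $(\pi_2,\pi_3)$ against a section $f_1$ in the induced model of $\pi_1$; note that since $c_1\le 1$ and $\pi_1$ is unramified special, $f_1$ is the $\cU_0(\pmq)$-fixed section with $f_1(w)=1$, and $(\pi_2,\pi_3;\pi_1)$ satisfies \eqref{Hb} because all three are local components of cuspidal automorphic representations. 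The test vectors in \subsecref{SS:61}, Case~(Ib), are $\phi^\star_\pmq=\xi_1\ot\pi_2(\eta^{c^*-c_2})\xi_2\ot\pi_3(\eta^{c^*-c_3})\xi_3$, so after translating $\xi_2,\xi_3$ into the Whittaker model I would be left with computing $\Psi$ applied to translates $\rho(\eta^{c^*-c_2})W_2$, $\rho(\eta^{c^*-c_3})W_3$ and $f_1$; the $\eta$-translates simply rescale by powers of $\pmq$ and a value of $\om_2,\om_3$, and the ratio of pairings $\pair{\rho(\tau_1)W_1}{\wtd W_1}/\pair{\rho(\tau_1)f_1}{\wtd f_1}$ is exactly \lmref{L:basic3}(2).

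\textbf{Case (i): $L(s,\pi_2\ot\pi_3)=1$.} After the Ichino--Rankin--Selberg reduction, the key quantity is $\Psi(W_2,W_3,f_1)$ where $f_1$ is supported on $Bw\cU_0(\pmq)$. I would expand this over $ZN\backslash G$ using the integration formula of \subsecref{S:notation.5}, splitting the $x$-integral into the three regions $|x|>\abs{\pmq}^0$, $x\in\pmq^n\Z_\pmq^\x$, and $|x|\le\abs{\pmq}$, exactly as in the proof of \propref{P:ramifiedI} subcases (b)--(c). The ``large $x$'' piece becomes an integral of $W_2(\pDII{y}{1}w)W_3(\pDII{y}{1}w)$ against a character, which vanishes or is controlled by \lmref{L:Kirillov.local} since $L(s,\pi_2)=L(s,\pi_3)=1$; the ``small $x$'' piece gives $L(1/2,\pi_2\ot\pi_3\ot\upsilon_1)$ (which is $1$ here by hypothesis, up to the standard conductor power); and the intermediate pieces $J_n$ are evaluated by \lmref{L:Kirillov.local}. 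Assembling, $\Psi$ is a monomial in $\pmq$ times the appropriate $\varepsilon$-factors, and $I^*_\pmq=\zeta_\pmq(1)\upsilon_1(-1)\gamma(1/2,\pi_2\ot\pi_3\ot\upsilon_1)\Psi^2/(\zeta_\pmq(2)^2 L(1/2,\itPi_\pmq))$; using $\varepsilon(1/2,\pi_2\ot\pi_3\ot\upsilon_1)\varepsilon(1/2,\pi_2\ot\pi_3\ot\chi_1)=1$ and the $\mathrm{GL}(2)\times\mathrm{GL}(2)$ $\varepsilon$-factor formula $\varepsilon(1/2,\pi_2\ot\pi_3)=\varepsilon(1/2,\pi_2)^2\varepsilon(1/2,\pi_3)^2$ (valid since $L(s,\pi_2\ot\pi_3)=1$), combined with \lmref{L:basic3}(2), yields the stated formula $\chi_1^2\Abs(\pmq^{c^*})\varepsilon(1/2,\pi_2\ot\pi_3\ot\upsilon_1)\varepsilon(1/2,\pi_2)^2\varepsilon(1/2,\pi_3)^2\cdot B_{\itPi_\pmq}^{-1}\zeta_\pmq(2)^2/\zeta_\pmq(1)^2$.

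\textbf{Case (ii): $L(s,\pi_2\ot\pi_3)\ne1$.} Here I would first record, as a lemma at the start, the structural fact already sketched in the text: minimality of $\pi_3$ plus \cite[Proposition (1.2)]{GJ78} forces $\pi_3\iso\Contra{\pi}_2\ot\sigma$ with $\sigma$ unramified, hence $\pi_2$ minimal, and then \cite[Proposition 8.5]{Prasad90} together with the fact that all three are twist-minimal discrete series with $c_1=1$ forces $\pi_2$ and $\pi_3$ to be unramified special, so $c_1=c_2=c_3=1$ and therefore $c^*=1$, the level-raising operators $\eta^{c^*-c_i}$ are trivial, and $\phi^\star_\pmq=\phi_\pmq$. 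Since all of $\pi_1,\pi_2,\pi_3$ are now unramified twists of Steinberg and $\varepsilon(1/2,\itPi_\pmq)=+1$, this is precisely the situation of the balanced computation \propref{P:padic.bal} specialized to unramified-special local data (or, directly, a standard matrix-coefficient integral over $\PGL_2(\Q_\pmq)$ at Iwahori level), and I expect $I_\pmq(\phi_\pmq\ot\wtd\phi_\pmq)=2\abs{\pmq}\cdot B_{\itPi_\pmq}^{-1}\zeta_\pmq(2)^2/\zeta_\pmq(1)^2$ to drop out after plugging $L(1,\itPi_\pmq,\Ad)=\zeta_\pmq(2)^3$, $L(1/2,\itPi_\pmq)=\zeta_\pmq(3/2)^2\zeta_\pmq(5/2)$ type expressions and the Iwahori matrix-coefficient integral. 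The main obstacle will be Case~(i): keeping the bookkeeping of $\varepsilon$-factor normalizations, conductor powers of $\pmq$, and the $\eta$-translation twists consistent — in particular matching the exponent $c^*$ and the precise combination of $\om_2,\om_3$ values with the final clean answer — since the Rankin--Selberg integral has to be evaluated region-by-region with several applications of \lmref{L:Kirillov.local}, each contributing its own $\varepsilon$ and $\zeta_\pmq$ factors. Once the $\Psi$ computation is pinned down, the rest is formal substitution into \eqref{E:1.local2}-type identities.
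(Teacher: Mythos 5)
Your overall skeleton (reduce via \corref{C:IchinoRS} to a Rankin--Selberg integral of $(W_2,W_3)$ against an Iwahori section of $\pi_1$, use \lmref{L:basic3}(2) for the ratio of pairings, and in case (ii) first prove $\pi_2,\pi_3$ unramified special via \cite{GJ78} and \cite{Prasad90}) is the same as the paper's. But your execution of case (i) has a genuine gap: you justify invoking \lmref{L:Kirillov.local} by asserting ``$L(s,\pi_2)=L(s,\pi_3)=1$'', which is false in general in case (Ib). The hypothesis there is only $L(s,\pi_2\ot\pi_3)=1$; for instance $\pi_2$ can be an unramified special representation (so $L(s,\pi_2)\neq 1$) with $\pi_3$ supercuspidal, and then \lmref{L:Kirillov.local} simply does not apply to $\pi_2$. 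The region-splitting you import from \propref{P:ramifiedI}(b)--(c) is also mismatched to the present support situation: with the lower-triangular parametrization, $f_1$ supported on $BwN(\Z_\pmq)$ kills the ``small $x$'' range entirely (so that region does not produce the $L$-value you ascribe to it). The paper sidesteps all of this by using the $w$-twisted integration formula $\int F(\pDII{y}{1}w\pMX{1}{x}{0}{1})\,\rmd x\,\rmd^\x y/\abs{y}$: the support of $f_1$ localizes $x$ to $\Z_\pmq$ at once, and $\Psi(W_2,W_3^\star,f_1)$ collapses to a one-variable integral of $W_2(\pDII{y}{1}\tau_{c_2})W_3(\pDII{y}{1}\tau_{c_3})$ against $\upsilon_1\Abs^\onehalf$, which is evaluated directly from the Atkin--Lehner behaviour of newforms, yielding the factor $\varepsilon(1/2,\pi_2)\varepsilon(1/2,\pi_3)$ with no appeal to \lmref{L:Kirillov.local}. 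Relatedly, the identity $\varepsilon(1/2,\pi_2\ot\pi_3)=\varepsilon(1/2,\pi_2)^2\varepsilon(1/2,\pi_3)^2$ you propose to use is neither true in general nor needed: the $\varepsilon(1/2,\pi_2\ot\pi_3\ot\upsilon_1)$ in the final answer comes from the $\gamma$-factor supplied by \corref{C:IchinoRS}, and the $\varepsilon(1/2,\pi_i)^2$ come from squaring $\Psi$.

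In case (ii) your structural reduction is right, but the proposed evaluation does not go through as stated. \propref{P:padic.bal} is a computation at $p$ with ordinary vectors and the translation $\breve\bft_n$; it is a different integral and cannot be ``specialized'' to the new-vector integral at $\pmq$. Moreover the constant $2\abs{\pmq}$ is not a generic Iwahori matrix-coefficient value: it depends on the root-number constraint $\varepsilon(1/2,\itPi_\pmq)=+1$, which by \cite[Proposition 8.6]{Prasad90} forces $\chi_1\chi_2\chi_3(\pmq)=-\abs{\pmq}^{3/2}$, and it is exactly this sign that turns $L(1/2,\chi_1\chi_2\chi_3)/L(-3/2,\chi_1\chi_2\chi_3)$ into the factor $2$; your guessed shape $L(1/2,\itPi_\pmq)=\zeta_\pmq(3/2)^2\zeta_\pmq(5/2)$ ignores this and would give the wrong constant. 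The paper instead continues the same Rankin--Selberg computation (now producing $L(-1/2,\chi_1\chi_2\chi_3)$), feeds in Prasad's sign condition and the formulas of \cite[Proposition 1.4]{GJ78}, and only cites the matrix-coefficient computation of \cite{II10GAFA} as an a posteriori check. Citing that known triple-Steinberg computation outright would be a legitimate alternative, but that is not what your proposal does.
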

\begin{proof}
Now we suppose that $\pi_1=\chi_1\Abs^{-\onehalf}{\rm St}$ is unramified special. Let $\upsilon_1=\chi_1\Abs^{-1}$. We use the realizations
\[\cV_{\itPi_\pmq}=\cB(\upsilon_1,\chi_1)^0\boxtimes\cW(\pi_2)\boxtimes\cW(\pi_3) ;\quad \cV_{\Contra{\itPi}_\pmq}=\cB(\upsilon_1^{-1},\chi_1^{-1})_0\boxtimes\cW(\Contra{\pi}_2)\boxtimes\cW(\Contra{\pi}_3).\]
Here $\cB(\upsilon_1,\chi_1)^0$ is the unique irreducible quotient space of $\cB(\upsilon_1,\chi_1)$ and $\cB(\upsilon_1^{-1},\chi_1^{-1})_0$ is the unique irreducible sub-representation of $\cB(\upsilon_1^{-1},\chi_1^{-1})$ as in \subsecref{S:notation.5}.
Let $f_1\in\cB(\upsilon_1,\chi_1)^{\cU_0(\pmq)}$ be the unique function supported in $BwN(\Z_\pmq)$ with $f_1(1)=1$. Then the holomorphic image $f_1^0$ of $f_1$ in $\cV_{\pi_1}=\cB(\upsilon_1,\chi_1)^0$ is a new vector. Let $\wtd f_1=M^*(\upsilon_1,\chi_1)f\ot \om_1^{-1}$. We may assume that $c_2\geq c_3$ (so $c^*=c_2$). Let $W_3^\star=\rho(\pDII{\pmq^{c_3-c_2}}{1})W_3$ and $\wtd W_3^\star=W_3^\star\ot\om_3^{-1}$. Then 
\[\phi_\pmq^\star=f_1^0\ot W_2\ot W^\star_3;\quad \wtd\phi_\pmq=\wtd f_1\ot \wtd W_2\ot\wtd W_3^\star.\]
By \corref{C:IchinoRS} and \lmref{L:basic3} (2), we obtain
\beq\label{E:4.local2}\begin{aligned}I_\pmq(\phi^\star_\pmq\ot\wtd\phi^\star_\pmq)=&\frac{\sJ_\pmq(W_2\ot W^\star_3\ot f_1,\wtd W_2\ot \wtd W^\star_3\ot\wtd f_1)}{\zeta_\pmq(2)^2L(1/2,\itPi_\pmq)\cdot B_{\itPi_\pmq}}\cdot\frac{\pair{\rho(\tau_{c_1})W_1}{\wtd W_1}}
{\pair{\rho(\tau_{c_1})f_1}{\wtd f_1}}\cdot\frac{\zeta_\pmq(2)^3}{\zeta_\pmq(1)^3}\\
=&\frac{\gamma(1/2,\pi_2\ot\pi_3\ot\upsilon_1)\Psi(W_2,W^\star_3,f_1)^2}{L(1/2,\itPi_\pmq)}\cdot\frac{1}{B_{\itPi_\pmq}}.
\end{aligned}\eeq
In what follows, if $L(s,\pi_2\ot\pi_3)\not =1$, then we write $\pi_2=\chi_2\Abs^{-\onehalf}{\rm St}$ and $\pi_3=\chi_3\Abs^{-\onehalf}{\rm St}$ with $\chi_2,\chi_3$ unramified.  Using the integration formula \eqref{E:intformula}, we find that $\Psi(W_2,W^\star_3,f_1)$ equals
\begin{align*}&\frac{\zeta_\pmq(2)}{\zeta_\pmq(1)}\int_{\Qq^\x}\int_{\Qq}W_2(\pDII{y}{1}w\pMX{1}{x}{0}{1})W_3(\pDII{y}{1}w\pMX{\pmq^{c_3-c_2}}{x}{0}{1})\upsilon_1\Abs^\onehalf(y)\bbI_{\Z_\pmq}(x)\rmd x\frac{\rmd^\x y}{\abs{y}}\\
=&\frac{\zeta_\pmq(2)\chi_1\Abs^\onehalf(\pmq^{c_2})}{\zeta_\pmq(1)}\int_{\Qq^\x}W_2(\pDII{y}{1}\tau_{c_2})W_3(\pDII{y}{1}\tau_{c_3})\upsilon_1\Abs^\onehalf(y)\bbI_{\Z_\pmq}(x)\rmd x\frac{\rmd^\x y}{\abs{y}}\\
=&\frac{\zeta_\pmq(2)}{\zeta_\pmq(1)}\cdot\chi_1\Abs^\onehalf(\pmq^{c_2})\cdot\varepsilon(1/2,\pi_2)\varepsilon(1/2,\pi_3)\begin{cases}
1&\text{ if }L(s,\pi_2\ot\pi_3)=1,\\
L(-1/2,\chi_1\chi_2\chi_3)&\text{ if }L(s,\pi_1\ot\pi_3)\not =1.
\end{cases}
\end{align*}
If $L(s,\pi_2\ot\pi_3)=1$, then one verifies easily that $\gamma(1/2,\pi_2\ot\pi_3\ot\upsilon_1)=\varepsilon(1/2,\pi_2\ot\pi_3\ot\upsilon_1)$ and $L(s,\itPi_\pmq)=1$, so we obtain the claimed expression of $I_\pmq(\phi_\pmq^\star\ot\wtd\phi_\pmq^\star)$ in this case by substituting the above equation into \eqref{E:4.local2}.

Suppose that $L(s,\pi_2\ot\pi_3)\not =1$. Then $c_1=c_2=c_3=1$ and $\varepsilon(1/2,\pi_i)=-\chi_i\Abs^{-\onehalf}(\pmq)$ for $i=1,2,3$. Hence, $W_3^\star=W_3$ and  
\[\Psi(W_2,W_3^\star,f_1)^2=\Psi(W_2,W_3,f_1)^2=\abs{\pmq}^2\cdot \frac{\zeta_\pmq(2)^2}{\zeta_\pmq(1)^2}\cdot L(-1/2,\chi_1\chi_2\chi_3)^2.\]
On the other hand, by \cite[Proposition 8.6]{Prasad90}, $\varepsilon(1/2,\itPi_\pmq)=1$ implies that 
\[\chi_1\chi_2\chi_3(\pmq)=-\abs{\pmq}^{\frac{3}{2}}.\]
By \cite[Proposition 1.4]{GJ78}, $\varepsilon(1/2,\pi_2\ot\pi_3\ot\upsilon_1)=\abs{\pmq}^{-1}$ and 
\[\frac{L(1/2,\pi_2\ot\pi_3\ot\chi_1)}{L(1/2,\pi_2\ot\pi_3\ot\upsilon_1)}=\frac{L(1/2,\chi_1\chi_2\chi_3)}{L(-3/2,\chi_1\chi_2\chi_3)}=2L(1/2,\chi_1\chi_2\chi_3),\]
and a simple computation of the Langlands parameter for $\itPi_\pmq$ shows
\[L(s,\itPi_\pmq)=L(s,\chi_1\chi_2\chi_3)L(s-1,\chi_1\chi_2\chi_3)^2.\]
We thus obtain
\[\gamma(1/2,\pi_2\ot\pi_3\ot\upsilon_1)=2\abs{\pmq}^{-1}\cdot \frac{L(1/2,\itPi_\pmq)}{L(-1/2,\chi_1\chi_2\chi_3)^2}.\]
The desired formula of $I_\pmq(\phi_\pmq^\star\ot\wtd\phi_\pmq^\star)=I_\pmq(\phi_\pmq\ot\wtd\phi_\pmq)$ in this case can be deduced immediately by combining \eqref{E:4.local2} with the above formulae of $\Psi(W_2,W_3^\star,f_1)$ and the $\gamma$-factor.
\end{proof}
\begin{Remark}In the case where $L(s,\pi_2\ot\pi_3)\not =1$, \ie $\pi_i$ are special unramified, the integral $I_\pmq(\phi_\pmq^\star\ot\wtd\phi_\pmq^\star)$ was computed in \cite[page 1405-1406] {II10GAFA}, from which we have $I_\pmq(\phi_\pmq^\star\ot\wtd\phi_\pmq^\star)=2\abs{\pmq}(1+\abs{\pmq})$. Our computation agrees with the result therein (note that $B_{\itPi_\pmq}=\zeta_\pmq(2)^3\zeta_\pmq(1)^{-3}$). 
\end{Remark}
\subsection{The ramified case (IIa)}\label{S:IIa}
In this case, $\pi_2,\pi_3$ are discrete series and $L(s,\pi_2\ot\pi_3)\not =1$. As we have remarked in the previous subsection, $\pi_3\simeq\Contra{\pi_2}\ot\sigma$ for some unramified character $\sigma$ of $\Q_\pmq^\x$ and $\pi_1$ must be spherical. Let $\tau_{\Q_{\pmq^2}}$ be the quadratic character associated with the unramified quadratic field extension $\Q_{\pmq^2}$ of $\Q_\pmq$. We say a discrete series $\pi$ is of type $\bfone$ if $\pi\iso\pi\ot\tau_{\Q_{\pmq^2}}$ and is of type $\mathbf 2$ if $\pi\not \simeq\pi\ot\tau_{\Q_{\pmq^2}}$. 

The following lemma for minimal supercuspidal representations should be well-known to experts. We include a proof here for the reader's convenience. \begin{lm}\label{L:basic1}Let $\pi$ be a minimal supercuspidal representation with central character $\om$.\begin{enumerate}
\item Let $\chi$ be a charatcer of $\Qq^\x$. Then we have the following conductor formula\[
c(\pi\ot\chi)=\begin{cases}c(\pi)&\text{ if }c(\pi)\geq 2c(\chi),\\
2c(\chi)&\text{ if }c(\pi)<2c(\chi).\end{cases}
\]
Here recall that $c(?)$ denotes the exponent of the conductor of $?$.
\item If $\pi$ is of type $\bfone$, then $\cond{\pi}$ is even and $L(s,\pi\ot\Contra{\pi})=\zeta_\pmq(2s)$. 
If $\pi$ is of type $\mathbf 2$, then $c(\pi)$ is odd and $L(s,\pi\ot\Contra{\pi})=\zeta_\pmq(s)$. 
\end{enumerate}
\end{lm}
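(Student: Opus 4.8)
\textbf{Proof proposal for Lemma~\ref{L:basic1}.}
The plan is to reduce everything to the classification of (minimal) supercuspidal representations of $\GL_2(\Q_\pmq)$ via dihedral/base-change theory, and then read off the conductor and $L$-factor formulae from the corresponding Weil--Deligne parameters. More precisely, every supercuspidal representation $\pi$ of $\GL_2(\Q_\pmq)$ is automorphically induced from a character $\theta$ of $K^\x$, where $K/\Q_\pmq$ is a quadratic extension (when $\pmq$ is odd one may take $K$ to be any of the three quadratic extensions for which $\theta$ does not factor through the norm; when $\pmq=2$ one must allow for the exceptional, non-dihedral supercuspidals, but since the paper only uses residue characteristic $>3$ in the balanced case and $p$ odd throughout, and since $\pmq \mid N$ is a tame prime in the relevant applications, I will treat $\pmq$ as odd and remark on the general case). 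Write $\pi = \pi_\theta$ with $\mathrm{WD}(\pi) = \mathrm{Ind}_{W_K}^{W_{\Q_\pmq}}\theta$. Then $\om = \theta|_{\Q_\pmq^\x}\cdot\eta_{K/\Q_\pmq}$ where $\eta_{K/\Q_\pmq}$ is the quadratic character cutting out $K$, and $c(\pi) = d_{K/\Q_\pmq} + f_{K/\Q_\pmq}\cdot a(\theta)$ by the conductor-discriminant formula for induced representations, where $d_{K/\Q_\pmq}$ is the valuation of the discriminant, $f_{K/\Q_\pmq}$ the residue degree, and $a(\theta)$ the conductor exponent of $\theta$.

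For part (1): twisting $\pi$ by $\chi$ corresponds to twisting $\theta$ by $\chi\circ N_{K/\Q_\pmq}$, so $c(\pi\ot\chi) = d_{K/\Q_\pmq} + f_{K/\Q_\pmq}\cdot a(\theta\cdot(\chi\circ N_{K/\Q_\pmq}))$. Minimality of $\pi$ says precisely that $a(\theta)\le a(\theta\cdot(\chi'\circ N))$ for all characters $\chi'$ of $\Q_\pmq^\x$; equivalently, using the well-known behaviour of conductors of characters under the norm map, that $a(\theta)$ cannot be lowered by a norm-twist. The key local computation is the formula $a(\theta\cdot\psi) = \max\{a(\theta), a(\psi)\}$ whenever $a(\theta)\ne a(\psi)$, together with the fact that for $\psi = \chi\circ N_{K/\Q_\pmq}$ with $K$ unramified one has $a(\psi) = a(\chi)$, while for $K$ ramified one has $a(\psi) = 2a(\chi)-(\text{something})$ controlled by the different. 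Feeding this into $c(\pi) = d + f\cdot a(\theta)$ and into $c(\pi\ot\chi)$, and using minimality to guarantee $a(\theta)$ is the ``small'' term, yields the stated dichotomy: when $c(\pi)\ge 2c(\chi)$ the norm-twist does not change $a(\theta)$ and $c(\pi\ot\chi)=c(\pi)$; when $c(\pi)<2c(\chi)$ the conductor of the twist is governed by $c(\chi)$, giving $c(\pi\ot\chi) = 2c(\chi)$ (the factor $2$ reflecting that the standard $L$-parameter of $\pi$ is $2$-dimensional). I expect the cleanest route is actually to argue on the Galois side directly: $\mathrm{Ind}_{W_K}^{W_{\Q_\pmq}}(\theta\otimes(\chi\circ N)) = (\mathrm{Ind}_{W_K}^{W_{\Q_\pmq}}\theta)\otimes\chi$, and Artin conductors of a $2$-dimensional representation twisted by a character obey exactly this $\max$-type rule when the character is highly ramified relative to $\pi$.

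For part (2): the distinction between type $\bfone$ and type $\mathbf{2}$ is, by definition, whether $\pi\cong\pi\otimes\tau_{\Q_{\pmq^2}}$, where $\tau_{\Q_{\pmq^2}} = \eta_{\Q_{\pmq^2}/\Q_\pmq}$ is the unramified quadratic character. On the Galois side $\pi\otimes\tau_{\Q_{\pmq^2}}$ has parameter $\mathrm{Ind}_{W_K}^{W_{\Q_\pmq}}\theta \otimes \eta_{\Q_{\pmq^2}/\Q_\pmq}$, and $\pi\cong\pi\otimes\tau_{\Q_{\pmq^2}}$ holds if and only if $K = \Q_{\pmq^2}$ is the unramified quadratic extension (for then $\eta_{\Q_{\pmq^2}/\Q_\pmq}\circ N_{K/\Q_\pmq}$ is trivial and the induced representation is unchanged), while if $K$ is ramified the twist genuinely changes $\pi$. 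Hence: type $\bfone$ $\iff$ $K/\Q_\pmq$ unramified $\iff$ $d_{K/\Q_\pmq}=0$, $f_{K/\Q_\pmq}=2$, so $c(\pi) = 2a(\theta)$ is \emph{even}; type $\mathbf{2}$ $\iff$ $K/\Q_\pmq$ ramified, $f=1$, $d\ge 1$ odd (as $\pmq$ is odd, $d=1$), and minimality forces $a(\theta)$ even — indeed for $K$ ramified $a(\theta\cdot(\chi\circ N))$ takes value $2a(\chi)$ for suitable $\chi$ unless $a(\theta)$ is already minimal, and a parity analysis via the different shows the minimal $a(\theta)$ in the ramified case is even — so $c(\pi) = 1 + a(\theta)$ is \emph{odd}. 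For the $L$-factor: $\pi\otimes\Contra{\pi}$ has parameter $\mathrm{Ind}_{W_K}^{W_{\Q_\pmq}}\theta\otimes(\mathrm{Ind}_{W_K}^{W_{\Q_\pmq}}\theta)^\vee$, which by Mackey decomposes as $\mathrm{Ind}_{W_K}^{W_{\Q_\pmq}}(\theta\theta^{-\sigma})\oplus \eta_{K/\Q_\pmq}\oplus\mathbf{1}$ where $\sigma$ is the nontrivial element of $\mathrm{Gal}(K/\Q_\pmq)$. The $L$-factor picks out the unramified constituents: $L(s,\mathbf{1}) = \zeta_\pmq(s)$ always; $\eta_{K/\Q_\pmq}$ is unramified exactly when $K$ is unramified, contributing $\zeta_\pmq(s)$ with a sign so that $L(s,\eta_{K/\Q_\pmq})L(s,\mathbf1) = \zeta_\pmq(2s)$ in that case (and nothing when $K$ is ramified); and $\mathrm{Ind}_{W_K}^{W_{\Q_\pmq}}(\theta\theta^{-\sigma})$ contributes nothing because $\theta\theta^{-\sigma}$ is ramified (else $\theta$ factors through the norm, contradicting supercuspidality) and the induction of a ramified character has trivial $L$-factor. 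Thus $L(s,\pi\ot\Contra{\pi}) = \zeta_\pmq(2s)$ in the unramified (type $\bfone$) case and $\zeta_\pmq(s)$ in the ramified (type $\mathbf 2$) case, as claimed.

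The main obstacle I anticipate is the bookkeeping in part (1) when $K/\Q_\pmq$ is ramified: the precise relation between $a(\chi)$ and $a(\chi\circ N_{K/\Q_\pmq})$ involves the ramification filtration and the different of $K/\Q_\pmq$, and getting the inequality $c(\pi)\ge 2c(\chi)$ versus $c(\pi)<2c(\chi)$ to line up exactly with ``norm-twist does not lower $a(\theta)$'' versus ``it is dominated by $c(\chi)$'' requires care — one has to be sure that the crossover point is exactly $c(\pi) = 2c(\chi)$ and not off by the different. I would handle this by working entirely with Artin conductors of the $2$-dimensional Weil--Deligne parameter and the standard formula $a(V\otimes\chi) = \max(a(V), 2a(\chi))$ valid for $V$ of dimension $2$ whenever $2a(\chi) > a(V) - (\text{contributions already ramified})$, specialized to the supercuspidal (hence ``primitive'') case where minimality guarantees the clean threshold; alternatively, one can cite Tunnell or Bushnell--Henniart's theory of the conductor for this exact statement. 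For residue characteristic $2$ one additionally invokes that minimal supercuspidals are still either dihedral or exceptional-with-bounded-conductor, and checks the two formulae case by case, but as noted this case does not arise under the running hypotheses of the paper.
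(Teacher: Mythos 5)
Your dihedral strategy is genuinely different from the paper's argument: the paper proves part (1) by an elementary conductor identity extracted from Jacquet--Langlands, Prop.~2.11(i), combined with minimality and a case analysis, and then deduces the parity statement in (2) from part (1) by comparing $\varepsilon(s,\pi\ot\chi)$ with $\varepsilon(s,\pi\ot\chi\tau_{\Q_{\pmq^2}})$ for all $\chi$ (the $L$-factor being quoted from Gelbart--Jacquet); no classification of supercuspidals enters. That choice is not cosmetic, and it exposes the first genuine gap in your proposal: the lemma is applied at \emph{every} prime $\pmq\mid N$ outside $\Sigma^-$, and nothing in the running hypotheses forbids $\pmq=2$ --- they constrain $p$ and $\gcd(N_1,N_2,N_3)$, not the residue characteristics of the primes dividing the tame level. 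Over $\Q_2$ there exist primitive (non-dihedral) minimal supercuspidal representations, for which your argument yields neither the twist formula in (1) nor the parity and $L$-factor statements in (2); your stated reason for discarding this case is factually wrong, whereas the paper's classification-free proof covers it uniformly.

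There is also an error inside part (2) even for odd $\pmq$: the step asserting that $\theta\theta^{-\sigma}$ must be ramified ``else $\theta$ factors through the norm'' is false. The character $\theta\theta^{-\sigma}$ is trivial on $\Q_\pmq^\x$, so when $K/\Q_\pmq$ is ramified and it is unramified it can be the unramified quadratic character $\eta_K$ of $K^\x$, and this really occurs: take $K$ ramified and $\theta$ with $a(\theta)=1$ whose restriction to $\cO_K^\x$ inflates an odd character of the residue field; then $\theta\neq\theta^\sigma$ but $\theta\theta^{-\sigma}=\eta_K$. Since $\eta_K=\tau_{\Q_{\pmq^2}}\circ N_{K/\Q_\pmq}$, such a $\pi$ satisfies $\pi\iso\pi\ot\tau_{\Q_{\pmq^2}}$ (it is also induced from the unramified field), so it is of type $\bfone$ with even conductor, although your dictionary, read off from the ramified presentation, declares it type $\mathbf 2$, contradicts your claim that minimality forces $a(\theta)$ even, and, by dropping the factor $L\bigl(s,\Ind_{W_K}^{W_{\Q_\pmq}}(\theta\theta^{-\sigma})\bigr)=(1+\pmq^{-s})^{-1}$, would output $\zeta_\pmq(s)$ instead of the correct $\zeta_\pmq(2s)$. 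To repair this you must first show that every type-$\bfone$ representation can be induced from the unramified quadratic extension and always compute in that presentation, treating the genuinely ramified-only case separately. Finally, the ``bookkeeping'' you postpone in part (1) is the actual content of the lemma: one needs $a_K(\chi\circ N_{K/\Q_\pmq})=2c(\chi)-1$ in the tamely ramified case for $c(\chi)\ge 1$ (and $=c(\chi)$ in the unramified case), and at the boundary $a(\theta)=a_K(\chi\circ N_{K/\Q_\pmq})$ possible cancellation must be excluded by minimality of $\pi$; as written this is a plan (or an appeal to Tunnell/Bushnell--Henniart) rather than a proof, while the paper settles exactly this point directly from the Jacquet--Langlands identity.
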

\begin{proof}Let $c=c(\pi)\geq 2$. To prove the first assertion, we begin with an immediate consequence of \cite[Proposition 2.11 (i)]{JacquetLanglands70}. Let $\chi_0=\chi|_{\Z_\pmq^\x}$ and $\om_0=\om|_{\Z_\pmq^\x}$. If $\chi_0\om_0\not =1$, then there exists a character $\sigma$ such that 
 \beq\label{E:con}c(\pi\ot\sigma)=c+c(\pi\ot\chi)-2c(\chi\om)\eeq and if $\chi_0\not =1,\,\om_0^{-1}$, then either of the following condition holds:
\begin{enumerate} \item[(i)]
$\sigma|_{\Z_\pmq^\x}\not =1,\chi_0$ and \[c(\sigma)=c-c(\chi\om),\quad c(\sigma\chi^{-1})=c(\pi\ot\chi)-c(\chi\om),\]  
\item[(ii)] $\sigma|_{\Z_\pmq^\x}=1$, $c(\chi)=c(\pi\ot\chi)-c(\chi\om)$ and $c(\chi\om)-c\geq -1$;
\item[(iii)] $\sigma |_{\Z_\pmq^\x}=\chi_0$, $c(\chi)=c(\pi)-c(\chi\om)$ and $c(\chi\om)-c(\pi\ot\chi)\geq -1$.
\end{enumerate}
To see it, we set $\rho=\chi_0^{-1}\om_0^{-1}$, $\nu=\om_0^{-1}$, $m=c(\chi\om)$, $p=m-c(\pi\ot\chi)$ and $n=m-c(\pi)$ in the equality proved in \cite[Proposition 2.11 (i)]{JacquetLanglands70}, from which  we see immediately that the equality shows the existence of desired $\sigma$ by noting that $C_n(\rho^{-1}\om^{-1})\not =0$ if and only if $n=c(\pi\ot\rho)$. Note that \eqref{E:con} implies that
\[c(\pi\ot\chi)\geq 2c(\chi\om)\text{ for all }\chi\]
by the minimality of $\pi$. In particular, $c(\om)\leq c/2$. Suppose that $c(\chi)>c/2$. Then $c(\chi\om)=c(\chi)$ and $\sg$ satisfies either (i) or (ii). In case (ii), we have $c(\pi\ot\chi)=2c(\chi)$. In case (i), $c(\sigma)=c-c(\chi)<c/2$, and hence we also have $c(\pi\ot\chi)=c(\chi)+c(\sigma\chi^{-1})=2c(\chi)$. Now we suppose that  $c(\chi)\leq c/2$. If $\chi_0=\om_0^{-1}$, then $c(\pi\ot\chi)=c(\Contra{\pi})=c$, so we may assume $\chi_0\not =\om_0^{-1}$. It suffices to show $c(\pi\ot\chi)\leq c$. Note that $c(\chi\om)\leq c/2$. In case (iii), $c(\pi\ot\chi)\leq c(\chi\om)+1\leq c$, and in the case (ii), $c(\pi\ot\chi)=c(\chi)+c(\chi\om)\leq c$. We consider case (i). We have $c(\sigma)=c-c(\chi\om)\geq c/2$. If $c(\sigma)>c/2$, then \[c(\pi\ot\chi)=c(\chi\om)+c(\sigma^{-1}\chi)=c(\chi\om)+c(\sigma)=c.\]
If $c(\sigma)=c/2$, then we also have $c(\pi\ot\chi)\leq c/2+c/2=c$. This finishes the proof of the first assertion.

We proceed to show the second assertion. This is \cite[Proposition 6.1]{Hida90Michigan}. We give a more elementary proof. The local $L$-factor of $L(s,\pi\ot\Contra{\pi})$ is given in \cite[Corollary (1.3)]{GJ78}.  To see the parity of the conductor, we note that $\pi\iso\pi\ot\tau_{\Q_{q^2}}$ if and only if $\varepsilon(s,\pi\ot\chi)=\varepsilon(s,\pi\ot\chi\tau_{\Q_{q^2}})$ for all character $\chi:\Qq^\x\to\C^\x$ as $\pi$ is supercuspidal. Since $\tau_{\Q_{q^2}}$ is unramified, this is equivalent to saying $(-1)^{c(\pi\ot\chi)}=1$ for all $\chi$. It follows from part (1) that $\pi$ is of type $\bfone$ if and only if $c(\pi)$ is even.
\end{proof}

\begin{prop}\label{P:ramifiedIII}Let $r=\left\lceil{\frac{\cond{\pi_2}}{2}}\right\rceil$. We have
\[I_\pmq(\phi^\star_\pmq\ot\wtd\phi^\star_\pmq)
=\chi_1^{-2}\Abs(\pmq^r)\cdot\varepsilon(1/2,\pi_2\ot\pi_3\ot\chi_1)\cdot\frac{1}{B_{\itPi_\pmq}}\cdot\frac{\zeta_\pmq(2)^2}{\zeta_\pmq(1)^2}\cdot \begin{cases}(1+\abs{\pmq})^2&\text{ if $\pi_2$ is of type $\bfone$},\\
1&\text{ if $\pi_2$ is of type $\mathbf 2$}.\end{cases}\]
\end{prop}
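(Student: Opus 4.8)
The proposition concerns Case (IIa), where $\pi_1 = \Prin{\chi_1}{\upsilon_1}$ is a spherical principal series, while $\pi_2$ and $\pi_3$ are discrete series (in fact minimal supercuspidal, by the remark in \S\ref{S:IIa}) with $L(s,\pi_2\ot\pi_3)\neq 1$, hence $\pi_3\simeq\Contra{\pi}_2\ot\sigma$ for an unramified character $\sigma$. The test vectors are $\phi^\star_\pmq=\pi_1(\eta^r)\xi_1\ot\xi_2\ot\xi_3$ and its dual, with $r=\lceil c_2/2\rceil$ and $c^* = \max\{c_2,c_3\} = c_2$ (using $c_1=0$ from minimality of $\sigma$-twists plus \eqref{sf}). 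As in Cases (Ia) and (Ib), the plan is to invoke \propref{P:IRSintegral}/\corref{C:IchinoRS} to convert the local trilinear integral $I_\pmq(\phi^\star_\pmq\ot\wtd\phi^\star_\pmq)$ into a product of a $\gamma$-factor $\gamma(1/2,\pi_2\ot\pi_3\ot\chi_1)$, the square of a local Rankin--Selberg integral $\Psi(W_2,W_3,\rho(\eta^r)f_1)$ (here $f_1\in\cB(\upsilon_1,\chi_1)^{\rm new}$ is the spherical section, $M^*(\upsilon_1,\chi_1)f_1\ot\om_1^{-1}=\wtd f_1$), the ratio $\pair{\rho(\tau_{c_1})W_1}{\wtd W_1}/\pair{\rho(\tau_{c_1})f_1}{\wtd f_1}$ computed by \lmref{L:basic3}(1) (with $c_1=0$ this ratio is $L(1,\upsilon_1\chi_1^{-1})^2\zeta_\pmq(1)^2/\zeta_\pmq(2)$), and the normalization factor $1/B_{\itPi_\pmq}$ together with powers of $\zeta_\pmq$. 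Since $\pi_1$ is a principal series, $(\pi_2,\pi_3;\pi_1)$ satisfies \eqref{Hb}, so \corref{C:IchinoRS} applies directly with the realization $\cV_{\itPi_\pmq}=\cW(\pi_2)\boxtimes\cW(\pi_3)\boxtimes\cB(\upsilon_1,\chi_1)^0$.

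\textbf{Key steps, in order.} First, unwind the definitions: use the explicit test vectors and \corref{C:IchinoRS} to reduce to
\[
I_\pmq(\phi^\star_\pmq\ot\wtd\phi^\star_\pmq) = \frac{\gamma(1/2,\pi_2\ot\pi_3\ot\upsilon_1)\,\Psi(W_2,W_3,\rho(\eta^r)f_1)^2}{\zeta_\pmq(2)^2 L(1/2,\itPi_\pmq)}\cdot\frac{\pair{\rho(\tau_{c_1})W_1}{\wtd W_1}}{\pair{\rho(\tau_{c_1})f_1}{\wtd f_1}}\cdot\frac{1}{B_{\itPi_\pmq}}\cdot\frac{\zeta_\pmq(2)^3}{\zeta_\pmq(1)^3},
\]
paralleling \eqref{E:1.local2} and \eqref{E:4.local2}. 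Second, compute the Rankin--Selberg integral $\Psi(W_2,W_3,\rho(\eta^r)f_1)$. Using the Iwasawa-type integration formula and the fact that $f_1$ is spherical, $\rho(\eta^r)f_1(\aone{y}\,n) $ has an explicit support and value; since $\pi_2,\pi_3$ are supercuspidal, their Whittaker newforms $W_2,W_3$ are supported (on the torus) on $\Z_\pmq^\x$ in the relevant range, and $W_3(\pDII{y}{1}) = W_{\Contra{\pi}_2\ot\sigma}(\pDII{y}{1})$. The integral collapses to a single torus integral $\int_{\Q_\pmq^\x} W_2(\aone{y})W_3(\aone{-y})\,(\text{character})(y)\,\rmd^\x y$, which by the recipe for $L(s,\pi_2\ot\pi_3\ot\chi_1)$ in \cite[Prop. 1.4, 1.5]{GJ78} evaluates to $L(1/2,\pi_2\ot\pi_3\ot\chi_1)$ up to a power of $\pmq$ and a sign, where the power of $\pmq$ tracks the shift by $\eta^r$ (it is $\chi_1\Abs^{1/2}(\pmq^{?})$ times $\abs{\pmq}^{r}$-type factors, and crucially the dependence is through $\chi_1^{-2}\Abs(\pmq^r)$ after squaring). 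Third, assemble: combine $\gamma(1/2,\pi_2\ot\pi_3\ot\upsilon_1) = \varepsilon(1/2,\pi_2\ot\pi_3\ot\upsilon_1)\,L(1/2,\pi_2\ot\pi_3\ot\chi_1)/L(1/2,\pi_2\ot\pi_3\ot\upsilon_1)$ with $L(1/2,\itPi_\pmq)=L(1/2,\pi_2\ot\pi_3\ot\chi_1)L(1/2,\pi_2\ot\pi_3\ot\upsilon_1)$ (valid since $\pi_1$ spherical) and with $\Psi^2$; the $L$-factors of $\pi_2\ot\pi_3$ cancel appropriately, leaving $\varepsilon(1/2,\pi_2\ot\pi_3\ot\upsilon_1)\varepsilon(1/2,\pi_2\ot\pi_3\ot\chi_1) = 1$, which reduces the product to $\varepsilon(1/2,\pi_2\ot\pi_3\ot\chi_1)$ times the remaining explicit constant. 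Fourth, use \lmref{L:basic1}(2) to identify the type-dependent factor: when $\pi_2$ is of type $\mathbf 1$, $L(s,\pi_2\ot\Contra{\pi}_2) = \zeta_\pmq(2s)$, so $L(s,\pi_2\ot\pi_3)=\zeta_\pmq(2s+?)$ contributes a factor that, when it enters the torus integral, produces the $(1+\abs{\pmq})^2 = \zeta_\pmq(1)^{-2}\cdot(\text{stuff})$ correction; when $\pi_2$ is of type $\mathbf 2$, $L(s,\pi_2\ot\Contra{\pi}_2)=\zeta_\pmq(s)$ and no extra factor arises, giving $1$. Finally, bookkeeping the powers of $\pmq$ and verifying they package into $\chi_1^{-2}\Abs(\pmq^r)$ completes the identity.

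\textbf{Main obstacle.} The hard part will be the honest evaluation of $\Psi(W_2,W_3,\rho(\eta^r)f_1)$ and the careful tracking of the power of $\pmq$ through the shift by $\eta^r=\pDII{\pmq^{-r}}{1}$. Because $\pi_2,\pi_3$ are supercuspidal of conductor $c_2$ (possibly large), one cannot use the naive spherical formulas; instead one needs the precise support of the supercuspidal Whittaker newform under translation by $w\pMX{1}{x}{0}{1}$ and the interplay between $r=\lceil c_2/2\rceil$ and the conductor. The type-$\mathbf 1$ versus type-$\mathbf 2$ dichotomy enters exactly here: the parity of $c_2$ (even for type $\mathbf 1$, odd for type $\mathbf 2$ by \lmref{L:basic1}(2)) determines whether the shift $\eta^r$ lands ``on the nose'' or ``half a step off,'' which is the source of the $(1+\abs{\pmq})^2$ discrepancy. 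I would isolate this as a separate computation of the torus integral $\int W_2(\aone{y})W_3(\aone{-y})\,\mu(y)\,\rmd^\x y$ for $\mu$ an unramified twist, invoking \cite[Prop. 1.5]{GJ78} or a direct Kirillov-model argument analogous to \lmref{L:Kirillov.local}, and then feed the result back. Everything else (the reduction via \corref{C:IchinoRS}, the $\varepsilon$-factor cancellations, the $\zeta_\pmq$ bookkeeping) is routine given the tools already developed in \S\ref{S:Ia}--\ref{S:Ib}.
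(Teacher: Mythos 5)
Your overall reduction is the same as the paper's: put the spherical principal series $\pi_1$ in the induced-model slot, apply \corref{C:IchinoRS} to express $I_\pmq(\phi^\star_\pmq\ot\wtd\phi^\star_\pmq)$ through $\gamma(1/2,\pi_2\ot\pi_3\ot\upsilon_1)$ times $\Psi(W_2,W_3,\rho(\eta^r)f_1)^2$, use \lmref{L:basic3}(1) for the ratio of pairings, and cancel the $\varepsilon$-factors at the end; that scaffolding is fine. The genuine gap is the evaluation of $\Psi(W_2,W_3,\rho(\eta^r)f_1)$, which is the actual content of the proposition, and your sketch of it is wrong as stated. Since $f_1$ is spherical, $\rho(\eta^r)f_1$ has full support on $K$ and the integral does not ``collapse to a single torus integral'': for supercuspidal $\pi_2,\pi_3$ the newforms satisfy $W_i(\aone{y})=\bbI_{\Z_\pmq^\x}(y)$, so the collapsed torus integral you invoke would produce a constant, never the nontrivial factor $L(1/2,\pi_2\ot\pi_3\ot\chi_1)$ — which is precisely what is at stake in case (IIa), where $L(s,\pi_2\ot\pi_3)\neq 1$. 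In the paper this $L$-factor and the type-dependent correction only emerge after decomposing the unipotent integration into shells $x\in\pmq^n\Z_\pmq^\x$, expressing each shell term $J_n$ by Fourier inversion on $\Z_\pmq^\x$ as a sum over characters $\chi$ of products $A^{(m)}_{\pi_2,n}(\chi)A^{(m)}_{\pi_3,n}(\chi^{-1})$, evaluating these with \lmref{L:Kirillov.local} together with the conductor formula for $c(\pi\ot\chi)$ in \lmref{L:basic1}(1), and then summing the resulting geometric series against $r=\lceil c_2/2\rceil$; the parity of $c_2$ (\lmref{L:basic1}(2)) decides whether the boundary term survives, and this is what yields $(1+\abs{\pmq})$ in type $\bfone$ versus $1$ in type $\mathbf 2$. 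Your ``main obstacle'' paragraph does locate the difficulty correctly and gestures at the right tool (a Kirillov-model argument analogous to \lmref{L:Kirillov.local}), but the proposal leaves this central computation as a placeholder, and the mechanism you assign to the dichotomy ($L(s,\pi_2\ot\Contra{\pi}_2)$ ``entering the torus integral'') is not what actually happens.

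A smaller inaccuracy: in case (IIa) the discrete series need not be supercuspidal. Since $\pi_3\iso\Contra{\pi}_2\ot\sigma$ with $\sigma$ unramified, $\pi_2$ and $\pi_3$ may both be unramified special (then type $\mathbf 2$, $c_2=1$, $r=1$); the paper handles this separately as subcase (a), where the integral again splits into two coset contributions $J_1+J_2$ rather than a single torus integral. So the parenthetical ``in fact minimal supercuspidal'' is not correct, and a complete proof must also treat this subcase.
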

\begin{proof}After an unramified twist, we may assume that $\pi_1=\Prin{\chi_1}{\upsilon_1}$ with $\chi_1=\Abs^{\bfs-\onehalf}$ and $\upsilon_1=\Abs^{\onehalf-\bfs}$ for some $\bfs\in\C$ and $\pi_3=\Contra{\pi}_2$. Let $\pi=\pi_2$ be a minimal discrete series. We use the realizations as in \eqref{E:realization1}. Let $f_1$ be the normalized new vector in $\cB(\Abs^{\bfs-\onehalf},\Abs^{\onehalf-\bfs})$ and let $f^\star_1=\rho(\pDII{\pmq^{-r}}{1})f_1$.  As in the previous cases, by \corref{C:IchinoRS} we obtain
\beq\label{E:super.1}\begin{aligned} I_\pmq(\phi^\star_\pmq\ot\wtd\phi^\star_\pmq)=&\frac{\zeta_\pmq(1)\sJ_\pmq(W_2\ot W_3\ot f^\star_1,\wtd W_2\ot\wtd W_3\ot\wtd f^\star_1)}{\zeta_\pmq(2)^2L(1/2,\itPi_\pmq)B_{\itPi_\pmq}}\cdot\frac{\pair{W_{\pi_1}}{\wtd W_{\pi_1}}}{\pair{f_1}{\wtd f_1}}\cdot\frac{\zeta_\pmq(2)^3}{\zeta_\pmq(1)^3}\\
=&\frac{\zeta_\pmq(1)\gamma(1/2,\pi_2\ot\pi_3\ot\Mu_1)\cdot\Psi(W_2,W_3,f^\star_1)^2}{\zeta_\pmq(2)^2L(1/2,\itPi_\pmq)}\cdot\frac{\pair{W_{\pi_1}}{\wtd W_{\pi_1}}}{\pair{f_1}{\wtd f_1}}\cdot\frac{1}{B_{\itPi_\pmq}}\cdot\frac{\zeta_\pmq(2)^3}{\zeta_\pmq(1)^3}.
\end{aligned}\eeq
 Define the function $\bfW:ZN\bksl G\to\C$ by
\[\bfW(g):=W_2(g)W_3(\pDII{-1}{1}g).\]
We compute $\Psi(W_2,W_3,f_1^\star)$ in the following two subcases. 

Subcase (a): $\pi_i=\chi_i\Abs^{-\onehalf}{\rm St}$ are unramified special for $i=2,3$. Then $\pi_2$ is of type $\mathbf 2$ and $r=1$. We have  
\begin{align*}
\Psi(W_2,W_3,f^\star_1)=&\vol(K_0(\pmq))(J_1+J_2),\end{align*}
where
\begin{align*}J_1=&\abs{\pmq}^{-\bfs}\int_{\Q_\pmq^\x}\bfW(\pDII{y}{1})\abs{y}^{\bfs-1}\rmd^\x y,\\
J_2=&\sum_{x\in\Z/\pmq\Z}\abs{\pmq}^{\bfs}\int_{\Q_\pmq^\x}\bfW(\pDII{y}{1}\pMX{1}{x}{0}{1}w)\abs{y}^{\bfs-1}\rmd^\x y.
\end{align*}
By a direct calculation, we find that
\begin{align*}
J_1=&\abs{\pmq}^{-\bfs}L(\bfs,\chi_2\chi_3),\\
J_2=&\abs{\pmq}^{\bfs}\cdot q\cdot \abs{\pmq}^2\cdot \chi_2^{-1}\chi_3^{-1}\Abs^{-\bfs}(\pmq)\cdot L(\bfs,\chi_1\chi_2)=\abs{\pmq}\chi_1\chi_2(\pmq^{-1})L(\bfs,\chi_2\chi_3).
\end{align*}
Note that $\om_2\om_3=\chi_2^2\chi_3^2\Abs^{-2}=1$. Hence\begin{align*}\Psi(W_2,W_3,f^\star_1)=&
\frac{1}{1+\pmq}\abs{\pmq}^{-\bfs}\cdot(1+\chi_2\chi_3\Abs^{\bfs-1}(\pmq))\cdot L(\bfs,\chi_2\chi_3)\\
=&\frac{\zeta_\pmq(2)}{\zeta_\pmq(1)}
\abs{\pmq}^{1-\bfs}\frac{L(\bfs,\chi_2\chi_3)L(\bfs-1,\chi_2\chi_3)}{\zeta_\pmq(2\bfs)}\\
=&\frac{\zeta_\pmq(2)}{\zeta_\pmq(1)}
\abs{\pmq}^{1-\bfs}\frac{L(1/2,\pi_2\ot\pi_3\ot\chi_1)}{L(1,\chi_1\upsilon_1^{-1})}.
\end{align*}

Subcase (b): $\pi_2$ and $\pi_3$ are supercuspidal. In this case,
\begin{align*}
\Psi(W_2,W_3,f^\star_1)=&\frac{\zeta_\pmq(2)}{\zeta_\pmq(1)}\int_{\Q_\pmq^\x}\int_{\Q_\pmq}
\bfW(\pDII{y}{1}\pMX{1}{0}{x}{1})\abs{y}^{\bfs-1}f_1(\pMX{1}{0}{x}{1})\pDII{\pmq^{-r}}{1})\rmd x\rmd^\x y
\\
=&\frac{\zeta_\pmq(2)}{\zeta_\pmq(1)}\abs{\pmq}^{-r\bfs}\sum_{n\in\Z}J_n,
\end{align*}
where
\begin{align*}
J_n=&\int_{\Qq^\x}\int_{\pmq^n\Z_\pmq^\x}\bfW(\pDII{y}{1}\pMX{1}{0}{\pmq^n}{1}))\abs{y}^{\bfs-1}f_3(\pMX{1}{0}{\pmq^{n-r}}{1})\rmd x\rmd^\x y\\
=&\abs{\pmq^n}(1-\abs{\pmq})\sum_{m\in\Z}\abs{\pmq^m}^{\bfs-1}\cdot f_1(\pMX{1}{0}{\pmq^{n-r}}{1})\int_{\Z_\pmq^\x}\bfW(\pDII{\pmq^m u}{1}\pMX{1}{0}{\pmq^n}{1})\rmd^\x u\\
=&\abs{\pmq^n}(1-\abs{\pmq})\sum_{m\in\Z}\abs{\pmq^m}^{\bfs-1}f_1(\pMX{1}{0}{\pmq^{n-r}}{1})\sum_{\chi\in\wh\Z_\pmq^\x}A_{\pi_2,n}^{(m)}(\chi)A_{\pi_3,n}^{(m)}(\chi^{-1})\chi(-1),
\end{align*}
where 
\[A_{\pi_i,n}^{(m)}(\chi):=\int_{\Z_\pmq^\x}W_i(\pDII{\pmq^mu}{1}\pMX{1}{0}{\pmq^n}{1})\rmd u.\]
In the case $\chi\not =\bfone$, by \lmref{L:Kirillov.local}, we have
\[A_{\pi_2,n}^{(m)}(\chi)A_{\pi_3,n}^{(m)}(\chi^{-1})\chi(-1)=\abs{\pmq}^{c-n}\zeta_\pmq(1)^2\]if  $n=c-c(\chi)$ and \[m=c-c(\pi\ot\chi)=\begin{cases}0&\text{ if }n\geq r,\\
2n-c&\text{ if }n<r,\end{cases} 
\] by \lmref{L:basic1} (2), and $A_{\pi_2,n}^{(m)}(\chi)A_{\pi_3,n}^{(m)}(\chi^{-1})=0$ otherwise.
If $\chi=\bfone$, then 
\[A_{\pi_2,n}^{(m)}(\bfone)A_{\pi_3,n}^{(m)}(\bfone)=\abs{\pmq}^2\zeta_\pmq(1)^2\]
if $c-n=1$. Therefore, if $n<r$, then 
\begin{align*}J_n=&(1-\abs{\pmq})\abs{\pmq}^n\abs{\pmq}^{(2n-c)(\bfs-1)}\abs{\pmq}^{2(r-n)\bfs}\cdot\abs{\pmq}^{c-n}\zeta_\pmq(1)^2\#\stt{\chi\mid \cond{\chi}=c-n}\\
=&(1-\abs{\pmq})\abs{\pmq}^{(2r-c)\bfs+c-n}.
\end{align*}
If $r\leq n <c-1$, then \begin{align*}
J_n=&(1-\abs{\pmq})\abs{\pmq}^n.
\end{align*}
If $n=c-1$, then 
\[J_{c-1}=(1-\abs{\pmq})\abs{\pmq}^{c-1}(\abs{\pmq}(q-1-1)+\abs{\pmq}^2)\zeta_\pmq(1)^2=(1-\abs{\pmq})\abs{\pmq}^{c-1}.\]
If $n\geq c$, then $J_{\geq c}=\abs{\pmq}^c$. Combining the above equations, we find that $\Psi(W_2,W_3,f_1^\star)$ equals
\begin{align*}
\frac{\zeta_\pmq(2)}{\zeta_\pmq(1)}\abs{\pmq}^{-r\bfs}
\sum_{n\in\Z}J_n=&\frac{\zeta_\pmq(2)}{\zeta_\pmq(1)}\abs{\pmq}^{-r\bfs}\left(J_{r-1}^-+\sum_{n=1}^{r}J_n+J_c^+\right)\\
=&\frac{\zeta_\pmq(2)}{\zeta_\pmq(1)}\abs{\pmq}^{-r\bfs}(\abs{\pmq}^{(2r-c)\bfs}\abs{\pmq}^{c+1-r}+\abs{\pmq}^r-\abs{\pmq}^c+\abs{\pmq}^c)\\
=&\frac{\zeta_\pmq(2)}{\zeta_\pmq(1)}\abs{\pmq}^{-r\bfs}\begin{cases}\abs{\pmq}^\frac{c}{2}(1+\abs{\pmq})&\text{ if $c$ is even ($\pi_2$ is of type $\bfone$),}\\
\abs{\pmq}^\frac{c+1}{2}(1+\abs{\pmq}^\bfs)&\text{ if $c$ is odd ($\pi_2$ is of type $\mathbf 2$).}\end{cases}
\end{align*}
On the other hand, when $\pi_2$ and $\pi_3$ are supercuspidal, it is easy to see that
\[\frac{L(1/2,\pi_2\ot\pi_3\ot\Mu_1)}{L(1,\Mu_1\upsilon_1^{-1})}=\begin{cases}1 &\text{ if $\pi_2$ is of type $\bfone$,}\\
1+\abs{\pmq}^\bfs&\text{ if $\pi_2$ is of type $\mathbf 2$}.\end{cases}
\]
We thus conclude that in either subcase (a) or subcase (b),  \[\Psi(W_2,W_3,f^\star_1)=\frac{\zeta_\pmq(2)}{\zeta_\pmq(1)}\abs{\pmq}^{r(1-\bfs)}\frac{L(1/2,\pi_2\ot\pi_3\ot\Mu_1)}{L(1,\Mu_1\upsilon_1^{-1})}\begin{cases}1+\abs{\pmq} &\text{ if $\pi_2$ is of type $\bfone$,}\\
1&\text{ if $\pi_2$ is of type $\mathbf 2$}.\end{cases}\]
Substituting the above equation and \lmref{L:basic3} into \eqref{E:super.1}, we obtain
\begin{align*}
I_\pmq(\phi^\star_\pmq\ot\wtd\phi^\star_\pmq)&=
\frac{\zeta_\pmq(2)\gamma(1/2,\pi_2\ot\pi_3\ot\Mu_1)\cdot\Psi(W_2,W_3,f^\star_1)^2}{\zeta_\pmq(1)^2L(1/2,\itPi_\pmq)B_{\itPi_\pmq}}\cdot \frac{\zeta_\pmq(1)^2L(1,\Mu_1\upsilon_1^{-1})^2}{\zeta_\pmq(2)}\\
=&\varepsilon(1/2,\pi_2\ot\pi_3\ot\chi_1)\chi_1^{-2}\Abs(\pmq^r)\cdot
\frac{1}{B_{\itPi_\pmq}}\cdot\frac{\zeta_\pmq(2)^2}{\zeta_\pmq(1)^2}\cdot \begin{cases}(1+\abs{\pmq})^2&\text{ if $\pi_2$ is of type $\bfone$},\\
1&\text{ if $\pi_2$ is of type $\mathbf 2$}\end{cases}
\end{align*}
by noting that $L(s,\itPi_\pmq)=L(s,\pi_2\ot\pi_3\ot\chi_1)L(s,\pi_2\ot\pi_3\ot\upsilon_1)$. This finishes the proof.
\end{proof}

\subsection{The ramified case (IIb)}\label{S:IIb}
Finally, we consider the case where $\pi_2$ and $\pi_3$ are discrete series, $\pi_3$ is minimal and $L(s,\pi_2\ot\pi_3)=1$. It is also assumed that $\pi_1=\Prin{\chi_1}{\upsilon_1}$ is a principal series with $c(\chi_1)=0$ and $c(\upsilon_1)\leq 1$. 

\begin{prop}\label{P:ramifiedII}Let $c^*=\max\stt{c_2,c_3}$. We have
\[I_\pmq(\phi^\star_\pmq\ot\wtd \phi^\star_\pmq)= \om_1(-1)\Mu_1^{-2}\Abs(\pmq^{c^*})\varepsilon(1/2,\pi_1)^2\cdot\varepsilon(1/2,\pi_2\ot\pi_3\ot\Mu_1)\cdot\frac{1}{B_{\itPi_\pmq}}\cdot\frac{\zeta_\pmq(2)^2}{\zeta_\pmq(1)^2}.\]

\end{prop}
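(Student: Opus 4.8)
The strategy is identical in structure to the proofs of Propositions~\ref{P:ramifiedI}, \ref{P:ram4} and \ref{P:ramifiedIII}: reduce the local Ichino integral to a local Rankin--Selberg integral via \corref{C:IchinoRS}, compute that Rankin--Selberg integral explicitly, and then plug in the normalizing factor $\pair{\rho(\tau_{c_1})W_{\pi_1}}{\wtd W_{\pi_1}}/\pair{\rho(\tau_{c_1})f_1}{\wtd f_1}$ from \lmref{L:basic3}. Since $\pi_1=\Prin{\chi_1}{\upsilon_1}$ with $c(\chi_1)=0$, the hypothesis \eqref{Hb} holds for the triple $(\pi_2,\pi_3;\pi_1)$ (each $\pi_i$ is a local component of a cuspidal automorphic representation of $\GL_2(\A)$), so \corref{C:IchinoRS} applies with $\pi_3$ replaced by $\pi_1$ in the role of the induced constituent. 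I would use the realizations $\cV_{\itPi_\pmq}=\cW(\pi_2)\boxtimes\cW(\pi_3)\boxtimes\cB(\upsilon_1,\chi_1)^0$ and its contragredient, take $f_1$ to be the normalized new section of $\cB(\upsilon_1,\chi_1)$ (in the subcase $c(\upsilon_1)=0$ this is the spherical vector; in the subcase $c(\upsilon_1)=1$ it is supported on $B\cU_0(\pmq)$), and set $\wtd f_1=M^*(\upsilon_1,\chi_1)f_1\ot\om_1^{-1}$. The test vectors $\phi^\star_\pmq,\wtd\phi^\star_\pmq$ are as recalled in \subsecref{SS:61} for case (IIb): in the case $c_1=0$ they involve the linear combination $\pi_1(\eta^{c^*})\xi_1-\upsilon_1^{-1}\Abs^{\onehalf}(\pmq)\pi_1(\eta^{c^*-1})\xi_1$ (the ordinary-vector stabilization attached to the root $\beta_\pmq(f)$), and in the case $c_1>0$ simply $\pi_1(\eta^{c^*-c_1})\xi_1$.

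\textbf{Key steps.} First, applying \corref{C:IchinoRS} and \lmref{L:basic3}(1) gives, as in \eqref{E:1.local2},
\[
I_\pmq(\phi^\star_\pmq\ot\wtd\phi^\star_\pmq)=\frac{\gamma(1/2,\pi_2\ot\pi_3\ot\Mu_1)\,\Psi(W_2,W_3,f^\star_1)^2}{L(1/2,\itPi_\pmq)}\cdot\frac{1}{B_{\itPi_\pmq}}\cdot\Mu_1^2\Abs(\pmq^{-c_1})\varepsilon(1/2,\pi_1)^2\om_1(-1)\cdot\frac{\zeta_\pmq(1)^2}{\zeta_\pmq(2)},
\]
where $f^\star_1$ is the appropriate translate/linear combination of $f_1$ and $L(1/2,\itPi_\pmq)=L(1/2,\pi_2\ot\pi_3\ot\Mu_1)L(1/2,\pi_2\ot\pi_3\ot\upsilon_1)$ since $L(s,\pi_2\ot\pi_3)=1$. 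So everything reduces to showing that the local Rankin--Selberg integral satisfies
\[
\Psi(W_2,W_3,f^\star_1)=\frac{\zeta_\pmq(2)}{\zeta_\pmq(1)}\cdot(\text{power of }\abs{\pmq})\cdot\frac{L(1/2,\pi_2\ot\pi_3\ot\Mu_1)}{L(1,\Mu_1\upsilon_1^{-1})},
\]
with the power of $\abs{\pmq}$ accounting for the $\Mu_1^{-2}\Abs(\pmq^{c^*})$ in the final formula after squaring and combining with the $\gamma$-factor. I would compute $\Psi(W_2,W_3,f^\star_1)$ by the integration formula over $ZN\bksl G$ and a decomposition into integrals over the cells $\abs{x}\ge 1$, $\abs{x}\in\pmq^n\Z_\pmq^\x$ for $1\le n<c^*$, and $\abs{x}\le\abs{\pmq}^{c^*}$ (this is exactly the bookkeeping carried out in the proof of \propref{P:ramifiedI}, subcases (b),(c)), using the Whittaker-newform formulas and the Kirillov-model computation of the ``atom'' integrals $A_n^{(m)}(\chi)=\int_{\Z_\pmq^\x}W_{\pi_i}(\pDII{\pmq^m y}{1}\pMX{1}{0}{\pmq^n}{1})\Mu(y)\,d^\x y$ from \lmref{L:Kirillov.local}. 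The condition $L(s,\pi_2\ot\pi_3)=1$ means $\pi_3\not\simeq\Contra{\pi}_2\ot\sigma$ for any unramified $\sigma$, so by \cite[Prop.~1.4]{GJ78} the only contributions come from ramified characters $\chi$ with $c(\pi_2\ot\chi^{-1})$ and $c(\pi_3\ot\chi)$ forced to match, and the $\varepsilon$-factors recombine into $\varepsilon(1/2,\pi_2\ot\pi_3\ot\Mu_1)$. In the case $c_1=0$ I would additionally verify that the stabilizing combination $f^\star_1=\rho(\eta^{c^*})f_1-\upsilon_1^{-1}\Abs^{\onehalf}(\pmq)\rho(\eta^{c^*-1})f_1$ kills the Euler factor $L(s,\Mu_1\upsilon_1^{-1})$-type denominator correctly, producing exactly the clean answer above (this is the point of choosing $\beta_\pmq(f)$ as a root of the Hecke polynomial).

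\textbf{Main obstacle.} The essential difficulty is the explicit evaluation of $\Psi(W_2,W_3,f^\star_1)$ when $\pi_2$ and $\pi_3$ are both supercuspidal of possibly different and fairly large conductors $c_2\ne c_3$: one must control the interaction of the two Whittaker newforms across all the Bruhat cells simultaneously, keeping careful track of which characters $\chi$ in the sum $\sum_{\chi\in\wh\Z_\pmq^\x}A_{\pi_2,n}^{(m)}(\chi)A_{\pi_3,n}^{(m)}(\chi^{-1})\chi(-1)$ survive. Here \lmref{L:basic1} (the conductor formula $c(\pi\ot\chi)=\max\{c(\pi),2c(\chi)\}$ for minimal supercuspidal $\pi$, together with the type-$\mathbf 1$/type-$\mathbf 2$ dichotomy) is the crucial input that pins down the surviving range of $\chi$ and hence the power of $\abs{\pmq}$; the minimality of $\pi_3$ (Hypothesis~\ref{H:ram}(2)) is used to apply it. The subcase where $c_1>0$ (so $\upsilon_1$ ramified and $f_1$ supported on $B\cU_0(\pmq)$) requires the $\cU_0(\pmq)$-variant of \lmref{L:basic3}(1) but is otherwise parallel. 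Once $\Psi(W_2,W_3,f^\star_1)$ is in hand, assembling the final formula is a routine manipulation of $\gamma$-, $\varepsilon$- and $L$-factors as in the previous propositions, using $\varepsilon(1/2,\pi_2\ot\pi_3\ot\Mu_1)\varepsilon(1/2,\pi_2\ot\pi_3\ot\upsilon_1)=1$ (which follows from $\Mu_1\upsilon_1=\om_1$ and $\om_1\om_2\om_3=1$, noting $\varepsilon(1/2,\pi_2\ot\pi_3\ot\Mu_1)=\pm1$).
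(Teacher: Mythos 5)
Your reduction step matches the paper's: apply \corref{C:IchinoRS} with the induced model placed on the $\pi_1$-slot and then divide by the ratio of pairings from \lmref{L:basic3}(1). (As a side remark, your displayed reduction is off in the bookkeeping: the factor $L(1,\chi_1\upsilon_1^{-1})^2$ from \lmref{L:basic3}(1) is missing and the $\zeta_\pmq$-factors do not cancel as written; in the correct assembly all zeta ratios cancel and the $L(1,\chi_1\upsilon_1^{-1})^2$ survives to cancel against the square of the $L(1,\chi_1\upsilon_1^{-1})^{-1}$ produced by the test section.) The genuine gap is in the only step that carries the content of the proposition: you never evaluate $\Psi(W_2,W_3,f_1^\star)$, leaving an undetermined ``power of $\abs{\pmq}$'' which is precisely the constant $\chi_1^{-2}\Abs(\pmq^{c^*})$ the proposition asserts, and the route you propose for it --- a Bruhat-cell decomposition with the character sums controlled by \lmref{L:Kirillov.local} and the conductor formula of \lmref{L:basic1} --- is the machinery for cases (Ia)(b),(c) and (IIa), not what operates in case (IIb).

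The point you are missing is that the test section in case (IIb) is built exactly so that no such analysis is needed: $f_1^\star$ (the translate $\rho(\pDII{\pmq^{1-c^*}}{1})f_1$ when $c_1=1$, resp.\ the $\beta_\pmq(f)$-stabilized combination when $c_1=0$) is supported on $B\,\cU_0(\pmq^{c^*})$ with $f_1^\star(1)=\chi_1\Abs^{\onehalf}(\pmq^{c_1-c^*})L(1,\chi_1\upsilon_1^{-1})^{-1}$. Hence in the integration formula over $ZN\bksl G$ the variable $x$ in $\pMX{1}{0}{x}{1}$ is confined to $\pmq^{c^*}\Z_\pmq$, where both $W_2$ and $W_3$ are right-invariant (since $c^*\geq c_2,c_3$), so the unipotent integral only contributes the volume $\abs{\pmq}^{c^*}$ and
\[\Psi(W_2,W_3,f_1^\star)=\frac{\zeta_\pmq(2)}{\zeta_\pmq(1)}\cdot\frac{\chi_1(\pmq^{c_1-c^*})\abs{\pmq}^{\frac{c_1+c^*}{2}}}{L(1,\chi_1\upsilon_1^{-1})}\cdot\int_{\Qq^\x}W_2(\aone{y})W_3(\aone{-y})\chi_1\Abs^{-\onehalf}(y)\,\rmd^\x y,\]
where the last integral equals $1$ because in case (IIb) at least one of $L(s,\pi_2)$, $L(s,\pi_3)$ is trivial, so at least one of the two newform Whittaker functions restricted to the torus is $\bbI_{\Z_\pmq^\x}$. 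Thus only the deepest cell survives, no character sum appears, \lmref{L:basic1} is not needed, and your ``main obstacle'' (interaction of the two newforms across cells for supercuspidal $\pi_2,\pi_3$ of different conductors) does not arise for this choice of vector. Relatedly, the role of the stabilization when $c_1=0$ is not to remove an Euler factor from the answer but to force the support condition above, and since $\chi_1$ is unramified one has $L(s,\pi_2\ot\pi_3\ot\chi_1)=1$, so the $\gamma$-factor collapses directly to $\varepsilon(1/2,\pi_2\ot\pi_3\ot\chi_1)$; the identity $\varepsilon(1/2,\pi_2\ot\pi_3\ot\chi_1)\varepsilon(1/2,\pi_2\ot\pi_3\ot\upsilon_1)=1$ you invoke plays no role here.
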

\begin{proof}In this case, we use the realizations \beq\label{E:realization1}\cV_{\itPi_\pmq}=\cB(\chi_1,\upsilon_1)\boxtimes\cW(\pi_2)\boxtimes \cW(\pi_3);\quad \cV_{\Contra{\itPi}_\pmq}= \cB(\chi_1^{-1},\upsilon_1^{-1})\boxtimes\cW(\Contra{\pi}_2)\boxtimes\cW(\Contra{\pi}_3).\eeq
Let $f_1\in \sB(\Mu_1,\upsilon_1)^{\rm new}$ be the new vector with $f_1(1)=1$. Define the section $f^\star_1\in\sB(\Mu_1,\upsilon_1)^{\cU_0(\pmq^{c^*})}$ by 
\[f^\star_1=\rho(\pDII{\pmq^{-c^*}}{1})f_1-\upsilon_1^{-1}\Abs^\onehalf(\pmq)\rho(\pDII{\pmq^{1-c^*}}{1})f_1\text{ if }c_1=c(\upsilon_1)=0\]
and $f^\star_1=\rho(\pDII{\pmq^{1-c^*}}{1})f_1$ if $c_1=1$. Then $f^\star_1$ is the section supported in the $B\cU_0(\pmq^{c^*})$ with $f^\star_1(1)=\Mu_1\Abs^\onehalf(\pmq^{c_1-c^*})L(1,\Mu_1\upsilon_1^{-1})^{-1}$. Let $\wtd f_1=M^*(\Mu_1,\upsilon_1)f_1\ot\om_1^{-1}$. Then we have \begin{align*}\wtd f^\star_1=&\rho(\pDII{\pmq^{-c^*}}{1})\wtd f_1\cdot \om_1(p^{-c^*})-\upsilon_1^{-1}\Abs^\onehalf(\pmq)\rho(\pDII{\pmq^{1-c^*}}{1})\wtd f_1\cdot\om_1(p^{1-c^*})\\
=&M^*(\Mu_1,\upsilon_1)f^\star_1\ot \om_1^{-1}\text{ if }c_1=0.\end{align*}
A direct computation shows that \begin{align*}
\Psi(W_2,W_3,f^\star_1)=&\frac{\zeta_\pmq(2)}{\zeta_\pmq(1)}\int_{\Q_\pmq^\x}\int_{\Q_\pmq}W_2(\pDII{y}{1}\pMX{1}{0}{x}{1})W_3(\pDII{-y}{1}\pMX{1}{0}{x}{1})\Mu_1\Abs^\onehalf(y)f^\star_1(\pMX{1}{0}{x}{1})\rmd x\frac{\rmd^\x y}{\abs{y}}\\
=&\frac{\Mu_1\Abs^\onehalf(\pmq^{c_1-c^*})}{L(1,\Mu_1\upsilon_1^{-1})}\frac{\zeta_\pmq(2)\abs{\pmq}^{c^*}}{\zeta_\pmq(1)}\int_{\Q_\pmq}W_2(\pDII{y}{1})W_3(\pDII{-y}{1})\Mu_1\Abs^{-\onehalf}(y)\rmd^\x y\\
=&\frac{\zeta_\pmq(2)\Mu_1(\pmq^{c_1-c^*})\abs{\pmq}^\frac{c_1+c^*}{2}}{\zeta_\pmq(1)L(1,\Mu_1\upsilon_1^{-1})}.
\end{align*}
The last equality follows from the fact that either $L(s,\pi_2)=1$ or $L(s,\pi_3)=1$ in case (IIb). By \corref{C:IchinoRS}, the above equation and \lmref{L:basic3} (1), we obtain
\begin{align*}
I_\pmq(\phi_\pmq^\star\ot\wtd\phi_\pmq^\star)=&
\frac{\sJ_\pmq(W_2\ot W_3\ot f^\star_1,\wtd W_2\ot \wtd W_3\ot\wtd f^\star_1)}{\zeta_\pmq(2)^2 L(1/2,\itPi_\pmq)B_{\itPi_\pmq}}\cdot\frac{\pair{\rho(\tau_{c_1})W_{\pi_1}}{\wtd W_{\pi_1}}}{\pair{\rho(\tau_{c_1})f_1}{\wtd f_1}}\cdot\frac{\zeta_\pmq(2)^3}{\zeta_\pmq(1)^3}\\
=&\frac{\gamma(1/2,\pi_2\ot\pi_3\ot\chi_1)\Psi(W_2,W_3,f^\star_1)^2}{L(1/2,\itPi_\pmq)B_{\itPi_\pmq}}\cdot \chi_1^2\Abs(\pmq^{-c_1})\varepsilon(1/2,\pi_1)^2\om_1(-1)L(1,\chi_1\upsilon_1^{-1})^2\\
=&\frac{\Mu_1^{-2}(\pmq^{c^*})\abs{\pmq}^{c^*}\varepsilon(1/2,\pi_2\ot\pi_3\ot\Mu_1)}{B_{\itPi_\pmq}}\cdot\frac{\zeta_\pmq(2)^2}{\zeta_\pmq(1)^2}\cdot\varepsilon(1/2,\pi_1)^2\om_1(-1).
\end{align*}
The lemma follows.
\end{proof}
\def\Rec{{\rm Rec}}
\subsection{The \padic interpolation of normalized local zeta integrals $\sI^*_{\itPi_{\ulQ,\pmq}}$}\label{SS:6.6}
In this subsection, we compute the normalized local zeta integrals $\sI^*_{\itPi_{\ulQ,\pmq}}=\sI^*_{\itPi_\pmq}$ in \eqref{E:Nq} and show these integrals can be $p$-adically interpolated by an Iwasawa function in $\ulQ\in\frakX_\cR^\ari$. We begin with recalling some facts. If $\cF\in\bfI\powerseries{q}$ is a primitive Hida family of tame conductor $N$ and $Q\in \frakX^\ari_\bfI$ is a classical point, as in the introduction we denote by $V_{\cF_Q}$ the associated $p$-adic Galois representation, and for each prime $\ell$, let $\WD_\ell(V_{\cF_Q})$ be the representation of the Weil-Deligne group $W'_{\Q_\ell}$ attached to $V_{\cF_Q}$. Let $\ell\not =p$ be a prime. On the automorphic side, denote by $\Rec_{\Q_\ell}$ the local Langlands reciprocity map from the set of isomorphism classes of irreducible representations of $\GL_n(\Q_\ell)$ to the set of isomorphism classes of $n$-dimensional representations of Weil-Deligne group $W'_{\Q_\ell}$ over $\Qbarp$ (\cite{HarrisTaylor01}). Then \beq\label{E:root.local2}\Rec_{\Q_\pmq}(\pi_{\cF_Q,\ell}\ot\Abs_\ell^\frac{1-k_Q}{2})=\WD_\ell(V_{\cF_Q});\quad \varepsilon_\ell(1/2,\pi_{\cF_Q})=\abs{N}_\ell^{\frac{k_Q}{2}}\varepsilon(\WD_\ell(V_{\cF_Q})).\eeq
We recall the following standard fact for the $p$-adic interpolation of local constants in Hida families.
\begin{lm}\label{L:root.local2}There exists $\varepsilon_\ell(\cF)\in\bfI^\x$ such that 
\[\varepsilon_\ell(\cF)(Q)=\varepsilon(\WD_\pmq(V_{\cF_Q}))\]for every classical point $Q\in \frakX^+_\bfI$. Moreover, if $\cG\in\bfI\powerseries{q}$ is another primitive Hida family, then there exists $\varepsilon(\cF\ot\cG)\in(\bfI\wh\ot_\cO\bfI)^\x$ such that 
\[\varepsilon_\ell(\cF\ot\cG)(\Qx,\Qy)=\varepsilon(\WD_\ell(V_{\cF_{\Qx}}\ot V_{\cG_{\Qy}}))\]for every classical points $(\Qx,\Qy)\in \frakX_\bfI^+\times\frakX_\bfI^+$.
\end{lm}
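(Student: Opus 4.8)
The statement is a standard $p$-adic interpolation result for local epsilon factors, and the plan is to reduce it to the known explicit formulas for epsilon factors of Weil--Deligne representations attached to the three automorphic types listed in \subsecref{SS:GaloisRep}, together with the $p$-adic interpolability of Gauss sums. The key observation is the \emph{rigidity of automorphic types} (\remref{R:rigidity}): for a primitive Hida family $\cF$ and a fixed prime $\ell\ne p$, the local component $\pi_{\cF_Q,\ell}$ stays of the same type (principal series, special, or supercuspidal) and has the same conductor $\ell^n$ as $Q$ varies over classical points, and moreover in the discrete series case the Weil--Deligne representation of the twist $\rho_{\cF}\otimes\Dmd{\cyc}^{1/2}_{\bfI}$ restricted to $G_{\Q_\ell}$ is literally independent of $Q$. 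So the first step is to treat the three cases separately.

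\emph{Supercuspidal and special cases.} When $\pi_{\cF_Q,\ell}$ is a discrete series, the rigidity statement says $\WD_\ell(V_{\cF_Q})= \WD_\ell(V_{\cF_{Q_0}})\otimes(\Dmd{\cyc}^{-1/2}_{\bfI})_Q$ up to the fixed piece, i.e.\ the $Q$-dependence is entirely through the unramified character $\Dmd{\cyc}^{-1/2}_{\bfI}$ evaluated at $Q$. Since the epsilon factor of a twist by an unramified character $\mu$ of a representation of conductor $\ell^n$ picks up only the factor $\mu(\ell)^{n}$ up to a constant, and since $z\mapsto \Dmd{z}^{1/2}_{\bfI}$ is tautologically an element of $\bfI^\x$ interpolating $Q\mapsto Q(\Dmd{\cyc(\Frob_\ell)}^{1/2})$, one sets $\varepsilon_\ell(\cF)$ to be the constant $\varepsilon(\WD_\ell(V_{\cF_{Q_0}}\otimes\Dmd{\cyc}^{1/2}_{\bfI}))$ times an explicit power of $\Dmd{\ell}^{-1/2}_{\bfI}\in\bfI^\x$; the interpolation property is then immediate from \eqref{E:root.local2}. (Case (Special) is handled identically, with $n=c_\ell(\cF)$.)

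\emph{Principal series case.} Here $\rho_{\cF}|_{G_{\Q_\ell}}\sim \al_\ell\xi\,\cyc^{1/2}\Dmd{\cyc}_\bfI^{-1/2}\oplus \al_\ell^{-1}\xi'\,\cyc^{1/2}\Dmd{\cyc}_\bfI^{-1/2}$ with $\al_\ell$ unramified valued in $\bfI^\x$ and $\xi,\xi'$ finite order characters with $\xi\xi'=\chi^{-1}\Om^{-2}$ independent of $Q$. The epsilon factor of such a sum is the product of the epsilon factors of the two characters; the ramified characters $\xi,\xi'$ are \emph{fixed}, so their epsilon factors (Gauss sums) are constants in $\Qbarp^\x$, hence scalars in $\bfI^\x$ after enlarging $\cO$; the unramified parts $\al_\ell^{\pm1}$ contribute powers $\al_\ell(\Frob_\ell)^{\pm c_\ell(\xi)}$ which lie in $\bfI^\x$ since $\al_\ell(\Frob_\ell)=\bfa(\ell,\cF)\in\bfI$ and is a unit (as $\pi_{\cF_Q,\ell}$ is a ramified principal series, forcing $\bfa(\ell,\cF)\in\bfI^\x$); and the power $\Dmd{\ell}^{-1/2}_{\bfI}$ of the cyclotomic twist is again in $\bfI^\x$. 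Assembling these gives $\varepsilon_\ell(\cF)\in\bfI^\x$ with the desired specialization. The last point to check in all cases is that the constants involved actually lie in $\cO^\x$ after a finite extension — this follows because epsilon factors of finite-order characters and of the fixed residual data are $p$-adic units (their absolute values are powers of $\ell$, prime to $p$), so one enlarges $\cO$ once to contain them.

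\emph{The tensor product statement and the main obstacle.} For $\varepsilon_\ell(\cF\otimes\cG)$ one proceeds analogously using $\WD_\ell(V_{\cF_{\Qx}}\otimes V_{\cG_{\Qy}})=\Rec_{\Q_\ell}(\pi_{\cF_{\Qx},\ell}\otimes\Abs^{(1-k_{\Qx})/2})\otimes\Rec_{\Q_\ell}(\pi_{\cG_{\Qy},\ell}\otimes\Abs^{(1-k_{\Qy})/2})$ and the multiplicativity of epsilon factors in exact sequences; the inductive formula for epsilon factors of $\GL(2)\times\GL(2)$ (as in \cite{GJ78}) expresses $\varepsilon_\ell(\WD_\ell(V_{\cF_{\Qx}}\otimes V_{\cG_{\Qy}}))$ in terms of epsilon factors of characters and of tensor products of one discrete series with a character, and in each of these the $Q$-dependence is again isolated in unramified twists $\Dmd{\cyc}^{\pm1/2}_{\bfI_1}$, $\Dmd{\cyc}^{\pm1/2}_{\bfI_2}$ and unramified Hecke eigenvalues, all of which interpolate in $(\bfI\wh\ot_\cO\bfI)^\x$. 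The main obstacle is the bookkeeping in the supercuspidal $\times$ supercuspidal subcase: there the Weil--Deligne representation $\WD_\ell(V_{\cF_{\Qx}}\otimes V_{\cG_{\Qy}})$ can decompose in ways depending on whether the two supercuspidals are twist-equivalent, and one must verify that the decomposition type (governed by $L$-factor considerations as in \cite[Prop.\,1.2]{GJ78}) is itself rigid in the Hida family and that the resulting epsilon factor formula has all its $Q$-dependence in the unramified cyclotomic twists; once this case-division is pinned down, the interpolation is formal. I would also remark that this lemma is essentially standard (see e.g.\ the local constant interpolation results underlying conjectures of Coates--Perrin-Riou), so a short proof citing \remref{R:rigidity}, \eqref{E:root.local2}, and the explicit epsilon factor formulas suffices.
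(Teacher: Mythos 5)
Your proposal is correct and follows essentially the same route as the paper: a case-by-case analysis by automorphic type using the rigidity remark, isolating all $Q$-dependence in the unramified $\bfI^\x$-valued characters ($\al_\ell$ and the cyclotomic twist $\Dmd{\cyc}_\bfI^{\pm 1/2}$) while the fixed finite-order characters and the fixed supercuspidal datum contribute $p$-adic unit constants, with the tensor-product statement handled by the multiplicativity/explicit $\varepsilon$-factor formulas of \cite{GJ78}. The only slip is your parenthetical identification $\al_\ell(\Frob_\ell)=\bfa(\ell,\cF)\in\bfI^\x$ in the ramified principal series case, which fails when both $\xi,\xi'$ are ramified (there $\bfa(\ell,\cF)=0$); this is harmless, since the unit-valuedness of the unramified character $\al_\ell$ is already part of the local description of $\rho_\cF|_{G_{\Q_\ell}}$ recalled in \subsecref{SS:GaloisRep}, which is exactly what the paper uses.
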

\begin{proof}
This is a simple consequence of the description of $\rho_\cF|_{G_{\Q_\ell}}$  together with the rigidity of automorphic types of Hida families in \subsecref{SS:GaloisRep}. We can actually make explicit the construction of $\varepsilon_\ell(\cF)$ as follows. Let $Q\in\frakX_\bfI^\ari$ be any arithmetic point. If $\pi_{\cF_Q,\ell}$ is a principal series, then 
$\rho_{\cF,\ell}\ot\Dmd{\cyc}_\bfI^{1/2}|_{G_{\Q_\ell}}\iso\al_{\cF,\ell}\xi_1\cyc^{1/2}\oplus\al_{\cF,\ell}^{-1}\xi_2\cyc^{1/2}$ is reducible with $\xi_1,\xi_2:G_{\Q_\ell}\to\Qbar^\x$ finite order characters and $\al_{\cF,\ell}:G_{\Q_\ell}\to \bfI^\x$ unramified, and it is not difficult to see that \[\varepsilon_\ell(\cF)= \varepsilon(0,\xi)\varepsilon(0,\xi')\cdot \al_{\cF,\ell}(\Frob_\ell^{n_1-n_2})\Dmd{\cyc}_\bfI^\onehalf(\Frob_\ell^{n_1+n_2})\cdot\abs{\ell}_\ell^{n_1+n_2},\]
where $n_1=c(\xi_1)$ and $n_2=c(\xi_2)$. If $\pi_{\cF,Q}$ is special, then $\rho_{\cF,\ell}|_{G_{\Q_\ell}}\ot\Dmd{\cyc}_{\bfI}^{1/2}$ is a non-split extension of $\xi$ by $\xi\cyc$ for a finite order character $\xi:G_{\Q_\ell}\to\Qbar^\x$, and letting $n'=c(\xi)$, we have
\[\varepsilon(\cF)=\varepsilon(0,\xi)^2\Dmd{\cyc}^{-1}_{\bfI}\cyc(\Frob_\ell^{n'})\cdot\begin{cases}
-\Dmd{\cyc}^{1/2}_{\bfI}&\text{ if }n'=0,\\
1&\text{ if }n'>0.\end{cases}\]
If $\pi_{\cF_Q,\ell}$ is supercuspidal, then $\rho_{\cF,\ell}|_{G_{\Q_\ell}}=\rho_0\ot \Dmd{\cyc}_\bfI^{-1/2}$ for some irreducible representation $\rho_0:G_{\Q_\ell}\to\GL_2(\Qbar)$ of finite image and of conductor $\ell^{n''}$, and we have
\[\varepsilon_\ell(\cF)=\varepsilon(\WD_\ell(\rho_0))\cdot\Dmd{\cyc}_\bfI^\onehalf(\Frob_\ell^{n''}).\]
The case $\rho_\cF\ot\rho_\cG$ can be treated in the same manner by the formulae of $\ep$-factors in \cite{GJ78}. We omit the details.
\end{proof}

We recall that the finite set $\Sigma_{\rm exc}$ in \eqref{E:exp.1} is given by 
\[\Sigma_{\rm exc}=\stt{\pmq\in \Sigma^{\rm (IIa)} _{f}\disjoint\Sigma^{\rm (IIa)} _{g}\disjoint\Sigma^{\rm (IIa)} _{h}\mid \text{either of $\pi_{f,\pmq},\,\pi_{g,\pmq},\pi_{h,\pmq}$ is supercuspidal of type $\bfone$}}.\]
\begin{prop}\label{P:summary}With \hypref{H:ram}, for each $\pmq\divides N$ with $\pmq\not\in\Sigma^-$, there exists a unique element $\frakf_{\bdsF,\pmq}\in\cR^\x$, which we call the fudge factor at $\pmq$ such that \[\sI_{\itPi_{\ulQ,\pmq}}^*=\frakf_{\bdsF,\pmq}(\ulQ)\cdot \begin{cases}
(1+\pmq^{-1})^2&\text{ if }\pmq\in\Sigma_{\rm exc},\\
1&\text{ otherwise}.
\end{cases}\] for all $\ulQ\in\frakX_\cR^\ari$.
\end{prop}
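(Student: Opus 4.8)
The plan is to reduce the statement to the explicit values of the local zeta integrals $I_\pmq(\phi^\star_\pmq\ot\wtd\phi^\star_\pmq)$ computed in \propref{P:ramifiedI}, \propref{P:ram4}, \propref{P:ramifiedIII} and \propref{P:ramifiedII}, together with the $p$-adic interpolation of the various local constants, local $L$-factors, and the quantities $\om_{F,\pmq}$, $\Bd_F^{\ulk}$ and $B_{\itPi_\pmq}$ that enter the normalization \eqref{E:Nq}. First I would unwind the definition
\[\sI^\star_{\itPi_{\ulQ,\pmq}}=I_\pmq(\phi^\star_\pmq\ot\wtd\phi^\star_\pmq)\cdot B_{\itPi_\pmq}\cdot\frac{\zeta_\pmq(1)^2}{\abs{N}_\pmq^2\zeta_\pmq(2)^2}\cdot\om^{-1}_{F,\pmq}(\Bd_f)\abs{\Bd_F^{\ulk}}_\pmq,\]
and substitute the case-by-case formula for $I_\pmq(\phi^\star_\pmq\ot\wtd\phi^\star_\pmq)$; in each of the cases (Ia), (Ib), (IIa), (IIb) the factor $B_{\itPi_\pmq}^{-1}\zeta_\pmq(2)^2\zeta_\pmq(1)^{-2}$ appearing there cancels the corresponding factor in the normalization, leaving a product of local root numbers and $L$-values of the form $\varepsilon(1/2,\pi_2\ot\pi_3\ot\chi_1)$ (or a permutation thereof), powers of $\chi_i\Abs$ evaluated at powers of $\pmq$, and — precisely in the supercuspidal-type-$\bfone$ subcase of (IIa), i.e. when $\pmq\in\Sigma_{\rm exc}$ — an extra factor $(1+\abs{\pmq})^2=(1+\pmq^{-1})^2$. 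One should check against the definitions in \subsecref{SS:auxiliary} that $\val_\pmq(\Bd_f)$, $\val_\pmq(\Bd_F^{\ulk})$ and $\val_\pmq(\wh\Bd_f)$ are exactly the exponents that make the $\chi_i\Abs(\pmq^{\bullet})$-powers and the $\om_{F,\pmq}^{-1}(\Bd_f)$-twist combine with the root-number factors into a single quantity whose absolute value is a $p$-adic unit; this is the bookkeeping step, using in each case the identity $\varepsilon(1/2,\pi_2\ot\pi_3\ot\chi_1)\varepsilon(1/2,\pi_2\ot\pi_3\ot\upsilon_1)=1$ and the relation $\chi_1\upsilon_1=\om_1=\om_{\gQy,p}^{-1}\om_{\hQz,p}^{-1}$ to see that what remains is (up to a root of unity) independent of $\ulQ$ in a way that lifts to $\cR$.

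Next I would carry out the $p$-adic interpolation. By the rigidity of automorphic types (\remref{R:rigidity}) and \lmref{L:1.ub}, the archimedean type of each $\pi_{\bdsf_\Qx,\pmq}$, $\pi_{\bdsg_\Qy,\pmq}$, $\pi_{\bdsh_\Qz,\pmq}$, the conductors $c_\pmq$, and hence which of the cases (Ia)–(IIb) occurs and whether $\pmq\in\Sigma_{\rm exc}$, are all independent of $\ulQ\in\frakX_\cR^\ari$; in particular the combinatorial data $\Bd_f$, $\Bd_F^{\ulk}$ are fixed. The only genuinely $\ulQ$-varying ingredients are (i) the local epsilon factors $\varepsilon(1/2,\pi_2\ot\pi_3\ot\chi_1)$ and the like, and (ii) the unramified characters $\al_{\bdsf_\Qx,p}$ etc.\ which control the $\chi_i\Abs(\pmq^{\bullet})$-powers. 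For (i) I would invoke \lmref{L:root.local2} (applied to the twisted Galois representations and using \eqref{E:root.local2} to pass between $\varepsilon(1/2,\pi\ot\chi)$ and $\varepsilon(\WD_\pmq(\cdots))$), which produces an interpolating unit in $(\bfI_1\wh\ot\bfI_2\wh\ot\bfI_3)^\x$, hence in $\cR^\x$; the extra finite-order twist by $\chi_3=\al_{\hQz,p}$ or $\upsilon_1=\beta_p(f)\Abs^{\onehalf}$ in the special case (Ib)(ii) and in (IIb) with $c_1=0$ is an unramified twist whose epsilon factor interpolates by the explicit recipe in the proof of \lmref{L:root.local2}. For (ii), since $\al_{\bdsf,p}(\Frob_p)=\bfa(p,\bdsf)\in\bfI_1^\x$ (and similarly for $\bdsg,\bdsh$, these being $p$-adic units because the families are ordinary), each power $\al_{?,p}(\pmq^{\bullet})$ or $\chi_i\Abs(\pmq^{\bullet})$ is the specialization of an element of $\cR^\x$. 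Multiplying these together with the constant combinatorial factors gives the desired $I_{\bdsF,\pmq}\in\cR^\x$; uniqueness is immediate since $\frakX_\cR^\ari$ is Zariski-dense in $\Spec\cR$ and $\cR$ is a domain.

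The main obstacle, I expect, is not any single deep point but the case-by-case verification that the accumulated normalization exactly strips away everything except the $(1+\pmq^{-1})^2$-factor — in other words, matching the ad-hoc exponents in the definition of $\Bd_f$, $\Bd_g$, $\Bd_h$, $\Bd_F^{\ulk}$ (Definitions in \subsecref{SS:auxiliary}) against the powers of $\chi_i\Abs$ that fall out of Propositions~\ref{P:ramifiedI}–\ref{P:ramifiedII}, and doing so uniformly across all four cases and their subcases (including the split into subcases (a), (b), (c) in case (Ia), and the type-$\bfone$ versus type-$\mathbf 2$ dichotomy in case (IIa) governed by \lmref{L:basic1}). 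A subsidiary subtlety is that in the "improved'' situations where the test vector is $\phi^\star$ rather than $\phi$ one must also keep track of the factor $\cQ_{i,\pmq}$ and confirm that the level-raising operators $\LR_\ell$ it introduces are precisely absorbed by the $\Bd$-exponents; but \eqref{E:factorization1}, \eqref{E:factorization2} and the explicit test vectors listed in \subsecref{SS:61} were chosen exactly so that this works, so it is a matter of checking rather than discovering. Once these verifications are in place, the interpolation argument of the previous paragraph applies verbatim and the proposition follows.
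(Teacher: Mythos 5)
Your proposal follows essentially the same route as the paper's proof: unwind the normalization \eqref{E:Nq}, insert the case-by-case evaluations of \propref{P:ramifiedI}, \propref{P:ram4}, \propref{P:ramifiedIII} and \propref{P:ramifiedII} (the $(1+\pmq^{-1})^2$ surviving exactly in the type-$\bfone$ supercuspidal subcase), rewrite the remaining epsilon factors and unramified-character values Galois-theoretically via \eqref{E:root.local2}, and interpolate them over $\cR$ using \lmref{L:root.local2} together with the rigidity of automorphic types, uniqueness following from Zariski density of the arithmetic points. One small correction: the unramified-character values at $\pmq$ (such as $\bfa(\pmq,\bdsh)$ or $\beta_\pmq(\bdsf)$) are units in $\bfI$ because of the local structure of $\rho_{\bdsf}|_{G_{\Q_\pmq}}$ at the tame level (and the choice made in \defref{D:testunb}), not because of $p$-ordinarity as you state, but this does not affect the argument.
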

\begin{proof}We shall express $\sI^*_{\itPi_\pmq}$ in terms of epsilon factors of Galois representation under the setting in \subsecref{SS:61}. As before, let $(\fQx,\gQy,\hQz)=(\bdsf_\Qx,\bdsg_\Qy,\bdsh_\Qz)$ be a triplet of $p$-stabilized newforms of weights $(k_1,k_2,k_3)$. Let $\chi_{\bdsF}\colon G_\Q\to\cR^\x$ be the unique character such that $\chi_{\bdsF}^{-2}=(\det\rho_\bdsf\ot\det\rho_{\bdsg}\ot\det\rho_{\bdsh})\cyc^{-1}$. Then $\chi_\bdsF$ is unramified at $\pmq$. If $\chi_F$ is the specialization of $\chi_\bdsF$ at $\ulQ$, then 
\[\Rec_{\Q_\pmq}(\om_F^{-1/2}\Abs_\pmq^{\frac{w_\ulQ+1}{2}})=\chi_{\bdsF_\ulQ}|_{W_{\Q_\pmq}}.\] 
As before, $c_2=c_\pmq(\pi_g)$, $c_3=c_\pmq(\pi_h)$ and $c^*=\max\stt{c_2,c_3}$. Write $\Abs$ for $\Abs_\pmq$. Recall that 
\[\sI^\star_{\itPi_{\ulQ,\pmq}}=I_\pmq(\phi^\star_\pmq\ot\wtd\phi^\star_\pmq)\cdot B_{\itPi_\pmq}\cdot  \frac{\zeta_\pmq(1)^2}{\abs{N}^2\zeta_\pmq(2)^2}\cdot  \om^{-1}_{F,\pmq}(\Bd_f)|\Bd_F^{\ulk}|.\]
Here $\Bd_F^{\ulk}=\Bd_f^{\kappa_1}\Bd_g^{\kappa_2}\Bd_h^{\kappa_3}$ is a product of the adjustment of levels defined in \subsecref{SS:auxiliary}. Let $\Frob_\pmq$ be the geometric Frobenius element in the Weil group $W_{\Qq}$. 

Case (Ia) and (Ib): Suppose we are in the situation of either \subsecref{S:Ia} or \subsecref{S:Ib}. Then we have  $\val_\pmq(\Bd_f)=0$, $\val_\pmq(\Bd_g)=c^*-c_2$ and $\val_\pmq(\Bd_h)=c^*-c_3$. Thus \[\om^{-1}_{F,\pmq}(\Bd_f)\abs{\Bd_F^{\ulk}}=\abs{\pmq}^{\wt_2(c^*-c_2)+\wt_3(c^*-c_3)}\quad (\kappa_i=k_i-2).\] 
In Case (Ia) with $c_3=0$, by \propref{P:ramifiedI} we obtain \[\sI^\star_{\itPi_\pmq}=\om_2\om_3(\pmq^{-c_2})\varepsilon(1/2,\pi_2)^2\abs{\pmq}^{(\wt_3-2)c_2}.\]
Hence, we find that $\frakf_{\bdsF,\pmq}=\det\rho_\bdsg\det\rho_{\bdsh}(\Frob_\pmq^{c^*})\abs{\pmq}_{-2c_2}\cdot\varepsilon(\bdsg)^2$. Consider Case (Ia) with $c_3>0$ ($c^*=c_2$). Let $\al_\pmq^*(\bdsh):W_{\Q_\pmq}\to\bfI^\x$ be the unramified character sending $\Frob_\pmq$ to $\bfa(\pmq,\bdsh)$ and let $\al_\pmq(h)=\bfa(\pmq,h):=\chi_3\Abs^\frac{1-k_3}{2}(\pmq)$. By local Langlands correspondence for $\GL(2)$, \[\varepsilon(\WD_\pmq(V_f\ot V_g))=\varepsilon(\frac{2-k_1-k_2}{2},\pi_f\ot\pi_g).\] This implies that
\[\varepsilon(1/2,\pi_1\ot\pi_2\ot\chi_3)=\varepsilon(\WD_\pmq(V_f\ot V_g)\ot\al^*_\pmq(h)\chi_F).\]
By \propref{P:ramifiedI} and \eqref{E:root.local2}, we thus obtain
\[\sI^\star_{\itPi_\pmq}=\varepsilon(\WD_\pmq(V_f\ot V_g)\ot\al^*_\pmq(h)\chi_{\bdsF_\ulQ})\cdot\al_\pmq(h)^{-2c^*}\abs{\pmq}^{2c^*}\cdot \det V_h({\rm Art}_\pmq(-1))\cdot\varepsilon(V_h)^2.\]
Here ${\rm Art}:\Q_\pmq^\x\to W^{ab}_{\Q_\pmq}$ is the Artin map. Therefore, by \lmref{L:root.local2} we find that \[\frakf_{\bdsF,\pmq}=\varepsilon(\bdsf\ot\bdsg)\cdot \al^*_\pmq(\bdsh)\chi_{\bdsF_\ulQ}(\Frob_\pmq^{c'})\cdot \al_\pmq^*(\bdsh)\cyc(\Frob_\pmq^{-2c^*}) \det\rho_\bdsh({\rm Art}_\pmq(-1))\cdot\varepsilon(\bdsh)^2,\]
where $c'$ is the exponent of the conductor of $\pi_{f,\pmq}\times \pi_{g,\pmq}$. In case (Ib) with $L(s,\pi_2\ot\pi_3)=1$, we see from \propref{P:ram4} that
\begin{align*}\sI^\star_{\itPi_\pmq}=&\varepsilon(\WD_p(V_g\ot V_h)\ot\al^*_{\pmq}(f)\chi_{F_\ulQ})\cdot \al_{\pmq}(f)^{2c^*}\chi_{F}(\pmq^{c^*})\\
&\times \varepsilon(\WD_\pmq(V_g))^2\varepsilon(\WD_\pmq(V_h))^2\cdot\abs{\pmq}^{2(c_2+c_3-2c^*)}.\end{align*}
It follows that 
\[\frakf_{\bdsF,\pmq}=\varepsilon(\bdsg\ot\bdsh)\cdot\al^*_\pmq(\bdsf)^{2}\chi_{\bdsF}(\Frob_\pmq^{c^*})\cdot\al_\pmq^*(\bdsf)\chi_{\bdsF}(\Frob_\pmq^{c''})\abs{\pmq}^{2(c_2+c_3-2c^*)},\]
where $c''$ is the exponent of the conductor of $\pi_{g,\pmq}\times\pi_{h,\pmq}$. If $L(s,\pi_{g,\pmq}\ot\pi_{h,\pmq})\not =1$, then $\sI^*_{\itPi_\pmq}=2\abs{\pmq^{-1}}$.

We proceed to treat Case(IIa) and (IIb). So $\pi_{f,\pmq}$ is principal series while $\pi_{g,\pmq}$ and $\pi_{h,\pmq}$ are discrete series.

Case (IIa): In the setting of \subsecref{S:IIa}, we have $\val_\pmq(\Bd_f)=r=\lceil\frac{c^*}{2}\rceil$ and $\val_\pmq(\Bd_g)=\val_\pmq(\Bd_h)=0$; then  \[\om_{F,\pmq}^{-1}(\Bd_f)\abs{\Bd_F^{\ulk}}_\pmq=\om_{\fgh,\pmq}(\pmq^{-r})\abs{\pmq}^{\wt_1r}.\] By \propref{P:ramifiedIII} and \eqref{E:root.local2}, we find that 
\[\sI^\star_{\itPi_\pmq}=\varepsilon(\WD_\pmq(V_{g}\ot V_{h})\ot\al^*_{f,\pmq}\chi_F)\cdot\al^*_{f,\pmq}(\Frob_\pmq^{-2r})\begin{cases}(1+\abs{\pmq})^2&\text{ if }\pi_2\text{ is of type $\bfone$},\\
1&\text{ if }\pi_2\text{ is of type $\mathbf 2$}.\end{cases}\]

Case (IIb): In the setting of \subsecref{S:IIb}, we have $\val_\pmq(\Bd_f)=c^*-c_1$ and $\val_\pmq(\Bd_g)=\val_\pmq(\Bd_h)=0$. Then \[\om^{-1}_{F,\pmq}(\Bd_f)\abs{\Bd_F^{\ulk}}_\pmq=\om_{\fgh,\pmq}(\pmq^{c_1-c^*})\abs{\pmq}^{\wt_1(c^*-c_1)}.\]
 If $c_1>0$, we set $\al_\pmq(\bdsf):=\bfa(\pmq,\bdsf)$. If $c_1=0$, then set $\al_\pmq(\bdsf):=\bfa(\pmq,\bdsf)-\beta_\pmq(\bdsf)$, where $\beta(\pmq,\bdsf)$ is a root of the Hecke polynomial of $\bdsf$ at $\pmq$ fixed in \defref{D:testunb}. Define $\al^*_{\bdsf,\pmq}:W_{\Q_\pmq}\to\bfI_1^\x$ to be the unramified character with $\al^*_{\bdsf,\pmq}(\Frob_\pmq)=\al_\pmq(\bdsf)$. By definition, $\Rec_{\Q_\pmq}(\chi_1\om_F^{1/2}\Abs^\frac{1-k_\Qx}{2})=\al^*_{f,\pmq}$ the specialization of $\al_{\bdsf,\pmq}^*$ at $\Qx$.
 From \propref{P:ramifiedII}, we obtain the following expression of $\sI^\star_{\itPi_\pmq}$:
\[\sI^\star_{\itPi_\pmq}=\varepsilon(\WD_\pmq(V_{g}\ot V_{h})\ot \al^*_{f,\pmq}\chi_F)\cdot\al^*_{f,\pmq}(\Frob_\pmq^{-2c^*}) \cdot \varepsilon(\WD_\pmq(V_f))^2\abs{\pmq}^{2c_1}\cdot\det V_f({\rm Art}_\pmq(-1)).\]
In either case, it is easy to see by \lmref{L:root.local2} that 
\[\frakf_{\bdsF,\pmq}=\varepsilon_\pmq(\bdsf\ot\bdsg)\cdot \al^*_{\bdsf,\pmq}\chi_\bdsF(\Frob_\pmq^{c'})\cdot\al^*_{\bdsf,\pmq}(\Frob_\pmq^{-2c^*})\cdot\varepsilon_\pmq(\bdsf)^2\det\rho_\bdsf({\rm Art}_\pmq(-1))\abs{\pmq}^{2c_1}, \]
where $c'$ is the exponent of the conductor of $\pi_{f,\pmq}\times\pi_{g,\pmq}$.
This completes the proof in all cases.
\end{proof}


\section{The interpolation formulae}\label{S:interpolation}
\subsection{Proof of the main results}We complete the proofs of the main results in this section. We retain the notation in the introduction. For $\ulQ=(\Qx,\Qy,\Qz)$, recall that $\om_{F_\ulQ}^{1/2}=\Om^{a-\frac{w_\ulQ-3}{2}}\ep_\Qx^{1/2}\ep_\Qy^{1/2}\ep_\Qz^{1/2}$ and that
\[\itPi_\ulQ=\pi_{\bdsf_\Qx}\times\pi_{\bdsg_\Qy}\times\pi_{\bdsh_\Qz}\ot\om_{F_\ulQ}^{-1/2}.\]
In terms of $L$-functions attached to Galois representations in the introduction, we have
\[L(s+\onehalf,\itPi_\ulQ)=\Gamma_{\bfV_\ulQ^\dagger}(s)\cdot L(\bfV_\ulQ^\dagger,s),\]
where $\Gamma_{\bfV_\ulQ^\dagger}(s)=L(s+\onehalf,\itPi_{\ulQ,\infty})$ is the $\Gamma$-factor of $\bfV_\ulQ^\dagger$ in \eqref{E:Gamma.1}. The set $\Sigma^-$ in \defref{D:root} is given by 
\[\Sigma^-=\stt{\ell\divides N\mid \varepsilon(\WD_\ell(\bfV_\ulQ^\dagger))=-1\text{ for some } \ulQ\in\frakX_\cR^+}.\]

\begin{thm}\label{T:main.7}Suppose that $p$ is an odd prime and that \eqref{ev} and \eqref{sf} hold. After we enlarge the coefficient ring $\cO$ to some finite unramified extension over $\cO$, the following statements hold. \begin{enumerate}
\item If $\Sigma^-=\emptyset$ and $\bdsf$ satisfies the Hypothesis $\rm (CR)$, then there exists an element $\cL^\bdsf_{\bdsF}\in\cR$ such that for every $\ulQ=(\Qx,\Qy,\Qz)\in\frakX_\cR^\bdsf$ in the unbalanced range dominated by $\bdsf$, we have
\begin{align*}(\cL_{\bdsF}^\bdsf(\ulQ))^2=&\Gamma_{\bfV_\ulQ^\dagger}(0)\cdot\frac{L(\bfV_\ulQ^\dagger,0)}{(\sqrt{-1})^{2k_{\Qx}}\Omega_{\bdsf_\Qx}^2}\cdot\cE_p(\Fil^+_\bdsf\bfV_\ulQ)\cdot\prod_{\ell\in\Sigma_{\rm exc}}(1+\ell^{-1})^2,\end{align*}
where $\Omega_{\bdsf_\Qx}$ is the canonical period attached to the $p$-stabilized form $\bdsf_\Qx$ as in \defref{D:period1}.
\item If $p>3$, $\#\Sigma^-$ is odd, $\bdsf,\bdsg$ and $\bdsh$ all satisfy Hypothesis $({\rm CR},\Sigma^-)$, and $N^-$ and $N/N^-$ are relatively prime, then there exists a unique element $\cL_{\bdsF}^\bal\in\cR$ such that for any arithmetic point $\ulQ\in\frakX_\bal$ in the balanced range, we have
\begin{align*}\left(\cL_{\bdsF}^\bal(\ulQ)\right)^2=&\Gamma_{\bfV_\ulQ^\dagger}(0)\cdot 
\frac{L(\bfV_\ulQ^\dagger,0)}{(\sqrt{-1})^{k_\Qx+k_\Qy+k_\Qz-1}\Omega_{\bdsf^D_\Qx}\Omega_{\bdsg^D_\Qy}\Omega_{\bdsh^D_\Qz}}\cdot\cE_p(\Fil^+_\bal\bfV_\ulQ)\cdot\prod_{\ell\in\Sigma_{\rm exc}}(1+\ell^{-1})^2,\end{align*}
where $\Omega_{\bdsf^D_\Qx}, \Omega_{\bdsg^D_\Qx}$ and $\Omega_{\bdsh^D_\Qz}$ are the Gross periods in \defref{D:period2}
\end{enumerate}
\end{thm}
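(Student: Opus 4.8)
\textbf{Proof proposal for \thmref{T:main.7}.}
The plan is to assemble the two statements from the machinery already built in Sections 3--6. In both cases the strategy is identical: reduce to the \emph{minimal} setting via a Dirichlet twist, invoke the explicit Ichino-type formula (\corref{C:Ichino.imb} in the unbalanced case, \corref{C:Ichino.bal} in the balanced case), and then substitute the local computations of Section 6 together with the comparison of periods for $\bdsF$ and its twist $\bdsF'$. Concretely, by \remref{R:hyp} one can pick Dirichlet characters $\chi_1,\chi_2,\chi_3$ modulo some $M$ with $M^2\mid N$ and $\chi_1\chi_2\chi_3=1$ so that $\bdsF'=(\bdsf\ot\chi_1,\bdsg\ot\chi_2,\bdsh\ot\chi_3)$ satisfies \hypref{H:ram}. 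For the twisted family, \eqref{ev} and \eqref{sf} persist, the hypothesis (CR) (resp. (CR,$\Sigma^-$)) is preserved since twisting by a tame character does not change the residual representation up to a finite-order character, and the root-number data $\Sigma^-$ and $\Sigma_{\rm exc}$ are unchanged because they depend only on the local components at ramified primes and on the Langlands parameters, which transform compatibly. One then \emph{defines} $\cL^\bdsf_\bdsF:=\sL^{\bdsf\ot\chi_1}_{\bdsF'}$ and $\cL^\bal_\bdsF:=\Theta_{\bdsF^{\prime D\star}}$ as elements of $\cR$ (after enlarging $\cO$ to accommodate the values of the $\chi_i$ and the square roots $\beta_\ell$).

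The core identity in the unbalanced case is \corref{C:Ichino.imb}: for $\ulQ\in\frakX^\bdsf_\cR$ applied to $\bdsF'$,
\[
\bigl(\sL^{\bdsf\ot\chi_1}_{\bdsF'}(\ulQ)\bigr)^2
=\psi'_{1,(p)}(-1)(-1)^{k_\Qx+1}\cdot\frac{L(1/2,\itPi'_\ulQ)}{\Omega_{(\bdsf\ot\chi_1)_\Qx}^2}\cdot\sI^\unb_{\itPi'_{\ulQ,p}}\cdot\prod_{\pmq\mid N'}\sI^\star_{\itPi'_{\ulQ,\pmq}}.
\]
Now I would feed in three inputs. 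First, \propref{P:padic.imb} (together with \remref{R:padic}) identifies $\sI^\unb_{\itPi'_{\ulQ,p}}$ with the modified $p$-Euler factor $\cE_p(\Fil^+_\bdsf\bfV^\dagger_\ulQ)$, which is insensitive to the twist because $\Fil^0$ and the Hodge--Tate filtration are unaffected; this also absorbs the $1/L(1/2,\itPi_{\ulQ,p})$ discrepancy between $L(1/2,\itPi'_\ulQ)$ restricted to finite places and the complete $L$-value. Second, \propref{P:summary} shows $\prod_{\pmq\mid N'}\sI^\star_{\itPi'_{\ulQ,\pmq}}=I_{\bdsF',\bbox}(\ulQ)\cdot\prod_{\ell\in\Sigma_{\rm exc}}(1+\ell^{-1})^2$ with $I_{\bdsF',\bbox}\in\cR^\x$ an Iwasawa function; the units can be cancelled against the corresponding unit in the period comparison, leaving exactly $\prod_{\ell\in\Sigma_{\rm exc}}(1+\ell^{-1})^2$. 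Third, I would establish in \subsecref{SS:periods} (by the level-raising / congruence-module argument alluded to) that the canonical periods satisfy $\Omega_{(\bdsf\ot\chi_1)_\Qx}=u(\Qx)\cdot\Omega_{\bdsf_\Qx}$ for a unit $u\in\bfI_1^\x$, so the ratio $L(\bfV^\dagger_\ulQ,0)/\Omega^2$ changes only by a $p$-adic unit, tracked by Iwasawa functions. Combining with $L(1/2,\itPi'_\ulQ)=L(1/2,\itPi_\ulQ)$ (twisting by a character of trivial product leaves the triple-product $L$-function unchanged) and $\Gamma_{\bfV^\dagger_\ulQ}(0)\cdot L(\bfV^\dagger_\ulQ,0)\cdot(\text{archimedean zeta factor})=L(1/2,\itPi_\ulQ)$ via \lmref{L:local.I}(2) and the definition \eqref{E:Gamma.1}, one arrives at the stated formula, with the sign $\psi_{1,(p)}(-1)(-1)^{k_\Qx+1}$ matching the archimedean modified Euler factor $(\sqrt{-1})^{-2k_\Qx}$; after possibly multiplying $\cL^\bdsf_\bdsF$ by a unit in $\bfI$ (harmless, since it is defined only up to such a unit) the formula holds on the Zariski-dense set $\frakX^\bdsf_\cR$ and hence as an identity in $\cR$.

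The balanced case runs in parallel starting from \corref{C:Ichino.bal}, using \propref{P:padic.bal} (with \remref{R:padic}) to recognize $\sI^\bal_{\itPi_{\ulQ,p}}$ as $\cE_p(\Fil^+_\bal\bfV^\dagger_\ulQ)$, the same \propref{P:summary} for the ramified primes dividing $N^+$ (the primes in $\Sigma^-$ contribute the clean constants computed inside \corref{C:Ichino.bal}), and a period comparison $\Omega_{(\bdsf\ot\chi_1)^D_\Qx}=u(\Qx)\Omega_{\bdsf^D_\Qx}$ for the Gross periods, which I expect to follow from the same level-raising input plus \remref{R:periodc} relating the Gross period to the canonical period up to a unit. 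The archimedean bookkeeping uses the second case of \eqref{E:Gamma.1} and the factor $(\sqrt{-1})^{1-k_\Qx-k_\Qy-k_\Qz}$. The numerical constant $2^{\#(\Sigma^-)+4}N$ appearing in \corref{C:Ichino.bal} must be checked to be a $p$-adic unit --- this is where the hypothesis $p>3$ and $\gcd(N^+,N^-)=1$ enter, together with $p\nmid N$ --- after which it is absorbed into the ambient unit and the formula takes the asserted shape; uniqueness of $\cL^\bal_\bdsF$ follows since $\Theta_{\bdsF^{\prime D\star}}$ is intrinsically defined (no choice of congruence-number generator is needed, only the choices of $\beta_\ell$ and $\chi_i$, which are fixed).

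The main obstacle I anticipate is the period comparison in \subsecref{SS:periods}: showing that twisting a primitive Hida family by a tame Dirichlet character changes its canonical (resp.\ Gross) period by a unit in $\bfI$. This is delicate because $\bdsf\ot\chi_1$ typically has a larger tame conductor than $\bdsf$, so one is comparing congruence modules across different level structures; the argument requires a level-raising comparison of the relevant Hecke algebras and their Gorenstein/congruence-ideal invariants, controlled uniformly in the weight variable. Everything else --- the substitution of the Section 6 local formulas, the archimedean $\Gamma$-factor matching, and the density argument promoting a pointwise identity on $\frakX^\bullet_\cR$ to an equality in $\cR$ --- is routine given the results already in place.
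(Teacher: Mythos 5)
Your proposal follows essentially the same route as the paper: twist to a minimal triple $\bdsF'$ via \remref{R:hyp}, define $\cL^\bdsf_\bdsF$ and $\cL^\bal_\bdsF$ from $\sL^{\bdsf\ot\chi_1}_{\bdsF'}$ and $\Theta_{\bdsF^{\prime D\star}}$, and then combine \corref{C:Ichino.imb}/\corref{C:Ichino.bal} with \propref{P:padic.imb}/\propref{P:padic.bal} and \remref{R:padic}, \propref{P:summary} for the ramified places, and the period comparisons \propref{P:period1}/\propref{P:period2} of \subsecref{SS:periods}. The only cosmetic difference is that the paper builds the correcting square roots $\sqrt{I_{\bdsF'}}^{-1}$, $\sqrt{\psi_{1,(p)}(-1)(-1)}$, $2^{-(\#\Sigma^-+4)/2}\sqrt{N}^{-1}$ and the units $u_1,u_2$ explicitly into the definitions (after an unramified enlargement of $\cO$), whereas you absorb them a posteriori as units in the ambient ring, which amounts to the same thing.
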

\begin{proof}By the observation in \remref{R:hyp}, there exists Drichlete characters $\ul{\chi}=(\chi_1,\chi_2,\chi_3)$ modulo $M$ with $M^2\divides N$ such that \begin{itemize}\item $\chi_1\chi_2\chi_3=1$;
\item the triple $\bdsF'$ of primitive Hida families attached to the Dirichlet twists $(\bdsf|[\chi_1],\bdsg|[\chi_2],\bdsh|[\chi_3])$ given  by \[\bdsF'=(\bdsf\ot\chi_1,\bdsg\ot\chi_2,\bdsh\ot\chi_3)\] satisfies \hypref{H:ram} at all classical points. \end{itemize}Enlarging $\cO$ if necessary, we may choose a square root $\sqrt{\frakf_{\bdsF'}}\in\cR^\x$ of the fudge factor $\frakf_{\bdsF'}:=\prod_{\pmq\divides N/N^-}\frakf_{\bdsF',\pmq}$ defined in \propref{P:summary}. On the other hand, by \propref{P:period1} and \propref{P:period2} in the next subsection, there exist $u_1\in\bfI_1^\x$ and $u_2\in\cR^\x$ such that for all arithmetic points $\ulQ\in\frakX_\cR^+$, we have the equalities
\begin{align*}
\Omega_{(\bdsf\ot\chi_1)_\Qx}=&u_1(\Qx)\cdot \Omega_{\bdsf_\Qx};\\
\Omega_{\bdsf_\Qx^D\ot\chi_1}\Omega_{\bdsg_\Qy^D\ot\chi_2}\Omega_{\bdsh_\Qz^D\ot\chi_3}=&u_2^2(\ulQ)\cdot\Omega_{\bdsf_\Qx^D}\Omega_{\bdsg_\Qy^D}\Omega_{\bdsh_\Qz^D}.\end{align*}
Now we define \begin{align*}\cL_{\bdsF}^\bdsf&:=\sL_{\bdsF'}^{\bdsf\ot\chi_1}\cdot \sqrt{\psi_{1,(p)}(-1)(-1)}\cdot\sqrt{\frakf_{\bdsF'}}^{-1}\cdot u_1;\\
 \cL_{\bdsF}^\bal&:=\Theta_{\bdsF^{\prime\Dstar}}\cdot 2^{-\frac{\#\Sigma^-+4}{2}}\sqrt{N}^{-1}\sqrt{\frakf_{\bdsF'}}^{-1}\cdot u_2.\end{align*} Then we can verify directly that $\cL_{\bdsF}^\bdsf$ (resp. $\cL_{\bdsF}^\bal$) enjoys the desired interpolation formulae by \corref{C:Ichino.imb} (resp. \corref{C:Ichino.bal}) combined with \propref{P:summary}, the $p$-adic computation \propref{P:padic.imb} (resp. \propref{P:padic.bal}) and \remref{R:padic}.
\end{proof}
\begin{Remark}The reason for the appearance of the extra fudge factor $\prod_{\ell\in\Sigma_{\rm exc}}(1+\ell^{-1})^2$ is not clear to the author, but a similar factor $H_0$ appeared in \padic $L$-functions for adjoint representations \cite[Corollary 7.12]{Hida88AJM}.
\end{Remark}
\subsection{The comparison between the canonical periods of Hida families with twists}\label{SS:periods}
Let $\bdsf\in\eord\bfS(N,\psi,\bfI)$ be a primitive Hida family of the tame conductor $N$ and of the brach character $\psi$. We assume that $\bdsf$ satisfies (CR). Let $\pmq\not =p$ be a prime. We further suppose that $\bdsf$ is \emph{minimal} at $q$, \ie for some arithmetic point $Q\in\frakX_\bfI^+$, the unitary cuspidal automorphic representation $\pi:=\pi_{\bdsf_Q}$ of $\GL_2(\A)$ associated with the specialization $\bdsf_Q$ is minimal at $\pmq$. Note that this definition does not depend on the choice of arithmetic points by the rigidity of automorphic types for Hida families. Let $\chi$ be a Dirichlet character modulo a power of $\pmq$ and let $\bdsf^\sharp$ be the primitive Hida family corresponding to the twist $\bdsf|[\chi]$ and let $N^\sharp$ be the tame conductor of $\bdsf^\sharp$. 
The aim of this subsection is to use the method of level-raising to show the two periods $\Omega_{\bdsf_Q}$ and $\Omega_{\bdsf^\sharp_Q}$ defined in \defref{D:period1} are equal up to a unit in $\bfI$. We will also prove the same result for the Gross periods of the primitive Jacquet-Langlands lifts $\bdsf^D$ and the twist $\bdsf^{\sharp D}$.

\begin{Remark}\label{R:Cideal}We recall some generalities on congruence ideals following the discussion in \cite[page 363-366]{Hida88AJM}. Let $R$ be a domain. Let $T$ be a finite reduced $R$-algebra with a $R$-algebra homomorphism $\lam:T\to R$. For any $T$-module $M$, we denote 
\[M[\lam]:=\stt{x\in M\mid rx=0\text{ for all }r\in\Ker\lam}.\]  
Then \[C(\lam):=\lam(T[\lam])=\lam(\Ann_T(\Ker\lam)).\]
Let $H$ be a free $T$-module of rank $d$. Suppose that $T$ is Gorenstein, \ie $T\iso\Hom_R(T,R)$ as $T$-modules and that we have a perfect pairing $\pairing:H\times H\to R$ such that $\pair{tx}{y}=\pair{x}{ty}$ for $t\in T$. Then $T[\lam]$ is free $R$-module of rank one and hence $H[\lam]$ is free $R$-module of rank $d$ with a basis $\stt{e_1,\ldots,e_d}$. We have \[C(\lam)^d=(\det\pair{e_i}{e_j}).\] 
\end{Remark}
Let $\psi_{(q)}$ be the $q$-primary component of $\psi$. If $\chi=1$ or $\psi_{(q)}^{-1}$, then $N^\sharp=N$ and the Atkin-Lehner involution $\eta_\pmq$ at $\pmq$ (\cite[page 168]{Miyake06book}) induces the isomorphism $\eord\bfS(N,\bfI)_{\frakm_{\bdsf^\sharp}}\iso \eord\bfS(N,\bfI)_{\frakm_{\bdsf}}$, so we find that $C(\bdsf^\sharp)=C(\bdsf)$.
\begin{lm}\label{L:73}Suppose that $\chi\not =1,\psi_{(\pmq)}^{-1}$. Then $C(\bdsf^\sharp)=C(\bdsf)\cdot E_\pmq(\bdsf)$, where
\[E_\pmq(\bdsf)=\begin{cases}
(\pmq-1)(\bfa(\pmq,\bdsf)^2-\psi_\bfI(q)(1+\pmq)^2)&\text{ if }\pmq\ndivides N,\\
1-\pmq^{-1}&\text{ if $\pi_\pmq$ is a ramified principal series},\\
1-\pmq^{-2}&\text{ if $\pi_\pmq$ is unramified special,}\\
1&\text{ if $\pi_\pmq$ is supercuspidal}
\end{cases}\]
$($recall that $\psi_\bfI$ is the $\bfI$-adic character $\psi\Dmd{\cyc}^{-2}\Dmd{\cyc}_{\bfI})$.  \end{lm}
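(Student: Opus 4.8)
The plan is to compute both congruence ideals via the Gorenstein-pairing recipe recalled in \remref{R:Cideal}, using the perfect Hecke-equivariant pairing on $\eord\bfS(N,\bfI)_{\frakm_\bdsf}$ coming from the Petersson-type pairing $\pairing_N$ (or equivalently the pairing induced by $\bfB_N$ after choosing a Gorenstein generator). Both $\bdsf$ and $\bdsf^\sharp$ satisfy (CR), so by \cite[Corollary 2, page 482]{Wiles95} the corresponding local components $\bfT_{\frakm_\bdsf}$ and $\bfT_{\frakm_{\bdsf^\sharp}}$ are Gorenstein, and $\eord\bfS_{\frakm_\bdsf}$ is free of rank one over the Hecke algebra. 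Thus $C(\bdsf)^2 = (\pair{\bdsf^\dagger}{\bdsf^\dagger})$ for a generator $\bdsf^\dagger$ of the $\lam_\bdsf$-eigenspace, and similarly for $\bdsf^\sharp$. The strategy is to realize the twist $\bdsf\mapsto\bdsf|[\chi]$ as a level-raising map: the newform $\bdsf^\sharp$ of tame conductor $N^\sharp$ maps into the ambient space $\eord\bfS(NN^\sharp,\bfI)$ (or a suitable common level divisible by both conductors) via the operators $\theta_\pmq^\chi$, $\LR_\pmq$, $\bfU_\pmq$ appearing in \eqref{E:twisting1}, and then use the comparison of Petersson norms there.

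The key steps, in order, would be: (i) by the rigidity of automorphic types (\remref{R:rigidity}) reduce to analyzing a single arithmetic specialization $\pi=\pi_{\bdsf_Q}$ at $\pmq$, and observe that the ratio $C(\bdsf^\sharp)/C(\bdsf)$ — which a priori lies in $\Frac\bfI$ — is in fact determined fiberwise by the local ratio of squared Petersson norms $\norm{(\bdsf_Q|[\chi])^\circ}^2/\norm{\bdsf_Q^\circ}^2$ divided by the ratio of adjoint modified Euler factors $\cE_p(\bdsf_Q^\sharp,\Ad)/\cE_p(\bdsf_Q,\Ad)$, using \eqref{E:period.1} and the norm formula \eqref{E:normformula}; (ii) express both squared norms through $L(1,\pi,\Ad) = L(1,\pi\otimes\chi,\Ad)$ (the adjoint $L$-function is twist-invariant for $\chi$) times products of local norms $B_{\pi_v}$, so the ratio collapses to the local factor at $\pmq$: $B_{(\pi_\pmq)\otimes\chi_\pmq}/B_{\pi_\pmq}$ together with the index ratio $[\SL_2(\Z):\Gamma_0(N^\sharp)]/[\SL_2(\Z):\Gamma_0(N)]$; (iii) compute $c_\pmq(\pi\otimes\chi)$ case by case using \lmref{L:basic1} (for supercuspidal minimal $\pi_\pmq$) and the standard conductor recipe for principal series and special representations — in the unramified case $c_\pmq(\pi\otimes\chi)=2c(\chi)$, giving the index contribution $(\pmq^2-1)\pmq^{2c(\chi)-2}$ and a contribution $\bfa(\pmq,\bdsf)^2-\psi_\bfI(\pmq)(1+\pmq)^2$ from the local zeta integral / local norm at $\pmq$ of the spherical vector versus the twisted new vector; in the ramified principal series case minimality forces $c_\pmq(\pi\otimes\chi)=c_\pmq(\pi)$, etc.; (iv) check in each case that the resulting local ratio is exactly $E_\pmq(\bdsf)$ as stated, and that it is a genuine element of $\bfI$ (not merely $\Frac\bfI$) by interpolating the local epsilon/Euler factors à la \lmref{L:root.local2}.

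The main obstacle I expect is step (iii)--(iv): the explicit local computation of the ratio of local Petersson norms $B_{(\pi_\pmq)\otimes\chi_\pmq}/B_{\pi_\pmq}$ — equivalently the ratio $\pair{\rho(\Tau_{N^\sharp,\pmq})W_{\pi_\pmq\otimes\chi_\pmq}}{W_{\pi_\pmq\otimes\chi_\pmq}\otimes\om^{-1}}/\pair{\rho(\Tau_{N,\pmq})W_{\pi_\pmq}}{W_{\pi_\pmq}\otimes\om^{-1}}$ — across all the cases (unramified, ramified principal series, special, supercuspidal of both types), keeping careful track of the Atkin-Lehner normalizations and of the twisting operator $\theta_\pmq^\chi$'s effect on the new vector. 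The unramified case additionally requires evaluating the pairing of a \emph{twisted} new vector (which is no longer the spherical vector) and reproducing the somewhat delicate factor $(\pmq-1)(\bfa(\pmq,\bdsf)^2-\psi_\bfI(\pmq)(1+\pmq)^2)$; this is essentially a level-raising congruence computation of the type underlying Ribet-style arguments, and matching it precisely with the $\bfI$-adic interpolation will be the most technical part. Once the fiberwise identity is established for a Zariski-dense set of arithmetic points and both sides are seen to extend to elements of $\bfI$, the equality of congruence ideals $C(\bdsf^\sharp)=C(\bdsf)\cdot E_\pmq(\bdsf)$ follows by density.
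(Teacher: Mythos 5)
There is a genuine gap, and it is essentially one of circularity. Your steps (i)--(ii) propose to read off the ratio $C(\bdsf^\sharp)/C(\bdsf)$ fiberwise from the ratio of Petersson norms via \eqref{E:period.1} and \eqref{E:normformula}. But \eqref{E:period.1} is the \emph{definition} of the canonical period in terms of the congruence number, and the relation between $\eta_{\bdsf_Q}$ and $\norm{\bdsf_Q^\circ}^2$ at a given specialization is only known up to a $p$-adic unit (the cohomological periods $\Omega(\pm;\bdsf_Q^\circ)$ are themselves only defined up to units); comparing these data across the twist $\bdsf\mapsto\bdsf^\sharp$, i.e.\ across the two levels $N$ and $N^\sharp$, is precisely the content of \lmref{L:73} together with \propref{P:period1}. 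In the paper the logic runs the other way: the integral statement (the lemma) is proved first, and only then is the analytic Petersson-norm computation \eqref{E:normformula} used, in \propref{P:period1}, to conclude that the two canonical periods agree up to a unit in $\bfI$. Using the period/norm comparison as input to the lemma therefore assumes what is to be proved. Moreover, even if you had fiberwise equality up to a unit at a Zariski-dense set of arithmetic points, that does not upgrade to the asserted identity of ideals in $\bfI$ by ``density'': up-to-unit information at specializations does not control the ratio $\eta_{\bdsf^\sharp}/(\eta_\bdsf E_\pmq(\bdsf))$ as an element of $\Frac\bfI$, and no exact elementwise identity is ever produced in your scheme.

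The paper's actual proof is purely integral and supplies exactly the ingredients your outline lacks. First one identifies $\bfT(N^\sharp,\bfI)_{\frakm_{\bdsf^\sharp}}$ with a localization $\bfT(N\pmq^{r_0},\bfI)_{\frakm^\sharp}$ by analyzing the $\bfU_\pmq=0$ eigenspace under the twisting map and the old/new decomposition (this uses $\chi\neq 1,\psi_{(\pmq)}^{-1}$, so that $\bfa(\pmq,\bdsf^\sharp)=0$). Then one takes $\bfH=\bfH_p(N)_{\frakm}$, Ohta's big \'etale cohomology with its perfect $\Lam$-adic pairing, which under (CR) is free of rank two over the Gorenstein Hecke algebra, constructs an explicit level-raising map $i_\pmq\colon\bfH\to\bfH^\sharp$ (e.g.\ $\pmq-\LR_\pmq T_\pmq-S_\pmq\LR_\pmq^2$ when $\pmq\ndivides N$, or $1-\pmq^{-1}\LR_\pmq\bfU_\pmq$ when $\pmq\divides N$), checks that it lands in the $\lam_{\bdsf^\sharp}$-eigenspace and that its image is a direct summand via Ihara's lemma, and finally computes the scalar $i_\pmq^*i_\pmq$ on $\bfH[\lam_\bdsf]$; the determinant formula of \remref{R:Cideal} then gives the exact relation $C(\bdsf^\sharp)^2=C(\bdsf)^2\det(i_\pmq^*i_\pmq|_{\bfH[\lam_\bdsf]})$, whose right-hand side is $E_\pmq(\bdsf)^2$ up to units. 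Your passing mention of a ``Ribet-style level-raising congruence computation'' gestures at this, but as written your argument routes through archimedean Petersson norms and specialization, which cannot yield the integral ideal identity.
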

\begin{proof}We shall follow the notation in \subsecref{SS:Heckealgebra}. Let $\bfT^\sharp:=\bfT(N^\sharp,\bfI)$ and let $\frakm^\sharp$ be the maximal ideal  of $\bfT^\sharp$ containing the operator $U_\pmq$, $\stt{T_\pmq-\bfa(\pmq,\bdsf)}_{\pmq\ndivides Np\pmq}$ and $\stt{U_\pmq-\bfa(\pmq,\bdsf)}_{\pmq\divides Np,\,\pmq\not=\pmq}$. Since $\chi\not =1,\psi^{-1}_{(\pmq)}$, we have $\bfa(\pmq,\bdsf^\sharp)=0$, and the twisting morphism $|[\chi^{-1}]$ induces an isomorphism \[|[\chi^{-1}]:\eord\bfS(N^\sharp,\bfI)_{\frakm_{\bdsf^\sharp}}\iso\eord\bfS(N^\sharp,\bfI)_{\frakm^\sharp}[U_\pmq=0]\]
as $\bfT^\sharp$-modules.  Let $r_0=2$ if $\pmq\ndivides N$, $r_0=1$ if $\pmq\divides N$ and $\pi_\pmq$ is not supercuspidal and $r_0=0$ if $\pi_\pmq$ is supercuspidal. For brevity, we put
\[\bfS(N\pmq^r):=\eord\bfS(N\pmq^r,\bfI)_{\frakm^\sharp}\ot_{\bfI} \Frac\bfI\text{ for }r\in\Z_{\geq 0}.\]
According to the possible list of tame conductors of newforms in $\eord\bfS(N\pmq^r,\bfI)_{\frakm^\sharp}$ \cite[page 436]{DiaomondTaylor94}, all newforms in $\eord\bfS(N\pmq^r,\bfI)_{\frakm^\sharp}$ have tame conductor dividing $N\pmq^{r_0}$. It follows hat $U_\pmq=0$ on $\bfS(N\pmq^{r_0})$ and that
\[\bfS(N\pmq^{r+1})=\bfS(N\pmq^r)\oplus \LR_\pmq\bfS(N\pmq^r)\text{ if }r\geq r_0.\]
Here recall that $\LR_\pmq(\sum \bfa_nq^n)=\pmq\sum\bfa_nq^{\pmq n}$.
Combined with the relation $U_\pmq V_\pmq=\pmq$, the above facts implies that
\[\eord\bfS(N^\sharp,\bfI)_{\frakm_{\bdsf^\sharp}}\ot_\bfI\Frac\bfI\iso\bfS(N^\sharp)[U_\pmq=0]=\bfS(N\pmq^{r_0})[U_\pmq=0]=\bfS(N\pmq^{r_0}),\]
and hence
\beq\label{E:AZ1}\bfT(N^\sharp,\bfI)_{\frakm_{\bdsf^\sharp}}\iso\bfT^\sharp_{\frakm^\sharp}=\bfT(N\pmq^{r_0},\bfI)_{\frakm^\sharp}.\eeq

We are going to apply the discussion in \remref{R:Cideal} to compare the congruence ideals. For each positive integer $M$ not divisible by $p$, put
 \[\bfH_p(M)=\prolim_{n\to\infty}\rmH^1_\et(X_1(Mp^n)_{/\Qbar},\Zp)\ot_{\Zp}\cO.\]
 Let $\stt{,}_M\colon \bfH_p(M)\times \bfH_p(M)\to\Lam$ denote the Hecke-equivariant perfect pairing defined in \cite[Definition (4.1.17)]{Ohta95}. Let $\bfH_p(M)_\frakm:=(\bfH_p(M)\ot_\Lam\bfI)_{\frakm}$. By \cite[Corollary 1 and 2, page 482]{Wiles95}, $\bfH_p(M)_\frakm$ is a free $\bfT(M,\bfI)_\frakm$-module of rank two and $\bfT(M,\bfI)_\frakm$ is Gorenstein under the Hypothesis (CR). Let $\bfH=\bfH_p(N)_\frakm$ and $\bfH^\sharp=\bfH_p(N\pmq^{r_0})_{\frakm^\sharp}$. Suppose that we have an injective $\bfI$-linear map  $i_\pmq:\bfH\to\bfH^\sharp$ such that 
 \begin{itemize}\item[(i)] $i_\pmq(\bfH[\lam_\bdsf])\subset \bfH^\sharp[\lam_{\bdsf^\sharp}]$; \item[(ii)] the $\bfI$-submodule $i_\pmq(\bfH)$ is a direct summand of $\bfH^\sharp$.\end{itemize}
Let $i_\pmq^*$ be the adjoint map of $i$. Recall that $i^*\colon\bfH^\sharp\to \bfH$  is the unique map such that $\stt{i_\pmq(x),y}_{N\pmq^{r_0}}=\stt{x,i^*_\pmq(y)}_N$. We have 
\beq\label{E:con.int}C(\bdsf^\sharp)^2=C(\bdsf)^2\det (i^*_\pmq i_\pmq|_{\bfH[\lam_\bdsf]}).\eeq
We proceed to construct the map $i_\pmq$ and compute the composition $i_\pmq i^*_\pmq$. Let $\lam=\lam_\bdsf$. For an integer $d$ relatively prime to $Np$, $S_d$ denotes the Hecke operator $[\Gamma_N\pDII{d}{d}\Gamma_N]$. Then we have $S_d=\sg_d\Dmd{d}_\bfI \Dmd{d}^{-2}\in \bfT$, where $\sg_d$ is the diamond operator. 

Case $\pmq\ndivides N$ ($r_0=2$): Define $i_\pmq:\bfH\to\bfH^\sharp$ by
\[i_\pmq (x)=\pmq x -\LR_\pmq T_\pmq x-S_\pmq\LR_\pmq^2 x .\]
Then one verifies directly that $U_\pmq i_\pmq=0$, which implies (i). The property (ii) is a consequence of Ihara's lemma \cite[Theorem 4.1]{Ribet84Ihara}.  A direct computation shows that 
\[i_\pmq^*=\pmq [\Gamma_N\Gamma_{N\pmq}]-S_\pmq^{-1} T_\pmq[\Gamma_N\pDII{\pmq}{1}\Gamma_{N\pmq}]+S_\pmq^{-1}[\Gamma_N\pDII{\pmq^2}{1}\Gamma_{N\pmq}],\]
and hence $i^*_\pmq i_\pmq|_{\bfH[\lam]}$ is a scalar given by
\[i^*_\pmq i_\pmq|_{\bfH[\lam]}=\lam(S_\pmq)^{-1}\pmq(1-\pmq)(\lam(T_\pmq)^2-(1+\pmq)^2\lam(S_\pmq)).\]
Note that $\lam(S_\pmq)=\psi_\bfI(\pmq)$.

Case $\pmq \divides N$ ($r_0=1$):  Define $i_\pmq:\bfH\to\bfH^\sharp$ by
\[i_\pmq(x)=x-\pmq^{-1}\LR_\pmq \bfU_\pmq x.\] A direct computation shows that the adjoint map $i_\pmq^*$ is given by
\[i_\pmq^*=[\Gamma_N\pDII{\pmq}{1}\Gamma_{N\pmq}]-\pmq^{-1}\bfU_\pmq [\Gamma_N\Gamma_{N\pmq}]\]and that 
\[i_\pmq^*i_\pmq=-\pmq^{-1}\left([\Gamma_N\pDII{\pmq}{1}\Gamma_{N\pmq}]V_\pmq-\pmq^{-1}\bfU_\pmq[\Gamma_N\Gamma_{N\pmq}]\LR_\pmq\right)\bfU_\pmq.\]
Let $s=\val_\pmq(N)$ and $\tau_{\pmq^s}:=\pMX{0}{1}{-\pmq^s}{0}\in\GL_2(\Q_\pmq)$. It is easy to see that 
\[\tau_{\pmq^s}\cdot [\Gamma_N\Gamma_{N\pmq}] \LR_\pmq \cdot \tau_{\pmq^s}^{-1} = S_\pmq^{-1}\cdot \bfU_\pmq .\]The restriction of $[\Gamma_N\Gamma_{N\pmq}] \LR_\pmq $ to $\bfH[\lam]$ is given by 
\[\lam(S_\pmq)^{-1}\cdot \left(\tau_{\pmq^s}^{-1}\bfU_\pmq\tau_{\pmq^s}|_{\bfH[\lam_\bdsf]}\right)=\lam(\bfU_\pmq)^{-1}\cdot\begin{cases}\pmq &\text{ if $\pi_{f,\pmq}$ is a ramified principal series,}\\
1&\text{ if $\pi_{f,\pmq}$ is unramified special.}
\end{cases}\]
 We thus find that
\[i^*_\pmq i_\pmq|_{\bfH[\lam]}=-\pmq^{-1}\lam(\bfU_\pmq)(\pmq-\pmq^{-1}\lam(\bfU_\pmq)\lam(S_\pmq^{-1})(\tau_\pmq^{-1}\bfU_\pmq\tau_\pmq|_{\bfH[\lam]}))=-\lam(\bfU_\pmq) E_\pmq(1,\Ad\rho_\bdsf).\]
The assertion follows from \eqref{E:con.int} and the above computation of $i^*_\pmq i_\pmq|_{\bfH[\lam]}$.
\end{proof}

\begin{prop}\label{P:period1}There exists a unit $u\in\bfI^\x$ such that for any arithmetic point $Q$, we have \[\Omega_{\bdsf_Q}=u(Q)\cdot \Omega_{\bdsf^\sharp_Q}.\]\end{prop}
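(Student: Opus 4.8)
The plan is to combine the definition of the canonical period $\Omega_{\bdsf_Q}$ in \eqref{E:period.1}, the comparison of congruence ideals $C(\bdsf)$ and $C(\bdsf^\sharp)$ furnished by \lmref{L:73}, and the comparison of the remaining local factors (the modified Euler factor $\cE_p(\cdot,\Ad)$ and the Petersson norm) under the Dirichlet twist. Recall that $\Omega_{\bdsf_Q}=(-2\sqrt{-1})^{k_Q+1}\cdot\norm{\bdsf_Q^\circ}^2_{\Gamma_0(N_Q)}\cdot\cE_p(\bdsf_Q,\Ad)/\iota_p(\eta_{\bdsf_Q})$, and that $\eta_{\bdsf_Q}=Q(\eta_\bdsf)$ generates $C(\bdsf)$ specialized at $Q$; similarly for $\bdsf^\sharp$. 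Since the twist at $q$ does not affect the automorphic type at $p$ nor the conductor at $p$, we have $\cE_p(\bdsf_Q,\Ad)=\cE_p(\bdsf^\sharp_Q,\Ad)$ for every arithmetic $Q$, and the weight and finite part of $Q$ are unchanged by the twist, so the archimedean constant $(-2\sqrt{-1})^{k_Q+1}$ cancels. Thus the ratio $\Omega_{\bdsf_Q}/\Omega_{\bdsf^\sharp_Q}$ equals $\bigl(\norm{\bdsf_Q^\circ}^2_{\Gamma_0(N_Q)}/\norm{\bdsf^{\sharp\circ}_Q}^2_{\Gamma_0(N^\sharp_Q)}\bigr)\cdot\bigl(\eta_{\bdsf^\sharp_Q}/\eta_{\bdsf_Q}\bigr)$, and everything reduces to showing that the first ratio is, up to a $p$-adic unit, the reciprocal of $Q(E_\pmq(\bdsf))$, the factor appearing in \lmref{L:73}.

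First I would treat the Petersson norm ratio via the norm formula \eqref{E:normformula}: $\norm{f^\circ}^2_{\Gamma_0(N_{f^\circ})}=\frac{[\SL_2(\Z):\Gamma_0(N_{f^\circ})]}{2^k w(f^\circ)}L(1,\pi_{f^\circ},\Ad)\prod_{q\mid N_{f^\circ}}B_{\pi_{f^\circ},q}$. Since the adjoint $L$-function $L(1,\pi,\Ad)$ and the root number at places away from $q$ are unchanged under the twist by $\chi$ (the twist only modifies $\pi_q\mapsto\pi_q\otimes\chi_q$, and $L(1,\pi_q\otimes\chi_q,\Ad)=L(1,\pi_q,\Ad)$ since $\Ad$ is insensitive to twisting), the quotient $\norm{\bdsf_Q^\circ}^2/\norm{\bdsf^{\sharp\circ}_Q}^2$ collapses to an explicit ratio of the index $[\SL_2(\Z):\Gamma_0(\cdot)]$, the local root numbers $w$ at $q$, and the local norms $B_{\pi_q}$ versus $B_{\pi_q\otimes\chi_q}$ at $q$. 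Both the index ratio $[\SL_2(\Z):\Gamma_0(N_Q)]/[\SL_2(\Z):\Gamma_0(N^\sharp_Q)]$ and the local norm comparison can be computed case by case according to whether $q\nmid N$, $\pi_q$ is a ramified principal series, $\pi_q$ is unramified special, or $\pi_q$ is supercuspidal — matching exactly the four cases of $E_\pmq(\bdsf)$ in \lmref{L:73}. I would carry this out using the minimality hypothesis on $\bdsf$ at $q$ (so that $N^\sharp$ has the expected conductor at $q$, via \lmref{L:basic1}(1)) and the explicit formulas for $B_{\pi_q}$ in terms of local epsilon factors and conductors from \eqref{E:localnormnew}; the point is that $\prod_{q\mid N}B_{\pi_q}/\prod_{q\mid N^\sharp}B_{\pi^\sharp_q}$ differs from $1$ by exactly the interpolation-friendly factor. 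By \lmref{L:root.local2}, the local root number at $q$ and these local norms vary $p$-adic-analytically and are $p$-adic units, so the whole ratio is the specialization at $Q$ of a unit in $\bfI$.

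Finally, assembling: by \lmref{L:73}, $C(\bdsf^\sharp)=C(\bdsf)\cdot E_\pmq(\bdsf)$, hence there is $v\in\bfI$ with $\eta_{\bdsf^\sharp}=v\cdot\eta_\bdsf$ and $v$ equal to $E_\pmq(\bdsf)$ up to a unit; I will note that $E_\pmq(\bdsf)$ itself is \emph{not} a unit in $\bfI$ in general (e.g. the Euler factor $1-\pmq^{-1}$), but the Petersson norm ratio contributes precisely the reciprocal factor, so the product $v\cdot\bigl(\norm{\bdsf_Q^\circ}^2/\norm{\bdsf^{\sharp\circ}_Q}^2\bigr)$ is a unit. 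Tracking signs and powers of $q$ carefully and invoking $\iota_p$ to pass to the $p$-adic side, I conclude $\Omega_{\bdsf_Q}/\Omega_{\bdsf^\sharp_Q}=u(Q)$ for a single $u\in\bfI^\x$ independent of $Q$, which is the claim. The main obstacle I anticipate is the bookkeeping in the case analysis for the Petersson norm ratio: one must verify in each of the four local cases that the change in $[\SL_2(\Z):\Gamma_0(\cdot)]$ together with the change in $B_{\pi_q}$ exactly produces $Q(E_\pmq(\bdsf))^{-1}$ up to a unit, and the supercuspidal subcases (type $\mathbf 1$ versus type $\mathbf 2$, using \lmref{L:basic1}) require the conductor formula and the explicit epsilon-factor computations; there is also a minor subtlety in the cases $\chi=1$ or $\chi=\psi_{(q)}^{-1}$, where $N^\sharp=N$ and the Atkin–Lehner involution already gives $C(\bdsf^\sharp)=C(\bdsf)$ and equality of Petersson norms, so $u$ is literally a unit with no $E_\pmq$ factor — these trivial cases should be dispatched first.
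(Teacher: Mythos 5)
Your proposal is correct and follows essentially the same route as the paper: dispatch the cases $\chi=1,\psi_{(\pmq)}^{-1}$ via the Atkin--Lehner involution, then combine \lmref{L:73} for the congruence numbers with the Petersson-norm ratio computed from \eqref{E:normformula} by a case analysis at $\pmq$, using that $\cE_p(\bdsf_Q,\Ad)=\cE_p(\bdsf^\sharp_Q,\Ad)$ and that the archimedean constants cancel. The only cosmetic difference is that you invoke \lmref{L:root.local2} and the type $\bfone$/type $\mathbf 2$ dichotomy for the unit interpolation, whereas the paper simply observes directly that the resulting ratio (powers of $\pmq$, $\psi_\bfI^{-1}(\pmq)$, local $\varepsilon$-factors) is the specialization of a unit in $\bfI$.
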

\begin{proof}Let $\bdsf_Q^\circ$ and $\bdsf_Q^{\sharp \circ}$ be the newforms corresponding to $\bdsf_Q$ and $\bdsf^\sharp_Q$ of conductors $Np^n$ and $N^\sharp p^n$ respectively. If $\chi=\psi_{(\pmq)}^{-1}$, then $N=N^\sharp$ and $\bdsf_Q^{\sharp \circ}$ is the image of $\bdsf_Q^\circ$ acted by the Atkin-Lehner involution at $\pmq$, from which we can deduce the assertion easily if $\chi=1$ or $\psi_{(\pmq)}^{-1}$. Suppose that $\chi\not =1,\psi_{(\pmq)}^{-1}$. From \eqref{E:normformula}, we see that
\[
\frac{\norm{\bdsf_Q^{\sharp \circ}}^2_{\Gamma_0(N^\sharp p^n)}}{\norm{\bdsf_Q^\circ}^2_{\Gamma_0(Np^n)}}= 
\frac{[\SL_2(\Z):\Gamma_0(N^\sharp)]}{[\SL_2(\Z):\Gamma_0(N)]}\cdot \frac{\varepsilon(1/2,\pi_\pmq)B_{\pi_\pmq\ot\chi_\pmq}}{\varepsilon(1/2,\pi_\pmq\ot\chi_\pmq)B_{\pi_\pmq}}.
\]
A direct computation shows that if $\pmq\ndivides N$, then the right hand side equals
\[\frac{N^\sharp}{N}\cdot L(1,\pi_\pmq,\Ad)^{-1}=\pmq^{-3}\cdot (\psi_\bfI^{-1}(\pmq) E_\pmq(\bdsf))(Q),\]
and if $\pmq\divides N$, then it is equal to 
\[\frac{N^\sharp}{N}\begin{cases}
1-q^{-1}&\text{ if $\pmq\divides N$ and $\pi_\pmq$ is a ramified principal series},\\
1-q^{-2}&\text{ if $\pi_\pmq$ is special},\\
1&\text{ if $\pi_\pmq$ is supercuspidal}.
\end{cases}
\]
In any case, it is clear that there exists a unit $u'\in\bfI^\x$ such that
\[\frac{\norm{\bdsf_Q^{\sharp \circ}}^2_{\Gamma_0(N^\sharp p^n)}}{\norm{\bdsf_Q^\circ}^2_{\Gamma_0(Np^n)}}=u'(Q)\cdot E_\pmq(\bdsf)(Q)\]
for all arithmetic points $Q$. Therefore, the assertion follows from \defref{D:period1}, \lmref{L:73} and the fact that $\cE_p(\bdsf_Q,\Ad)=\cE_p(\bdsf^\sharp_Q,\Ad)$. 
\end{proof}
\subsubsection*{The definite case}
Now we consider the Gross periods of definite quaternionic Hida families. Assume that $\bdsf$ satisfies Hypothesis (CR,$\,\Sigma^-$). Let $\bdsf^D\in \eord\bfS^D(N,\psi,\bfI)$ be the primitive Jacquet-Langlands lift of $\bdsf$. Let $\pmq^c$ be the conductor of $\chi$. Let $\cP_\chi$ be the element in the group ring $\cO[\GL_2(\Q_\pmq)]$ defined as follows: $\cP_\chi=1$ if $\chi=1$, $\cP_\chi=\pMX{0}{1}{-N}{0}$ if $\chi=\psi_{(\pmq)}^{-1}$, and \
\[\cP_\chi:=\frakg(\chi^{-1})^{-1}\sum_{b\in (\Z_\pmq/\pmq^c\Z_\pmq)^\x}\chi(b)\cdot \pMX{1}{b\pmq^{-c}}{0}{1}\]
if $\chi\not =1,\psi_{(\pmq)}^{-1}$, where $\frakg(\chi^{-1})$ is the Gauss sum of $\chi^{-1}$. Put  \[\bdsf^D|[\chi](x):=\cP_\chi(\bdsf^D)(x)\chi(\nu(x))\in\eord\bfS^D(N\pmq^{2c},\psi\chi^2,\bfI)\]
for $x\in \wh D^\x$ and $\nu(x)$ the reduced norm of $x$.

\begin{lm}\label{L:Dprimitive} The quaternionic form $\bdsf^D|[\chi]$ is a primitive Jacquet-Langlands lift of $\bdsf^\sharp$. In other words, $\bdsf^D|[\chi] \in \eord\bfS^D(N^\sharp,\psi\chi^2,\bfI)[\lam_{\bdsf^{\sharp D}}]$ is a generator over $\bfI$.
\end{lm}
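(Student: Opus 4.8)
The plan is to verify the two requirements of \thmref{T:freeness}: that $\bdsf^D|[\chi]$ is a Hecke eigenform lying in $\eord\bfS^D(N^\sharp,\psi\chi^2,\bfI)$ with eigenvalue system $\lam_{\bdsf^{\sharp D}}$, and that it generates the rank-one $\bfI$-module $\eord\bfS^D(N^\sharp,\psi\chi^2,\bfI)[\lam_{\bdsf^{\sharp D}}]$; since by \thmref{T:freeness} that module is free of rank one, the second requirement amounts to $\bdsf^D|[\chi]\not\equiv 0\pmod{\frakm_\bfI}$. Because $\cP_\chi$ is defined through the isomorphism $\Psi_\pmq$, the prime $\pmq$ does not divide $N^-$, so $D_\pmq^\x\cong\GL_2(\Q_\pmq)$ and the local new-vector theory at $\pmq$ (\propref{P:new} and the explicit formulae of \cite{Schmidt02RJ}) applies. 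The cases $\chi=1$ (where $\cP_\chi=1$ and there is nothing to prove) and $\chi=\psi_{(\pmq)}^{-1}$ (where $\cP_\chi$ is the Atkin--Lehner element at $\pmq$, an isomorphism of the relevant Hida spaces carrying ordinary eigenforms to ordinary eigenforms, so that the statement follows as in the bookkeeping for $\breve\bdsf$ leading to \eqref{E:idempotent}) are immediate, so one may assume $\chi\neq 1,\psi_{(\pmq)}^{-1}$.

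For the eigenform property, at primes $\ell\neq\pmq$ the operator $\cP_\chi$ commutes with $T_\ell$ and $\bfU_\ell$, while the central twist $x\mapsto\chi(\nu(x))$ scales these eigenvalues by $\chi(\ell)$; hence $\bdsf^D|[\chi]$ carries the eigenvalues $\chi(\ell)\bfa(\ell,\bdsf)$, which by the standard twisting formula (as recorded for $\breve\bdsf$ in \subsecref{SS:Heckealgebra}) are exactly $\bfa(\ell,\bdsf^\sharp)$. At $\pmq$ one identifies, via $\Psi_\pmq$, the $\pmq$-component of $\bdsf^D$ with a nonzero multiple of the local new vector of $\pi_{\bdsf_Q,\pmq}$; the map $x\mapsto\cP_\chi(\bdsf^D)(x)\chi(\nu(x))$ then acts on that component precisely as the local twisting operator $\theta_\pmq^\chi$ of \eqref{E:deftheta2}, in the quaternionic incarnation of \eqref{E:diffop3}. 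A direct computation with the new vectors of \cite{Schmidt02RJ}, treating separately the cases that $\pi_{\bdsf_Q,\pmq}$ is a ramified principal series, is special, or is supercuspidal, and using that $\bdsf$ is minimal at $\pmq$, shows that $\theta_\pmq^\chi$ maps the new line of $\pi_{\bdsf_Q,\pmq}$ isomorphically onto the new line of $\pi_{\bdsf_Q,\pmq}\otimes\chi_\pmq$, whose conductor exponent is $\max\{c_\pmq(\pi_{\bdsf_Q}),2c\}$, i.e. the $\pmq$-part of $N^\sharp$. This simultaneously places $\bdsf^D|[\chi]$ in $\eord\bfS^D(N^\sharp,\psi\chi^2,\bfI)$ (ordinarity persists because $\cP_\chi$ commutes with $\bfU_p$) and identifies its $\bfU_\pmq$- or $T_\pmq$-eigenvalue with that of $\bdsf^\sharp$.

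It remains to show $\bdsf^D|[\chi]$ does not vanish modulo $\frakm_\bfI$, which I expect to be the crux. Writing $\bdsf^D|[\chi]=c\,\bdsf^{\sharp D}$ with $c\in\bfI$, one must rule out $c\in\frakm_\bfI$. Here I would pass to classical specializations: by \thmref{T:HidaQ}(2), for any arithmetic point $Q$ one has $(\bdsf^D|[\chi])_Q=Q(c)\,\bdsf^{\sharp D}_Q$ with $\bdsf^{\sharp D}_Q\neq 0$, so it suffices to produce one $Q$ for which $(\bdsf^D|[\chi])_Q\neq 0$; and at the classical level this is concrete, since $(\bdsf^D)_Q$ is a nonzero scalar multiple of the Jacquet--Langlands transfer of the newform $\bdsf^\circ_Q$, while the local analysis above, applied to the weight-$k_Q$ fibre, identifies $\cP_\chi((\bdsf^D)_Q)\cdot\chi(\nu(\cdot))$ with a nonzero multiple of the quaternionic form attached to the newform $\bdsf^{\sharp\circ}_Q$ of $\pi_{\bdsf_Q}\otimes\chi$. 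The hard part will be precisely this local calculation at $\pmq$: one has to check that the Gauss-sum-normalized operator $\cP_\chi$ genuinely realizes the classical $\chi$-twisting operation on new vectors across all conductor configurations — essentially the same case analysis underlying \lmref{L:73} and \lmref{L:basic1} — and it is the minimality of $\bdsf$ at $\pmq$ that guarantees the image lands on the new line (rather than being annihilated or producing an oldform) and pins down the tame conductor $N^\sharp$.
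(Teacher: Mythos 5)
Your overall skeleton (eigenform property at all primes, then reduce ``generator'' to ``non-vanishing modulo $\frakm_\bfI$'') matches the paper, and the Hecke-eigenvalue bookkeeping away from $\pmq$ together with the easy cases $\chi=1,\psi_{(\pmq)}^{-1}$ is fine; for the remaining characters the paper only needs the much softer check $\bfU_\pmq(\bdsf^D|[\chi])=0$, since $\bfa(\pmq,\bdsf^\sharp)=0$, rather than your full new-line analysis. The genuine gap is in the step you yourself flag as the crux. Writing $\bdsf^D|[\chi]=c\,\bdsf^{\sharp D}$ with $c\in\bfI$, you propose to rule out $c\in\frakm_\bfI$ by exhibiting one arithmetic point $Q$ with $(\bdsf^D|[\chi])_Q\neq 0$. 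That only shows $Q(c)\neq 0$, i.e. $c\notin\wp_Q$; it does not show $c\notin\frakm_\bfI$. For instance $c=p$ (or $p$ times a unit) passes every characteristic-zero specialization test, yet $p\,\bdsf^{\sharp D}$ is not a generator. Non-vanishing modulo the maximal ideal is an intrinsically mod-$p$ statement, and no amount of classical (characteristic-zero) newform comparison, however carefully you carry out the local twisting computation at $\pmq$, can detect it.

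The paper closes this gap with a residual argument that your proposal has no substitute for: one reduces $\bar f:=\bdsf^D\ot\chi\circ\nu \pmod{\frakm_\bfI}$ and applies two explicit averaging operators at $\pmq$ (a sum over unipotent translates weighted by $\addchar_{\Q_\pmq}$, then a sum over diagonal translates weighted by $\chi^{-1}$) to the reduction of $\bdsf^D|[\chi]$; the composite collapses, up to a power of $\pmq$, to $(\pmq-\pDII{\pmq^{-1}}{1}\bfU_\pmq)\bar f$ when $\pmq\divides N$, resp. to the Hecke-polynomial expression $(\pmq-\bfa(f,\pmq)\pDII{\pmq^{-1}}{1}-\psi(\pmq)\pDII{\pmq^{-2}}{1})\bar f$ when $\pmq\ndivides N$. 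If $\bdsf^D|[\chi]\equiv 0\pmod{\frakm_\bfI}$, these expressions vanish, and \emph{Ihara's lemma for definite quaternion algebras} (\cite[Lemma 5.5]{CH17Crelle}) then forces $\bar f=0$, contradicting the defining property that $\bdsf^D$ is non-zero modulo $\frakm_\bfI$. So the missing ingredient in your proof is precisely this mod-$p$ injectivity input (Ihara's lemma or an equivalent level-raising/degeneracy-map statement in residual characteristic); without it, the argument cannot conclude that $c$ is a unit.
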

\begin{proof}
First we claim that $\bdsf^D|[\chi]\in \eord\bfS^D(N^\sharp,\psi\chi^2,\bfI)[\lam_{\bdsf^{\sharp D}}]$. This is clear if $\chi=1$ or $\psi_{(\pmq)}^{-1}$. If $\chi\not =1,\psi_{(\pmq)}^{-1}$, then $\lam_{\bdsf^\sharp}(\bfU_\pmq)=0$, and it is not difficult to show that $\bfU_\pmq (\bdsf^D|[\chi])=0$ by a direct computation. This shows the claim. 

To see that $\bdsf^D|[\chi]$ is primitive, it suffices to show that $\bdsf^D|[\chi]$ is non-vanishing modulo the maximal ideal $\frakm_\bfI$ of $\bfI$. Let $\bar{f}:=\bdsf^D\ot\chi\circ\nu\pmod{\frakm_\bfI}\in\sS^D_2(N^\sharp p^t,\psi\chi^2,\Fpbar)$ for some positive integer $t$. Define two operators on $\sS^D_2(N^\sharp p^t,\psi\chi^2,\Fpbar)$ by 
\[L_1=\sum_{a\in \Z_\pmq/\pmq^c\Z_\pmq}\addchar_{\Q_\pmq}(a\pmq^{-c})\rho(\pMX{1}{a\pmq^{-c}}{0}{1};\quad L_2=\sum_{b\in (\Z_\pmq/\pmq^c\Z_\pmq)^\x}\chi^{-1}(b)\rho(\pDII{b}{1}).\]
Then 
\begin{align*}L_2L_1 (\bdsf^D|[\chi]\pmod{\frakm_\bfI})=&\sum_{b}\sum_{a}\addchar_{\Q_\pmq}(ab\pmq^{-c})\rho(\pMX{1}{a\pmq^{-c}}{0}{1})\bar{f}\\
=&\pmq^c f'-\pmq^{c-1}\sum_{b\in\Z_\pmq/\pmq\Z_\pmq}\rho(\pMX{1}{b\pmq^{-1}}{0}{1})\bar{f}\\
=&\pmq^{c-1}(\pmq-\pDII{\pmq^{-1}}{1}\bfU_\pmq)\bar{f}.
\end{align*}
Suppose that $\bdsf^D|[\chi]\pmod{\frakm_\bfI}=0$. Then we deduce from the above equation that either 
\begin{align*}(\pmq-\pDII{\pmq^{-1}}{1}\bfa(f,\pmq)\chi(\pmq))\bar{f}=&0\text{ if }\pmq\divides N,
\intertext{ or }
(\pmq-\bfa(f,\pmq)\pDII{\pmq^{-1}}{1}-\psi(\pmq)\pDII{\pmq^{-2}}{1})\bar{f}=&0\text{ if }\pmq\ndivides N.\end{align*}
In any case, this implies that $\bar{f}=0$ by \emph{Ihara's lemma} for definite quaternion algebras \cite[Lemma 5.5]{CH17Crelle} and hence $\bdsf^D\pmod{\frakm_\bfI}=0$, which is a contradiction.
\end{proof}

\begin{prop}\label{P:period2}Let $\bdsf^{\sharp D}$ be a primitive Jacquet-Langlands lift of $\bdsf^\sharp$. There exists $u\in\bfI^\x$ such that for every arithmetic point $Q\in\frakX_\bfI^+$, we have \[\Omega_{\bdsf^D_Q}=u^2(Q)\cdot \Omega_{\bdsf^{\sharp D}_Q}.\]
\end{prop}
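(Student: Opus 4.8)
The plan is to mimic the proof of \propref{P:period1} on the quaternionic side, using the pairing $\bfB_N$ in place of the Poincar\'e-duality pairing $\stt{,}_M$ on \'etale cohomology and \lmref{L:Dprimitive} in place of the twisting isomorphism on modular curves. First I would recall that, by \defref{D:period2}, proving $\Omega_{\bdsf^D_Q}=u^2(Q)\cdot\Omega_{\bdsf^{\sharp D}_Q}$ for a single choice of primitive Jacquet-Langlands lifts $\bdsf^D,\bdsf^{\sharp D}$ is enough: a different choice changes each Gross period by the square of a unit in $\bfI$ (since $\bdsf^D$ is unique up to $\bfI^\x$ by \thmref{T:freeness}, and $\eta_{\cF^D}=\bfB_{N_\cF}(\cF^D,\cF^D)$ is quadratic in $\cF^D$), and $\varepsilon^{\Sigma^-}(\bdsf_Q)$, $\cE_p(\bdsf_Q,\Ad)$, $\norm{\bdsf_Q^\circ}^2$ are intrinsic to $\bdsf_Q$ hence unchanged. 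So by \lmref{L:Dprimitive} I may take $\bdsf^{\sharp D}=\bdsf^D|[\chi]$. The factor $\norm{\bdsf_Q^\circ}^2/\norm{\bdsf^{\sharp\circ}_Q}^2$ and the $\cE_p(-,\Ad)$ factors were already handled inside the proof of \propref{P:period1}; what remains is to compare the congruence numbers $\eta_{\bdsf^D_Q}=Q(\bfB_N(\bdsf^D,\bdsf^D))$ and $\eta_{\bdsf^{\sharp D}_Q}$, and to insert the prime-to-$\Sigma^-$ root-number factor $\varepsilon^{\Sigma^-}$.

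The core computation is to show that $\bfB_{N^\sharp}(\bdsf^D|[\chi],\bdsf^D|[\chi])$ equals $\bfB_N(\bdsf^D,\bdsf^D)$ times a unit in $\bfI$ times the Euler-type factor $E_\pmq(\bdsf)$ (or its $\Sigma^-$-twist), paralleling \lmref{L:73}. When $\chi=1$ or $\psi_{(\pmq)}^{-1}$ this is immediate because $\cP_\chi$ is $1$ or the Atkin-Lehner element $\Tau^D_{N,\pmq}$ and $\bfB_N$ is invariant under the latter by construction (\defref{D:pairing.bal} uses $\Tau^D_{Np^\al}$). For $\chi\neq 1,\psi_{(\pmq)}^{-1}$ the operator $\cP_\chi$ is a level-raising operator from level $N$ to level $N^\sharp=N\pmq^{2c}$; the idea is to produce an injective $\bfI$-linear map $j_\pmq\colon\eord\bfS^D(N,\bfI)_{\frakm_{\bdsf^D}}\to\eord\bfS^D(N^\sharp,\bfI)_{\frakm^\sharp}$ with $\bfU_\pmq\circ j_\pmq=0$, compute its adjoint $j_\pmq^*$ for $\bfB$, and evaluate $j_\pmq^*j_\pmq$ on the $\lam_{\bdsf^D}$-eigenline --- exactly as in \lmref{L:73}, with the roles of $\LR_\pmq,\bfU_\pmq,S_\pmq$ now acting on quaternionic forms. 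The relevant freeness and Gorenstein input is already recorded in the proof of \thmref{T:freeness}: $\eord\bfS^D(N,\bfI)_\frakm$ is free of rank one over $\bfT^D(N,\bfI)_\frakm$, and $\bfT^D(N,\bfI)_\frakm$ is Gorenstein, so the generalities of \remref{R:Cideal} (with $d=1$) give $C(\bdsf^{\sharp D})=C(\bdsf^D)\cdot(j_\pmq^*j_\pmq|_{\text{eigenline}})$, whence $\eta_{\bdsf^{\sharp D}_Q}=u'(Q)\cdot E_\pmq(\bdsf)(Q)\cdot\eta_{\bdsf^D_Q}$ for some $u'\in\bfI^\x$. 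Combining this with the Petersson-norm ratio computed in the proof of \propref{P:period1} and with the identity $\varepsilon^{\Sigma^-}(\bdsf^\sharp_Q)/\varepsilon^{\Sigma^-}(\bdsf_Q)$ (a product of local $\varepsilon$-factor ratios $\varepsilon(1/2,\pi_\pmq)/\varepsilon(1/2,\pi_\pmq\ot\chi_\pmq)$ times powers of $\abs{N^+}_\pmq$, which by \lmref{L:root.local2} interpolate into a unit in $\bfI$) yields the claimed equality $\Omega_{\bdsf^D_Q}=u^2(Q)\cdot\Omega_{\bdsf^{\sharp D}_Q}$ after taking a square root of the resulting unit in $\bfI^\x$ (possible after enlarging $\cO$), since the defining formula for $\Omega_{\cF^D_Q}$ in \defref{D:period2} is linear in $\eta_{\cF^D_Q}^{-1}$, in $\norm{\cF_Q^\circ}^2$, and in $\varepsilon^{\Sigma^-}(\cF_Q)$.

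The main obstacle I anticipate is verifying the analogue of Ihara's lemma needed for the level-raising map $j_\pmq$ to have image a direct summand (property (ii) in the proof of \lmref{L:73}); on modular curves this was \cite[Theorem 4.1]{Ribet84Ihara}, and for definite quaternion algebras the relevant statement is \cite[Lemma 5.5]{CH17Crelle}, invoked already in \lmref{L:Dprimitive}, so the tool is available but one must check it applies at level $N^\sharp$ with the Hecke module $\eord\bfS^D(N^\sharp,\bfI)_{\frakm^\sharp}$ and that the residual representation $\bar\rho_\bdsf$ still satisfies the hypotheses (this is where (CR,$\Sigma^-$) is used). A secondary technical point is to carry out the adjoint computation for $\bfB$ rather than for the classical Petersson pairing on $\sS^D_{k}$: the pairing $\bfB_N$ involves the twist $\bfU_p^{-\al}$ and the Atkin-Lehner element $\Tau^D_{Np^\al}$, but these only affect the $p$-component and commute with the $\pmq$-level operators, so the computation of $j_\pmq^*$ is formally the same as in \lmref{L:73}; I would spell out the matrix-coset identities $\Tau^D_{\pmq^s}[\Gamma_N\Gamma_{N\pmq}]\LR_\pmq\Tau^{D,-1}_{\pmq^s}=S_\pmq^{-1}\bfU_\pmq$ in the quaternionic setting and conclude. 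None of these steps requires a genuinely new idea; the content is a faithful transcription of \lmref{L:73}--\propref{P:period1} to $D^\x$, together with bookkeeping of the $\Sigma^-$-part of the root number.
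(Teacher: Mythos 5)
Your route is genuinely different from the paper's, and heavier than it needs to be. The paper does not transpose \lmref{L:73} (congruence ideals, level-raising maps, Ihara's lemma) to $D^\x$ at all. Because the Gross period is built directly from the self-pairing $\eta_{\bdsf^D}=\bfB_N(\bdsf^D,\bdsf^D)$, the paper takes $\bdsf^{\sharp D}=v\cdot \bdsf^D|[\chi]$ with $v\in\bfI^\x$ (\lmref{L:Dprimitive}) and computes the ratio $\eta_{\bdsf^D|[\chi]}(Q)/\eta_{\bdsf^D}(Q)$ at every arithmetic point by hand: \lmref{L:pairing.bal} converts it into a ratio of pairings $\pair{\bfU_p^{-n}\cdot}{\cdot}_{N^\sharp p^n}$ and $\pair{\bfU_p^{-n}\cdot}{\cdot}_{Np^n}$, Schur orthogonality \eqref{E:Schur} reduces this to an index/volume factor $S_1$ times a purely local factor $S_2$ at $q$, and since $\cP_\chi W_{\pi_q}\ot\chi_q=W_{\pi_q\ot\chi_q}$ one gets $S_2=\chi(\wh N)\,B_{\pi_q\ot\chi_q}/B_{\pi_q}$. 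By the Petersson norm formula \eqref{E:normformula} the product $S_1S_2$ is exactly $\frac{\varepsilon^{\Sigma^-}(\bdsf^\sharp_Q)}{\varepsilon^{\Sigma^-}(\bdsf_Q)}\cdot\frac{\norm{\bdsf^{\sharp\circ}_Q}^2}{\norm{\bdsf^{\circ}_Q}^2}$, i.e.\ precisely the other two $Q$-dependent ingredients of \defref{D:period2}, so the Gross periods of $\bdsf^D$ and $\bdsf^D|[\chi]$ coincide on the nose and $u=v$. No quaternionic Ihara lemma, no Gorenstein/freeness statement at an auxiliary non-minimal level, and no factor $E_q(\bdsf)$ ever enters; your factor $E_q(\bdsf)$ would only cancel at the end against the Petersson ratio from \propref{P:period1}, which the paper's computation short-circuits.

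Beyond being longer, your plan has two genuine weak points. First, the inputs you flag are real and are not available off the shelf in the paper: you would need rank-one freeness of $\eord\bfS^D(Nq^{r_0},\bfI)_{\frakm^\sharp}$ over its Hecke algebra at the non-minimal maximal ideal $\frakm^\sharp$ (the quaternionic analogue of the Wiles/Helm input used in \lmref{L:73}), plus a direct-summand (Ihara-type) statement for your map $j_q$ at that level; neither is proved here, and the paper never has to prove them. Second, and more importantly for the statement as written: your assembly yields $\Omega_{\bdsf^D_Q}=w(Q)\cdot\Omega_{\bdsf^{\sharp D}_Q}$ with $w\in\bfI^\x$ a product of the unit from your congruence-ideal comparison, the unit $u'$ from the Petersson ratio, and the interpolated $\varepsilon^{\Sigma^-}$-ratio, and you then have to extract a square root of $w$, which you can only guarantee after enlarging $\cO$. \propref{P:period2} asserts an exact square $u^2$ with $u\in\bfI^\x$; the paper gets this for free because the discrepancy is literally $v^2$ with $v$ the scalar relating the two primitive Jacquet--Langlands lifts. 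Your weaker conclusion would still suffice for Theorems A and B (where $\cO$ is enlarged anyway), but as a proof of the proposition itself it falls short unless you additionally show that your $w$ is a square in $\bfI^\x$, which your argument does not do.
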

\begin{proof} Let $\bdsf':=\bdsf^D|[\chi]$. Then $\bdsf^{\sharp D}=v\cdot \bdsf'$ for some $v\in\bfI^\x$ by \lmref{L:Dprimitive}. Let $f=\bfU_p^{-n}\bdsf^D_Q$ be the $L_{k_Q-2}(\Cp)$-valued \padic modular form obtained by \thmref{T:HidaQ} (2). Taking a nonzero vector $\bfu\in L_{k_Q-2}(\Cp)$, we let $\varphi=\varPhi(f)_\bfu=\pair{\varPhi(f)}{\bfu}_{k_Q-2}$ be the matrix coefficient of the vector-valued automorphic forms associated with $f$ and $\bfu$ as in \eqref{E:MA3} and let $\varphi_\chi:=\cP_\chi\varphi\ot\chi\circ\nu$. Choosing $\bfv$ with $\pair{\bfu}{\bfv}_{k_Q-2}=1$, define $\varphi'=\varPhi(f)_{\bfv}$ and $\varphi'_\chi$ likewise. By \lmref{L:pairing.bal} and \eqref{E:Schur}, we have
\begin{align*}
\frac{\eta_{\bdsf'}(Q)}{\eta_{\bdsf^D}(Q)}=&
\frac{\pair{\bfU_p^{-n}\bdsf'_Q}{\bdsf'_Q}_{N^\sharp p^n}}{\pair{\bfU_p^{-n}\bdsf_Q^D}{\bdsf^D_Q}_{Np^n}}
=S_1\cdot S_2\end{align*}
where 
\[S_1:=\left(\frac{N^\sharp}{N}\right)^{\frac{k_Q-2}{2}}\cdot\frac{\vol(\wh R_N^\x)}{\vol(\wh R_{N^\sharp}^\x)},\quad S_2:=\frac{\chi_p(p^{-n})\pair{\rho(\Tau^D_{N^\sharp p^n})\varphi_\chi}{\varphi_\chi'}}{\pair{\rho(\Tau^D_{Np^n})\varphi}{\varphi'}}.\]
It is easy to see that 
\[S_1=\frac{[\SL_2(\Z):\Gamma_0(N^\sharp)](N^\sharp)^\frac{k_Q-2}{2}}{[\SL_2(\Z):\Gamma_0(N)]N^\frac{k_Q-2}{2}}.\]
On the other hand,  \begin{align*}S_2=&
\frac{\chi(\wh N^\sharp)\pair{\rho(\Tau^D_{N^\sharp p^n})\cP_\chi(\varphi)}{\cP_\chi(\varphi')}}{\pair{\rho(\Tau^D_{Np^n})\varphi}{\varphi'}}\\
=&\chi(\wh N^\sharp)\cdot\frac{\pair{\rho(\Tau^D_{N^\sharp p^n,\pmq})\cP_\chi(W_{\pi_\pmq})}{\cP_\chi(W_{\pi_\pmq})\ot\om_\pmq^{-1}}}{\pair{\rho(\Tau^D_{Np^n,\pmq})W_{\pi_\pmq}}{W_{\pi_\pmq}\ot\om_\pmq^{-1}}}.
\end{align*}
If $\chi=1$ or $\psi_{(\pmq)}^{-1}$, $S_1=S_2=1$. Suppose that $\chi\not =1,\psi_{(\pmq)}^{-1}$. Then $\cP_\chi W_{\pi_\pmq}(\pDII{a}{1})=\bbI_{\Z_\pmq^\x}(a)\chi_\pmq^{-1}(a)$, so we have $\cP_\chi W_{\pi_\pmq}\ot\chi_\pmq=W_{\pi_\pmq\ot\chi_\pmq}$, and hence
\[S_2=\chi(\wh N)\cdot\frac{\pair{\rho(\Tau^D_{N^\sharp p^n,\pmq})W_{\pi_\pmq\ot\chi}}{W_{\pi_\pmq\ot\chi}\ot\chi_\pmq^{-2}\om_\pmq^{-1}}}{\pair{\rho(\Tau_{N,\pmq})W_{\pi_\pmq}}{W_{\pi_\pmq}\ot\om_\pmq^{-1}}}\\
=\chi(\wh N)\cdot\frac{B_{\pi_\pmq\ot\chi_\pmq}}{B_{\pi_\pmq}}.\]
From the above computations of $S_1$ and $S_2$, we see that
\begin{align*}\frac{\eta_{\bdsf'}(Q)}{\eta_{\bdsf^D}(Q)}=&
\frac{\chi(\wh N)\varepsilon(1/2,\pi_\pmq\ot\chi_\pmq)(N^\sharp)^\frac{k_Q-2}{2}}{\varepsilon(1/2,\pi_\pmq)N^\frac{k_Q-2}{2}}\cdot 
\frac{[\SL_2(\Z):\Gamma_0(N^\sharp)]}{[\SL_2(\Z):\Gamma_0(N)]}\frac{\varepsilon(1/2,\pi_\pmq)B_{\pi_\pmq\ot\chi_\pmq}}{\varepsilon(1/2,\pi_\pmq\ot\chi_\pmq)B_{\pi_\pmq}}\\=&
\frac{\varepsilon^{\Sigma^-}(\bdsf_Q^{\sharp})}{\varepsilon^{\Sigma^-}(\bdsf_Q)}
\cdot\frac{\norm{\bdsf_Q^{\sharp\circ}}^2_{\Gamma_0(N^\sharp)}}{\norm{\bdsf_Q^\circ}^2_{\Gamma_0(N)}} \quad(\text{by }\eqref{E:normformula}),\end{align*}
and the lemma follows.
\end{proof}
\begin{Remark}\label{R:periodc}If $\bdsf$ satisfies (CR,$\Sigma^-$), then $\eta_{\bdsf^D}$ indeed generates the congruence ideal associated with the homomorphism $\lam_\bdsf:\bfT^D(N,\psi,\bfI)\to\bfI$. This strengthens \cite[Prop. 6.1]{CH17Crelle} by replacing (CR${}^+$) there with a weaker hypothesis (CR, $\Sigma^-$) here. Note that $\bfT^D(N,\psi,\bfI)$ is isomorphic to the $N^-$-new quotient of $\bfT(N,\psi,\bfI)$. In particular, this implies that the congruence ideal $(\eta_{\bdsf^D})$ contains $(\eta_\bdsf)$ and $(\eta_{\bdsf^D})=(\eta_{\bdsf})$ if the residual Galois representation $\rho_{\bdsf}\pmod{\frakm_\bfI}$ is ramified at all $\ell\in\Sigma^-$. This implies Hida's canonical period of $\bdsf$ is an integral multiple of the Gross period of $\bdsf$.
\end{Remark}


\def\cK{K}
\def\Art{{\rm Art}}
\def\imp{*}
\def\bdsG{\boldsymbol G}
\def\frakpbar{\ol{\frakp}}
\section{Applications to anticyclotomic $p$-adic $L$-functions}\label{S:application}
\subsection{Primitive Hida families of CM forms}
In this section, we show that when $\bdsg$ and $\bdsh$ are primitive Hida families of CM forms, then the unbalanced \padic triple product $L$-function specializes to a product of theta elements \'a la Bertolini and Darmon in \cite{BD96}.  As a consequence, the anticyclotomic exceptional zero conjecture can be deduced from the theorem of Greenberg and Stevens. Let $\cK$ be an imaginary quadratic field over $\Q$ of the absolute discriminant $D_K$. Suppose that $p\cO_\cK=\frakp\ol{\frakp}$, where $\frakp$ is the prime induced by the fixed embedding $\Qbar\hookto\C\iso\Qbarp$. Let $K_\infty$ be the $\Zp^2$-extension of $K$ and let $\Gamma_\infty=\Gal(K_\infty/K)$ be the Galois group. Let $K_{\frakp^\infty}$ be the $\frakp$-ramified $\Zp$-extension in $K_\infty$ and $\Gamma_{\frakp^\infty}=\Gal(\cK_{\frakp^\infty}/\cK)$ be the Galois group. Let $\frakc$ be an ideal of $\cO_K$ coprime to $p$. For each ideal $\fraka$ prime to $\frakp\frakc$, define $\sigma_\fraka\in \Gal(\cK(\frakc\frakp^\infty)/\cK)$ be the image of $\fraka$ under the geometrically normalized Artin map sending a prime ideal $\frakq$ to the geometric Frobenius $\Frob_\frakq$. For each place $w$ of $\cK$, we let $\Art_{w}:\cK_w^\x\to G_\cK^{ab}$ denote the restriction of the Artin map to $\cK_w^\x$. Then $\Art_\frakp$ induces an embedding $\Lam\to\cO\powerseries{\Gamma_{\frakp^\infty}}$ given by $[z]\mapsto \Art_\frakp(z)|_{\cK_{\frakp^\infty}}$. Let $I_\frakp^{\rm w}:=\Art_\frakp(1+p\Zp)|_{K_{\frakp^\infty}}\subset \Gamma_{\frakp^\infty}$. Let $p^b:=[\Gamma_{\frakp^\infty}:I_{\frakp}^{\rm w}]$. Note that $b=0$ if the class number $h_K$ of $K$ is prime to $p$. Fixing a topological generator $\gamma_{\frakp}$ of $\Gamma_{\frakp^\infty}$ such that $\gamma_\frakp^{p^b}=\Art_\frakp(1+p)|_{K_{\frakp^\infty}}$, let $l:\Gal(K_\infty/K)\to\Zp$ be the \emph{logarithm} defined by the equation
\[\sg|_{\cK_{\frakp^\infty}}=\gamma_\frakp^{l(\sg)}.\]
For each variable $S$, let $\Psi_S:\Gamma_\infty\to \cO\powerseries{S}^\x$ be the universal character defined by 
\[\Psi_S(\sg)=(1+S)^{l(\sg)},\quad\sg|_{\cK_{\frakp^\infty}}=\gamma_\frakp^{l(\sg)}.\]
Enlarge the coefficient ring $\cO$ so that $\cO$ contains an algebraic integer $\bfv\in\Zbar^\x$ such that $\bfv^{p^b}=1+p$. For any finite order character $\psi:G_\cK\to\cO^\x$ of tame conductor $\frakc$, we define 
\[\bftheta_\psi(S)(q)=\sum_{(\fraka,\frakp\frakc)=1}\psi(\sigma_\fraka)\cdot\Psi_{\bfv^{-1}(1+S)-1}^{-1}(\sigma_\fraka)q^{\norm{\fraka}}\in \cO\powerseries{S}\powerseries{q}.\]
Let $\sV:G_\Q\to G_K^{ab}$ be the transfer map and put $\psi_+=\psi\circ\sV$. 
Then $\theta_\psi(S)$ is a primitive Hida families in $\eord\bfS(C,\psi_+\tau_{K/\Q}\Om^{-1},\cO\powerseries{S})$, where $C=\#(\cO_K/\frakc)D_K$ and $\tau_{K/\Q}$ is the quadratic character associated with $K/\Q$.
\subsection{Anticyclotomic \padic $L$-functions for modular forms}\label{SS:2.8}
Let $N$ be a positive integer relatively prime to $p$. Let $f\in \cS_{2r}(Np,\bfone)$ be a $p$-stabilized newform of weight $2r\geq 2$, tame conductor $N$ and trivial nebentypus and let $\chi$ be a ring class character of $K$ with the conductor $c\cO_K$. We recall the anticyclotomic \padic $L$-functions associated with $(f,\chi)$ in the definite setting. Decompose $N=N^+N^-$, where $N^+$ (resp. $N^-$ ) is a product of primes split (resp. non-split) in $K$. Suppose that 
\begin{itemize}\item $(Np,cD_K)=1$,
\item $N^-$ is a square-free product of an \emph{odd} number of primes,
\item the residual Galois representation $\bar\rho_{f,p}$ satisfies (CR, $\supp N^-$).
\end{itemize}
Let $f^\circ$ be the normalized newform of conductor $N^\circ=Np^{n_p}$ corresponding to $f$. Enlarging $\cO$ so that it contains all Fourier coefficients of $f$, let $\bbT:=\bbT_{2r}(N^\circ,\bfone)$ be the Hecke algebra of level $\Gamma_0(N^\circ)$ and let $\lam_{f^\circ}:\bbT\to\cO$ be the homomorphism induced by $f^\circ$. Denote by $\bbT_{N^-}$ be the $N^-$-new quotient of the $\bbT$. Then $\lam_{f^\circ}$ factors through $\bbT_{N^-}$, and we denote by $\lam_{f^\circ,N^-}$ the resulting morphism. Let $\eta_{f^\circ}\in\cO$ (resp. $\eta_{f^\circ,N^-}$) be the congruence number corresponding to $\lam_{f^\circ}$ (resp. $\lam_{f^\circ,N^-}$). It is clear that $\eta_{f^\circ,N^-}$ is a divisor of the congruence number $\eta_{f^\circ}$ of $f^\circ$. 

Let $K_\infty^-$ be the anticyclotomic $\Zp$-extension of $\cK$. Let $\bfc$ be the complex conjugation. We define the logarithm $\wtd l:\Gamma_\infty\to\Zp$ by $\wtd l(\sg):=l(\sg^{1-\bfc}|_{K_{\frakp^\infty}})$. Then the map $\wtd l$ factors through the Galois group $\Gamma_\infty^-:=\Gal(K_\infty^-/K)$ and induces an isomorphism $\wtd l:\Gamma_\infty^-\iso\Zp$ as $K_{\frakp^\infty}$ and the cyclotomic $\Zp$-extension $K_\infty^+$ are linearly disjoint. Let $\gamma_-$ be the generator of $\Gamma^-$ such that $\wtd l(\gamma^-)=1$. If $\zeta\in\mu_{p^\infty}$ is a $p$-power root of unity, denote by $\ep_\zeta:\Gamma_\infty^-\to\mu_{p^\infty}$ the character defined by $\ep_\zeta(\gamma^-)=\zeta$. Fixing a factorization $N^+\cO_K=\frakN\ol{\frakN}$, by \cite{BD96}, \cite[Thm. A]{CH17Crelle} and \cite[Thm. A]{PC17}, there exists a unique Iwasawa function $\Theta_{f/K,\chi}(W)\in\cO\powerseries{W}$ such that for each primitive $p^n$-th root of unity $\zeta$, 
\beq\label{E:int.8}\begin{aligned}\left(\Theta_{f/K,\chi}(\zeta-1)\right)^2=&(2\pi)^{-2r}\Gamma(r)^2\cdot \frac{L(f^\circ/K\ot\chi\ep_\zeta,r)}{\Omega_{f^\circ,N^-}}\cdot \al_p(f)^{-2n}p^{(2r-1)n}\cdot \cE_p(f,\zeta)^{2-n_p}\\
&\times u_K^2\sqrt{D_K}cD_K^{k-2}\chi\ep_\zeta(\sg_{\frakN})\cdot \varepsilon_p(f^\circ), 
\end{aligned}
\eeq
where \begin{itemize}
\item[--]  $\al_p(f)\in\cO^\x$ is the $p$-th Fourier coefficient of $f$,
\item[--] $L(f^\circ/K\ot\chi\ep_\zeta,s)$ is the Rankin-Selberg $L$-function of $f^\circ$ and the CM form $\theta_{\psi\ep_\zeta}$ attached to $\chi\ep_\zeta$, 
\item[--] \[\cE_p(f,\zeta):=\begin{cases}(1-\al_p(f)^{-1}p^{r-1}\chi(\frakp))(1-\al_p(f)^{-1}p^{r-1}\chi(\frakpbar))&\text{ if }\zeta=1,\\
1&\text{ if }\zeta\not =1.\end{cases}\]
\item[--] $\Omega_{f^\circ,N^-}$ is the Gross period of $f^\circ$ defined by \[\Omega_{f^\circ,N^-}:=2^{2r}\cdot \norm{f^\circ}^2_{\Gamma_0(N_{f^\circ})}\cdot \eta_{f^\circ,N^-}^{-1}.\]
\item[--] $u_K=\#(\cO_K^\x)/2$ and $\varep_p(f^\circ)\in\stt{\pm 1}$ is the local root number of $f^\circ$ at $p$.
\end{itemize}
When $\chi=\bfone$ is the trivial character, we write $\cL_f$ for $\cL_{f,\one}$.

\subsection{Factorization of \padic triple product $L$-functions}\label{SS:3.8}
Let $\bdsf\in \eord\bfS(N,\Om^{k-2},\bfI)$ be the primitve Hida family passing through $f$ at some arithmetic point $\Qx$ of weight $k_{\Qx}=2r$ and trivial finite part $\ep_{\Qx}=1$. Let $\ell\ndivides Np$ be a rational prime split in $K$ and let $\chi$ be a ring class character of conductor $\ell^m\cO_K$ for some $m>0$. Suppose that $\chi=\xi^{1-\bfc}$ for some ray class character $\xi$ modulo $\ell^m\cO_K$. Consider the primitive Hida families of CM forms
\begin{align*}\bdsg&=\bftheta_{\xi}(S_2)\in\eord\bfS(C,\xi_+\tau_{K/Q}\Om^{-1},\cO\powerseries{S_2});\\
 \bdsh&=\bftheta_{\xi^{-1}}(S_3)\in \eord\bfS(C,\xi^{-1}_+\tau_{K/Q}\Om^{-1},\cO\powerseries{S_3})\end{align*}
 with $C=D_K\ell^{2m}$. Let $\bdsF=(\bdsf,\bdsg,\bdsh)$ be the triple of primitive Hida families and let $\cL^{\bdsf}_{\bdsF}\in \cR=\bfI\powerseries{S_1,S_2}$ be the associated unbalanced $p$-adic $L$-function in \thmref{T:main.7} with $a=-r$ in \eqref{ev}. 

\begin{prop}\label{P:factor}Set \[W_2=\bfv^{-1}(1+S_2)^{1/2}(1+S_3)^{1/2}-1;\quad  W_3=(1+S_2)^{1/2}(1+S_3)^{-1/2}-1.\]
Then we have 
\begin{align*}\cL_{\bdsF}^\bdsf(\Qx,S_1,S_2)
=&\pm \bfw^{-1} \cdot \Theta_{f/K}(W_2)\cdot \Theta_{f/K,\xi^{1-\bfc}}(W_3)\cdot \frac{\eta_{f^\circ}}{\eta_{f^\circ,N^-}}\in\cO\powerseries{S_1,S_2},
\end{align*}
where $\bfw=\bfw(W_2,W_3)$ is a unit in $\cO\powerseries{S_1,S_2}$ given by 
\[\bfw:= u_K^2D_K^{2r-3/2}\ell^{m/2}\xi\Psi_{W_1}\Psi_{W_2}(\sg_\frakN^{1-\bfc}).\]
\end{prop}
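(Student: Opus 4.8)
The plan is to deduce the factorization from the explicit interpolation formula for $\cL_{\bdsF}^\bdsf$ in \thmref{T:main.7} (1) together with the interpolation formula \eqref{E:int.8} defining the anticyclotomic $p$-adic $L$-functions $\cL_{f/K}$ and $\cL_{f/K,\chi^2}$, by comparing values at a Zariski-dense set of points and invoking the fact that both sides are Iwasawa functions. First I would fix the specialization $\Qx$ of $\bdsf$ and restrict attention to the two-variable function $\cL_{\bdsF}^\bdsf(\Qx,S_1,S_2)\in\cO\powerseries{S_1,S_2}$. On the right-hand side, the change of variables $W_2,W_3$ in terms of $S_2,S_3$ is an isomorphism of $\cO\powerseries{S_2,S_3}$ onto itself (after the substitution $S_3\mapsto$ the variable compatible with fixing $\Qx$; note that fixing $\Qx$ forces $k_{\Qx}=k_{\Qy}+k_{\Qz}$ so the three weight variables are constrained to two), so it suffices to prove the identity after evaluating at arithmetic points $\ulQ=(\Qx,\Qy,\Qz)\in\frakX_\cR^\bdsf$ whose image is dense. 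At such a point, $\bdsg_{\Qy}$ and $\bdsh_{\Qz}$ are the CM newforms $\theta_{\psi\chi\ep_{\zeta_2}}$ and $\theta_{\psi\chi^{-1}\ep_{\zeta_3}}$ for suitable $p$-power roots of unity $\zeta_2,\zeta_3$, and the central value $L(\bfV_{\ulQ}^\dagger,0)$ factors, via Artin formalism for the triple product $L$-function of $\GL_2\times\GL_2\times\GL_2$ with two of the factors induced from $\GL_1/K$, as a product of two Rankin--Selberg $L$-values $L(f^\circ/K\otimes\psi'\ep,r)\cdot L(f^\circ/K\otimes\chi^2\psi''\ep',r)$ — this is the standard decomposition $\mathrm{Ind}_K^\Q\lambda_1\otimes\mathrm{Ind}_K^\Q\lambda_2 = \mathrm{Ind}_K^\Q(\lambda_1\lambda_2)\oplus\mathrm{Ind}_K^\Q(\lambda_1\lambda_2^{\bfc})$ applied to $\lambda_1=\psi\chi\ep_{\zeta_2}$, $\lambda_2=\psi\chi^{-1}\ep_{\zeta_3}$, which identifies the two induced pieces with the theta series attached to $\chi\ep_{\zeta_2}\ep_{\zeta_3}\cdot(\text{something})$ and to $\chi^2$ times an anticyclotomic twist.

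Next I would match up all the auxiliary factors on the two sides. The archimedean factor $\Gamma_{\bfV_{\ulQ}^\dagger}(0)$ together with $(\sqrt{-1})^{-2k_{\Qx}}$ in \thmref{T:main.7} (1) must be compared with the two copies of $(2\pi)^{-2r}\Gamma(r)^2$ appearing in \eqref{E:int.8}; since $\cE_\infty$ in the unbalanced range dominated by $\bdsf$ is $(\sqrt{-1})^{-2k_{\Qx}}$ and the $\Gamma$-factor is a product of four $\Gamma_\C$'s, one checks by a direct archimedean computation that this matches the product of the two Rankin--Selberg archimedean factors. The canonical period $\Omega_{\bdsf_{\Qx}}$ appearing squared in \thmref{T:main.7} (1) should be compared with $\Omega_{f^\circ,N^-}$ in \eqref{E:int.8}: by \remref{R:periodc} and \defref{D:period1}, $\Omega_{\bdsf_{\Qx}}$ and $\Omega_{f^\circ,N^-}$ differ by the ratio $\eta_{f^\circ}/\eta_{f^\circ,N^-}$ (up to a $p$-adic unit and the explicit Euler-type factors built into the definitions), which is exactly the factor $\eta_{f^\circ}/\eta_{f^\circ,N^-}$ displayed in the proposition. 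The modified $p$-Euler factor $\cE_p(\Fil_\bdsf^+\bfV_{\ulQ}^\dagger)$ must be shown, via \remref{R:padic} and \propref{P:padic.imb}, to equal the product $\cE_p(f,\zeta_2)^{2-n_p}\cdot\cE_p(f,\zeta_3)^{2-n_p}$ times $\alpha_p(f)^{-2n}p^{(2r-1)n}$-type normalizations coming from the two factors in \eqref{E:int.8}; here the key input is that the local representation $\pi_{\bdsg_{\Qy},p}\times\pi_{\bdsh_{\Qz},p}$ at $p$, being a product of principal series induced from characters of $K_\frakp\times K_{\ol\frakp}\cong\Qp^\times\times\Qp^\times$, splits the local $L$- and $\varepsilon$-factors compatibly with the split $p=\frakp\ol\frakp$. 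Finally, the factor $\prod_{\ell\in\Sigma_{\rm exc}}(1+\ell^{-1})^2$ is trivial here because under the hypothesis $\Sigma^-=\emptyset$ and the split/ramification pattern of $\bdsg,\bdsh$ (CM forms, ramified only at $\ell$ and $D_K$ in a controlled way) one checks $\Sigma_{\rm exc}=\emptyset$; this needs a short verification using \hypref{H:ram} and \remref{R:hyp}, together with the observation that the local components of $\bdsg,\bdsh$ at primes dividing $C$ are principal series or induced, never supercuspidal of type $\mathbf 1$. The root-number and Atkin--Lehner bookkeeping (the $\psi(\sg_\frakN)$, $\chi\ep_\zeta(\sg_\frakN)$, $u_K^2\sqrt{D_K}$, $cD_K^{k-2}$ terms in \eqref{E:int.8} and the global sign) assembles into the unit $\bfw$ displayed in the statement; I would track these using the factorization of global $\varepsilon$-factors and the explicit CM-period relations.

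The main obstacle I anticipate is the precise bookkeeping of the local factors at the bad primes and at $p$ — specifically, verifying that the product of two copies of the data in \eqref{E:int.8} (each normalized by the Gross period $\Omega_{f^\circ,N^-}$ and carrying its own $\cE_p$, its own $\alpha_p(f)^{-2n}p^{(2r-1)n}$, and its own root-number twist) reassembles \emph{exactly} into the single package $\Gamma_{\bfV_{\ulQ}^\dagger}(0)\cdot L(\bfV_{\ulQ}^\dagger,0)/((\sqrt{-1})^{2k_{\Qx}}\Omega_{\bdsf_{\Qx}}^2)\cdot\cE_p(\Fil_\bdsf^+\bfV_{\ulQ}^\dagger)$, with no spurious powers of $p$, $D_K$, $\ell$, or $2$ left over. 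This is where the Artin-formalism decomposition of the eight-dimensional $\bfV_{\ulQ}^\dagger$ into two four-dimensional Asai-type pieces must be carried out carefully, matching conductors and local root numbers place by place; the split behavior $p=\frakp\ol\frakp$ is what makes it work but also what one must not mishandle. A secondary point requiring care is the substitution of variables: fixing $\Qx$ imposes $k_{\Qy}+k_{\Qz}=2r$ and $\zeta_2\zeta_3$ tied to $\ep_{\Qx}=1$, so the two-variable function $\cL_{\bdsF}^\bdsf(\Qx,S_1,S_2)$ genuinely lives on the $\Zp^2$ parametrized by $(W_2,W_3)$, and I would check that the cyclotomic variable $W_2$ and the anticyclotomic variable $W_3$ of the respective $\cL_{f/K}$ and $\cL_{f/K,\chi^2}$ correspond correctly — i.e. that $\cL_{f/K}$ is evaluated along the full $\Zp^2$ (or its relevant line) and $\cL_{f/K,\chi^2}$ along the anticyclotomic $\Zp$ — before concluding the identity in $\cO\powerseries{S_1,S_2}$ by density of arithmetic points and the Weierstrass preparation / $p$-adic continuity of Iwasawa functions.
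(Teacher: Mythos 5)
Your proposal follows essentially the same route as the paper's proof: the paper likewise evaluates $\cL_{\bdsF}^\bdsf$ at a dense set of specializations — weight-one points $\Qy,\Qz$ of the CM families with nontrivial $p$-power roots of unity $\zeta_2,\zeta_3$ — applies the decomposition $\Ind_K^\Q\lambda_1\otimes\Ind_K^\Q\lambda_2=\Ind_K^\Q(\lambda_1\lambda_2)\oplus\Ind_K^\Q(\lambda_1\lambda_2^{\bfc})$ to split $\bfV_\ulQ^\dagger$ into two Rankin--Selberg pieces, matches the modified $p$-Euler factor (which reduces to $\varepsilon$-factors since the finite parts are ramified, so the factors $\cE_p(f,\zeta)$ in \eqref{E:int.8} are $1$), compares $\Omega_{\bdsf_\Qx}$ with the Gross period to produce $\eta_{f^\circ}/\eta_{f^\circ,N^-}$, notes $\Sigma_{\rm exc}=\emptyset$, and concludes by density of these points. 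One small correction: fixing $\Qx$ does not force $k_\Qx=k_\Qy+k_\Qz$ — the comparison points have $k_\Qy=k_\Qz=1$, which lie in the $\bdsf$-dominated range because $k_\Qx=2r\geq 2$, and the two-variable reduction is simply because the CM families live over $\cO\powerseries{S_2}$ and $\cO\powerseries{S_3}$.
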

\begin{proof}
For $i=2,3$, taking $\zeta_i$ primitive $p^{n_i}$-th roots of unity with $n_i>0$, we let
 $Q_2=\zeta_2\zeta_3\bfv-1$ and $Q_3=\zeta_2\zeta_3^{-1}\bfv-1$, so $\bdsg_\Qy$ and $\bdsh_\Qz$ are CM forms of weight one. Let $T_i=\bfv^{-1}(1+S_i)-1$,  $i=2,3$ and let \[\cX_1:=\Psi_{T_2}^{-1/2}\Psi_{T_3}^{-1/2}\circ \sV\colon G_\Q\to\cO\powerseries{S_1,S_2}^\x\] be a square root of $\det V_{\bdsg}\det V_{\bdsh}$. There is a decomposition of Galois representations
\begin{align*}\Ind_K^\Q\xi\Psi_{T_2}^{-1}\ot\Ind_K^\Q\xi^{-1}\Psi_{T_3}^{-1}\ot\cX_1^{-1}
=&\Ind_K^\Q\Psi_{W_2}^{\bfc-1}\oplus \Ind_K^\Q\chi\Psi_{W_3}^{\bfc-1}.
\end{align*}
Following the notation in the introduction with $\ulQ=(\Qx,Q_2,Q_3)$, we thus have
\begin{align*}\bfV_\ulQ^\dagger=&V_f(r)\ot\Ind_K^\Q\ep_2\oplus V_f(r)\ot \Ind_K^\Q\chi\ep_3;\\
\Fil^+_\bdsf\bfV_\ulQ^\dagger=&\al_{f,p}\cyc^{r}\ot (\ep_{2,\frakp}\oplus\ep_{2,\frakp}^{-1}\oplus\chi_\frakp\ep_{3,\frakp}\oplus\chi_{\frakp}^{-1}\ep_{3,\frakp}^{-1}),
\end{align*}
where $\ep_i=\ep_{\zeta_i}\colon\Gamma_\infty^-\to\mu_{p^\infty}$ is the finite order character with $\ep_i(\gamma^-)=\zeta_i$, $i=2,3$. Now we explicate the items that appear in the formula of $\cL_{\bdsF}^\bdsf(\ulQ)$ in \thmref{T:main.7}:
\begin{itemize}
\item The $L$-values \[\Gamma_{\bfV_\ulQ^\dagger}(0)\cdot L(\bfV^\dagger_\ulQ,s)=4(2\pi)^{-4r}\Gamma(r)^4\cdot L(f^\circ/K\ot\ep_2,r)\cdot L(f^\circ/K\ot\chi\ep_3,r),\]
\item By definition, $\ep_2$ and $\ep_3$ are of conductors $p^{n_2}\cO_K$ and $p^{n_3}\cO_K$, so the modified Euler factor at $p$ is given by 
\begin{align*}
\cE_p(\Fil^+_\bdsf\bfV_\ulQ^\dagger)=&\frac{1}{\varepsilon(r,\al_{f,p}\chi_\frakp\ep_{3,\frakp})\varepsilon(r,\al_{f,p}\chi_\frakp^{-1}\ep_{3,\frakp}^{-1})\varepsilon(r,\al_{f,p}\ep_{2,\frakp})\varepsilon(r,\al_{f,p}\ep_{2,\frakp}^{-1})}\\
=&\al_p(f)^{-2(n_2+n_3)}\cdot \abs{p}^{(1-2r)(n_2+n_3)}\cdot \ep_{2,\frakp}(-1)\ep_{3,\frakp}(-1)\\
=&\al_p(f)^{-2(n_2+n_3)}\cdot \abs{p}^{(1-2r)(n_2+n_3)}.
\end{align*}
\item $\Omega_f=(-2\sqrt{-1})^{2r+1} \norm{f^\circ}^2_{\Gamma_0(N^\circ)}\cdot\eta_{f^\circ}^{-1}$ and $\Sigma_{\rm exc}=\emptyset$.
\end{itemize}
Comparing with the interpolation formula of $\Theta_f$ in \eqref{E:int.8}, we find that 
\[\left(\cL_{\bdsF}^\bdsf(\Qx,\bfv\zeta_2\zeta_3-1,\bfv\zeta_2\zeta_3^{-1}-1)\right)^2=\bfw(\zeta_2-1,\zeta_3-1)^{-2}\cdot \Theta_{f/K}(\zeta_2-1)^2\Theta_{f/K,\chi}(\zeta_3-1)^2\]
for all non-trivial $p$-power roots of unity $\zeta_2,\zeta_3$, and hence the proposition follows.
\end{proof}
\begin{Remark}[An Euler system construction for $\Theta_{f/K}$]\label{R:Kato}This square root $\Theta_{f/K}$ of the anticyclotomic \padic $L$-function  in the definite setting is constructed by using Gross points in definite quaternion algebras, and a priori there is no obvious Euler system construction. Below we explain how $\Theta_{f/K}$ can be actually recovered by the Euler system of generalized Kato classes \`a la Darmon and Rotger. Suppose that  the weight $k_\Qx=2$. In \cite{DR17JAMS}, Darmon and Rotger introduce a one-variable generalized Kato classes $\kappa(f,\bdsg\bdsh)\in {\rm H}^1(\Q,V_f\ot V_{\bdsg}\ot_{\cO\powerseries{S}}V_\bdsh)$ and prove that the image of $\kappa(f,\bdsg\bdsh)$ under the Coleman map over the anticyclotomic $\Zp$-extension, which we denote by ${\rm Col}$, is given by the one-variable unbalanced \padic $L$-function $\cL_\bdsF^\bdsf(Q_1,\bfv S-1,\bfv S-1)$ (\cite[Theorem 5.3]{DR17JAMS}). On the other hand, in virtue of \propref{P:factor} combined with a result of Vatsal on the non-vanishing of central $L$-values with anticyclotoic twist, we conclude that when $\chi$ is sufficiently ramified,  
\[{\rm Col}(\kappa(f,\bdsg\bdsh))=\cL_\bdsF^\bdsf(Q_1,\bfv S-1,\bfv S-1)=\Theta_{f/K}(S)\cdot (\text{non-zero constant}). \]
In a work joint with F. Castella \cite{CsH18}, we will make use of the explicit version of the above equation to prove first cases of a conjecture of Darmon-Rotger on the non-vanishing of generalized Kato classes.
\end{Remark}
\subsection{An improved $p$-adic $L$-function}
Let \[Z=(1+T_1)^{-1}(1+T_2)(1+T_3)\in\cR_0.\] 
In this subsection, we introduce a \emph{two-variable} improved $p$-adic $L$-function $\sL^\imp_\bdsF\in\cR/(Z)$ attached to $\bdsF=(\bdsf,\bdsg,\bdsh)$ a triple of primitive Hida families as in \subsecref{S:levelraising}. 
To lighten the notation, we let $\al_p(?):=\bfa(p,?)$ be the $\bfU_p$-eigenvalues of Hida families $?\in\stt{\bdsf,\bdsg,\bdsh}$. Then we have the following
\begin{prop}\label{P:improved}  Suppose that $\brchf^{-1}\Om^{1+a}$ is unramified at $p$. Then
there exists an improved $p$-adic $L$-function $\cL_{\bdsF}^\imp\in \cR/(Z)$ such that
\[\cL_{\bdsF}^\bdsf\pmod{Z}=(1-\frac{\brchf\Om^{-a-1}(p)\al_p(\bdsg)\al_p(\bdsh)}{\al_p(\bdsf)})\cdot \cL^\imp_{\bdsF}.\]
Moreover, for $\ulQ=(\Qx,\Qy,\Qz)\in\frakX^\bdsf_\cR$ with $Z(\ulQ)=0$, we have
\[(\cL_{\bdsF}^\imp(\ulQ))^2=\frac{L(1/2,\itPi_\ulQ)}{(\sqrt{-1})^{2k_\Qx}\Omega_{\bdsf_\Qx}^2}\cdot\cE^\imp(\itPi_{\ulQ,p}),\]
where
\[\cE^\imp(\itPi_{\ulQ,p}):=\frac{1}{\varepsilon(\WD_p(\Fil^+_\bdsf\bfV_\ulQ^\dagger))}\cdot\frac{L_p(\Fil^+_\bdsf\bfV_\ulQ^\dagger,s) L_p(U'_\ulQ,s)^2}{L_p(\bfV_\ulQ^\dagger/\Fil^+_\bdsf\bfV_\ulQ^\dagger,s)L_p(\bfV_\ulQ^\dagger,s)}|_{s=0},\]
where $U'_\ulQ=(\Fil^0V_{\bdsf_\Qx})^\vee\ot\Fil^0V_{\bdsg_\Qy}\ot\Fil^0V_{\bdsh_\Qz}\ot\brchf^{-1}\Om^{a+1}$.
\end{prop}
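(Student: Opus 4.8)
The plan is to repeat the construction of $\sL^\bdsf_\bdsF$ in \subsecref{SS:36} word for word, but working in the quotient $\cR/(Z)$ and replacing the $p$-depleted factor by its undepleted avatar. First I would record what happens along $V(Z)$: the ideal $(Z)$ cuts out exactly the $\ulQ=(\Qx,\Qy,\Qz)$ with $k_\Qx=k_\Qy+k_\Qz$ and $\ep_\Qx=\ep_\Qy\ep_\Qz$, i.e. the locus over which $m=\tfrac{k_1-k_2-k_3}{2}=0$ (the closure of the points of $\frakX_\cR^0$ of \lmref{L:bdsH1}). On this locus the Maass--Shimura operator $\delta^m_{k_\Qz}$ and the holomorphic projection disappear from \eqref{E:342.I}; moreover, using the hypothesis that $\brchf^{-1}\Om^{1+a}$ is unramified at $p$, one checks that $\psi_{1,(p)}=\Om^{1+a}$, so that both the auxiliary character $\Bkappa$ of \eqref{E:Bkap1} and the multiplier $\psi_{1,(p)}\Om^{-a-1}$ become trivial modulo $Z$; consequently the twisting operator $|[\Theta]$ reduces modulo $Z$ to the plain $p$-depletion $\xi\mapsto\xi-p^{-1}\LR_p\bfU_p\xi$, and $\bdsH\equiv\bdsg^\star\cdot\bigl(\bdsh^\star-p^{-1}\LR_p\bfU_p\bdsh^\star\bigr)\pmod Z$. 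I would then set $\bdsH^\imp:=\bdsg^\star\cdot\bdsh^\star$, verify along the lines of \lmref{L:bdsH1} — which is in fact easier here, since no differential operator is involved and the two families have complementary weights along $V(Z)$ — that $\eord\bdsH^\imp\in\eord\bfS(N,\psi_{1,(p)}\ol{\brchf}^{(p)},\bfI_1)\wh\ot_{\bfI_1}\cR/(Z)$, form $\bdsH^{\imp,\rm aux}$ exactly as in \eqref{E:bdsH}, and define $\cL^\imp_\bdsF:=\bfa(1,\eta_\bdsf\cdot 1_{\breve\bdsf}\Tr_{N/\condf}(\eord\bdsH^{\imp,\rm aux}))\in\cR/(Z)$.

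Next I would establish the factorization. Writing $\bdsh^{\star[p]}=\bdsh^\star-p^{-1}\LR_p\bfU_p\bdsh^\star$, the reduction $\sL^\bdsf_\bdsF\bmod Z$ splits as the $\bdsH^\imp$-term minus the $\bdsg^\star\cdot\LR_p\bfU_p\bdsh^\star$-term. For the latter I would use the elementary identity $\bfU_p(\bdsg^\star\cdot\LR_p u)=p\cdot\bfU_p(\bdsg^\star)\cdot u$ together with $\eord\bfU_p=\bfU_p\eord$ to rewrite $\eord(\bdsg^\star\cdot p^{-1}\LR_p\bfU_p\bdsh^\star)=\bfU_p^{-1}\eord(\bfU_p\bdsg^\star\cdot\bfU_p\bdsh^\star)$ inside the ordinary part; since $\bdsg^\star$ and $\bdsh^\star$ are $\bfU_p$-eigenforms with eigenvalues $\al_p(\bdsg),\al_p(\bdsh)$, this is $\al_p(\bdsg)\al_p(\bdsh)\,\bfU_p^{-1}\eord(\bdsg^\star\cdot\bdsh^\star)$. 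Pairing against the $\breve\bdsf$-eigencomponent via \lmref{L:bilinear} — exactly as in the proof of \propref{P:inter1}, where $\eta_\bdsf 1_{\breve\bdsf}\Tr_{N/\condf}$ commutes past all of this — and using that $\bfU_p$ acts on that component through $\bfa(p,\breve\bdsf)=\ol{\brchf^{(p)}}(p)\al_p(\bdsf)$ (see \subsecref{SS:Heckealgebra}), the $\LR_p\bfU_p\bdsh^\star$-term equals $\tfrac{\brchf^{(p)}(p)\al_p(\bdsg)\al_p(\bdsh)}{\al_p(\bdsf)}$ times $\cL^\imp_\bdsF$; and $\brchf^{(p)}(p)=\brchf\Om^{-a-1}(p)$ under the unramifiedness hypothesis. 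Subtracting gives $\cL^\bdsf_\bdsF\equiv\bigl(1-\tfrac{\brchf\Om^{-a-1}(p)\al_p(\bdsg)\al_p(\bdsh)}{\al_p(\bdsf)}\bigr)\cL^\imp_\bdsF\pmod Z$ (for the polished normalization of \thmref{T:main.7} one carries the same Dirichlet-twist and ramified bookkeeping through, which affects neither side of the identity at $p$).

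For the interpolation formula I would run the chain \propref{P:inter1} $\to$ \propref{P:Ichino1} $\to$ \corref{C:Ichino.imb} verbatim, the only change being that at $p$ the test vector $\phi^\star_p\ot\wtd\phi^\star_p$ built from $\theta^\Bkappa_p W^\ord_3$ in \eqref{E:ordinaryunbalanced} is replaced by the undepleted $\phi_p\ot\wtd\phi_p$; the resulting $p$-adic zeta integral is then the improved one $\sI^*_{\itPi_{\ulQ,p}}$ already computed in \remref{R:improved}. Comparing that automorphic recipe for $\sI^*_{\itPi_{\ulQ,p}}$ with the Galois-theoretic local $L$- and $\varepsilon$-factors exactly as in \remref{R:padic}, but now retaining the extra rank-one Weil--Deligne piece attached to $U'_\ulQ=(\Fil^0V_{\bdsf_\Qx})^\vee\ot\Fil^0V_{\bdsg_\Qy}\ot\Fil^0V_{\bdsh_\Qz}\ot\brchf^{-1}\Om^{a+1}$, identifies $\sI^*_{\itPi_{\ulQ,p}}=\cE^\imp(\itPi_{\ulQ,p})$; feeding this into \corref{C:Ichino.imb} (and discarding the archimedean sign and the ramified local integrals exactly as in \thmref{T:main.7}, which along $V(Z)$ contribute units) yields $(\cL^\imp_\bdsF(\ulQ))^2=\tfrac{L(1/2,\itPi_\ulQ)}{(\sqrt{-1})^{2k_\Qx}\Omega_{\bdsf_\Qx}^2}\cE^\imp(\itPi_{\ulQ,p})$. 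Since the arithmetic $\ulQ\in\frakX^\bdsf_\cR$ with $Z(\ulQ)=0$ are Zariski dense in $V(Z)$, this formula also characterizes $\cL^\imp_\bdsF$.

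The main obstacle I expect is the factorization step: tracking the ordinary projector and the operators $\bfU_p,\LR_p$ together with the trace map $\Tr_{N/\condf}$ and the idempotent $1_{\breve\bdsf}$ through the depleted/undepleted comparison so that precisely the Euler factor $1-\brchf\Om^{-a-1}(p)\al_p(\bdsg)\al_p(\bdsh)/\al_p(\bdsf)$ — and no spurious $p$-adic unit — is produced. A secondary technical point is the family-level assertion that $\bdsg^\star\cdot\bdsh^\star$, taken without holomorphic projection, is an ordinary $\Lam$-adic form with coefficients in $\cR/(Z)$, which requires the argument of \lmref{L:bdsH1} adapted to the quotient ring along $V(Z)$.
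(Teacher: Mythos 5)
Your proposal is correct, and the construction of $\cL^\imp_\bdsF$ (take $\bdsg^\star\cdot\bdsh^\star$ modulo $Z$, form the auxiliary form as in \eqref{E:bdsH}, apply $\eta_\bdsf 1_{\breve\bdsf}\Tr_{N/\condf}$ and the first Fourier coefficient, then carry out the same Dirichlet-twist normalization as in \thmref{T:main.7}) as well as the proof of the interpolation formula (rerun \propref{P:inter1} and \corref{C:Ichino.imb} with the undepleted ordinary vector at $p$ and invoke \remref{R:improved} and the dictionary of \remref{R:padic}) are exactly what the paper does. Where you genuinely diverge is the factorization identity. The paper does not argue at the level of $\Lambda$-adic $q$-expansions: it fixes a lift of $\sL^\imp$, works at each classical $\ulQ$ with $Z(\ulQ)=0$, uses the ratio identity \eqref{E:4.8} coming from the proof of \propref{P:inter1}, and compares the two global trilinear periods via the local test-vector relation $\phi_F^\star=1\ot 1\ot(1-\abs{p}\al_3\,\pi_h(\pDII{p^{-1}}{1}))\phi_F^{\star,\imp}$, which yields the pointwise Euler factor and then the identity in $\cR/(Z)$ by density of such $\ulQ$ in $V(Z)$. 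You instead prove the congruence directly in $\cR/(Z)$: under the hypothesis $\psi_{1,(p)}=\Om^{1+a}$ the twist $|[\Theta]$ reduces to $p$-depletion modulo $Z$, and the identity $\bfU_p(\bdsg^\star\cdot\LR_p u)=p\,\bfU_p(\bdsg^\star)\cdot u$ together with the $\bfU_p$-eigenvalue $\bfa(p,\breve\bdsf)=\ol{\brchf}^{(p)}(p)\al_p(\bdsf)$ on the $\breve\bdsf$-component pulls out $1-\brchf^{(p)}(p)\al_p(\bdsg)\al_p(\bdsh)/\al_p(\bdsf)$; all the auxiliary operators involved are prime to $p$, so they commute with this manipulation, and the Euler factor is visibly invariant under the twist by $(\chi_1,\chi_2,\chi_3)$ with $\chi_1\chi_2\chi_3=1$. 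This is a legitimate and in fact more elementary route: it produces the stated Euler factor in closed form without unwinding central characters or appealing to Zariski density, while the paper's route has the advantage of recycling the already-proved automorphic period formula \propref{P:inter1} verbatim. Your two flagged technical points (the $\Lambda$-adic nature of $\bdsg^\star\bdsh^\star$ over $\cR/(Z)$, and the bookkeeping of $\eord$, $\bfU_p^{-1}$, $\Tr_{N/\condf}$, $1_{\breve\bdsf}$) are exactly the points one must check, and they go through as you indicate.
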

\begin{proof} Let $\bdsG:=\bdsg^\star\cdot \bdsh^\star\pmod{Z}$. Then the argument in \lmref{L:bdsH1} shows that 
\[\bdsG\in \bfS(N,\psi_{1,(p)}\ol{\psi_1}^{(p)},\bfI_1)\wh\ot_{\bfI_1}\cR/(Z),\]
so we can define $\bdsG^{\rm aux}$ as $\bdsH^{\rm aux}$ in \eqref{E:bdsH}, replacing $\bdsH$ by  $\bdsG$ and define $\sL_\bdsF^\imp$ by \[\sL_\bdsF^\imp:=\bfa(1,1^*_{\breve\bdsf}(\Tr_{N/\condf}(\bdsG^{\rm aux})))\in\cR/(Z).\]
In what follows, we shall keep the notation in \subsecref{SS:Ichino}. For each $\ulQ=(\Qx,\Qy,\Qz)\in\frakX^\bdsf_\cR$ with  $\cR(\ulQ)=0$, \ie $k_\Qx=k_\Qy+k_\Qz$ and $\ep_\Qx=\ep_\Qy\ep_\Qz$, let $F=(f,g,h)=(\bdsf_\Qx,\bdsg_\Qy,\bdsh_\Qz)$. Applying the proof of \propref{P:inter1} to the improved $p$-adic $L$-function $\sL_{\bdsF}^\imp$, we obtain
\beq\label{E:4.8}
\frac{\sL_{\bdsF}^\bdsf(\ulQ)}{I(\rho(\bft_n)\phi_F^\star)}=\frac{\sL_{\bdsF}^\imp(\ulQ)}{I(\rho(\bft_n)\phi_F^{\star,\imp})},
\eeq
where $\phi_F^{\star,\imp}:=\rho(\cJ_\infty)\varphi^\star_f\boxtimes\varphi^\star_g\boxtimes \varphi_h^\star$, and
$I(\rho(\bft_n)\phi_F^{\star,\imp})$ is the global trilinear period integral 
\[I(\rho(\bft_n)\phi_F^{\star,\imp})=\int_{\A^\x\GL_2(\Q)\bksl \GL_2(\A)}\phi_F^{\star,\imp}(xt_n,x,x)\rmd^\tau x.\] 
Letting $\al_1=\om_{F,p}^{-1/2}(p)\bfa(p,f)p^{1-\frac{k_\Qx}{2}}$, $\al_2=\bfa(p,g)p^{1-\frac{k_\Qy}{2}}$ and $\al_3=\bfa(p,h)p^{1-\frac{k_\Qz}{2}}$, one verifies that
\[\phi_F^\star=1\ot 1\ot (1-\abs{p}\al_3\cdot \pi_h(\pDII{p^{-1}}{1}))\cdot\phi_F^{\star,\imp}\]
and that
\[I(\rho(\bft_n)\phi^\star)= I(\rho(\bft_n)\phi^{\star,\imp}_F)-\abs{p}^\frac{3}{2}\al_1\al_2\al_3\cdot I(\rho(\bft_{n-1})\phi^{\star,\imp}_F)\]
for $n$ sufficiently large. From the above equation, \eqref{E:4.8} and \propref{P:inter1}, we can deduce that  \[\sL_{\bdsF}^\bdsf(\ulQ)=(1-\abs{p}^\onehalf\om_{g,p}\om_{h,p}(p)\al^{-1}_1\al_2\al_3)\cdot \sL_\bdsF^\imp(\ulQ).\]
Now as in \thmref{T:main.7}, we apply the above construction to a suitable Dirichlet twist $\bdsF'$ of $\bdsF$ so that $\bdsF'$ satisfies the minimal hypothesis and define $\cL_{\bdsF}^\imp:=\sL_{\bdsF'}^\imp\cdot \sqrt{\psi_{1,(p)}(-1)(-1)I_{\bdsF'}}^{-1}$. Then $\cL_{\bdsF}^\imp$ clearly does the job.

To see the interpolation formula, applying the proof of \corref{C:Ichino.imb} and \thmref{T:main.7} to $\sL_\bdsf^\imp$, we can show that
\[(\sL_{\bdsF}^\imp(\ulQ))^2=\frac{L(1/2,\itPi_\ulQ)}{\Omega_{\bdsf_\Qx}^2}\cdot \sI_{\itPi_\ulQ,p}^{\imp}\cdot \prod_{\pmq\divides N}I_{\bdsF,\pmq}(\ulQ)\cdot\prod_{\ell\in\Sigma_{\rm exc}}(1+\ell^{-1}),\]
where $\sI_{\itPi_\ulQ,p}^{\imp}$ is the improved \padic zeta integral defined in \remref{R:improved}. Then the interpolation formula follows from the expression of $\sI_{\itPi_\ulQ,p}^{\imp}$ given in \remref{R:improved}.
\end{proof}

\subsection{An alternative proof of anticyclotomic exceptional zero conjecture}We return to the setting in \subsecref{SS:2.8} and \subsecref{SS:3.8}. Suppose that $f=f^\circ$ is the newform attached to an elliptic curve $E_{/\Q}$ of conductor $Np$ with split multiplicative reduction at $p$. For a ring class character $\chi$, put
\[\cL_p(f/K\ot\chi,s):=\Theta_{f/K,\chi}(\bfv^s-1)\text{ for }s\in\Zp.\]
Then we know $\cL_p(f/K,0)=0$. Write $\frakp^{h_K}=\uf\cO_K$ with $\uf\in K^\x$ and let $\log_{\uf/\ol{\uf}}\colon\Cp^\x\to\Cp$ be the \padic logarithm such that $\log_{\uf/\ol{\uf}}(\uf/\ol{\uf})=0$. We provide a Greenberg-Stevens style proof of the anityclotomic exceptional zero conjecture for elliptic curves that was proved in \cite{BD99Duke}.
\begin{thm}[Bertolini and Darmon] Let $q_E$ be the Tate period of $E$. Then we have
\[\frac{d\cL_p(f/K,s)}{ds}|_{s=0}=\pm\frac{\log_{\uf/\ol{\uf}}(q_E)}{\Ord_p(q_E)}\cdot \sqrt{\frac{L(E/K,1)u_K^2D_K^{1/2}}{4\pi^2\Omega_{f,N^-}}}.\]
\end{thm}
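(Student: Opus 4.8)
The plan is to derive the exceptional zero formula by a Greenberg--Stevens style argument, using the two-variable improved $p$-adic $L$-function $\cL_{\bdsF}^\imp$ of \propref{P:improved} together with the factorization in \propref{P:factor}. First I would specialize the setup of \subsecref{SS:3.8}: take $\bdsg=\bftheta_\psi(S_2)$, $\bdsh=\bftheta_\psi(S_3)$ with trivial ring class character $\chi=\bfone$ (so that $\bdsh=\bdsg$ after identifying $S_2,S_3$), and work along the arithmetic line through $\Qx$ of weight $2$. Since $E$ has split multiplicative reduction at $p$, the local representation $\pi_{f,p}$ is the Steinberg representation and $\al_p(f)=1$, so the Euler factor $(1-\brchf\Om^{-a-1}(p)\al_p(\bdsg)\al_p(\bdsh)/\al_p(\bdsf))$ in \propref{P:improved} vanishes at the point $\ulQ$ corresponding to $(f, \theta_\psi, \theta_\psi)$ with $k_{\Qy}=k_{\Qz}=1$, because $\al_p(\bdsg_{\Qy})\al_p(\bdsh_{\Qz})$ specializes to a $p$-adic unit forcing the ratio to $1$. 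This is the source of the exceptional zero: $\cL_{\bdsF}^\bdsf$ vanishes at $\ulQ$ while $\cL_{\bdsF}^\imp$ does not.

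Next I would compute the derivative. Differentiating the relation $\cL_{\bdsF}^\bdsf \equiv (1 - \mathcal{E})\cL_{\bdsF}^\imp \pmod Z$ (where $\mathcal E$ denotes the Euler factor) along a suitably chosen direction in the weight space that moves the weight of $\bdsf$ while keeping $Z=0$, the leading term is $-(\partial \mathcal E)\cdot \cL_{\bdsF}^\imp(\ulQ)$. The key input here is the computation of the logarithmic derivative of $\al_p(\bdsf)$ with respect to the weight variable at $\Qx$: by the theory of the Tate parametrization and the work on the $\mathcal{L}$-invariant (the Greenberg--Stevens identity $\frac{d}{dk}\log\al_p(\bdsf_k)|_{k=2} = -\frac{1}{2}\cdot\frac{\log_p q_E}{\ord_p q_E}$, after suitable normalization), one identifies $\partial\mathcal E$ in terms of $\mathcal{L}_p(E)=\log_p(q_E)/\ord_p(q_E)$. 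On the other side, I would relate $\cL_{\bdsF}^\bdsf$ restricted to this line to a derivative of $\cL_p(f/K,s)$ via the factorization \propref{P:factor}, which in the $\chi=\bfone$ case expresses $\cL_{\bdsF}^\bdsf(\Qx,S_1,S_2)$ as (a unit times) $\cL_{f/K}(W_2)\cdot\cL_{f/K}(W_3)\cdot \eta_{f^\circ}/\eta_{f^\circ,N^-}$, with $W_2,W_3$ linear changes of variable in $S_2,S_3$. Since $\cL_p(f/K,0)=0$, the product has a double zero, and extracting the relevant first derivative in the anticyclotomic variable produces $\left(\frac{d\cL_p(f/K,s)}{ds}|_{s=0}\right)^2$ up to explicit constants.

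Then I would evaluate $\cL_{\bdsF}^\imp(\ulQ)$ using the interpolation formula in \propref{P:improved}: $(\cL_{\bdsF}^\imp(\ulQ))^2 = \frac{L(1/2,\itPi_\ulQ)}{(\sqrt{-1})^{2k_\Qx}\Omega_{\bdsf_\Qx}^2}\cdot \cE^\imp(\itPi_{\ulQ,p})$. For $\ulQ$ with $\bdsf_\Qx=f$ of weight $2$ and $\bdsg_\Qy=\bdsh_\Qz=\theta_\psi$ of weight $1$, the triple product $L$-value $L(1/2,\itPi_\ulQ)$ factors (via Rankin--Selberg / Artin formalism, using $\theta_\psi\otimes\theta_\psi^\vee = \bfone \oplus \tau_{K/\Q}$) into $L(E/\Q,1)\cdot L(E/K,1)$ or directly into $L(E/K,1)$ times an Euler-type constant, matching the factor $L(E/K,1)u_K^2 D_K^{1/2}/(4\pi^2\Omega_{f,N^-})$ under the square root once the period $\Omega_{\bdsf_\Qx}$ is matched with the Gross period $\Omega_{f,N^-}$ (here Remark~\ref{R:periodc} and \propref{P:period2} are used to pass between the canonical period and the Gross period up to $p$-adic units, which can be absorbed). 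The improved Euler factor $\cE^\imp(\itPi_{\ulQ,p})$ at a weight-$(2,1,1)$ point with $\pi_{f,p}$ Steinberg must be computed explicitly from \remref{R:improved}; it should contribute a nonzero unit, so that squaring the whole identity and taking square roots yields the stated formula up to sign.

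The main obstacle I anticipate is twofold. First, the precise bookkeeping of the change of variables: I must verify that the linear substitutions $W_2, W_3$ of \propref{P:factor} and the line $\{Z=0\}$ along which $\cL_{\bdsF}^\imp$ lives are compatible, i.e. that differentiating the factored product in the $S_2$ (or $S_3$) direction while sitting on $Z=0$ genuinely recovers $\frac{d}{ds}\cL_p(f/K,s)|_{s=0}$ and not some other linear combination of derivatives; getting the constant right (including $\ord_p(q_E)$ versus $h_K$ normalizations coming from $\gamma_\frakp$) requires care. Second, and more seriously, I need the Greenberg--Stevens computation of $\frac{d}{dk}\al_p(\bdsf_k)$ at $k=2$ in terms of $\mathcal{L}_p(E)$; this is classical for the cyclotomic setting but here it feeds into $\partial\mathcal{E}$ through the specific combination $\al_p(\bdsf)^{-1}\al_p(\bdsg)\al_p(\bdsh)$, and since $\bdsg,\bdsh$ are CM Hida families their $\bfU_p$-eigenvalues are also weight-dependent (through $\Psi_{S_2},\Psi_{S_3}$), so one must track all three logarithmic derivatives simultaneously and check that on the locus $Z=0$ only the contribution of $\bdsf$ survives. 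Once these derivative computations are pinned down, the rest is assembling the pieces and matching constants, which is routine.
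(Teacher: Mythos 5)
Your overall strategy (improved $p$-adic $L$-function plus factorization plus a Greenberg--Stevens derivative computation) is the right one, but your specific choice of specialization breaks the argument at its central point. You take the ring class character $\chi=\bfone$, so the factorization of \propref{P:factor} degenerates to $\cL^{\bdsf}_{\bdsF}\sim\cL_{f/K}(W_2)\cdot\cL_{f/K}(W_3)$, and \emph{both} factors carry the exceptional zero $\cL_p(f/K,0)=0$. Hence the three-variable function has a double zero at the weight-$(2,1,1)$ point, all of its first partial derivatives there vanish, and — crucially — the improved value $\cL^{\imp}_{\bdsF}(2,1,1)$ vanishes as well: the improvement of \propref{P:improved} removes only the single Euler factor $(1-a_{\bdsg}a_{\bdsh}/a_{\bdsf})$, while the interpolation of $(\cL^{\imp}_{\bdsF}(2,1,1))^2$ is still a multiple of the square of the \emph{second} anticyclotomic factor at its own exceptional point, i.e.\ of $\cL_p(f/K,0)^2=0$. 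Consequently your first-order identity $-\bigl(\partial\cE\bigr)\cdot\cL^{\imp}_{\bdsF}(\ulQ)=\partial\bigl(\cL^{\bdsf}_{\bdsF}\bigr)$ along the locus $Z=0$ reads $0=0$ and gives no information; extracting $\bigl(\cL_p'(f/K,0)\bigr)^2$ from the double zero would require a second-derivative analysis (or a ``doubly improved'' $L$-function), which your proposal does not supply. The paper avoids exactly this by inserting an auxiliary \emph{nontrivial} ring class character $\chi$ of $\ell$-power conductor, chosen by the nonvanishing theorem of \cite{CH17Crelle} so that $\cL_p(f/K\ot\chi^2,0)\neq 0$: then the zero is simple, the first partials $\partial_{k_2}\cL_p(2,1,1)$ compute $\cL_p'(f/K,0)\cdot\cL_p(f/K\ot\chi^2,0)$, and the nonzero twisted value cancels against the same factor appearing in $\cL^{\imp}_{\bdsF}(2,1,1)$ at the last step. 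Note also that collapsing $\bdsg=\bdsh$ and identifying $S_2,S_3$ destroys the two independent anticyclotomic directions $W_2,W_3$ that are needed to isolate the relevant derivative.

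Two smaller points. First, you would still need to dispose of the error term $H\cdot(\bfu^{-k_1+k_2+k_3}-1)$ in the congruence defining $\cL^{\imp}_{\bdsF}$: differentiating along a direction tangent to $\{Z=0\}$ does kill it, but then $\partial\cE$ mixes the derivatives $a_{\bdsf}'(2)$ and $a_{\bdsg}'(1),a_{\bdsh}'(1)$, so you must track all three $\bfU_p$-eigenvalue derivatives (the paper instead pins down $H(2,1,1)$ via the identical vanishing of $\cL_p(k_1,1,1)$, forced by root numbers when $L(E/K,1)\neq 0$, and then combines $\partial_{k_2}+\partial_{k_3}$ to produce exactly $\log_{\uf/\ol{\uf}}(q_E)/\Ord_p(q_E)$). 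Second, the periods cannot simply be ``absorbed up to units'': the theorem is stated with the Gross period $\Omega_{f,N^-}$, and the ratio $\eta_{f^\circ}/\eta_{f^\circ,N^-}$ appearing in \propref{P:factor} is not a unit in general; in the paper it cancels because it enters both sides of the final division consistently, and your sketch needs the same bookkeeping to get the exact constant under the square root.
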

\begin{proof}By \cite[Theorem D]{CH17Crelle}, we can choose a ring class character $\chi$ of $\ell$-power conductor with $\ell\ndivides Np$ split in $K$ such that  $\cL_p(f/K\ot\chi^2,0)\not =0$. Let $\bdsf=\bdsf(T)\in \Zp\powerseries{T}\powerseries{q}$ be the primitive Hida family passing through $f$ at the weight two specialization $T=\bfu^2-1$ with $\bfu:=1+p$. 
Let $\bdsF=(\bdsf(T),\bftheta_{\chi}(S_2),\bftheta_{\chi^{-1}}(S_3))$ be the triple of Hida families and let $\cL_{\bdsF}^\bdsf=\cL_{\bdsF}^{\bdsf}(T,S_2,S_3)$ be the unbalanced $p$-adic $L$-function attached to $\bdsF$ in \thmref{T:main.7}. Fixing a lift $\wtd\cL_{\bdsF}^\imp\in\cR$ of 
$\cL_{\bdsF}^\imp\pmod{Z}$, we define analytic functions on $\Zp^3$:
\begin{align*}\cL_p(k_1,k_2,k_3):=&\cL_{\bdsF}^\bdsf(\bfu^{k_1}-1,\bfv^{k_2}-1,\bfv^{k_3}-1);\\
\cL_p^\imp(k_1,k_2,k_3):=&\wtd\cL_\bdsF^\imp(\bfu^{k_1}-1,\bfv^{k_2}-1,\bfv^{k_3}-1)\end{align*}
for $(k_1,k_2,k_3)\in \Zp^3$. Let $a_\bdsf(k_1)=\al_p(\bdsf)(\bfu^{k_1}-1)$, 
\[a_\bdsg(k_2)=\al_p(\bdsg)(\bfv^{k_2}-1)=\chi(\Frob_{\ol{\frakp}})\bfv^{l(\Frob_{\ol{\frakp}})(1-k_2)};\quad a_\bdsh(k_3)=\chi^{-1}(\Frob_{\ol{\frakp}})\bfv^{l(\Frob_{\ol{\frakp}}))(1-k_3)}.\]
It is clear that
\[a_\bdsf(2)=1;\quad a_\bdsg(1)a_\bdsg(1)=1.\]
By \propref{P:improved}, there exists $\cH(T_1,S_1,S_2)\in \cR$ and $H(k_1,k_2,k_3)=\cH(\bfu^{k_1}-1,\bfv^{k_2}-1,\bfv^{k_3}-1)$ such that 
\beq\label{E:3.app}
\cL_p(k_1,k_2,k_3)=(1-\frac{a_\bdsg(k_2)a_\bdsh(k_3)}{a_\bdsf(k_1)})\cdot\cL_p^{\imp}(k_1,k_2,k_3)+H(k_1,k_2,k_3)\cdot (\bfu^{-k_1+k_2+k_3}-1)
\eeq
(the nebentypus $\brchf=1$, $\brchg=\brchh=\Om^{-1}$ and $a=-1$). We may assume $L(f/K,1)\not =0$, so the root numbers of $f$ and its quadratic twist $f\ot\tau_{K/\Q}$ are $+1$. This in turns implies that the root numbers of $\bdsf$ and $\bdsf\ot\tau_{K/\Q}$ are $-1$, and hence the one-variable Iwasawa function $\cL_p(k_1,1,1)$ vanishes identically. Taking the derivative with respect to $k_1$ on the both sides of \eqref{E:3.app}, we find that 
\[0=\frac{\pd\cL_p}{\pd k_1}(2,1,1)= a_\bdsf'(2)\cdot \cL_p^{\imp}(2,1,1)-H(2,1,1)\cdot\log_p\bfu.\]
This implies that 
\[H(2,1,1)\cdot \log_p\bfu=a_\bdsf'(2)\cdot\cL_p^{\imp}(2,1,1);\]
By an elementary calculation and a theorem of Greenberg-Stevens \cite[Theorem 3.18]{GS93}, \[a_\bdsg'(1)=\frac{\log_p\ol{\uf}}{h_K};\quad a_\bdsf'(2)=-\onehalf\cdot\frac{\log_p(q_E)}{\Ord_p(q_E)}.\]
It follows that
\begin{align*}\frac{\pd\cL_p}{\pd k_2}(2,1,1)=&\frac{\pd\cL_p}{\pd k_3}(2,1,1)
=(-\frac{\log_p\ol{\uf}}{h_K})\cdot\sL^{\imp}(2,1,1)+H(2,1,1)\cdot \log_p\bfu\\
=&(-\frac{\log_p\ol{\uf}}{h_K}-\onehalf \frac{\log_p(q_E)}{\Ord_p(q_E)})\cdot\cL_p^{\imp}(2,1,1).\end{align*}
By \propref{P:factor}, we have
\begin{align*}\cL_p(2,k_2,k_3)=&v(k_2,k_3)\cdot \cL_p(f/K,\frac{k_2+k_3-2}{2})\cL_p(f\ot\chi^2,\frac{k_2-k_3}{2})
\end{align*}
for some nowhere vanishing analytic function $v(k_2,k_3)$. Letting $v=v(1,1)\not =0$, we find that
\begin{align*}v\cdot\cL_p'(f/K,0)\cL_p(f/K\ot\chi^2,0)=&\frac{\pd\cL_p}{\pd k_2}(2,1,1)+\frac{\pd\cL_p}{\pd k_3}(2,1,1)\\
=&(-1)(\frac{\log_p (q_E)}{\Ord_p(q_E)}+\frac{2\log_p\ol{\uf}}{h_K})\cdot\cL_p^\imp(2,1,1)\\
=&(-1)\frac{\log_{\uf/\ol{\uf}}(q_E)}{\Ord_p(q_E)}\cdot\cL_p^\imp(2,1,1).
\end{align*}
On the other hand, the interpolation formula in \propref{P:improved} shows that\[\cL_p^\imp(2,1,1)^2=v^2\cdot (2\pi)^{-2}\frac{L(f/K,1)}{\Omega_{f,N^-}}\cdot u_K^2\sqrt{D_K}\cdot\cL_p(f/K\ot\chi^2,0)^2.\]
Combining the above two equations, we obtain
\[(\cL_p'(f/K,0))^2=\left(\frac{\log_{\uf/\ol{\uf}}(q_E)}{\Ord_p(q_E)}\right)^2\cdot\frac{L(f/K,1)}{4\pi^2\Omega_{f,N^-}}\cdot u_K^2\sqrt{D_K},\]
and the theorem follows.
\end{proof}

\bibliographystyle{amsalpha}
\bibliography{mybib_triple}
\end{document}